\newtheorem{theorem}{Theorem}[subsection]
\newtheorem*{theoremstar}{Theorem}
\newtheorem{lemma}[theorem]{Lemma}
\newtheorem{cor}[theorem]{Corollary}
\newtheorem{conj}[theorem]{Conjecture}
\newtheorem{prop}[theorem]{Proposition}
\theoremstyle{definition}
\newtheorem{defn}[theorem]{Definition}
\newtheorem{hypothesis}[theorem]{Hypothesis}
\newtheorem{example}[theorem]{Example}
\newtheorem{remark}[theorem]{Remark}
\newtheorem{convention}[theorem]{Convention}
\newtheorem{construction}[theorem]{Construction}
\newtheorem{notation}[theorem]{Notation}
\numberwithin{equation}{theorem}
\newcommand{\calE}{\mathcal{E}}
\newcommand{\calL}{\mathcal{L}}
\newcommand{\calO}{\mathcal{O}}
\newcommand{\calR}{\mathcal{R}}
\newcommand{\gothm}{\mathfrak{m}}
\newcommand{\gothp}{\mathfrak{p}}
\newcommand{\BB}{\mathbb{B}}
\newcommand{\QQ}{\mathbb{Q}}
\newcommand{\NN}{\mathbb{N}}
\newcommand{\RR}{\mathbb{R}}
\newcommand{\ZZ}{\mathbb{Z}}
\newcommand{\dual}{\vee}
\newcommand{\bbe}{\mathbf{e}}
\newcommand{\bbf}{\mathbf{f}}
\newcommand{\bbG}{\mathbb{G}}
\newcommand{\bbm}{\mathbf{m}}
\newcommand{\bbv}{\mathbf{v}}
\newcommand{\bbw}{\mathbf{w}}
\newcommand{\bbD}{\mathbf{D}}
\newcommand{\bbL}{\mathbf{L}}
\newcommand{\bbR}{\mathbf{R}}
\newcommand{\rmA}{\mathrm{A}}
\newcommand{\rmC}{\mathrm{C}}
\newcommand{\rmD}{\mathrm{D}}
\newcommand{\scrC}{\mathscr{C}}
\newcommand{\an}{\mathrm{an}}
\newcommand{\alg}{\mathrm{alg}}
\newcommand{\cont}{\mathrm{cont}}
\newcommand{\cris}{\mathrm{cris}}
\newcommand{\Dfm}{\mathbf{Dfm}}
\newcommand{\dR}{\mathrm{dR}}
\newcommand{\even}{\mathrm{even}}
\newcommand{\fil}{\mathrm{fil}}
\newcommand{\id}{\mathrm{id}}
\newcommand{\Ind}{\mathrm{Ind}}
\newcommand{\Iw}{\mathrm{Iw}}
\newcommand{\LT}{\mathrm{LT}}
\newcommand{\odd}{\mathrm{odd}}
\newcommand{\perf}{\mathrm{perf}}
\newcommand{\pst}{\mathrm{pst}}
\def\Qp{\QQ_p}
\newcommand{\red}{\mathrm{red}}
\newcommand{\RHom}{\bbR\mathrm{Hom}}
\newcommand{\rig}{\mathrm{rig}}
\newcommand{\gr}{\mathrm{gr}}
\newcommand{\Sen}{\mathrm{Sen}}
\newcommand{\Ta}{\mathrm{Ta}}
\newcommand{\tors}{\mathrm{tors}}
\newcommand{\ptors}{{p\text{-tors}}}
\newcommand{\Tr}{\mathrm{Tr}}
\newcommand{\whotimes}{\mathop{\widehat{\otimes}}}
\newcommand{\Zp}{\ZZ_p}
\newcommand{\Lotimes}{\mathop{\stackrel{\bbL}{\otimes}}}
\DeclareMathOperator{\Adj}{Adj}
\DeclareMathOperator{\Cone}{Cone}
\DeclareMathOperator{\Coker}{Coker}
\DeclareMathOperator{\Fib}{Fib}
\DeclareMathOperator{\Fitt}{Fitt}
\DeclareMathOperator{\Frac}{Frac}
\DeclareMathOperator{\Gal}{Gal}
\DeclareMathOperator{\Ker}{Ker}
\DeclareMathOperator{\Hom}{Hom}
\DeclareMathOperator{\Image}{Image}
\DeclareMathOperator{\Max}{Max}
\DeclareMathOperator{\ord}{ord}
\DeclareMathOperator{\rank}{rank}
\DeclareMathOperator{\Res}{Res}
\DeclareMathOperator{\Spec}{Spec}
\DeclareMathOperator{\Supp}{Supp}
\DeclareMathOperator{\Tor}{Tor}
\begin{document}

\title{Cohomology of arithmetic families of $(\varphi, \Gamma)$-modules}
\author{Kiran S. Kedlaya, Jonathan Pottharst, and Liang Xiao}

\maketitle
\begin{abstract}
We prove the finiteness and compatibility with base change of the
$(\varphi, \Gamma)$-cohomology and the Iwasawa cohomology of
arithmetic families of $(\varphi, \Gamma)$-modules.  Using this
finiteness theorem, we show that a family of Galois representations
that is densely pointwise refined in the sense of Mazur is actually
trianguline as a family over a large subspace.  In the case of the
Coleman-Mazur eigencurve, we determine the behavior at all points.
\end{abstract}

\tableofcontents

\section{Introduction}

One of the most powerful tools available for the study of $p$-adic
Galois representations is the theory of
$(\varphi, \Gamma)$-modules, which provides an equivalence of
categories between Galois representations and
modules over a certain somewhat simpler group algebra.
To name just one example, $(\varphi, \Gamma)$-modules
form a key intermediate step in the $p$-adic local Langlands correspondence for
the group
$\mathrm{GL}_2(\QQ_p)$, as discovered by Colmez \cite{colmez-kirillov}.
One key feature of the theory, which plays a crucial role in the
previous example, is that
the full category of $(\varphi, \Gamma)$-modules is strictly larger
than the subcategory
on which the equivalence to Galois representations take place (namely
the subcategory of \emph{\'etale}
$(\varphi, \Gamma)$-modules); this makes it possible (and indeed quite
frequent) for an \emph{irreducible}
$p$-adic Galois representation to become \emph{reducible} when carried
to the full category of
$(\varphi, \Gamma)$-modules. This is explained largely using the
theory of slope filtrations for $\varphi$-modules
(as developed for instance in \cite{kedlaya-relative}) and has
important applications in Iwasawa theory
especially in cases of nonordinary reduction \cite{pottharst1, pottharst2}.

One important feature of the theory of $(\varphi, \Gamma)$-modules is
its compatibility with variation in
analytic families. On the side of Galois representations, this means
working with continuous actions of
the Galois group of a finite extension of $\QQ_p$ not on a
finite-dimensional $\QQ_p$-vector space, but
on a finite projective module over a $\QQ_p$-affinoid algebra
(or more globally, a locally free coherent
sheaf on a rigid analytic space over $\QQ_p$).
The work of Berger and Colmez \cite{berger-colmez} provides a functor
from such Galois representations to
a certain category of \emph{relative $(\varphi, \Gamma)$-modules} (or
more globally, to an \emph{arithmetic
family of $(\varphi, \Gamma)$-modules} in the language of
\cite{kiran-liu}). In contrast to the usual theory,
however, this functor is fully faithful but not essentially
surjective, even onto the \'etale objects; this somewhat complicates
the relative theory. Another complication is that the theory of slope
filtrations does not extend in a completely satisfactory way to
families; see \cite{kiran-liu} for discussion of some of the
difficulties.

This paper primarily concerns itself with the relative analogue of the theory of
\emph{Galois cohomology} for
$(\varphi, \Gamma)$-modules. The mechanism for computing Galois
cohomology of $p$-adic Galois representations
on the side of $(\varphi, \Gamma)$-modules was introduced by Herr, and
it was later shown by Liu \cite{liu}
(using slope filtrations) that this extends in a satisfactory way to
all $(\varphi, \Gamma)$-modules when the family is reduced to a point.
This includes analogues of some basic results of Tate (finite
dimensionality of cohomology,
an Euler characteristic formula, and Tate local duality) and is
important for such applications as \cite{pottharst1}.
This makes clear the need for an analogue of these results in
families, but a straightforward imitation of
\cite{liu} is infeasible for the sorts of reasons described in the
previous paragraph.
The arguments used form a combination of several different strategies,
which are described in the discussion of the structure of the paper below.

In addition to results on relative Galois cohomology, we also obtain
relative versions of some results
on \emph{Iwasawa cohomology}. Iwasawa cohomology, computed using the
$\psi$ operator
(a left inverse of the Frobenius operator $\varphi$), is used in
\cite{pottharst2} to provide a structural
link between $(\varphi, \Gamma)$-modules and Iwasawa theory, giving a
way to clarify some previously
mysterious phenomena on the latter side.

Arguments taken from \cite{pottharst2} also show that our finiteness
results imply compatibility
with base change for both Galois cohomology and Iwasawa cohomology. As
one might expect, these results must be stated at the
level of derived categories except in the case of a \emph{flat} base
change.

We summarize the preceding results in the following theorem, referring to the body of the paper for the relevant terminology.

\begin{theoremstar}
Let $K$ be a finite extension of $\Qp$, let $A$ be a $\Qp$-affinoid
algebra, and let $M$ be (the global sections of) a locally free
coherent sheaf over the relative Robba ring $\calR_A(\pi_K)$ with
continuous semilinear $(\varphi,\Gamma_K)$-action, such that the
induced linear map $\varphi^*M \to M$ is an isomorphism.  Then $M$ is
finitely generated projective over $\calR_A(\pi_K)$, its Galois
cohomology $\rmC_{\varphi,\gamma_K}^\bullet(M)$ is quasi-isomorphic to
$\rmC_{\psi,\gamma_K}^\bullet(M)$ and lies in $\bbD_\perf^\flat(A)$,
and its Iwasawa cohomology $\rmC_\psi^\bullet(M)$ lies in
$\bbD_\perf^\flat(\calR_A^\infty(\Gamma_K))$.  Variants of Tate local
duality and the Euler-Poincar\'e formula hold for
$\rmC_{\varphi,\gamma_K}^\bullet(M)$ and $\rmC_\psi^\bullet(M)$.

If $A \to B$ is a homomorphism of $\Qp$-affinoid algebras, then the natural maps
\[
\rmC_{\varphi,\gamma_K}^\bullet(M) \Lotimes_A B \to \rmC_{\varphi,\gamma_K}^\bullet(M \whotimes_A B) \quad \text{and} \quad \rmC_\psi^\bullet(M) \Lotimes_{\calR_A^\infty(\Gamma_K)} \calR_B^\infty(\Gamma_K) \to \rmC_\psi^\bullet(M \whotimes_A B)
\]
are isomorphisms in the derived category.
\end{theoremstar}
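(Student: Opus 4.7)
The plan is to decompose the theorem into a sequence of sub-statements---(i) finite generation of $M$; (ii) the quasi-isomorphism $\rmC_{\varphi,\gamma_K}^\bullet(M) \simeq \rmC_{\psi,\gamma_K}^\bullet(M)$; (iii) perfectness over $A$ and over $\calR_A^\infty(\Gamma_K)$; (iv) derived base change; (v) Tate duality and the Euler--Poincar\'e formula---and prove them in that order, each resting on its predecessors. The pointwise analogues at a closed point of $\Max A$ are due to Liu \cite{liu}, so the crux is spreading out finiteness from a single point to a full affinoid neighborhood. For (i), I would construct a plus-model $M^+$ on a suitable annulus $\calR_A^{[s,r]}$, use coherence of annulus-level Robba rings to deduce that $M^+$ is finitely generated projective there, then propagate across all radii via $\varphi^*M \xrightarrow{\sim} M$. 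Statement (ii) follows by the Herr--Liu comparison: the natural map of double complexes has cone controlled by the decomposition $M = \varphi(M) \oplus M^{\psi=0}$, whose existence in our setting follows from the $\varphi$-isomorphism hypothesis.

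The main obstacle is (iii). My strategy is to realize the two-term complex $[M \xrightarrow{\psi - 1} M]$, and its $\Gamma_K$-Koszul enhancement, as a Fredholm complex of topological $A$-modules, exploiting that on appropriate overconvergent subspaces, $\psi$ strictly improves the radius of convergence and hence induces a compact operator in the sense of Serre. More precisely, after passing to a bounded plus-model $M^+$, one presents the cohomology as a difference of Banach $A$-modules on which $\psi - 1$ is Fredholm in the sense of Coleman--Serre, with finitely generated kernel and cokernel; these patch into a perfect complex over $A$. Combining this with the finite Koszul contribution of $\Gamma_K$ (topologically procyclic up to finite torsion, handled directly or via Shapiro) yields the claimed perfect complex over $A$. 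The Iwasawa variant is analogous but works over the larger base $\calR_A^\infty(\Gamma_K)$, using the continuous $\Gamma_K$-equivariance of $\psi$ to lift the Fredholm argument.

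Once perfectness is established, (iv) follows by standard derived-category yoga: represent a perfect complex by a bounded complex of finitely generated projective modules, so that $-\Lotimes_A B$ commutes with cohomology, and verify that the comparison maps to $\rmC^\bullet(M \whotimes_A B)$ are quasi-isomorphisms by reducing to plus-models where completed and ordinary tensor products agree. For (v), Tate local duality and the Euler--Poincar\'e formula follow from Liu's pointwise results via flat base change to residue fields, combined with the already-established perfectness which ensures both sides of the duality pairing are well-defined; the Euler characteristic is locally constant on $\Max A$ for a perfect complex, so agreement at all closed points forces the formula to hold universally.
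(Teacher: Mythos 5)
Your sub-statements (i), (ii), (iv), (v) track the paper's route: the model on an annulus propagated by $\varphi^*M\xrightarrow\sim M$ is precisely Proposition~\ref{P:phi module v.s. phi bundle}, the quasi-isomorphism via $M = \varphi(M)\oplus M^{\psi=0}$ is Proposition~\ref{P:phi cohomology = psi cohomology} (resting on Theorem~\ref{T:structure of D^psi=0}), and once perfectness is available, base change and duality are indeed obtained by reducing to points (Theorems~\ref{T:base change} and \ref{T:finite cohomology}). The gap is in (iii), and it is not a small one.

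A Fredholm/compactness argument of Coleman--Serre type cannot give the Iwasawa-level perfectness. The obstruction is that the kernel $M^{\psi=1}$ is \emph{not} a finitely generated $A$-module---already over a point $A=L$ it is infinite-dimensional over $L$, being instead finite over the larger Fr\'echet--Stein algebra $\calR_L^\infty(\Gamma_K)$. So any presentation of $[M\xrightarrow{\psi-1}M]$ as a Fredholm complex of Banach $A$-modules would wrongly predict a finite-dimensional kernel. The paper does make a Serre-type argument, but only in Proposition~\ref{P:psi cokernel} and only for the \emph{cokernel} $M/(\psi-1)$; that result alone is one ingredient, not the whole. Likewise there is no Fredholm theory ``over $\calR_A^\infty(\Gamma_K)$'' to fall back on: the terms of $\rmC^\bullet_\psi(M)$ are not finite modules over this ring, and the ring is not Banach, so the notion of a compact operator does not port over. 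Folding $\gamma_K-1$ into a Koszul complex does not rescue the strategy either, since the intermediate terms remain LF rather than Banach and the crucial $\calR_A^\infty(\Gamma_K)$-structure is then lost.

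What the paper actually does to establish $\rmC_\psi^\bullet(M)\in\bbD_\perf^-(\calR_A^\infty(\Gamma_K))$ (Theorem~\ref{T:finite Iwasawa cohomology}) is a two-stage argument that has no Fredholm input at all: (a) a d\'evissage (Lemma~\ref{L:devissage}), after twisting by $t^n$ and reducing to $A$ reduced, that replaces $M$ by an extension $N$ with $N/(\psi-1)=N^*/(\psi-1)=0$; and (b) for such $N$, Theorem~\ref{T:structure of M^psi=1} proves $N^{\psi=1}$ is finite projective over $\calR_A^\infty(\Gamma_K)$ by constructing the continuous Iwasawa pairing $\{-,-\}_\Iw\colon N^{\psi=1}\times(N^*)^{\psi=1,\iota}\to\calR_A^\infty(\Gamma_K)$, identifying it pointwise with the perfect Tate pairing (Proposition~\ref{P:Iwasawa pairing = Tate pairing}), and then patching local generators via a determinant argument (Proposition~\ref{P:generators nearby}) and a subtle uniform-finiteness criterion (Proposition~\ref{P:finite-generation} and Lemma~\ref{L:variant finite generation}). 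Without some version of this duality-plus-d\'evissage machinery, the step from pointwise finiteness to family finiteness over $\calR_A^\infty(\Gamma_K)$ does not go through.
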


While many applications of these results are expected, we limit
ourselves in this paper to just one application: the problem of
\emph{global triangulations} of families of $(\varphi,
\Gamma)$-modules, a triangulation being a useful notion due to Colmez
\cite{colmez}. As noted earlier, it occurs quite frequently that
irreducible $p$-adic Galois representations become more reducible when
pushed into the full category of $(\varphi, \Gamma)$-modules; our
results show that this phenomenon occurs frequently in families as
well.

One primary area of application is to
the study of \emph{eigenvarieties}, which parametrize $p$-adic
analytic families of automorphic forms.  
In this case, we prove the existence of global triangulations after modifying the eigenvarieties along some proper birational morphisms.  We also prove that the representation is trianguline at \emph{every} point on the eigenvariety.
In the special case of the Coleman-Mazur eigencurve, we prove the existence of a global triangulation after resolving the singularities; we prove that if a point corresponds to an overconvergent modular form which is in the image of the $\theta^{k-1}$-operator of Coleman, then the global triangulation does \emph{not} give a saturated triangulation at that point.

As explained in \cite{pottharst1}, the reducibility of trianguline
families can be exploited to form analogues of Greenberg's families of
Selmer groups for \emph{nonordinary} families of Galois
representations; the finiteness of cohomology also intervenes in a
crucial way to guarantee their well-behavedness.  Although we do not
explain the following point in this paper, it should be noted that
given our work on the eigencurve, Nekov\'a\v{r}'s methods then
generalize immediately to show that the validity of the Parity
Conjecture is constant along the irreducible components of the
eigencurve; this reduces the conjecture immediately to the (presently
unknown) claim that each irreducible component contains a classical
weight two point of noncritical slope.

Some similar results for representations of $G_{\Qp}$ have recently
been announced by other authors.  Using a significant technical
improvement of Kisin's original method \cite{R:Kisin} of interpolating
crystalline periods, Liu has developed techniques for showing that
refined families admit triangulations over Zariski-dense open sets,
similar to our Example~\ref{E:refined family}.  As an application, he
has recovered essentially our Proposition~\ref{P:eigencurve} in
\cite[Proposition~5.4.2]{R:Liu2} (in which the cases (1) and (2) treat
nonordinary and ordinary points, respectively); see also
\cite[Theorem~5.4.3]{R:Liu2} for some information at singularities of
the eigencurve.  Also, Hellmann \cite[Corollary~1.5]{hellmann} has
obtained a result on the triangulation of eigenvarieties for definite
unitary groups over imaginary quadratic fields, following a strategy
suggested by the second author: to construct a universal family of
(rigidified) trianguline $(\varphi,\Gamma)$-modules as in
\cite{chenevier}, then use automorphic data to construct a map from
the eigenvariety to this moduli space.

\subsubsection*{Structure of the paper}

In \S\ref{sec:family-phi-Gamma-modules}, we carry out some preliminary
arguments
relating sheaves on relative annuli to \emph{coadmissible} modules in
the sense of Schneider and Teitelbaum
\cite{schneider-teitelbaum}. A key issue is to establish finite
generation for certain such modules;
for instance, we obtain (Proposition~\ref{P:phi module v.s. phi bundle})
an equivalence between the two flavors of relative $(\varphi,
\Gamma)$-modules
treated in \cite{kiran-liu} (one defined using modules over a relative
Robba ring, the other defined using vector bundles over a relative
annulus).

In \S\ref{S:psi operator}, we make a careful study of the
$\psi$ operator, a one-sided inverse of the Frobenius operator $\varphi$.
Some of these arguments (such as the finiteness of the cokernel of
$\psi-1$)
are straightforward generalizations of arguments appearing in the
usual $(\varphi, \Gamma)$-module theory.

In \S\ref{S:finiteness of cohomology}, we establish the formal
framework for our results, and obtain
relative versions of the results on Galois cohomology from \cite{liu}
and the results of Iwasawa
cohomology from \cite{pottharst2}, plus base change in both settings,
modulo
a key finiteness theorem  for Iwasawa cohomology
(Theorem~\ref{T:finite Iwasawa cohomology}).
Given this result, most of the proofs proceed by reducing to the case
of a point, for which
we appeal to the previously known results; in particular, we do not
give a new method for
proving any results from \cite{liu} or \cite{pottharst2}.

In \S\ref{S:proof}, we prove Theorem~\ref{T:finite Iwasawa
cohomology}. After some initial simplifications,
the argument consists of two main steps. The first step is to
establish finiteness of Iwasawa cohomology in degree $1$
assuming vanishing of outer $(\varphi,\Gamma)$-cohomology, using the existence of the
duality pairing in general and its
perfectness at points. The second step is to reduce to this case using
a d\'evissage argument akin to those
in \cite{liu}, in which one carefully constructs an extension of the
original $(\varphi, \Gamma)$-module
(using slope filtrations) in order to kill off undesired cohomology in degree $2$.

In \S\ref{S:triangulation}, we give some applications of our results
to triangulations in families
of $(\varphi, \Gamma)$-modules for a finite extension $K$ over $\Qp$.  We also derive some explicit
consequences for eigenvarieties and
in particular for the Coleman-Mazur eigencurve.  In addition, we prove that all rank one arithmetic families of $(\varphi,\Gamma)$-modules are of character type (up to twisting by a line bundle on the base), answering in the affirmative a question of Bella\"iche \cite[\S3,~Question~1]{R:Bellaiche}.

\subsubsection*{Notation}
Throughout this paper, we fix a prime number $p$.  Set $\omega =
p^{-1/(p-1)}$.

For $H \subseteq G$ a subgroup of finite index and $M$ a
$\ZZ[H]$-module, we write $\Ind_H^G M$ for the induced
$\ZZ[G]$-module; it is canonically isomorphic to
$\Hom_{\ZZ[H]}(\ZZ[G], M)$.

We will exclusively work with affinoid and rigid analytic spaces in
the sense of Tate, rather than Berkovich.
The letter $A$ will always denote a $\QQ_p$-affinoid algebra;
we use $\Max(A)$ to denote the associated rigid analytic space.  For
$z \in \Max(A)$, we use $\gothm_z$ to denote the maximal ideal of $A$
at $z$ and put $\kappa_z = A /\gothm_z$.  For $M$ an $A$-module, we
write $M_z$ to mean $M/\gothm_zM$.

All Hom spaces consist of \emph{continuous} homomorphisms for the
relevant topologies (although we have no need to topologize the Hom
spaces themselves, except for when specified). We will sometimes use
the subscript ``cont" to emphasize this point.

We normalize the theory of slope filtrations so that a nonzero pure
submodule of an \'etale object has \emph{negative} slope.

\subsubsection*{Acknowledgements}
We thank Rebecca Bellovin, John Bergdall, Matthew Emerton, David
Geraghty, Ruochuan Liu, Zhiwei Yun, and Weizhe Zheng for helpful
discussions.  We thank Jo\"el Bella\"iche for suggesting that his
conjecture be attacked using the methods of this paper.  We thank the
anonymous referees for their careful reading of this paper and
numerous helpful suggestions.  Comments by Peter Schneider, Filippo
Nuccio, and Tadashi Ochiai led to improvements in the exposition of
this paper.  The first two authors would like to thank the University
of Chicago for its hospitality during their visits.  The third author
would like to thank Boston University and MIT for their hospitality
during his visits.  Kedlaya was supported by NSF (CAREER grant
DMS-0545904, grant DMS-1101343), DARPA (grant HR0011-09-1-0048), MIT
(NEC Fund, Cecil and Ida Green professorship), and UCSD (Stefan
E. Warschawski professorship).  Pottharst was supported by NSF
(MSPRF).

\section{Families of $(\varphi, \Gamma)$-modules}
\label{sec:family-phi-Gamma-modules}

In this section, we introduce the definition of a $(\varphi,
\Gamma)$-module over the Robba ring with coefficients in the affinoid
algebra $A$ over $\Qp$.  Our definition is algebraic in nature,
involving a finite projective module equipped with extra structures;
however, we show that it agrees with the definition of an
\emph{arithmetic family of $(\varphi,\Gamma)$-modules} over the Robba
ring in the sense of \cite{kiran-liu}, which involves a vector bundle
over a certain rigid analytic space.  We also recall the relationship
between $(\varphi, \Gamma)$-modules and families of Galois
representations following Berger-Colmez \cite{berger-colmez} and
Kedlaya-Liu \cite{kiran-liu}, as well as the formalism of Galois
cohomology for $(\varphi, \Gamma)$-modules following Herr and
especially Liu \cite{liu}.

\begin{convention}
Throughout this paper, all radii $r$ and $s$ are assumed to be
rational numbers, except possibly $r=\infty$.
\end{convention}

\subsection{Modules over relative discs and annuli}
\label{S:half-open}
In general, the module of global sections of a vector bundle over an open relative disc or a half-open relative annulus may not be finitely generated over the ring of analytic functions on the corresponding space.  We establish a criterion for the module of sections to be finitely generated or finite projective.

\begin{notation}
For $r>0$, define the \emph{$r$-Gauss norm} on $\Qp[T^{\pm1}]$ by the formula $\left|\sum_i a_iT^i\right|_r = \max_{i\in\ZZ} \{|a_i| \omega^{ir}\}$, where $a_i \in \Qp$.  This is a multiplicative nonarchimedean norm.

For $0 < s \leq r$, we write $\rmA^1[s,r]$ for the rigid analytic
annulus in the variable $T$ with radii $|T| \in [\omega^r, \omega^s]$;
its ring of analytic functions, denoted by $\calR^{[s,r]}$, is the
completion of $\Qp[T^{\pm1}]$ with respect to the norm
$|\cdot|_{[s,r]} = \max\{|\cdot|_r, |\cdot|_s\}$.  We also allow $r$
(but not $s$) to be $\infty$, in which case $\rmA^1[s,r]$ is
interpreted as the rigid analytic disc in the variable $T$ with radii
$|T| \leq \omega^s$; then $\calR^{[s,r]} = \calR^{[s, \infty]}$ is the
completion of $\Qp[T]$ with respect to $|\cdot|_s$.  We treat
$[s,\infty]$ as a closed interval when referring to it.

Let $\calR_A^{[s,r]}$ denote the ring of analytic functions on the relative annulus (or disc if $r=\infty$) $\Max(A) \times \rmA^1[s,r]$; its ring of analytic functions is $\calR_A^{[s,r]} = \calR^{[s,r]} \widehat \otimes_{\Qp} A$.
Put $\calR^r_A = \bigcap_{0 < s \leq r} \calR^{[s,r]}_A$ and 
$\calR_A = \bigcup_{0<r} \calR^r_A$.
\end{notation}

\begin{hypothesis}
In this subsection, we fix $r_0$ to be a positive rational number or $\infty$.
\end{hypothesis}

For any decreasing sequence of positive (rational) numbers $r_1, r_2, \dots$ tending to zero with $r_0 > r_1$, we have
\begin{itemize}
\item[(i)] $\calR_A^{[r_n,r_0]}$ and $\calR_A^{[r_n,\infty]}$ are noetherian Banach $A$-algebras, and
\item[(ii)] $\calR_A^{[r_{n+1},r_0]} \to \calR_A^{[r_n,r_0]}$ and $\calR_A^{[r_{n+1},\infty]} \to \calR_A^{[r_n,\infty]}$ are flat and have topologically dense images,
\end{itemize}
for any $n \geq 0$.  This implies that $\calR_A^{r_0}$ is a Fr\'echet-Stein algebra in the sense of \cite[Section~3]{schneider-teitelbaum}.  We recall the terminology therein as follows.

\begin{defn}
A \emph{coherent sheaf} over $\calR_A^{r_0}$ consists of one finite  module $M^{[s,r]}$ over each ring $\calR_A^{[s,r]}$ with $0<s\leq r \leq r_0$, together with isomorphisms $M^{[s',r']}\cong M^{[s,r]} \otimes_{\calR_A^{[s,r]}} \calR_A^{[s',r']}$ for all $0< s \leq s' \leq r'\leq r \leq r_0$ satisfying the obvious cocycle conditions.  Its \emph{module of global sections} is $M = \varprojlim_{s \to 0^+} M^{[s,r_0]}$.  An $\calR_A^{r_0}$-module is \emph{coadmissible} if it occurs as the module of global sections of some coherent sheaf.

A \emph{vector bundle} over $\calR_A^{r_0}$ is a coherent sheaf $(M^{[s,r]})$ where each $M^{[s,r]}$ is flat over $\calR_A^{[s,r]}$.
\end{defn}

We list a few basic facts from \cite[Section~3]{schneider-teitelbaum}. (Strictly speaking, \cite[Section~3]{schneider-teitelbaum} only treats the case $r=r_0$, but one can deduce the results for general $r$ using standard techniques from \cite{BGR}.)

\begin{lemma}
\label{L:schneider-teitelbaum}
Let $(M^{[s,r]})$ be a coherent sheaf over $\calR_A^{r_0}$ with module of global sections $M$.
\begin{itemize}
\item[(1)] For any $0 < s\leq r \leq r_0$, $M$ (resp. $M[\frac 1T]$ if $r_0 =\infty$ and $r\neq \infty$) is dense in $M^{[s,r]}$.
\item[(2)] We have $\bbR^i\varprojlim_{s\to 0^+} M^{[s,r_0]} = 0$ for $i\geq 1$.
\item[(3)] We have a natural isomorphism $M \otimes_{\calR_A^{r_0}} \calR_A^{[s,r]} \cong M^{[s,r]}$ for any $0<s \leq r\leq r_0$.
\item[(4)] The ring $\calR_A^{[s,r]}$ is flat over $\calR_A^{r_0}$ for any $0<s \leq r\leq r_0$.
\item[(5)] The kernel and cokernel of an arbitrary $\calR_A^{r_0}$-linear map between coadmissible $\calR_A^{r_0}$-modules are coadmissible.
\item[(6)] Any finitely generated ideal of $\calR_A^{r_0}$ is coadmissible, or more generally, any finitely generated $\calR_A^{r_0}$-submodule of a coadmissible module is coadmissible.
\item[(7)] Any finitely presented $\calR_A^{r_0}$-module is coadmissible.
\end{itemize}

\end{lemma}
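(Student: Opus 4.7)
The proof strategy is essentially bookkeeping: the case $r = r_0$ is exactly the theory of coadmissible modules over a Fréchet--Stein algebra developed in \cite[Section~3]{schneider-teitelbaum}, while the extension to arbitrary $r \leq r_0$ follows from standard affinoid base change. So I will organize the proof in three stages.

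First, I will verify that conditions (i) and (ii) really do exhibit $\calR_A^{r_0}$ as a Fréchet--Stein algebra in the sense of \cite{schneider-teitelbaum}. Noetherianity of each $\calR_A^{[s,r]}$ is standard since it is an affinoid algebra. The flatness in (ii) holds because $\rmA^1[s,r_0] \subseteq \rmA^1[s',r_0]$ (for $s' \leq s$) is an inclusion of affinoid subdomains, and density of the transition maps follows from the density of $\Qp[T^{\pm1}] \otimes_{\Qp} A$ (or of $\Qp[T] \otimes_{\Qp} A$ when $r_0 = \infty$) in both source and target.

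Second, for $r = r_0$ I will appeal to \cite[Section~3]{schneider-teitelbaum} directly: items (3) and (4) are among the defining properties of coadmissible modules over a Fréchet--Stein algebra; item (2) is the Mittag--Leffler vanishing of higher inverse limits provided by (ii); item (1) is the Schneider--Teitelbaum density statement for coadmissible modules; item (5) is their Corollary~3.4; and items (6)--(7) follow by combining (5) with the observation that $\calR_A^{r_0}$ is coadmissible over itself, so every finitely presented module arises as the cokernel of a map between coadmissible free modules and every finitely generated submodule of a coadmissible module is the image of such a map.

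Third, to extend to $r < r_0$, I will observe that $\rmA^1[s,r] \subseteq \rmA^1[s,r_0]$ is again an inclusion of affinoid subdomains, so that the restriction map $\calR_A^{[s,r_0]} \to \calR_A^{[s,r]}$ is flat; hence applying $- \otimes_{\calR_A^{[s,r_0]}} \calR_A^{[s,r]}$ transfers each of the $r = r_0$ conclusions to the desired range of intervals. The main obstacle --- really the only subtlety --- is item (1) in the case $r_0 = \infty$ and $r \neq \infty$: here the affinoid subdomain $\rmA^1[s,r] \subseteq \rmA^1[s,\infty]$ is cut out by $|T| \geq \omega^r$, so its coordinate ring is obtained from $\calR_A^{[s,\infty]}$ only after inverting $T$ (and then completing). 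Consequently the image of $M$ in $M^{[s,r]}$ is not dense in general; only the image of $M[1/T]$ is, which is precisely what the statement asserts. Once this exceptional case is recorded, everything else is a routine application of Fréchet--Stein theory and affinoid flat base change in the sense of \cite{BGR}.
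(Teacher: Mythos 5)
Your proposal is correct and follows essentially the same route as the paper: the paper states the lemma without proof, simply citing \cite[Section~3]{schneider-teitelbaum} for $r = r_0$ and remarking that ``one can deduce the results for general $r$ using standard techniques from \cite{BGR}.'' You have carried out this reduction explicitly, including the one genuine subtlety (the passage to $M[1/T]$ in item (1) when $r_0 = \infty$ and $r < \infty$), which matches the intended argument.
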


\begin{cor}
\label{C:R_A flat over A}
For any element $t \in \calR_{\Qp}^{r_0}$, the ring $\calR_A^{r_0}/t$
is flat over $A$.  In particular, $\calR^{r_0}_A$, $\calR_A/t$, and
$\calR_A$ are flat over $A$.
\end{cor}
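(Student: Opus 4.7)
The plan is to first show that $\calR_A^{[s,r_0]}/t$ is finite free (hence flat) over $A$ for each $0 < s \le r_0$, and then pass to the Fr\'echet--Stein limit using Lemma~\ref{L:schneider-teitelbaum}. The main technical point will be the pointwise identification $\calR_A^{[s,r_0]}/t \cong (\calR_{\QQ_p}^{[s,r_0]}/t) \otimes_{\QQ_p} A$, which requires exactness of the functor $(-) \whotimes_{\QQ_p} A$ applied to a strict short exact sequence whose cokernel is finite-dimensional over $\QQ_p$. Once the main claim is established, the ``in particular'' clauses follow at once: $\calR_A^{r_0}$ is the case $t = 0$, while $\calR_A = \bigcup_{r > 0} \calR_A^r$ and $\calR_A/t = \bigcup_{0 < r \le r_0} \calR_A^r/t$ are filtered unions of flat $A$-modules, hence flat.

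For the pointwise statement, the algebra $\calR_A^{[s,r_0]}$ is flat over $A$ as a relative affinoid algebra (admitting a natural orthonormal Schauder basis in powers of $T$ over $A$), settling the case $t = 0$. If $t \ne 0$, then as a nonzero element of the one-dimensional noetherian affinoid $\QQ_p$-algebra $\calR_{\QQ_p}^{[s,r_0]}$, it cuts out a zero-dimensional (hence Artinian, hence finite-dimensional) quotient $\calR_{\QQ_p}^{[s,r_0]}/t$ of some $\QQ_p$-dimension $d$. The short exact sequence
\[
0 \to \calR_{\QQ_p}^{[s,r_0]} \xrightarrow{\,t\,} \calR_{\QQ_p}^{[s,r_0]} \to \calR_{\QQ_p}^{[s,r_0]}/t \to 0
\]
is therefore strict as a sequence of Banach $\QQ_p$-spaces, and since $\calR_{\QQ_p}^{[s,r_0]}$ admits an orthonormal Schauder basis while the cokernel is finite-dimensional, one may split it as $\calR_{\QQ_p}^{[s,r_0]} = W \oplus t\calR_{\QQ_p}^{[s,r_0]}$ in the Banach category with $W \cong \calR_{\QQ_p}^{[s,r_0]}/t$. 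Applying $(-)\whotimes_{\QQ_p} A$ preserves this splitting and yields $\calR_A^{[s,r_0]}/t \cong (\calR_{\QQ_p}^{[s,r_0]}/t) \otimes_{\QQ_p} A \cong A^d$.

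To pass to the Fr\'echet--Stein limit, pick a decreasing sequence $r_n \searrow 0$ with $r_1 \le r_0$. The family $(\calR_A^{[r_n,r_0]}/t)_n$ forms a coherent sheaf over $\calR_A^{r_0}$, arising as the cokernel of multiplication by $t$ on the coherent sheaf $(\calR_A^{[r_n,r_0]})_n$ via Lemma~\ref{L:schneider-teitelbaum}(5), with module of global sections equal to $\calR_A^{r_0}/t$. Since $A$ is noetherian, flatness of $\calR_A^{r_0}/t$ over $A$ reduces to exactness of $\calR_A^{r_0}/t \otimes_A (-)$ on short exact sequences $0 \to N' \to N \to N'' \to 0$ of finitely generated $A$-modules. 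For each such $N$, a finite presentation $A^\ell \to A^m \to N \to 0$ together with Lemma~\ref{L:schneider-teitelbaum}(5) shows that $\calR_A^{r_0}/t \otimes_A N$ is coadmissible with associated coherent sheaf $(\calR_A^{[r_n,r_0]}/t \otimes_A N)_n$, and similarly for $N'$ and $N''$. At each level $n$ the tensored sequence is exact by the pointwise flatness established above, and passing to global sections using the vanishing of $\bbR^1 \varprojlim$ (Lemma~\ref{L:schneider-teitelbaum}(2)) yields the required exact sequence for $\calR_A^{r_0}/t$.
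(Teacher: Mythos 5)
Your proposal is correct and follows essentially the same route as the paper: establish flatness of each $\calR_A^{[s,r_0]}/t$ over $A$ (finite freeness for $t\neq 0$, a Schauder-basis argument for $t=0$), then use coadmissibility together with the vanishing of $\bbR^1\varprojlim$ from Lemma~\ref{L:schneider-teitelbaum}(2) to descend exactness to the Fr\'echet--Stein limit. The only real divergence is one of detail: the paper simply asserts that $\calR_A^{[s,r]}/t$ is finite free over $A$ when $t\neq 0$, whereas you supply the justification (Artinian quotient of a one-dimensional noetherian algebra, strict splitting of the Banach sequence, and stability of the splitting under $\whotimes_{\Qp} A$), and the paper tests injectivity against a chosen injection of finite $A$-modules while you phrase the same reduction in terms of short exact sequences.
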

\begin{proof}
Granted the claim for $\calR_A^{r_0}/t$, the claim for $\calR_A^{r_0}$
follows by taking $t=0$, and the claims with $\calR_A$ in place of
$\calR_A^{r_0}$ follow by taking the direct limit on $r_0$.  So we
prove the first statement.

Let $N \to N'$ be an injective morphism of finite $A$-modules.  Since $A$ is noetherian, $N \otimes_A \calR_A^{r_0}/t$ and $N'\otimes_A \calR_A^{r_0}/t$ are finitely presented and hence coadmissible by Lemma~\ref{L:schneider-teitelbaum}(7).  To check the injectivity of $N \otimes_A \calR_A^{r_0}/t \to N'\otimes_A \calR_A^{r_0}/t$, by Lemma~\ref{L:schneider-teitelbaum}(2), it suffices to check the injectivity of $N \otimes_A \calR_A^{[s,r]}/t \to N'\otimes_A \calR_A^{[s,r]}/t$ for any $0<s<r\leq r_0$.  When $t \neq 0$, 
$\calR_A^{[s,r]}/t$ is in fact finite and free over $A$, so the injectivity is obvious.  When $t=0$, we take a Schauder basis of $\calR_{\Qp}^{[s,r]}$ over $\Qp$ to identify $\calR_{\Qp}^{[s,r]}$ with the completed direct sum $\widehat \oplus_{i\in I}\Qp e_i$, where $(e_i)_{i \in I}$ form a potentially orthonormal basis.  Under this identification, $N\otimes_A \calR_A^{[s,r]} \cong \widehat \oplus_{i\in I}N$ and $N' \otimes_A \calR_A^{[s,r]} \cong \widehat \oplus_{i\in I}N'$.  Since $N \to N'$ is injective, so is $\widehat \oplus_{i\in I}N \to \widehat \oplus_{i\in I}N'$.  The corollary follows.
\end{proof}

\begin{lemma}
\label{L:fp+bundle=>finite-proj}
For a vector bundle $(M^{[s,r]})$ over $\calR_A^{r_0}$, its module of global sections $M$ is a finite projective $\calR_A^{r_0}$-module if and only if $M$ is finitely presented.
\end{lemma}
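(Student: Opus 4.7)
The plan is to prove the nontrivial direction by lifting $\id_M$ along the surjection from a finite presentation $F \twoheadrightarrow M$, using the flatness of each $M^{[s,r]}$ to obtain local splittings and the Stein-type vanishing of Lemma~\ref{L:schneider-teitelbaum}(2) to propagate them to a global splitting.

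First, I would choose a finite presentation $0 \to K \to F \to M \to 0$ with $F = (\calR_A^{r_0})^n$ free. The submodule $K$ is finitely generated, hence coadmissible by Lemma~\ref{L:schneider-teitelbaum}(6). Tensoring with the flat extension $\calR_A^{[s,r]}$ (Lemma~\ref{L:schneider-teitelbaum}(3),(4)) localizes this to an exact sequence $0 \to K^{[s,r]} \to F^{[s,r]} \to M^{[s,r]} \to 0$. By hypothesis, $M^{[s,r]}$ is flat over the noetherian ring $\calR_A^{[s,r]}$ and finitely generated, hence finite projective; thus the local sequence splits, and applying $\Hom_{\calR_A^{[s,r]}}(M^{[s,r]}, -)$ yields a short exact sequence of $\calR_A^{[s,r]}$-modules.

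Next, I would verify that the three systems $\Hom(M^{[s,r]}, K^{[s,r]})$, $\Hom(M^{[s,r]}, F^{[s,r]})$, and $\Hom(M^{[s,r]}, M^{[s,r]})$ assemble into coherent sheaves over $\calR_A^{r_0}$; this follows because $M^{[s,r]}$ is finitely presented over a noetherian ring, so the formation of Hom commutes with the flat base changes $\calR_A^{[s,r]} \to \calR_A^{[s',r']}$. Passing to the inverse limit as $s \to 0^+$ and invoking Lemma~\ref{L:schneider-teitelbaum}(2), which gives the vanishing of $\bbR^1 \varprojlim$ for any coherent sheaf, the limit sequence remains exact. Identifying each inverse limit with the corresponding Hom group over $\calR_A^{r_0}$ (immediate since each target is coadmissible and hence recovered from its local pieces), one obtains
$$0 \to \Hom_{\calR_A^{r_0}}(M,K) \to \Hom_{\calR_A^{r_0}}(M,F) \to \Hom_{\calR_A^{r_0}}(M,M) \to 0.$$

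Finally, lifting $\id_M \in \Hom_{\calR_A^{r_0}}(M,M)$ to a map $M \to F$ yields a splitting of $F \to M$, exhibiting $M$ as a direct summand of the free module $F$ and hence as finite projective; the converse implication is trivial since every finite projective module is finitely presented. The main obstacle I anticipate is the bookkeeping required to confirm that the Hom systems really form coherent sheaves (i.e., that Hom commutes with the relevant flat base changes) and that their inverse limits coincide with the global Hom groups; once these compatibilities are in place, the vanishing in Lemma~\ref{L:schneider-teitelbaum}(2) reduces the construction of a global splitting to a formal inverse-limit argument.
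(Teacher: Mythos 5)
Your argument is correct, but it takes a genuinely different route from the paper's. The paper invokes the criterion that finite projective is equivalent to finitely presented plus flat, and then tests flatness directly: it checks that $I \otimes_{\calR_A^{r_0}} M \to M$ is injective for every finitely generated ideal $I$, realizing $I \otimes M$ as a coadmissible module and reducing the injectivity to the local pieces via Lemma~\ref{L:schneider-teitelbaum}(2) and the flatness of each $M^{[s,r_0]}$. You instead bypass the flatness criterion and directly exhibit $M$ as a direct summand of a finite free module: starting from a presentation $0 \to K \to F \to M \to 0$, you use the local projectivity of each $M^{[s,r]}$ to make $\Hom_{\calR_A^{[s,r]}}(M^{[s,r]},-)$ exact, check that the resulting Hom systems are coherent sheaves (thanks to the finite presentation of $M^{[s,r]}$ over the noetherian $\calR_A^{[s,r]}$ and flat base change), and pass to the limit with the same $\bbR^1\varprojlim$ vanishing to lift $\id_M$ to a global section of $F \to M$. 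Both arguments lean on exactly the same two pillars --- local finite projectivity and the Fr\'echet--Stein $\bbR^1\varprojlim = 0$ --- but deploy them differently: the paper's version is shorter and avoids the bookkeeping you flagged about the Hom sheaves (coherence, compatibility of base changes, and the identification of $\varprojlim_s \Hom_{\calR_A^{[s,r_0]}}(M^{[s,r_0]},N^{[s,r_0]})$ with $\Hom_{\calR_A^{r_0}}(M,N)$), while your version is arguably more transparent about \emph{why} the conclusion holds, since it produces the splitting explicitly rather than citing an abstract characterization of projectivity.
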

\begin{proof}
A module over a commutative ring is finite projective if and only if it is finitely presented and flat \cite[Corollary~of~Theorem~7.12]{matsumura}.  It thus suffices to assume that $M$ is finitely presented and prove that it is flat.  We need to prove that, for any finitely generated ideal $I$ of $\calR_A^{r_0}$, the natural map $I \otimes_{\calR_A^{r_0}} M \to M$ is injective.

We write $M$ as the cokernel of an $\calR_A^{r_0}$-linear homomorphism $f: (\calR_A^{r_0})^{\oplus m} \to (\calR_A^{r_0})^{\oplus n}$.  We can then realize $I \otimes_{\calR_A^{r_0}} M$ as  the cokernel of the $\calR_A^{r_0}$-linear map $I \otimes f: I^{\oplus m} \to I^{\oplus n}$.  By Lemma~\ref{L:schneider-teitelbaum}(6), $I$ is coadmissible, and hence so is $I \otimes_{\calR_A^{r_0}} M$ by Lemma~\ref{L:schneider-teitelbaum}(5).   By Lemma~\ref{L:schneider-teitelbaum}(2), to check the injectivity of the natural map $I \otimes_{\calR_A^{r_0}} M \to M$, it suffices to check the injectivity of
\[
(I \otimes_{\calR_A^{r_0}} M) \otimes_{\calR_A^{r_0}} \calR_A^{[s,r_0]} \to M\otimes_{\calR_A^{r_0}} \calR_A^{[s,r_0]}
\]
for all $0<s \leq r_0$.
By the flatness in Lemma~\ref{L:schneider-teitelbaum}(3)(4), the map above becomes
\[
(I\cdot \calR_A^{[s,r_0]}) \otimes_{\calR_A^{[s,r_0]}} M^{[s,r_0]} \to M^{[s,r_0]}.
\]
It is injective because $M^{[s,r_0]}$ is a flat $\calR_A^{[s,r_0]}$-module.  This finishes the proof.
\end{proof}

\begin{cor}
\label{C:pointwise criterion for finite projectivity}
Assume that $A$ is reduced.
Let $M$ be a finitely presented $\calR_A^{r_0}$-module such that for any $z \in \Max(A) \times \rmA^1(0, r_0]$, $M/\gothm_zM$ has the same dimension.  Then $M$ is a finite projective $\calR_A^{r_0}$-module.
\end{cor}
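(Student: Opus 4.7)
The plan is to apply Lemma~\ref{L:fp+bundle=>finite-proj}: it suffices to show that each base change $M^{[s,r]} = M \otimes_{\calR_A^{r_0}} \calR_A^{[s,r]}$ is flat (equivalently, given finite presentation, projective) over $\calR_A^{[s,r]}$. Three ingredients go into this. First, finite presentation of $M^{[s,r]}$ is inherited from that of $M$ via the base change. Second, the ring $\calR_A^{[s,r]} \cong A \whotimes_{\Qp} \calR^{[s,r]}$ is noetherian and Jacobson as a $\Qp$-affinoid algebra, and is moreover reduced: both $A$ and the principal ideal domain $\calR^{[s,r]}$ are reduced, and the completed tensor product of reduced affinoid algebras over the perfect field $\Qp$ is reduced (cf.\ \cite{BGR}). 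Third, the hypothesis asserts constancy of the fiber dimension $\dim_{\kappa_z} M^{[s,r]}/\gothm_z M^{[s,r]}$ as $\gothm_z$ ranges over $\Max \calR_A^{[s,r]} = \Max(A) \times \rmA^1[s,r] \subseteq \Max(A) \times \rmA^1(0, r_0]$.

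Given these, I would invoke the classical criterion that a finitely generated module over a reduced noetherian ring is projective as soon as its fiber dimension is constant on all of $\Spec$. To promote ``constant on $\Max$'' to ``constant on $\Spec$,'' I would combine upper semicontinuity of the rank function with the Jacobson property: at any minimal prime $\gothp$, any maximal ideal $\gothm \supset \gothp$ gives $\rank(\gothp) \le d$, while if $\rank(\gothp) < d$ then the open locus $\{\rank < d\}$ would meet the irreducible component cut out by $\gothp$ in a nonempty open subset, which by the Jacobson property contains some closed point, contradicting the hypothesis. Thus $\rank \equiv d$ on $\Spec\calR_A^{[s,r]}$, so $M^{[s,r]}$ is projective, finishing the proof. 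The main input deserving care is the reducedness of $\calR_A^{[s,r]}$, which is genuinely needed (constant fiber dimension alone does not force projectivity over a non-reduced ring, as shown by $k = k[x]/(x)$ over $k[x]/(x^2)$), but follows from standard stability properties of reduced affinoids over perfect fields.
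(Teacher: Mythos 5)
Your proof is correct and takes essentially the same approach as the paper: reduce to flatness of each $M^{[s,r]}$ over the noetherian ring $\calR_A^{[s,r]}$ and then invoke Lemma~\ref{L:fp+bundle=>finite-proj}. The paper packages the ``constant fiber dimension over a noetherian ring with trivial Jacobson radical implies flat'' step as Lemma~\ref{L:constant dimension function implies flat}(1), proved directly on maximal ideals, whereas you derive it from the classical constant-rank-on-$\Spec$ criterion for reduced noetherian rings together with an upper-semicontinuity-plus-Jacobson density argument to pass from $\Max$ to $\Spec$; this is a minor variation of the same idea.
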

\begin{proof}
For any $0<s\leq r\leq r_0$, $\calR_A^{[s,r]}$ is noetherian, and hence $M \otimes_{\calR_A^{r_0}}\calR_A^{[s,r]}$ is finite and flat by simple commutative algebra (see Lemma~\ref{L:constant dimension function implies flat}(1) below).  By Lemma~\ref{L:fp+bundle=>finite-proj}, $M$ is then a finite projective $\calR_A^{r_0}$-module.
\end{proof}

\begin{lemma}
\label{L:constant dimension function implies flat}
\begin{enumerate}
\item[(1)]
Let $B$ be a noetherian ring whose Jacobson radical is zero, or
equivalently, a reduced noetherian ring whose maximal ideals form a
dense subset of $\Spec(B)$. Let $M$ be a finitely generated
$B$-module.  If $z \mapsto \dim_{\kappa_z} M/\gothm_zM$ is a locally
constant function on the set of maximal ideals $z$ of $B$, then $M$ is
flat.
\item[(2)]
Let $A$ and $B$ be $\Qp$-affinoid algebras with $B$ \emph{reduced},
and $N$ a coherent sheaf on $\Max(A) \times \Max(B)$.  Assume that for
every $y \in \Max(B)$, $N/\gothm_yN$ is a finite projective $A
\otimes_{\Qp} \kappa_y$-module of rank depending only on the connected
component of $y$ in $\Spec(B)$.  Then $N$ is a finite flat $A \widehat
\otimes_{\Qp} B$-module.
\end{enumerate}
\end{lemma}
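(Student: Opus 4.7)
My plan is to apply the Fitting ideal criterion: a finitely presented module $M$ over a commutative ring $R$ is finite locally free of rank $r$ if and only if $\Fitt_r(M) = R$ and $\Fitt_{r-1}(M) = 0$.

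For part (1), reduce to the case of a single constant rank $r$ by passing to one connected component of $\Spec(B)$ at a time. The hypothesis gives $\dim_{\kappa(\gothm)}(M \otimes_B \kappa(\gothm)) = r$ for every maximal ideal $\gothm$ of $B$. Since Fitting ideals commute with base change, this yields $\Fitt_r(M) + \gothm = B$ and $\Fitt_{r-1}(M) \subseteq \gothm$ for every maximal $\gothm$. The first forces $\Fitt_r(M) = B$ (since any proper ideal is contained in some maximal ideal by Zorn), while the second gives $\Fitt_{r-1}(M) \subseteq \bigcap_\gothm \gothm = 0$ by the vanishing Jacobson radical assumption. Hence $M$ is finite locally free of rank $r$, in particular finite flat.

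For part (2), similarly reduce to the case where $r(y) = r$ is constant on $\Max(B)$. The finite projectivity of $N/\gothm_y N$ over $A \otimes_{\Qp} \kappa_y$, together with base change for Fitting ideals, yields $\Fitt_r(N) + \gothm_y(A \whotimes_{\Qp} B) = A \whotimes_{\Qp} B$ and $\Fitt_{r-1}(N) \subseteq \gothm_y(A \whotimes_{\Qp} B)$ for every $y \in \Max(B)$. Every maximal ideal of $A \whotimes_{\Qp} B$ lies over some $y \in \Max(B)$ (as $\Max(A \whotimes_{\Qp} B) = \Max(A) \times \Max(B)$) and hence contains $\gothm_y(A \whotimes_{\Qp} B)$, so the first condition forces $\Fitt_r(N) = A \whotimes_{\Qp} B$, and the second gives
\[
\Fitt_{r-1}(N) \subseteq \bigcap_{y \in \Max(B)} \gothm_y(A \whotimes_{\Qp} B).
\]

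The main obstacle is to show that this intersection is zero. The key observation is that $A$, being a $\Qp$-affinoid algebra, admits an orthonormalizable Banach basis $\{e_i\}$ over $\Qp$ (by Serre's theorem on $p$-adic Banach spaces), so that $A \whotimes_{\Qp} B$ decomposes as a topologically free $B$-Banach module $\widehat{\bigoplus}_i B \cdot e_i$. An element $\sum_i b_i e_i$ lies in $\gothm_y(A \whotimes_{\Qp} B)$ if and only if every coefficient $b_i$ lies in $\gothm_y$, so taking the intersection over $y$ forces each $b_i$ into $\bigcap_{y \in \Max(B)} \gothm_y = 0$ by the reducedness of $B$. This gives $\Fitt_{r-1}(N) = 0$, and the Fitting criterion concludes that $N$ is finite locally free of rank $r$, and therefore finite flat over $A \whotimes_{\Qp} B$.
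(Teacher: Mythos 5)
Your proof is correct, and it takes a genuinely different route from the paper's.  The paper treats part (1) by a bare-hands argument: it picks elements $m_1,\dots,m_r$ reducing to a basis mod $\gothm_z$, shows the induced map $B^r \to M$ is surjective after localizing (Nakayama plus closedness of support of the cokernel), and then shows the kernel vanishes because its entries lie in every $\gothm_{z'}$, hence in the Jacobson radical.  For part (2) the paper invokes the local criterion of flatness from EGA to reduce to $B$-flatness, which it establishes by a somewhat delicate induction over a filtration of $A$ by radical ideals, applying (1) to the reduced quotients $A/I_i$ and then propagating exactness through a base change argument.  Your approach instead treats both parts with the same tool, the Fitting ideal criterion ($\Fitt_r = R$ and $\Fitt_{r-1} = 0$ characterize finite locally free of rank $r$): part (1) becomes a two-line observation once one notes $\Fitt_{r-1}(M)$ sits in the Jacobson radical, and part (2) avoids the local flatness criterion and the filtration of $A$ entirely.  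The one genuinely new ingredient you need is the identification $A \whotimes_{\Qp} B \cong \widehat\bigoplus_i B\,e_i$ via a potentially orthonormal $\Qp$-basis of $A$, which lets you read off $\Fitt_{r-1}(N) \subseteq \bigcap_y \gothm_y(A\whotimes_{\Qp}B) \subseteq \widehat\bigoplus_i\bigl(\bigcap_y\gothm_y\bigr)e_i = 0$ coordinate by coordinate (using that $B$, being reduced affinoid, hence reduced Jacobson, has zero Jacobson radical).  Note the paper uses exactly this orthonormal-basis technique a few lines earlier, in Corollary~\ref{C:R_A flat over A}, so you are not importing anything foreign.  What your approach buys is uniformity and brevity; what the paper's buys is that (1) is proved with no machinery at all, and (2) is kept inside the framework of local flatness criteria that the authors rely on elsewhere.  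One small remark common to both proofs: the reduction to a globally constant rank (you pass to connected components of $\Spec B$, the paper localizes) implicitly uses that $B$ is Jacobson, not merely that its Jacobson radical vanishes — but this is harmless here since affinoid algebras are Jacobson.
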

\begin{proof}
(1) Suppose $m_1,\ldots,m_r \in M$ reduce mod $\gothm_z$ to a basis,
and let $f: B^r \to M$ denote the map associated to the $m_i$.  It
suffices to show that $f$ is an isomorphism in a neighborhood of $z$
in $\Spec(B)$.  Consider the kernel $K$ and cokernel $K'$ of $f$.  By
Nakayama's lemma, the $m_i$ generate $M_{\gothm_z}$, so $K'_{\gothm_z}
= 0$.  But the support of $K'$ in $\Spec(B)$ is a closed subset, so in
a neighborhood of $z$ we have that $f$ is surjective.  Replacing
$\Spec(B)$ by a suitable affine neighborhood of $z$, we assume $f$ to
be surjective.  On the other hand, for all closed points $z' \in
\Spec(B)$ the surjectivity of $f_{z'}: (B/\gothm_{z'})^r \to
M/\gothm_{z'}M$ implies, by comparing $\kappa_{z'}$-dimensions (and
using the local constancy hypothesis), that $f_{z'}$ is also
injective.  Thus elements of $K$ have all entries in $B^r$ belonging
to every $\gothm_{z'}$, and hence to the Jacobson radical of $B$,
which was assumed to be zero.

(2) Since $N$ is finite over $A \widehat\otimes_{\Qp} B$, it is flat
if and only if for all $z \in \Max(A \widehat\otimes_{\Qp} B)$ one has
$N_{\gothm_z}$ flat over $(A \widehat\otimes_{\Qp} B)_{\gothm_z}$.
Given such $z$, let $y \in \Max(B)$ be such that $\gothm_y = \gothm_z
\cap B$.  Then since $B \to A \widehat\otimes_{\Qp} B$ is flat,
$B_{\gothm_y} \to (A \widehat\otimes_{\Qp} B)_{\gothm_y} \to (A
\widehat\otimes_{\Qp} B)_{\gothm_z}$ is flat, so \cite[0.10.2.5]{ega3}
shows that $N_{\gothm_z}$ is flat over $(A \widehat\otimes_{\Qp}
B)_{\gothm_z}$ if and only if both $N_{\gothm_z}/\gothm_yN_{\gothm_z}$
is flat over $(A \widehat\otimes_{\Qp} B)_{\gothm_z}/\gothm_y(A
\widehat\otimes_{\Qp} B)_{\gothm_z}$ and $N_{\gothm_z}$ is flat over
$B_{\gothm_y}$.  The first condition is immediate, because by
hypothesis $N/\gothm_yN$ is flat over $(A \widehat\otimes_{\Qp}
B)/\gothm_y(A \widehat\otimes_{\Qp} B)$, and then we may localize at
$\gothm_z$.  We next show that $N$ is flat over $B$, so that
$N_{\gothm_y}$ is flat over $B_{\gothm_y}$, from which it follows
easily that $N_{\gothm_z}$ is also flat over $B_{\gothm_y}$.

One can find a decreasing sequence of ideals $J_i$ of $A$ such that
$J_i/J_{i+1} \approx A/I_i$ for some radical ideal $I_i \subset A$,
where, say, $J_0 = A$ and $J_r = (0)$.  We will show by induction on
$i$ that each $J_i N / J_{i+1} N$ is flat over $B$ and, for all $y \in
\Max(B)$, $(J_{i+1} N) \otimes_B \kappa_y \cong J_{i+1}(N \otimes_B
\kappa_y)$; it follows from this that $N$ is flat over $B$.  There is
nothing to prove when $i=0$. Suppose that we have proved the claim for
$i-1$.  Applying $\otimes_B \kappa_y$ to the exact sequence $0 \to
J_{i+1} N \to J_i N \to (J_i N / J_{i+1} N) \to 0$ and then using the
inductive hypothesis gives
\begin{align*}
(J_i N / J_{i+1} N) \otimes_B \kappa_y
&\cong
(J_i N) \otimes_B \kappa_y / \Image((J_{i+1} N) \otimes_B \kappa_y) \\
&\cong
J_i(N \otimes_B \kappa_y) / J_{i+1} (N \otimes_B \kappa_y)
\cong (A/I_i) \otimes_A (N \otimes_B \kappa_y).
\end{align*}
The right hand side is a finite projective module over $A/I_i$ of constant
rank; because $A/I_i$ is reduced, the claim (1) then gives that $(J_i
N / J_{i+1} N)$ is flat over $(A/I_i) \widehat\otimes_{\Qp} B$ and
hence flat over $B$.  This moreover implies that the application of
$\otimes_B k_y$ above was exact and hence $(J_{i+1} N) \otimes_B
\kappa_y \cong J_{i+1}(N \otimes_B \kappa_y)$.
\end{proof}

\begin{defn}
\label{D:uniform-finite-presentation}
For $R$ a ring and $M$ an $R$-module, we say that $M$ is $n$-finitely generated (resp. \emph{$(m,n)$-finitely presented}) if there exists an $R$-linear exact sequence $R^{\oplus n} \to M \to 0$ (resp. $R^{\oplus m} \to R^{\oplus n} \to M \to 0$).

By an \emph{admissible cover} of $(0, r_0]$, we mean a cover of $(0, r_0]$ by closed intervals $\{[s_i, r_i]\}_{i \in \ZZ_{\geq 0}}$ with nonempty interiors, admitting a locally finite refinement.  Given such a cover of $(0,r_0]$, the system of rigid annuli $\rmA^1[s_i,r_i]$ of respective valuations $[s_i,r_i]$ gives an admissible affinoid cover of the rigid annulus $\rmA^1(0,r_0]$.  Thus, to define a coherent sheaf over $\calR_A^{r_0}$ is equivalent to specify $M^{[s_i,r_i]}$ for each $i$ together with the obvious compatibility data and conditions.

Let $(M^{[s,r]})$ be a coherent sheaf over $\calR_A^{r_0}$.  We say that the sheaf is \emph{uniformly finitely generated} if there exist $n \in \NN$ and an admissible cover $\{[s_i, r_i]\}_{i \in \ZZ_{\geq 0}}$ of $(0,r_0]$ such that each $M^{[s_i,r_i]}$ is $n$-finitely generated.
We say that the sheaf is \emph{uniformly finitely presented} if there exist $m,n \in \NN$ and an admissible cover $\{[s_i, r_i]\}_{i \in \ZZ_{\geq 0}}$ of $(0,r_0]$ such that each $M^{[s_i,r_i]}$ is $(m,n)$-finitely presented.
\end{defn}

Keeping track of the number of generators, a standard argument in homological algebra proves the following.

\begin{lemma}
\label{L:always-finitely presented}
Let $0 \to (M'^{[s,r]}) \to (M^{[s,r]}) \to (M''^{[s,r]}) \to 0$ be a short exact sequence  of coherent sheaves over $\calR_A^{r_0}$.
\begin{itemize}
\item If $(M'^{[s,r]})$ and one of the other coherent sheaves are uniformly finitely presented, so is the third one.
\item If both $(M^{[s,r]})$ and $(M''^{[s,r]})$ are uniformly finitely presented, then $(M'^{[s,r]})$ is uniformly finitely generated.
\end{itemize}
\end{lemma}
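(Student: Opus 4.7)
The plan is to refine the given admissible covers to a common admissible cover $\{[s_i,r_i]\}_{i\ge 0}$ of $(0,r_0]$, on which the sheaves that are hypothesized to be uniformly finitely presented admit presentations with bounds independent of $i$. By the definition of exactness in the category of coherent sheaves, the given short exact sequence restricts to a short exact sequence of modules over the noetherian ring $R := \calR_A^{[s_i,r_i]}$ for every $i$. Thus each assertion reduces to a standard bookkeeping calculation for a short exact sequence of finitely generated or presented modules over $R$, performed uniformly in $i$.

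For the implication ``$M'$, $M$ uniformly finitely presented $\Rightarrow$ $M''$ uniformly finitely presented'': if the uniform bounds are $(m,n)$ for $M^{[s_i,r_i]}$ and $(m',n')$ for $M'^{[s_i,r_i]}$, then lifting $n'$ generators of $M'^{[s_i,r_i]}$ to $R^n$ along the given surjection $R^n \twoheadrightarrow M^{[s_i,r_i]}$ yields $n'$ elements of the kernel of $R^n \twoheadrightarrow M''^{[s_i,r_i]}$, which together with the $m$ given relations of $M^{[s_i,r_i]}$ generate this kernel. Hence $M''^{[s_i,r_i]}$ is $(m+n', n)$-finitely presented. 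For the implication ``$M'$, $M''$ uniformly finitely presented $\Rightarrow$ $M$ uniformly finitely presented'', apply the horseshoe lemma: lift a surjection $R^{n''} \twoheadrightarrow M''^{[s_i,r_i]}$ to a map $R^{n''} \to M^{[s_i,r_i]}$ along $M^{[s_i,r_i]} \twoheadrightarrow M''^{[s_i,r_i]}$, and combine with $R^{n'} \twoheadrightarrow M'^{[s_i,r_i]} \hookrightarrow M^{[s_i,r_i]}$ to form a surjection $R^{n'+n''} \twoheadrightarrow M^{[s_i,r_i]}$. A standard chase shows that its kernel is generated by the $m'$ relations of $M'^{[s_i,r_i]}$ in $R^{n'}$ together with fixed lifts to $R^{n'+n''}$ of the $m''$ relations of $M''^{[s_i,r_i]}$ in $R^{n''}$, so $M^{[s_i,r_i]}$ is $(m'+m'', n'+n'')$-finitely presented.

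For the remaining implication ``$M$, $M''$ uniformly finitely presented $\Rightarrow$ $M'$ uniformly finitely generated'', apply Schanuel's lemma to the two surjections $R^n \twoheadrightarrow M^{[s_i,r_i]} \twoheadrightarrow M''^{[s_i,r_i]}$ and $R^{n''} \twoheadrightarrow M''^{[s_i,r_i]}$ to obtain $\ker(R^n \to M''^{[s_i,r_i]}) \oplus R^{n''} \cong R^n \oplus \ker(R^{n''} \to M''^{[s_i,r_i]})$; the right-hand side is $(n + m'')$-finitely generated, hence so is its summand $\ker(R^n \to M''^{[s_i,r_i]})$, and $M'^{[s_i,r_i]}$ is its quotient by $\ker(R^n \to M^{[s_i,r_i]})$, so is $(n+m'')$-finitely generated as well.

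Since the final bounds in each case depend only on the input uniform data and not on $i$, the admissible cover $\{[s_i,r_i]\}$ witnesses the desired uniform conclusion. There is no real obstacle, only bookkeeping; the one conceptual point to flag is that the last case cannot be strengthened to uniform finite presentation of $M'$, because the hypotheses give no control on the ``relations among relations'' needed to bound the module of relations of $M'$.
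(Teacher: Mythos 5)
Your proof is correct and is essentially the argument the paper has in mind; the paper simply states "Keeping track of the number of generators, a standard argument in homological algebra proves the following" without writing out the details. Your writeup correctly reduces to a common refinement of the admissible covers, restricts the exact sequence to each noetherian $\calR_A^{[s_i,r_i]}$, and then carries out the standard bookkeeping (direct lift for $M''$, horseshoe lemma for $M$, Schanuel's lemma for $M'$) with bounds independent of $i$; the closing remark that the last case only gives uniform finite generation, not presentation, is also the right thing to observe.
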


\begin{lemma}
\label{L:finite-generation-local}
Let $(M^{[s,r]})$ be a coherent sheaf over $\calR_A^{r_0}$ with module of global sections $M$.  Assume that there exist elements $\bbf_1, \dots, \bbf_n \in M$ and an admissible cover $\{[s_i, r_i]\}_{i \in \ZZ_{\geq 0}}$ of $(0, r_0]$ such that $\bbf_1, \dots, \bbf_n$ generate each $M^{[s_i, r_i]}$ as an $\calR_A^{[s_i,r_i]}$-module.  Then $\bbf_1, \dots, \bbf_n$ generate $M$ as an $\calR_A^{r_0}$-module.
\end{lemma}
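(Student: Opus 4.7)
The plan is to consider the $\calR_A^{r_0}$-linear map
\[
f \colon (\calR_A^{r_0})^{\oplus n} \longrightarrow M
\]
sending the $i$-th standard basis vector to $\bbf_i$, and to show that the cokernel $Q := \Coker(f)$ vanishes. Surjectivity of $f$ then gives the lemma.

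First I would verify that $Q$ is coadmissible. The source of $f$ is finitely presented, hence coadmissible by Lemma~\ref{L:schneider-teitelbaum}(7), and $M$ is coadmissible by hypothesis; Lemma~\ref{L:schneider-teitelbaum}(5) then makes $Q$ coadmissible. Let $(Q^{[s,r]})$ denote the associated coherent sheaf.

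Next, using the flatness in Lemma~\ref{L:schneider-teitelbaum}(4) together with the identification in Lemma~\ref{L:schneider-teitelbaum}(3), tensoring the right exact sequence $(\calR_A^{r_0})^{\oplus n} \to M \to Q \to 0$ with $\calR_A^{[s_i,r_i]}$ shows
\[
Q^{[s_i,r_i]} \;\cong\; \Coker\bigl( (\calR_A^{[s_i,r_i]})^{\oplus n} \to M^{[s_i,r_i]} \bigr),
\]
and the right-hand side vanishes by the hypothesis that $\bbf_1,\ldots,\bbf_n$ generate $M^{[s_i,r_i]}$ over $\calR_A^{[s_i,r_i]}$.

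It then remains to deduce that $Q^{[s,r_0]} = 0$ for every $0 < s \leq r_0$, for the module of global sections $Q = \varprojlim_{s \to 0^+} Q^{[s,r_0]}$ will then vanish. Given such $s$, the intersections $[\max(s,s_i),\min(r_0,r_i)]$ (over those $i$ with $[s_i,r_i] \cap [s,r_0] \ne \emptyset$) form an admissible cover of $[s,r_0]$, and on each piece $Q^{[s,r_0]}$ restricts to a localization of $Q^{[s_i,r_i]} = 0$. Thus the coherent sheaf associated to $Q^{[s,r_0]}$ on the affinoid $\Max(A) \times \rmA^1[s,r_0]$ vanishes on an admissible affinoid cover, so by Tate's acyclicity theorem $Q^{[s,r_0]} = 0$, as desired. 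The main obstacle, such as it is, is the bookkeeping in this last step, where one must ensure that the admissible cover of $(0,r_0]$ truly restricts to an admissible affinoid cover of each subinterval $[s,r_0]$; this is handled by taking intersections as above and invoking local finiteness of the refinement.
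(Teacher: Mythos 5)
Your proposal is correct and follows essentially the same route as the paper's proof: both consider the map $f : (\calR_A^{r_0})^{\oplus n} \to M$ and deduce surjectivity from local surjectivity on the cover via the Fr\'echet--Stein machinery of Lemma~\ref{L:schneider-teitelbaum}. The only difference is one of packaging: the paper invokes part~(2) (vanishing of $\bbR^1\varprojlim$) to pass from surjectivity of $f^{[s,r_0]}$ for all $s$ to surjectivity of $f$, whereas you show the cokernel $Q$ is coadmissible via parts~(5) and~(7) and observe that a coadmissible module with vanishing sheaf is zero; you also make explicit the gluing step that the paper leaves implicit. (A very minor streamlining: flatness from part~(4) is not needed for $Q^{[s_i,r_i]} \cong \Coker(f^{[s_i,r_i]})$ since tensoring is right exact; part~(3) alone suffices.)
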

\begin{proof}
Consider the $\calR_A^{r_0}$-linear map $f: (\calR_A^{r_0})^{\oplus m} \to M$ given by $\bbf_1, \dots, \bbf_n$.  The hypothesis of the lemma implies that
$
f \otimes \calR_A^{[s_i,r_i]}: (\calR_A^{[s_i,r_i]})^{\oplus n} \to M^{[s_i,r_i]}
$
is surjective for any $i$.  Hence $f$ is surjective by Lemma~\ref{L:schneider-teitelbaum}(2), yielding the lemma.
\end{proof}

\begin{lemma}
\label{L:perturbation-of-generators}
Let $N$ be a finite Banach module over a  Banach algebra $R$ over $\Qp$ with generators $\bbe_1, \dots, \bbe_n$.  Then there exists $\epsilon >0$ such that, for any elements $\bbe'_1, \dots, \bbe'_n \in N$ with $|\bbe_i - \bbe'_i|\leq \epsilon$ for any $i$, $N$ is generated by $\bbe'_1, \dots, \bbe'_n$.
\end{lemma}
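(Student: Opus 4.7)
The plan is a standard perturbation-by-iteration argument whose essential input is the non-archimedean open mapping theorem. First I would consider the continuous surjective $R$-linear map $f \colon R^n \to N$ sending the $i$-th standard basis vector $e_i$ to $\bbe_i$; surjectivity is exactly the assumption that the $\bbe_i$ generate $N$. Since $R$ is a $\Qp$-Banach algebra, both $R^n$ (with the sup norm) and $N$ are $\Qp$-Banach spaces, so the open mapping theorem yields a constant $C > 0$ such that every $x \in N$ admits a representation $x = \sum_{i=1}^n r_i(x) \bbe_i$ with $|r_i(x)| \leq C |x|$. The map $x \mapsto (r_1(x),\dots,r_n(x))$ need not be $R$-linear or even continuous; we only need it as a choice function.

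Now I would set $\epsilon := 1/(2C)$ and, for any $\bbe'_1, \dots, \bbe'_n \in N$ with $|\bbe_i - \bbe'_i| \leq \epsilon$, consider the map $f' \colon R^n \to N$ with $e_i \mapsto \bbe'_i$. For $x \in N$, define a sequence by $y_0 = x$ and $y_{k+1} = y_k - f'(r_1(y_k), \dots, r_n(y_k))$. The identity
\[
y_{k+1} = \sum_{i=1}^n r_i(y_k)(\bbe_i - \bbe'_i)
\]
together with the bound $|r_i(y_k)| \leq C|y_k|$ yields $|y_{k+1}| \leq C\epsilon|y_k| \leq 2^{-(k+1)}|x|$, so $y_k \to 0$ geometrically and $|r_i(y_k)| \leq C \cdot 2^{-k}|x|$.

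Completeness of $R^n$ then guarantees that $(R_1,\dots,R_n) := \sum_{k \geq 0}(r_1(y_k),\dots,r_n(y_k))$ converges in $R^n$, and telescoping the defining recursion gives $f'(R_1,\dots,R_n) = \lim_k (x - y_{k+1}) = x$. Hence $f'$ is surjective, proving the lemma. There is no serious obstacle: the only non-trivial ingredient is the non-archimedean open mapping theorem for $\Qp$-Banach spaces, which is classical, and the rest is a Neumann-series-style bookkeeping exercise. The only minor point worth recording is that the hypothesis that $N$ is a finite Banach module over a Banach $\Qp$-algebra implicitly equips $N$ with the structure of a $\Qp$-Banach space, which is what makes the open mapping theorem applicable.
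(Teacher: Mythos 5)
Your proof is correct and rests on exactly the same single nontrivial ingredient as the paper's: the nonarchimedean Banach open mapping theorem applied to the surjection $R^n \to N$, which yields the uniform bound on representing coefficients. Where the paper then expresses $\bbe'_i - \bbe_i = \sum_j x_{ji}\bbe_j$ with $|x_{ji}| \leq \tfrac{1}{2}$ and observes that the transition matrix $1+X$ is invertible in $M_n(R)$ (a matrix-level Neumann series), you instead run the geometric-series iteration at the level of individual module elements to show $f'$ is surjective. The two are logically the same contraction argument; the paper's matrix packaging is slightly shorter and has the cosmetic advantage of showing the $\bbe'_i$ and $\bbe_j$ span the same module via an explicit invertible change of basis, but for the stated conclusion (that the $\bbe'_i$ generate) your version is fully adequate.
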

\begin{proof}
We have a continuous surjective morphism $f: R^{\oplus n} \to M$ of Banach $R$-modules given by $\bbe_1, \dots, \bbe_n$.  By the Banach open mapping theorem (\cite[\S I.3.3,~Th\'eor\`eme 1]{bourbaki} or \cite[Proposition~8.6]{schneider}), there exists $\epsilon>0$ such that for any $\bbv \in M$ with $|\bbv|\leq \epsilon$, we can write $\bbv = x_1 \bbe_1+\cdots + x_n \bbe_n$ for some $x_i \in R$ with $|x_i|\leq \frac 12$.  Given the data in the lemma, we write $\bbe'_i-\bbe_i = \sum_{j=1}^n x_{ji}\bbe_j$ for $x_{ji} \in R$ with $|x_{ji}|\leq \frac 12$.  Set $X = (x_{ji})$.  The transition matrix for the two sets of elements is $1+X$, which is invertible.  Hence $N$ is generated by $\bbe'_1, \dots, \bbe'_n$.
\end{proof}

We are grateful to R.~Bellovin for showing us a more elementary proof
of the following result (see \cite[\S2.2]{bellovin}); we provide the
following argument anyway, because its method is used to prove
Lemma~\ref{L:variant finite generation} below, to which her argument
does not seem to apply.

\begin{prop}
\label{P:finite-generation}
Let $(M^{[s,r]})$ be a coherent sheaf over $\calR_A^{r_0}$ with module of global sections $M$.  

\begin{itemize}
\item[(1)]
The $\calR_A^{r_0}$-module $M$ is finitely generated if and only if $(M^{[s,r]})$ is uniformly finitely generated.
\item[(2)]
The $\calR_A^{r_0}$-module $M$ is finitely presented if and only if $(M^{[s,r]})$ is uniformly finitely  presented.
\item[(3)]
The $\calR_A^{r_0}$-module $M$ is finite projective if and only if $(M^{[s,r]})$ is a uniformly finitely presented vector bundle.
\end{itemize}
\end{prop}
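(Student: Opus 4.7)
The plan is to prove the three equivalences in sequence. In each case the forward direction is immediate from earlier results: for (1), a global generating set restricts via Lemma~\ref{L:schneider-teitelbaum}(3) to a generating set of each $M^{[s,r]}$; for (2), flatness of $\calR_A^{[s,r]}$ over $\calR_A^{r_0}$ (Lemma~\ref{L:schneider-teitelbaum}(4)) carries a global presentation to a local one; for (3), finite projectivity gives both uniform finite presentation via (2) and flatness of each $M^{[s,r]}$, hence a vector bundle. The substantive content lies in the three converses, of which (1)$\Leftarrow$ is the crux.

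For (1)$\Leftarrow$, first reduce the given admissible cover to a nested exhausting cover $\{[s_i,r_0]\}_{i\geq 0}$ with $s_i\searrow 0$ for which each $M^{[s_i,r_0]}$ is generated by $n$ elements. I will iteratively build $n$-tuples $(\bbg_{1,k},\ldots,\bbg_{n,k}) \in M^n$ satisfying (a) the $\bbg_{j,k}$'s generate $M^{[s_k,r_0]}$, and (b) the discrepancies $|\bbg_{j,k+1}-\bbg_{j,k}|_{[s_k,r_0]}$ are summably small, chosen well within the perturbation threshold of Lemma~\ref{L:perturbation-of-generators} applied to every $|\cdot|_{[s_l,r_0]}$-norm with $l\le k$. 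Since restriction to a sub-annulus only decreases these seminorms, the sequence is Cauchy in every seminorm of the Fr\'echet topology on $M=\varprojlim_k M^{[s_k,r_0]}$, and its limit $\bbg_j \in M$ exists; perturbation then ensures that the $\bbg_j$'s generate each $M^{[s_k,r_0]}$, so by Lemma~\ref{L:finite-generation-local} they generate $M$ globally.

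The main obstacle is executing the inductive step: given $\bbg_{j,k}$ generating $M^{[s_k,r_0]}$, produce $\bbg_{j,k+1} \in M$ generating the larger $M^{[s_{k+1},r_0]}$ while remaining close to $\bbg_{j,k}$ in $|\cdot|_{[s_k,r_0]}$. The approach is to first choose auxiliary generators $\bbh_l\in M$ of $M^{[s_{k+1},r_0]}$ using density of $M$ in $M^{[s_{k+1},r_0]}$ (Lemma~\ref{L:schneider-teitelbaum}(1)) combined with Lemma~\ref{L:perturbation-of-generators}; on $[s_k,r_0]$ both the $\bbg_{j,k}$'s and the $\bbh_l$'s generate $M^{[s_k,r_0]}$, so we may write $\bbg_{j,k}=\sum_l c_{jl}\bbh_l$ with $c_{jl}\in\calR_A^{[s_k,r_0]}$ and then approximate the $c_{jl}$ by $c'_{jl}\in\calR_A^{r_0}$ using density. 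The candidate $\bbg_{j,k+1} := \sum_l c'_{jl}\bbh_l \in M$ is close to $\bbg_{j,k}$ on $[s_k,r_0]$ by construction; the delicate point is that generation of the larger $M^{[s_{k+1},r_0]}$ is not automatic from an approximation of coefficients originally defined only on the smaller annulus, and must be engineered, either by arranging that the approximating matrix $(c'_{jl})$ is invertible over $\calR_A^{[s_{k+1},r_0]}$ or by a further iterative correction within the same induction step.

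For (2)$\Leftarrow$, apply (1) to obtain global $n$-generators of $M$, yielding a surjection $(\calR_A^{r_0})^n\twoheadrightarrow M$ with coadmissible kernel $K$ (Lemma~\ref{L:schneider-teitelbaum}(5)). On each element of the cover witnessing uniform $(m,n)$-presentation, Schanuel's lemma yields $K^{[s_i,r_i]}\oplus (\calR_A^{[s_i,r_i]})^n \cong L\oplus (\calR_A^{[s_i,r_i]})^n$, where $L$ is the $m$-generated local kernel, so $K^{[s_i,r_i]}$ is $(m+n)$-generated and $(K^{[s,r]})$ is uniformly $(m+n)$-generated. Applying (1) to $K$ gives global finite generation of $K$, so $M$ is finitely presented. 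Finally, (3)$\Leftarrow$ follows by combining (2) with Lemma~\ref{L:fp+bundle=>finite-proj}.
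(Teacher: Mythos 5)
Your proof of (1)$\Leftarrow$ has a genuine gap at the inductive step, which you yourself flag but do not close, and in fact the step as proposed cannot be made to work. You aim to build a single $n$-tuple of global sections as a Fr\'echet limit of $n$-tuples $(\bbg_{j,k})_j$, with $\bbg_{j,k}$ generating $M^{[s_k,r_0]}$ and successive tuples close on $[s_k,r_0]$. The candidate $\bbg_{j,k+1} = \sum_l c'_{jl}\bbh_l$, with $c'_{jl} \in \calR_A^{r_0}$ approximating the coefficients $c_{jl} \in \calR_A^{[s_k,r_0]}$ in the $[s_k,r_0]$-topology, gives \emph{no} control over the behavior of $(c'_{jl})$ on the newly added subannulus $[s_{k+1},s_k]$: a global function approximating $c_{jl}$ well on $[s_k,r_0]$ can be arbitrary — in particular, its determinant can vanish — on $[s_{k+1},s_k]$, so there is no reason for $\bbg_{j,k+1}$ to generate $M^{[s_{k+1},r_0]}$. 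The two demands you make on $\bbg_{j,k+1}$ (smallness of the perturbation on the old annulus, and generation on the enlarged one) are in tension, and on a nested exhausting cover there is no leverage to satisfy both. The paper's proof uses a different mechanism: it refines the admissible cover and then splits it into odd- and even-indexed subfamilies of \emph{pairwise disjoint} intervals. For each subfamily it produces an $n$-tuple as a convergent series $\bbf_j^\even = \sum_i \alpha_{2i,j}T^{\lambda_{2i,j}}\bbf_{2i,j}$, where the $\alpha_{2i,j}\in\Qp$ and $\lambda_{2i,j}\in\ZZ_{\geq 0}$ are chosen inductively so that on $[s_{2i},r_{2i}]$ the $i$-th term dominates and all others lie below the perturbation threshold of Lemma~\ref{L:perturbation-of-generators}. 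Disjointness is what makes this possible: when $r_{2i} < s_{2i'}$, a large power $T^\lambda$ has radically different Gauss norms on the two annuli, so each term can be tuned to be dominant on its own annulus and negligible elsewhere. The conclusion is that $M$ is generated by $2n$ elements (not $n$); by contrast your scheme, if it worked, would give $n$ generators, and the discrepancy is a hint that the nested approach is asking for too much.

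Your treatments of (2) and (3) are sound modulo (1) and essentially agree with the paper. For (2), the paper invokes Lemma~\ref{L:always-finitely presented} to deduce that the kernel sheaf $(K^{[s,r]})$ of $(\calR_A^{r_0})^{\oplus n} \twoheadrightarrow M$ is uniformly finitely generated, whereas you derive this from Schanuel's lemma applied to the two local resolutions on each $[s_i,r_i]$; both routes pass uniform finite generation to the kernel and then reapply (1). Part (3) is in both cases an immediate combination of (2) with Lemma~\ref{L:fp+bundle=>finite-proj}.
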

\begin{proof}
(3) follows from combining (2) with Lemma~\ref{L:fp+bundle=>finite-proj}.  (2) follows from applying (1) twice and using Lemma~\ref{L:always-finitely presented} to pass on the conditions.
The sufficiency part of (1) is obvious.

We now prove the necessity part of (1).  Assume that each $(M^{[s,r]})$ is $n$-finitely generated.  We may refine and reorder the admissible cover $\{[s_i, r_i]\}_{i \in \ZZ_{\geq0}}$ so that the subcollections $I_\mathrm{odd} = \{[s_{2i+1}, r_{2i+1}]\}_{i \in \ZZ_{\geq0}}$ and $I_\mathrm{even} = \{[s_{2i}, r_{2i}]\}_{i \in \ZZ_{\geq0}}$ consist of pairwise \emph{disjoint} intervals in decreasing order.  We claim that there exist global sections $\bbf^\odd_1, \dots, \bbf^\odd_n \in M$ (resp. $\bbf^\even_1, \dots, \bbf^\even_n \in M$) generating each $M^{[s_i, r_i]}$ where $i$ is odd (resp. even).  Then the proposition follows from Lemma~\ref{L:finite-generation-local} above. We will only prove the claim for the even subcollection, as the proof for the odd one proceeds the same way.

For each $i$, we use $|\cdot|_{M, [s_i, r_i]}$ to denote a fixed Banach norm on $M^{[s_i,r_i]}$.
As in Definition~\ref{D:uniform-finite-presentation}, $M^{[s_{2i}, r_{2i}]}$ is generated by $n$ elements $\bbf_{2i, 1}, \dots, \bbf_{2i,n} \in M^{[s_{2i}, r_{2i}]}$.  
By the density of 
$M$ (resp. $M[\frac 1T]$ if $r_0=\infty$ and $r_{2i}\neq \infty$) in $M^{[s_{2i}, r_{2i}]}$ (Lemma~\ref{L:schneider-teitelbaum}(1))
 and Lemma~\ref{L:perturbation-of-generators},
we may assume that $\bbf_{2i, 1}, \dots, \bbf_{2i,n} \in M$ are global sections.  (When $r_0=\infty$ and $r_{2i}\neq \infty$, we first get the generators in $M[\frac 1T]$ and then we multiply them by some powers of $T$; they are still generators.)

 By Lemma~\ref{L:perturbation-of-generators} again, there exists $\epsilon_{2i}>0$ such that any elements $\bbf'_{2i, 1}, \dots, \bbf'_{2i,n} \in M$ with $|\bbf'_{2i,j} - \bbf_{2i,j}|_{M, [s_{2i}, r_{2i}]}\leq \epsilon_{2i}$ for all $j$ also generate  $M^{[s_{2i}, r_{2i}]}$.  For each $j$, we hope to define
\[
\bbf_j^\even = \alpha_{0,j} T^{\lambda_{0,j}} \bbf_{0,j}+ \alpha_{2,j}T^{\lambda_{2,j}}\bbf_{2, j} + \alpha_{4,j}T^{\lambda_{4,j}} \bbf_{4, j} 
 + \cdots,
\]
where $\alpha_{2i,j} \in \Qp$ and $\lambda_{2i,j} \in \ZZ_{\geq 0}$ are chosen inductively on $i$ with $\lambda_{0,j} = 0, \alpha_{0,j} = 1$ so that $\lambda_{2i,j} \geq \lambda_{2i-2,j}$, and
\begin{align}
\label{E:conditions on alpha and lambda1}
\textrm{for any }i'<i, \ &
\Big|\frac{\alpha_{2i,j}T^{\lambda_{2i,j}}}{\alpha_{2i',j}T^{\lambda_{2i',j}}}\bbf_{2i, j}\Big|_{M,[s_{2i'}, r_{2i'}]}\leq \epsilon_{2i'},\\
\label{E:conditions on alpha and lambda2}
\textrm{for any }i'<i, \ &
\Big|\frac{\alpha_{2i',j}T^{\lambda_{2i',j}}}{\alpha_{2i,j}T^{\lambda_{2i,j}}}\bbf_{2i', j}\Big|_{M,[s_{2i}, r_{2i}]}\leq \epsilon_{2i}, \textrm{ and }\\
\label{E:conditions on alpha and lambda3}
\textrm{for any }i''<2i-1 \textrm{ (even or odd)}, \ &
\big|\alpha_{2i,j}T^{\lambda_{2i,j}}\bbf_{2i,j}\big|_{M, [s_{i''}, r_{i''}]} \leq p^{-i}.
\end{align}
This would imply that the infinite sum defining $\bbf_j^\even$ converges and we have for any $i$,
\[
\Big|\bbf_{2i, j} - \frac 1{\alpha_{2i,j}T^{\lambda_{2i,j}}} \bbf_j^\even\Big|_{M, [s_{2i}, r_{2i}]} \leq \epsilon_{2i}.
\]
(This equation is also valid for $i=0$ because $\alpha_{0,j}=1$ and $\lambda_{0,j}=0$.)
These imply that $\bbf_1^\even, \dots, \bbf_n^\even$ generate $M^{[s_{2i}, r_{2i}]}$ over $\calR_A^{[s_{2i}, r_{2i}]}$.

We now prove that we can choose $\alpha_{2i,j}$ and $\lambda_{2i,j}$ satisfying \eqref{E:conditions on alpha and lambda1}--\eqref{E:conditions on alpha and lambda3}.  It suffices to meet the following conditions:
\begin{align*}
\textrm{for any }i'<i, \ &
|\alpha_{2i,j}|\cdot \omega^{s_{2i'}\lambda_{2i,j}} \leq
\omega^{s_{2i'}\lambda_{2i',j}} |\bbf_{2i, j}/\alpha_{2i',j}|^{-1}_{M,[s_{2i'}, r_{2i'}]} \cdot \epsilon_{2i'},\\
\textrm{for any }i'<i, \ &
|\alpha_{2i,j}|\cdot \omega^{r_{2i}\lambda_{2i,j}} \geq 
\omega^{r_{2i}\lambda_{2i',j}}|\alpha_{2i',j}\bbf_{2i', j}|_{M,[s_{2i}, r_{2i}]} / \epsilon_{2i}, \textrm{ and }\\
\textrm{for any }i''<2i-1, \ &
|\alpha_{2i,j}|\cdot \omega^{s_{i''}\lambda_{2i,j}} \leq
|\bbf_{2i, j}|^{-1}_{M,[s_{i''}, r_{2i''}]} \cdot p^{-i}.
\end{align*}
Note that for any $i'<i$ and $i''<2i-1$, we have $r_{2i}< s_{2i'}$ and $r_{2i}< s_{i''}$ because the intervals in $I_\even$ are pairwise disjoint.  Hence by making $\lambda_{2i,j} - \lambda_{2i',j}$ sufficiently large, we may find a choice of $\alpha_{2i,j} \in \Qp$ satisfying all conditions above.  This finishes the proof.
\end{proof}

\begin{remark}
The condition in (3) may be weakened to only require $(M^{[s,r]})$ to be a uniformly finitely generated vector bundle.  Indeed, $M^{[s,r]}$ is finite flat and hence finite projective, so if it is generated by $n$ elements, it is automatically $(n,n)$-finitely presented. Hence $(M^{[s,r]})$ is  uniformly finitely presented.
\end{remark}

We will later apply Proposition~\ref{P:finite-generation} in several
cases: $\varphi$-bundles and $\varphi$-modules over $\calR_A^{r_0}$
(Proposition~\ref{P:phi module v.s. phi bundle} and Lemma
\ref{L:extending isom and surj}, respectively), for a $(\varphi,
\Gamma)$-module $M$ over $\calR_A^{r_0}$, the structure theorems for
$M^{\psi=0}$ (Theorem~\ref{T:structure of D^psi=0}) and $M^{\psi=1}$
(Proposition~\ref{P:N^s uniformly fp}).

The following variant of Proposition~\ref{P:finite-generation} is tailored to the situation of Theorem~\ref{T:structure of M^psi=1}.

\begin{lemma}
\label{L:variant finite generation}
Let $M$ be a finite projective $\calR_A^\infty$-module and $N$ an $\calR_A^\infty$-submodule of $M$ such that $N \otimes_{\calR_A^\infty} \calR_A^{[s,\infty]} \stackrel \sim \to M\otimes_{\calR_A^\infty} \calR_A^{[s,\infty]}$ for any $ s>0$.
Assume moreover that $N$ is complete for a Fr\'echet topology defined by a sequence of norms $|\cdot|_{N, q_1}\leq |\cdot|_{N, q_2} \leq \cdots$ indexed by $\NN$, such that for each $m$, there exists $g_m \in \NN$ such that the operator norm $|T^{g_m}|_{N,q_m} \leq \omega$.  Then $N=M$.
\end{lemma}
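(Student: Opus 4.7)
The idea is to show $N=M$ by constructing each $\bbx \in M$ as a convergent series in $N$'s Fr\'echet topology.

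The first step is to establish that for every $g \in \NN$, the natural map $N \to M/T^g M$ is surjective. Since $\calR_A^\infty/T^g \cong \calR_A^{[s,\infty]}/T^g \cong A[T]/T^g$ is free of rank $g$ over $A$, the quotient $M/T^g M$ is a finite $A$-module whose $\calR_A^\infty$-action factors through $A[T]/T^g$. Let $J$ denote the image of $N$; it is an $A[T]/T^g$-submodule of $M/T^g M$ on which $T^g$ acts trivially, so $J \otimes_{\calR_A^\infty} \calR_A^{[s,\infty]} = J$. On the other hand, the hypothesis $N \otimes_{\calR_A^\infty} \calR_A^{[s,\infty]} \stackrel{\sim}{\to} M^{[s,\infty]}$ provides a surjection $M^{[s,\infty]} \twoheadrightarrow M^{[s,\infty]}/T^g M^{[s,\infty]} = M/T^g M$ factoring through $J$, forcing $J = M/T^g M$.

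Given $\bbx \in M$, choose a strictly increasing sequence of indices $m(k) \to \infty$ (to be refined) and set $g'_k := g_{m(k)}$. Inductively define $\bbv_0 := \bbx$, pick $\bbu_k \in N$ with $\bbu_k \equiv \bbv_k \pmod{T^{g'_k} M}$ (using the surjectivity above), and let $\bbv_{k+1} := T^{-g'_k}(\bbv_k - \bbu_k) \in M$, which is well-defined because $T$ is a non-zero-divisor on the finite projective $\calR_A^\infty$-module $M$ (by Corollary~\ref{C:R_A flat over A}). Writing $h_k := g'_0 + \cdots + g'_{k-1}$ and $\bbz_k := T^{h_k} \bbu_k$ (which lies in $N$ since $N$ is a $\calR_A^\infty$-submodule), the partial sums satisfy $\sum_{j < K} \bbz_j = \bbx - T^{h_K} \bbv_K$, and the remainder $T^{h_K} \bbv_K$ tends to zero in each $M^{[s,\infty]}$ as $K \to \infty$ because $h_K \to \infty$.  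Thus the series sums to $\bbx$ in $M = \varprojlim_{s \to 0^+} M^{[s,\infty]}$.

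It remains to show that $\sum_k \bbz_k$ converges in the Fr\'echet topology of $N$, for then the limit lies in $N$ by completeness and equals $\bbx$ by matching limits in $M$.  The operator-norm hypothesis $|T^{g_\ell}|_{N,q_\ell} \le \omega$ implies that the spectral radius of $T$ acting on $(N, |\cdot|_{N,q_\ell})$ is at most $\omega^{1/g_\ell} < 1$, whence $|T^{h_k}|_{N,q_\ell} \to 0$ as $h_k \to \infty$; consequently $|\bbz_k|_{N,q_\ell} \le |T^{h_k}|_{N,q_\ell} \cdot |\bbu_k|_{N,q_\ell} \to 0$ provided $|\bbu_k|_{N,q_\ell}$ remains bounded in $k$.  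This uniform bound can be arranged via the Banach open mapping theorem applied to the continuous surjection $N \to M/T^{g'_k} M$ onto a finite Banach $A$-module, combined with a careful bookkeeping argument controlling the growth of $|[\bbv_k]|$ through the induction by choosing $m(k)$ to grow sufficiently rapidly.  Securing this uniform norm control simultaneously for \emph{all} seminorms $|\cdot|_{N,q_\ell}$ constitutes the principal technical obstacle of the proof.
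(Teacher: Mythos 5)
Your step establishing that $N \twoheadrightarrow M/T^g M$ for every $g$ is a clean observation and is correct, but the successive-approximation strategy that follows has a genuine gap that is not merely bookkeeping.  The basic obstruction is a norm cancellation: dividing by $T^{g'_k}$ inflates the $s$-Gauss norm by exactly the factor $\omega^{-s g'_k}$ that is recovered when you later multiply by $T^{g'_k}$.  Concretely, from $\bbv_{k+1} = T^{-g'_k}(\bbv_k - \bbu_k)$ one only gets $|\bbv_{k+1}|_{M,s} \leq \omega^{-s g'_k}\max\{|\bbv_k|_{M,s}, |\bbu_k|_{M,s}\}$, so that $|T^{h_K}\bbv_K|_{M,s}$ is merely bounded, not convergent to $0$, and $|\bbz_j|_{M,s} = \omega^{s h_j}|\bbu_j|_{M,s}$ need not tend to $0$ either.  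Thus neither of the two assertions ``$\sum_j\bbz_j$ converges in $M^{[s,\infty]}$'' nor ``$T^{h_K}\bbv_K\to 0$'' is established by what you wrote; the phrase ``because $h_K\to\infty$'' is not a proof.  Choosing $m(k)$ larger does not help, because the loss from $T^{-g'_k}$ scales with $g'_k$ itself.  You already flag the Fr\'echet convergence of $\sum_k\bbz_k$ in $N$ as the ``principal technical obstacle'' and leave it unresolved; so the argument is not complete even granting the (unproven) convergence in $M$.

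The paper's proof takes a genuinely different route and sidesteps the inflation problem entirely: instead of reconstructing \emph{every} $\bbx\in M$, it constructs a \emph{fixed finite set} of elements $\bbf_j^\even\in N$ and checks that they generate each $M^{[s_{2i},r_{2i}]}$, then invokes Lemma~\ref{L:finite-generation-local}.  Each $\bbf_j^\even$ is a series $\sum_i\alpha_{2i,j}T^{\lambda_{2i,j}}\bbf_{2i,j}$, where $\bbf_{2i,j}\in N$ is chosen (using $N\otimes\calR_A^{[s,\infty]}\cong M^{[s,\infty]}$ and Lemma~\ref{L:perturbation-of-generators}) to approximate a fixed $\calR_A^\infty$-basis of $M$ over the annulus $[s_{2i},r_{2i}]$ only.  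Crucially, there is no recursive division by $T$: the parameters $\lambda_{2i,j}$ are free to grow arbitrarily fast, and can therefore be chosen, together with the scalars $\alpha_{2i,j}$, so that \emph{all} the Fr\'echet seminorms $|\cdot|_{N,q_m}$ are controlled simultaneously (this is the role of the hypothesis $|T^{g_m}|_{N,q_m}\leq\omega$, used to replace condition~\eqref{E:conditions on alpha and lambda3}).  If you want to repair your approach, you would need a quantitative version of the surjectivity $N\twoheadrightarrow M/T^{g}M$ that produces lifts $\bbu_k$ with $|\bbu_k|_{M,s}$ strictly smaller than $\omega^{-sh_k}$ uniformly in $s$ and in the Fr\'echet seminorms of $N$; it is far from clear that such a bound can be achieved, and the generator-based argument of the paper is more robust.
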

\begin{proof}
For $0<s\leq r$, we write $M^{[s,r]}$ for $M  \otimes_{\calR_A^\infty} \calR_A^{[s,r]}$.
It suffices to exhibit a finite set of elements of $N$ generating all $M^{[s_i,r_i]}$ over $\calR_A^{[s_i,r_i]}$ for some admissible cover $\{[s_i,r_i]\}_{i\in \ZZ_{\geq0}}$ of $(0,\infty]$, by Lemma~\ref{L:finite-generation-local}.

We fix generators $\bbe_1, \dots, \bbe_n$ of $M$ as an $\calR_A^\infty$-module.
We proceed as in the proof of Proposition~\ref{P:finite-generation} by separating the admissible cover into subcollections $I_\odd$ and $I_\even$ of pairwise disjoint decreasing intervals.  We will prove that there exist $\bbf_1^\even, \dots, \bbf_n^\even \in N$ generating $M^{[s_{2i}, r_{2i}]}$ for any $i$, and the corresponding statement for odd subcollection will follow by the same argument.  For each $i$, we use $|\cdot|_{M, [s_{2i}, r_{2i}]}$ to denote a fixed Banach norm on $M^{[s_{2i}, r_{2i}]}$.  
Using the isomorphism $N \otimes_{\calR_A^\infty} \calR_A^{[s,\infty]} \cong M^{[s,\infty]}$, we may write each $\bbe_j$ in terms of a finite sum $\sum_\alpha m_\alpha \otimes f_\alpha(T)$ for $m_\alpha \in N$ and $f_\alpha(T) \in \calR_A^{[s,\infty]}$.  We may approximate each $f_\alpha(T)$ by elements in $\calR_A^\infty[\frac 1T]$ (resp. $\calR_A^\infty$ if $i=0$) and hence obtain $\bbm_{2i,j} \in N[\frac1T]$ (resp. $\bbm_{2i,j} \in N$ if $i=0$) such that $|\bbm_{2i,j} - \bbe_j|_{M, [s_{2i},r_{2i}]}$ is sufficiently small so that $\bbm_{2i,1}, \dots, \bbm_{2i,n}$ form a basis of $M^{[s_{2i}, r_{2i}]}$ by invoking Lemma~\ref{L:perturbation-of-generators}.  By multiplying some powers of $T$ (when $i>0$), we may take $\bbf_{2i,1}, \dots, \bbf_{2i,m} \in N$ generating $M^{[s_{2i}, r_{2i}]}$. 

By Lemma~\ref{L:perturbation-of-generators} again, there exists $\epsilon_{2i}>0$ such that any elements $\bbf'_{2i, 1}, \dots, \bbf'_{2i,n} \in N$ with $|\bbf'_{2i,j} - \bbf_{2i,j}|_{M, [s_{2i}, r_{2i}]}\leq \epsilon_{2i}$ for any $j$ also generate  $M^{[s_{2i}, r_{2i}]}$.  We define $\bbf_j^\even$ as in the proof of Proposition~\ref{P:finite-generation}, but instead of \eqref{E:conditions on alpha and lambda3}, we require (inductively on $i$)
\[
\textrm{ for any }m \leq i\textrm{ with }r_{2i}<1/g_m,\textrm{ we have }\big|\alpha_{2i,j}T^{\lambda_{2i,j}}\bbf_{2i,j}
\big|_{N, q_m} \leq p^{-i},
\]
which is implied by
\begin{eqnarray*}
&\textrm{ for any }m \leq i\textrm{ with }r_{2i}<1/g_m,\textrm{ we have }\\
&|\alpha_{2i,j}|\omega^{\lfloor\lambda_{2i,j}/g_m\rfloor} \leq |\bbf_{2i,j}|_{N, q_m}^{-1}\cdot p^{-i} \cdot \min\{1, |T|_{q_m}, \dots, |T^{g_m-1}|_{q_m}\}^{-1}.
\end{eqnarray*}
By making $\lambda_{2i,j}$ sufficiently large, we may find a choice of $\alpha_{2i,j} \in \Qp$ satisfying all required conditions, finishing the proof.
\end{proof}

\begin{lemma}
\label{L:isom on Robba descent}
Let $f: M \to N$ be a morphism of two $\calR_A^{r_0}$-modules with
finitely generated kernel (resp.\ cokernel).  If $f \otimes \calR_A: M
\otimes_{\calR_A^{r_0}} \calR_A \to N \otimes_{\calR_A^{r_0}} \calR_A$
is injective (resp.\ surjective), then there exists some $r \in (0,
r_0]$ such that $f \otimes \calR_A^r: M \otimes_{\calR_A^{r_0}}
\calR_A^r \to N \otimes_{\calR_A^{r_0}} \calR_A^r$ is injective
(resp.\ surjective).
\end{lemma}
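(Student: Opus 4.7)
The plan is to exploit the filtered union $\calR_A = \bigcup_{r > 0} \calR_A^r$, so that tensoring with $\calR_A$ becomes the filtered colimit of tensorings with $\calR_A^r$ as $r \to 0^+$; this colimit commutes with kernels and cokernels.  For the cokernel case, right-exactness identifies
\[
\Coker(f \otimes \calR_A) = \Coker(f) \otimes_{\calR_A^{r_0}} \calR_A = \varinjlim_{r \to 0^+} \Coker(f) \otimes_{\calR_A^{r_0}} \calR_A^r,
\]
which vanishes by hypothesis.  Since $\Coker(f)$ is finitely generated, its finitely many generators must die at some common stage $r > 0$, yielding $\Coker(f \otimes \calR_A^r) = 0$.

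For the kernel case, I would first fix generators $k_1, \dots, k_n$ of $\Ker(f)$.  Each $k_i \otimes 1 \in M \otimes \calR_A$ lies in the vanishing $\Ker(f \otimes \calR_A)$ and hence equals zero, so the filtered-colimit description of $M \otimes \calR_A$ supplies $r > 0$ with $k_i \otimes 1 = 0$ in $M \otimes_{\calR_A^{r_0}} \calR_A^r$ for every $i$.  For each $s$ with $0 < s \leq r$, Lemma~\ref{L:schneider-teitelbaum}(4) provides flatness of $\calR_A^{[s,r]}$ over $\calR_A^{r_0}$, so that $\Ker(f \otimes \calR_A^{[s,r]}) \cong \Ker(f) \otimes_{\calR_A^{r_0}} \calR_A^{[s,r]}$; this module is generated by the images of the $k_i \otimes 1$, each vanishing in $M \otimes \calR_A^{[s,r]}$ by further base change from $M \otimes \calR_A^r$.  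Hence $f \otimes \calR_A^{[s,r]}$ is injective for every $0 < s \leq r$.

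The hard part will be upgrading cut-wise injectivity on every $\calR_A^{[s,r]}$ to injectivity on $\calR_A^r$ itself.  Given a putative element $\alpha \in \Ker(f \otimes \calR_A^r)$ written as a finite sum $\sum_j m_j \otimes c_j$ with $m_j \in M$ and $c_j \in \calR_A^r$, I plan to reduce to the finitely generated sub-$\calR_A^{r_0}$-module $M_0 \subseteq M$ generated by the $m_j$ and exploit the Hausdorffness of the Fr\'echet--Stein topology on the coadmissible $\calR_A^r$-module $M_0 \otimes_{\calR_A^{r_0}} \calR_A^r$ (via Lemma~\ref{L:schneider-teitelbaum}(2)) to conclude $\alpha = 0$ from its cut-wise vanishings.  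The delicate point is to verify that cut-wise vanishing of $\alpha$ in $M \otimes \calR_A^{[s,r]}$ lifts to cut-wise vanishing of a preimage $\tilde\alpha$ in $M_0 \otimes \calR_A^{[s,r]}$ despite possible kernels of the maps $M_0 \otimes \calR_A^{[s,r]} \to M \otimes \calR_A^{[s,r]}$; I anticipate handling this by combining the finite generation of $\Ker(f)$ with the flatness on affinoid cuts already in play, so that only the finitely many generators $k_i$ need to be controlled.
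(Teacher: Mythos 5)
Your cokernel argument is identical to the paper's: right-exactness gives $\Coker(f)\otimes_{\calR_A^{r_0}}\calR_A = \Coker(f\otimes\calR_A) = 0$, then finite generation of $\Coker(f)$ together with $\calR_A = \varinjlim_r \calR_A^r$ yields $\Coker(f)\otimes\calR_A^r = 0$ and hence surjectivity of $f\otimes\calR_A^r$.

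For the kernel case, your route diverges from the paper's and contains a genuine gap. The paper's proof is terse: writing $Q = \Ker(f)$, it asserts $Q\otimes_{\calR_A^{r_0}}\calR_A = 0$, deduces $Q\otimes\calR_A^r = 0$ for small $r$, and concludes $f\otimes\calR_A^r$ is injective. Both the first assertion and the final step implicitly rest on flatness of $\calR_A$ (resp.\ $\calR_A^r$) over $\calR_A^{r_0}$, so that $\Ker(f\otimes\calR_A) = Q\otimes\calR_A$ and $\Ker(f\otimes\calR_A^r) = Q\otimes\calR_A^r$; the paper establishes flatness only for the affinoid cuts $\calR_A^{[s,r]}$ (Lemma~\ref{L:schneider-teitelbaum}(4)), so this is a standard-but-unspoken step. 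You instead work only through those affinoid cuts and correctly deduce injectivity of $f\otimes\calR_A^{[s,r]}$ for all small $s$, but then the crucial step --- upgrading cut-wise injectivity on every $\calR_A^{[s,r]}$ to injectivity on $\calR_A^r$ --- is exactly where you stall, and your sketched fix does not work. The kernel of $M_0\otimes\calR_A^{[s,r]} \to M\otimes\calR_A^{[s,r]}$ records $\Tor_1$ of $M/M_0$, which has nothing to do with $\Ker(f)$, so ``combining the finite generation of $\Ker(f)$ with the flatness on affinoid cuts'' cannot control it. Moreover, $M_0\otimes_{\calR_A^{r_0}}\calR_A^r$ is only finitely generated over $\calR_A^r$, not a priori coadmissible ($M_0$ is finitely generated, not finitely presented, and $\calR_A^{r_0}$ is not noetherian), so Lemma~\ref{L:schneider-teitelbaum}(2) is unavailable and the intended ``Hausdorffness of the Fr\'echet--Stein topology'' has no foundation. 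In short: the element $\alpha\in\Ker(f\otimes\calR_A^r)$ is killed by every restriction to $\calR_A^{[s,r]}$, but you have produced no mechanism forcing $\alpha = 0$. The simplest way to close the gap is to invoke flatness of $\calR_A^r$ over $\calR_A^{r_0}$ directly (as the paper implicitly does), after which the kernel case becomes as easy as the cokernel case and your entire detour through the affinoid cuts is unnecessary.
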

\begin{proof}
Let $Q$ denote the kernel (resp.\ cokernel) of $f$.  We have $Q
\otimes_{\calR_A^{r_0}}\calR_A = 0$.  Since $Q$ is finitely generated,
there exists $r \in (0, r_0]$ such that $Q
\otimes_{\calR_A^{r_0}}\calR_A^{r} = 0$ and hence $f\otimes \calR_A^r$
is injective (resp.\ surjective).
\end{proof}

\begin{remark}\label{R:isom on Robba descent}
In the preceding proposition, we only need the finite generation of $N$ to test for surjectivity.  To test for bijectivity, it is sufficient that $M$ and $N$ both be finite projective: after restricting to $r \in (0,r_0]$ we may assume $f \otimes \calR_A^r$ is surjective, and the kernel is then a summand of $M$ hence also finitely generated.  (The issue we are working around here is that $\calR_A^{r_0}$ is not noetherian.)
\end{remark}

The following lemma will be needed later in Theorem~\ref{T:strong Iwasawa cohomology}.

\begin{lemma}
\label{L:finite A-mod as R_A-mod}
If $M$ is an $\calR_A^{r_0}$-module which is finite over $A$, then there exists $s \in (0, r_0]$ such that $M \otimes_{\calR_A^{r_0}} \calR_A^s = 0$.
\end{lemma}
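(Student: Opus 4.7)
The idea is that the $\calR_A^{r_0}$-module structure forces the spectral radius of $T$ acting on $M$ to be uniformly less than $1$; once this is established, a geometric series argument makes $T-\tau$ invertible after tensoring to $\calR_A^s$.

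First I would reduce to the case $A$ reduced via $\gothN^N = 0$: given the conclusion for $M_\red := M/\gothN M$ with some $s$, one has $(M \otimes_{\calR_A^{r_0}} \calR_A^s) \otimes_A A_\red = 0$, i.e., $\gothN \cdot (M \otimes \calR_A^s) = M \otimes \calR_A^s$; iterating $N$ times forces $M \otimes \calR_A^s = 0$. Next, letting $\tau \in \mathrm{End}_A(M)$ denote the action of $T$, I would consider the commutative subalgebra $A[\tau] \subseteq \mathrm{End}_A(M)$, which is finite over $A$ (since $M$ is) and therefore a $\Qp$-affinoid whose maximal spectrum parametrizes the fiberwise eigenvalues of $\tau$ (up to Galois). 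For any $\beta \in \bar\Qp$ with $|\beta| \notin [\omega^{r_0},1)$, the minimal polynomial $m_\beta(T) \in \Qp[T]$ is a unit in $\calR_\Qp^{r_0}$---its Galois conjugates all have absolute value $|\beta|$, outside the annulus---so $m_\beta(\tau) \in \mathrm{End}_A(M)$ is invertible; since the commuting factors $\tau - \sigma\beta$ must then individually be invertible after extending to $\Qp(\beta)$, no such $\beta$ can occur as an eigenvalue at any fiber. Hence $|\tau(z')| < 1$ at every $z' \in \Max(A[\tau])$, and the Maximum Modulus Principle applied to $A[\tau]_\red$ produces some $c < 1$ with $|\tau|_{\sup, A[\tau]} \leq c$.

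Now I would pick $s \in (0, r_0]$ with $\omega^s > c$ (possible because $c < 1$, with $s \leq r_0$ automatic using the lower bound $|\alpha| \geq \omega^{r_0}$). The tensor product $M \otimes_{\calR_A^{r_0}} \calR_A^s$ is the cokernel of $\tau - T$ acting on $N := M \otimes_A \calR_A^s$, where $\tau$ acts on the $M$-factor via $A[\tau]$ and $T$ by multiplication on the $\calR_A^s$-factor (the two commute). In $A[\tau] \otimes_A \calR_A^s$ acting on $N$, the element $\tau/T$ has spectral radius at most $c/\omega^s < 1$ (combining $|\tau|_{\sup, A[\tau]} \leq c$ with $|T^{-1}|_{\sup, \calR_A^s} = \omega^{-s}$ via the commuting-product estimate), so the geometric series $\sum_{n \geq 0}(\tau/T)^n$ converges, in each Banach piece of the Fr\'echet-Stein structure, to $(1 - \tau/T)^{-1}$. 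Therefore $T - \tau = T(1 - \tau/T)$ is invertible on $N$, and the cokernel $M \otimes_{\calR_A^{r_0}} \calR_A^s$ vanishes.

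The principal difficulty is the uniform bound $c < 1$: while the pointwise bound $|\tau(z')| < 1$ is immediate from the $\calR_A^{r_0}$-structure via the minimal-polynomial argument, lifting it to uniformity requires the Maximum Modulus Principle on the auxiliary affinoid $A[\tau]$. Working with $A[\tau]$---rather than a polynomial obtained by applying Cayley--Hamilton to an arbitrary presentation of $M$---is essential, so that the eigenvalues encountered are the genuine ones, free of spurious roots that could lie outside the range controlled by MMP.
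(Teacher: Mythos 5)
Your proof is correct, and it relies on the same key insight as the paper's: the action of $T$ on $M$ factors through a finite $A$-algebra---your $A[\tau]$, which is precisely the paper's $A\langle T\rangle/\mathrm{Ann}(M)$---and the maximum modulus principle on that affinoid yields the uniform bound $|T|_{\sup} < 1$. Your flanking arguments are more computational than the paper's. For the pointwise step, the paper shows that when $A$ is finite over $\Qp$ the artinian module $M$ stabilizes the tower $M \to M^{[s,r_0]}$; you instead show directly that no $\beta$ with $|\beta| \geq 1$ can be a fiberwise eigenvalue of $\tau$, because its minimal polynomial is already a unit of $\calR_{\Qp}^{r_0}$. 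For the final step, the paper observes that $\Supp(M)$ is disjoint from the annulus and reads off the vanishing; you make this explicit by inverting $T-\tau$ via a Neumann series on each Banach piece. Both are sound, and yours has the mild advantage of only tensoring $M$ up from $A$ (rather than from $\calR_A^{r_0}$), which sidesteps any question of whether $M$ itself is coadmissible. Two small tidy-ups: the preliminary reduction to $A$ reduced is superfluous (you already pass to $A[\tau]_\red$ at the MMP step, and nothing else in the argument cares about nilpotents); and you only need that the cokernel of $\tau - T$ on $M \otimes_A \calR_A^s$ \emph{surjects onto} $M \otimes_{\calR_A^{r_0}} \calR_A^s$, which is formal once one checks $\Image(\tau-T)$ lies in the kernel of $m\otimes f \mapsto fm$---the stronger claim that it \emph{equals} the tensor product would require establishing exactness of a Koszul-type presentation, which the argument never actually uses.
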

\begin{proof}
If $A$ is a finite $\Qp$-algebra then $M$ is artinian, and so $M \to
M^{[s_0,r_0]}$ must be an isomorphism for sufficiently small $s_0$.
Any $s \in (0,s_0)$ then has the desired property.

In the general case, view $M$ as a module over the subring $A\langle
T\rangle \subseteq \calR_A^{r_0}$; it is finitely generated over this
ring, because it is finitely generated over $A$.  By specializing
modulo maximal ideals of $A$ and invoking the preliminary case above,
we see that the reduced closed affinoid subspace $\Supp(M)$ of
$\Max(A\langle T\rangle)$ does not meet $\Max(A\langle
T^{\pm1}\rangle)$.  By the maximum modulus principle, the spectral
norm $\omega^{s_0}$ of $T$ on $\Supp(M)$ therefore satisfies
$\omega^{s_0} < 1$.  Any $s \in (0,s_0)$ then has the desired
property.
\end{proof}

We conclude this subsection with some constructions involving residues
and duality of Robba rings.  Denote by $\Omega_{\calR_A} =
\widehat{\Omega}^1_{\calR_A/A}$ the module of continuous $A$-linear
differentials of $\calR_A$.  The \emph{residue} map $\Res_{\calR_A}:
\Omega_{\calR_A} \to A$ sends $\sum_{n \in \ZZ} a_nT^ndT$ to $a_{-1}$.

\begin{lemma}
\label{L:calR duality}
The module $\Omega_{\calR_A}$ is free of rank one over $\calR_A$ with
basis $dT$, and in particular $\Res_{\calR_A}$ is well-defined.
Moreover, $\Res_{\calR_A}$ depends only on $\calR_A$ up to $A$-linear
continuous isomorphism, and not on the choice of indeterminate $T$.

The natural pairing $\calR_A \times \calR_A \to A$ sending $(f(T),
g(T))$ to $\Res_{\calR_A}(f(T)g(T)dT)$ induces two topological
isomorphisms depending on $T$:
\[
\Hom_{A, \cont}(\calR_A, A) \cong \calR_A \quad\textrm{and}\quad \Hom_{A, \cont} (\calR_A/\calR^\infty_A, A) \cong \calR^\infty_A.
\]
(The topology on the first $\Hom$-set is dual-to-LF and hence LF, and
the topology on the second $\Hom$-set is Fr\'echet.)
\end{lemma}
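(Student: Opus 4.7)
The plan is to address the three assertions in order.  First, since each $\calR_A^{[s,r]}$ is the Banach completion of $\Qp[T^{\pm 1}] \otimes_{\Qp} A$ (resp.\ $\Qp[T] \otimes_{\Qp} A$ when $r = \infty$) in the norm $|\cdot|_{[s,r]}$, its module of continuous $A$-linear differentials is freely generated by $dT$, with $df = f'(T)\,dT$ computed by termwise differentiation of Laurent series.  Passing through the inverse limit in $s$ defining $\calR_A^r$ and the direct limit in $r$ defining $\calR_A$ yields $\Omega_{\calR_A} = \calR_A\,dT$, and the residue of $\sum_n a_n T^n\,dT$ is well-defined as $a_{-1}$.

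For the independence from $T$, I rely on the intrinsic characterization of $\Res_{\calR_A}$ as the unique continuous $A$-linear functional on $\Omega_{\calR_A}$ vanishing on exact differentials and sending $dT/T$ to $1$.  The vanishing $\Res \circ d = 0$ is clear since $f'(T)$ has no $T^{-1}$ coefficient, and uniqueness uses the density of Laurent polynomials in $\calR_A$.  If $T'$ is another uniformizer defining the same $\calR_A$, one writes $T' = c T^{\pm 1}(1 + \pi)$ with $c \in A^\times$ and $\pi$ topologically nilpotent in an appropriate integral subring; then $d\log T' = d\log c + (\pm 1)\,dT/T + d\log(1 + \pi)$, where the first and last terms are exact (using convergence of the logarithm series applied to $1 + \pi$), and the middle contributes $\pm 1$.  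The consistency of orders forces the sign to be $+1$, matching $\Res(dT/T) = 1$.

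For the duality isomorphisms, the pairing $(f, g) \mapsto \Res(fg\,dT)$ is $A$-bilinear and continuous.  Injectivity on either side follows from $\phi_g(T^{-n-1}) = g_n$.  For surjectivity in the first isomorphism, given $\phi \in \Hom_{A,\cont}(\calR_A, A)$, set $g_n := \phi(T^{-n-1})$.  The continuity of $\phi$ restricted to the Fr\'echet space $\calR_A^r$ provides, for each $r > 0$, a constant $C_r$ and some $s_r \in (0, r]$ with $|\phi(f)| \leq C_r |f|_{[s_r, r]}$; this yields $|g_n| \leq C_r \omega^{-(n+1)r}$ for $n \geq 0$ and a matching bound for $n < 0$ involving $\omega^{-(n+1)s_r}$.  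For any fixed $s > 0$, selecting $r \in (0, s)$ gives $|g_n|\omega^{ns} \to 0$ as $n \to +\infty$ (with an analogous estimate for $n \to -\infty$), placing $g := \sum_n g_n T^n$ in $\calR_A$.  The identity $\phi = \phi_g$ is verified on the dense subring $A[T^{\pm 1}]$ and extended by continuity.  For the second isomorphism, the vanishing of $\phi$ on $\calR_A^\infty$ forces $g_n = 0$ for $n < 0$ (since $T^{-n-1} \in \calR_A^\infty$ then), placing $g$ in $\calR_A^\infty$.  The topology comparison follows from the splitting $\calR_A = \calR_A^\infty \oplus \calR_A^-$, where $\calR_A^\infty$ is Fr\'echet and $\calR_A^- := \calR_A/\calR_A^\infty$ is LB (a countable strict inductive limit of the Banach spaces of principal-part Laurent tails).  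Dualizing interchanges the two factors and exchanges Fr\'echet with LB, producing the stated dual-to-LF topology in the first case and the Fr\'echet topology in the second.

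The main obstacle is the surjectivity in the duality: the bounds $|g_n| \leq C_r \omega^{-(n+1)r}$ are too weak for membership in $\calR_A$ at any single value of $r$, and the proof hinges on exploiting the whole family of such bounds for varying $r$, together with the freedom to choose $r$ smaller than the parameter $s$ appearing in the defining norms of $\calR_A$.
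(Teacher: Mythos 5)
Your treatment of the first claim (freeness of $\Omega_{\calR_A}$ with basis $dT$) and of the duality isomorphisms follows essentially the same route as the paper's sketch, and you have supplied correct details for the surjectivity step: the key observation, which you make explicitly, is that the bounds $|g_n| \leq C_r\,\omega^{-(n+1)r}$ ($n \geq 0$) and $|g_n| \leq C_r\,\omega^{(-n-1)s_r}$ ($n < 0$), holding for \emph{every} $r$, combine to put $g = \sum_n g_n T^n$ in $\calR_A^{r_0}$ for a small $r_0$ even though no single value of $r$ suffices.

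The argument for independence of $\Res_{\calR_A}$ from the choice of $T$, however, departs from the paper's and has a genuine gap. The paper observes that $T$ lies in the bounded subring $\calE^\dagger \subset \calR$, that any continuous $\Qp$-linear ring automorphism of $\calR$ preserves $\calE^\dagger$ and so induces a continuous automorphism of the complete discretely valued field $\calE$, and then invokes Fontaine's independence result for the residue on $\calE$. Your argument instead posits a normal form $T' = cT^{\pm 1}(1+\pi)$ with $c \in A^\times$ and $\pi$ ``topologically nilpotent in an appropriate integral subring,'' and then computes $d\log T'$. But (i) the existence of such a normal form is exactly the content of the paper's reduction to $\calE^\dagger$ and is not automatic from ``another uniformizer defining the same $\calR_A$'' — in fact the sign must be $+1$ outright (the map $T \mapsto T^{-1}$ is not even a continuous automorphism of $\calR$, since the Gauss norms treat positive and negative powers asymmetrically), so the $\pm$ cannot be carried along and resolved by an unspecified ``consistency of orders''; (ii) the exactness of $d\log(1+\pi)$ needs $\log(1+\pi)$ to converge in the Fr\'echet topology of $\calR_A^r$, hence $|\pi|_{[s,r]} < 1$ uniformly for relevant $[s,r]$, which is again a consequence of $\pi$ living in $\calE^\dagger$ with positive valuation and is not implied by topological nilpotency in an unspecified subring. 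In short, your direct calculation would go through once one knows that any change of indeterminate stabilizes the bounded subring and that its image there has the required form; the identification of $\calE^\dagger$ is the missing step, and it is precisely what the paper supplies in order to appeal to the classical result over $\calE$.
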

\begin{proof}
The computation of $\Omega_{\calR_A}$ reduces to the structure of
$\widehat{\Omega}^1_{\calR_A^{[r,s]}/A}$, and that the latter is free
over $A$ with basis $dT$ is obvious.  To see that $\Res_{\calR_A}$ is
independent of choices, it suffices to take $A=\QQ_p$.  The
indeterminate $T$ belongs to the the bounded subring $\calE^\dag$ of
$\calR$, which is dense in $\calR$ and is a Henselian discretely
valued field with uniformizer $p$; call the completion of $\calE^\dag$
for the $p$-adic topology $\calE$.  Continuous automorphisms of
$\calR$ preserve $\calE^\dag$ and induce $p$-adically continuous
automorphisms of $\calE$.  Thus we may show the independence of the
residue map for $\calE$ instead of $\calR$, and this is done in
\cite[A2 2.2.3]{fontaine}.

The dualities are well-known; they appear implicitly in
\cite[Section~8.5]{kedlaya-finite}, and when $A$ is a finite extension
of $\Qp$, they are proved in \cite[Theorem~5.4]{crew}.  For the
convenience of the reader, we sketch a short proof of the bijection.
The pairing obviously defines a map $\calR_A \to
\Hom_{A,\cont}(\calR_A, A)$.  Conversely, to any
$\mu\in\Hom_{A,\cont}(\calR_A, A)$, we associate a power series
$\sum_{n \in\ZZ}\mu(T^{-1-n})T^n$.  One checks easily that the
continuity of $\mu$ translates directly to the condition that this
formal Laurent series lies in $\calR_A$.  This defines an inverse to
the natural morphism $\calR_A \to \Hom_{A,\cont}(\calR_A, A)$, proving
the first isomorphism.  Having proved this, the second isomorphism
follows immediately because the associated formal Laurent series lies
in $\calR_A^\infty$ if and only if $\mu(T^{-1-n})=0$ for any $n<0$, or
equivalently, $\mu(\calR_A^\infty)=0$.  See
\cite[Proposition~5.5]{crew} for a discussion of the topologies in the
case where $A$ is a field, the general case being similar.
\end{proof}

We will also need a group-theoretic variant of the residue pairing.

\begin{defn}\label{D:omega-Gamma}
Assume given the data $(C,\ell)$, where $C$ is a multiplicative
abelian topological group whose torsion subgroup $C_\tors$ is finite
and $C/C_\tors$ is noncanonically isomorphic to $\Zp$, and $\ell$ is a
nonzero continuous homomorphism $C \to \Qp$.

Let $0 < s \leq r$ be given.  In the case where $C$ is torsion-free,
we choose a topological generator $c$ of $C$ and we define
$\calR_A^{[s,r]}(C)$ to be the formal substitution of $T$ by $c-1$ in
$\calR_A^{[s,r]}$.  For any other choice $c'$, the element
$\frac{c'-1}{c-1}$ is a unit with all Gauss norms equal to $1$, so
this definition does not depend on $c$.  Moreover, since one has
$|f((1+T)^p-1)|_r = |f(T)|_{pr}$ for all $f(T) \in \Qp[T^{\pm1}]$ if
$r < 1$ (recall that $|T|_r = \omega^r$), it follows that if either $r
< 1$ or both $r=\infty$ and $s < 1$ then one has $\calR_A^{[s,r]}(C) =
\calR_A^{[ps,pr]}(C^p) \otimes_{\ZZ[C^p]} \ZZ[C]$.  Therefore, in the
general case, if we choose a torsion-free open subgroup $D \subseteq
C$ and set $\calR_A^{[s,r]}(C) = \calR_A^{[[C:D]s,[C:D]r]}(D)
\otimes_{\ZZ[D]} \ZZ[C]$, then $\calR_A^{[s,r]}(C)$ is independent of
all choices if either $r < 1/\#C_\tors$ or both $r=\infty$ and $s <
1/\#C_\tors$.  One defines as well $\calR_A^r(C) = \bigcap_{0 < s \leq
  r} \calR_A^{[s,r]}(C)$ and $\calR_A(C) = \bigcup_{0<r}
\calR_A^r(C)$.  For general open subgroups $D' \subseteq D$ of $C$, if
either $r < 1/\#D_\tors$ or both $r=\infty$ and $s < 1/\#D_\tors$ then
one has $\calR_A^?(D) = \calR_A^{[D:D']?}(D') \otimes_{\ZZ[D']}
\ZZ[D]$, where for $? = [s,r],r,\emptyset$ we write $[D:D']?  =
   [[D:D']s,[D:D']r], [D:D']r, \emptyset$, respectively.  In
   particular, in this case $\calR_A^?(D)$ is a free
   $\calR_A^{[D:D']?}(D')$-module of rank $[D:D']$.

If $a \in \ZZ$ and $c \in C$ is nontorsion, then $d\log(c^a) =
\frac{d(c^a)}{c^a} = a\frac{dc}c = ad\log(c)$.  It follows that the
differential form $\omega_\ell = \ell(c)^{-1}d\log(c) =
\ell(c)^{-1}\frac{dc}{c} \in \Omega_{\calR_A(C)}$ is independent of
the choice of nontorsion $c \in C$, and even independent of $C$, in
the sense that for every open subgroup $D \subseteq C$ it belongs to
$\Omega_{\calR_A(D)} \subseteq \Omega_{\calR_A(C)}$ and agrees with
the differential form $\omega_{\ell|_D}$.  One has that $\omega_\ell$
is an $\calR_A(C)$-basis of $\Omega_{\calR_A(C)}$, and therefore it
makes sense to define
\[
\Res_C: \Omega_{\calR_A(C)} \to A,
\qquad
f\omega_\ell \mapsto
   \Res_{\calR_A(D)}(\Tr_{\calR_A(C)/\calR_A(D)}(f)\omega_\ell),
\]
for $D \subseteq C$ a torsion-free open subgroup, where
$\Res_{\calR_A(D)}$ denotes the intrinsic residue map on the Robba
ring $\calR_A(D)$ as above.  It is easy to check that $\Res_C$ for
$\calR_A(C)$ is independent of the auxilliary choices $D$ and $\ell$.
As in the case of a Robba ring, the pairing
\[
\{-,-\}_\ell: \calR_A(C) \times \calR_A(C) \to A,
\qquad
(f,g) \mapsto \Res_C(fg\omega_\ell),
\]
depends only on $\ell$, and is perfect and induces topological
isomorphisms
\begin{equation}\label{E:gamma residue pairing}
\calR_A(C) \stackrel\sim\to \Hom_{A,\cont}(\calR_A(C),A),
\qquad
\calR_A^\infty(C) \stackrel\sim\to \Hom_{A,\cont}(\calR_A(C)/\calR_A^\infty(C),A).
\end{equation}
\end{defn}

\begin{remark}\label{R:normalizations}
Continue with the data $(C, \ell)$ of Definition~\ref{D:omega-Gamma}.
Let $c$ be an element of $C$ whose image in $C/C_\tors$ is a
topological generator.  For $X$ a $\Qp$-vector space equipped with a
continuous action $C$, the continuous cohomology $H^*(C, X)$ can be
computed by the complex $[X^{C_\tors} \xrightarrow{c-1} X^{C_\tors}]$.
In particular, $H^1(C, X)$ is isomorphic to $X^{C_\tors} / (c-1) \cong
X_C$, the coinvariants of $X$.  We wish to warn the reader that this
isomorphism depends on the choice of the element $c$ above, in the
following sense.  Given another choice $c'$, the natural isomorphism
$[X^{C_\tors} \xrightarrow{c-1} X^{C_\tors}]$ with $[X^{C_\tors}
  \xrightarrow{c'-1} X^{C_\tors}]$ is given by $[1 \quad
  \frac{c'-1}{c-1}]$.  The two isomorphisms $H^1(C, X) \approx X_C$
differ thus by a factor of $\ell(c')/\ell(c)$; multiplying by
$\ell(c)^{-1}$ the na\"ive identification $H^1(C,X) \stackrel\sim\to
X_C$ arising from the choice $c$ yields an isomorphism that is
compatible with changing the choice of $c$, and depends only on
$\ell$.

The preceding discussion applies equally well when the $p$-torsion
subgroup $C_\ptors$ is such that $C_\tors/C_\ptors$ is cyclic (the
Chinese remainder theorem then implies $C/C_\ptors$ is procyclic), one
replaces $X^{C_\tors}$ by $X^{C_\ptors}$, and one instead chooses $c
\in C$ mapping to a topological generator $\overline c$ of
$C/C_\ptors$.  These conditions are satisfied, in the notations to
appear shortly, by $C = \Gamma_K$, $\ell = \log \circ \chi$, and $c =
\gamma_K$.  Then every occurrence of the term $\log\chi(\gamma_K)$ or
its reciprocal in this paper can be explained as normalizing a passage
between $H^1(\Gamma_K,X)$ and $X_{\Gamma_K}$ to render it compatible
with changing the choice of $\gamma_K$.
\end{remark}

\begin{remark}
Except for Corollary \ref{C:pointwise criterion for finite projectivity} and Lemma~\ref{L:constant dimension function implies flat}, the results in this subsection remain valid even if $A$ is only assumed to be a strongly noetherian Banach algebra over $\Qp$, meaning that the Tate algebra over $A$ in any number of variables is Noetherian.
\end{remark}

\subsection{$(\varphi, \Gamma)$-modules and Galois representations}

In this subsection, we define $(\varphi, \Gamma)$-modules over the
relative Robba ring and state their relationship with Galois
representations.  In particular, using the results from the previous
subsection, we establish the equivalence of the two viewpoints of
relative $(\varphi,\Gamma)$-modules treated in \cite{kiran-liu} (one
defined using modules over the relative Robba ring, the other defined
using vector bundles over a relative annulus).

\begin{notation}
Throughout the rest of the paper, fix an algebraic closure $\overline
{\QQ}_p$ of $\Qp$ and a finite extension $K$ of $\QQ_p$ inside
$\overline {\QQ}_p$.  Choose a system $\varepsilon = (\zeta_{p^n})_{n
  \geq 0}$, where each $\zeta_{p^n} \in \overline {\QQ}_p$ is a
primitive $p^n$th root of unity and $(\zeta_{p^{n+1}})^p =
\zeta_{p^n}$.

Let $\mu_{p^\infty}$ denote the set of $p$-power roots of unity.  Set
$G_K = \Gal(\overline \QQ_p / K)$, $H_K = \Gal(\overline \QQ_p /
K(\mu_{p^\infty}))$, and $\Gamma_K = \Gal(K(\mu_{p^\infty}) / K)$.
The group $\Gamma_K$ is naturally identified to a subgroup of
$\Gamma_{\Qp}$.  The cyclotomic character $\chi: \Gamma_{\Qp} =
\Gal(\QQ_p(\mu_{p^\infty})/ \Qp) \to \ZZ_p^\times$ is given by
$\gamma(\zeta) = \zeta^{\chi(\gamma)}$ for any $\zeta \in
\mu_{p^\infty}$ and $\gamma \in \Gamma_{\Qp}$.

We let $k$ be the residue field of $K$, with $F = W(k)[1/p]$, and we
let $k'$ be the residue field of $K(\mu_{p^\infty})$, with $F'
=W(k')[1/p]$.  We write $\tilde e_K =
[K(\mu_{p^\infty}):\Qp(\mu_{p^\infty})]/[k':k]$ for the ramification
degree of $K(\mu_{p^\infty})/\Qp(\mu_{p^\infty})$.
\end{notation}

\begin{defn}\label{D:field of norms}
The theory of the field of norms allows a certain choice of
indeterminate $\pi_K$, and gives rise to constants $C(\pi_K)>0$.  (See
\cite[pp.~154--156]{fontaine-ouyang} for details.  We assume, in
addition, that $C(\pi_K) < 1/\#(\Gamma_{\Qp})_\tors$.)
For $0 < s \leq r\leq C(\pi_K)$, we set $\calR^{[s,r]}(\pi_K)$ to be the formal substitution of $T$ by $\pi_K$ in the ring $\calR_{F'}^{[s,r]}$; we set $\calR_A^{[s,r]}(\pi_K)  = \calR^{[s,r]}(\pi_K) \widehat \otimes_{\Qp} A$.  We similarly define $\calR_A^r(\pi_K)$ and $\calR_A(\pi_K)$; the latter is referred to as the \emph{relative Robba ring} over $A$ for $K$.
If $\pi_K$ and $\pi'_K$ are two choices of indeterminates as above, then for $0<s \leq r \leq \min\{C(\pi_K),C(\pi'_K)\}$ the respective rings $\calR_A^{[s,r]}(\pi_K)$ and $\calR_A^{[s,r]}(\pi'_K)$ are canonically isomorphic.
There are commuting $A$-linear actions on $\calR_A^{[s,r]}(\pi_K)$ of $\Gamma_K$ and of an operator $\varphi:\calR_A^{[s,r]}(\pi_K) \to \calR_A^{[s/p,r/p]}(\pi_K)$.  
The actions on the coefficients $F'$ are the natural ones, $\Gamma_K$ through its
quotient $\Gal(F'/F)$ and $\varphi$ by Witt functoriality, but the actions on $\pi_K$ depend on its choice; see \cite{berger-intro} for more discussion of these actions.  Moreover, $\varphi$ makes $\calR_A^{[s,r]}(\pi_K)$ into a free $\calR_A^{[s/p,r/p]}(\pi_K)$-module of rank $p$, and we obtain a $\Gamma_K$-equivariant left inverse $\psi : \calR_A^{[s/p,r/p]}(\pi_K) \to \calR_A^{[s,r]}(\pi_K)$ by the formula $p^{-1}\varphi^{-1} \circ
\Tr_{\calR_A^{[s/p,r/p]}(\pi_K)/\varphi( \calR_A^{[s,r]}(\pi_K))}$.  The map $\psi$ extends to functions $\calR_A^{r_0/p}(\pi_K) \to \calR_A^{r_0}(\pi_K)$ for $r_0 \leq C(\pi_K)$ and $\calR_A(\pi_K) \to \calR_A(\pi_K)$.
\end{defn}

\begin{notation}\label{N:simplify Qp}
Since we will often be in the case $K = \Qp$, we simplify our
notations in this case, writing $\Gamma = \Gamma_{\Qp}$ and $\calR_A^?
= \calR_A^?(\pi_K)$ for $?=[s,r],r,\emptyset$ (the latter overloading
our previous meaning for $\calR_A^?$).

Moreover, in this case one has $F'=\Qp$, and the choice of
$\varepsilon$ above gives rise to a preferred indeterminate $\pi =
\pi_{\Qp}$ with constant any number $C(\pi) < 1/(p-1)$.  We can
explicitly describe the $(\varphi, \Gamma)$-actions on this
indeterminate by $\gamma(\pi) = (1+\pi)^{\chi(\gamma)}-1$ for $\gamma
\in \Gamma = \Gamma_{\Qp}$ and $\varphi(\pi) = (1+\pi)^p-1$.  (Any
other choice of $\varepsilon$ is of the form $\gamma(\varepsilon)$,
$\gamma \in \Gamma$, and leads to the indeterminate $\gamma(\pi)$.)
The $\psi$-action is computed by knowing that, in this case,
$\calR^{[s,r]}_A = \bigoplus_{i=0}^{p-1}
(1+\pi)^i\varphi\calR^{[ps,pr]}_A$, so if $f = \sum_{i=0}^{p-1}
(1+\pi)^i \varphi(f_i)$ then $\psi(f) = f_0$.

We use the following notation for subgroups of $\Gamma =
\Gamma_{\Qp}$.  For $n \geq 1$ (resp. $n \geq 2$ if $p=2$), write
$\gamma_n \in \Gamma$ for the unique element with $\chi(\gamma_n) =
1+p^n$, as well as $\gamma'_n = \gamma_1^{p^{n-1}}$ so that
$\chi(\gamma'_n) = (1+p)^{p^{n-1}}$.  Each of these elements
topologically generates the subgroup $\Gamma_n$ of $\Gamma$ mapped
isomorphically by $\chi$ onto $1+p^n\ZZ_p$, so that $\Gamma = \Gamma_1
\times \mu_{p-1}$ (resp. $\Gamma = \Gamma_2 \times \mu_2$ if $p=2$).
\end{notation}

\begin{remark}
\label{R:Robba-ring-base-change}  
For $L$ a finite extension of $K$ and for $0< r \leq
\min\{C(\pi_K),C(\pi_L)\tilde e_L/\tilde e_K\}$, 
$\calR^{r\tilde e_K/\tilde e_L}_A(\pi_L)$ is naturally a finite \'etale
$\calR^r_A(\pi_K)$-algebra, free of rank
$[L(\mu_{p^\infty}):K(\mu_{p^\infty})]$; when $L/K$ is
Galois, it has Galois group
$H_K/H_L$.  In general, the inclusion is equivariant for the
actions of $\Gamma_L$ (as a subgroup of $\Gamma_K$) and $\varphi$.
\end{remark}

\begin{notation}
For $M^{r_0}$ a module over $\calR_A^{r_0}(\pi_K)$ and for $0< s \leq
r \leq r_0$, we set $M^? = M^{r_0} \otimes_{\calR_A^{r_0}(\pi_K)}
\calR_A^?(\pi_K)$ where $? = [s,r],s,\emptyset$.  For example, $M =
M^{r_0} \otimes_{\calR_A^{r_0}(\pi_K)} \calR_A(\pi_K)$.
\end{notation}

\begin{defn}
Let $r_0 \in (0, C(\pi_K)]$.  A \emph{$\varphi$-module} over
  $\calR^{r_0}_A(\pi_K)$ is a finite projective
  $\calR^{r_0}_A(\pi_K)$-module $M^{r_0}$ equipped with an isomorphism
  $\varphi^* M^{r_0} \cong M^{r_0/p}$.  A \emph{$\varphi$-module} over
  $\calR_A(\pi_K)$ is the base change to $\calR_A(\pi_K)$ of a
  $\varphi$-module over some $\calR^{r_0}_A(\pi_K)$.  For $M^{r_0}$ a
  $\varphi$-module over $\calR_A^{r_0}(\pi_K)$, the isomorphism
  $\varphi^* M^{r_0} \cong M^{r_0/p}$ induces a continuous $A$-linear
  homomorphism $\psi: M^{r_0/p} \cong \varphi (M^{r_0})
  \otimes_{\varphi(\calR_A^{r_0}(\pi_K))} \calR_A^{r_0/p}(\pi_K) \to
  M^{r_0}$ given by $\psi(\varphi(m) \otimes f) = m \otimes \psi(f)$
  for $m \in M^{r_0}$ and $f \in \calR_A^{r_0/p}(\pi_K)$.

A \emph{$\varphi$-bundle} over $\calR_A^{r_0}(\pi_K)$ is a vector bundle $(M^{[s,r]})$ over $\calR_A^{r_0}(\pi_K)$ equipped with  isomorphisms $\varphi^*M^{[s,r]} \cong M^{[s/p, r/p]}$ for all $0 < s \leq r \leq r_0$ satisfying the obvious compatibility conditions.
\end{defn}

\begin{prop}
\label{P:phi module v.s. phi bundle}
The natural functor from  $\varphi$-modules over $\calR^{r_0}_A(\pi_K)$ to  $\varphi$-bundles over $\calR_A^{r_0}(\pi_K)$ is an equivalence of categories.
\end{prop}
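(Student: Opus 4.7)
The plan is to establish full faithfulness and essential surjectivity separately; the first is formal, while the second is where the $\varphi$-structure is doing real work.

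For full faithfulness, suppose $M^{r_0}$ and $N^{r_0}$ are $\varphi$-modules with associated $\varphi$-bundles $(M^{[s,r]})$ and $(N^{[s,r]})$. Being finite projective, both $M^{r_0}$ and $\Hom_{\calR_A^{r_0}(\pi_K)}(M^{r_0}, N^{r_0})$ are coadmissible. Using Lemma~\ref{L:schneider-teitelbaum}(2)(3), we identify
\[
\Hom_{\calR_A^{r_0}(\pi_K)}(M^{r_0}, N^{r_0}) \cong \varprojlim_{s \to 0^+} \Hom_{\calR_A^{[s, r_0]}(\pi_K)}(M^{[s, r_0]}, N^{[s, r_0]}),
\]
and the $\varphi$-compatibility on either side cuts out the same subset of this inverse limit. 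Hence the natural functor is fully faithful.

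For essential surjectivity, let $(M^{[s,r]})$ be a $\varphi$-bundle and set $M^{r_0} = \varprojlim_{s \to 0^+} M^{[s, r_0]}$. In view of Proposition~\ref{P:finite-generation}(3) and the Remark following it, it suffices to exhibit an admissible cover of $(0, r_0]$ on which the bundle is uniformly finitely generated. Here the key idea is to bootstrap via the $\varphi$-isomorphism. Take $s_0 = r_0/p$. Since $\calR_A^{[s_0, r_0]}(\pi_K)$ is noetherian, the finite flat module $M^{[s_0, r_0]}$ admits a surjection $(\calR_A^{[s_0, r_0]}(\pi_K))^{\oplus n} \twoheadrightarrow M^{[s_0, r_0]}$ for some $n$. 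Applying $\varphi^*$ (which is a base change and so preserves surjections) and using the isomorphism $\varphi^* M^{[s_0, r_0]} \cong M^{[s_0/p, r_0/p]}$, we obtain a surjection $(\calR_A^{[s_0/p, r_0/p]}(\pi_K))^{\oplus n} \twoheadrightarrow M^{[s_0/p, r_0/p]}$. Iterating, the module $M^{[r_0/p^{k+1}, r_0/p^k]}$ is $n$-finitely generated for every $k \geq 0$. These closed intervals form an admissible cover of $(0, r_0]$ with consecutive intervals meeting at a single point, so $(M^{[s,r]})$ is uniformly finitely generated; the cited Remark and Proposition~\ref{P:finite-generation}(3) then force $M^{r_0}$ to be finite projective over $\calR_A^{r_0}(\pi_K)$.

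It remains to produce the $\varphi$-module structure on $M^{r_0}$. The identification $(\varphi^* M^{r_0}) \otimes_{\calR_A^{r_0/p}(\pi_K)} \calR_A^{[s, r_0/p]}(\pi_K) \cong \varphi^* M^{[ps, r_0]}$ for $0 < s \leq r_0/p$, combined with the bundle $\varphi$-isomorphisms, shows that the two coherent sheaves on $\Max(A) \times \rmA^1(0, r_0/p]$ associated to $\varphi^* M^{r_0}$ and $M^{r_0/p}$ agree; by Lemma~\ref{L:schneider-teitelbaum}(3) this descends to an isomorphism $\varphi^* M^{r_0} \stackrel{\sim}{\to} M^{r_0/p}$ of modules. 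Thus $M^{r_0}$ is a $\varphi$-module whose associated $\varphi$-bundle recovers $(M^{[s,r]})$.

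The main obstacle is arranging uniform finite generation across all of $(0, r_0]$, which is the nontrivial content here: a priori, $\calR_A^{r_0}(\pi_K)$ is not noetherian, so Proposition~\ref{P:finite-generation}(3) really needs data of this uniform type, and without the $\varphi$-action one has no mechanism to bound the number of generators as $s \to 0^+$. The $\varphi$-pullback, being a base change along the Frobenius rescaling the radii by a factor of $p$, exactly supplies this bound, and this is the essential input distinguishing $\varphi$-bundles from general vector bundles.
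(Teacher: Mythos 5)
Your proof is correct and follows essentially the same route as the paper's: both reduce essential surjectivity to checking a uniform finiteness property over the noetherian ring $\calR_A^{[r_0/p,r_0]}(\pi_K)$ and propagate it to the intervals $[r_0/p^{k+1},r_0/p^k]$ by pulling back along powers of $\varphi$, then invoke Proposition~\ref{P:finite-generation}. The only cosmetic difference is that the paper establishes uniform finite \emph{presentation} directly while you establish uniform finite \emph{generation} and appeal to the subsequent Remark to upgrade it for vector bundles; your extra details on full faithfulness and on producing the $\varphi$-structure on the global sections are spelling out steps the paper leaves to the reader.
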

\begin{proof}
This natural functor is obviously fully faithful.  To check essential surjectivity, by Proposition~\ref{P:finite-generation}, it suffices to check that  any $\varphi$-bundle $(M^{[s,r]})$ is uniformly finitely presented.  Since $\calR_A^{[r_0/p, r_0]}(\pi_K)$ is noetherian, $M^{[r_0/p, r_0]}$ is $(m,n)$-finitely presented for some $m,n \in \NN$.  Pulling back along $\varphi^a$ for any $a\in\NN$, we see that $M^{[r_0/p^{a+1}, r_0/p^a]} \cong (\varphi^a)^* M^{[r_0/p, r_0]}$ is also $(m,n)$-finitely presented.  This checks that the $\varphi$-bundle is uniformly finitely presented, and finishes the proof.
\end{proof}

\begin{remark}
Proposition~\ref{P:phi module v.s. phi bundle}
 shows that the two types of objects considered in \cite{kiran-liu}, $(\varphi, \Gamma)$-modules over $\calR_A$ and families of $(\varphi, \Gamma)$-modules, are actually the same thing.  We will no longer distinguish them from now on.  But we remind the reader that these categories do depend on the choice of $r_0$.
\end{remark}

The following two technical lemmas will be used in the proof of Theorem~\ref{T:global triangulation step 1}, the first of which is a strengthening of Lemma~\ref{L:isom on Robba descent} making use of the $\varphi$-structure.

\begin{lemma}
\label{L:extending isom and surj}
Let $M^{r_0}$ and $N^{r_0}$ be two  $\varphi$-modules over $\calR_A^{r_0}(\pi_K)$ and let $g: M \to N$ be a morphism of  $\varphi$-modules over $\calR_A(\pi_K)$.  Then $g$ is induced by some morphism $g^{r_0}:M^{r_0} \to N^{r_0}$ of  $\varphi$-modules over $\calR_A^{r_0}(\pi_K)$. Moreover, if $g$ is injective (resp. surjective), so is $g^{r_0}$.
\end{lemma}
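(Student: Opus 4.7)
My plan is to establish existence in two steps—first extending $g$ to a map at some intermediate radius via finite generation, then using the $\psi$-operator to upgrade the radius to $r_0$—and then to deduce the injectivity and surjectivity statements separately.

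For existence, I would first use that $M^{r_0}$ is finite projective, hence finitely generated, over $\calR_A^{r_0}(\pi_K)$ to find $r_1 \in (0, r_0]$ such that $g$ carries a finite set of generators of $M^{r_0}$ into $N^{r_1}$ (using $N = \bigcup_{0 < r \leq r_0} N^r$). This gives a $\calR_A^{r_0}(\pi_K)$-linear map $M^{r_0} \to N^{r_1}$, which extends by scalars to a morphism $g^{r_1}: M^{r_1} \to N^{r_1}$ of $\varphi$-modules over $\calR_A^{r_1}(\pi_K)$. After shrinking, I may assume $r_1 = r_0/p^k$ for some $k \in \NN$.

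To lift $g^{r_1}$ to $g^{r_0}$, I would exploit the $\psi$-operator. Working locally on the base to trivialize $M^{r_0}$ and $N^{r_0}$, represent $g^{r_1}$ by a matrix $G$ with entries in $\calR_A^{r_1}(\pi_K)$, and the linearized Frobenii of $M^{r_0}, N^{r_0}$ by invertible matrices $\Phi, \Psi$ over $\calR_A^{r_0/p}(\pi_K)$. The $\varphi$-equivariance of $g^{r_1}$ reads $\varphi(G) = \Psi^{-1} G \Phi$; applying $\psi$ entrywise and using $\psi \circ \varphi = \id$ yields the identity
\[ G = \psi(\Psi^{-1} G \Phi). \]
Now iterate, using that $\psi$ maps $\calR_A^{r/p}(\pi_K)$ into $\calR_A^{r}(\pi_K)$ for each $r \leq r_0$. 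Suppose inductively that $G$ has entries in $\calR_A^{r_0/p^{k-j}}(\pi_K)$ for some $j \in \{0, \dots, k-1\}$ (the base case $j=0$ being our starting point $G \in \calR_A^{r_1}(\pi_K)$). Since $p^{k-j} \geq p$, the ring $\calR_A^{r_0/p}(\pi_K)$ sits inside $\calR_A^{r_0/p^{k-j}}(\pi_K)$, so the product $\Psi^{-1} G \Phi$ has entries in $\calR_A^{r_0/p^{k-j}}(\pi_K)$; applying $\psi$ then places $G$ in $\calR_A^{r_0/p^{k-j-1}}(\pi_K)$. After $k$ iterations, $G$ has entries in $\calR_A^{r_0}(\pi_K)$, producing the desired $g^{r_0}$.

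For injectivity: if $g$ is injective, then its restriction to $M^{r_0} \hookrightarrow M$ (the embedding following from flatness of $\calR_A^{r_0}(\pi_K) \to \calR_A(\pi_K)$ combined with finite projectivity of $M^{r_0}$) is injective, and this restriction factors through $g^{r_0}$ followed by $N^{r_0} \hookrightarrow N$, forcing $g^{r_0}$ to be injective. For surjectivity: the cokernel $Q^{r_0} := \Coker(g^{r_0})$ is a $\varphi$-module (cokernels of $\varphi$-equivariant maps inherit $\varphi$-structure) finitely generated as a quotient of $N^{r_0}$; the hypothesis $\Coker(g) = 0$ gives $Q^{r_0} \otimes_{\calR_A^{r_0}(\pi_K)} \calR_A(\pi_K) = 0$, so Lemma~\ref{L:isom on Robba descent} yields $r \in (0, r_0]$ with $Q^{r_0} \otimes_{\calR_A^{r_0}(\pi_K)} \calR_A^r(\pi_K) = 0$; choosing $k$ large enough that $r_0/p^k \leq r$, we get $Q^{r_0/p^k} = 0$, i.e.\ $(\varphi^k)^* Q^{r_0} = 0$ via the iterated linearized Frobenius; since $\calR_A^{r_0/p^k}(\pi_K)$ is free of rank $p^k$ (hence faithfully flat) over $\varphi^k(\calR_A^{r_0}(\pi_K))$ by iterating the degree-$p$ freeness in Definition~\ref{D:field of norms}, one concludes $Q^{r_0} = 0$. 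The hard part will be the careful bookkeeping of radii in the $\psi$-iteration, so that matrix entries land in compatible rings at each step and $\psi$ can be applied to gain the required factor of $p$ in the radius.
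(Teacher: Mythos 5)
Your argument is essentially correct and follows the same core strategy as the paper: extend $g$ to some small radius $r_1$, then iterate the operation ``apply $\psi$ after precomposing with $\varphi$'' to increase the radius to $r_0$, and finally handle injectivity/surjectivity. Your identity $G = \psi(\Psi^{-1}G\Phi)$ is exactly the coordinate expression of the paper's construction $g^{r'} = \psi_N \circ g^{r'/p} \circ \varphi_M$. However, there are two points worth flagging.

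First, the step ``working locally on the base to trivialize $M^{r_0}$ and $N^{r_0}$'' is not justified. If ``locally on the base'' means over affinoid subdomains of $\Max(A)$, then the claim that $M^{r_0}$ becomes free is precisely the open Conjecture~\ref{C:strong locally free}. If it means Zariski-localizing on $\Spec\calR_A^{r_0}(\pi_K)$, then $\psi$ and $\varphi$ do not interact well with that localization (they change radii, so are not ring maps on $\calR_A^{r_0}(\pi_K)$), and gluing the local constructions back into a global $g^{r_0}$ would require additional care. The fix is to discard the matrices entirely: define $g^{r'} := \psi_N \circ g^{r'/p} \circ \varphi_M : M^{r'} \to N^{r'}$ directly (this composite is $\calR_A^{r'}(\pi_K)$-linear because $\psi(\varphi(a)x) = a\psi(x)$, and one checks using $\psi\varphi = \id$ that it restricts to $g^{r}$), and iterate. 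This is what the paper does, and your identity $G = \psi(\Psi^{-1}G\Phi)$ is just this construction written in nonexistent coordinates.

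Second, your injectivity argument is actually simpler than the paper's. The paper treats injectivity and surjectivity uniformly, taking $Q^{r_0}$ to be the kernel or cokernel and showing it is finitely generated via a $\varphi$-bundle argument (so that Lemma~\ref{L:isom on Robba descent} applies). You instead observe directly that $g^{r_0}$ followed by the injection $N^{r_0} \hookrightarrow N$ equals the injection $M^{r_0} \hookrightarrow M$ followed by the injective $g$, so $g^{r_0}$ is injective. This sidesteps the finite generation of the kernel entirely. (One small correction: the injectivity of $M^{r_0} \to M$ does not need flatness of $\calR_A^{r_0}(\pi_K) \to \calR_A(\pi_K)$, which is not obvious; it follows just from $M^{r_0}$ being a direct summand of a finite free module and $\calR_A^{r_0}(\pi_K) \to \calR_A(\pi_K)$ being injective.) Your surjectivity argument coincides with the paper's, though you rightly note that finite generation of the cokernel is automatic, whereas for the kernel the paper needs the $\varphi$-bundle argument.
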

\begin{proof}
The morphism $g$ is the base change of some $g^r: M^r \to N^r$ for some $r \in (0, r_0]$. For $r' = \min\{pr, r_0\}$, $g^r$ induces $g^{r'/p}: M^{r'/p} \to N^{r'/p}$. We define
\[
g^{r'}: M^{r'} \xrightarrow{\varphi} M^{r'/p} \xrightarrow{g^{r'/p}} N^{r'/p} \xrightarrow{\psi} N^{r'}.
\]
It is easy to check that $g^{r'}$ is a homomorphism of $\varphi$-modules over $\calR_A^{r'}(\pi_K)$, which agrees with $g^r$ upon base change to $\calR^r_A(\pi_K)$.
Iterating this construction gives the morphism $g^{r_0}:M^{r_0} \to N^{r_0}$.
Now, suppose $g$ is injective (resp. surjective).  Let $Q^{r_0}$ be the kernel (resp. cokernel) of $g^{r_0}$; then $\varphi^*(Q^r) \cong Q^{r/p}$ for $r \in (0, r_0]$.  Using the similar argument as in the proof of Proposition~\ref{P:phi module v.s. phi bundle} (by invoking Proposition~\ref{P:finite-generation}(1)), we know that $Q^{r_0}$ is finitely generated over $\calR^{r_0}_A(\pi_K)$.  By the injectivity (resp. surjectivity) of $g$ and Lemma~\ref{L:isom on Robba descent}, we know that $Q^r = 0$ for some $r$.  But then for $r' = \min\{pr, r_0\}$, $\varphi^*(Q^{r'}) = Q^{r'/p} = 0$ implies that $Q^{r'} =0$.  Iterating this proves the lemma.
\end{proof}

\begin{notation}
For any $f \in A$, we let $Z(f) \subseteq \Max(A)$ denote the closed subspace defined by $f$.
\end{notation}

\begin{lemma}
\label{L:local saturation=>global saturation}
Let $f \in A$ be a nonzero element.
Let $g: M \to N$ be a morphism of  $\varphi$-modules over $\calR_A$ (or $\calR_A^{r_0}$ for some $r_0>0$).  Assume that $g_z: M_z \to N_z$ is surjective for every $z \in \Max(A) \backslash Z(f)$.  Then $g:M[\frac 1f] \to N[\frac 1f]$ is surjective.
\end{lemma}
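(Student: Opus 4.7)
The plan is to show that the cokernel $Q$ of $g$ is annihilated by some power of $f$; this immediately yields the surjectivity of $g: M[\frac{1}{f}] \to N[\frac{1}{f}]$, since localization is exact. First, by the second assertion of Lemma~\ref{L:extending isom and surj}, lift $g$ to a morphism $g^{r_0}: M^{r_0} \to N^{r_0}$ of $\varphi$-modules over $\calR_A^{r_0}(\pi_K)$ (if $g$ is already defined over $\calR_A^{r_0}(\pi_K)$, take $g^{r_0} = g$); let $Q^{r_0} = \Coker(g^{r_0})$. This is finitely generated over $\calR_A^{r_0}(\pi_K)$ and coadmissible by Lemma~\ref{L:schneider-teitelbaum}(5), and inherits an isomorphism $\varphi^* Q^{r_0} \cong Q^{r_0/p}$ from the $\varphi$-structures on $M^{r_0}$ and $N^{r_0}$. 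It suffices to produce an integer $n \geq 0$ with $f^n Q^{r_0} = 0$.

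Next I promote the pointwise surjectivity from $g$ to $g^{r_0}$. For any $z \in \Max(A) \setminus Z(f)$, the specialization $g_z: M_z \to N_z$ is a surjective morphism of $\varphi$-modules over $\calR_{\kappa_z}(\pi_K)$ by hypothesis; applying Lemma~\ref{L:extending isom and surj} now over the field $\kappa_z$ forces $g^{r_0}_z$ to be surjective as well, so that $Q^{r_0}_z = Q^{r_0}/\gothm_z Q^{r_0} = 0$. Restrict to the single subannulus $[r_0/p, r_0]$: the module $Q^{[r_0/p, r_0]} := Q^{r_0} \otimes_{\calR_A^{r_0}(\pi_K)} \calR_A^{[r_0/p, r_0]}(\pi_K)$ is finitely generated over the noetherian Jacobson affinoid algebra $\calR_A^{[r_0/p, r_0]}(\pi_K)$ and vanishes in every fiber over $z \notin Z(f)$. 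The support of the associated coherent sheaf on $\Max(A) \times \rmA^1[r_0/p, r_0]$ is therefore contained in $Z(f) \times \rmA^1[r_0/p, r_0]$; by the Nullstellensatz for affinoid algebras, $f$ lies in $\sqrt{\mathrm{Ann}(Q^{[r_0/p, r_0]})}$, so some $f^n$ annihilates $Q^{[r_0/p, r_0]}$.

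The final step is to spread this annihilation to all radii using the $\varphi$-structure. Since $\varphi: \calR_A^{[s, r]}(\pi_K) \to \calR_A^{[s/p, r/p]}(\pi_K)$ is $A$-linear and $\varphi^* Q^{[s, r]} \cong Q^{[s/p, r/p]}$, iterated pullback shows that the same $n$ annihilates $Q^{[r_0/p^{k+1}, r_0/p^k]}$ for every $k \geq 0$. As $\{[r_0/p^{k+1}, r_0/p^k]\}_{k \geq 0}$ is an admissible cover of $(0, r_0]$, the sheaf property (Lemma~\ref{L:schneider-teitelbaum}(2)) forces $f^n Q^{[s, r_0]} = 0$ for every $s \in (0, r_0]$, and the inverse limit then gives $f^n Q^{r_0} = 0$, as required. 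The main obstacle I anticipate is precisely the uniformity of the annihilation exponent across all concentric subannuli; the $\varphi$-structure is exactly what converts a single Nullstellensatz fact on $[r_0/p, r_0]$ into annihilation throughout the entire punctured disc $(0, r_0]$.
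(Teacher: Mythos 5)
Your proof is correct and follows essentially the same route as the paper's: reduce to $\calR_A^{r_0}$ via Lemma~\ref{L:extending isom and surj}, observe $Q_z = 0$ off $Z(f)$ so that $f^m$ kills $Q^{[r_0/p,r_0]}$ on the noetherian annulus, then pull back along powers of $\varphi$ to get the same exponent on all smaller annuli, whence $f^m Q = 0$. The only difference is a bit of extra elaboration at the final assembly step, which is fine.
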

\begin{proof}
If both  $\varphi$-modules are defined over $\calR_A$, we may choose $r_0>0$ such that both modules and the morphism are the base change from $\calR_A^{r_0}$; in this case, the surjectivity of the model $g^{r_0}_z$ of $g_z$ over $\calR^{r_0}_{\kappa_z}$ for $z \in \Max(A)\backslash Z(f)$ is still guaranteed by Lemma~\ref{L:extending isom and surj}.  Therefore, we need only to prove the lemma when the conditions are stated for  $\varphi$-modules over $\calR_A^{r_0}$.
Let $Q$ denote the cokernel of $g$; it is finitely presented.  The assumption of the lemma implies that $Q_z=0$ for any $z \in \Max(A)\backslash Z(f)$. Therefore, $Q^{[r_0/p,r_0]}$ is supported on $Z(f) \times \rmA^1[r_0/p,r_0]$, and  is then killed by $f^m$ for some $m\in \NN$.  Pulling back along powers of $\varphi$, we have $Q^{[r_0/p^{n+1}, r_0/p^n]} \cong (\varphi^n)^* Q^{[r_0/p,r_0]}$ is also killed by $f^m$ for any $n \in \NN$.  Therefore, $Q$ is killed by $f^m$, concluding the lemma.
\end{proof}

\begin{defn}\label{D:phigammamodule}
Let $r_0 \in (0, C(\pi_K)]$.  A \emph{$(\varphi, \Gamma_K)$-module}
  over $\calR_A^{r_0}(\pi_K)$ is a $\varphi$-module $M^{r_0}$ over
  $\calR_A^{r_0}(\pi_K)$ equipped with a commuting semilinear
  continuous action of $\Gamma_K$.  A \emph{$(\varphi,
    \Gamma_K)$-module} over $\calR_A(\pi_K)$ is the base change to
  $\calR_A(\pi_K)$ of a $(\varphi, \Gamma_K)$-module over
  $\calR_A^{r_0}(\pi_K)$ for some $r_0 \in (0, C(\pi_K)]$.  We note
    that the map $\psi$ commutes with the action of $\Gamma_K$.

If $L$ is a finite extension of $K$ and $M$ is a $(\varphi,
\Gamma_L)$-module over $\calR_A(\pi_L)$, we define the \emph{induced}
$(\varphi, \Gamma_K)$-module to be $\Ind_L^K M =
\Ind_{\Gamma_K}^{\Gamma_L}M$, viewed as an $\calR_A(\pi_K)$-module,
where the action of $\varphi$ is inherited from $M$.
\end{defn}

\begin{remark}
The continuity condition in the preceding definition means that, for
any $m \in M^{[s,r]}$ with $0 < s \leq r \leq r_0$, the map $\Gamma_K
\to M^{[s,r]}$, $\gamma \mapsto \gamma(m)$ is continuous for the
profinite topology on $\Gamma_K$ and the Banach topology on
$M^{[s,r]}$.  In the next proposition, we reformulate this condition
more explicitly, and show that one in fact obtains an action of
$\calR_A^\infty(\Gamma_K)$ on $M$, the latter ring being defined in
Definition~\ref{D:omega-Gamma}.
\end{remark}

The following claims are well-known, but not well documented in the
literature, so we provide a proof.

\begin{prop}
\label{P:calR_A(Gamma_K) acts on M}
Let $r_0 \in (0,C(\pi_K)]$, and let $M^{r_0}$ be a finitely generated
  $\calR_A^{r_0}(\pi_K)$-module, say generated by $m_1,\ldots,m_N$.
  For any $s \in (0,r_0]$ let $|\cdot|_{M^{[s,r_0]}}$ denote any fixed
    $\calR_A^{[s,r_0]}(\pi_K)$-Banach module norm on $M^{[s,r_0]}$,
    and $\|\cdot\|_{M^{[s,r_0]}}$ the associated operator norm.
    Assume $M^{r_0}$ is equipped with a semilinear action of
    $\Gamma_K$.
\begin{itemize}
\item[(1)] In order that the action of $\Gamma_K$ on $M^{r_0}$ be
  continuous, it is necessary and sufficient that for each
  $i=1,\ldots,N$ one has $\lim_{\gamma \to 1} \gamma(m_i) = m_i$.
\item[(2)] If the action of $\Gamma_K$ on $M^{r_0}$ is continuous,
  then for each $s$ one has $\lim_{\gamma \to 1}
  \|\gamma-1\|_{M^{[s,r_0]}} = 0$.
\item[(3)] If the action of $\Gamma_K$ on $M^{r_0}$ is continuous,
  then the action of $A[\Gamma_K]$ extends by continuity to an action
  of $\calR^\infty_A(\Gamma_K)$.  More precisely, taking any $n \geq
  1$ (resp.\ $n \geq 2$ if $p=2$) such that $\Gamma_n \subseteq
  \Gamma_K$, for any $s \in (0,r_0]$ there exists $g_s \in \NN$ such
    that $\|(\gamma'_n-1)^{g_s}\|_{M^{[s,r_0]}} \leq \omega$.
\end{itemize}
\end{prop}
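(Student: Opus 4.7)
My plan is to prove (1), (2), and (3) in sequence. Necessity in (1) is immediate from the definition. For sufficiency, fix $s \in (0,r_0]$ and $m \in M^{[s,r_0]}$; by Lemma~\ref{L:schneider-teitelbaum}(3), the $m_i$ generate $M^{[s,r_0]}$ over $\calR_A^{[s,r_0]}(\pi_K)$, so we may write $m = \sum_i f_i m_i$ with $f_i \in \calR_A^{[s,r_0]}(\pi_K)$. The identity
\[
\gamma(m) - m = \sum_i \gamma(f_i)\bigl(\gamma(m_i) - m_i\bigr) + \sum_i \bigl(\gamma(f_i) - f_i\bigr) m_i
\]
reduces continuity of $\gamma \mapsto \gamma(m)$ to the hypothesis $\gamma(m_i) \to m_i$ together with the standard continuity of the $\Gamma_K$-action on $\calR_A^{[s,r_0]}(\pi_K)$.

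For (2), I would make this decomposition uniform. The continuous surjection $\calR_A^{[s,r_0]}(\pi_K)^{\oplus N} \to M^{[s,r_0]}$ carrying the standard basis to the $m_i$ satisfies the Banach open mapping theorem (compare Lemma~\ref{L:perturbation-of-generators}), so there exists $C > 0$ such that every $m \in M^{[s,r_0]}$ admits a decomposition $m = \sum_i f_i m_i$ with $\max_i |f_i|_{[s,r_0]} |m_i|_{M^{[s,r_0]}} \leq C|m|_{M^{[s,r_0]}}$. Substituting this into the identity from (1), and invoking the standard uniform continuity $\|\gamma - 1\|_{\calR_A^{[s,r_0]}(\pi_K)} \to 0$ (whence also the boundedness of $\|\gamma\|_{\calR_A^{[s,r_0]}(\pi_K)}$ near $1$) together with the finitely many bounds $|\gamma(m_i) - m_i|_{M^{[s,r_0]}} \to 0$, yields $\|\gamma - 1\|_{M^{[s,r_0]}} \to 0$ as $\gamma \to 1$.

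For (3), the substantive task is to produce the operator norm bound; the extension to $\calR_A^\infty(\Gamma_K)$ will follow formally from it. The trick I plan to use is a \emph{unitarization}: set $|m|' := \sup_{\gamma \in \Gamma_K} |\gamma(m)|_{M^{[s,r_0]}}$. By pointwise boundedness (a consequence of (1)), Banach-Steinhaus, and compactness of $\Gamma_K$, $|\cdot|'$ is a well-defined norm equivalent to $|\cdot|_{M^{[s,r_0]}}$; after simultaneously unitarizing the Gauss norm on $\calR_A^{[s,r_0]}(\pi_K)$, it remains a Banach module norm over the original Robba ring up to an equivalence constant. Since $\Gamma_K$ acts isometrically in $|\cdot|'$, the non-archimedean triangle inequality gives $\|\gamma - 1\|' \leq 1$ for all $\gamma \in \Gamma_K$, where $\|\cdot\|'$ denotes operator norm with respect to $|\cdot|'$; and part (2) translates to $\|\gamma - 1\|' \to 0$ as $\gamma \to 1$. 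Choose $k$ with $\|(\gamma_n')^{p^k} - 1\|' \leq p^{-1}$; the binomial identity
\[
(\gamma_n' - 1)^{p^k} = \bigl((\gamma_n')^{p^k} - 1\bigr) - \sum_{j=1}^{p^k - 1} \binom{p^k}{j}(\gamma_n' - 1)^j,
\]
combined with $|\binom{p^k}{j}|_p \leq p^{-1}$ for $1 \leq j < p^k$ and $\|(\gamma_n' - 1)^j\|' \leq 1$, yields $\|(\gamma_n' - 1)^{p^k}\|' \leq p^{-1}$. Iterating via $\|X^{p^{k+l}}\|' \leq (\|X^{p^k}\|')^{p^l}$ gives $\|(\gamma_n' - 1)^{p^{k+l}}\|' \leq p^{-p^l}$; translating back to the original norm and choosing $l$ sufficiently large produces $g_s := p^{k+l}$ satisfying $\|(\gamma_n' - 1)^{g_s}\|_{M^{[s,r_0]}} \leq \omega$. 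From this bound one deduces $\|(\gamma_n' - 1)^i\|_{M^{[s,r_0]}} \leq C'\omega^{i/g_s}$ for all $i$, so that for any $f = \sum_i a_i(\gamma_n' - 1)^i \in \calR_A^\infty(\Gamma_K)$, the entire decay of the $|a_i|$ guarantees convergence of $\sum_i a_i(\gamma_n' - 1)^i(m)$ in $M^{[s,r_0]}$, giving the claimed extension of the $A[\Gamma_K]$-action. The chief technical subtlety is checking the compatibility of the unitarized norm with the $\calR_A^{[s,r_0]}(\pi_K)$-module structure, which amounts to carefully tracking equivalence constants when passing between the original and unitarized Gauss norms on the Robba ring.
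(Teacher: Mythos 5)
Your plan for (1)--(2) mirrors the paper's: decompose a general $m$ in terms of the generators via an identity for $\gamma(fm)-fm$, reduce to the free rank one Robba ring case, and use a Banach open mapping theorem input to make the decomposition uniform. Where the paper diverges is that it does not treat the base case as ``standard'': the uniform bound $\|\gamma-1\|_{\calR_A^{[s,r_0]}(\pi_K)} \to 0$ (and boundedness of $\|\gamma\|_{\calR_A^{[s,r_0]}(\pi_K)}$) is exactly what the rank one case of the proposition asserts, and it is \emph{not} a formal consequence of pointwise continuity plus compactness (equicontinuity via Banach--Steinhaus plus pointwise convergence at $\gamma=1$ does not give uniform convergence on the unit ball of an infinite-dimensional Banach space). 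The paper supplies the missing estimate by reducing to $K=\Qp$ via $\Ind_K^{\Qp}$ and then explicitly bounding $\gamma(\pi)-\pi = (1+\pi)\bigl((1+\pi)^{ap^n}-1\bigr)$ and $\gamma(\pi)^{-1}-\pi^{-1}$ uniformly in $a\in\Zp$, using the specific form of the $\Gamma_{\Qp}$-action on $\pi$. Citing this as ``standard'' leaves a genuine gap, although the reduction surrounding it is sound.

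For (3) your route is genuinely different, and in one respect cleaner. The paper first shows the Gauss norm is $\Gamma_n$-invariant, then constructs a $\Gamma_n$-invariant norm on $M^{[s,r_0]}$ by averaging over $\Gamma_n/\Gamma_{n'}$, and compares. You instead unitarize by taking $|m|' = \sup_{\gamma \in \Gamma_K}|\gamma(m)|$ directly over the whole group, simultaneously unitarizing the ring norm; Banach--Steinhaus plus compactness of $\Gamma_K$ gives the required equivalence of norms and bounded constant. Your sup-norm is automatically nonarchimedean, so the ultrametric inequality for the associated operator norm holds on the nose, which is needed to bound the binomial expansion. Both arguments then rest on the same $p$-adic fact, namely that $(\gamma'_n-1)^{p^k}$ differs from $(\gamma'_n)^{p^k}-1$ by an expression with coefficients divisible by $p$; your form of the binomial identity and the iteration $\|X^{p^{k+l}}\|' \le (\|X^{p^k}\|')^{p^l}$ are correct. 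A further small advantage of your route is that it sidesteps the $\Ind_K^{\Qp}$ reduction inside (3), working directly over $\calR_A^{[s,r_0]}(\pi_K)$ — provided of course the uniform rank one estimate flagged above is available to launch the Banach--Steinhaus step. Overall: a correct and somewhat slicker (3), contingent on filling the base-case estimate in (1)--(2).
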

\begin{proof}
For (1) and (2), if the action is continuous then clearly the
condition on the $m_i$ holds.  Conversely, assuming the condition on
the $m_i$ holds, we now show that the condition on the operator norms
holds, which implies that the action of $\Gamma_K$ is continuous,
because we only need to check continuity at the origin.  A general $m
\in M^{[s,r_0]}$ is of the form $\sum_{i=1}^N f_im_i$ with $f_i \in
\calR_A^{[s,r_0]}(\pi_K)$, and since there are only finitely many
terms, it suffices to treat each $f_im_i$ separately.  Writing
\[
\gamma(f_im_i)-f_im_i = (\gamma-1)(f_i)m_i+\gamma(f_i)(\gamma-1)(m_i),
\]
it suffices to make $(\gamma-1)(f_i)$ arbitrarily small, to keep
$\gamma(f_i)$ bounded, and to make $(\gamma-1)(m_i)$ arbitrarily
small, with respect to $\|\cdot\|_{M^{[s,r_0]}}$ as $\gamma \to 1$.
The third term can be made small by hypothesis.  The second term is
bounded by $\|f_i\|_{M^{[s,r_0]}}$ as soon as we can make the first
term small.  So we have reduced the general case to treating the free
rank one module $M = \calR_A^{r_0}(\pi_K)$.  Since the module
$\Ind_K^{\Qp} \calR_A^{r_0}(\pi_K)$ is noncanonically a direct sum of
finitely many copies of $\calR_A^{r_0}(\pi_K)$, the desired claim for
$\calR_A^{r_0}(\pi_K)$ follows from the case of the $\calR_A^{\tilde
  e_K r_0}(\pi_{\Qp})$-module $\Ind_K^{\Qp} \calR_A^{r_0}(\pi_K)$ with
$\Gamma_{\Qp}$-action.  But, as above, a general $\calR_A^{\tilde e_K
  r_0}(\pi_{\Qp})$-module with $\Gamma_{\Qp}$-action reduces to the
case of the free rank one module.  Henceforth, we replace $\tilde e_K
r_0$ by $r_0$ for simplicity.  We now take $\pi_{\Qp} = \pi$ as in
Notation~\ref{N:simplify Qp}.  Because $|(1+\pi)-1|_{[s,r_0]} < 1$ we
have $|(1+\pi)^a-1|_{[s,r_0]} \leq C(s,r_0) < 1$ for some constant
$C(s,r_0)$ independent of $a \in \Zp$, and because taking $p$th powers
improves congruences, this implies that for any $\epsilon>0$ we can
make $|(1+\pi)^{ap^n}-1|_{[s,r_0]}<\epsilon$ independently of $a \in
\Zp$ by taking $n$ sufficiently large.  For $n \geq 1$ (resp.\ $n \geq
2$ if $p=2$) and $\gamma \in \Gamma_n$, write $\chi(\gamma) = 1+ap^n$
with $a \in \Zp$.  Given $f \in \calR_A^{[s,r_0]}$, in order to bound
$|\gamma(f)-f|_{[s,r_0]}$ it suffices to consider the positive and
negative powers of $\pi$ separately.  Elementary algebra gives
\begin{gather}\label{E:pi estimates}
\gamma(\pi)-\pi
= (1+\pi)((1+\pi)^{ap^n}-1), \\
\nonumber
\frac1{\gamma(\pi)} - \frac1\pi
= -\frac{1+\pi}{\pi}
   \frac{(1+\pi)^{ap^n}-1}{(1+\pi)((1+\pi)^{ap^n}-1)+\pi}.
\end{gather}
For any $\epsilon>0$, by taking $n$ large enough the first
(resp.\ second) right hand side can be made less than $\epsilon
|\pi|_{[s,r_0]}$ (resp.\ $\epsilon |\pi^{-1}|_{[s,r_0]}$)
independently of $a \in \Zp$.  This proves (1) and (2).

For (3), the second claim implies the first, because it provides $M$
with an action of $\calR_A^\infty(\Gamma_n)$ and one has
$\calR_A^\infty(\Gamma_K) = \calR_A^\infty(\Gamma_n)
\otimes_{\ZZ[\Gamma_n]} \ZZ[\Gamma_K]$.  Thus we concern ourselves
with the second claim.  Because $\Ind_K^{\Qp} M$ is noncanonically a
direct sum of finitely many $\Gamma_K$-stable copies of $M$, and
$\Gamma_K \subseteq \Gamma_{\Qp}$, the desired bound follows from the
case of the $\calR_A^{\tilde e_K r_0}(\pi_{\Qp})$-module $\Ind_K^{\Qp}
M$ with $\Gamma_{\Qp}$-action.  So we assume from now on that $K=\Qp$
and we replace $\tilde e_K r_0$ by $r_0$.  Since for any $a \in \Zp$
and $n \geq 1$ (resp.\ $n \geq 2$ if $p=2$) one has
$|(1+\pi)^{ap^n}-1|_{[s,r_0]} < |\pi|_{[s,r_0]}$, it follows from the
identities \eqref{E:pi estimates} that the norm $|\cdot|_{[s,r_0]}$ on
$\calR_A^{[s,r_0]}(\pi_{\Qp})$ is invariant under precomposition with
$\gamma \in \Gamma_n$.  It follows from (2) that for some $n' \geq n$,
the norm $|\cdot|_{M^{[s,r_0]}}$ is invariant under $\Gamma_{n'}$.
Then $C = \max_{\gamma \in \Gamma_n/\Gamma_{n'}}
\|\tilde\gamma\|_{M^{[s,r_0]}}$, where $\tilde\gamma$ denotes any
choice of lift of $\gamma \to \Gamma_n$, is equal to $\max_{\gamma \in
  \Gamma_n} \|\gamma\|_{M^{[s,r_0]}}$.  Keeping in mind that
$|\cdot|_{M^{[s,r_0]}}$ is invariant under $\Gamma_n$, it follows that
the average
\[
|\cdot|'_{M^{[s,r_0]}}
=
\frac1{[\Gamma_n:\Gamma_{n'}]}
\sum_{\gamma \in \Gamma_n/\Gamma_{n'}} |\cdot|_{M^{[s,r_0]}} \circ \tilde\gamma
\]
does not depend on the choice of $n'$, and is a $\Gamma_n$-invariant
norm on $M^{[s,r_0]}$ such that
\[
C^{-1}|\cdot|'_{M^{[s,r_0]}}
\leq
|\cdot|_{M^{[s,r_0]}}
\leq
C|\cdot|'_{M^{[s,r_0]}}.
\]
Assuming we know the result for the norm $|\cdot|'_{M^{[s,r_0]}}$,
there exists $g'_s \in \NN$ such that
$\|\gamma_n'-1)^{g'_s}\|'_{M^{[s,r_0]}} \leq \omega$.  Choosing $h \in
\NN$ large enough so that $\omega^hC^2 \leq 1$, we have
\[
\|(\gamma'_n-1)^{(h+1)g'_s}\|_{M^{[s,r_0]}}
\leq
C\|(\gamma'_n-1)^{(h+1)g'_s}\|'_{M^{[s,r_0]}}
\leq
C^2\omega^{h+1} \leq \omega,
\]
whence the claim for the norm $|\cdot|_{M^{[s,r_0]}}$.  By (2), there
exists $n' \geq n$ such that $||\gamma'_{n'}-1||_{M^{[s,r_0]}} \leq
C^{-2}\omega$ so that $||\gamma'_{n'}-1||'_{M^{[s,r_0]}} \leq \omega$.
Now write $(\gamma'_n-1)^{p^{n'-n}} = (\gamma'_{n'}-1) +
P(\gamma'_{n'})$, where $P(X) \in \ZZ[X]$ is divisible by $p$; the
first term has operator norm under $|\cdot|'_{M^{[s,r_0]}}$ at most
$\omega$ by the choice of $n'$, and the latter term has operator norm less
than or equal to $|p|\leq \omega$, so the operator norm
$||(\gamma'_{n}-1)^{p^{n'-n}}||'_{M^{[s,r_0]}} \leq \omega$, proving
part (3) of the proposition.
\end{proof}

It would be useful for technical purposes to know that the following
conjecture holds (see also \cite[\S3,~Question~1]{R:Bellaiche} and
\cite[Remark~6.2]{kiran-liu}).

\begin{conj}
\label{C:strong locally free}
For $M$ a $(\varphi, \Gamma_K)$-module over $\calR_A(\pi_K)$, there exists a finite admissible cover $\{\Max(A_i)\}_{i=1, \dots, n}$ of $\Max(A)$ such that each $M \widehat\otimes_A A_i$ is a finite \emph{free} $\calR_{A_i}(\pi_K)$-module.
\end{conj}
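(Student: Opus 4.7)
The plan is to work with a single representative $M^{r_0}$ of $M$ over $\calR_A^{r_0}(\pi_K)$ for some $r_0 \in (0, C(\pi_K)]$; by Proposition~\ref{P:finite-generation}(3), $M^{r_0}$ is finite projective, so it suffices to produce an admissible cover $\{\Max(A_i)\}$ of $\Max(A)$ such that each $M^{r_0} \widehat\otimes_A A_i$ is free over $\calR_{A_i}^{r_0}(\pi_K)$. The strategy has three stages: pointwise freeness, lifting a basis, and killing the resulting kernel.

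For the pointwise stage, at each $z \in \Max(A)$ the fiber $M_z$ is a classical $(\varphi,\Gamma_K)$-module over $\calR_{\kappa_z}(\pi_K)$, hence free of some rank $n$ by Kedlaya's theorem on $\varphi$-modules over the Robba ring. Applying Lemma~\ref{L:extending isom and surj} to the trivialization $\calR_{\kappa_z}(\pi_K)^{\oplus n} \stackrel\sim\to M_z$ descends it to an isomorphism of $\varphi$-modules $\calR_{\kappa_z}^{r_0}(\pi_K)^{\oplus n} \stackrel\sim\to M^{r_0}_z$. Local constancy of $n = n(z)$ on $\Max(A)$ (via Lemma~\ref{L:constant dimension function implies flat}) reduces us to fixed rank on a single connected component.

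To lift, choose $e_1,\ldots,e_n \in M^{r_0}$ reducing to the chosen basis of $M^{r_0}_z$, using the density of global sections from Lemma~\ref{L:schneider-teitelbaum}(1). Applying Lemma~\ref{L:perturbation-of-generators} to each piece $M^{[s_j,r_j]}$ of an admissible cover of $(0,r_0]$ and shrinking $\Max(A)$ to a small affinoid neighborhood $\Max(A')$ of $z$, the $e_i$ generate every $M^{[s_j,r_j]} \widehat\otimes_A A'$; Lemma~\ref{L:finite-generation-local} then produces a surjection $f\colon \calR_{A'}^{r_0}(\pi_K)^{\oplus n} \twoheadrightarrow M^{r_0}\widehat\otimes_A A'$. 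The Frobenius-averaging construction in the proof of Lemma~\ref{L:extending isom and surj} (replacing $f$ by $\psi\circ f\circ\varphi$ iteratively) renders $f$ $\varphi$-equivariant, and a similar averaging over $\Gamma_K/\Gamma_n$ (using the $\calR_A^\infty(\Gamma_K)$-module structure from Proposition~\ref{P:calR_A(Gamma_K) acts on M}) renders it $\Gamma_K$-equivariant; the kernel $K = \ker f$ is then itself a $(\varphi,\Gamma_K)$-module over $\calR_{A'}^{r_0}(\pi_K)$ and, being a direct summand of a free module of the same rank as its target, is finite projective with $K_z = 0$.

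The main obstacle is the last step: to arrange, after further shrinking $A'$, that $K$ vanishes identically. The $\varphi$-action provides a compatibility $\varphi^\ast K^{[s,r]} \cong K^{[s/p,r/p]}$, so if $K^{[s,r]}$ vanishes for a single pair $(s,r)$ on a neighborhood of $z$ then $K$ vanishes on the whole relative annulus $\Max(A')\times \rmA^1(0,r_0]$ by propagation toward the boundary, exactly as in the proof of Lemma~\ref{L:local saturation=>global saturation}. Thus the problem reduces to the following: for a \emph{single} choice of $0 < s \leq r \leq r_0$, show that the coherent sheaf $K^{[s,r]}$ on the noetherian affinoid $\Max(\calR_{A'}^{[s,r]}(\pi_K)) = \Max(A') \times \rmA^1[s,r]$, which has rank zero along the whole slice $\{z\}\times \rmA^1[s,r]$, vanishes on a box of the form $\Max(A'')\times \rmA^1[s,r]$ for some neighborhood $\Max(A'')$ of $z$. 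Without further structure this is false in general — the support of $K^{[s,r]}$ need not be a pullback from $\Max(A)$ — and the conjecture is really the assertion that the combined $(\varphi,\Gamma_K)$-action rigidifies the support enough to force this product shape. One would attempt this via the slope filtration formalism of \cite{kiran-liu}: the pure pieces of $K^{[s,r]}$ are controlled by $\varphi$ in a way that restricts their possible supports, and combined with the density of classical weight arguments from $\calR_A^\infty(\Gamma_K)$ one hopes to force $K^{[s,r]}$ to be the pullback of a coherent sheaf from $\Max(A')$. In rank one this approach succeeds and yields the character-type result mentioned in the introduction; in higher rank the rigidification of the support remains the essential missing ingredient.
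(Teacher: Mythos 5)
The statement you were asked to prove is labeled as a \textbf{Conjecture} in the paper; the authors do not prove it, and the remark immediately following it explicitly records that it is only known in special cases --- for rank one (via Theorem~\ref{T:full rank 1 classification}), for trianguline $(\varphi,\Gamma_K)$-modules, and for the essential image of $\bbD_\rig$, but not for a general \'etale $(\varphi,\Gamma_K)$-module, let alone an arbitrary one. So there is no ``paper's own proof'' to compare against, and your honest conclusion that the argument cannot be completed in higher rank is the correct status.

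That said, there is also an earlier gap in your sketch that deserves flagging. After lifting a pointwise basis to a surjection $f\colon \calR_{A'}^{r_0}(\pi_K)^{\oplus n}\twoheadrightarrow M^{r_0}\widehat\otimes_A A'$, you claim that ``the Frobenius-averaging construction in the proof of Lemma~\ref{L:extending isom and surj} \dots renders $f$ $\varphi$-equivariant,'' and similarly for the $\Gamma_K$-action. This is not what Lemma~\ref{L:extending isom and surj} does: that lemma starts from a morphism which is \emph{already} a morphism of $\varphi$-modules over $\calR_A(\pi_K)$ and extends it to the model over $\calR_A^{r_0}(\pi_K)$. It does not, and cannot, convert a non-equivariant $\calR$-linear surjection into a $\varphi$-equivariant one; the operator $\psi\circ f\circ\varphi$ lives on a shifted annulus and there is no reason for the process to converge to an intertwiner. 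Likewise, averaging over $\Gamma_K/\Gamma_n$ at best produces $\Gamma_K/\Gamma_n$-equivariance. In other words, your argument assumes without justification that one can find a $(\varphi,\Gamma_K)$-equivariant surjection from a free $(\varphi,\Gamma_K)$-module of the right rank --- but this is close to the whole content of the conjecture, and it is exactly what fails in general because the forgetful functor to $\calR_A^{r_0}(\pi_K)$-modules does not make projective-modulo-torsion objects globally free. Your concluding paragraph then correctly locates the other, support-rigidification obstruction, and that both gaps resist the known techniques is consistent with the statement being listed as an open conjecture.
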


\begin{remark}
We will prove the preceding conjecture for rank one
$(\varphi,\Gamma_K)$-modules in Theorem~\ref{T:full rank 1
  classification}, and the conjecture follows for trianguline
$(\varphi,\Gamma_K)$-modules.  The conjecture is also known for the
essential image of the functor $\bbD_\rig$ (defined in
Theorem~\ref{T:Drig} below), though not for a general \'etale
$(\varphi,\Gamma_K)$-module.
\end{remark}

The importance of $(\varphi, \Gamma)$-modules rests upon the following result.

\begin{theorem}
\label{T:Drig}
Let $V$ be a finite projective $A$-module equipped with a continuous $A$-linear action of $G_K$.
Then there is functorially associated to $V$ a $(\varphi,\Gamma_K)$-module
$\bbD_\rig(V)$ over $\calR_A(\pi_K)$.  The rule $V \mapsto \bbD_\rig(V)$ is fully faithful and exact, and it commutes with base change in $A$.
\end{theorem}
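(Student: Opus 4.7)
The plan is to realize $\bbD_\rig$ via the construction of Berger--Colmez \cite{berger-colmez}, extended to the relative Robba ring as in Kedlaya--Liu \cite{kiran-liu}. First, one applies the Tate--Sen formalism to decomplete the $G_K$-action on $V$. After choosing a $G_K$-stable $\gotho_A$-lattice $T \subset V$ (which exists since $V$ is a finite projective $A$-module with continuous $G_K$-action, so carries a $G_K$-invariant Banach norm), and working in the relative setting, I would produce a finite projective $(\varphi,\Gamma_K)$-module $\bbD^\dagger(V)$ over the relative overconvergent coefficient ring $\mathbf{B}_{K,A}^\dagger$ which is \emph{\'etale}, in the sense that the linearized Frobenius is an isomorphism. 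The key technical input is the convergence of the Tate-normalized traces in the relative setup, proved in \cite{berger-colmez}.

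I would then set $\bbD_\rig(V) := \bbD^\dagger(V) \otimes_{\mathbf{B}_{K,A}^\dagger} \calR_A(\pi_K)$. The $(\varphi,\Gamma_K)$-action descends, and one checks that this yields a $(\varphi,\Gamma_K)$-module in the sense of Definition~\ref{D:phigammamodule}: finite projectivity is preserved under flat base change, the isomorphism $\varphi^* \bbD_\rig(V) \cong \bbD_\rig(V)^{r_0/p}$ is inherited from the \'etale structure at the $\mathbf{B}_{K,A}^\dagger$-level, and continuity of $\Gamma_K$ reduces via Proposition~\ref{P:calR_A(Gamma_K) acts on M}(1) to continuity on a finite generating set, which is immediate from the construction. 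Functoriality in $V$ is built into each step.

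For full faithfulness and exactness, I would exhibit a quasi-inverse on the essential image: one recovers $V$ as the $H_K$-invariants and $\varphi$-fixed points of $\bbD_\rig(V)$ after tensoring up to the ``large'' relative Robba ring of \cite{kiran-liu}. This reconstruction is functorial and sends short exact sequences to short exact sequences, giving both full faithfulness and exactness of $\bbD_\rig$ (exactness of $\bbD^\dagger$ comes from vanishing of higher Tate--Sen cohomology in the relative setting, plus flatness of $\calR_A(\pi_K)$ over $\mathbf{B}_{K,A}^\dagger$). Finally, compatibility with base change $A \to B$ is manifest: the lattice may be chosen to be $T \whotimes_{\gotho_A} \gotho_B$, and the normalized trace operators, \'etale descent, and extension of scalars to $\calR_A(\pi_K)$ each visibly commute with $-\whotimes_A B$.

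The main obstacle is entirely contained in the Tate--Sen descent over a general $\QQ_p$-affinoid base, which is the heart of the Berger--Colmez argument; since that work does this for us, the substantive remaining task is just to match the output, which is a priori a finite projective module over the relative overconvergent ring, with our definition of $(\varphi,\Gamma_K)$-module over $\calR_A(\pi_K)$. That this matching works smoothly is ultimately a consequence of the equivalence between $\varphi$-modules and $\varphi$-bundles established in Proposition~\ref{P:phi module v.s. phi bundle}.
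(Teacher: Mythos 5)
Your proposal is correct and follows essentially the same approach as the paper, which proves this theorem purely by citation to \cite[Theorem~3.11]{kiran-liu} (itself a generalization of Berger--Colmez \cite[Th\'eor\`eme~4.2.9]{berger-colmez}); your sketch is an accurate outline of the substantive content of those references (choice of a $G_K$-stable lattice, relative Tate--Sen descent to an \'etale module over the overconvergent ring, base change to $\calR_A(\pi_K)$, and recovery of $V$ via the ``big'' Robba ring for full faithfulness, exactness, and compatibility with base change).
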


\begin{proof}
See \cite[Theorem~3.11]{kiran-liu}, which generalizes
\cite[Th\'eor\`eme~4.2.9]{berger-colmez}\footnote{R.~Liu has pointed out to us a small gap in this reference; see \cite[Theorem~1.1.4]{R:Liu2} for a fix.}. (It was also pointed out by Ga\"etan Chenevier that some modification of \cite[Th\'eor\`eme~4.2.9]{berger-colmez} can recover the theorem; see \cite[Remark~3.13]{kiran-liu}.)  For further details, see
\cite[\S2.1]{pottharst1}.
\end{proof}

\begin{lemma}
\label{L:compatibility of induction}
Let $V$ be a finite projective $A$-module equipped with a continuous
$A$-linear action of $G_L$.  Denote the induced representation of
$G_K$ by $\Ind_{G_L}^{G_K} V$, and let $\Ind_L^K \bbD_\rig(V)$ be as
in Definition~\ref{D:phigammamodule}.  Then we have
$\bbD_\rig(\Ind_{G_L}^{G_K} V) \cong \Ind_L^K \bbD_\rig(V)$.
\end{lemma}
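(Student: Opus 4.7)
The plan is to construct a natural morphism $\Phi$ via Frobenius reciprocity, then show it is an isomorphism by reducing to the fiber over each closed point of $\Max(A)$, where the compatibility is classical.

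For the construction, realize $\Ind_{G_L}^{G_K} V = \Hom_{A[G_L]}(A[G_K], V)$ as a $G_K$-representation, with its $G_L$-equivariant evaluation-at-identity map to $V$.  Since $H_L \subseteq H_K$, the functor $\bbD_\rig$ is compatible with restriction along $G_L \hookrightarrow G_K$ in the sense that for any $G_K$-representation $W$, the natural inclusion $\bbD_\rig^K(W) \hookrightarrow \bbD_\rig^L(W|_{G_L})$ realizes $\bbD_\rig^K(W)$ as a $(\varphi,\Gamma_L)$-equivariant, $\calR_A(\pi_K)$-linear subspace of a $(\varphi,\Gamma_L)$-module over $\calR_A(\pi_L)$.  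Applying this to $W = \Ind_{G_L}^{G_K}V$ and composing with the $\bbD_\rig^L$-image of the evaluation map yields a $(\varphi,\Gamma_L)$-equivariant, $\calR_A(\pi_K)$-linear morphism $\bbD_\rig(\Ind_{G_L}^{G_K}V) \to \bbD_\rig(V)$.  The Frobenius reciprocity adjunction (right adjointness of $\Ind_{\Gamma_K}^{\Gamma_L} = \Hom_{A[\Gamma_L]}(A[\Gamma_K], -)$ to restriction), enriched to respect $\calR_A(\pi_K)$-linearity and $\varphi$-equivariance, then promotes this to the desired
\[
\Phi: \bbD_\rig(\Ind_{G_L}^{G_K} V) \to \Ind_L^K \bbD_\rig(V).
\]

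Next I would verify that both sides are finite projective $\calR_A(\pi_K)$-modules of rank $[L:K]\cdot \rank_A V$: for the source this is Theorem~\ref{T:Drig}, and for the target one uses that $\calR_A(\pi_L)$ is finite \'etale of rank $[L(\mu_{p^\infty}):K(\mu_{p^\infty})]$ over $\calR_A(\pi_K)$ (Remark~\ref{R:Robba-ring-base-change}), that $\Ind_{\Gamma_K}^{\Gamma_L}$ multiplies rank over $\calR_A(\pi_K)$ by $[\Gamma_K:\Gamma_L]$, and the identity $[\Gamma_K:\Gamma_L]\cdot[L(\mu_{p^\infty}):K(\mu_{p^\infty})] = [L:K]$ arising from the tower $K \subseteq L \cap K(\mu_{p^\infty}) \subseteq L$.

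Since $\bbD_\rig$ and the two induction functors commute with base change in $A$ (Theorem~\ref{T:Drig}), the fiber $\Phi_z$ at each $z \in \Max(A)$ is the analogous morphism for the $G_L$-representation $V_z$ on the finite-dimensional $\kappa_z$-vector space.  For such a classical $p$-adic representation, the identification $\bbD_\rig(\Ind_{G_L}^{G_K} V_z) \cong \Ind_L^K \bbD_\rig(V_z)$ follows from the period-ring construction: since $\kappa_z[G_K]$ is finitely generated free over $\kappa_z[G_L]$, the relevant period-ring tensor product commutes with $\Hom_{\kappa_z[G_L]}(\kappa_z[G_K], -)$, and taking $H_K$-invariants (using $H_L = H_K \cap G_L$) recovers $\Ind_L^K \bbD_\rig(V_z)$.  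Hence $\Phi_z$ is an isomorphism for every $z$.  Since $\Phi$ is a morphism of finite projective $\calR_A(\pi_K)$-modules of the same rank, it suffices to prove global surjectivity; this follows by passing to a common $\calR_A^{r_0}(\pi_K)$-model of both as $\varphi$-modules via Proposition~\ref{P:phi module v.s. phi bundle} and Lemma~\ref{L:extending isom and surj}, and then applying Lemma~\ref{L:local saturation=>global saturation} with $f = 1$, after which bijectivity follows by rank comparison.  The main obstacle I expect is the careful bookkeeping of the various group actions in setting up the adjunction and matching it with the pointwise identification; once these are correctly arranged, the reduction to a point and the promotion from pointwise to global isomorphism are essentially formal.
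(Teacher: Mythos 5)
The paper's own ``proof'' is simply a one-line citation: ``This can be proved in the same way as in [Liu, Proposition~3.1],'' i.e.\ it asserts that Liu's period-ring computation in the pointwise case ($A$ a finite extension of $\Qp$) goes through verbatim with $A$-coefficients. Your route is genuinely different: rather than redoing Liu's computation in families, you build an intrinsic comparison map $\Phi$ by Frobenius reciprocity and then reduce the assertion that $\Phi$ is an isomorphism to the known pointwise case. This is a legitimate and arguably cleaner strategy, since all the period-ring subtleties are confined to a single fiber where one can quote the classical result, and the passage from pointwise to global is exactly the $\varphi$-module machinery the paper has already set up (Proposition~\ref{P:phi module v.s. phi bundle}, Lemma~\ref{L:extending isom and surj}, Lemma~\ref{L:local saturation=>global saturation} with $f=1$, then rank comparison); the rank identity $[\Gamma_K:\Gamma_L]\cdot[L(\mu_{p^\infty}):K(\mu_{p^\infty})]=[L:K]$ is also correct.

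One step deserves more care than you give it, however. Establishing that the two sides are \emph{abstractly} isomorphic at a point does not by itself show that \emph{your} map $\Phi_z$ is an isomorphism; you need to verify that the adjunction-constructed $\Phi_z$ coincides with (or at least is forced to agree up to units with) the period-ring identification. Relatedly, the sentence ``taking $H_K$-invariants (using $H_L = H_K \cap G_L$) recovers $\Ind_L^K\bbD_\rig(V_z)$'' compresses a genuine Mackey/double-coset argument: $G_LH_K$ is the preimage of $\Gamma_L$ in $G_K$, so $|G_L\backslash G_K/H_K| = [\Gamma_K:\Gamma_L]$ which is $>1$ whenever $L\cap K(\mu_{p^\infty})\neq K$, and it is precisely this decomposition that produces the induction from $\Gamma_L$ to $\Gamma_K$ rather than merely $\bbD_\rig^L(V_z)$ itself. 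Both of these are routine once one actually writes them out (and are essentially what Liu's Proposition~3.1 does), but as written they are the places where a reader would want the ``careful bookkeeping'' you flag at the end to actually appear.
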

\begin{proof}
This can be proved in the same way as in \cite[Proposition~3.1]{liu}.
\end{proof}

\subsection{Cohomology of $(\varphi, \Gamma)$-modules and duality}

Fontaine and Herr \cite{herr} defined a cohomology theory for $(\varphi, \Gamma)$-modules, compatible with the theory of Galois cohomology.  When $A$ is a finite extension of $\Qp$, Liu \cite{liu} showed that this cohomology theory of $(\varphi,\Gamma)$-modules admits analogs of local Tate duality and the Euler characteristic formula for $p$-adic representations of $G_K$.  In this subsection, we set up the basic formalism in the relative setting, including Tate duality pairing.  In the special case $K=\Qp$, we give an explicit computation of the pairing in terms of residues.

\begin{notation}
Let $\Delta_K$ denote the $p$-torsion subgroup of $\Gamma_K$, which is
trivial if $p \neq 2$ and at largest cyclic of order two.  When
$K=\Qp$, we write $\Delta = \Delta_{\Qp}$.  Choose $\gamma_K \in
\Gamma_K$ whose image in $\Gamma_K / \Delta_K$ is a topological
generator. (This choice is useful for explicit formulas, but if
desired one can reformulate everything to eliminate this choice.)
\end{notation}

\begin{notation}
We follow \cite{nekovar} for sign conventions throughout.  Thus, the
tensor product of complexes $X^\bullet \otimes Y^\bullet$ has
differential defined by $d_{X \otimes Y}^{i+j}(x \otimes y) = d_X^ix
\otimes y + (-1)^ix \otimes d_Y^jy$ if $x \in X^i$ and $y \in Y^j$.
For a morphism of complexes $f^\bullet: X^\bullet \to Y^\bullet$,
define its \emph{mapping fiber} $\Fib(f) = \Cone(f)[-1]$, i.e.\ the
complex with $\Fib(f)^i = X^i \oplus Y^{i-1}$ and $d_{\Fib(f)}^i =
d_X^i - d_Y^{i-1} - f^i$.  If $f: X \to Y$ is a morphism of objects,
then $\Fib(f)$ is the complex $[X \xrightarrow{-f} Y]$ concentrated in
degrees $0,1$.
\end{notation}

\begin{defn}
\label{D:varphi Gamma cohomology}
For $M$ a  $(\varphi, \Gamma_K)$-module over $\calR_A(\pi_K)$, we define the complexes $\rmC^\bullet_{\varphi, \gamma_K}(M)$ and $\rmC^\bullet_{\psi, \gamma_K}(M)$ concentrated in degree $[0,2]$, and a morphism $\Psi_M$ between them, as follows:
\begin{equation}
\label{E:Herr-complex}
\xymatrix@C=10pt{
\rmC^\bullet_{\varphi, \gamma_K}(M) \ar[d]^{\Psi_M} & = &
[ M^{\Delta_K} \ar^(.4){(\varphi-1,\gamma_K-1)}[rrrr]\ar^{\id}[d] &&&& M^{\Delta_K} \oplus M^{\Delta_K} \ar^(.6){(\gamma_K-1) \oplus (1-\varphi)}[rrrr] \ar^{-\psi \oplus \id}[d] &&&& M^{\Delta_K}] \ar^{-\psi}[d] \\
\rmC^\bullet_{\psi, \gamma_K}(M) & = & [M^{\Delta_K} \ar^(.4){(\psi-1,\gamma_K-1)}[rrrr] &&&& M^{\Delta_K} \oplus M^{\Delta_K} \ar^(.6){(\gamma_K-1) \oplus (1-\psi)}[rrrr] &&&& M^{\Delta_K}].
}
\end{equation}
These complexes and the morphism are independent of the choice of
$\gamma_K$ up to canonical $A$-linear isomorphism: if $f$ is one of
$\varphi$ or $\psi$ then $\Gamma_{\gamma_K,\gamma_K',M} :
\rmC_{f,\gamma_K}^\bullet(M) \stackrel\sim\to
\rmC_{f,\gamma_K'}^\bullet(M)$ is given by $[1 \quad 1 \oplus
  \frac{\gamma_K'-1}{\gamma_K-1} \quad
  \frac{\gamma_K'-1}{\gamma_K-1}]$.

The complex $\rmC_{\varphi,\gamma_K}^\bullet(M)$ is called the \emph{Herr complex} of $M$.  Its cohomology group is denoted
$H^*_{\varphi, \gamma_K}(M)$ and is called the \emph{$(\varphi, \Gamma)$-cohomology}.
 We similarly define $H^*_{\psi, \gamma_K}(M)$ to be the cohomology of $\rmC^\bullet_{\psi, \gamma_K}(M)$, called the \emph{$(\psi, \Gamma)$-cohomology} of $M$.  
We remark that the same definition applies to any module $N$ with a commuting action of $\varphi$ (resp. $\psi$) and $\Gamma_K$; we denote the resulting cohomology by $H^*_{\varphi, \gamma_K}(N)$ (resp. $H^*_{\psi, \gamma_K}(N)$).
\end{defn}

\begin{remark}\label{R:useful-sequence}
For $f$ one of $\varphi$ or $\psi$, we have
$\rmC_{f,\gamma_K}^\bullet(M) =
\Fib(1-\gamma_K|\Fib(1-f|M^{\Delta_K}))$.  Moreover, from the
definition we easily deduce a useful short exact sequence
\[
0 \to M^{\Delta_K,\psi=1}/(\gamma_K-1) \to H^1_{\psi, \gamma_K}(M) \to
(M/(\psi-1))^{\Gamma_K} \to 0,
\]
where the nontrivial maps are induced by inclusion of the second
(resp.\ projection onto the first) coordinate in
$\rmC^1_{\psi,\gamma_K}(M) = M^{\Delta_K} \oplus M^{\Delta_K}$.
\end{remark}

\begin{lemma}
Let $L$ be a finite extension of $K$. Let $M$ be a $(\varphi, \Gamma_L)$-module over $\calR_A(\pi_L)$.  We have a natural quasi-isomorphism $\rmC^\bullet_{\varphi, \gamma_L}(M) \to \rmC^\bullet_{\varphi, \gamma_K}(\Ind_L^K(M))$ (inducing isomorphisms $H^i_{\varphi, \gamma_L}(M) \cong H^i_{\varphi, \gamma_K}(\Ind_L^KM)$ for any $i$).
\end{lemma}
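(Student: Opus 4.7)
The plan is to realize both Herr complexes as iterated mapping fibers $\Fib(1-\varphi \mid \Fib(1-\gamma \mid M^{\Delta}))$ (as in Remark~\ref{R:useful-sequence}), construct a $\varphi$-equivariant quasi-isomorphism between the inner $\Gamma$-cohomology complexes via Shapiro's lemma, and then apply $\Fib(1-\varphi\mid -)$ to both sides.  Compatibility with $\varphi$ is manifest because $\varphi$ commutes with the $\Gamma_K$-action on $\Ind_L^K M$.

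The first step is to identify the $\Delta_K$-invariants of $\Ind_L^K M$.  Using that $\Gamma_K$ is abelian and that $\Delta_L = \Gamma_L \cap \Delta_K$ (both equal the $p$-torsion of $\Gamma_L$ inside $\Gamma_K$), a direct check on functions $f\colon\Gamma_K\to M$ shows that evaluation gives a natural isomorphism
\[
(\Ind_L^K M)^{\Delta_K} \;\cong\; \Ind_{\Gamma_L/\Delta_L}^{\Gamma_K/\Delta_K}(M^{\Delta_L})
\]
of modules equipped with commuting actions of $\Gamma_K/\Delta_K$ and $\varphi$.  This reduces the claim to Shapiro's lemma for the inclusion $\Gamma_L/\Delta_L \subseteq \Gamma_K/\Delta_K$ of torsion-free procyclic groups of some finite index $e$.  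Choosing $\gamma_L = \gamma_K^e$ (replacing $\gamma_L$ by its image under the comparison $\Gamma_{\gamma_L,\gamma_K^e,M}$ of Definition~\ref{D:varphi Gamma cohomology} if necessary) and coset representatives $1,\gamma_K,\ldots,\gamma_K^{e-1}$, one identifies the induced module with $(M^{\Delta_L})^e$, on which $\gamma_K$ acts by the shift $(m_0,\ldots,m_{e-1}) \mapsto (m_1,\ldots,m_{e-1},\gamma_L m_0)$ and $\varphi$ acts diagonally.

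Next I would write down the explicit Shapiro chain map
\[
\iota\colon [M^{\Delta_L}\xrightarrow{\gamma_L-1}M^{\Delta_L}] \;\longrightarrow\; [(M^{\Delta_L})^e\xrightarrow{\gamma_K-1}(M^{\Delta_L})^e]
\]
given by the diagonal embedding $m\mapsto(m,m,\ldots,m)$ in degree $0$ and the last-coordinate inclusion $m\mapsto(0,\ldots,0,m)$ in degree $1$.  A one-line computation using the shift formula confirms that $\iota$ is a chain map; it is $\varphi$-equivariant since $\varphi$ acts coordinatewise.  On $H^0$ it is the tautological identity $M^{\Gamma_L}=(\Ind_L^K M)^{\Gamma_K}$, and on $H^1$ it realizes the canonical isomorphism $M^{\Delta_L}/(\gamma_L-1)\cong (M^{\Delta_L})^e/(\gamma_K-1)$ whose inverse is given by summing coordinates; hence $\iota$ is a quasi-isomorphism.

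Finally, applying $\Fib(1-\varphi\mid -)$ to $\iota$ yields the desired natural morphism $\rmC^\bullet_{\varphi,\gamma_L}(M) \to \rmC^\bullet_{\varphi,\gamma_K}(\Ind_L^K M)$, which remains a quasi-isomorphism by functoriality of mapping fibers (equivalently, by a five-lemma argument on the long exact sequences associated to $[C\xrightarrow{1-\varphi}C]$ on both sides).  The only genuine obstacle is the $\Delta$-bookkeeping in the first identification, which is controlled by the equality $\Delta_L = \Gamma_L\cap\Delta_K$; everything else is formal from Shapiro's lemma together with the commutation of $\varphi$ and $\Gamma_K$.
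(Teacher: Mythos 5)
Your proof is correct and follows the same high-level strategy as the paper: realize the Herr complex as $\Fib(1-\varphi\mid -)$ applied to the two-term $\Gamma$-complex, construct a $\varphi$-equivariant Shapiro quasi-isomorphism there, and then take fibers. Where you differ is at the Shapiro step: the paper compares $[M^{\Delta_K}\xrightarrow{\gamma_K-1}M^{\Delta_K}]$ with the full continuous-cochain complex $\rmC^\bullet_\cont(\Gamma_K,M)$ and then cites the standard Shapiro lemma for profinite groups, whereas you build the Shapiro quasi-isomorphism directly at the level of the two-term complexes, via the identification $(\Ind_L^K M)^{\Delta_K}\cong\Ind_{\Gamma_L/\Delta_L}^{\Gamma_K/\Delta_K}(M^{\Delta_L})$ and the explicit chain map (diagonal in degree $0$, last-coordinate inclusion in degree $1$). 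Your version is more self-contained and exhibits the quasi-isomorphism concretely, at the cost of the $\Delta$-bookkeeping; the paper's version is shorter but leaves the compatibility between the Herr normalization and the standard Shapiro map to the reader. One small thing to keep in mind with your formulation: $\gamma_K^e$ need not lie in $\Gamma_L$, so the symbol $\gamma_L=\gamma_K^e$ should be read as specifying the image in $\Gamma_L/\Delta_L\subseteq\Gamma_K/\Delta_K$ (which is all that the two-term complex on $M^{\Delta_L}$ sees), and the comparison with a genuine choice of $\gamma_L\in\Gamma_L$ is then an instance of $\Gamma_{\gamma_L,\gamma_L'}$ applied inside $\Gamma_L/\Delta_L$. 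With that reading, the argument is complete.
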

\begin{proof}
As $\rmC^\bullet_{\varphi,\gamma_K}(M)$ is equal, up to signs within
the differentials, to the mapping fiber of $1-\varphi$ on the complex
\[
\rmC^\bullet_{\gamma_K}(M) = [M^{\Delta_K} \xrightarrow{\gamma_K-1} M^{\Delta_K}],
\]
it suffices to treat the complex $\rmC^\bullet_{\gamma_K}(M)$ in place
of $\rmC^\bullet_{\varphi,\gamma_K}(M)$.  But the latter complex is
functorially quasi-isomorphic to the continuous cochain group
$\rmC_\cont^\bullet(\Gamma_K,M)$.  Thus the lemma follows from the
usual formulation of Shapiro's lemma for $\Gamma_L \subseteq
\Gamma_K$.
\end{proof}

\begin{prop}
\label{P:phi cohomology = psi cohomology}
For $M$ a $(\varphi, \Gamma_K)$-module over $\calR_A(\pi_K)$, the morphism $\Psi_M :
\rmC_{\varphi,\gamma_K}^\bullet(M) \to \rmC_{\psi,\gamma_K}^\bullet(M)$ is a
quasi-isomorphism.  Hence we have $H^*_{\varphi, \gamma_K}(M) \cong H^*_{\psi, \gamma_K}(M)$.
\end{prop}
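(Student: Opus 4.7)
The plan is to reduce the proposition to the bijectivity of $\gamma_K-1$ on $M^{\psi=0,\Delta_K}$, a structural fact about the $\psi$-operator. First, by Remark~\ref{R:useful-sequence} both Herr-type complexes have the form
\[
\rmC^\bullet_{f,\gamma_K}(M) = \Fib(1-\gamma_K \,|\, [M^{\Delta_K} \xrightarrow{1-f} M^{\Delta_K}]), \qquad f \in \{\varphi,\psi\},
\]
and unpacking the definition shows that $\Psi_M$ is obtained by applying $\Fib(1-\gamma_K \,|\, -)$ to the chain map
\[
\sigma = (\id,-\psi) \colon [M^{\Delta_K} \xrightarrow{1-\varphi} M^{\Delta_K}] \to [M^{\Delta_K} \xrightarrow{1-\psi} M^{\Delta_K}].
\]
The constituent square commutes by the identity $-\psi(1-\varphi) = -\psi+\psi\varphi = -\psi+1 = 1-\psi$, and $\sigma$ is $\Gamma_K$-equivariant since $\psi$ commutes with $\Gamma_K$.

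Since $\Fib(1-\gamma_K\,|\,-)$ preserves exact triangles, $\Cone(\Psi_M) \simeq \Fib(1-\gamma_K\,|\,\Cone(\sigma))$, so it suffices to compute $\Cone(\sigma)$. Writing $N = M^{\Delta_K}$, the cone is the complex
\[
[N \xrightarrow{d^{-1}} N \oplus N \xrightarrow{d^0} N], \qquad d^{-1}(m) = ((\varphi-1)m,\, m), \qquad d^0(a,b) = -\psi a + (1-\psi)b,
\]
concentrated in degrees $-1,0,1$. Injectivity of $d^{-1}$ (visible from the second component) gives $H^{-1} = 0$, and surjectivity of $d^0$ (given $c \in N$, the element $a = -\varphi c$ satisfies $-\psi a = c$ by $\psi\varphi=\id$) gives $H^1 = 0$. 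For $H^0$, the kernel of $d^0$ consists precisely of the pairs $((1-\psi)c,\, \psi c)$ with $c = a+b \in N$, and such a pair lies in $\Image(d^{-1})$ iff $\varphi\psi c = c$, i.e.\ iff $c \in \varphi(N)$. Invoking the canonical decomposition $N = \varphi(N) \oplus N^{\psi=0}$ inherited from the expression of $\calR_A(\pi_K)$ as a free module of rank $p$ over $\varphi(\calR_A(\pi_K))$ (see Definition~\ref{D:field of norms}), we obtain $H^0(\Cone(\sigma)) \cong N/\varphi(N) \cong N^{\psi=0} = M^{\psi=0,\Delta_K}$. Therefore $\Cone(\sigma) \simeq M^{\psi=0,\Delta_K}[0]$ and
\[
\Cone(\Psi_M) \simeq [M^{\psi=0,\Delta_K} \xrightarrow{1-\gamma_K} M^{\psi=0,\Delta_K}].
\]

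The proposition thus reduces to the bijectivity of $1-\gamma_K$ on $M^{\psi=0,\Delta_K}$, which I expect to be the main obstacle. This bijectivity is provided by the structure theorem for $M^{\psi=0}$ (Theorem~\ref{T:structure of D^psi=0}, forward-referenced), which extends the natural $\Gamma_K$-action on $M^{\psi=0}$ to a finite projective module structure over the Robba ring $\calR_A(\Gamma_K)$. Under the identification of $\calR_A(\Gamma_K/\Delta_K)$ with a Robba ring in the variable $\gamma_K-1$, this element is a coordinate on a half-open annulus excluding the origin, hence a unit in $\calR_A(\Gamma_K)$; it therefore acts invertibly on $M^{\psi=0,\Delta_K}$, and the quasi-isomorphism follows.
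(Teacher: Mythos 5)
Your proposal is correct and arrives at the same reduction as the paper: the quasi-isomorphism is equivalent to the bijectivity of $\gamma_K-1$ on $M^{\psi=0,\Delta_K}$, which both you and the authors forward-reference to Theorem~\ref{T:structure of D^psi=0}. The difference is purely presentational: the paper observes that $\Psi_M$ is surjective in each degree and writes down its kernel, the complex $[M^{\Delta_K,\psi=0}\xrightarrow{\gamma_K-1}M^{\Delta_K,\psi=0}]$ in degrees $1,2$, whereas you pass to $\Cone(\Psi_M)$ and compute it via the exactness of $\Fib(1-\gamma_K\,|\,\cdot\,)$ and an explicit analysis of $\Cone(\sigma)$, using the decomposition $M^{\Delta_K}=\varphi(M^{\Delta_K})\oplus M^{\psi=0,\Delta_K}$; since $\Psi_M$ is degree-wise surjective, $\Cone(\Psi_M)\simeq(\Ker\Psi_M)[1]$ and these are the same object up to shift. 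Your extra remark that $\gamma_K-1$ is a unit in the relevant Robba-type group algebra is consistent with (and implicit in) the cited theorem, but the theorem's statement already gives the invertibility directly, so it is not needed as a separate justification.
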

\begin{proof}
The morphism $\Psi_M$ is surjective at each degree.
The kernel of $\Psi_M$ is the complex $[M^{\Delta_K, \psi =0} \xrightarrow{\gamma_K - 1} M^{\Delta_K,\psi=0}]$ concentrated in degrees 1 and 2.  The bijectivity of $\gamma_K-1$ on $M^{\Delta_K,\psi = 0}$ will be proved in Theorem~\ref{T:structure of D^psi=0} in the next subsection (the reader can check that there is no circular reasoning), and the proposition follows from this fact.
\end{proof}

From now on, we will use the $(\varphi, \Gamma)$-cohomology and the $(\psi, \Gamma)$-cohomology interchangeably without notification.

\begin{prop}
\label{P:comparison with Galois cohomology}
Let $V$ be a finite projective $A$-module equipped with a continuous
$A$-linear action of $G_K$.  If we use $\bbR\Gamma_\cont(G_K,V)$ to
denote the cohomology of continuous $G_K$-cochains with values in $V$,
we have a functorial isomorphism $\bbR\Gamma_\cont(G_K,V) \cong
\bbR\Gamma_{\varphi,\gamma_K}(\bbD_\rig(V))$ compatible with base
change, in the derived category of perfect complexes over $A$.
\end{prop}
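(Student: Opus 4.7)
The plan is to reduce to the case $K=\QQ_p$, to decompose via the extension $1\to H_{\QQ_p}\to G_{\QQ_p}\to \Gamma\to 1$, and finally to identify the inner cohomology with a two-term Frobenius complex on $\bbD_\rig(V)$.  First I would reduce to $K=\QQ_p$: combining Lemma~\ref{L:compatibility of induction} (which identifies $\bbD_\rig(\Ind_{G_L}^{G_K}V)$ with $\Ind_L^K \bbD_\rig(V)$), the unlabelled lemma preceding Proposition~\ref{P:phi cohomology = psi cohomology} (identifying $\rmC^\bullet_{\varphi,\gamma_L}(M)$ with $\rmC^\bullet_{\varphi,\gamma_K}(\Ind_L^K M)$ up to quasi-isomorphism), and Shapiro's lemma for continuous cochain cohomology, the assertion for $(G_K,V)$ is equivalent to the corresponding assertion for $(G_{\QQ_p},\Ind_{G_K}^{G_{\QQ_p}}V)$.

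Assuming $K=\QQ_p$, the Lyndon-Hochschild-Serre formalism applied to the sequence $1\to H_{\QQ_p}\to G_{\QQ_p}\to \Gamma\to 1$ yields a $\Gamma$-equivariant quasi-isomorphism $\bbR\Gamma_\cont(G_{\QQ_p},V)\simeq \bbR\Gamma_\cont(\Gamma,\bbR\Gamma_\cont(H_{\QQ_p},V))$.  Since $\Gamma/\Delta$ is procyclic with topological generator the image of $\gamma_{\QQ_p}$, the outer $\Gamma$-cohomology is computed by the mapping fiber $\Fib(\gamma_{\QQ_p}-1\mid(-)^\Delta)$.  Comparing with Definition~\ref{D:varphi Gamma cohomology}, which presents $\rmC^\bullet_{\varphi,\gamma}(M)=\Fib(\gamma-1\mid\Fib(\varphi-1\mid M^\Delta))$ as an iterated fiber, the remaining task is to produce a functorial $\Gamma$-equivariant quasi-isomorphism $\bbR\Gamma_\cont(H_{\QQ_p},V)\simeq \Fib(\varphi-1\mid\bbD_\rig(V))$.

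This last identification is the technical heart.  One applies classical Fontaine theory, extended to arithmetic families by Dee, Berger-Colmez, and Kedlaya-Liu, to obtain a $\Gamma$-equivariant quasi-isomorphism $\bbR\Gamma_\cont(H_{\QQ_p},V)\simeq[\bbD(V)\xrightarrow{\varphi-1}\bbD(V)]$, where $\bbD(V)$ denotes the \'etale $(\varphi,\Gamma)$-module of $V$ over the integral analog of the Robba ring.  One then compares with the overconvergent descent $\bbD^\dagger(V)$ and with its Robba-ring extension $\bbD_\rig(V)$: it suffices to check that $\varphi-1$ acts bijectively on each of the quotients $\bbD(V)/\bbD^\dagger(V)$ and $\bbD_\rig(V)/\bbD^\dagger(V)$.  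This is the main obstacle: while both bijectivities are classical over a single point (via slope filtrations and the Banach open mapping theorem), in the family setting the absence of a fully satisfactory slope theory (as discussed in the introduction) forces an approximation argument that reduces the inversion of $\varphi-1$ to its action on the coefficient ring along an admissible cover.

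Finally, functoriality in $V$ is transparent from the construction.  Compatibility with the base change $A\to B$ holds because $\bbD_\rig$ commutes with base change by Theorem~\ref{T:Drig}, each intermediate Fontaine-style construction inherits this property, and the Herr complex visibly commutes with base change.  Perfectness of the resulting complex over $A$ is the content of the relative finiteness theorem for $(\varphi,\Gamma)$-cohomology established in Section~\ref{S:finiteness of cohomology}, to which one may appeal.
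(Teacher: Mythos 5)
The paper's proof is a bare citation to \cite[Theorem~2.8]{pottharst1}, so there is no internal argument to compare your sketch against; the right comparison is with the argument in that reference, which your sketch does follow in outline. It is the standard Fontaine--Herr--Cherbonnier--Colmez route: reduce to $K=\Qp$ if desired by Shapiro's lemma, use the decomposition $1\to H_K\to G_K\to\Gamma_K\to1$ and procyclicity of $\Gamma_K/\Delta_K$ to identify the outer cohomology with a two-term $\gamma_K-1$ complex, identify the inner $H_K$-cohomology with the $\varphi$-complex of $\bbD(V)$, then descend to $\bbD^\dagger(V)$ and stretch to $\bbD_\rig(V)$.

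Two points are worth flagging. First, the step you call the technical heart---showing $\varphi-1$ acts bijectively on $\bbD(V)/\bbD^\dagger(V)$ and $\bbD_\rig(V)/\bbD^\dagger(V)$ uniformly in the family---is precisely where the content of \cite[Theorem~2.8]{pottharst1} lives, and your sketch stops short of proving it: ``an approximation argument \ldots along an admissible cover'' describes the shape of what must be done but supplies no estimate. A proof, as opposed to a strategy, must produce those estimates. (As an aside, the pointwise bijectivities on these \'etale quotients do not rest on slope filtrations; that technology appears in Liu's cohomology comparison for arbitrary, not-necessarily-\'etale, $(\varphi,\Gamma)$-modules, which is a different problem than the one you need here.)

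Second, closing by appealing to the finiteness theorem of Section~\ref{S:finiteness of cohomology} is both unnecessary and logically delicate. Unnecessary, because $\bbR\Gamma_\cont(G_K,V)$ is already perfect over $A$ by a classical argument (again in \cite{pottharst1}, independently of $(\varphi,\Gamma)$-module theory), so once the quasi-isomorphism is in hand, perfectness of the Herr complex is automatic. Delicate, because the Tate isomorphism $\Ta_K$ is built from the $V=A(1)$ case of the present proposition, and the paper deliberately arranges the duality arguments feeding into the finiteness theorem around residue maps rather than $\Ta_K$ so as to keep the dependency chain clean; appealing to the finiteness theorem here would reinsert the present proposition into its own foundations and require a separate check that no circle arises.
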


\begin{proof}
This is \cite[Theorem~2.8]{pottharst1}.
\end{proof}

\begin{defn}
As is customary, write $\Zp(1) = \varprojlim_n \mu_{p^n}$, and for any
$\Zp$-module $V$ with $G_K$-action (possibly via $\Gamma_K$), write
$V(1)$ for $V \otimes_{\Zp} \Zp(1)$ with the diagonal action.

We write $\calR_A(\pi_K)(1) = \bbD_\rig(A(1))$.  For $M$ a $(\varphi,
\Gamma_K)$-module over $\calR_A(\pi_K)$, we write $M(1)$ for the twist
$M \otimes_{\calR(\pi_K)} \calR(\pi_K)(1)$, and $M^\dual$ for the
module dual $\Hom_{\calR_A(\pi_K)}(M, \calR_A(\pi_K))$, as well as
$M^* = M^\dual(1)$ for the \emph{Cartier dual}.  For $L$ a finite
extension of $K$ and $M$ a $(\varphi, \Gamma_L)$-module over
$\calR_A(\pi_L)$, we have $(\Ind_L^K(M))^* \cong \Ind_L^K(M^*)$.

Since $A$ is flat over $\Qp$, one has $H^2(G_K,\Qp(1)) \otimes_{\Qp} A
\cong H^2(G_K,A(1))$.  From local class field theory one has the trace
isomorphism $H^2(G_K,\Qp(1)) \cong \Qp$, hence also $H^2(G_K,A(1))
\cong A$.  Combining this with the case $V = A(1)$ in the preceding
proposition, one gets the \emph{Tate isomorphism} $\Ta_K:
H^2_{\varphi,\gamma_K}(\calR_A(\pi_K)(1)) \cong H^2(G_K,A(1)) \cong
A$.
\end{defn}

We record a general formalism of cup products, which is well-known in the literature.

\begin{lemma}
\label{L:cup product}
Let $R$ be a ring.  Let $f_i, g_i: \rmC_i^\bullet \to \rmD_i^\bullet$ for $i = 1,2$ be morphisms of $R$-complexes.  Then we have a natural morphism 
\[
\Fib(\rmC_1^\bullet \xrightarrow{g_1 - f_1} \rmD_1^\bullet) \otimes \Fib(\rmC_2^\bullet \xrightarrow{g_2 - f_2} \rmD_2^\bullet) \quad \longrightarrow \quad \Fib(\rmC_1^\bullet \otimes \rmC_2^\bullet \xrightarrow{g_1 \otimes g_2 - f_1 \otimes f_2} \rmD_1^\bullet \otimes \rmD_2^\bullet)
\]
given by the downward arrows of the diagram
\[
\xymatrix@C=3.5cm{
\rmC_1^\bullet \otimes \rmC_2^\bullet \ar[r]^-{\big((f_1-g_1) \otimes 1 , 1 \otimes (f_2 -g_2)\big)} \ar[d]^{\id} & \rmD_1^\bullet \otimes \rmC_2^\bullet \oplus \rmC_1^\bullet \otimes \rmD_2^\bullet \ar[r]^-{\big(1 \otimes (g_2-f_2)\big) \oplus \big((f_1 - g_1) \otimes 1\big)} \ar[d]^{1 \otimes g_2 \oplus f_1 \otimes 1  } & \rmD_1^\bullet \otimes \rmD_2^\bullet\\
\rmC_1^\bullet \otimes \rmC_2^\bullet
\ar[r]^{f_1\otimes f_2 - g_1 \otimes g_2}
& \rmD_1^\bullet \otimes \rmD_2^\bullet
}
\]
where the maps $1 \otimes (f_2-g_2)$ in the first horizontal arrow and
$f_1 \otimes 1$ in the second vertical arrow get multiplied by $-1$ on
summands with a tensor factor that is an odd-degree piece of
$\Fib(\rmC_1^\bullet \xrightarrow{g_1-f_1} \rmD_1^\bullet)$.
\end{lemma}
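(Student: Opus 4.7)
The plan is a direct check. The statement is a purely formal assertion about maps of complexes, and the diagram already specifies the morphism; what remains is the verification that it commutes with the differentials with the correct Koszul signs.

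First I would unwind each mapping fiber into its underlying graded pieces. Using the definition $\Fib(h)^i = X^i \oplus Y^{i-1}$ with $d_{\Fib(h)}^i = d_X^i - d_Y^{i-1} - h^i$, I would write
\[
\bigl(\Fib(g_1-f_1) \otimes \Fib(g_2-f_2)\bigr)^n = \bigoplus_{i+j=n}\bigl((\rmC_1^i \oplus \rmD_1^{i-1}) \otimes (\rmC_2^j \oplus \rmD_2^{j-1})\bigr)
\]
and
\[
\Fib(g_1 \otimes g_2 - f_1 \otimes f_2)^n = (\rmC_1 \otimes \rmC_2)^n \oplus (\rmD_1 \otimes \rmD_2)^{n-1}.
\]
This exposes four types of source summands ($\rmC\otimes\rmC$, $\rmD\otimes\rmC$, $\rmC\otimes\rmD$, $\rmD\otimes\rmD$) and two types of target summands. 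By comparing total degrees, the only maps of the correct degree are the identity on $\rmC_1 \otimes \rmC_2$, the map $1 \otimes g_2$ on $\rmD_1 \otimes \rmC_2$, and the map $f_1 \otimes 1$ on $\rmC_1 \otimes \rmD_2$ (both landing in $\rmD_1 \otimes \rmD_2$), while the $\rmD_1 \otimes \rmD_2$ summand of the source sits one total degree too high and so must be sent to zero. This matches the diagram.

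Next, I would check differential-compatibility term by term. On a pure tensor $c_1 \otimes c_2$ with $c_i \in \rmC_i^{\bullet}$, both routes through the proposed morphism and the two differentials land in $(\rmC_1\otimes\rmC_2) \oplus (\rmD_1\otimes\rmD_2)$, and the $(\rmC_1\otimes\rmC_2)$-components agree by construction while the $(\rmD_1\otimes\rmD_2)$-component reduces, after combining the two factorizations $(f_1-g_1)\otimes f_2 \circ (\id \otimes 1)$ and $g_1 \otimes (f_2-g_2)$, to $f_1 \otimes f_2 - g_1 \otimes g_2$ as needed; the check on the remaining three summand-types is analogous. The sign adjustment flagged in the statement ($-1$ on summands with an odd-degree factor from $\Fib(g_1-f_1)$) is precisely what is needed to cancel the Koszul signs introduced by moving the $\rmD_1$-differential past a $\rmC_1$- or $\rmD_1$-tensorand and by the minus sign on $d_Y$ in the mapping fiber formula.

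The main obstacle, such as it is, is purely bookkeeping of signs: one must keep straight (i) the Koszul rule for the total differential on a tensor product of complexes, (ii) the sign twist built into $d_{\Fib(h)}$, and (iii) the degree parity of the $\rmC_i$-tensorand that flips the signs of the indicated pieces. No new idea is required, and since the candidate morphism is built from the functorial operations $\id, f_1, g_2$, compatibility with differentials is essentially forced; the verification is a short but careful computation on each of the four summand-types above.
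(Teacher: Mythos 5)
Your proposal is correct and takes essentially the same approach as the paper, which simply asserts the lemma is a straightforward computation; you carry out the decomposition into graded pieces and the degree bookkeeping in somewhat more detail than the paper does, while leaving the sign verification at the same level of sketch.
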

\begin{proof}
This is a straightforward computation.
\end{proof}

\begin{defn}
For $M_1,M_2$ two $(\varphi, \Gamma_K)$-modules over $\calR_A(\pi_K)$,
we construct cup products as follows.  Write $\rmC_\varphi^\bullet(M)
= \Fib(1-\varphi|M) = [M \xrightarrow{\varphi-1} M]$ for brevity.
First apply Lemma~\ref{L:cup product} (with $R=A$) to the case
$\rmC_i^\bullet = \rmD_i^\bullet = M_i$, $f_i = \varphi$, and $g_i =
\id$, to get the first arrow in the composite
\begin{multline*}
\cup_\varphi :
\rmC_\varphi^\bullet(M_1) \otimes_A \rmC_\varphi^\bullet(M_2)
= [M_1 \xrightarrow{\varphi-1} M_1]
  \otimes_A [M_2 \xrightarrow{\varphi-1} M_2] \\
\to
[M_1 \otimes_A M_2 \xrightarrow{\varphi \otimes \varphi - 1 \otimes 1}
  M_1 \otimes_A M_2] \\
\twoheadrightarrow
[M_1 \otimes_{\calR_A(\pi_K)} M_2 \xrightarrow{\varphi-1}
  M_1 \otimes_{\calR_A(\pi_K)} M_2]
= \rmC_\varphi^\bullet(M_1 \otimes_{\calR_A(\pi_K)} M_2).
\end{multline*}
Then, one applies
Lemma~\ref{L:cup product} (with $R=A$) to the case $\rmC_i^\bullet = \rmD_i^\bullet =
\rmC_\varphi^\bullet(M_i)$, $f_i = \gamma_K$, and $g_i = \id$, to get
the arrow $\cup_{\gamma_K}$ in the composite
\begin{multline*}
\cup_{\varphi,\gamma_K} :
\rmC_{\varphi,\gamma_K}^\bullet(M_1)
  \otimes_A
  \rmC_{\varphi,\gamma_K}^\bullet(M_2) \\
=
\Fib(\rmC_\varphi^\bullet(M_1) \xrightarrow{1-\gamma_K}
    \rmC_\varphi^\bullet(M_1))
  \otimes_A
  \Fib(\rmC_\varphi^\bullet(M_2) \xrightarrow{1-\gamma_K}
    \rmC_\varphi^\bullet(M_2)) \\
\xrightarrow{\cup_{\gamma_K}}
\Fib(\rmC_\varphi^\bullet(M_1) \otimes_A \rmC_\varphi^\bullet(M_2)
  \xrightarrow{1 \otimes 1 - \gamma_K \otimes \gamma_K}
  \rmC_\varphi^\bullet(M_1) \otimes_A \rmC_\varphi^\bullet(M_2)) \\
\xrightarrow{\cup_\varphi}
\Fib(\rmC_\varphi^\bullet(M_1 \otimes_{\calR_A(\pi_K)} M_2)
  \xrightarrow{1-\gamma_K}
  \rmC_\varphi^\bullet(M_1 \otimes_{\calR_A(\pi_K)} M_2)) \\
=
\rmC_{\varphi,\gamma_K}^\bullet(M_1 \otimes_{\calR_A(\pi_K)} M_2).
\end{multline*}
This morphism induces an $A$-bilinear pairing $H^i_{\varphi,
  \gamma_K}(M) \times H^j_{\varphi, \gamma_K}(N) \to H^{i+j}_{\varphi,
  \gamma_K}(M \otimes_{\calR_A(\pi_K)} N)$.  This gives rise to the
explicit formulas of \cite[\S4.2]{herr2}.  Especially, when $i=j=1$,
one has
\[
\overline{(x_1, x_2)} \otimes \overline{(y_1, y_2)} \mapsto \overline{x_2 \otimes \gamma(y_1) - x_1 \otimes \varphi(y_2)}.
\]

Using the cup product, functoriality for the morphism $M
\otimes_{\calR_A(\pi_K)} M^* \to \calR_A(\pi_K)(1)$, and the Tate map
$\Ta_K: H^2_{\varphi, \gamma_K}(\calR_A(\pi_K)(1)) \cong A$, one gets
\emph{Tate duality pairings}
\begin{gather}
\nonumber
\cup_\Ta: \rmC^\bullet_{\varphi, \gamma_K}(M) \times \rmC^\bullet_{\varphi, \gamma_K}(M^*) \xrightarrow{\cup_{\varphi,\gamma_K}} \rmC^\bullet_{\varphi, \gamma_K}(\calR_A(\pi_K)(1)) \\
\label{E:Tate duality complex}
\to H^2_{\varphi, \gamma_K}(\calR_A(\pi_K)(1))[-2] \xrightarrow{\Ta_K} A[-2],
\\
\label{E:Tate duality}
\cup_\Ta: H^i_{\varphi, \gamma_K}(M) \times H^{2-i}_{\varphi, \gamma_K}(M^*) \to H^2_{\varphi, \gamma_K}(\calR_A(\pi_K)(1)) \xrightarrow{\Ta_K} A.
\end{gather}
\end{defn}

The following extension of Tate local duality is due to Liu \cite{liu}.  We will later generalize this result to the case of a general affinoid algebra $A$ in Theorem~\ref{T:finite cohomology}.
\begin{theorem}[Liu]  
\label{T:Liu}
Suppose that $A$ is a finite extension of $\QQ_p$. Let $M$ be a $(\varphi, \Gamma_K)$-module over $\calR_A(\pi_K)$.
\begin{enumerate}
\item[(1)] The $A$-vector spaces $H^i_{\varphi,\gamma_K}(M)$ are finite-dimensional for $i=0,1,2$.
\item[(2)]
The Euler characteristic $\chi(M) = \sum_{i=0}^2 (-1)^i \dim_A H^i_{\varphi,\gamma_K}(M)$ equals
$-[K:\QQ_p] \rank M$.
\item[(3)]
The duality cup product \eqref{E:Tate duality} is a perfect pairing.
\end{enumerate}
\end{theorem}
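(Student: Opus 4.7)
The plan is to reduce all three claims to the case of étale $(\varphi, \Gamma_K)$-modules, for which they follow from Herr's comparison (Proposition \ref{P:comparison with Galois cohomology}) together with classical Tate local duality and the Euler-Poincaré formula. As a first reduction, the Shapiro-type quasi-isomorphism $\rmC^\bullet_{\varphi, \gamma_L}(M) \simeq \rmC^\bullet_{\varphi, \gamma_K}(\Ind_L^K M)$ together with the compatibility $(\Ind_L^K M)^* \cong \Ind_L^K M^*$ and multiplicativity of ranks allow us to assume $K = \QQ_p$.

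The key step is dévissage along Kedlaya's slope filtration, which provides a canonical filtration of $M$ whose successive quotients are pure of various slopes. Claim (1) propagates along short exact sequences via the long exact cohomology sequence, claim (2) is additive in such sequences, and given (1) and (3) for the outer terms of $0 \to M' \to M \to M'' \to 0$, claim (3) for $M$ follows from the five lemma applied to the long exact sequences of $M$ and $M^*$. We thereby reduce to the case where $M$ is pure of some slope $s$. Tensoring such an $M$ with a rank-one $(\varphi, \Gamma)$-module of slope $-s$ produces an étale module of the same rank; combining this with a direct treatment of rank-one objects via the explicit classification (Theorem \ref{T:full rank 1 classification}) and the residue formalism for $\Ta_K$ completes the reduction to the étale case.

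For étale $M \cong \bbD_\rig(V)$, Proposition \ref{P:comparison with Galois cohomology} identifies $\rmC^\bullet_{\varphi, \gamma_K}(M)$ with $\bbR\Gamma_\cont(G_K, V)$, so (1) and (2) are immediate from Tate's classical finiteness and Euler-Poincaré theorems for the finite-dimensional $G_K$-representation $V$. Since $(\bbD_\rig V)^* \cong \bbD_\rig(V^*)$ and $\Ta_K$ corresponds to the standard trace isomorphism from local class field theory, the cup-product pairing corresponds under Herr's comparison to classical Tate local duality, yielding (3). The main obstacle I expect is the compatibility of the cup-product pairing and the Tate map $\Ta_K$ with the dévissage: one must verify at the cochain level that tensoring by a rank-one module of opposite slope transports perfectness of the pairing and respects the Tate normalization, which ultimately reduces to explicit residue calculations of the kind previewed by the residue and duality constructions in Lemma \ref{L:calR duality} and Definition \ref{D:omega-Gamma}.
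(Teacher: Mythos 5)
The paper does not supply its own proof of this theorem: it is attributed directly to Liu \cite{liu}, with the promise only that the affinoid version will follow later. So the comparison must be with Liu's argument, the shape of which is visible in this paper's proof of Proposition~\ref{P:Iwasawa cohomology A=field}.

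Your reduction to $K = \Qp$ by Shapiro's lemma, and the observation that all three claims propagate along short exact sequences (by long exact sequences and the five lemma), match Liu's strategy. The critical gap is your reduction to the \'etale case. You assert that ``tensoring such an $M$ with a rank-one $(\varphi,\Gamma)$-module of slope $-s$ produces an \'etale module'' and that this ``completes the reduction.'' The first half is true (after extending scalars so the needed fractional slope is realized), but the reduction step does not follow: the Herr complex is not multiplicative under tensor products of $(\varphi,\Gamma)$-modules, so finiteness, the Euler characteristic, and perfectness of the pairing for $M \otimes N$ and for $N$ give you nothing about $M$. There is no K\"unneth or Shapiro-type formula relating $H^\bullet_{\varphi,\gamma_K}(M)$ to $H^\bullet_{\varphi,\gamma_K}(M\otimes N)$ in this setting. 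Liu's actual d\'evissage is substantially more delicate: fixing the denominator $d$ of the slope, he inducts on the numerator modulo $d$, using a specific $(\varphi,\Gamma)$-module $E$ of pure slope $-1/d$ built as a successive extension of $d-1$ trivial rank-one modules on top of $t\calR$ (see \cite[Lemma~5.2]{liu}, and the use of $E$ in the proof of Proposition~\ref{P:Iwasawa cohomology A=field} here). Tensoring $M$ with $E$ lowers the slope by $1/d$, and the point is that the constituent exact sequences of $E$, together with explicit control of $H^\bullet_{\varphi,\gamma_{\Qp}}(M/tM)$ via Proposition~\ref{P:finite cohomology M/t}, let one relate the cohomology of $M$ and of $M \otimes E$ through long exact sequences. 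This is genuinely a d\'evissage, not a change-of-object by an invertible twist.

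A second problem is circularity: you invoke Theorem~\ref{T:full rank 1 classification} for the rank-one case, but that theorem's proof in this paper relies on the finiteness and base-change machinery (Theorems~\ref{T:finite cohomology 1}, \ref{T:base change}, \ref{T:finite cohomology}), which are downstream of Theorem~\ref{T:Liu}. Similarly, Proposition~\ref{P:cohomology of rank one module} explicitly uses parts (2) and (3) of Theorem~\ref{T:Liu}, so it cannot be an input here. The rank-one computations needed for the d\'evissage must be done from scratch, as Liu and Colmez do.
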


We focus for the remainder of this subsection on the case $K=\Qp$, and
describe an explicit construction of the Tate isomorphism $\Ta_{\Qp}$
(up to a fixed $\QQ_p^\times$-multiple) and duality pairings
$\cup_\Ta$.

Confusing additive and multiplicative notations, we may write
$\QQ_p(1) = \QQ_p \otimes \varepsilon$ and $\calR(1) = \calR \otimes
\varepsilon$ (with $\varphi$ acting trivially on $\varepsilon$).
Recall that to our choice of $\varepsilon$ is associated an
indeterminate $\pi$ for $\calR$.  We define the \emph{normalized
  residue maps} $\Res_{\Qp}, \Res_{\gamma_{\Qp}}: \calR_A(1) = \calR_A
\otimes \varepsilon \to A$ by $\Res_{\Qp}(f \otimes \varepsilon) =
\Res_{\calR_A}(f\frac{d\pi}{1+\pi})$ and $\Res_{\gamma_{\Qp}} =
(\log\chi(\gamma_{\Qp}))^{-1} \Res_{\Qp}$.  These maps are formally
checked to be independent of the choice of $\varepsilon$, and
invariant under precomposition with $\varphi$, $\psi$, and all $\gamma
\in \Gamma$ (cf. \cite[\S I.2]{colmez-mirabolique}).  In particular,
they factor through maps $H^2_{\varphi,\gamma_{\Qp}}(\calR_A(1)) \cong
\calR_A(1)_\Gamma/(\varphi-1) \to A$ which we also call $\Res_{\Qp}$
and $\Res_{\gamma_{\Qp}}$, respectively.  By the reasoning of
Remark~\ref{R:normalizations}, the maps $\Res_{\gamma_{\Qp}}$ are
compatible for varying choices of $\gamma_{\Qp}$.  Since
$H^2_{\varphi,\gamma_{\Qp}}(\calR_A(1))$ is isomorphic to $A$ as
above, and the residue map is obviously surjective, it follows that
$\Res_{\Qp}$ and $\Res_{\gamma_{\Qp}}$ are isomorphisms.  By base
changing from the case $A=\Qp$, we see that $\Res_{\gamma_{\Qp}} = C_p
\cdot \Ta_{\Qp}$ for some $C_p \in \QQ_p^\times$ independent of $A$
and $\gamma_{\Qp}$.

\begin{remark}
Although the precise value of $C_p$ is irrelevant in this paper, we
remark that it is claimed in \cite[Theorem~2.2.6]{benois} that $C_p =
-(p-1)/p$.
\end{remark}

\begin{notation}
\label{N:residue pairing}
Let $\langle -, -\rangle$ denote the tautological $\calR_A$-bilinear
pairing $M \times M^* \to \calR_A(1)$.  We define the \emph{residue
  pairings} $\{-,-\}_{\Qp},\{-,-\}_{\gamma_{\Qp}}: M \times M^* \to A$
by $\Res_{\Qp} \circ \langle-,-\rangle$ and $\Res_{\gamma_{\Qp}} \circ
\langle -, -\rangle$, respectively.  (Our notation differs from the
twisted pairing defined by Colmez \cite{colmez-kirillov}.)
\end{notation}

From the properties of $\langle-,-\rangle$ and $\Res_{\gamma_{\Qp}}$
it follows that $\{\gamma(x), \gamma(y)\} = \{\varphi(x), \varphi(y)\}
= \{x,y\}$, that $\{\varphi(x), y\} = \{x, \psi(y)\}$, and that
$\{\psi(x), y\} = \{x, \varphi(y)\}$ for any $\gamma \in \Gamma$, $x
\in M$ and $y \in M^*$.  Then the scaled Tate duality pairing $C_p
\cdot \cup_\Ta$ is computed, under the identification
\[
\Psi_{M^*} \circ \Gamma_{\gamma_{\Qp},\gamma_{\Qp}^{-1},M^*} :
\rmC_{\varphi,\gamma_{\Qp}}^\bullet(M^*)
\stackrel\sim\to
\rmC_{\psi,\gamma_{\Qp}^{-1}}^\bullet(M^*),
\]
by the diagram
\[
\xymatrix@R=2pt{
\rmC^\bullet_{\varphi, \gamma_{\Qp}}(M)\ = &
[\ M^{\Delta_K} \ar[r] & M^{\Delta_K} \oplus M^{\Delta_K} \ar[r] & M^{\Delta_K}\ ] \\
&\times & \times & \times\\
& [\ (M^*)^{\Delta_K} \ar[ddd]^{\{-,-\}_{\gamma_{\Qp}}} & (M^*)^{\Delta_K} \oplus (M^*)^{\Delta_K} \ar[l]\ar[ddd]^{\{-,-\}_{\gamma_{\Qp},1}} &(M^*)^{\Delta_K}\ ]\ar[ddd]^{\{-,-\}_{\gamma_{\Qp}}} \ar[l]&=\ \rmC^\bullet_{\psi, \gamma_{\Qp}^{-1}}(M^*),\\
\\ \\
& A&A&A
}
\]
where $\{(m,n),(k,l)\}_{\gamma_{\Qp},1} =
\{m,l\}_{\gamma_{\Qp}}-\{n,k\}_{\gamma_{\Qp}}$.  (The diagram only
commutes up to sign, but it is compatible with the sign convention for
the tensor product of complexes.)

\section{The $\psi$ operator}
\label{S:psi operator}

\setcounter{theorem}{0}

We now focus attention more closely on the action of the operator
$\psi$ on $(\varphi, \Gamma)$-modules, and particularly the kernel of
$\psi$ and the kernel and cokernel of $\psi-1$. The technical
importance of the $\psi$-action is apparent in much of the prior work
on the cohomology of $(\varphi, \Gamma)$-modules. Moreover, Fontaine
observed that, in addition to being a useful intermediate step in the
computation of $(\varphi, \Gamma)$-cohomology, $\psi$-cohomology is
itself important because of its identification with Iwasawa
cohomology; see \cite{cherbonnier-colmez}. We thus take a bit of extra
care in our analysis of $\psi$-cohomology here, in order to later
obtain conclusions about Iwasawa cohomology in arithmetic families.

\begin{notation}
\label{N:Gamma_n}
Throughout this section, the notations $\calR_A^?(C)$ of
Definition~\ref{D:omega-Gamma} are in force with $C = \Gamma_K$ and
$c=\gamma_K$.
\end{notation}

\begin{remark}
Although we have assumed throughout $A$ to be a $\Qp$-affinoid
algebra, the statements and proofs in this section apply to any
strongly noetherian Banach algebra $A$ over $\Qp$.
\end{remark}

\subsection{$\Gamma$-action on $M^{\psi=0}$}

The aim of this subsection is to prove the following theorem regarding the $\Gamma_K$-action on $M^{\psi=0}$, by removing an auxiliary boundedness condition from \cite[Th\'eor\`eme~2.4]{chenevier}.

\begin{theorem}
\label{T:structure of D^psi=0}
For $M$ any $(\varphi,\Gamma_K)$-module over $\calR_A(\pi_K)$, there
exists $r_1 \in (0,C(\pi_K))$ such that for any $0< r \leq r_1$,
$\gamma_K - 1$ is invertible on $(M^r)^{\psi=0}$, and the $A[\Gamma_K,
  (\gamma_K-1)^{-1}]$-module structure on $(M^r)^{\psi=0}$ extends
uniquely by continuity to an $\calR_A^{\tilde e_K r}(\Gamma_K)$-module
structure, for which $(M^r)^{\psi=0}$ is finite projective of rank
$[K:\Qp]\cdot\rank M$.
\end{theorem}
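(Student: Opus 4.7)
The plan is to reduce to the basic rank-one case $M = \calR_A(\pi_K)$, for which $(M^r)^{\psi=0}$ is identified with $\calR_A^{\tilde e_K r}(\Gamma_K)$ via Colmez's Mellin transform, and then bootstrap by a natural map. Proposition~\ref{P:calR_A(Gamma_K) acts on M}(3) already furnishes a continuous $\calR_A^\infty(\Gamma_K)$-action on $(M^r)^{\psi=0}$, so only the extension of this action from the disc to the full annulus of $\Gamma_K$-parameters needs to be constructed; uniqueness is automatic from density of $A[\Gamma_K, (\gamma_K-1)^{-1}]$ in $\calR_A^{\tilde e_K r}(\Gamma_K)$, together with Lemma~\ref{L:schneider-teitelbaum}.

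For the basic case $M = \calR_A(\pi_K)$, the decomposition $\calR_A^r(\pi_K) = \varphi(\calR_A^{r/p}(\pi_K)) \oplus \calR_A^r(\pi_K)^{\psi=0}$ combined with the Colmez--Mellin identification shows that $(1+\pi_K)$, which lies in $\calR_A^r(\pi_K)^{\psi=0}$ because $\psi(1+\pi_K)=0$, generates this submodule freely as an $\calR_A^{\tilde e_K r}(\Gamma_K)$-module of rank $[K:\Qp]$. The general $K$ case reduces to $K=\Qp$ via the field-of-norms construction of Definition~\ref{D:field of norms}, with the radii rescaling $r \leftrightarrow \tilde e_K r$ producing the factor $\tilde e_K$ in the theorem statement. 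Under this identification $\gamma_K-1$ corresponds to the distinguished annulus coordinate, hence is a unit in $\calR_A^{\tilde e_K r}(\Gamma_K)$ and acts invertibly on $(M^r)^{\psi=0}$.

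For general $M$, I would construct and analyze the natural $\calR_A^{\tilde e_K r}(\Gamma_K)$-linear map
\[
\theta_M: \calR_A^{\tilde e_K r}(\Gamma_K) \whotimes_A M^{r/p} \longrightarrow (M^r)^{\psi=0}, \qquad g \otimes m \mapsto g \cdot ((1+\pi_K)\varphi(m)),
\]
proving that (after possibly shrinking $r$) it is an isomorphism. The strategy is to express both sides as finitely presented coadmissible sheaves on $\Max A \times \rmA^1(0, \tilde e_K r]$ using the $\varphi$-decomposition of $M^r$, the basic case, and Proposition~\ref{P:finite-generation}, then apply Corollary~\ref{C:pointwise criterion for finite projectivity} (combined with Lemma~\ref{L:constant dimension function implies flat}) to reduce to pointwise verification on $\Max A \times \rmA^1(0, \tilde e_K r]$; at each such point one lands in the setting of a $(\varphi,\Gamma_K)$-module over a finite $\QQ_p$-extension coefficient ring, where the original theorem of Chenevier (whose boundedness condition is automatic pointwise) directly yields bijectivity and the rank count $[K:\QQ_p]\cdot \rank M$.

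The main obstacle will be the uniform convergence estimate needed to make $\theta_M$ well-defined globally: one must control the operator norms $\|(\gamma_K-1)^n\|$ on $(M^r)^{\psi=0}$ as $n \to \infty$ so as to match the stronger growth rates built into the Banach norms defining $\calR_A^{\tilde e_K r}(\Gamma_K)$, which are more demanding than those of $\calR_A^\infty(\Gamma_K)$ provided for free by Proposition~\ref{P:calR_A(Gamma_K) acts on M}(3). The constant $r_1$ in the theorem is introduced precisely to enable this: for $r \leq r_1$ sufficiently small, the Mellin series of $(1+\pi_K)\varphi(m)$ converge on the full annulus $\rmA^1(0,\tilde e_K r]$ and not merely on the disc $\rmA^1[s,\infty]$. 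This quantitative estimate is the substance of removing Chenevier's boundedness hypothesis, and combining it with the pointwise reduction and Corollary~\ref{C:pointwise criterion for finite projectivity} completes the proof.
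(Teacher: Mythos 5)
Your plan shares the key idea with the paper (the element $(1+\pi)\varphi(m)$ lies in $M^{\psi=0}$ and generates it under the $\Gamma$-action), and your reduction to $K=\Qp$ by induction is exactly the paper's first step, but the bootstrap via $\theta_M$ and a pointwise criterion does not go through as stated, for several reasons.

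First, the map $\theta_M: \calR_A^{\tilde e_K r}(\Gamma_K) \whotimes_A M^{r/p} \to (M^r)^{\psi=0}$ is ill-posed. The source is not a finite $\calR_A^{\tilde e_K r}(\Gamma_K)$-module (since $M^{r/p}$ is infinite-dimensional over $A$), while the target is supposed to be finite projective of rank $[K:\Qp]\rank M$, so $\theta_M$ cannot be an isomorphism and does not reduce the problem. The object one should study is the $\calR_A(\Gamma_n)$-submodule $(1+\pi)\varphi^n(M^r)$ of $(M^{r/p^n})^{\psi=0}$ (for $n$ large), intrinsically, and prove directly that it is finite projective with invertible $\gamma_n-1$; one then recovers $M^{\psi=0}$ by tensoring up along $\ZZ[\Gamma_n] \subset \ZZ[\Gamma]$. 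There is no tensor product against all of $M^{r/p}$ over $A$.

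Second, the pointwise criterion (Corollary~\ref{C:pointwise criterion for finite projectivity}) assumes $A$ is reduced — the theorem you are proving has no such hypothesis — and, more seriously, it takes finite presentation as an input. Knowing the fiber dimensions are constant (from Chenevier's theorem over a field) and knowing coadmissibility do \emph{not} together yield the uniform bound on the number of generators required by Proposition~\ref{P:finite-generation}. Uniform finite presentation is precisely what the hard part of the argument must establish, so the reduction is circular.

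Third, the quantitative estimate you correctly flag as the ``substance'' of removing Chenevier's boundedness hypothesis is exactly where the proof lives and cannot be deferred. The paper proves it in Lemma~\ref{L:(1+pi)phi(D) finite projective}: after conjugating by $(1+\pi)\varphi^n$, the operator $\gamma_n - 1$ becomes $G_{\gamma_n}(x)=(1+\pi)\gamma_n(x)-x = \pi(1 + \tfrac{1+\pi}{\pi}(\gamma_n-1))(x)$, which for $n$ chosen so that $\|\gamma_n-1\|_{M,I} < |\pi|_I$ is a small perturbation of multiplication by $\pi$; a second perturbation argument (the maps $\Phi,\Phi'$) then exhibits $(1+\pi)\varphi^n M^I$ as finite projective over $\calR_A^I(\Gamma_n)$ with the right rank. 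This is not a reduction to the field case — it is an explicit Banach-space argument valid over arbitrary $A$ — and your proposal does not supply anything that would replace it.
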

\begin{proof}
We first observe that
\[
(\Ind_K^{\Qp}M)^{\psi = 0} = \big(\Hom_{\ZZ[\Gamma_K]}(\ZZ[\Gamma], M)\big)^{\psi = 0} = \Hom_{\ZZ[\Gamma_K]}(\ZZ[\Gamma], M^{\psi = 0}).
\]
Since $\ZZ[\Gamma]$ is a finite free $\ZZ[\Gamma_K]$-module, the
statement of the theorem for $M$ is equivalent to that for
$\Ind_K^{\Qp}M$.  (Note our restriction on $C(\pi_K)$ in
Definition~\ref{D:field of norms} allows us to compare
$\calR_A^{\tilde e_Kr}(\Gamma_K)$ and $\calR_A^r(\Gamma)$.)
Hence we can and will assume that $K =\Qp$ from now on.

We may assume that $M$ is a $(\varphi,\Gamma)$-module over
$\calR_A^{r_0}$, where $r_0 < C(\pi_K)$ is small enough so that $|p| <
|\pi^p|_{r_0}$, and that $\psi$ is defined.  For $0 < s \leq r \leq
r_0$, because of the decomposition $\calR_A^{[s/p^n,r/p^n]} =
\bigoplus_{i \in \ZZ/p^n\ZZ}
(1+\pi)^{\tilde\imath}\varphi^n\calR_A^{[s,r]}$, where $\tilde\imath$
denotes any lift of $i$ to $\ZZ$, we have a decomposition
\[
(M^{[s/p^n, r/p^n]})^{\psi =0} = \bigoplus_{i \in (\ZZ/p^n\ZZ)^\times} (1+\pi)^{\tilde\imath} \varphi^n M^{[s,r]} \cong (1+\pi)\varphi^nM^{[s,r]} \otimes_{\ZZ[\Gamma_n]} \ZZ[\Gamma]
\]
of $\ZZ[\Gamma]$-modules.  Since $\calR_A^{[s/p^n, r/p^n]}(\Gamma) =
\calR_A^{[s,r]}(\Gamma_n) \otimes_{\ZZ[\Gamma_n]} \ZZ[\Gamma]$ for all
$n \geq 0$ (using the restriction on $C(\pi_K)$ in
Definition~\ref{D:field of norms}), we need only to prove the
following: for \emph{some} $n\geq 2$, the module
$(1+\pi)\varphi^nM^{r_0}$ has invertible action of $\gamma_n-1$, it
admits a unique extension of the
$A[\Gamma_n,(\gamma_n-1)^{-1}]$-action by continuity to an
$\calR_A^{r_0}(\Gamma_n)$-module structure, and with respect to this
structure it is finite projective of the same rank as $M$.  Then the
theorem holds for $r_1 = r_0/p^n$.

Since $M$ is a finite projective $\calR_A^{r_0}$-module, there exists a finite (projective) $\calR_A^{r_0}$-module $N$ such that $M \oplus N$ is free with basis $\bbe_1, \dots, \bbe_m$ over $\calR_A^{r_0}$.  For $I = [r_0, r_0]$, $[r_0/p, r_0/p]$, or $[r_0/p, r_0]$, set $N^I = N \otimes_{\calR_A^{r_0}} \calR_A^I$.
We equip $M^I$ and $N^I$ with any Banach module norms $|\cdot|_{M,I}$ and $|\cdot|_{N,I}$, and $M^I \oplus N^I$ with the supremum of these norms, denoted $|\cdot|_{M \oplus N,I}$.

By Proposition~\ref{P:calR_A(Gamma_K) acts on M}(2), there exists $n_0
\in \NN$ such that for all $n \geq n_0$ the operator norms of
$\gamma_n-1$ satisfy $\|\gamma_n-1\|_{M, I}< |\pi|_I$ for any $I =
    [r_0, r_0]$, $[r_0/p, r_0/p]$, or $[r_0/p, r_0]$.

We will prove in Corollary~\ref{C:structure of 1+pi varphi^n} that,
for any $l \in \NN$, $(1+\pi)\varphi^{n_0} M^{[r_0/p^l,r_0/p^{l-1}]}$
has invertible action of $\gamma_{n_0}-1$, it admits a unique
extension of the $A[\Gamma_{n_0},(\gamma_{n_0}-1)^{-1}]$-action by
continuity to an $\calR_A^{[r_0/p^l,r_0/p^{l-1}]}(\Gamma_{n_0})$-module
structure, and it is finite projective of the same rank as $M$, in
fact generated by $m$ elements (and hence $(m,m)$-finitely presented
because of the projectivity), such that
\begin{align*}
&(1+\pi)\varphi^{n_0} M^{[r_0/p^l,r_0/p^{l-1}]} \otimes_{\calR_A^{[r_0/p^l,r_0/p^{l-1}]}(\Gamma_{n_0})} \calR_A^{[r_0/p^l,r_0/p^l]}(\Gamma_{n_0})
\\
\cong\ & (1+\pi)\varphi^{n_0} M^{[r_0/p^{l+1},r_0/p^l]} \otimes_{\calR_A^{[r_0/p^{l+1},r_0/p^l]}(\Gamma_{n_0})} \calR_A^{[r_0/p^l,r_0/p^l]}(\Gamma_{n_0})
\end{align*}
 for any $l$.
By Proposition~\ref{P:finite-generation}(3), this implies that $(1+\pi)\varphi^{n_0} M^{r_0}$ is a finite projective $\calR_A^{r_0}(\Gamma_{n_0})$-module of the same rank as $M$.
\end{proof}

\begin{lemma}
\label{L:(1+pi)phi(D) finite projective}
Retain the notations of the preceding proof.  Let $I = [r_0/p, r_0],
[r_0/p, r_0/p]$, or $[r_0,r_0]$.  If $n \geq n_0$, then
$(1+\pi)\varphi^n M^{I}$ has invertible action of $\gamma_n-1$, it
admits a unique extension of the
$A[\Gamma_n,(\gamma_n-1)^{-1}]$-action by continuity to an
$\calR_A^{I}(\Gamma_n)$-module structure, and it is finite projective
of the same rank as $M$ with (at most) $m$ generators.  Moreover, we
have natural isomorphisms
\begin{align*}
(1+\pi)\varphi^n M^{[r_0/p, r_0/p]} &\cong 
(1+\pi)\varphi^n M^{[r_0/p, r_0]} \otimes_{\calR_A^{[r_0/p, r_0]}(\Gamma_n)} \calR_A^{[r_0/p, r_0/p]}(\Gamma_n),\textrm{ and }\\
(1+\pi)\varphi^n M^{[r_0, r_0]} &\cong
(1+\pi)\varphi^n M^{[r_0/p, r_0]} \otimes_{\calR_A^{[r_0/p, r_0]}(\Gamma_n)} \calR_A^{[r_0, r_0]}(\Gamma_n).
\end{align*}
\end{lemma}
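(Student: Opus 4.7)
The plan is to pull back the $\gamma_n$-action on $(1+\pi)\varphi^n M^I$ along the $A$-linear bijection $(1+\pi)\varphi^n\colon M^I \xrightarrow{\sim} (1+\pi)\varphi^n M^I$ to an operator $U$ on $M^I$, identify it as a perturbation of multiplication by $\pi$, and leverage this to deduce all the claims. Using $\chi(\gamma_n) = 1+p^n$ and $(1+\pi)^{p^n} = 1 + \varphi^n(\pi)$, a direct computation gives $\gamma_n((1+\pi)\varphi^n(m)) = (1+\pi)\varphi^n((1+\pi)\gamma_n(m))$, so $\gamma_n - 1$ pulls back to $U := \pi\,\id + (1+\pi)(\gamma_n - 1)$ on $M^I$. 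From $\|\gamma_n - 1\|_{M, I} < |\pi|_I$ and $|1+\pi|_I = 1$, the perturbation $\epsilon := (1+\pi)(\gamma_n - 1)$ has operator norm strictly less than $|\pi|_I$; writing $U = \pi(\id + \pi^{-1}\epsilon)$ and inverting via the geometric series (using $\|\pi^{-1}\epsilon\|_{M,I} < 1$), we obtain $U^{-1} = \pi^{-1}\sum_{k\geq 0}(-\pi^{-1}\epsilon)^k$ with $\|U^{-1}\|_{M, I} \leq |\pi^{-1}|_I$, so $\gamma_n - 1$ is invertible on $(1+\pi)\varphi^n M^I$.

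With these bounds, for any $f = \sum_i a_i T^i \in \calR_A^I$ the series $\sum_i a_i U^i$ converges in operator norm by matching $\|U\|^i \leq \omega^{is}$ (for $i > 0$) and $\|U^{-1}\|^{|i|} \leq \omega^{-ri}$ (for $i < 0$, with $I = [s, r]$) against the summability conditions defining $\calR_A^{[s, r]}$. The substitution $T \mapsto U$ thus gives a continuous $A$-algebra homomorphism $\calR_A^I \to \mathrm{End}_{A,\cont}(M^I)$, endowing $(1+\pi)\varphi^n M^I$ with an $\calR_A^I(\Gamma_n)$-module structure extending the $A[\Gamma_n,(\gamma_n - 1)^{-1}]$-action; uniqueness of the extension follows from continuity and density of Laurent polynomials in $\gamma_n - 1$.

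For finite projectivity of rank $\rank M$ with $m$ generators, I would extend the $\Gamma$-action trivially to $N$, so that $M \oplus N$ is free over $\calR_A^{r_0}$ with basis $\bbe_1, \ldots, \bbe_m$, the operator $U$ respects the decomposition, and $U$ acts as $\pi\,\id$ on $N^I$. The new $\calR_A^I(\Gamma_n)$-module structure on $(1+\pi)\varphi^n (M \oplus N)^I$ splits as the direct sum of that on $(1+\pi)\varphi^n M^I$ with the old $\calR_A^I$-structure on $N^I$, so it suffices to prove that $(1+\pi)\varphi^n (M \oplus N)^I$ is free of rank $m$ with generators $(1+\pi)\varphi^n \bbe_j$. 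This reduces to showing that the $A$-linear map $\Phi\colon (\calR_A^I)^m \to (M \oplus N)^I$, $(g_j) \mapsto \sum_j g_j(U) \bbe_j$, is bijective. Using the expansion $U^k - \pi^k = \sum_{i+j=k-1} U^i \epsilon \pi^j$ for $k > 0$ and its analogue for $k < 0$ via $U^{-1} - \pi^{-1}$, one estimates $\Phi - \Phi_0$ against the old topological isomorphism $\Phi_0\colon (g_j) \mapsto \sum_j g_j \bbe_j$, and concludes bijectivity of $\Phi$ via Banach perturbation; the summand $(1+\pi)\varphi^n M^I$ then has rank $m - \rank N = \rank M$.

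Finally, the base change isomorphisms follow from naturality: the generators $(1+\pi)\varphi^n \bbe_j$ restrict coherently across the three intervals, and the natural maps $\calR_A^I(\Gamma_n) \to \calR_A^{I'}(\Gamma_n)$ for $I \supseteq I'$ are induced from the Robba-ring restriction maps via $T = \gamma_n - 1$, reducing the claim to base change for modules over Robba rings. The main obstacle is the Banach perturbation argument for bijectivity of $\Phi$, especially in the interval case $I = [r_0/p, r_0]$: whereas for the single-point intervals the contraction ratio $\|\epsilon\|/|\pi|_{[r, r]} < 1$ is uniform, on the interval the outer-end ratio $\|\epsilon\|\cdot|\pi^{-1}|_I = \omega^{-r}\|\epsilon\|$ need not be less than $1$, so one must track the positive- and negative-power estimates separately and synthesize them using the coherent single-point results.
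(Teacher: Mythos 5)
Your proposal follows essentially the same route as the paper's proof: identify the pulled-back operator $U = G_{\gamma_n} = \pi\,\id + (1+\pi)(\gamma_n-1)$, invert it by a geometric series, promote the action to an $\calR_A^I(\Gamma_n)$-module structure by substituting $U$ for the indeterminate, pad $M$ by $N$ to a free module of rank $m$, and compare the ``old'' $\calR_A^I$-structure with the ``new'' $\calR_A^I(\Gamma_n)$-structure via a pair of maps (your $\Phi_0$, $\Phi$ are the paper's $\Phi$, $\Phi'$) and a Banach perturbation. Two small slips: since $\pi$ and $\gamma_n-1$ do not commute, the inverse of $U = \pi(\id + \pi^{-1}\epsilon)$ is $\bigl[\sum_{k\ge 0}(-\pi^{-1}\epsilon)^k\bigr]\pi^{-1}$, not $\pi^{-1}\sum_{k\ge 0}(-\pi^{-1}\epsilon)^k$; and one cannot literally endow $N$ with a ``trivial'' semilinear $\Gamma$-action (that would contradict semilinearity over $\calR_A^{r_0}$). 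The paper sidesteps the latter by extending only the single operator $G_{\gamma_n}$ to act on $N^I$ as multiplication by $\pi$, which is exactly what you need.

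The concern you raise at the end is a real one, and it is worth spelling out how it is resolved. For $I = [r_0/p,r_0]$ one has $|\pi|_I = \omega^{r_0/p}$ but $|\pi^{-1}|_I = \omega^{-r_0}$, so the naive bound $\|\pi^{-1}\epsilon\| \le |\pi^{-1}|_I\|\gamma_n-1\|_{M,I} < \omega^{-r_0}\omega^{r_0/p}$ exceeds $1$. The fix is to choose the Banach norm on $M^{[r_0/p,r_0]}$ to be the maximum of its restrictions to $M^{[r_0/p,r_0/p]}$ and $M^{[r_0,r_0]}$ (which one may always do, since these are finite modules over the corresponding affinoid rings and the transition maps are bounded). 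With that choice, any operator compatible with the two restriction maps has $\|L\|_{M,[r_0/p,r_0]} \le \max\bigl\{\|L\|_{M,[r_0/p,r_0/p]},\ \|L\|_{M,[r_0,r_0]}\bigr\}$, and the endpoint bounds $\|\gamma_n-1\|_{M,[r,r]} < \omega^r$ for $r = r_0$ and $r = r_0/p$ (guaranteed by the choice of $n_0$ in the proof of Theorem~\ref{T:structure of D^psi=0}) give $\|\pi^{-1}\epsilon\|_{M,[r,r]} < 1$ at both endpoints, hence $\|\pi^{-1}\epsilon\|_{M,[r_0/p,r_0]} < 1$. This makes the geometric series converge and the $\Phi'\circ\Phi^{-1}$ perturbation estimate go through uniformly on the closed interval, which is precisely what the base-change isomorphisms in the lemma require.
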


\begin{proof}
We observe that $\gamma_n-1$ acts on $(1+\pi)\varphi^n M^I$ by sending $(1+\pi)\varphi^n(x)$ to
\begin{align*}
\gamma_n((1+\pi)\varphi^n(x)) - (1+\pi)\varphi^n(x)
&= (1+\pi)(1+\pi)^{p^n}\varphi^n\gamma_n(x) - (1+\pi)\varphi^n(x) \\
&= (1+\pi)\varphi^n((1+\pi)\gamma_n(x)-x) \\
&= (1+\pi)\varphi^n(G_{\gamma_n}(x)),
\end{align*}
where we denote by $G_{\gamma_n}$ the operator on $M^I$ given by
\[
G_{\gamma_n} : x \mapsto (1+\pi)\gamma_n(x)-x = \pi \cdot
\left(1 + \frac{(1+\pi)}{\pi}(\gamma_n-1)\right)(x).
\]

We observe that, by
our choice of $n_0$, the series
\[
\sum_{k \geq 0} \left( -\frac{(1+\pi)}{\pi} (\gamma_n-1)
  \right)^k \cdot \pi^{-1}
\]
converges for the operator norm $\|\cdot\|_{M,I}$, and is inverse to
$G_{\gamma_n}$.  Thus $\gamma_n-1$ acts invertibly on
$(1+\pi)\varphi^nM^I$.  Moreover, we have $\|(G_{\gamma_n})^{\pm
  1}-(\pi)^{\pm1}\|_{M,I} < |\pi^{\pm1}|_I$.  As a result, $M^I$
admits a unique extension of the action of $A[G_{\gamma_n}^{\pm 1}]$
by continuity an $\calR_A^I(G_{\gamma_n})$-module structure, and by
transport of structure $(1+\pi)\varphi^n M^I$ admits a unique
extension of the action of $A[(\gamma_n-1)^{\pm1}]$ by continuity to
an $\calR_A^I(\Gamma_n)$-module structure.

We extend the action of $G_{\gamma_n}$ to $M^I\oplus N^I$ by having
$G_{\gamma_n}$ act as multiplication by $\pi$ on $N^I$.  We still have
\begin{equation}\label{E:norm condition}
\|(G_{\gamma_n})^{\pm 1}-(\pi)^{\pm1}\|_{M \oplus N,I} < |\pi^{\pm1}|_I.
\end{equation}

Consider the following two maps
\begin{align*}
\Phi: \bigoplus_{i=1}^m \calR_A^I(\Gamma_n)\bbe_i &\longrightarrow M^I \oplus N^I & \Phi': \bigoplus_{i=1}^m \calR_A^I(\Gamma_n)\bbe_i &\longrightarrow M^I \oplus N^I
\\
\sum_{i=1}^m f_i(\gamma_n-1) \bbe_i& \longmapsto \sum_{i=1}^m f_i(\pi) \bbe_i & \sum_{i=1}^m f_i(\gamma_n-1) \bbe_i& \longmapsto \sum_{i=1}^m f_i(G_{\gamma_n}) \bbe_i,
\end{align*}
where $f_i(G_{\gamma_n})$ is the formal substitution of $G_{\gamma_n}$
into the variable of the formal Laurent series.  If we provide
$\bigoplus_{i=1}^m \calR_A^I(\Gamma_n)\bbe_i$ with the norm defined by
the $\bbe_i$, the map $\Phi$ is a topological isomorphism. The norm
condition (\ref{E:norm condition}) implies that
\[
|\Phi' \circ \Phi^{-1} (x) - x |_{M \oplus N,I} < |x|_{M \oplus N,I}
\qquad
\text{for any } x \in M^I \oplus N^I,
\]
forcing $\Phi'$ to be an isomorphism too.  Hence, if we let
$\calR_A^I(G_{\gamma_n})$ denote the formal substitution of
$G_{\gamma_n}$ into the indeterminate of $\calR_A^I$, then $M^I \oplus
N^I$ is a free $\calR_A^I(G_{\gamma_n})$-module of rank $m$. In
particular, this implies that $(1+\pi)\varphi^n M^I$ is finite
projective over $\calR_A^I(\Gamma_n)$, and is generated by $m$
elements.  Moreover, since the definition of $\Phi'$ is compatible
with changing the interval $I$, we have the base change property
asserted in the lemma.

By our construction, we have $\rank_{\calR_A^I}(M^I \oplus N^I) = m =  \rank_{\calR_A^I(G_{\gamma_n})}(M^I \oplus N^I)$. Since the $G_{\gamma_n}$-action on $N^I$ is given by multiplication by $\pi$, we see immediately that $\rank_{\calR_A^I(\Gamma_n)}(1+\pi)\varphi^n M^I$ agrees with $\rank_{\calR_A}M$.
\end{proof}

\begin{cor}
\label{C:structure of 1+pi varphi^n}
Retain the notations of the preceding proof.
For any $l \geq 0$, let $I_l = [r_0/p^{l+1}, r_0/p^l], [r_0/p^{l+1}, r_0/p^{l+1}]$, or $[r_0/p^l,r_0/p^l]$.
Then $(1+\pi)\varphi^{n_0} M^I$ is a finite projective module over $\calR_A^{I}(\Gamma_{n_0})$ of the same rank as $M$ with (at most) $m$ generators.  Moreover, we have
\begin{align*}
(1+\pi)\varphi^n M^{[r_0/p^{l+1}, r_0/p^{l+1}]} &\cong
(1+\pi)\varphi^n M^{[r_0/p^{l+1}, r_0/p^l]} \otimes_{\calR_A^{[r_0/p^{l+1}, r_0/p^l]}(\Gamma_n)} \calR_A^{[r_0/p^{l+1}, r_0/p^{l+1}]}(\Gamma_n),\textrm{ and }\\
(1+\pi)\varphi^n M^{[r_0/p^l,r_0/p^l]} &\cong
(1+\pi)\varphi^n M^{[r_0/p^{l+1}, r_0/p^l]} \otimes_{\calR_A^{[r_0/p^{l+1}, r_0/p^l]}(\Gamma_n)} \calR_A^{[r_0/p^l,r_0/p^l]}(\Gamma_n).
\end{align*}
\end{cor}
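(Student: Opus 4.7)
The plan is to execute the proof of Lemma \ref{L:(1+pi)phi(D) finite projective} on each interval $I_l$ individually, where the key technical issue is a uniform-in-$l$ operator norm estimate $\|\gamma_{n_0}-1\|_{M,I_l} < |\pi|_{I_l}$. The case $l = 0$ is exactly the content of that lemma, together with the corresponding base change isomorphisms, so I assume throughout that $l \geq 1$.

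First, I would equip $M^{I_l}$ with a Banach module norm obtained by $\varphi^l$-pullback from the chosen norm on $M^{I_0}$, where $I_0 = [r_0/p, r_0]$.  Exploiting the $\varphi$-module isomorphism $(\varphi^l)^* M^{I_0} \cong M^{I_l}$, I define
\[
|y|_{M, I_l} \;:=\; \inf\Bigl\{\max_i |r_i|_{I_l}\cdot|x_i|_{M, I_0} \,:\, y = \sum_i r_i\,\varphi^l(x_i)\Bigr\}.
\]
Since $\gamma_{n_0}$ commutes with $\varphi^l$, the Leibniz-style expansion
$(\gamma_{n_0}-1)(r_i\,\varphi^l(x_i)) = \gamma_{n_0}(r_i)\,\varphi^l\bigl((\gamma_{n_0}-1)x_i\bigr) + (\gamma_{n_0}-1)(r_i)\,\varphi^l(x_i)$
yields
\[
\|\gamma_{n_0}-1\|_{M,I_l} \;\leq\; \max\bigl(\|\gamma_{n_0}-1\|_{M, I_0},\ \|\gamma_{n_0}-1\|_{\calR_A, I_l}\bigr).
\]
The first quantity is strictly less than $|\pi|_{I_0} = \omega^{r_0/p}$ by Lemma \ref{L:(1+pi)phi(D) finite projective}, and since $l\geq 1$ one has $\omega^{r_0/p} < \omega^{r_0/p^{l+1}} = |\pi|_{I_l}$. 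For the second quantity, a direct Gauss norm calculation using the identities \eqref{E:pi estimates} yields $\|\gamma_{n_0}-1\|_{\calR_A, I_l} \leq \omega^{(r_0/p^{l+1})(p^{n_0}-1)}$, which is strictly less than $|\pi|_{I_l}$ because $p^{n_0}\geq 4$ (recall that $n_0\geq 2$ from the theorem's setup).

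With the norm estimate in hand, the remainder of the proof of Lemma \ref{L:(1+pi)phi(D) finite projective} applies verbatim with $I$ replaced by $I_l$. The operator $G_{\gamma_{n_0}} = (1+\pi)\gamma_{n_0}-1$ on $M^{I_l}$, extended to $N^{I_l}$ by multiplication by $\pi$, satisfies the norm inequality \eqref{E:norm condition}; the maps $\Phi$ and $\Phi'$ then exhibit $M^{I_l} \oplus N^{I_l}$ as free of rank $m$ over $\calR_A^{I_l}(G_{\gamma_{n_0}})$. This realizes $(1+\pi)\varphi^{n_0}M^{I_l}$ as a finite projective $\calR_A^{I_l}(\Gamma_{n_0})$-module of the same rank as $M$, with at most $m$ generators. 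The two claimed base change isomorphisms are immediate from the construction, since the definition of $\Phi'$ is formally compatible with restricting intervals from $I_l$ to one of its endpoints.

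The main obstacle is arranging for a single $n_0$ to work uniformly in $l$; without a $\varphi$-compatible choice of Banach module norm on $M^{I_l}$, one would a priori need to enlarge $n_0$ as $l$ grows. The Frobenius-pullback construction above circumvents this, after which the remainder of the argument is a routine transcription of the preceding lemma.
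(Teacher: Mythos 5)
Your approach is correct in outline and gives a workable alternative, but the paper's own proof is substantially shorter and purely algebraic.  The paper simply observes that
\[
(1+\pi)\varphi^{n_0} M^{[s/p^l, r/p^l]} = (1+\pi)\varphi^{n_0+l} M^{[s,r]} \otimes_{\ZZ[\Gamma_{n_0+l}]} \ZZ[\Gamma_{n_0}] = (1+\pi)\varphi^{n_0+l} M^{[s,r]} \otimes_{\calR_A^{[s,r]}(\Gamma_{n_0+l})} \calR_A^{[s/p^l, r/p^l]}(\Gamma_{n_0}),
\]
so the module over $I_l$ is obtained by a free base change of degree $p^l$ from $(1+\pi)\varphi^{n_0+l}M^{I_0}$, which already falls under Lemma~\ref{L:(1+pi)phi(D) finite projective} because $n_0+l \geq n_0$.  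Finite projectivity, rank, the bound of $m$ generators, and the interval-restriction isomorphisms all pass trivially through such a base change, so the paper needs no new norm estimates.  You instead re-run the proof of the lemma on each $I_l$ directly, using a $\varphi^l$-pullback norm to get uniform-in-$l$ operator norm control; this is the analytic shadow of the same Frobenius-pullback idea, and it does work, but it requires re-verifying the Gauss-norm estimate for $\gamma_{n_0}-1$ on $\calR_A^{I_l}$.  One caution there: your stated exponent $\omega^{(r_0/p^{l+1})(p^{n_0}-1)}$ is not always a correct upper bound for $\|\gamma_{n_0}-1\|_{\calR_A,I_l}$.  The dominant term in $|(1+\pi)^{p^{n_0}}-1|_s$ can be $\binom{p^{n_0}}{p^j}\pi^{p^j}$ with $j<n_0$ rather than $\pi^{p^{n_0}}$, depending on the sizes of $n_0$, $s$, and $l$, so the true operator norm involves the $p$-adic valuations of binomial coefficients and need not be as small as $\omega^{(p^{n_0}-1)s}$.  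That said, your argument only uses the qualitative bound $\|\gamma_{n_0}-1\|_{\calR_A,I_l} < |\pi|_{I_l}$, which does hold by the same estimates as in Proposition~\ref{P:calR_A(Gamma_K) acts on M} (the key point being that for $k=1$ the coefficient $\binom{p^{n_0}}{1}=p^{n_0}$ contributes $p^{-n_0}<\omega^s$ since $s<1/(p-1)$, and for $k\geq 2$ one has the extra factor $\omega^{(k-1)s}\leq\omega^s$), so the proof goes through.  In short: a correct alternative route, but more laborious than the paper's clean algebraic reduction, and with a numeric estimate you should state more conservatively.
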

\begin{proof}
This follows from Lemma~\ref{L:(1+pi)phi(D) finite projective} and the fact that, for any $0 <s \leq r\leq r_0$, 
\[
(1+\pi) \varphi^{n} M^{[s/p^l, r/p^l]} = (1+\pi)\varphi^{n+l} M^{[s,r]} \otimes_{\ZZ[\Gamma_{n+l}]} \ZZ[\Gamma_{n}] = (1+\pi)\varphi^{n+l} M^{[s,r]} \otimes_{\calR_A^{[s,r]}(\Gamma_{n+l})} \calR_A^{[s/p^l, r/p^l]}(\Gamma_{n}).
\]
\end{proof}

\subsection{$\psi$-cohomology of $M / tM$}

An important ingredient of Liu's proof \cite{liu} of Theorem~\ref{T:Liu} is the careful study of the $(\varphi, \Gamma)$-cohomology of ``torsion" modules $M/tM$.  This allows one to freely shift the slopes of the modules.  We  generalize this result to the relative setting, and give more information concerning the $\Gamma$-action.

\begin{hypothesis}
In this subsection, we assume that $K = \Qp$.
\end{hypothesis}

\begin{notation}
We define the special elements $q = \varphi(\pi)/\pi$, $q_n =
\varphi^{n-1}(q)$ for $n \geq 1$, and $t = \log(1+\pi) =
\pi\prod_{n\geq 1} (q_n/p)$ of $\calR^\infty$.  We have $\varphi(t) =
pt$ and $\gamma(t) = \chi(\gamma)t$ for any $\gamma \in \Gamma$.  For
$0<s\leq r$, we have $q_n \notin (\calR^{[s,r]})^\times$ if and only
if $s \leq 1/p^{n-1}\leq r$; in this case, $\calR^{[s,r]} / q_n \cong
\Qp[\pi]/q_n$ may be identified with $\Qp(\zeta_{p^n})$ by identifying
$1+\pi$ with $\zeta_{p^n}$.  This identification is equivariant for
$\Gamma$, and for $\varphi$ in the sense that $\varphi: \calR^{[s,r]}
/ q_n \to \calR^{[s/p,r/p]} / q_{n+1}$ is identified with the natural
embedding $\Qp(\zeta_{p^n}) \to \Qp(\zeta_{p^{n+1}})$.

We define similar special elements of $\calR_A^\infty(\Gamma_{n_0})$,
namely $q_{0, \gamma_{n_0}} =\gamma_{n_0}-1$, $q_{n, \gamma_{n_0}} =
p^{-1}(\gamma_{n_0}^{p^n}-1)/ (\gamma_{n_0}^{p^{n-1}}-1)$ for $n
\geq1$, and $\ell_{\gamma_{n_0}} = \prod_{n \geq 0} q_{n,
  \gamma_{n_0}}$.
\end{notation}

The following lemma is essentially \cite[Proposition~2.16]{colmez}.

\begin{lemma}
\label{L:structure of M/tM}
Let $M^{r_0}$ be a $(\varphi,\Gamma)$-module over $\calR_A^{r_0}$, and
let $n_0 = \lceil -\log_p r_0 \rceil$.  If we denote $M_n^{r_0} =
M^{r_0} / q_nM^{r_0}$ for $n \geq n_0$, then we have the
following.
\begin{itemize}
\item[(1)] $M^{r_0} / tM^{r_0} \cong \prod_{n \geq n_0}M_n^{r_0}$;
\item[(2)] $\varphi^{n'-n} \otimes 1$ induces an isomorphism $M^{r_0}_n \otimes_{\Qp(\zeta_{p^n})} \Qp(\zeta_{p^{n'}}) \cong M^{r_0}_{n'}$ as $A[\Gamma]$-modules for any $n' \geq n \geq n_0$;
\item[(3)] under the product decomposition of (1), the map $\varphi: M^{r_0}/tM^{r_0} \to M^{r_0/p}/tM^{r_0/p}$ takes $(x_n)_n$ to $(x_{n-1})_n$;
\item[(4)] $\psi: M^{r_0/p}/tM^{r_0/p} \to M^{r_0}/tM^{r_0}$ takes $(x_n)_n$ to $(p^{-1}\Tr_{M_n^{r_0}/M_{n-1}^{r_0}}(x_n))_n$, where $\Tr_{M_n^{r_0}/M_{n-1}^{r_0}}: M_n^{r_0} \cong M_{n-1}^{r_0} \otimes_{\Qp(\zeta_{p^{n-1}})} \Qp(\zeta_{p^n}) \to M_{n-1}^{r_0}$ is given by the trace on the second factor.
\end{itemize}
\end{lemma}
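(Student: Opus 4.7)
The plan is to deduce (1) from a Chinese remainder theorem argument applied to each term of the Fr\'echet--Stein presentation $\calR_A^{r_0} = \varprojlim_s \calR_A^{[s,r_0]}$, and then to verify (2)--(4) by direct computation on each factor of the resulting product decomposition. The key algebraic input is the factorization $t = \pi \prod_{n \geq 1}(q_n/p)$ together with the observation that in $\calR_A^{[s,r_0]}$ the element $q_n$ is a unit except when $s \leq 1/p^{n-1} \leq r_0$, i.e.\ for $n$ in a finite range $[n_0, n_s]$. Since $\pi$ and the $q_n/p$ with $n < n_0$ are already units in $\calR_A^{r_0}$, one obtains $t = u_s \prod_{n=n_0}^{n_s} q_n$ for some unit $u_s \in \calR_A^{[s,r_0]}$, and the factors $q_n$ in this finite product are pairwise coprime because their zero loci in $\rmA^1[s,r_0]$ are disjoint.

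For (1), the classical Chinese remainder theorem in the noetherian ring $\calR_A^{[s,r_0]}$ yields
\[
M^{[s,r_0]}/tM^{[s,r_0]} \;\cong\; \prod_{n=n_0}^{n_s} M^{[s,r_0]}/q_n M^{[s,r_0]}.
\]
Each quotient $\calR_A^{[s,r_0]}/q_n \cong A \whotimes_{\Qp} \Qp(\zeta_{p^n})$ is visibly independent of $s$, so the $n$-th factor is naturally identified with $M_n^{r_0}$. Because $M^{r_0}/tM^{r_0}$ is finitely presented over $\calR_A^{r_0}$, it is coadmissible by Lemma~\ref{L:schneider-teitelbaum}(7), so $\bbR^1\varprojlim_s$ vanishes by Lemma~\ref{L:schneider-teitelbaum}(2) and one passes to the inverse limit as $s \to 0^+$ to obtain $M^{r_0}/tM^{r_0} \cong \prod_{n \geq n_0} M_n^{r_0}$.

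For (2), the identity $\varphi(q_n) = q_{n+1}$ identifies $\varphi: \calR_A^{r_0}/q_n \to \calR_A^{r_0/p}/q_{n+1}$ with the natural embedding $A \whotimes_{\Qp} \Qp(\zeta_{p^n}) \hookrightarrow A \whotimes_{\Qp} \Qp(\zeta_{p^{n+1}})$. Combining this with $\varphi^*M^{r_0} \cong M^{r_0/p}$ and the canonical equality $M^{r_0/p}/q_{n+1} M^{r_0/p} = M^{r_0}/q_{n+1} M^{r_0}$ (both arise by restricting $M^{r_0}$ along the tautological reduction to the residue field) gives $M_{n+1}^{r_0} \cong M_n^{r_0} \otimes_{\Qp(\zeta_{p^n})} \Qp(\zeta_{p^{n+1}})$, and general $n' > n$ follows by iteration. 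Part (3) is then a direct consequence: the element $\varphi(x_{n-1}) \in M_n^{r_0/p}$ corresponds under (2) to $x_{n-1} \otimes 1$. For (4), I would combine the defining formula $\psi(\varphi(m) \otimes f) = m \cdot \psi(f)$ with the computation that, mod $q_n$, the map $\psi: \calR_A^{r_0/p}/q_n \to \calR_A^{r_0}/q_{n-1}$ realizes $p^{-1}\Tr_{\Qp(\zeta_{p^n})/\Qp(\zeta_{p^{n-1}})}$ (immediate from $p\varphi\psi = \Tr_{\calR_A^{r_0/p}/\varphi(\calR_A^{r_0})}$ and the fact that reduction mod $q_n$ turns the latter into the usual field-theoretic trace); combined with the tensor-product description in (2) this produces the asserted formula $\psi = p^{-1}\Tr_{M_n^{r_0}/M_{n-1}^{r_0}}$ on each factor.

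The main obstacle is handling the infinite product in (1): classical CRT only gives finite products, and the product structure must respect the Fr\'echet topology on $\calR_A^{r_0}$. Restricting first to each noetherian $\calR_A^{[s,r_0]}$, where only finitely many $q_n$ are non-units, and then using the Fr\'echet--Stein formalism---in particular the vanishing of $\bbR^1\varprojlim$ for coadmissible modules---to pass to the limit resolves this cleanly.
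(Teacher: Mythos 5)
Your proof is correct and follows the same route as the paper: for (1) you factor $t$ in each noetherian $\calR_A^{[s,r_0]}$, apply the classical CRT to the finite set of non-unit $q_n$'s, and pass to the limit via the vanishing of $\bbR^1\varprojlim$ for coadmissible modules; for (2) you use $\varphi(q_n) = q_{n+1}$ and the pullback isomorphism $\varphi^*M^{r_0} \cong M^{r_0/p}$; and for (3)--(4) you compute directly (the paper declares these ``straightforward,'' and your reduction of $\psi$ mod $q_n$ to $p^{-1}\Tr_{\Qp(\zeta_{p^n})/\Qp(\zeta_{p^{n-1}})}$ is exactly the implicit content). The only imprecision, shared with the paper's own terse exposition, is the claimed identification $M^{r_0/p}/q_{n+1} = M^{r_0}/q_{n+1}$, which strictly speaking requires the zero locus of $q_{n+1}$ to lie in $\rmA^1(0,r_0/p]$ and can fail for the smallest relevant index; this is a boundary edge case (the offending factor is zero) and does not affect the argument.
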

\begin{proof}
(1) For $0<s< r_0$, we have $\calR_A^{[s,r_0]} / t\calR_A^{[s,r_0]} \stackrel \sim \rightarrow \bigoplus_{n_0 \leq n \leq -\log_ps} A \otimes_{\Qp} \Qp(\zeta_{p^n})$, compatible with varying $s$ and $r$.  This implies that $M^{[s,r_0]} / tM^{[s,r_0]} \stackrel \sim \rightarrow \bigoplus_{n_0 \leq n \leq -\log_ps} M_n^{r_0}$. Taking the inverse limit as $s \to 0^+$ and applying Lemma~\ref{L:schneider-teitelbaum}(2) proves (1).

The statement (2) follows from
$
M_n^{r_0}
  \otimes_{\Qp(\zeta_{p^n})} \Qp(\zeta_{p^{n+1}})
\cong
\varphi(M^{r_0}/q_nM^{r_0})
  \otimes_{\varphi(\calR_A^{r_0}/q_n)} \calR_A^{r_0/p}/q_{n+1}
\cong
M^{r_0/p}/q_{n+1}M^{r_0/p}
\cong
M_{n+1}^{r_0/p} = M_{n+1}^{r_0}$.

The statements (3) and (4) are straightforward.
\end{proof}

The following proposition is a na\"ive generalization of \cite[Theorem~3.7]{liu} for families of $(\varphi, \Gamma)$-modules.

\begin{prop}
\label{P:finite cohomology M/t}
Let $M$ be a $(\varphi, \Gamma)$-module over $\calR_A$.  The cohomology groups $H^*_{\varphi, \gamma_{\Qp}}(M/tM)$ are finitely generated $A$-modules, and vanish outside degrees $0$, $1$.
\end{prop}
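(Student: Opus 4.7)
By Proposition~\ref{P:phi cohomology = psi cohomology}, it suffices to compute $H^*_{\psi, \gamma_{\Qp}}(M/tM)$. Under the isomorphism $M/tM \cong \prod_{n \geq n_0} M_n$ from Lemma~\ref{L:structure of M/tM}(1), parts~(3) and~(4) identify $\psi$ with the scaled trace-down map: $\psi((x_n)_n)$ has $m$-th component $p^{-1}\Tr_{M_{m+1}/M_m}(x_{m+1})$. The first step is to show $\psi - 1$ is surjective on $M/tM$: given a target $(y_n)_n$, I would construct a preimage $(x_n)_n$ by choosing $x_{n_0}$ arbitrarily and, having determined $x_n$, selecting any $x_{n+1} \in M_{n+1}$ satisfying $\Tr_{M_{n+1}/M_n}(x_{n+1}) = p(x_n + y_n)$; such a lift exists because the trace $M_{n+1} \twoheadrightarrow M_n$ is obtained by base change from the surjective cyclotomic trace $\Qp(\zeta_{p^{n+1}}) \twoheadrightarrow \Qp(\zeta_{p^n})$. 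This surjectivity immediately yields $H^2_{\psi, \gamma_{\Qp}}(M/tM) = 0$, since the second differential of the $(\psi, \gamma)$-complex is already surjective in its $\psi$-coordinate; and via Remark~\ref{R:useful-sequence} we obtain
\[
H^1_{\psi, \gamma_{\Qp}}(M/tM) \cong (M/tM)^{\Delta, \psi = 1}/(\gamma_{\Qp} - 1).
\]

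The second step is to establish finite generation of $H^0$ and $H^1$ by analyzing the evaluation map $\mathrm{ev}_{n_0}: (M/tM)^{\Delta, \psi = 1} \to M_{n_0}^{\Delta}$, $(x_n)_n \mapsto x_{n_0}$. The target is a finite $A$-module, and the iterative lifting above shows $\mathrm{ev}_{n_0}$ is surjective. Applying the $\gamma_{\Qp}$-cohomology long exact sequence to
\[
0 \to \ker(\mathrm{ev}_{n_0}) \to (M/tM)^{\Delta, \psi = 1} \to M_{n_0}^{\Delta} \to 0
\]
expresses $H^0$ and $H^1$ in terms of the finite modules $M_{n_0}^{\Delta, \gamma_{\Qp} = 1}$ and $M_{n_0}^{\Delta}/(\gamma_{\Qp} - 1)$, plus the $\gamma_{\Qp}$-invariants and coinvariants of $\ker(\mathrm{ev}_{n_0})$. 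Iterating the same trick with $\mathrm{ev}_{n_0 + j}$ for $j \geq 1$ gives a d\'evissage whose successive graded pieces are controlled by $\ker\Tr_{M_{n+1}/M_n}$, each of which is finite over $A$.

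The main obstacle is handling the infinite tower arising in the d\'evissage: even though each $\ker\Tr_{M_{n+1}/M_n}$ is finite over $A$, a priori the $\gamma_{\Qp}$-coinvariants of $\ker(\mathrm{ev}_{n_0})$ could accumulate infinitely many generators. The key input that resolves this is that trace on $\gamma_{\Qp}$-invariants acts as multiplication by the degree $p$, which is injective by the $\Qp$-flatness of $A$, forcing $\gamma_{\Qp}$-invariant contributions at higher levels to vanish; the dual argument for coinvariants, comparing $(\gamma_{\Qp}-1)$-images across levels, mirrors the finiteness computation of \cite[Theorem~3.7]{liu}, here promoted from the case of a finite extension of $\Qp$ to a general $\Qp$-affinoid base $A$ by invoking Noetherianity of $A$ in place of finite-dimensionality.
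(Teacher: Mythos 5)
Your proposal takes a genuinely different route: you pass through $\psi$-cohomology, whereas the paper proves surjectivity of $\varphi-1$ directly and then uses the identification $(M/tM)^{\varphi=1} \cong \varinjlim_n M_n$. This choice introduces two serious difficulties that the paper's route avoids, and I don't think you overcome them.

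First, a smaller point: you invoke Proposition~\ref{P:phi cohomology = psi cohomology}, but as stated and proved that result applies only to $(\varphi,\Gamma_K)$-modules over $\calR_A(\pi_K)$, not to the torsion quotient $M/tM$. The proof relies on the bijectivity of $\gamma_K-1$ on $M^{\Delta_K,\psi=0}$, which is Theorem~\ref{T:structure of D^psi=0} for finite projective modules. You would need to separately verify the analogous bijectivity on $(M/tM)^{\Delta,\psi=0}$ (this does follow, e.g.\ by the five lemma applied to $0\to tM\to M\to M/tM\to 0$, since both $tM$ and $M$ are honest $(\varphi,\Gamma)$-modules, but you don't say this).

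Second, and more importantly: the paper works with the direct limit $(M/tM)^{\varphi=1}\cong\varinjlim_n M_n$, whose $\gamma_{\Qp}$-cohomology is a direct limit of the $\gamma_{\Qp}$-cohomologies of the finite pieces $M_n$; the key lemma, proved via operator-norm estimates, is that for any $A$-valued $\Gamma$-representation $W$, $\gamma_{\Qp}-1$ is bijective on every twist $W(\psi)$ by a finite-order character $\psi$ of sufficiently large conductor. This forces the direct system to stabilize and finiteness falls out. Your route instead requires analyzing $(M/tM)^{\psi=1}\cong\varprojlim_n M_n$, an inverse limit over an infinite tower; $\gamma_{\Qp}$-cohomology does not commute with such limits in general (derived limit terms intervene), and the module $(M/tM)^{\psi=1}$ is in fact not finitely generated over $A$ at all (cf.\ Proposition~\ref{P:finite-torsion}: it is $M_{n_0}\otimes_A\calR_A(\Gamma_{n_0})/\ell_{\gamma_{n_0}}$). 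So after your d\'evissage one must still argue that the $\gamma_{\Qp}$-coinvariants of this large module collapse to something $A$-finite. Your proposed mechanism — ``trace on $\gamma_{\Qp}$-invariants acts as multiplication by the degree $p$'' — is not the relevant fact: the trace from $M_{n+1}$ to $M_n$ acts only on the cyclotomic factor, while the $\gamma_{\Qp}$-action is diagonal, so these two do not interact as your statement would require; moreover the graded pieces of your d\'evissage are kernels of traces, not traces themselves. The missing ingredient is precisely the paper's high-conductor bijectivity lemma (or, alternatively, the explicit structure from Proposition~\ref{P:finite-torsion} followed by a direct computation of coinvariants), and waving at ``the finiteness computation of \cite{liu}'' does not supply it, since that is precisely what must be proved in the relative setting.
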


\begin{proof}
Assume that $M$ is the base change of a $(\varphi, \Gamma)$-module
$M^{r_0}$ over $\calR_A^{r_0}$.  Denote $n_0 = \lceil -\log_p r_0
\rceil$. Then $M^{r_0} / tM^{r_0} \cong \prod_{n \geq n_0}M_n$.  We
first prove that $\varphi-1: M^{r_0}/tM^{r_0} \to
M^{r_0/p}/tM^{r_0/p}$ is surjective.  For $(x_n)_{n \geq n_0+1} \in
M^{r_0/p}/tM^{r_0/p}$, we take $y_{n_0} = 0$ and $y_n = -x_n + x_{n-1}
-x_{n-2}+\cdots +(-1)^{n-n_0} x_{n_0+1}$ for $n \geq n_0+1$ and then
$(\varphi-1)(y_n)_{n \geq n_0} = (x_n)_{n \geq n_0+1}$.  This proves
that the cohomology vanishes outside degrees $0$, $1$.

By the description of $\varphi$-action in Lemma~\ref{L:structure of
  M/tM}(3), we have $(M^r/tM^r)^{\varphi=1} \cong
M_{\lceil-\log_pr\rceil}$ for $0<r\leq r_0$.  Taking the limit as $r
\to 0^+$, we have $(M/tM)^{\varphi=1} \cong \varinjlim_n M_n$, and
hence $H^*_{\varphi, \gamma_{\Qp}}(M/tM)$ is the cohomology of the
complex $[(\varinjlim_n M_n)^\Delta \xrightarrow{\gamma_{\Qp}-1}
  (\varinjlim_n M_n)^\Delta]$. By the description of the
$\Gamma$-action in Lemma~\ref{L:structure of M/tM}(2) and the
knowledge that $\Qp(\mu_{p^n})$ is noncanonically the sum of all the
irreducible $\Qp$-representations of $\Gamma/\Gamma_n$, it suffices to
know the following claim: for any $A$-valued $\Gamma$-representation
$W$, the operator $\gamma_{\Qp}-1$ is bijective on all twists
$W(\psi)$ by finite order characters $\psi$ of sufficiently large
conductor.  Indeed, it suffices to check bijectivity of $\gamma_n-1$
for some $n$. Choose $n$ large enough that the operator norm
$||\gamma_n-1||_W$ is less than $p^{-1/(p-1)}$ (using an argument
similar to Proposition~\ref{P:calR_A(Gamma_K) acts on M}(2), but
easier because the $\Gamma_K$-action is $A$-linear).  Then for any
character $\psi$ such that $\psi|_{\Gamma_n}$ is nontrivial, it is
easy to see that $\gamma_n-1$ is bijective on $W(\psi)$.
\end{proof}

Since we will later be studying Iwasawa cohomology, we need a version of Proposition~\ref{P:finite cohomology M/t} for $\psi$-cohomology rather than $(\varphi,\Gamma)$-cohomology, which gives more understanding of the $\calR_A^\infty(\Gamma)$-module structure.

\begin{prop}
\label{P:finite-torsion}
Let $M$ be a $(\varphi,\Gamma)$-module over $\calR_A$.
The map $\psi-1: M / tM \to M/tM$ is surjective, and its kernel admits a resolution $0 \to P \to Q \to (M/tM)^{\psi=1} \to 0$ by finite projective $\calR_A^\infty(\Gamma)$-modules $P$ and $Q$.
\end{prop}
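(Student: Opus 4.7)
My plan is to handle the two assertions separately, exploiting the explicit product description of $M/tM$ from Lemma~\ref{L:structure of M/tM} together with the structural result on $M^{\psi=0}$ in Theorem~\ref{T:structure of D^psi=0}.

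For the surjectivity of $\psi-1$: by Lemma~\ref{L:structure of M/tM}(1)(4), $M/tM \cong \prod_{n \geq n_0} M_n$ and $\psi$ acts as $(x_n)_n \mapsto (p^{-1}\Tr_{M_{n+1}/M_n}(x_{n+1}))_n$. Given $(y_n)_n$, I would build a preimage $(x_n)_n$ of $(-y_n)_n$ recursively: set $x_{n_0} = 0$, and at each step lift $y_n + x_n$ along $p^{-1}\Tr_{M_{n+1}/M_n}$, which is surjective because $\Tr(1) = p$.

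For the resolution, I would introduce the $\calR_A^\infty(\Gamma)$-linear map
\[
\rho \colon M^{\psi=0} \longrightarrow (M/tM)^{\psi=1}, \qquad m \longmapsto \Big(\sum_{k=0}^{n-n_0} \varphi^k(m) \bmod q_n\Big)_{n \geq n_0},
\]
written formally as $\sum_{k \geq 0} \varphi^k(m) \bmod tM$; this is well-defined because by Lemma~\ref{L:structure of M/tM}(3) only the first $n-n_0+1$ summands contribute to the $n$-th coordinate. A direct check using $\psi(m) = 0$ together with $p^{-1}\Tr_{M_{n+1}/M_n}(x) = x$ for $x$ lifted from $M_n$ shows $\rho(m) \in (M/tM)^{\psi=1}$, while $\Gamma$-equivariance is automatic since $\varphi$ commutes with $\Gamma$. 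The central assertion is that $\rho$ is surjective with kernel $\ell_{\gamma_{\Qp}} \cdot M^{\psi=0}$, where $\ell_{\gamma_{\Qp}} = \prod_{n \geq 0} q_{n, \gamma_{\Qp}} \in \calR_A^\infty(\Gamma)$ is the $\Gamma$-theoretic analog of $t = \pi \prod_{n \geq 1}(q_n/p)$. Combined with Theorem~\ref{T:structure of D^psi=0} (which provides finite projectivity of $M^{\psi=0}$ over $\calR_A^\infty(\Gamma)$), this yields the resolution
\[
0 \longrightarrow M^{\psi=0} \xrightarrow{\;\cdot\ell_{\gamma_{\Qp}}\;} M^{\psi=0} \xrightarrow{\;\rho\;} (M/tM)^{\psi=1} \longrightarrow 0,
\]
with $P = Q = M^{\psi=0}$.

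I would verify the central assertion first in the trivial rank-one case $M = \calR_A$, where $M^{\psi=0}$ is freely generated by $1+\pi$ over $\calR_A^\infty(\Gamma)$ and $\rho(1+\pi) = (\sum_{j=n_0}^n \zeta_{p^j})_n$ is computed directly. The telescoping identity $\prod_{n=0}^{k} q_{n,\gamma_{\Qp}} = p^{-k}(\gamma_{\Qp}^{p^k}-1)$ shows that $\ell_{\gamma_{\Qp}}$ acts as zero componentwise (since $\gamma_{\Qp}^{p^{n-1}}$ fixes $\zeta_{p^n}$), whereas surjectivity of $\rho$ is a form of Coleman's characterization of norm-compatible sequences in cyclotomic towers. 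The general case should then follow by reducing, via finite projectivity of $M^{\psi=0}$, to a local statement over $\calR_A^\infty(\Gamma)$.

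The hardest step will be the containment $\ker(\rho) \subseteq \ell_{\gamma_{\Qp}} M^{\psi=0}$ with arbitrary affinoid coefficients: the opposite inclusion reduces componentwise to the trivial case, but establishing equality requires isolating $\ell_{\gamma_{\Qp}}$ as the unique (up to a unit in $\calR_A^\infty(\Gamma)$) element vanishing to first order at every finite-order character of $\Gamma$, and then transferring the rank-one identification to arbitrary $M$ through the $\calR_A^\infty(\Gamma)$-module structure on $M^{\psi=0}$.
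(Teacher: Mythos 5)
Your treatment of the surjectivity of $\psi-1$ on $M/tM$ is correct and matches the paper's argument (both build a preimage by truncating the formal geometric series). The problem lies in the resolution.

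The proposed resolution $0 \to M^{\psi=0} \xrightarrow{\cdot\ell_{\gamma_{\Qp}}} M^{\psi=0} \xrightarrow{\rho} (M/tM)^{\psi=1} \to 0$ cannot serve the purpose, because $M^{\psi=0}$ is \emph{not} a finite projective $\calR_A^\infty(\Gamma)$-module. Theorem~\ref{T:structure of D^psi=0} gives finite projectivity of $M^{\psi=0}$ over the larger ring $\calR_A(\Gamma)$ (or its bounded models $\calR_A^r(\Gamma)$), on which $\gamma_K - 1$ has been inverted; it says nothing about projectivity over $\calR_A^\infty(\Gamma)$. In fact $M^{\psi=0}$ is not even finitely generated over $\calR_A^\infty(\Gamma)$: the operator $\gamma_{\Qp}-1$ acts bijectively on $M^{\psi=0}$, but $\gamma_{\Qp}-1$ is a non-unit in $\calR_A^\infty(\Gamma)$ (it vanishes at the trivial character, a point of $\Max(\calR_A^\infty(\Gamma))$), so multiplication by it on a nonzero finite projective $\calR_A^\infty(\Gamma)$-module would fail to be surjective upon reduction mod that maximal ideal. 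Since the whole point of the proposition is to produce a perfect complex over $\calR_A^\infty(\Gamma)$ (this is what feeds into the $\bbD_\perf^\flat(\calR_A^\infty(\Gamma_K))$ statement in Theorem~\ref{T:finite Iwasawa cohomology}), substituting the $\calR_A(\Gamma)$-finite object $M^{\psi=0}$ loses exactly the content you need. You would need a genuine $\calR_A^\infty(\Gamma)$-lattice inside $M^{\psi=0}$ to make this route work, and producing one (e.g.\ showing $\scrC = (\varphi-1)M^{\psi=1}$ is finite over $\calR_A^\infty(\Gamma)$ for general affinoid $A$) is essentially the content of the main theorem.

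The paper avoids this issue entirely by never invoking $M^{\psi=0}$. It instead identifies $(M/tM)^{\psi=1}$ directly as $\varprojlim_n M_n$ (transition maps $\psi$), uses the normal basis theorem to recast this as $M_{n_0}\otimes_A\calR_A(\Gamma_{n_0})/\ell_{\gamma_{n_0}}$, and then observes that the latter is the cokernel of multiplication by $\ell_{\gamma_{n_0}}$ on $M_{n_0}\otimes_A\calR_A^\infty(\Gamma_{n_0})$. The nontrivial step, which has no analogue in your approach, is showing that $M_{n_0}\otimes_A\calR_A^\infty(\Gamma_{n_0})$ with the \emph{diagonal} $\Gamma_{n_0}$-action is finite projective over $\calR_A^\infty(\Gamma_{n_0})$; this requires the operator-norm perturbation argument analogous to Lemma~\ref{L:(1+pi)phi(D) finite projective}, comparing the diagonal action with the action on the right factor alone. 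Lemma~\ref{L:resolution stable under finite Galois base change}(1) then upgrades the resolution from $\calR_A^\infty(\Gamma_{n_0})$ to $\calR_A^\infty(\Gamma)$.
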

\begin{proof}
Assume that $M$ is the base change of a $(\varphi, \Gamma)$-module $M^{r_0}$ over $\calR_A^{r_0}$ with $r_0<1/p$.  Set $n_0 = \lceil -\log_p r_0 \rceil \geq 2$, so that $M^{r_0} / tM^{r_0} \cong \prod_{n \geq n_0}M_n$ by Lemma~\ref{L:structure of M/tM}(1).  To see the surjectivity of $\psi -1$, given $(x_n)_{n \geq n_0} \in M^{r_0}/tM^{r_0}$, the tuple $(y_n)$ defined by $y_{n_0} = 0$ and $y_n = x_{n-1} + x_{n-2} + \cdots +x_{n_0}$ for $n \geq n_0+1$ satisfies $(\psi-1)(y_n)_{n \geq n_0} = (x_n)_{n \geq n_0}$.

We now discuss the $\calR_A^\infty(\Gamma)$-module structure of
$(M^{r_0}/tM^{r_0})^{\psi=1}$.  By Lemma~\ref{L:resolution stable
  under finite Galois base change}(1) below, it suffices to find a
two-term resolution by finite projective
$\calR_A^\infty(\Gamma_{n_0})$-modules.

By the description of the $\psi$-operator in Lemma~\ref{L:structure of
  M/tM}(4), $(M^{r_0}/tM^{r_0})^{\psi=1} \cong \varprojlim_{n \geq
  n_0} M_n$ with transition map $\psi$ (which admits a right inverse).
In particular, this implies that, in contrast to the $(\varphi,
\Gamma)$-cohomology case (Proposition~\ref{P:finite cohomology M/t}),
$(M^{r_0}/tM^{r_0})^{\psi=1}$ does \emph{not} depend on the choice of
$r_0$, i.e.\ $(M/tM)^{\psi=1} = \varinjlim_r (M^r/tM^r)^{\psi=1} \cong
\varprojlim_{n \geq n_0} M_n$.  On the other hand,
Lemma~\ref{L:structure of M/tM}(2) gives $\Gamma_{n_0}$-equivariant
isomorphisms $M_n \cong M_{n_0} \otimes_{\Qp(\zeta_{p^{n_0}})}
\Qp(\zeta_{p^n})$, with $\psi$ on the left corresponding to $1 \otimes
\frac1p\Tr_{\Qp(\zeta_{p^n})/\Qp(\zeta_{p^{n-1}})}$ on the right.
Thus
\[
(M/tM)^{\psi=1} \cong \varprojlim_{n \geq n_0,1\otimes\frac1p\Tr}
M_{n_0} \otimes_{\Qp(\zeta_{p^{n_0}})} \Qp(\zeta_{p^n}).
\]
The normal basis theorem allows us to choose a system of
$\Gamma_{n_0}$-equivariant isomorphisms $\Qp(\zeta_{p^n}) \cong
\Qp(\zeta_{p^{n_0}})[\Gamma_{n_0}/\Gamma_n]$ such that
$\frac1p\Tr_{\Qp(\zeta_{p^n})/\Qp(\zeta_{p^{n-1}})}$ on the left
corresponds to the projection onto
$\Qp(\zeta_{p^{n_0}})[\Gamma_{n_0}/\Gamma_{n-1}]$ on the right.
On the other hand, one has a $\Gamma_{n_0}$-equivariant identification
$\Qp(\zeta_{p^{n_0}})[\Gamma_{n_0}/\Gamma_n] \cong
\calR_{\Qp(\zeta_{p^{n_0}})}(\Gamma_{n_0})/(\gamma_{n_0}^{p^{n-n_0}}-1)$.
Thus
\begin{multline*}
(M/tM)^{\psi=1}
\cong
\varprojlim_n
M_{n_0} \otimes_{\Qp(\zeta_{p^{n_0}})}
  \calR_{\Qp(\zeta_{p^{n_0}})}(\Gamma_{n_0})/(\gamma_{n_0}^{p^{n-n_0}}-1) \\
\cong
\varprojlim_n
M_{n_0} \otimes_{\Qp}
  \calR(\Gamma_{n_0})/(\gamma_{n_0}^{p^{n-n_0}}-1)
\cong
\varprojlim_n
M_{n_0} \otimes_A
  \calR_A(\Gamma_{n_0})/(\gamma_{n_0}^{p^{n-n_0}}-1).
\end{multline*}
But $M_{n_0}$ is a finite $A$-module, and $\gamma_{n_0}^{p^{n-n_0}}-1 =
p^{n-n_0}\prod_{i=0}^{n-n_0} q_{i,\gamma_{n_0}}$ (with $p$ a unit in
$\calR(\Gamma_{n_0})$), so
\[
(M/tM)^{\psi=1}
\cong
M_{n_0} \otimes_A
  \varprojlim_n \calR_A(\Gamma_{n_0})/(\gamma_{n_0}^{p^{n-n_0}}-1)
\cong
M_{n_0} \otimes_A \calR_A(\Gamma_{n_0})/\ell_{\gamma_{n_0}}.
\]
We stress that $\Gamma_{n_0}$ acts diagonally on the two tensor
factors above.

Now we view $M_{n_0} \otimes_A \calR_A^\infty
(\Gamma_{n_0})/\ell_{\gamma_{n_0}}$ as the cokernel of the natural map
\begin{equation}\label{E:short resolution}
M_{n_0} \otimes_A \calR_A^\infty(\Gamma_{n_0})
\xrightarrow{\id \otimes \cdot \ell_{\gamma_{n_0}}}
M_{n_0} \otimes_A \calR_A^\infty(\Gamma_{n_0}).
\end{equation}
We will prove that $M_{n_0} \otimes_A \calR_A^\infty(\Gamma_{n_0})$ is
a finite projective $\calR_A^\infty(\Gamma_{n_0})$-module (again for
the diagonal $\Gamma_{n_0}$-action), which would conclude the
proposition.  For this, we use an argument similar to that of
Lemma~\ref{L:(1+pi)phi(D) finite projective}.  By
Lemma~\ref{L:resolution stable under finite Galois base change}(2)
below, it suffices to prove that $M_{n_0} \otimes_A
\calR_A^\infty(\Gamma_{n_0}) \approx M_{n_0} \otimes_A
\calR_A^\infty(\Gamma_m)^{\oplus p^{m-n_0}}$, and hence equivalently
$M_{n_0} \otimes_A \calR_A^\infty(\Gamma_m)$, is finite projective as
an $\calR_A^\infty(\Gamma_m)$-module for some $m\geq n_0$.  Since
$M_{n_0}$ is finite flat over $A$, there exists an $A$-module
$N_{n_0}$ (with trivial $\Gamma_{n_0}$-action) such that $M_{n_0}
\oplus N_{n_0}$ is a free $A$-module with basis $\bbe_1, \dots,
\bbe_d$.  We choose the Banach norm $|\cdot|_{M_{n_0} \oplus N_{n_0}}$
on $M_{n_0} \oplus N_{n_0}$ to be the one given by the basis $\bbe_1,
\dots, \bbe_d$.  By an argument similar to
Proposition~\ref{P:calR_A(Gamma_K) acts on M}(2) (but easier, because
the action is now linear), there exists $m \in \NN$ such that the
operator norm $\|\gamma_m -1\|_{M_{n_0} \oplus N_{n_0}} < 1$; let
$\epsilon >0$ be so that this quantity is $\omega^\epsilon$.  As in
Lemma~\ref{L:(1+pi)phi(D) finite projective}, we consider the
following two maps:
\begin{align*}
& \bigoplus_{i=1}^d \calR_A^\infty(\Gamma_m)\bbe_i
\longrightarrow
(M_{n_0} \oplus N_{n_0}) \otimes_A \calR_A^\infty(\Gamma_m)
\\
\Phi: \ & \sum_{i=1}^d f_i(\gamma_m-1) \bbe_i
\longmapsto
\sum_{i=1}^d f_i(\gamma_m-1) \cdot (\bbe_i \otimes 1)
\\
\Phi': \ & \sum_{i=1}^d f_i(\gamma_m-1) \bbe_i
\longmapsto
\sum_{i=1}^d \bbe_i \otimes f_i(\gamma_m-1).
\end{align*}
Here, $\Phi'$ is a topological isomorphism of $A$-modules (not
necessarily respecting the $\Gamma_m$-action); and $\Phi$ is an
$\calR_A^\infty(\Gamma_m)$-homomorphism which we will now prove to be
an isomorphism.

For each $r>0$, let $|\cdot|_{M_{n_0} \oplus N_{n_0},r}$ denote the
tensor product norm on $(M_{n_0} \oplus N_{n_0}) \otimes_A
\calR_A^\infty(\Gamma_m)$ given by the prescribed norm
$|\cdot|_{M_{n_0} \oplus N_{n_0}}$ on $M_{n_0} \oplus N_{n_0}$ and the
$r$-Gauss norm on $\calR_A^\infty(\Gamma_m)$.  The bound on the
operator norm of $\gamma_m-1$ on $M_{n_0} \oplus N_{n_0}$ implies that
for any $r \in (0, \epsilon)$, we have
\[
|\Phi \circ \Phi'^{-1}(x) - x|_{M_{n_0} \oplus N_{n_0},r}
  < |x|_{M_{n_0} \oplus N_{n_0},r}
\qquad
\text{for any } x \in (M_{n_0} \oplus N_{n_0}) \otimes_A
  \calR_A^\infty(\Gamma_m).
\]
This implies that $\Phi$ is a topological isomorphism. Hence $M_{n_0}
\otimes_A \calR_A^\infty(\Gamma_m)$ is a finite projective
$\calR_A^\infty(\Gamma_m)$-module. This finishes the proof.
\end{proof}

The proof of the following lemma follows from elementary homological
algebra, and is left to the reader.

\begin{lemma}
\label{L:resolution stable under finite Galois base change}
(1) Let $S$ be a finite Galois algebra over a ring $R$ with Galois
group $G$ and let $N$ be an $S$-module.  Assume that $N$, viewed as an
$R$-module, admits a $d$-term resolution $0 \to P'^{-d+1} \to \cdots
\to P'^0 \to N \to 0$ by finite projective $R$-modules.  Then $N$
admits a $d$-term resolution by finite projective $S$-modules.

(2) Let $S$ be a finite flat algebra over a ring $R$, and let $M$ be a
finitely generated $R$-module.  If $\Hom_R(S,M)$ is projective over
$S$ then $M$ is projective over $R$.
\end{lemma}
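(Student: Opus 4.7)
The two parts are independent, and I would handle them separately. For part (1), the plan is to first bound the $S$-projective dimension of $N$ by $d-1$, then build an explicit length-$d$ resolution by finite projective $S$-modules, keeping finite generation under control via Schanuel's lemma. For the dimension bound, I observe that every $S$-module $M'$ is canonically an $S$-direct summand of its coinduction: the map $M' \to \Hom_R(S, M')$, $m \mapsto (s \mapsto sm)$, is $S$-linear (for the ``target'' $S$-action on Hom) and split by evaluation at $1$. The Grothendieck change-of-rings spectral sequence $\mathrm{Ext}^p_S(N, \mathrm{Ext}^q_R(S, M')) \Rightarrow \mathrm{Ext}^{p+q}_R(N, M')$ degenerates since $S$ is $R$-projective, giving $\mathrm{Ext}^p_S(N, \Hom_R(S, M')) \cong \mathrm{Ext}^p_R(N, M')$; passing to the $S$-summand at $p=d$ yields $\mathrm{Ext}^d_S(N, M') = 0$ for all $M'$, so $\mathrm{pd}_S(N) \leq d-1$.

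I then build the resolution iteratively: I choose a surjection $Q^0 \twoheadrightarrow N$ with $Q^0$ a finitely generated free $S$-module (possible since $N$ is $R$-finitely generated, hence $S$-finitely generated), set $K_0 = \ker$, and repeat. Schanuel's lemma applied over $R$ to $0 \to K_0 \to Q^0 \to N \to 0$ and $0 \to \ker(P'^0 \to N) \to P'^0 \to N \to 0$ gives $K_0 \oplus P'^0 \cong \ker(P'^0 \to N) \oplus Q^0$; the right-hand side is $R$-finitely generated and has $R$-projective dimension at most $d-2$, so the same holds for $K_0$, which is then $S$-finitely generated. After $d-1$ iterations, the final kernel $K_{d-2}$ is $S$-finitely generated and, by the projective dimension bound, $S$-projective, completing the resolution.

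For part (2), the strategy is local on $\Spec R$. Since $S$ is finite flat as an $R$-algebra, it is finite locally free, so on a Zariski cover $S \cong R^k$, and consequently $\Hom_R(S, M) \cong M^k$ as $R$-modules. The hypothesis that $\Hom_R(S, M)$ is $S$-projective forces it to be $R$-projective (since $S$ is itself $R$-finite-projective), so $M^k$ is $R$-projective; and $M$, being a direct summand of $M^k$ via the first coordinate, is $R$-projective locally. Projectivity being Zariski-local, $M$ is $R$-projective globally.

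The main obstacle is in part (1): maintaining finite generation through the iterative resolution construction without assuming $R$ or $S$ noetherian. Schanuel's lemma handles this neatly by controlling each kernel in terms of the finite modules appearing in the given $R$-resolution, which is precisely why the hypothesis is phrased as a concrete $d$-term $R$-resolution rather than a projective-dimension bound.
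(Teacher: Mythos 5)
The paper leaves this lemma to the reader, so I evaluate your proposal on its own. Part (2) looks fine (modulo the standard point that ``finite flat implies finite locally free'' uses finite presentation of $S$, which holds in the paper's application since $S$ is finite free over $R$). In part (1), however, there is a genuine gap, located exactly where the Galois hypothesis must be used --- and your argument as written never invokes that hypothesis, which should already be a warning sign: the conclusion is false for general finite extensions. For example, take $R$ a field, $S = R[x]/(x^2)$, and $N = R$ with $x$ acting as $0$; then $N$ has $R$-projective dimension $0$ but infinite $S$-projective dimension.

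The problem is a mismatch of $S$-module structures on $\Hom_R(S, M')$. The change-of-rings spectral sequence you invoke comes from the adjunction $\Hom_R(N, M') \cong \Hom_S(N, \Hom_R(S, M'))$, and this adjunction requires the ``source'' $S$-action $(s \cdot f)(x) = f(xs)$ on $\Hom_R(S,M')$ --- indeed, $M'$ need only be an $R$-module there. Your splitting uses $\iota(m)(s) = sm$ and $\mathrm{ev}_1$, and you correctly observe these are $S$-linear for the ``target'' action $(s \cdot f)(x) = s\,f(x)$. But $\mathrm{ev}_1$ is \emph{not} source-$S$-linear: $\mathrm{ev}_1(s \cdot_{\mathrm{src}} f) = f(s)$, which is not $s\,f(1)$ in general (it would be for all $f$ only if $S = R$). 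So ``passing to the $S$-summand'' inside $\mathrm{Ext}^d_S(N, \Hom_R(S,M'))$ is not legitimate: the summand decomposition lives in a different category of $S$-modules than the one the spectral sequence computes $\mathrm{Ext}_S$ in. In the counterexample above with $M' = N = R$, one checks that $\Hom_R(S,M')$ with the source action is free of rank one over the local ring $S$, so $M'$ is not a source-$S$-summand of it, and indeed $\mathrm{pd}_S N = \infty$.

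The fix is where the Galois (i.e., separability) hypothesis finally enters. Let $e = \sum_i x_i \otimes y_i \in S \otimes_R S$ be a separability idempotent, so $\sum_i x_iy_i = 1$ and $\sum_i sx_i \otimes y_i = \sum_i x_i \otimes y_is$ for all $s \in S$. Define $\rho : \Hom_R(S, M') \to M'$ by $\rho(f) = \sum_i x_i\,f(y_i)$. Then $\rho \circ \iota = \mathrm{id}$, and the idempotent relation shows $\rho$ \emph{is} $S$-linear for the source action. With this retraction in place of $\mathrm{ev}_1$, $M'$ is a source-$S$-summand of $\Hom_R(S,M')$ and your spectral sequence argument gives $\mathrm{Ext}^d_S(N,M')=0$ for all $S$-modules $M'$, hence $\mathrm{pd}_S N \leq d-1$. (Equivalently, separability makes $N$ an $S$-summand of $S \otimes_R N$ via $n \mapsto e\cdot(1 \otimes n)$, and one uses $\mathrm{Ext}^p_S(S \otimes_R N, -) \cong \mathrm{Ext}^p_R(N,-)$.)

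A smaller remark on the syzygy step: as written you apply Schanuel only at the first stage, and what that yields is that $K_0$ is $R$-finitely generated with $R$-pd $\leq d-2$ --- not that $K_0$ comes with a length-$(d-2)$ resolution by \emph{finite} projective $R$-modules, which is what you would need to repeat the argument on $K_1$ (recall $R$ need not be noetherian, so kernels of maps of finite modules are not automatically finite). Better to invoke the generalized (long-exact-sequence) Schanuel lemma once at each stage, comparing the partial $S$-free resolution $Q^\bullet$ (whose terms are $R$-finite projective since $S$ is) against the given $P'^\bullet$: this exhibits each $K_i$ directly as a summand of a finite $R$-module, and at the last stage as a summand of a finite projective $R$-module.
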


\begin{remark}
With some mild generalization of Lemma~\ref{L:resolution stable under finite Galois base change}, one can show that the $\calR_A^\infty(\Gamma)$-modules $P$ and $Q$ in Proposition~\ref{P:finite-torsion} have the same rank.
\end{remark}

\subsection{Finiteness of $M / (\psi-1)$}

We now make a calculation to control the cokernel of $\psi-1$. Note
that this argument (which makes no use of $\Gamma$) is similar to
arguments used to control $\varphi$-cohomology in the development of
slope filtration theory, as in
\cite[Proposition~2.1.5]{kedlaya-relative}.

\begin{hypothesis}
In this subsection, we assume that $K = \Qp$.
\end{hypothesis}

\begin{prop} \label{P:psi cokernel}
Let $M$ be a $(\varphi, \Gamma)$-module over $\calR_A$.
\begin{enumerate}
\item[(1)]
The $A$-module $M/(\psi-1)$ is finitely generated.
\item[(2)]
For all sufficiently large integers $n$, the map $\psi-1: t^{-n}M \to t^{-n}M$ is surjective.
\end{enumerate}
\end{prop}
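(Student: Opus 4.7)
The plan is to prove (2) first via a Neumann series, and then derive (1) from (2) combined with a Fredholm-type finiteness argument on an appropriate model.

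For (2), I will use $\varphi(t) = pt$ and the projection formula $\psi(\varphi(f)g) = f\psi(g)$ to derive $\psi(tx) = p^{-1}t\psi(x)$, and hence $\psi(t^{-n}m) = p^n t^{-n}\psi(m)$.  So under the $A$-module isomorphism $t^{-n}M \cong M$ given by multiplication by $t^n$, the operator $\psi$ on $t^{-n}M$ corresponds to $p^n\psi$ on $M$, and it suffices to show that $1-p^n\psi: M \to M$ is surjective for $n$ sufficiently large.  I would invert this via the Neumann series $y = \sum_{k \geq 0}(p^n\psi)^k(x)$, for which the identity $(1-p^n\psi)(y) = x$ holds by telescoping once convergence is established.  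Choose $r > 0$ with $x \in M^r$; since $\psi$ maps $M^s$ into $M^{ps}$ for every $s$, iterating drives $\psi^k(x)$ into $M^{r_0}$ once $k$ is large.  The tail of the series then lies in the Fr\'echet module $M^{r_0}$, and convergence in each defining Banach seminorm $|\cdot|_{[s,r_0]}$ holds as soon as $|p|^n$ beats the operator norm of $\psi$ in that seminorm; taking $n$ large enough yields simultaneous convergence across all (countably many) defining seminorms.

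For (1), using (2) one has $t^{-n}M/(\psi-1) = 0$ for $n$ large, so the snake lemma applied to $0 \to M \to t^{-n}M \to t^{-n}M/M \to 0$ presents $M/(\psi-1)$ as a quotient of $(t^{-n}M/M)^{\psi=1}$.  More directly, since $x \equiv \psi^k(x) \pmod{(\psi-1)M}$ and $\psi^k(x) \in M^{r_0}$ for $k$ large, the natural map $M^{r_0} \to M/(\psi-1)M$ is already surjective.  It therefore suffices to show that $M^{r_0}/(\psi-1)M^{r_0}$ is finitely generated over $A$.  For this, I will exploit that $\psi$ factors through the smaller model $M^{pr_0}$, and the inclusion $M^{pr_0} \hookrightarrow M^{r_0}$ is a compact embedding of Fr\'echet modules, reflecting the stronger convergence at the deeper radius.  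This renders $\psi$ completely continuous on $M^{r_0}$, and standard $p$-adic Fredholm theory then yields that $\psi-1$ has finitely generated (over $A$) cokernel.

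The hard part will be controlling the operator norm of $\psi$ uniformly across the defining Banach seminorms of $M^{r_0}$, as required both for the Neumann series convergence in (2) and for the Fredholm/compactness argument in (1).  These uniform bounds should follow from the explicit direct-sum decomposition $\calR_A^{[s/p,r_0/p]} = \bigoplus_{i=0}^{p-1}(1+\pi)^i\varphi(\calR_A^{[s,r_0]})$, which makes the action of $\psi$ transparent at each scale and reduces the estimates to ones involving only $\varphi$ (an isometry between appropriate Gauss norms) and the bounded multiplication operators $(1+\pi)^i$.
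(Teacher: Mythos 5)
Your two ideas --- a $p^n\psi$-Neumann series for (2), and Fredholm/compactness for (1), with the twist by $t^{-n}$ supplying the small factor $p^n$ --- are the right shape, and the paper's argument is indeed a contraction-plus-finite-rank argument in this spirit. But the execution breaks down at exactly the point you flag as ``the hard part,'' and the difficulty is sharper than a lack of uniformity across the seminorms: $\psi$ is not continuous as an endomorphism of any single Banach piece $M^{[s,r_0]}$ of the Fr\'echet module $M^{r_0}$. The decomposition you invoke shows precisely that $\psi$ maps $M^{[s/p,r_0/p]}$ into $M^{[s,r_0]}$, so iterating gives only $|\psi^k(x)|_{[s,r_0]} \leq C^k|x|_{[s/p^k, r_0/p^k]}$, and the quantity on the right need not stay under control as $k\to\infty$. (One can take $x = \sum_{i > 1} a_i\pi^i \in \calR_A^{r_0}$ with $|a_i|$ growing just slowly enough, say $|a_i| = p^{\lfloor i/\log i\rfloor}$, so that $|x|_{r_0/p^k}$ grows faster than $|p|^{-nk}$ for every $n$; the crude bound then diverges, and there is no reason the series converges.) The mechanism is that $\psi$ pushes a large positive power $\pi^i$ toward $\pi^{i/p}$ without damping the coefficient, so a $\psi$-only series cannot contract on the positive $\pi$-range. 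The paper's operator $\lambda$ fixes exactly this: it applies $\psi$ to the very negative range $i<a$ of $\pi$-powers (where $\psi$ does shrink the Gauss norm), applies $\varphi$ to the very positive range $i>b$ (where $\varphi$, multiplying exponents by $p$, shrinks it), and leaves the middle window $[a,b]$ to a finite-rank projector $\Pi$; the operator $1-\Pi+(\psi-1)\lambda$ is then an honest $\epsilon$-contraction for the $s$-Gauss seminorms, and its iterates converge.

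The Fredholm step for (1) has the same underlying problem: the Serre--Coleman $p$-adic Riesz theory applies to compact operators on Banach modules, and since $\psi$ preserves no $M^{[s,r_0]}$ you cannot exhibit $\psi-1$ as a compact perturbation of the identity on one; an appeal to ``completely continuous on the Fr\'echet module'' would require a theory the proposal does not supply. Your snake-lemma route to (1) also does not close: $(t^{-n}M/M)^{\psi=1}$ is finite over $\calR_A^\infty(\Gamma)$ but very far from $A$-finite (see Proposition~\ref{P:finite-torsion}), so presenting $M/(\psi-1)$ as a quotient of it yields nothing about $A$-finiteness. In the paper both parts fall out of a single iteration $\bbv \mapsto \bbv - \Pi(\bbv) + (\psi-1)\lambda(\bbv)$: for (1), the surviving contribution after summing lies in the image of the finite-rank $\Pi$; for (2), replacing $M$ by $t^{-n}M$ rescales the matrices governing $\varphi$ and $\psi$ by $p^{\mp n}$, so for $n\gg 0$ the window $[a,b]$ may be taken empty, forcing $\Pi=0$ and giving surjectivity of $\psi-1$ outright.
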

\begin{proof}
For a suitable choice of $r_0 \in (0,1)$, we can realize $M$ as the base extension of a $\varphi$-module $M^{r_0}$
over $\calR^{r_0}_A$. Put $r = r_0/p^2$, choose generators $\bbe'_1, \dots, \bbe'_n$ of $M^{r_0}$, and take
$\bbe_i = \varphi(\bbe'_i) \in M^{pr}$. Let $N^{pr}$ be the free module over $\calR^{pr}_A$ on the generators
$\bbe_1,\dots,\bbe_n$ and choose an $\calR_A^{pr}$-linear splitting $N^{pr} \cong M^{pr} \oplus P^{pr}$ of the surjection $\mathrm{proj}: N^{pr} \to M^{pr}$.
For $s \in (0,pr]$, let $|\cdot|_s$
denote the $s$-Gauss norm on $N^{pr}$ for the basis $\bbe_1,\dots,\bbe_n$, and restrict this norm to $M^{pr}$.
Let $||\mathrm{proj}||_s$ denote the operator norm of $\mathrm{proj}: N^{pr} \to M^{pr}$ with respect to the $s$-Gauss norm.

Choose a matrix $F'$ with entries in $\calR^{pr}_A$ such that $\bbe_j = \sum_i F'_{ij} \bbe'_i$
and put $F = \varphi(F')$, so that $\varphi(\bbe_j) = \sum_i F_{ij} \bbe_i$ and $F$ has entries in $\calR_A^r$.
Choose also a matrix $G$ with entries in $\calR^{pr}_A$ such that $\bbe'_j = \sum_i G_{ij} \bbe_i$, so that $\bbe_j =
\varphi(\sum_i G_{ij} \bbe_i)$.
We then have
\begin{align}
\label{E:expression of phi}
\varphi\left(\sum_j c_j \bbe_j\right)&= \sum_i \left( \sum_j F_{ij}\varphi(c_j) \right) \bbe_i \qquad (c_1,\dots,c_n \in \calR_A^r), \\
\label{E:expression of psi}
\psi\left(\sum_j c_j \bbe_j\right) &= \sum_i \left( \sum_j G_{ij}\psi(c_j)\right) \bbe_i \qquad (c_1, \dots, c_n \in 
\calR_A^{r/p}).
\end{align}
Choose $\epsilon \in (0,1)$.
Note that for $a \in \RR$ sufficiently small and $b \in \RR$ sufficiently large (they need not be integers), one has
\[
\omega^{a(p^{-1}-1)r} ||\mathrm{proj}||_r \cdot \|G\|_{r}\leq \epsilon, \qquad \omega^{b(p-1)r}
||\mathrm{proj}||_r \cdot \|F\|_{r} \leq \epsilon.
\]
Choose some such $a,b$ and define the $A$-linear map $\Pi: M^r \to
M^r$ by inclusion into $N^r$ followed by the formula
\[
\Pi\left( \sum_{j=1}^n \sum_{i \in \ZZ} a_{ij} \pi^i \bbe_j\right) = \mathrm{proj}\  \sum_{j=1}^n \sum_{i \in [a,b] \cap \ZZ} a_{ij} \pi^i \bbe_j.
\]
Note that $\Pi$ projects $M^r$ onto a finite $A$-submodule of $M^r$
and that $|\Pi(\bbv)|_r\leq ||\mathrm{proj}||_r \cdot |\bbv|_r$ for
$\bbv \in M^r$.  Define the map $\lambda: M^r \to M^r$ by inclusion
into $N^r$ followed by the formula
\begin{equation}
\label{E:definition of lambda}
\lambda\left( \sum_{j=1}^n \sum_{i \in \ZZ} a_{ij} \pi^i \bbe_j \right)
= \mathrm{proj}\ \sum_{j=1}^n \sum_{i<a} a_{ij} \pi^i \bbe_j -
\mathrm{proj}\  \sum_{j=1}^n \sum_{i>b} \varphi(a_{ij} \pi^i \bbe_j).
\end{equation}
Here we use \eqref{E:expression of phi} to write $\varphi(a_{ij} \pi^i
\bbe_j)$ as $\sum_{l}F_{lj} \varphi(a_{ij}\pi^i)\bbe_l$; hence the
right hand side of \eqref{E:definition of lambda} lies in $M^r$.  From
the definition, we have the following formula for a given element
$\bbv = \sum_{j= 1}^n\sum_{i \in \ZZ}a_{ij}\pi^i\bbe_j$ in $ M^r$:
\begin{align}
\nonumber
\bbv - \Pi(\bbv) + (\psi-1)(\lambda(\bbv)) &= \mathrm{proj} \  \sum_{j=1}^n \sum_{i<a} \psi( a_{ij} \pi^i \bbe_j) -
\mathrm{proj}\  \sum_{j=1}^n \sum_{i>b} \varphi(a_{ij} \pi^i \bbe_j)\\
\label{E:expression of iteration}
&=\mathrm{proj} \  
\sum_{j, l=1}^n
\sum_{i<a}
G_{lj}\psi(a_{ij}\pi^i) \bbe_j
 - \mathrm{proj}\
\sum_{j,l=1}^n
\sum_{i>b}
F_{lj} \varphi(a_{ij} \pi^i)\bbe_j.
\end{align}
By our choice of $a,b$, we have
\[
|\bbv - \Pi(\bbv) + (\psi-1)(\lambda(\bbv))|_r  \leq \epsilon |\bbv|_r.
\]
Given $\bbv \in M^r$, put $\bbv_0 = \bbv$ and 
\begin{equation}
\label{E:bound on v_l+1}
\bbv_{l+1} = \bbv_l - \Pi(\bbv_l) + (\psi-1)(\lambda(\bbv_l)).
\end{equation}
Then $\bbw = \sum_{l=0}^\infty \bbv_l$, provided this sum converges, is an element
of $M^r$ satisfying
\[
\bbv-\Pi(\bbw)+(\psi-1)(\lambda(\bbw)) = 0.
\]
This sum converges for the $r$-Gauss norm by \eqref{E:bound on v_l+1}.
We will check the convergence for $s$-Gauss norm when $ s < r/p$; this is enough because knowing the convergence for smaller $s$ gives it for larger $s$.
For any $s \in (0,r/p)$, we again choose $a'\leq a$ sufficiently small and $b'\geq b$ sufficiently large so that 
\[
\omega^{a'(p^{-1}-1)s}||\mathrm{proj}||_s\cdot  \|G\|_{s} \leq \epsilon \quad \textrm{and} \quad \omega^{b'(p-1)s}||\mathrm{proj}||_s\cdot  \|F\|_{s} \leq \epsilon.
\]
We separate the
powers of $\pi$ in the expression \eqref{E:expression of iteration} defining $\bbv_l$ into the ranges $[a', a)$, $(b,b']$, and $(-\infty, a') \cup (b', \infty)$.  By the same argument, the summation over all powers of $\pi$ in   $(-\infty, a') \cup (b', \infty)$ has $s$-Gauss norm less than or equal to $\epsilon |\bbv_l|_s$.  The summation over powers of $\pi$ in $[a', a)$ has $s$-Gauss norm less than or equal to $ \omega^{a(p^{-1}s-r)}\|G\|_s |\bbv_l|_r$ (note that we are comparing it with $|\bbv_l|_r$ as opposed to $|\bbv_l|_s$).  The summation over powers of $\pi$ in $(b, b']$ has $s$-Gauss norm less than or equal to $ \omega^{b'(sp-r)}\|F\|_s|\bbv_l|_r$.  In summary, we have
\[
|\bbv_{l+1}|_s \leq \max\{ \omega^{b'(sp-r)}\|F\|_s|\bbv_l|_r,\ \omega^{a(p^{-1}s-r)}\|G\|_s |\bbv_l|_r,\ \epsilon |\bbv_l|_s\}.
\]
From this, it follows that the series defining $\bbw$ converges also under $|\cdot|_s$; since $s$ was an arbitrary
element of $(0,r/p)$, we deduce that $\bbw \in M^r$.

Given any $\bbv \in M$, we can find $s \in (0,r]$ such that $\bbv \in M^s$.
Choose a nonnegative integer $m$ for which $p^m s \geq r$; then $\psi^m(\bbv)$ is an element of $M^r$
representing the same class in $M/(\psi-1)M$ as $\bbv$. By the previous paragraph, however, this class is represented
by an element in the image of $\Pi$; we thus deduce (1).

To prove (2), note that replacing $M$ with $t^{-n}M$ has the effect of replacing $F$ with $p^{-n} F$ and 
$G$ with $p^n G$. It is thus sufficient to check that for $n$ large, we can find a single value
$a \in \RR$ with 
\[
\omega^{a(p^{-1}-1)r} p^{-n}||\mathrm{proj}||_r \cdot \|G\|_r \leq \epsilon, \qquad \omega^{a(p-1)r} p^n ||\mathrm{proj}||_r \cdot \|F\|_r \leq \epsilon,
\]
as then the previous argument applies with an empty generating set. Namely, such a choice of $a$ exists if and only if
\[
p^{-n(p-1)} ||\mathrm{proj}||_r^{p+1}\cdot\|G\|_r^p \cdot \|F\|_r \leq \epsilon^{p+1},
\]
and it is clear that this holds for $n$ large enough.
\end{proof}

\section{Finiteness of cohomology}
\label{S:finiteness of cohomology}

In this section, we give our main results on finiteness of $(\varphi,
\Gamma)$-cohomology and Iwasawa cohomology for relative $(\varphi,
\Gamma)$-modules.  To clarify the presentation, we leave one key
statement (Theorem~\ref{T:finite Iwasawa cohomology}) unproven for the
moment, in order to illustrate the overall structure of the arguments;
we take up the technical arguments needed to prove
Theorem~\ref{T:finite Iwasawa cohomology} in the next section.

\subsection{Formalism of derived categories}

We give a short discussion of the derived category of complexes of modules.

\begin{hypothesis}
In this subsection, we fix a commutative ring $R$.
\end{hypothesis}

\begin{notation}
Let $\bbD_{\perf}^{[a,b]}(R)$ (resp. $\bbD_\perf^\flat(R)$, $\bbD_\perf^-(R)$) denote the subcategory of the derived category of complexes of $R$-modules consisting of the complexes of $R$-modules which are quasi-isomorphic to complexes of finite projective $R$-modules concentrated in degrees in $[a,b]$ (resp. bounded degrees, degrees bounded above).
\end{notation}

\begin{lemma}\label{L:flat bottom cokernel}
Let $P^\bullet$ be a complex of projective (resp. flat) $R$-modules
concentrated in degree $[0,d]$ and $Q^\bullet$ a complex of projective
(resp. flat) $R$-modules bounded above.  Suppose that we have a
quasi-isomorphism $P^\bullet \to Q^\bullet$ or $Q^\bullet \to
P^\bullet$.  Then the complex $Q'^\bullet = [ \Coker d_Q^{-1}
\xrightarrow{d_Q^0} Q^1 \xrightarrow{d_Q^1} \cdots ]$ is quasi isomorphic
to $Q^\bullet$ and $\Coker d_Q^{-1}$ is a projective (resp. flat)
$R$-module.
\end{lemma}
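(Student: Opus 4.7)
The plan is to split the lemma into (i) the quasi-isomorphism $Q^\bullet \simeq Q'^\bullet$, which is standard, and (ii) the projectivity or flatness of $N := \Coker d_Q^{-1}$, which requires different arguments in the two cases. Since $P^i = 0$ for $i < 0$, the hypothesis yields $H^i(Q^\bullet) = H^i(P^\bullet) = 0$ for $i < 0$, and then the natural map $Q^\bullet \to Q'^\bullet$ (identity above degree zero, the quotient $Q^0 \twoheadrightarrow N$ in degree zero, zero below) is immediately checked on cohomology to be a quasi-isomorphism.

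For part (ii) in the projective case, I would exploit that any quasi-isomorphism of bounded-above complexes of projectives is a chain homotopy equivalence. Pick such $f: P^\bullet \to Q^\bullet$ and a homotopy inverse $g: Q^\bullet \to P^\bullet$. Because $P^{-1} = 0$, the chain-map relation gives $g^0 \circ d_Q^{-1} = d_P^{-1} \circ g^{-1} = 0$, so $g^0$ descends to $\bar g : N \to P^0$; likewise, composition of $f^0$ with the projection $\pi: Q^0 \twoheadrightarrow N$ gives $\bar f : P^0 \to N$. Together with the other components these assemble into mutually inverse chain homotopy equivalences $\tilde f: P^\bullet \to Q'^\bullet$ and $\tilde g: Q'^\bullet \to P^\bullet$: the original homotopy $gf \sim 1_{P^\bullet}$ already takes values in $P^\bullet$, while the original homotopy $fg \sim 1_{Q^\bullet}$ descends degreewise via $\pi$. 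The crucial output is in degree $0$: since $Q'^{-1} = 0$, the relation $\tilde f \tilde g \sim 1_{Q'^\bullet}$ degenerates in degree zero to $\bar f \bar g - 1_N = \tilde k^1 \circ d_{Q'}^0$ for some $\tilde k^1 : Q^1 \to N$. The decisive algebraic move is then to consider the maps $\mu = (\bar g, d_{Q'}^0) : N \to P^0 \oplus Q^1$ and $\nu(p,q) = \bar f(p) - \tilde k^1(q) : P^0 \oplus Q^1 \to N$, whose composition is $\nu\mu = 1_N$; hence $N$ is a direct summand of the projective module $P^0 \oplus Q^1$, and is therefore projective.

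In the flat case this homotopy-equivalence strategy is unavailable, and I would instead use that bounded-above complexes of flat modules are $K$-flat, so the given quasi-isomorphism remains one after applying $-\otimes_R X$ for any $R$-module $X$. Since $P^\bullet$ is concentrated in $[0,d]$, this yields $H^i(Q^\bullet \otimes_R X) = 0$ for all $i < 0$ and all $X$. On the other hand, the stupid truncation $\sigma^{\leq 0} Q^\bullet = [\cdots \to Q^{-1} \to Q^0]$ is a bounded-above flat resolution of $N$, so $\Tor^R_i(N, X) = H^{-i}(\sigma^{\leq 0} Q^\bullet \otimes_R X)$; for $i \geq 1$, the right-hand side depends only on the degrees $\leq -1$ of the complex and therefore equals $H^{-i}(Q^\bullet \otimes_R X) = 0$. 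Thus $\Tor^R_i(N, -) = 0$ for all $i \geq 1$, which is the flatness of $N$.

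The main obstacle is the projective case: the correction terms $\bar g \bar f - 1$ and $\bar f \bar g - 1$ factor through $P^1$ and $Q^1$ respectively but are nonzero, so $\bar g$ is not literally a section of $\bar f$. The trick of absorbing these correction terms into an enlargement of the source by $Q^1$ --- using the homotopy $\tilde k^1$ packaged together with $\bar g$ --- is what converts chain-level homotopy data into an honest splitting of a surjection onto $N$; thereafter the argument is routine homological bookkeeping.
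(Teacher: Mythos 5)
Your proof is correct, but it takes a genuinely different route from the paper's, and moreover uses two different arguments where the paper uses one.

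The paper first observes, exactly as you do, that since $P^{-1} = 0$ any chain map between $P^\bullet$ and $Q^\bullet$ induces one between $P^\bullet$ and the truncated complex $Q'^\bullet$ (this is the same factorization $g^0 \circ d_Q^{-1} = 0$ you exploit). It then passes to the mapping fiber of this induced quasi-isomorphism: since $P^\bullet$ and $Q'^\bullet$ are both bounded (in $[0,d]$ and $[0,m]$ respectively), the fiber is a bounded \emph{acyclic} complex all of whose terms are projective (resp.\ flat), except possibly the lone occurrence of $\Coker d_Q^{-1}$ as the leftmost term or as a summand of the second term. A single standard argument -- peeling off short exact sequences from the top in an acyclic bounded complex of projectives (resp.\ flats) -- then treats both cases at once. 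Your approach, by contrast, bifurcates: in the projective case you invoke the theorem that a quasi-isomorphism of bounded-above complexes of projectives is a homotopy equivalence, and then extract from the degree-zero homotopy relation an explicit retraction exhibiting $N = \Coker d_Q^{-1}$ as a direct summand of $P^0 \oplus Q^1$; in the flat case, where no such homotopy equivalence exists, you switch to $K$-flatness of bounded-above flat complexes plus a computation via the stupid truncation $\sigma^{\leq 0}Q^\bullet$ to show $\Tor^R_i(N,-) = 0$ for $i \geq 1$. Both of your arguments are correct (I checked the degree-zero splitting $\nu\mu = 1_N$ and the identification $H^{-i}(\sigma^{\leq 0}Q^\bullet \otimes_R X) = H^{-i}(Q^\bullet \otimes_R X)$ for $i \geq 1$, and the descent of the homotopy $fg \sim 1_{Q^\bullet}$ through $\pi: Q^0 \twoheadrightarrow N$ does work as you claim). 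What the paper's approach buys is uniformity and elementarity -- no appeal to homotopy equivalences or $K$-flatness, just exactness of the cone. What your approach buys is a concrete realization of $N$ as a summand in the projective case, and a clean $\Tor$-vanishing characterization in the flat case which generalizes readily (for instance, it would work with $P^\bullet$ bounded below but unbounded above, where the paper's bounded-cone argument would need modification).
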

\begin{proof}
Since $P^\bullet$ and $Q^\bullet$ are quasi-isomorphic,
$H^i(Q^\bullet) = 0$ for $i<0$.  This implies that $Q'^\bullet$ is
quasi-isomorphic to $Q^\bullet$.  To see the projectivity
(resp. flatness) of $\Coker d_Q^{-1}$, we observe that, under either
condition, $\Fib(P^\bullet \to Q'^\bullet)$ or $\Fib(Q'^\bullet \to
P^\bullet)$ is an \emph{acyclic} complex of projective (resp. flat)
$R$-modules, except possibly the term $\Coker d_Q^{-1}$ which appears
either in the leftmost nonzero term, or as a direct summand of the
second leftmost nonzero term.  In either case, it is easy to deduce
that $\Coker d_Q^{-1}$ is a projective (resp. flat) $R$-module.
\end{proof}

\begin{lemma}
\label{L:complex with cohomology capped above}
If a complex in $ \bbD_\perf^-(R)$ has trivial cohomology in degrees strictly greater than $b$, then it is quasi-isomorphic to a complex of finite projective $R$-modules $P^\bullet$ with $P^{b+1} = P^{b+2} = \cdots =0$.  If the given complex lies in $\bbD_\perf^\flat(R)$, we may choose $P^\bullet $ such that, in addition, we have $P^{-n} = 0$ for $n$ sufficiently large.
\end{lemma}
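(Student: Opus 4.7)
The plan is to carry out a standard \emph{top truncation} of a chosen representative by finite projective modules, being careful to stay within finite projectives at every step.

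First, by the definition of $\bbD_\perf^-(R)$, I would pick a representative $Q^\bullet$ of the complex which consists of finite projective $R$-modules and is bounded above: $Q^i = 0$ for $i > N$, say. If $N \leq b$, I am already done. Otherwise I induct on $N - b$. The key observation is that since we assume $H^i$ vanishes for $i > b$ and in particular $H^N(Q^\bullet) = 0$, while $Q^{N+1} = 0$, the differential $d^{N-1}: Q^{N-1} \to Q^N$ must be surjective. Because $Q^N$ is finite projective, this surjection splits, so $Q^{N-1} \cong K \oplus Q^N$ where $K = \ker(d^{N-1})$. As a direct summand of a finite projective module, $K$ is itself finite projective.

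Next I would form the ``truncated'' complex $\widetilde Q^\bullet = [\cdots \to Q^{N-2} \to K \to 0 \to \cdots]$, placing $K$ in degree $N-1$, with the differential $Q^{N-2} \to K$ obtained from $d^{N-2}$ (which factors through $K$ because $d^{N-1} \circ d^{N-2} = 0$). The inclusion $\widetilde Q^\bullet \hookrightarrow Q^\bullet$ is a morphism of complexes that agrees with the identity in degrees $\leq N-2$, is the inclusion $K \hookrightarrow Q^{N-1}$ in degree $N-1$, and is zero in degree $N$. A direct check shows it is a quasi-isomorphism: cohomology in degrees $\leq N-2$ is clearly unchanged, while both complexes have vanishing cohomology in degrees $N-1$ and $N$ (the former since $N-1 > b$ by our inductive setup). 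Now $\widetilde Q^\bullet$ is a complex of finite projective modules concentrated in degrees $\leq N-1$, and iterating the construction $N - b$ times yields the desired $P^\bullet$ with $P^i = 0$ for $i > b$.

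For the last sentence, if the given complex lies in $\bbD_\perf^\flat(R)$, I would start the argument with $Q^\bullet$ chosen to be a \emph{bounded} complex of finite projectives, say with $Q^i = 0$ for $i \notin [-M, N]$. The truncation step above never alters terms in degrees $< N-1$; in particular it does not introduce any nonzero terms in degrees below $-M$. So after the $N-b$ iterations, the resulting $P^\bullet$ still satisfies $P^{-n} = 0$ for $n > M$, which is exactly the required extra conclusion.

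The only technically delicate point is to ensure the ``truncated'' module $K$ is finite projective rather than just a kernel; this is precisely where one uses the finite projectivity of $Q^N$ to split off a summand. A naive good-truncation $\tau_{\leq b}$ would only give $\ker(d^b)$, which need not be projective, so the splitting step is essential. Everything else is routine bookkeeping with complexes.
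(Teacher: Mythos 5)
Your proof is correct and follows essentially the same strategy as the paper: replace a bounded-above representative by finite projectives with its good truncation at degree $b$ and verify that the kernel appearing in the top degree is finite projective via splitting. The paper does this in one shot (truncating directly to $[\cdots \to Q^{b-1} \to \Ker d^b]$ and reading off projectivity of $\Ker d^b$ from the exact sequence $0 \to \Ker d^b \to Q^b \to \cdots \to Q^{b'} \to 0$), while you peel off one degree at a time; the two are the same argument, yours just made inductive.
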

\begin{proof}
The given complex is  quasi-isomorphic to a complex $[\cdots
  \xrightarrow{d^{b'-2}} Q^{b'-1} \xrightarrow{d^{b'-1}} Q^{b'}]$ of
finite projective $R$-modules in degrees bounded above, and also below
if the given complex lies in $\bbD_\perf^\flat(R)$.  Then the given
complex is also quasi-isomorphic to the complex $[\cdots
  \xrightarrow{d^{b-2}} Q^{b-1} \xrightarrow{d^{b-1}} \Ker(d^b)]$.  It
suffices to prove that $\Ker d^b$ is finite projective.  Indeed, the
condition implies that $0 \to \Ker d^b \xrightarrow{d^b} Q^b \to
Q^{b+1} \to \cdots \to Q^{b'} \to0$ is a long exact sequence, in which
all terms except $\Ker d^b$ are known to be finite projective.
Therefore, $\Ker d^b$ is also finite projective.
\end{proof}

\begin{lemma}
\label{L:acyclic pointwise}
Let $P^\bullet \to Q^\bullet$ be a morphism of complexes in $\bbD_\perf^-(R)$.  Then it is a quasi-isomorphism if and only if $P^\bullet \stackrel\bbL\otimes_R  R/\gothm \to Q^\bullet \stackrel\bbL\otimes_R  R/\gothm$ is a quasi-isomorphism for every maximal ideal $\gothm$ of $ R$.
\end{lemma}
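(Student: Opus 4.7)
The plan is to replace the morphism by its mapping cone and reduce everything to understanding the top nonvanishing cohomological degree. Set $C^\bullet = \mathrm{Cone}(P^\bullet \to Q^\bullet)$, which lies in $\bbD_\perf^-(R)$. The statement to be proved becomes: $C^\bullet$ is acyclic if and only if $C^\bullet \Lotimes_R R/\gothm$ is acyclic for every maximal ideal $\gothm$ of $R$. The ``only if'' direction is immediate, since derived tensor product of a zero object is zero.

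For the ``if'' direction I would argue by contradiction. Assume $C^\bullet$ is not acyclic, and let $h$ be the largest integer for which $H^h(C^\bullet) \neq 0$; this exists because $C^\bullet$ is bounded above. By Lemma~\ref{L:complex with cohomology capped above}, I can replace $C^\bullet$ by a quasi-isomorphic bounded-above complex of finite projective $R$-modules with $C^i = 0$ for $i > h$. In particular
\[
H^h(C^\bullet) = \Coker(d^{h-1}\colon C^{h-1} \to C^h)
\]
is finitely generated over $R$. Because every $C^i$ is flat, $C^\bullet \otimes_R R/\gothm$ represents the derived tensor product $C^\bullet \Lotimes_R R/\gothm$; using $C^{h+1}=0$ and the right-exactness of $-\otimes_R R/\gothm$, this yields the crucial identification
\[
H^h(C^\bullet \Lotimes_R R/\gothm) \ =\ H^h(C^\bullet) \otimes_R R/\gothm \ =\ H^h(C^\bullet)/\gothm H^h(C^\bullet),
\]
which vanishes for every maximal ideal $\gothm$ by hypothesis.

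The main (and essentially the only) point to invoke is then the standard fact that a finitely generated module $M$ over any commutative ring vanishes if and only if $M/\gothm M = 0$ for every maximal ideal $\gothm$: since $M/\gothm M = M_\gothm/\gothm M_\gothm$ as $R/\gothm$-vector spaces, Nakayama's lemma applied in each local ring $R_\gothm$ forces $M_\gothm = 0$ at all maximal ideals, which implies $M = 0$. Applied to $M = H^h(C^\bullet)$, this contradicts the choice of $h$, proving that $C^\bullet$ is acyclic. The argument has no substantial obstacle; the only delicate point is verifying that the derived tensor product really does detect the top cohomology as a naive tensor quotient, which is exactly why one first reduces to a complex of finite projectives concentrated in degrees $\leq h$ before invoking Nakayama.
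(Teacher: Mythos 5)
Your proof is correct and follows the paper's strategy: pass to the cone (the paper uses the mapping fiber, which is the same up to shift), locate the top nonvanishing cohomological degree $h$, invoke Lemma~\ref{L:complex with cohomology capped above} to truncate to a complex of finite projectives concentrated in degrees $\leq h$ (so that $H^h$ is finitely generated), and conclude via Nakayama. The only variation is that where the paper appeals to the Tor spectral sequence $\Tor_j^R(H^i(S^\bullet),R/\gothm)\Rightarrow H^{i-j}(S^\bullet\otimes_R R/\gothm)$ at degree $n$ to identify the top cohomology of the derived tensor, you obtain the identification $H^h(C^\bullet\Lotimes_R R/\gothm)\cong H^h(C^\bullet)\otimes_R R/\gothm$ directly from right-exactness of $-\otimes_R R/\gothm$ after truncating above degree $h$; this is a harmless and somewhat more elementary substitute for the same step.
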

\begin{proof}
We may assume that both complexes are complexes of finite projective
$R$-modules in degrees bounded above.  It suffices to show that the
mapping fiber complex $S^\bullet = \Fib(P^\bullet \to Q^\bullet)$ is acyclic.
Suppose not.  Let $n$ be the maximal integer such that
$H^n(S^\bullet)$ is nontrivial.  By Lemma \ref{L:complex with
  cohomology capped above}, $H^n(S^\bullet)$ is finitely generated.
Hence there exists a maximal ideal $\gothm$ of $R$ such that
$H^n(S^\bullet) \otimes_R R/\gothm \neq 0$.  Now, the spectral
sequence $\Tor_j^R( H^i(S^\bullet), R/\gothm) \Rightarrow
H^{i-j}(S^\bullet \otimes_R R/\gothm) = 0$ would lead to a
contradiction at degree $n$, which is nonzero according to the
spectral sequence and is zero by assumption.
\end{proof}

\begin{lemma}
\label{L:finite A-mod is afp A=field}
If $A$ is a finite extension of $\Qp$ and $M$ is an $\calR_A^{r_0}$-module which is finite over $A$,  we have $M \in \bbD_\perf^\flat(\calR_A^{r_0})$.
\end{lemma}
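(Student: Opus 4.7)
The plan is to construct an explicit length-two resolution of $M$ by finite free $\calR_A^{r_0}$-modules, using the characteristic-matrix presentation of $M$ as a module over $A[T] \subseteq \calR_A^{r_0}$ and then base-changing. First I set $R = \calR_A^{r_0}$ and observe that $R$ is an integral domain, being the intersection of the one-dimensional Noetherian affinoid domains $\calR_A^{[s,r_0]}$; in particular $R$ is torsion-free as an $A[T]$-module, hence flat over the PID $A[T]$. Since $A$ is a field, $M$ is a finite-dimensional $A$-vector space, say of dimension $n$; I choose an $A$-basis so that $T \in R$ acts on $M$ via a matrix $\mathbf{T} \in M_n(A)$.

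Over the PID $A[T]$, the characteristic-matrix presentation gives an exact sequence
\[
0 \longrightarrow A[T]^n \xrightarrow{\,T\,I_n - \mathbf{T}\,} A[T]^n \longrightarrow M \longrightarrow 0.
\]
Surjectivity onto $M$ is clear from the basis choice; exactness in the middle follows because, modulo the image, the relation $T\bar e_i = \mathbf{T}\bar e_i$ lets one reduce any element to an $A$-combination of $\bar e_1,\dots,\bar e_n$, forcing the cokernel to have $A$-dimension at most $n = \dim_A M$ and therefore to equal $M$; and $T\,I_n - \mathbf{T}$ is injective because its determinant, the characteristic polynomial of $\mathbf{T}$, is nonzero in the domain $A[T]$ (adjugate identity). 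Tensoring this resolution with $R$ over $A[T]$ and using flatness yields
\[
0 \longrightarrow R^n \xrightarrow{\,T\,I_n - \mathbf{T}\,} R^n \longrightarrow R \otimes_{A[T]} M \longrightarrow 0.
\]

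It remains to identify $R \otimes_{A[T]} M$ with $M$ itself as $R$-modules via the natural surjection $r \otimes v \mapsto rv$. By Lemma~\ref{L:finite A-mod as R_A-mod}, $M$ is supported on a closed sub-annulus, so all eigenvalues of $\mathbf{T}$, i.e.\ all roots of the characteristic polynomial $P(T)$, lie in the half-open annulus on which elements of $R$ are defined as functions. Weierstrass division in the Robba ring on that annulus identifies $R/P \cong A[T]/P$. Decomposing $M \cong \bigoplus_i A[T]/P_i(T)$ via the structure theorem for modules over the PID $A[T]$ (each $P_i$ also having its roots in the annulus), one obtains $R \otimes_{A[T]} M \cong \bigoplus_i R/P_i \cong \bigoplus_i A[T]/P_i \cong M$, as required.

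This exhibits $M$ as the cokernel of a morphism between finite free $R$-modules with zero kernel, so $M \in \bbD_\perf^{[0,1]}(R) \subseteq \bbD_\perf^\flat(R)$. The main technical point I expect is the identification $R/P \cong A[T]/P$ when $P$ has all its roots in the half-open annulus; this is the Weierstrass-division/preparation argument for the Robba ring, and it is precisely here that Lemma~\ref{L:finite A-mod as R_A-mod} is needed, to ensure the roots of $P$ actually lie in the relevant region so that Weierstrass applies.
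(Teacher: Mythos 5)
Your proposal is correct, but it takes a genuinely different route from the paper's one-sentence proof. The paper observes that, since $A$ is a field and $M$ is finite over $A$, $M$ is supported at finitely many closed points of $\Max(A)\times\rmA^1(0,r_0]$ and declares the conclusion obvious; the implicit reasoning is that $\calR_A^{r_0}$ is a product of one-dimensional B\'ezout domains (cf.\ Remark~\ref{R:Bezout+torsion free=>free}), so the kernel of any surjection $(\calR_A^{r_0})^n \to M$ is a finitely generated torsion-free module, hence free, giving the two-term resolution directly. You instead construct the resolution explicitly: present $M$ over the PID $A[T]$ by the characteristic matrix $T\,I_n - \mathbf{T}$, base change along the flat inclusion $A[T]\hookrightarrow\calR_A^{r_0}$, and identify $\calR_A^{r_0}\otimes_{A[T]}M\cong M$ via Weierstrass division. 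This costs you the $A[T]$-structure theorem and a Weierstrass argument, but it produces the differentials of the resolution concretely and bypasses the B\'ezout property entirely. A few points to tighten. First, $\calR_A^{r_0}$ can be a finite product of domains rather than a single domain (e.g.\ for $K\neq\Qp$, or for $\calR_A^\infty(\Gamma_K)$ when $\Gamma_K$ has torsion, which is the ring the lemma is actually applied to in Proposition~\ref{P:Iwasawa cohomology A=field}); the argument should be run component by component. Second, you should record explicitly that your abstract identification $\calR_A^{r_0}\otimes_{A[T]}M\cong M$ coincides with the multiplication map $r\otimes v\mapsto rv$: the multiplication map is surjective, and by your dimension computation source and target have the same finite $A$-dimension, so it is an isomorphism; this one-liner is what guarantees the tensored-up complex resolves $M$ itself and not merely something abstractly isomorphic to it. Finally, a small precision on the Weierstrass step: Weierstrass division lives on closed annuli, so the isomorphism $\calR_A^{r_0}/(P)\cong A[T]/(P)$ should be mediated by $\calR_A^{r_0}/(P)\cong\calR_A^{[s,r_0]}/(P)$ for $s$ small enough, which is exactly what Lemma~\ref{L:finite A-mod as R_A-mod} gives by bounding the support into a closed sub-annulus, after which Weierstrass division over the affinoid ring $\calR_A^{[s,r_0]}$ finishes.
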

\begin{proof}
In this case, $M$ is supported at a finite set of closed points on $\Max(A) \times \rmA^1(0,r_0]$, so
the lemma is obvious.
\end{proof}

\begin{defn}
For $P$ a finite projective $R$-module, we denote its rank by $\rank_R P$, viewed as a function on the set of irreducible components of $\Spec R$ (and when $\Spec R$ is irreducible we regard it simply as a nonnegative integer).

For $P^\bullet \in \bbD_\perf^{[a,b]}(R)$ a complex of finite projective $R$-modules concentrated in degrees in $[a,b]$, we define its \emph{Euler characteristic} by $\chi_R(P^\bullet) = \sum_{i=a}^b (-1)^i \rank_R P^i$; it is invariant under quasi-isomorphisms, and is additive for distinguished triangles.
\end{defn}

\begin{defn}
There is a duality functor $\RHom_R(-,R): \bbD_\perf^{[a,b]}(R) \to \bbD_\perf^{[-b,-a]}(R)$ given by sending a bounded complex $P^\bullet$ of finite projective $R$-modules to $(\Hom_R(P^{-i}, R))_{i \in\ZZ}$.  The functor is well-defined and $\RHom_R(-,R) \circ \RHom_R(-,R)$ is the identity.
\end{defn}

\subsection{Iwasawa cohomology of $(\varphi, \Gamma)$-modules}

Following the work of Fontaine (see \cite{cherbonnier-colmez}) on computing the Iwasawa cohomology of a Galois representation using $(\varphi, \Gamma)$-modules,
the second author pointed out in \cite{pottharst2} that the Iwasawa cohomology of general $(\varphi, \Gamma)$-modules has many number-theoretic applications.  This motivates a systematic study of the Iwasawa cohomology of a $(\varphi, \Gamma)$-module.

\begin{notation} \label{N:cyclotomic deformation space}
Let $\Max(\calR_A^\infty(\Gamma_K))$ denote the union of the
$\Max(\calR_A^{[s,\infty]}(\Gamma_K))$ for all $s>0$.  In other words,
it is the set of closed points of the quasi-Stein rigid analytic space
associated to $\calR_A^\infty(\Gamma_K)$, viewed as a disjoint union
of relative discs.  A closed point of $\Max(\calR_A^\infty(\Gamma_K))$
is the same datum as an equivalence class of pairs $(z, \eta)$, where
$z \in \Max(A)$ and $\eta$ is a character of $\Gamma_K$ with values in
some finite extension of $\Qp$, and two such pairs are considered
equivalent if they become equal upon embedding their target fields
into some common finite extension of $\Qp$.  We let
$\gothm_{(z,\eta)}$ be the corresponding maximal ideal and
$\kappa_{(z,\eta)}$ the residue field (which is finite over $\Qp$), so
that $\eta$ is given by the composition $\Gamma_K \to
\calR_A^\infty(\Gamma_K)^\times \to
(\calR_{A}^\infty(\Gamma_K)/\gothm_{(z, \eta)})^\times =
\kappa_{(z,\eta)}^\times$.  Let $\bar \gothm_{(z, \eta)}$ denote the
corresponding maximal ideal of $ \calR_{A/\gothm_z}^\infty(\Gamma_K)$.

For $M$ a $(\varphi, \Gamma_K)$-module over $\calR_A(\pi_K)$, let $M_z
\otimes \eta$ denote the $(\varphi,\Gamma_K)$-module over
$\calR_{\kappa_{(z,\eta)}}(\pi_K)$-module given by $M_z
\otimes_{\kappa_z} \kappa_{(z, \eta)}$ with $\Gamma_K$ acting on
$\kappa_{(z,\eta)}$ through $\eta$, and $\varphi$ acting on
$\kappa_{(z,\eta)}$ trivially.

When $A$ is a finite extension of $\Qp$, we omit $z$ from the notation
by simply writing, for example, $M\otimes \eta$ and $\gothm_\eta$.
\end{notation}

\begin{defn}
\label{D:Iwasawa cohomology}
Let $M$ be a $(\varphi, \Gamma_K)$-module over $\calR_A(\pi_K)$.  We
write $\rmC^\bullet_\psi(M)$ for the complex $[M
  \xrightarrow{\psi-1}M]$ concentrated in degrees $1$ and $2$.  Its
cohomology groups $H^*_\psi(M)$ are called the \emph{Iwasawa
  cohomology} of $M$; using Proposition~\ref{P:calR_A(Gamma_K) acts on
  M}, we view them as modules over $\calR_A^\infty(\Gamma_K)$.
\end{defn}

The following base change result from Iwasawa cohomology to Galois cohomology follows immediately from the definitions, and will often be used implicitly in the remainder of this paper.

\begin{prop}
\label{P:psi-gamma complex v.s. psi-complex}
For $\eta \in \Max(\calR^\infty(\Gamma_K))$ a continuous character of $\Gamma_K$ with coefficients in $L = \calR^\infty(\Gamma_K)/\gothm_\eta$, we have a natural quasi-isomorphism
\begin{equation}
\label{E:psi-gamma complex v.s. psi-complex}
\rmC^\bullet_\psi(M) \Lotimes_{\calR^\infty_A(\Gamma_K)}
\calR^\infty_A(\Gamma_K)/\gothm_\eta \calR^\infty_A(\Gamma_K)
\longrightarrow \rmC^\bullet_{\psi, \gamma_K}(M \otimes_{\Qp} L(\eta^{-1})),
\end{equation}
where $L(\eta^{-1})$ is a one-dimensional $L$-vector space with the trivial $\varphi$-action and the $\Gamma_K$-action by $\eta^{-1}$.
\end{prop}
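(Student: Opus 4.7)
The plan is to compute the left-hand derived tensor product by exhibiting a concrete finite free resolution of the augmentation module $\calR^\infty_A(\Gamma_K)/\gothm_\eta\calR^\infty_A(\Gamma_K) \cong A\whotimes_{\Qp}L$ over $\calR^\infty_A(\Gamma_K)$, and then identifying the resulting total complex with the Herr-type complex on the right. Since $\#\Delta_K$ is invertible in $L$, I can use the central idempotent $e_\eta\in L[\Delta_K]$ cutting out the $\eta|_{\Delta_K}$-isotypic summand. Under the decomposition $\Gamma_K = \Delta_K\times\Gamma'_K$, with $\Gamma'_K$ procyclic topologically generated by the image of $\gamma_K$, the kernel of the augmentation $\eta:\calR^\infty_{A\whotimes L}(\Gamma_K)\to A\whotimes L$ is, on the $e_\eta$-summand, principally generated by the non-zero-divisor $\gamma_K-\eta(\gamma_K)$. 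This yields a two-term free resolution
\[
P^\bullet = \bigl[\,\calR^\infty_{A\whotimes L}(\Gamma_K)\,e_\eta \xrightarrow{\gamma_K-\eta(\gamma_K)}\calR^\infty_{A\whotimes L}(\Gamma_K)\,e_\eta\,\bigr]
\]
in degrees $[-1,0]$, quasi-isomorphic to $A\whotimes L$. The harmless extra base change from $A$ to $A\whotimes_{\Qp} L$ is flat and commutes with $\Lotimes$.

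Next, I tensor $\rmC^\bullet_\psi(M) = [M\xrightarrow{\psi-1}M]$ with $P^\bullet$ over $\calR^\infty_A(\Gamma_K)$. Because $\psi$ commutes with $\Gamma_K$, this produces a well-defined double complex; its horizontal differential is $\psi-1$ and its vertical differential is $\gamma_K-\eta(\gamma_K)$, both acting on $M_\eta := e_\eta\cdot(M\otimes_{\Qp}L)$. Passing to the total complex gives a three-term complex concentrated in degrees $[0,2]$ whose terms are $M_\eta$, $M_\eta\oplus M_\eta$, $M_\eta$, with differentials modelled on those of Definition~\ref{D:varphi Gamma cohomology}.

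Finally, I identify this total complex with $\rmC^\bullet_{\psi,\gamma_K}(N)$ for $N = M\otimes_{\Qp}L(\eta^{-1})$. Taking $\Delta_K$-invariants of $N$ amounts to extracting the $\eta|_{\Delta_K}$-isotypic subspace of $M\otimes L$, so $N^{\Delta_K}\cong M_\eta$. Furthermore, on $N$ one computes
\[
(\gamma_K-1)(m\otimes\ell) = \eta(\gamma_K)^{-1}\bigl((\gamma_K-\eta(\gamma_K))(m)\bigr)\otimes\ell,
\]
so $\gamma_K-1$ on $N^{\Delta_K}$ differs from $\gamma_K-\eta(\gamma_K)$ on $M_\eta$ only by the unit scalar $\eta(\gamma_K)^{-1}$. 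Rescaling the two relevant copies of $M_\eta$ by this unit (consistently across the total complex) yields an on-the-nose isomorphism to $\rmC^\bullet_{\psi,\gamma_K}(N)$, and naturality in $M$ and in $\eta$ is immediate from the construction.

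The main technical nuisance is bookkeeping: tracking the idempotent projection, the sign conventions of Definition~\ref{D:varphi Gamma cohomology}, and the rescaling by $\eta(\gamma_K)^{-1}$ carefully enough to obtain a genuine isomorphism of complexes. There is no substantial mathematical obstacle, since everything reduces to the fact that $\gamma_K-\eta(\gamma_K)$ is a non-zero-divisor in the $e_\eta$-summand of $\calR^\infty_{A\whotimes L}(\Gamma'_K)$ and the idempotent $e_\eta$ commutes through all operations; in this sense the proposition really does follow immediately from the definitions.
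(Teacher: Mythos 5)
The paper offers no proof of this proposition, declaring it to ``follow immediately from the definitions,'' so your write-up is the expected filling-in of that implicit argument, and it is correct. Your key move---exhibiting the explicit two-step resolution $P^\bullet = [\,e_\eta\calR^\infty_{A\whotimes L}(\Gamma_K) \xrightarrow{\gamma_K - \eta(\gamma_K)} e_\eta\calR^\infty_{A\whotimes L}(\Gamma_K)\,]$ of $A\whotimes_{\Qp}L$ by finite projective $\calR^\infty_A(\Gamma_K)$-modules, then taking the total complex---is exactly the right one, and the identification of the result with $\rmC^\bullet_{\psi,\gamma_K}(M\otimes_{\Qp}L(\eta^{-1}))$ via the computations $N^{\Delta_K}\cong M_\eta$ and $(\gamma_K-1)|_N = \eta(\gamma_K)^{-1}(\gamma_K-\eta(\gamma_K))$ is correct and well organized. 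One small thing you slightly understate: showing that $\ker\eta_L$ on the $e_\eta$-summand is principally generated by $\gamma_K - \eta(\gamma_K)$ is not purely formal when $\Gamma_K/\Delta_K$ has nontrivial prime-to-$p$ torsion (e.g.\ $K=\Qp$, $p$ odd, where $\Gamma_K\cong\mu_{p-1}\times\ZZ_p$); in that case $e_\eta\calR^\infty_{A\whotimes L}(\Gamma'_K)$ decomposes along characters of that torsion subgroup, and one must observe that $\gamma_K-\eta(\gamma_K)$ is already a unit on every component other than the one picked out by $\eta$, which holds because the relevant constants differ from $1$ by a prime-to-$p$ root of unity. So ``non-zero-divisor'' is not quite the whole story, but the principality claim as you stated it is true for this reason, and the rest goes through.
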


\begin{notation}
The involution $\iota: \Gamma_K \to \Gamma_K$ given by $\iota(\gamma)
= \gamma^{-1}$ for $\gamma \in \Gamma_K$ induces an involution $\iota:
\calR_A^?(\Gamma_K) \to \calR_A^?(\Gamma_K)$, where $? =
     [s,r],r,\emptyset$.  For an $\calR_A^?(\Gamma_K)$-module $N$, we
     write $N^\iota$ for the $\calR_A^?(\Gamma_K)$-module with the
     same underlying abelian group $N$ but with module structure
     twisted through $\iota$.
\end{notation}

For the remainder of this subsection we specialize to the case $K=\Qp$
and, in this case, describe an explicit construction of duality
pairings for Iwasawa cohomology.  We use the notations of
Definition~\ref{D:omega-Gamma} with $C=\Gamma$, $\ell = \log \circ
\chi$, and $c=\gamma_{\Qp}$.  Recall that the residue map on
$\calR_A(\Gamma)$ is denoted $\Res_\Gamma$.

\begin{lemma}
Let $M$ be a $(\varphi, \Gamma)$-module over $\calR_A$.  There is a
continuous isomorphism of $\calR_A(\Gamma)$-modules
\begin{equation}\label{E:D^psi=0 duality}
(M^*)^{\psi=0} \cong \Hom_{\calR_A(\Gamma)}(M^{\psi=0}, \calR_A(\Gamma))^\iota
\end{equation}
sending $y \in (M^*)^{\psi=0}$ to the unique $\calR_A(\Gamma)$-linear
homomorphism $\{-,y\}_\Iw^0: M^{\psi=0} \to \calR_A(\Gamma)$ such that
for all $x \in M^{\psi=0}$ one has
$\Res_\Gamma(\{x,y\}_\Iw^0\omega_{\log\circ\chi}) = \{x,y\}_{\Qp}$.
In particular,
\[
\{-,-\}_\Iw^0: M^{\psi=0} \times (M^*)^{\psi=0,\iota} \to \calR_A(\Gamma)
\]
is $\calR_A(\Gamma)$-bilinear.
\end{lemma}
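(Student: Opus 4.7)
The key tool is the perfect residue pairing on $\calR_A(\Gamma)$ expressed in \eqref{E:gamma residue pairing}. The plan is to construct $\{-,-\}_\Iw^0$ using this pairing, verify the linearity assertions by uniqueness, and establish bijectivity by reducing to the perfectness of the Cartier duality pairing $\{-,-\}_{\Qp} : M \times M^* \to A$.

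First I would construct $\{x, y\}_\Iw^0$. Fix $y \in (M^*)^{\psi=0}$. The continuous finite projective $\calR_A(\Gamma)$-module structure on $M^{\psi=0}$ given by Theorem~\ref{T:structure of D^psi=0} makes $\mu \mapsto \{\mu x, y\}_{\Qp}$ into a continuous $A$-linear functional on $\calR_A(\Gamma)$, so by \eqref{E:gamma residue pairing} there is a unique $\{x, y\}_\Iw^0 \in \calR_A(\Gamma)$ with $\Res_\Gamma(\mu \{x, y\}_\Iw^0 \omega_{\log\circ\chi}) = \{\mu x, y\}_{\Qp}$ for all $\mu$; taking $\mu = 1$ yields the stated characterization. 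The $\calR_A(\Gamma)$-linearity of $\{-, y\}_\Iw^0$ in $x$ follows immediately from uniqueness in the residue pairing. For $\iota$-semilinearity in $y$: the pairing $\{-,-\}_{\Qp}$ is $\Gamma$-invariant because residues and the tautological pairing into $\calR_A(1)$ both are, so the identity $\{\mu' x, \nu y\}_{\Qp} = \{\iota(\nu) \mu' x, y\}_{\Qp}$ extends by continuity from $\nu \in \Gamma$ to $\nu \in \calR_A(\Gamma)$ via Proposition~\ref{P:calR_A(Gamma_K) acts on M}, and uniqueness then gives $\{x, \nu y\}_\Iw^0 = \iota(\nu) \{x, y\}_\Iw^0$.

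The main step is bijectivity. By Theorem~\ref{T:structure of D^psi=0}, both $(M^*)^{\psi=0}$ and $\Hom_{\calR_A(\Gamma)}(M^{\psi=0}, \calR_A(\Gamma))^\iota$ are finite projective $\calR_A^r(\Gamma)$-modules of rank $\rank M$ for some $r > 0$. For injectivity of the pairing, suppose $\{-, y\}_\Iw^0 = 0$; then $\{x, y\}_{\Qp} = 0$ for all $x \in M^{\psi=0}$. The splitting $M = \varphi(M) \oplus M^{\psi=0}$ (from $m = \varphi(\psi(m)) + (m - \varphi(\psi(m)))$ together with $\psi \circ \varphi = \id$), combined with the adjunction $\{\varphi(m'), y\}_{\Qp} = \{m', \psi(y)\}_{\Qp} = 0$, extends the vanishing to all of $M$; perfectness of the Cartier pairing on $M \times M^*$, obtained by composing the tautological $\calR_A$-bilinear pairing into $\calR_A(1)$ with the perfect residue map of Lemma~\ref{L:calR duality}, then forces $y = 0$. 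To upgrade injectivity to bijectivity, I would descend the map to each Banach level $\calR_A^{[s, r]}(\Gamma)$ and check isomorphism fiberwise at each closed point $(z, \eta)$, using Proposition~\ref{P:psi-gamma complex v.s. psi-complex} together with the explicit $(1+\pi)\varphi^n$-decomposition behind Theorem~\ref{T:structure of D^psi=0} to identify the fibers with the $\psi=0$-parts of $M_z \otimes \eta^{-1}$ and $M_z^* \otimes \eta$ over $\kappa_{(z, \eta)}$; the same injectivity argument applies fiberwise, and equality of finite fiber dimensions then promotes injectivity to isomorphism, whence the global statement follows from standard properties of finite projective modules over the Fr\'echet-Stein algebra $\calR_A(\Gamma)$.

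The hardest part will be this fiberwise reduction, as the compatibility of $(-)^{\psi=0}$ with base change at nontrivial characters of $\Gamma$ is not formal and must be traced through the explicit decompositions of \S\ref{S:psi operator} rather than treated by pure functoriality.
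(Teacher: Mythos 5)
Your construction of $\{-,-\}_\Iw^0$ and the verification of $\calR_A(\Gamma)$-linearity (in $x$) and $\iota$-semilinearity (in $y$) by uniqueness under the residue pairing is fine and matches the paper's intent. The injectivity argument, using the splitting $M = \varphi(M) \oplus M^{\psi=0}$ and the adjunction $\{\varphi(m'),y\}_{\Qp} = \{m',\psi(y)\}_{\Qp}$, is also correct. However, your proposed route to bijectivity via descent to Banach levels and a fiberwise check at each $(z,\eta) \in \Max(\calR_A^\infty(\Gamma))$ has a genuine gap. First, the identification you invoke is wrong as stated: the fiber $M^{\psi=0} \otimes_{\calR_A(\Gamma)} \kappa_{(z,\eta)}$ is a finite-dimensional $\kappa_{(z,\eta)}$-vector space (of dimension $\rank M$), while $(M_z \otimes \eta^{-1})^{\psi=0}$ is an infinite-dimensional module over $\calR_{\kappa_{(z,\eta)}}(\Gamma)$; Proposition~\ref{P:psi-gamma complex v.s. psi-complex} concerns the $\psi-1$ complex, not $(-)^{\psi=0}$. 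Second, even with the correct identification, injectivity of the global map over $\calR_A(\Gamma)$ does not formally propagate to injectivity of each fiber, so the ``equal dimensions promotes injectivity to isomorphism'' step does not close.

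The cleaner path — and the one the paper takes — is already implicit in your injectivity argument. The decompositions $M = \varphi(M) \oplus M^{\psi=0}$, $M^* = \varphi(M^*) \oplus (M^*)^{\psi=0}$, combined with the adjunction identities, show that $M^{\psi=0}$ and $\varphi(M^*)$ are exact orthogonal complements under the perfect pairing $\{-,-\}_{\Qp}: M \times M^* \to A$, and likewise $\varphi(M)$ and $(M^*)^{\psi=0}$. Hence $\{-,-\}_{\Qp}$ restricts to a \emph{perfect} $A$-bilinear pairing $M^{\psi=0} \times (M^*)^{\psi=0} \to A$: surjectivity of $(M^*)^{\psi=0} \to \Hom_{A,\cont}(M^{\psi=0},A)$ follows because any continuous functional on $M^{\psi=0}$ extends by zero on $\varphi(M)$ to a functional on $M$, which by perfectness comes from some $y \in M^*$, and the vanishing on $\varphi(M)$ forces $\psi(y)=0$. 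This gives the topological isomorphism $(M^*)^{\psi=0} \cong \Hom_{A,\cont}(M^{\psi=0},A)^\iota$ directly, with no fiberwise reduction. One then checks $\calR_A(\Gamma)$-linearity (it is $\calR_A^\infty(\Gamma)[(\gamma_{\Qp}-1)^{-1}]$-linear, hence by continuity $\calR_A(\Gamma)$-linear), and converts the $A$-duality into an $\calR_A(\Gamma)$-duality by composing with tensor-hom adjunction and the residue duality $\calR_A(\Gamma) \cong \Hom_{A,\cont}(\calR_A(\Gamma),A)$ from \eqref{E:gamma residue pairing}.
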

\begin{proof}
We first note that the pairing $\{-,-\}_{\Qp}: M \times M^* \to A$ is
perfect.  This perfectness does not involve any $\varphi$- or
$\Gamma$-action, only that $M$ is finite projective over $\calR_A$;
one reduces to the free case by adding a complementary projective
module, then reduces to the free rank one case, which is
Lemma~\ref{L:calR duality}.

Returning to the $(\varphi,\Gamma)$-action on $M$, we recall that for
any $x \in M$ and $y \in M^*$ one has $\{\gamma x, \gamma y\}_{\Qp} =
\{x,y\}_{\Qp}$ for all $\gamma \in \Gamma$, $\{\varphi x, y\}_{\Qp} =
\{x, \psi y\}_{\Qp}$, and $\{\psi x, y\}_{\Qp} = \{x, \varphi
y\}_{\Qp}$.  From the identity for $\gamma \in \Gamma$ it follows that
the duality isomorphism is $A[\Gamma]$-linear (with the shown
involution $\iota$), and hence by continuity of the action it is
$\calR_A^\infty(\Gamma)$-linear.  From the identities for $\varphi$
and $\psi$ and the direct sum decompositions $M = \varphi(M) \oplus
M^{\psi=0}$ and $M^* = \varphi(M^*) \oplus (M^*)^{\psi=0}$, it follows
that $M^{\psi=0}$ and $\varphi(M^*)$ are exact orthogonal complements.
Therefore, $\{-,-\}_{\Qp}$ restricts to a perfect pairing between
$M^{\psi=0}$ and $(M^*)^{\psi=0}$, which in turn gives by adjunction a
topological isomorphism $(M^*)^{\psi=0} \stackrel\sim\to
\Hom_{A,\cont}(M^{\psi=0},A)^\iota$.  This isomorphism is
$\calR_A^\infty(\Gamma)[(\gamma_{\Qp}-1)^{-1}]$-linear, and hence by
continuity of the action it is $\calR_A(\Gamma)$-linear.

Combining the preceding isomorphism with tensor-hom adjunction and
residue duality \eqref{E:gamma residue pairing} for $\calR_A(\Gamma)$,
we obtain
\begin{align}
\nonumber
(M^*)^{\psi=0}
&\cong \Hom_{A,\cont}(M^{\psi=0}, A)^\iota
 \cong \Hom_{A,\cont}(M^{\psi=0} \otimes_{\calR_A(\Gamma)} \calR_A(\Gamma), A)^\iota \\
\label{E:partial duality}
&\cong \Hom_{\calR_A(\Gamma)}(M^{\psi=0}, \Hom_{A, \cont}(\calR_A(\Gamma), A))^\iota\\
\nonumber
&\cong\Hom_{\calR_A(\Gamma)}(M^{\psi=0}, \calR_A(\Gamma))^\iota.
\end{align}
One verifies immediately from the definitions that this isomorphism
has desired form.
\end{proof}

\begin{cor}
\label{C:D^psi=0 duality}
Let $M$ be a $(\varphi, \Gamma)$-module over $\calR_A$.  Then there exists $r_0>0$ such that $(M^{*r_0})^{\psi=0} \cong \Hom_{\calR_A^{r_0}(\Gamma)}((M^{r_0})^{\psi=0}, \calR_A^{r_0}(\Gamma))^\iota$.
\end{cor}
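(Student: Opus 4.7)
The plan is to descend the $\calR_A(\Gamma)$-linear isomorphism $\Phi: (M^*)^{\psi=0} \stackrel{\sim}{\to} \Hom_{\calR_A(\Gamma)}(M^{\psi=0}, \calR_A(\Gamma))^\iota$ of the preceding lemma to a $\calR_A^{r_0}(\Gamma)$-linear isomorphism for a suitably small $r_0 > 0$. First I would apply Theorem~\ref{T:structure of D^psi=0} (with $K = \QQ_p$, so $\tilde e_K = 1$) to fix $r_1 > 0$ such that both $(M^r)^{\psi=0}$ and $(M^{*r})^{\psi=0}$ are finite projective $\calR_A^r(\Gamma)$-modules of rank $d = \rank M$ for every $r \leq r_1$, and fix finite $\calR_A^{r_1}(\Gamma)$-generating sets $\{x_i\}$ of $(M^{r_1})^{\psi=0}$ and $\{y_j\}$ of $(M^{*r_1})^{\psi=0}$.

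Each pairing $\{x_i, y_j\}_\Iw^0$ lies in $\calR_A(\Gamma) = \bigcup_{r > 0} \calR_A^r(\Gamma)$, hence in some $\calR_A^{R_{ij}}(\Gamma)$; since $\calR_A^r(\Gamma)$ enlarges as $r$ decreases, taking $r_0 \leq \min(r_1, \min_{i,j} R_{ij})$ places all the $\{x_i, y_j\}_\Iw^0$ in $\calR_A^{r_0}(\Gamma)$. By $\calR_A^{r_0}(\Gamma)$-bilinearity of $\{-,-\}_\Iw^0$, the pairing then restricts to $(M^{r_0})^{\psi=0} \times (M^{*r_0})^{\psi=0, \iota} \to \calR_A^{r_0}(\Gamma)$, inducing a $\calR_A^{r_0}(\Gamma)$-linear map
\[
\tilde \Phi: (M^{*r_0})^{\psi=0} \to \Hom_{\calR_A^{r_0}(\Gamma)}((M^{r_0})^{\psi=0}, \calR_A^{r_0}(\Gamma))^\iota
\]
between finite projective $\calR_A^{r_0}(\Gamma)$-modules of rank $d$. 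Since Hom commutes with base change for finite projective modules, $\tilde \Phi \otimes_{\calR_A^{r_0}(\Gamma)} \calR_A(\Gamma)$ is identified with $\Phi$ and is therefore an isomorphism.

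To conclude $\tilde \Phi$ itself is an isomorphism (after possibly shrinking $r_0$ further), I would argue injectivity and surjectivity separately. For injectivity, the finite projectivity of $(M^{*r_0})^{\psi=0}$ implies that it embeds into its base change $(M^*)^{\psi=0}$, so $\ker \tilde \Phi$ embeds into $\ker \Phi = 0$. For surjectivity, $\Coker \tilde \Phi$ is a finitely generated $\calR_A^{r_0}(\Gamma)$-module whose base change $\Coker \tilde \Phi \otimes_{\calR_A^{r_0}(\Gamma)} \calR_A(\Gamma)$ equals $\Coker \Phi = 0$. Using $\calR_A(\Gamma) = \varinjlim_{r \to 0^+} \calR_A^r(\Gamma)$ and the commutation of tensor with filtered colimits, each of the finitely many generators of $\Coker \tilde \Phi$ must vanish in some $\Coker \tilde \Phi \otimes \calR_A^{r'}(\Gamma)$ with $r' \leq r_0$; shrinking $r_0$ below the minimum of these $r'$'s (which preserves the earlier descent conditions, since those are stable under further shrinking) then yields $\Coker \tilde \Phi = 0$, completing the proof.

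The main obstacle is the two-sided character of the descent: we must simultaneously arrange for finitely many pairing values to land in $\calR_A^{r_0}(\Gamma)$ and for finitely many cokernel generators to vanish at level $r_0$. Both are finite-generation conditions, handled by successively shrinking $r_0$, with termination in a positive radius guaranteed by the finite projectivity supplied by Theorem~\ref{T:structure of D^psi=0}.
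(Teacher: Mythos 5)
Your argument is correct and follows the paper's strategy: invoke Theorem~\ref{T:structure of D^psi=0} for finite projectivity of $(M^r)^{\psi=0}$ and $(M^{*r})^{\psi=0}$ over $\calR_A^r(\Gamma)$, observe that the pairing descends for $r_0$ small enough, and then deduce that the descended map is an isomorphism after possibly further shrinking. The paper simply cites Remark~\ref{R:isom on Robba descent} for this last step; you essentially reprove it (with a slightly different, but also valid, handling of injectivity: you note that finite projectivity makes $(M^{*r_0})^{\psi=0}$ inject into its base change $(M^*)^{\psi=0}$, so injectivity is automatic, whereas the remark obtains surjectivity first and then argues the kernel is a finitely generated summand). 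Either way the content is the same.
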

\begin{proof}
It is easy to see that for sufficiently small $r_0>0$, the isomorphism in \eqref{E:D^psi=0 duality} sends the $\calR_A^{r_0}(\Gamma)$-structure of the left hand side into that of the right hand side.  By Theorem~\ref{T:structure of D^psi=0} and Remark~\ref{R:isom on Robba descent}, after perhaps shrinking $r_0$, the resulting morphism is an isomorphism.
\end{proof}

\begin{lemma}\label{L:psi=1 pair to infty}
For $x \in M^{\psi=1}$ and $y \in (M^*)^{\psi=1}$, then $(\varphi-1)x
\in M^{\psi=0}$ and $(\varphi-1)y \in (M^*)^{\psi=0}$, and we have
$\{(\varphi-1)x,(\varphi-1)y\}_\Iw^0 \in \calR_A^\infty(\Gamma)
\subset \calR_A(\Gamma)$.
\end{lemma}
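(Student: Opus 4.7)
The plan is to prove the three assertions in turn. The stability claims $(\varphi-1)x \in M^{\psi=0}$ and $(\varphi-1)y \in (M^*)^{\psi=0}$ are immediate from $\psi\circ\varphi = \id$ on both $M$ and $M^*$: since $\psi x = x$ one has $\psi((\varphi-1)x) = \psi\varphi(x) - \psi(x) = x - x = 0$, and symmetrically for $y$. The substance of the lemma is therefore the integrality statement, namely that $g := \{(\varphi-1)x, (\varphi-1)y\}_\Iw^0$ actually lies in the ``plus part'' $\calR_A^\infty(\Gamma)$ of the Iwasawa algebra.

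To address this, I will use the residue-duality characterization afforded by Lemma~\ref{L:calR duality} and Definition~\ref{D:omega-Gamma}: an element $g \in \calR_A(\Gamma)$ belongs to $\calR_A^\infty(\Gamma)$ if and only if $\Res_\Gamma(hg\omega_\ell) = 0$ for every $h \in \calR_A^\infty(\Gamma)$, where $\ell = \log\circ\chi$. By the $\calR_A(\Gamma)$-bilinearity of $\{-,-\}_\Iw^0$ together with its defining identity $\Res_\Gamma(\{u,v\}_\Iw^0\omega_\ell) = \{u,v\}_{\Qp}$, this condition translates to the vanishing
\[
\{h\cdot(\varphi-1)x,\ (\varphi-1)y\}_{\Qp} = 0
\qquad \text{for every } h \in \calR_A^\infty(\Gamma).
\]

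Next, I will absorb $h$ into the second variable via the $\Gamma$-invariance $\{\gamma u, v\}_{\Qp} = \{u, \gamma^{-1}v\}_{\Qp}$ for $\gamma \in \Gamma$. Because both the action of $\calR_A^\infty(\Gamma)$ on $M$ and $M^*$ and the pairing $\{-,-\}_{\Qp}$ are continuous, this identity extends to $\{hu,v\}_{\Qp} = \{u, \iota(h)v\}_{\Qp}$ for any $h \in \calR_A^\infty(\Gamma)$. Since $\iota(h)$ commutes with both $\varphi$ and $\psi$, the element $y' := \iota(h)y$ still lies in $(M^*)^{\psi=1}$ and satisfies $\iota(h)(\varphi-1)y = (\varphi-1)y'$. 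We are thus reduced to proving $\{(\varphi-1)x, (\varphi-1)y'\}_{\Qp} = 0$ for arbitrary $x \in M^{\psi=1}$ and $y' \in (M^*)^{\psi=1}$.

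This final vanishing is a four-line bookkeeping calculation using the adjunction relations $\{\varphi a, b\}_{\Qp} = \{a, \psi b\}_{\Qp}$ and $\{\psi a, b\}_{\Qp} = \{a, \varphi b\}_{\Qp}$: expanding the pairing bilinearly into four terms and invoking $\psi\varphi = \id$ together with $\psi x = x$ and $\psi y' = y'$ shows that each of the four terms equals $\{x, y'\}_{\Qp}$, so the alternating signs make them cancel. The only delicate point I anticipate is verifying the continuous extension from $A[\Gamma]$ to $\calR_A^\infty(\Gamma)$ of the $\Gamma$-adjunction $\{hu, v\} = \{u, \iota(h)v\}$ and correctly tracking the $\iota$-twist built into the definition of $\{-,-\}_\Iw^0$; all remaining identities are already present in the text.
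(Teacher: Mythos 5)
Your proof is correct, and it follows essentially the same approach as the paper's: the $\psi=0$ statements come from $\psi\circ\varphi=\id$, the integrality statement is converted via residue duality into the vanishing $\{h\cdot(\varphi-1)x,(\varphi-1)y\}_{\Qp}=0$ for all $h\in\calR_A^\infty(\Gamma)$, and this vanishing is verified by a four-term expansion using $\{\varphi a,\varphi b\}=\{a,b\}$, $\{\varphi a,b\}=\{a,\psi b\}$, $\{a,\varphi b\}=\{\psi a,b\}$, and $\psi$-fixedness. The one cosmetic deviation is that you first transport $h$ to the second slot via the $\iota$-adjunction $\{hu,v\}_{\Qp}=\{u,\iota(h)v\}_{\Qp}$ and then absorb $\iota(h)$ into $y$, whereas the paper absorbs $h$ directly into $x$ via $h\cdot(\varphi-1)x = (\varphi-1)(hx)$ (using that $h$ commutes with $\varphi$ and preserves $M^{\psi=1}$); the latter is a slightly shorter route, but both collapse to the identical computation.
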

\begin{proof}
The claim that $(\varphi-1)x \in M^{\psi=0}$ and $(\varphi-1)y \in
M^{\psi=0}$ follows immediately from the identity $\psi \circ \varphi
= \id$.  Unwinding the last step of \eqref{E:partial duality} through
\eqref{E:gamma residue pairing}, the second claim is equivalent to
showing that for all $r \in \calR_A^\infty(\Gamma) \subseteq
\calR_A(\Gamma)$, one has $\{r \cdot (\varphi-1)x,(\varphi-1)y\}_{\Qp}
= 0$.  Indeed, $r$ commutes with $\varphi$ and preserves $M^{\psi=1}$,
so
\begin{align*}
\{r \cdot (\varphi-1)x, (\varphi-1)y\}_{\Qp}
&= \{(\varphi-1)(rx), (\varphi-1)y\}_{\Qp} \\
&= \{rx,y\}_{\Qp} - \{\varphi(rx),y\}_{\Qp}
 - \{rx, \varphi(y)\}_{\Qp} + \{\varphi(rx), \varphi(y)\}_{\Qp} \\
&=\{rx,y\}_{\Qp} - \{rx, \psi(y)\}_{\Qp} -\{\psi(rx), y\}_{\Qp} + \{rx,y\}_{\Qp}
 = 0
\end{align*}
because $x$, $rx$, and $y$ are fixed by $\psi$.
\end{proof}

\begin{defn}
\label{D:Iwasawa pairing}
By Lemma~\ref{L:psi=1 pair to infty} above, the rule
\[
\{-,-\}_\Iw:
M^{\psi=1} \times (M^*)^{\psi=1,\iota} \to \calR_A^\infty(\Gamma),
\qquad (x,y) \mapsto \{(\varphi-1)x,(\varphi-1)y\}_\Iw^0
\]
gives a continuous, $\calR_A^\infty(\Gamma)$-bilinear pairing called
the \emph{Iwasawa pairing}.  From the definition, if $x \in
M^{\psi=1}$ and $y \in (M^*)^{\psi=1}$ then $\{x,y\}_\Iw$ is the
unique element of $\calR_A^\infty(\Gamma)$ such that for all $r \in
\calR_A(\Gamma)$ one has
$\Res_\Gamma(r\{x,y\}_\Iw\omega_{\log\circ\chi}) = \{r \cdot
(\varphi-1)x,\varphi-1)y\}_{\Qp}$.
\end{defn}

\begin{prop}
\label{P:Iwasawa pairing = Tate pairing}
For any point $(z,\eta) \in \Max(\calR_A^\infty(\Gamma))$, the Iwasawa
pairing $\{-,-\}_\Iw$ at $(z,\eta)$ is compatible with the Tate
pairing for $M_z \otimes \eta^{-1}$, in the sense that the diagram
\[
\xymatrix{
M^{\psi=1}/  \gothm_{(z,\eta)} \ar@{^(->}[d]_{\log\chi(\gamma_{\Qp})}
 \ar@{}[r]|{\displaystyle\times}
 & (M^*)^{\psi=1}/\gothm_{(z,\eta^{-1})} \ar@{^(->}[d]^{\log\chi(\gamma_{\Qp})} \ar[rrr]^-{\{-,-\}_\Iw \bmod \gothm_{(z,\eta)}}
 &&& \calR_{A}^\infty(\Gamma)/ \gothm_{(z,\eta)}\ar@{=}[dd] \\
H^1_{\psi,\gamma_{\Qp}}(M_z \otimes \eta^{-1})
 \ar@{}[r]|{\displaystyle\times}
 & H^1_{\psi,\gamma_{\Qp}}(M^*_z \otimes \eta) \\
H^1_{\varphi,\gamma_{\Qp}}(M_z \otimes \eta^{-1}) \ar[u]^{\Psi_{M_z \otimes \eta^{-1}}}_\cong
 \ar@{}[r]|{\displaystyle\times}
 & H^1_{\psi,\gamma_{\Qp}^{-1}}(M^*_z \otimes \eta) \ar[u]_{\Gamma_{\gamma_{\Qp}^{-1},\gamma_{\Qp},M_z^* \otimes \eta}}^\cong \ar[rrr]^-{C_p\cdot\cup_\Ta}
 &&& \kappa_{(z,\eta)}
}
\]
commutes.  (The the upper horizontal arrow is obtained by identifying
$(M^*)^{\psi=1,\iota}/\gothm_{(z,\eta)} \cong
(M^*)^{\psi=1}/\gothm_{(z,\eta^{-1})}$, and the first vertical arrows
are $\log\chi(\gamma_{\Qp})$ times the na\"ive maps induced by
\eqref{E:psi-gamma complex v.s. psi-complex}, or by
Remark~\ref{R:useful-sequence}.)
\end{prop}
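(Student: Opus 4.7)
The plan is to verify commutativity by reducing both compositions around the diagram to explicit scalars in $\kappa_{(z,\eta)}$, and matching them using the identity $\Res_{\gamma_{\Qp}} = C_p \cdot \Ta_{\Qp}$ recalled earlier in the subsection. As explained in Remark~\ref{R:normalizations}, the factors of $\log\chi(\gamma_{\Qp})$ on the left vertical arrows are exactly those needed to make the na\"ive isomorphism $H^1 \cong (-)^{\psi=1}/(\gamma_{\Qp}-1)$ independent of the choice of $\gamma_{\Qp}$, so at the level of scalars their overall effect is to contribute a factor $\log\chi(\gamma_{\Qp})^2$.

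First, I would trace the downward-then-across path. The image of $\bar{x}\in M^{\psi=1}/\gothm_{(z,\eta)}$ in $H^1_{\psi,\gamma_{\Qp}}(M_z\otimes\eta^{-1})$ is represented, via Remark~\ref{R:useful-sequence}, by the cocycle $(0,\log\chi(\gamma_{\Qp})\cdot x)$. Lifting through $\Psi_{M_z\otimes\eta^{-1}}^{-1}$, the corresponding class in $H^1_{\varphi,\gamma_{\Qp}}$ is represented by $(x_1,\log\chi(\gamma_{\Qp})\cdot x)$, where $x_1$ is the unique element of $(M_z\otimes\eta^{-1})^{\psi=0}$ satisfying $(\gamma_{\Qp}-1)x_1=\log\chi(\gamma_{\Qp})(\varphi-1)x$; existence and uniqueness use the invertibility of $\gamma_{\Qp}-1$ on the $\psi=0$-subspace established in Theorem~\ref{T:structure of D^psi=0}. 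The analogous lift for $\bar{y}$ into $H^1_{\psi,\gamma_{\Qp}^{-1}}(M^*_z\otimes\eta)$ goes through $\Gamma_{\gamma_{\Qp}^{-1},\gamma_{\Qp}}^{-1}$, which introduces the factor $\frac{\gamma_{\Qp}^{-1}-1}{\gamma_{\Qp}-1}=-\gamma_{\Qp}^{-1}$. Feeding these cocycle representatives into the degree $1\times 1$ formula $\{(m,n),(k,l)\}_{\gamma_{\Qp},1}=\{m,l\}_{\gamma_{\Qp}}-\{n,k\}_{\gamma_{\Qp}}$ displayed at the end of the subsection, the scalar $C_p\cdot\cup_\Ta$ becomes a combination of residue pairings of the form $(\log\chi(\gamma_{\Qp}))^{-1}\Res_{\Qp}\circ\langle-,-\rangle$.

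Second, I would analyze the across-then-down path. By Definition~\ref{D:Iwasawa pairing}, $\{x,y\}_\Iw\in\calR_A^\infty(\Gamma)$ is characterized by $\Res_\Gamma(r\{x,y\}_\Iw\omega_{\log\circ\chi})=\{r(\varphi-1)x,(\varphi-1)y\}_{\Qp}$ for all $r\in\calR_A(\Gamma)$. To recover the $\eta$-evaluation of $\{x,y\}_\Iw$, one chooses a test element $r$ with a simple pole on the $\eta$-locus, e.g., $r=(\gamma_{\Qp}-\eta(\gamma_{\Qp}))^{-1}\cdot u$ for a unit $u$ at $\eta$; then the $\eta$-evaluation is extracted as a group residue, and the right-hand side of the defining relation becomes an expression in $\{-,-\}_{\Qp}$ involving the element $(\gamma_{\Qp}-\eta(\gamma_{\Qp}))^{-1}(\varphi-1)x$. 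After pushing the $\eta$-twist into the module, this element matches, up to a factor $\log\chi(\gamma_{\Qp})$, the element $x_1$ from the first step.

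The verification then reduces to matching normalizations, and this bookkeeping is the main obstacle. One must check that the two $\log\chi(\gamma_{\Qp})$-scalings from the vertical arrows, the $(\gamma_{\Qp}-1)^{-1}$ and $-\gamma_{\Qp}^{-1}$ factors from the $\Psi^{-1}$ and $\Gamma_{\gamma_{\Qp}^{-1},\gamma_{\Qp}}^{-1}$ lifts, the $\iota$-involution on $(M^*)^{\psi=1,\iota}$, the twists by $\eta^{\pm 1}$ on the two factors, and the factor $(\log\chi(\gamma_{\Qp}))^{-1}$ converting $\Res_\Gamma\circ\omega_{\log\circ\chi}$ into $\Res_{\Qp}$, all combine precisely to produce the scalar $C_p$ coming from $\Res_{\gamma_{\Qp}}=C_p\cdot\Ta_{\Qp}$, with no leftover discrepancy. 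Once these sign and constant conventions are tracked in parallel the commutativity falls out, but extracting each of them explicitly from the cocycle-level computations is the essentially calculational core of the argument.
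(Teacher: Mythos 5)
Your outline follows the same basic strategy as the paper: trace both paths around the diagram at the level of explicit cocycles, feed the representatives into the degree-$(1,1)$ formula $\{(m,n),(k,l)\}_{\gamma_{\Qp},1}=\{m,l\}_{\gamma_{\Qp}}-\{n,k\}_{\gamma_{\Qp}}$ on one side and unwind Definition~\ref{D:Iwasawa pairing} on the other, then match scalars. The specific lifts you identify are the right ones (lift $(0,x)$ to $((\gamma_{\Qp}-1)^{-1}(\varphi-1)x,\,x)$ using invertibility of $\gamma_{\Qp}-1$ on the $\psi=0$ subspace, and lift $(0,y)$ through $\Gamma_{\gamma_{\Qp}^{-1},\gamma_{\Qp}}$ with the factor $\frac{\gamma_{\Qp}^{-1}-1}{\gamma_{\Qp}-1}$). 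So far this tracks the paper.

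However, the proposal stops precisely at the point where the proposition has content. You write that the residue pairings ``become a combination'' of terms, that the right-hand path ``matches, up to a factor $\log\chi(\gamma_{\Qp})$, the element $x_1$,'' and that ``once these sign and constant conventions are tracked in parallel the commutativity falls out'' — but you never carry out the cancellation. For a statement that is an identity of scalars, asserting that the bookkeeping will close is not a proof; it is the statement one is asked to prove. Subtle sign and normalization choices in $\Psi_M$, $\Gamma_{\gamma_{\Qp}^{-1},\gamma_{\Qp}}$, the $\iota$-involution, and the constant $C_p$ in $\Res_{\gamma_{\Qp}}=C_p\Ta_{\Qp}$ could conspire to produce a discrepancy; the verification is not automatic.

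Two concrete steps you are missing. First, you can greatly simplify the bookkeeping by twisting the $\Gamma$-action on $M$ by $\eta$ and reducing to the case $\eta=1$ (and assuming $\Delta$ trivial); this eliminates the twist-juggling you anticipate. Second, after the manipulations using $\{\gamma x,\gamma y\}=\{x,y\}$, $\{\varphi x,y\}=\{x,\psi y\}$, and orthogonality of $M^{\psi=0}$ with $\varphi(M^*)$, the bottom-left route produces a quantity of the form $(\log\chi(\gamma_{\Qp}))^{-1}\Res_\Gamma\bigl(\tfrac{-1}{\gamma_{\Qp}^{-1}-1}\{x,y\}_\Iw\,\omega_{\log\circ\chi}\bigr)$, and the proposition reduces to the group-residue identity: for all $f\in\calR_A^\infty(\Gamma)$,
\[
(\log\chi(\gamma_{\Qp}))\,\Res_\Gamma\Bigl(\tfrac{-1}{\gamma_{\Qp}^{-1}-1}\,f\,\omega_{\log\circ\chi}\Bigr)
= f \bmod (\gamma_{\Qp}-1)
\]
in $\calR_A^\infty(\Gamma)/(\gamma_{\Qp}-1)\cong A$. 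This is the analytic kernel of the argument — checked by taking $f=1$ and writing $\tfrac{\gamma_{\Qp}}{\gamma_{\Qp}-1}\,\tfrac{d\gamma_{\Qp}}{(\log\chi(\gamma_{\Qp}))\gamma_{\Qp}}=\tfrac{d(\gamma_{\Qp}-1)}{\gamma_{\Qp}-1}$ — and it is not captured by your ``test element with a simple pole'' heuristic, which would also require justification. Until these calculations are actually performed, the proposal has a gap.
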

\begin{proof}
We assume that $\Delta$ is trivial for simplicity, as the proof for
the case of nontrivial $\Delta$ is similar.  By twisting the action of
$\Gamma$ on $M$, we may assume that $\eta$ is trivial.

Let $x \in M^{\psi=1}$ and $y \in (M^*)^{\psi=1}$.  Their images in
$H^1_{\psi, \gamma_{\Qp}}(M_z)$ and $H^1_{\psi, \gamma_{\Qp}}(M^*_z)$
are $\log\chi(\gamma_{\Qp})\overline{(0, x)}$ and
$\log\chi(\gamma_{\Qp})\overline{(0, y)}$, respectively.  Since
$(\varphi-1)x \in M^{\psi=0}$, and $\gamma_{\Qp}-1$ acts invertibly on
$M^{\psi=0}$, the element $(\gamma_{\Qp}-1)^{-1}(\varphi-1)x$ makes
sense, and $\Psi_{M_z}$ sends
$\overline{((\gamma_{\Qp}-1)^{-1}(\varphi-1)x,x)}$ to
$\overline{(0,x)}$.  By definition,
$\Gamma_{\gamma_{\Qp}^{-1},\gamma_{\Qp},M_z^*}$ sends
$\overline{(0,\frac{\gamma_{\Qp}^{-1}-1}{\gamma_{\Qp}-1}y)}$ to
$\overline{(0,y)}$.  We now compute that the bottom-left route around
the diagram sends $(\log\chi(\gamma_{\Qp}))^{-2}(x,y)$ to
\begin{align*}
&\{
((\gamma_{\Qp}-1)^{-1}(\varphi-1)x,x),
(0,\frac{\gamma_{\Qp}^{-1}-1}{\gamma_{\Qp}-1}y)
\}_{\gamma_{\Qp},1} \\
&\qquad=
\{
(\gamma_{\Qp}-1)^{-1}(\varphi-1)x,
\frac{\gamma_{\Qp}^{-1}-1}{\gamma_{\Qp}-1}y
\}_{\gamma_{\Qp}} \\
&\qquad=
\{
(\gamma_{\Qp}^{-1}-1)^{-1}(\varphi-1)x,
y
\}_{\gamma_{\Qp}} \\
&\qquad=
\{
(\gamma_{\Qp}^{-1}-1)^{-1}(\varphi-1)x,
(1-\varphi)y
\}_{\gamma_{\Qp}} \\
&\qquad=
(\log\chi(\gamma_{\Qp}))^{-1}\{
(\gamma_{\Qp}^{-1}-1)^{-1}(\varphi-1)x,
(1-\varphi)y
\}_{\Qp} \\
&\qquad=
(\log\chi(\gamma_{\Qp}))^{-1}\Res_\Gamma(\frac{-1}{\gamma_{\Qp}^{-1}-1}\{x,y\}_{\Iw,\gamma_{\Qp}}\omega_{\log\circ\chi}),
\end{align*}
where the first equality is by definition of the residue pairing, the
second equality is because of the identity $\{x,\gamma
y\}_{\gamma_{\Qp}} = \{\gamma^{-1}x,y\}_{\gamma_{\Qp}}$, the third
equality is because $(\varphi-1)x$ belongs to the orthogonal
complement $M^{\psi=0}$ of $\varphi(M^*)$, and the final two
equalities are by the definitions of normalized trace and the Iwasawa
pairings, respectively.

The claim of the proposition reduces to showing that for any $f \in
\calR_A^\infty(\Gamma)$ the quantity
\[
(\log\chi(\gamma_{\Qp}))
\Res_\Gamma(\frac{-1}{\gamma_{\Qp}^{-1}-1}f\omega_{\log\circ\chi}) \in A
\]
is equal to the image of $f$ under the natural projection to
$\calR_A^\infty(\Gamma)/(\gamma_{\Qp}-1) \cong A$.  To see this in
general, it suffices to take $f=1$, in which case
\begin{align*}
(\log\chi(\gamma_{\Qp}))
\Res_\Gamma(\frac{-1}{\gamma_{\Qp}^{-1}-1}\omega_{\log\circ\chi})
&=
(\log\chi(\gamma_{\Qp}))
\Res_\Gamma(\frac{\gamma_{\Qp}}{\gamma_{\Qp}-1}\frac{d\gamma_{\Qp}}{(\log\chi(\gamma_{\Qp}))\gamma_{\Qp}}) \\
&=
\Res_\Gamma(\frac{d(\gamma_{\Qp}-1)}{\gamma_{\Qp}-1})
=
1,
\end{align*}
as was desired.
\end{proof}

\subsection{Iwasawa cohomology over a field}

In this subsection, we study properties of Iwasawa cohomology for $(\varphi, \Gamma)$-modules over the standard Robba rings.  Most of the results are already included in \cite{pottharst2}; we reproduce them here for the convenience of the reader.

\begin{hypothesis}
In this subsection, assume that  $A$ is a finite extension of $\Qp$ and $M$ is a $(\varphi, \Gamma_K)$-module over $\calR_A(\pi_K)$.
\end{hypothesis}

\begin{notation}
Since we will be making use of the exact sequence $0 \to M^{\varphi=1}
\to M^{\psi=1} \xrightarrow{\varphi-1} M^{\psi=0}$, we set $\scrC =
(\varphi-1)M^{\psi=1} \subseteq M^{\psi=0}$.
\end{notation}

\begin{remark}
\label{R:Bezout+torsion free=>free}
Under this hypothesis, the rings $R = \calR_A^{r_0}(\pi_K)$ and
$\calR_A^\infty(\Gamma_K)$ are products of B\'ezout domains that are
one-dimensional Fr\'echet-Stein algebras, and the rings $R =
\calR_A(\pi_K)$ and $\calR_A(\Gamma_K)$ are direct limits of such
rings.  Therefore, any finitely generated or coadmissible,
torsion-free $R$-module is automatically finite free on each connected
component of $\Spec R$.
\end{remark}

\begin{lemma}\label{L:A=field finite coad = perfect}
An $\calR_A^\infty(\Gamma_K)$-module $N$ is finitely generated and coadmissible if and only if it lies in $\bbD^\flat_\perf(\calR_A^\infty(\Gamma_K))$.
\end{lemma}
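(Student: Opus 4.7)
The plan is to prove both implications using the structural fact in Remark~\ref{R:Bezout+torsion free=>free}, which is the decisive input since $A$ is a finite extension of $\Qp$ and hence $R = \calR_A^\infty(\Gamma_K)$ is a product of one-dimensional Fr\'echet--Stein B\'ezout domains.

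For the easy direction ($\Leftarrow$): suppose $N$, viewed as a complex concentrated in degree $0$, lies in $\bbD^\flat_\perf(R)$. Then $N$ is quasi-isomorphic to a bounded complex $P^\bullet$ of finite projective $R$-modules. In particular $P^0 \twoheadrightarrow N$ is surjective with kernel equal to the image of the map $P^{-1} \to P^0$, which is a quotient of the finite projective module $P^{-1}$; hence $N$ is finitely presented. Finite presentation implies finite generation, and implies coadmissibility by Lemma~\ref{L:schneider-teitelbaum}(7).

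For the main direction ($\Rightarrow$): suppose $N$ is finitely generated and coadmissible, and choose generators yielding a surjection $f\colon R^n \twoheadrightarrow N$. The kernel $K = \ker f$ is coadmissible as the kernel of an $R$-linear map between coadmissible modules (Lemma~\ref{L:schneider-teitelbaum}(5)) and is torsion-free as a submodule of the torsion-free module $R^n$. Remark~\ref{R:Bezout+torsion free=>free} then provides that $K$ is finite free on each connected component of $\Spec R$, and in particular is a finite projective $R$-module. Thus $0 \to K \to R^n \to N \to 0$ is a two-term resolution of $N$ by finite projective $R$-modules, witnessing $N \in \bbD_\perf^{[-1,0]}(R) \subseteq \bbD^\flat_\perf(R)$.

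The entire substance of the argument reduces to Remark~\ref{R:Bezout+torsion free=>free}: the only nontrivial step is passing from ``coadmissible and torsion-free'' to ``finite free''. This is where the hypothesis that $A$ is a field (so that $R$ has the desired B\'ezout/Fr\'echet--Stein structure) is essential; for a general $\Qp$-affinoid coefficient ring $A$ one cannot expect such a simple resolution, which is why this lemma is restricted to the $A = \text{field}$ setting. Consequently I do not foresee any serious obstacle beyond invoking the correct structural result; the rest is formal manipulation of coadmissible sheaves.
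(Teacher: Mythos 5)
Your proof takes essentially the same approach as the paper's. The forward direction is identical: a surjection $\calR_A^\infty(\Gamma_K)^{\oplus n} \twoheadrightarrow N$ whose kernel is coadmissible (Lemma~\ref{L:schneider-teitelbaum}(5)) and torsion-free, hence finite projective by Remark~\ref{R:Bezout+torsion free=>free}, giving the two-term resolution. For the backward direction, the only thing you elided is that a generic bounded complex $P^\bullet$ of finite projectives representing $N$ need not satisfy $P^0 \twoheadrightarrow N$; one must first truncate to ensure $P^i = 0$ for $i > 0$, and this is precisely the content of Lemma~\ref{L:complex with cohomology capped above}, which the paper cites for this step. Once that lemma is invoked, your derivation of finite presentation, and hence finite generation and coadmissibility (via Lemma~\ref{L:schneider-teitelbaum}(7)), is exactly the intended argument.
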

\begin{proof}
The backward implication follows from Lemma~\ref{L:complex with cohomology capped above}.  We now prove the forward implication.  Assume that $N$ is generated by $n$ elements, giving rise to an exact sequence $0 \to \Ker \to \calR_A^\infty(\Gamma_K)^{\oplus n} \to N \to 0$.
By Lemma~\ref{L:schneider-teitelbaum}(5),  $\Ker$ is coadmissible. By Remark~\ref{R:Bezout+torsion free=>free}, the torsion-free coadmissible module $\Ker$ is finite projective over $\calR_A^\infty(\Gamma_K)$.  This gives a finite resolution of $N$.
\end{proof}

\begin{lemma}\label{L:A=field structure of M^phi=1}
The $A$-module $M^{\varphi=1}$ is finite.
\end{lemma}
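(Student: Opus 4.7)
The plan is to apply the slope filtration theorem \cite{kedlaya-relative}, valid since $A$ is a field, to reduce to pure $\varphi$-modules. This theorem gives $M$ a canonical filtration $0 = M_0 \subset M_1 \subset \cdots \subset M_k = M$ by $\varphi$-submodules (automatically $\Gamma_K$-stable by uniqueness) whose subquotients are pure of strictly increasing slopes. The snake lemma applied to $\varphi - 1$ on the short exact sequence $0 \to M_1 \to M \to M/M_1 \to 0$ yields
\[
0 \to M_1^{\varphi=1} \to M^{\varphi=1} \to (M/M_1)^{\varphi=1},
\]
so by induction on $k$ it suffices to treat the case where $M$ is pure of some slope $s$.

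For pure $M$ of slope $s \neq 0$, I would show $M^{\varphi=1} = 0$ by a direct norm analysis.  After a finite extension of $A$ (which does not affect finiteness over $A$ by faithfully flat descent), $M$ admits a Dieudonn\'e--Manin standard form, reducing the equation $\varphi(x) = x$ to finding $f \in \calR_A$ with $\varphi(f) = c^{-1}f$ for some $c \in A^\times$ of $p$-adic valuation $s$.  Using the isometry $|\varphi(f)|_{r/p} = |f|_r$ of Gauss norms, iteration yields $|f|_{r/p^n} = p^{-ns}|f|_r$ for all $n$.  For $s>0$ this forces $|f|_{r/p^n} \to 0$, which contradicts the positive lower bound $\sup_k |a_k|$ that any nonzero Laurent series $f = \sum_k a_k \pi_K^k \in \calR_A$ satisfies as $r' \to 0^+$.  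The case $s<0$ is handled analogously, applying the same type of argument to the inverse of the structure isomorphism $\varphi^*M \xrightarrow{\sim} M$ or equivalently to the dual $\varphi$-module $M^\vee$ of slope $-s > 0$.

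For pure $M$ of slope $0$ (the étale case), the Cherbonnier--Colmez / Berger--Colmez equivalence (a consequence of slope filtration theory applied to étale objects) realizes $M \cong \bbD_\rig(V)$ for some continuous $A$-linear representation $V$ of $G_K$ on a finite-dimensional $A$-vector space.  A direct computation using Fontaine's period rings, based on the fact that only the scalars from $A$ are $\varphi$-fixed in the ambient ring, then yields $M^{\varphi=1} \cong V^{H_K}$, which is finite-dimensional over $A$.  The main obstacle is this étale case, since it requires invoking the full Berger--Colmez equivalence (and the descent of étale objects to a finer subring) to identify $M$ with a Galois representation; by contrast, the non-zero slope case reduces to essentially elementary norm estimates in the Robba ring.
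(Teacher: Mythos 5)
Your argument is essentially correct, but it takes a genuinely different route from the paper's. The paper's proof is a short duality argument: the residue pairing $\{-,-\}$ gives an injection $M^{\varphi=1} \hookrightarrow \Hom_A(M^*,A)$, and the adjunction $\{\varphi(x),y\} = \{x,\psi(y)\}$ shows it factors through $\Hom_A\bigl(M^*/(\psi-1),A\bigr)$, which is $A$-finite by Proposition~\ref{P:psi cokernel}(1). Your route instead reduces to pure slopes via the slope filtration, then handles nonzero slope by a Gauss norm estimate and slope zero via the equivalence with Galois representations. Each buys something: your proof makes the geometric content visible and the nonzero-slope case is elementary, while the paper's proof is shorter, avoids invoking Dieudonn\'e--Manin or the Berger--Colmez equivalence, and (as the paper remarks) applies verbatim over any strongly noetherian Banach algebra $A$, whereas your reduction to pure slopes and the passage $M \cong \bbD_\rig(V)$ genuinely use that $A$ is a field. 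A few details in your sketch should be tightened. For non-integral slope $c/d$ the Dieudonn\'e--Manin standard form yields $\varphi^d(f) = p^{-c}f$ rather than $\varphi(f)=c^{-1}f$; the norm argument adapts, but you should say so. In the $s<0$ case, the direct norm argument works (one shows $|f|_{r/p^n}\to\infty$ contradicts the bound $\sup_{0<r'\leq r}|f|_{r'} \leq \max\bigl(\sup_k|a_k|,\,|f^-|_r\bigr) < \infty$ for nonzero $f\in\calR_A^r$), whereas passing ``to the inverse of the structure isomorphism'' is not well-posed, and $(M^\vee)^{\varphi=1}$ is not literally $M^{\varphi=1}$ (though $\Hom_\varphi(\calR_A,M) \cong \Hom_\varphi(M^\vee,\calR_A)$ would rescue that variant). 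Finally, the identity $M^{\varphi=1} \cong V^{H_K}$ for $M=\bbD_\rig(V)$ requires knowing that the passage from $\calE^\dagger$-modules to $\calR$-modules does not create new $\varphi$-fixed vectors; this is standard but is exactly where the slope filtration enters again, so it deserves a sentence.
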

Note that although we assumed $A$ is a field, the proof applies whenever $A$ is a strongly noetherian Banach algebra over $\Qp$.
\begin{proof}
Recall that the residue pairing $\{-,-\}_{\Qp}: M \times M^* \to A$
given in Notation~\ref{N:residue pairing} is nondegenerate
and thus induces an injective map $M^{\varphi=1} \to \Hom_A(M^*, A)$.
However, this map factors through $\Hom_A(M^*/(\psi-1),A)$ due to the
identity $\{\varphi(x), y\} = \{x, \psi(y)\}$.
Since $M^*/(\psi-1)$ is finite over $A$ by Proposition~\ref{P:psi cokernel}(1), so then is $M^{\varphi=1}$.
\end{proof}

\begin{prop}
\label{P:Iwasawa cohomology A=field}
The complex $\rmC^\bullet_\psi(M)$ lies in $\bbD_\perf^\flat(\calR_A^\infty(\Gamma_K))$.
\end{prop}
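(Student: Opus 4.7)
The plan is to show that each cohomology group $H^i_\psi(M)$ for $i=1,2$ lies in $\bbD^\flat_\perf(\calR_A^\infty(\Gamma_K))$; equivalently, by Lemma~\ref{L:A=field finite coad = perfect}, that each is finitely generated and coadmissible as an $\calR_A^\infty(\Gamma_K)$-module. Granting this, perfectness of $\rmC^\bullet_\psi(M)$ follows from the canonical distinguished triangle $H^1_\psi(M)[-1] \to \rmC^\bullet_\psi(M) \to H^2_\psi(M)[-2]$ together with the fact that $\bbD^\flat_\perf(\calR_A^\infty(\Gamma_K))$ is a triangulated subcategory of the derived category (and hence stable under extensions).

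The degree-$2$ case is immediate: $H^2_\psi(M) = M/(\psi-1)M$ is a finite $A$-module by Proposition~\ref{P:psi cokernel}(1). Any such module is finitely presented over the $A$-algebra $\calR_A^\infty(\Gamma_K)$, and finitely presented modules over a Fr\'echet--Stein algebra are coadmissible by the Fr\'echet--Stein analogue of Lemma~\ref{L:schneider-teitelbaum}(7).

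For $H^1_\psi(M) = M^{\psi=1}$, the plan is to exploit the short exact sequence
\[
0 \to M^{\varphi=1} \to M^{\psi=1} \xrightarrow{\varphi-1} \scrC \to 0.
\]
By Lemma~\ref{L:A=field structure of M^phi=1}, $M^{\varphi=1}$ is finite over $A$, hence perfect over $\calR_A^\infty(\Gamma_K)$ by the previous paragraph. Since coadmissibility is closed under kernels, cokernels, and extensions (the Fr\'echet--Stein variant of Lemma~\ref{L:schneider-teitelbaum}(5)), the task reduces to showing that $\scrC \subseteq M^{\psi=0}$ is finitely generated and coadmissible over $\calR_A^\infty(\Gamma_K)$. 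This is the principal obstacle. The approach is to combine Theorem~\ref{T:structure of D^psi=0} (which gives $M^{\psi=0}$ finite projective of rank $[K{:}\Qp]\rank M$ over $\calR_A(\Gamma_K)$) with the Iwasawa duality pairing of Definition~\ref{D:Iwasawa pairing}: by Lemma~\ref{L:psi=1 pair to infty}, its restriction to $\scrC \times \scrC^*$ takes values in $\calR_A^\infty(\Gamma_K) \subseteq \calR_A(\Gamma_K)$, and by Proposition~\ref{P:Iwasawa pairing = Tate pairing} it is pointwise perfect at every character $\eta \in \Max(\calR_A^\infty(\Gamma_K))$, where it reduces to the finite-dimensional Tate local duality of Theorem~\ref{T:Liu}. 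A Nakayama-type argument, exploiting the B\'ezout-domain structure of $\calR_A^\infty(\Gamma_K)$ (Remark~\ref{R:Bezout+torsion free=>free}) to pass from pointwise finiteness to global finite generation, then yields the finite generation and coadmissibility of $\scrC$.
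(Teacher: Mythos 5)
Your overall framing (show each $H^i_\psi(M)$ separately is finitely generated and coadmissible over $\calR_A^\infty(\Gamma_K)$, then appeal to Lemma~\ref{L:A=field finite coad = perfect}) is reasonable, and the degree-$2$ part is correct and matches the paper's treatment via Lemma~\ref{L:finite A-mod is afp A=field}. A minor point: the Iwasawa pairing of Definition~\ref{D:Iwasawa pairing} and its pointwise compatibility in Proposition~\ref{P:Iwasawa pairing = Tate pairing} are set up only for $K=\Qp$, so you would first need the Shapiro-type reduction $\rmC_\psi^\bullet(\Ind_K^{\Qp}M) \cong \Hom_{\ZZ[\Gamma_K]}(\ZZ[\Gamma],\rmC_\psi^\bullet(M))$ that the paper's proof begins with; you do not mention this.

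The real gap, however, is in the step controlling $\scrC = (\varphi-1)M^{\psi=1}$. The Nakayama-type argument you are alluding to (it is essentially the mechanism of Proposition~\ref{P:generators nearby} and Corollary~\ref{C:N_s finite flat}) works by picking a point $(z,\eta)$, lifting a basis of $M^{\psi=1}/\gothm_{(z,\eta)}$, forming the Gram matrix under $\{-,-\}_\Iw$ against a dual basis lifted from $(M^*)^{\psi=1}/\gothm_{(z,\eta^{-1})}$, and observing the determinant is a unit near $(z,\eta)$. For this to produce generators of $M^{\psi=1}$ (not just of its image in each fiber), one needs the specialization maps $M^{\psi=1}/\gothm_{(z,\eta)} \to H^1_{\psi,\gamma_K}(M\otimes\eta^{-1})$ to be isomorphisms and the pairing on $H^1 \times H^1$ to be \emph{perfect} at every $(z,\eta)$; both require $H^0 = H^2 = 0$ after every twist, i.e.\ $M/(\psi-1) = 0$ and $M^*/(\psi-1) = 0$. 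This is exactly Hypothesis~\ref{H:reduced vanishing H2Iw}, which is not available here: over a field $A$ these cokernels are merely finite, not zero. Moreover you cannot arrange both to vanish by twisting, since killing $M/(\psi-1)$ via Proposition~\ref{P:psi cokernel}(2) means replacing $M$ by $t^{-n}M$ for $n \gg 0$, which worsens $M^*/(\psi-1)$. Arranging simultaneous vanishing is precisely what Lemma~\ref{L:devissage} (extending $M$ to a larger $N$ by a carefully chosen rank-one module) is for, and one cannot get away without some such extension.

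The paper's own proof avoids the duality argument entirely at this stage. After reducing to $K=\Qp$, it proceeds by slope filtrations: d\'evissage to pure slope; the \'etale case is classical (Fontaine/Colmez); integer slopes are handled by comparing $\rmC_\psi^\bullet(M)$ with $\rmC_\psi^\bullet(t^nM)$ via Proposition~\ref{P:finite-torsion}; and general slope $c/d$ is treated by induction on $c \bmod d$, tensoring $M$ with the auxiliary $(\varphi,\Gamma)$-module $E$ of slope $-1/d$ from \cite[Lemma~5.2]{liu} to lower the slope, and using Proposition~\ref{P:psi cokernel}(2) (after replacing $M$ by $t^{-n}M$ for $n \gg 0$) so that the cokernel vanishing needed for the d\'evissage of the $\psi=1$ spaces is available; finite generation then comes from finite generation of $(M\otimes_\calR E)^{\psi=1}$, and coadmissibility from $M^{\psi=1}$ being a submodule of the coadmissible $(t^{-1}M\otimes_\calR E)^{\psi=1}$ together with Lemma~\ref{L:schneider-teitelbaum}(6). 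This is a genuinely different route; your duality-based plan is essentially what the paper runs later in Section~\ref{S:proof} for general affinoid $A$, \emph{after} the d\'evissage of Lemma~\ref{L:devissage} has put it in a position where the hypothesis $M/(\psi-1) = M^*/(\psi-1) = 0$ actually holds.
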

This argument uses some basic properties of slope filtrations for $\varphi$-modules,
for which see  \cite[Theorem~1.7.1]{kedlaya-relative}.
\begin{proof}
First, we have
$\rmC^\bullet_\psi(\Ind_K^{\Qp} M)
= \Hom_{\ZZ[\Gamma_K]}(\ZZ[\Gamma], \rmC^\bullet_\psi(M))$ and hence it suffices to prove the proposition for $K=\Qp$. Since
$M/(\psi-1)$ is a finite $A$-module by Proposition~\ref{P:psi cokernel}(1), it lies in $\bbD_\perf^\flat(\calR_A^\infty(\Gamma))$ by Lemma~\ref{L:finite A-mod is afp A=field}.  Now, we prove $M^{\psi=1} \in \bbD_\perf^\flat(\calR_A^\infty(\Gamma))$.  By d\'evissage and slope filtrations, one may assume that $M$ is of pure slope.  When $M$ is \'etale, this is well-known (e.g. \cite[Th\'eor\`eme I.5.2 and Proposition V.1.18]{colmez-kirillov}, plus the well-known identity $\bbD(\mathbf{V}(M))^{\varphi=1} = \mathbf{V}(M)^{H_K}$).
For general integer slopes, it follows from comparing $\rmC^\bullet_\psi(M)$ with $\rmC^\bullet_\psi(t^nM)$ for $n \in \ZZ$ and applying Proposition~\ref{P:finite-torsion}.

When the slope of $M$ is $c/d$ with $c,d$ integers, we fix $d \in \NN$ and do induction on the residue $c \pmod d$ with known case $c=0$.  Assume the statement is known for some $c-1$ and we check it for $c$.  By Propositions~\ref{P:finite-torsion} and \ref{P:psi cokernel}(2), we may replace $M$ by $t^{-n}M$ for $n\gg0$, so that $(t^iM)/(\psi-1)=0$ for $i=0,\pm1$.

Recall that \cite[Lemma~5.2]{liu} gives a $(\varphi, \Gamma)$-module
$E$ over $\calR$ of pure slope $-1/d$ which is a successive extension
of $d-1$ copies of $\calR$ with a copy of $t\calR$ as the subobject.
Since $M \otimes_\calR E$ is pure of slope equal to that of $M$ minus
$1/d$, the inductive hypothesis gives $(M \otimes_\calR E)^{\psi=1}
\in \bbD_\perf^\flat (\calR_A^\infty(\Gamma))$, and hence $(M
\otimes_\calR E)^{\psi=1}$ is a finitely generated
$\calR_A^\infty(\Gamma)$-module.  This module, by the vanishing of
cokernels of $\psi-1$, is a successive extension of $M^{\psi=1}$ and
$(tM)^{\psi=1}$ with $d-1$ copies of $M^{\psi=1}$ as the quotient
object.  In particular, this implies that $M^{\psi=1}$ is a finitely
generated $\calR_A^\infty(\Gamma)$-module.  Running the same argument
with $t^{-1}M$ in place of $M$ shows that $M^{\psi=1}$, which is
already known to be finitely generated, is an
$\calR_A^\infty(\Gamma)$-submodule of the coadmissible module
$(t^{-1}M \otimes_\calR E)^{\psi=1}$.  By
Lemma~\ref{L:schneider-teitelbaum}(6), $M^{\psi=1}$ is also
coadmissible, and hence lies in
$\bbD_\perf^\flat(\calR_A^\infty(\Gamma))$ by Lemma~\ref{L:A=field
  finite coad = perfect}.  This finishes the induction and the proof.
\end{proof}

\begin{cor}
\label{C:scrC is free}
For $M$ a $(\varphi, \Gamma_K)$-module over $\calR_A(\pi_K)$, the subspace $\scrC \subseteq M^{\psi=0}$ is a finite free $\calR_A^\infty(\Gamma_K)$-module of rank equal to $[K:\Qp]\rank M$.  Moreover, the cohomology groups $H_\psi^i(M)$ for $i=0,1,2$ are of respective generic ranks $0, [K:\Qp]\rank M, 0$ as $\calR_A^\infty(\Gamma_K)$-modules.
\end{cor}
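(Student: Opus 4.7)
The plan is to exploit the short exact sequence
\[
0 \to M^{\varphi=1} \to M^{\psi=1} \xrightarrow{\varphi-1} \scrC \to 0,
\]
where exactness in the middle follows from $\psi\circ\varphi=\id$, which forces $\ker(\varphi-1|M^{\psi=1})=M^{\varphi=1}$. I would first verify that $\scrC$ satisfies the hypotheses of Remark~\ref{R:Bezout+torsion free=>free}, and then extract its rank from an Euler characteristic calculation for $\rmC^\bullet_\psi(M)$.

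For the structural properties of $\scrC$, I would argue as follows. Proposition~\ref{P:Iwasawa cohomology A=field} combined with Lemma~\ref{L:A=field finite coad = perfect} shows that $M^{\psi=1}=H^1_\psi(M)$ is a finitely generated coadmissible $\calR_A^\infty(\Gamma_K)$-module; its quotient $\scrC$ inherits finite generation directly and coadmissibility via Lemma~\ref{L:schneider-teitelbaum}(5). Torsion-freeness comes from the embedding $\scrC\hookrightarrow M^{\psi=0}$ together with Theorem~\ref{T:structure of D^psi=0}, which for $r\in(0,C(\pi_K))$ sufficiently small exhibits $(M^r)^{\psi=0}$ as a finite projective, hence torsion-free, module over $\calR_A^{\tilde e_K r}(\Gamma_K)$, a property that transports along the inclusion $\calR_A^\infty(\Gamma_K)\hookrightarrow \calR_A^{\tilde e_K r}(\Gamma_K)$. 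Remark~\ref{R:Bezout+torsion free=>free} then yields that $\scrC$ is finite free on each connected component of $\Spec\calR_A^\infty(\Gamma_K)$.

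To determine the rank, I would compute the Euler characteristic of $\rmC^\bullet_\psi(M)$, which is well-defined and constant on components by Proposition~\ref{P:Iwasawa cohomology A=field}. Specializing at any closed point via Proposition~\ref{P:psi-gamma complex v.s. psi-complex}, together with Theorem~\ref{T:Liu}(2) and Proposition~\ref{P:phi cohomology = psi cohomology}, delivers $\chi(\rmC^\bullet_\psi(M))=-[K:\Qp]\rank M$. Next, $H^2_\psi(M)=M/(\psi-1)$ is finite over $A$ by Proposition~\ref{P:psi cokernel}(1), hence finite-dimensional over $\Qp$, so it can contain no nonzero free $\calR_A^\infty(\Gamma_K)$-summand and has generic rank $0$; likewise $M^{\varphi=1}$ is finite over $A$ by Lemma~\ref{L:A=field structure of M^phi=1} and has generic rank $0$. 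The Euler characteristic identity $-\rank H^1_\psi(M)+\rank H^2_\psi(M)=-[K:\Qp]\rank M$ then forces $\rank M^{\psi=1}=[K:\Qp]\rank M$, and the short exact sequence above gives $\rank\scrC=\rank M^{\psi=1}=[K:\Qp]\rank M$. The triviality $H^0_\psi(M)=0$ is immediate from the placement of $\rmC^\bullet_\psi(M)$ in degrees $1,2$.

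The main obstacle I expect is the bookkeeping of compatibilities between the rings $\calR_A^\infty(\Gamma_K)$ and $\calR_A^{\tilde e_K r}(\Gamma_K)$ that appear in Theorem~\ref{T:structure of D^psi=0}, so that the coadmissibility, torsion-freeness, and finite generation properties of $\scrC$ all transport correctly. The Euler characteristic calculation itself should be a routine application of the invariance of Euler characteristic for perfect complexes under base change to a residue field.
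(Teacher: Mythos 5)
Your proposal is essentially correct and runs along the same lines as the paper's proof: both establish that $\scrC$ is finitely generated, coadmissible, and torsion-free over $\calR_A^\infty(\Gamma_K)$ (hence finite free on each component by Remark~\ref{R:Bezout+torsion free=>free}) using Proposition~\ref{P:Iwasawa cohomology A=field} for finite generation and Theorem~\ref{T:structure of D^psi=0} for torsion-freeness via the embedding $\scrC \hookrightarrow M^{\psi=0}$, and both exploit the short exact sequence $0 \to M^{\varphi=1} \to M^{\psi=1} \to \scrC \to 0$ together with the $A$-finiteness of $M^{\varphi=1}$ and $M/(\psi-1)$. The one genuine difference is in the rank computation: the paper selects a specific $\eta$ in each component where the outer $(\varphi,\gamma_{\Qp})$-cohomology vanishes (possible because $M^{\varphi=1}$ and $M/(\psi-1)$ have isolated support), evaluates $\scrC/\gothm_\eta$ directly via Theorem~\ref{T:Liu}(2), and then reads off the rank of $H^1_\psi$ from the exact sequence; you instead compute the Euler characteristic of the perfect complex $\rmC_\psi^\bullet(M)$ via an arbitrary specialization, determine the generic ranks in degrees $0$ and $2$, and solve for the rank in degree $1$. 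Both are valid; the Euler characteristic route is more systematic, while the paper's route avoids one layer of bookkeeping.

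One small omission worth flagging: Proposition~\ref{P:psi cokernel}(1), which you invoke to conclude that $M/(\psi-1)$ is $A$-finite, is stated only for $K = \Qp$. The paper handles this by beginning with ``Again by induction it suffices to prove the proposition for $K = \Qp$'' (i.e.\ the $\Ind_K^{\Qp}$ trick), and you should do likewise; alternatively, apply Proposition~\ref{P:psi cokernel} to $\Ind_K^{\Qp}M$ and observe $(\Ind_K^{\Qp}M)/(\psi-1) = \Ind_K^{\Qp}(M/(\psi-1))$ contains $M/(\psi-1)$ as a direct summand. With this noted, your proof is sound.
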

\begin{proof}
Again by induction it suffices to prove the proposition for $K=\Qp$.
By Proposition~\ref{P:Iwasawa cohomology A=field}, $\scrC$ is finitely
generated over $\calR_A^\infty(\Gamma)$.  Sitting inside $M^{\psi=0}$,
a finite projective $\calR_A(\Gamma)$-module, $\scrC$ is torsion-free
and hence finite free over each connected component of $\Spec
\calR_A^\infty(\Gamma)$.  We compute its rank as follows.  By
Proposition~\ref{P:psi cokernel}(1) and Lemma~\ref{L:A=field structure
  of M^phi=1}, the two modules $M/(\psi-1)$ and $M^{\varphi=1}$ have
isolated support.  It follows that there exists $\eta \in
\Max(\calR_A^\infty(\Gamma))$ in each connected component of $\Spec
\calR_A^\infty(\Gamma)$ such that $H^0_{\varphi, \gamma_{\Qp}}(M
\otimes \eta^{-1}) = H^2_{\varphi, \gamma_{\Qp}}(M \otimes \eta^{-1})
= 0$ and $H^1_{\varphi, \gamma_{\Qp}}(M \otimes \eta^{-1}) \cong
M^{\psi=1}/\gothm_\eta \simeq \scrC / \gothm_\eta$.  By
Theorem~\ref{T:Liu}(2), $\scrC$ has rank equal to $[K:\Qp]\rank M$.
The generic rank computation in degree $0$ is obvious, and in degree
$2$ follows from the fact that $M/(\psi-1)$ is a finite $A$-module as
noted above.  To see the correct rank in degree $1$, it suffices to
note that, in the exact sequence
\[
0 \to M^{\varphi=1} \to M^{\psi=1} \to \scrC \to 0,
\]
we have already shown $M^{\varphi=1}$ to be $A$-finite and $\scrC$ to
have generic rank $[K:\Qp]\rank M$.
\end{proof}

\begin{prop}
\label{P:Iwasawa pairing over fields}
Assume $K=\Qp$, and let $M$ be a $(\varphi, \Gamma)$-module over
$\calR_A$ such that $M / (\psi -1) = M^*/(\psi-1) = 0$.
\begin{itemize}
\item[(1)] The natural maps $\varphi-1: M^{\psi=1} \to M^{\psi=0}$ and $\varphi-1: (M^*)^{\psi=1} \to (M^*)^{\psi=0}$ are injective.
\item[(2)] The Iwasawa pairing $\{-,-\}_\Iw$ is perfect, inducing an isomorphism between two finite free $\calR_A^\infty(\Gamma)$-modules:
\[
(M^*)^{\psi =1} \cong \Hom_{\calR_A^\infty(\Gamma)}
\big(M^{\psi=1}, \calR_A^\infty(\Gamma) \big)^\iota.
\]
\item[(3)] We have an isomorphism of $\calR_A^{r_0}(\Gamma)$-modules $\scrC \otimes_{\calR_A^\infty(\Gamma)} \calR_A^{r_0}(\Gamma) \cong (M^{r_0})^{\psi=0}$ with the precise $r_0>0$ for which Corollary~\ref{C:D^psi=0 duality} holds.
\end{itemize}
\end{prop}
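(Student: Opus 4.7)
The plan is to establish (1), (2), (3) in sequence, each building on the previous.

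For (1), since $M^{\varphi=1}$ is the kernel of $\varphi-1: M^{\psi=1}\to M^{\psi=0}$, I need to show $M^{\varphi=1}=0$ and symmetrically $(M^*)^{\varphi=1}=0$. Following the argument of Lemma~\ref{L:A=field structure of M^phi=1}, the identity $\{\varphi(x),y\}_{\Qp} = \{x,\psi(y)\}_{\Qp}$ together with nondegeneracy of the residue pairing embeds $M^{\varphi=1}$ into $\Hom_A(M^*/(\psi-1), A)$, which vanishes by hypothesis.

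For (2), by (1) the map $\varphi-1$ identifies $M^{\psi=1}$ with $\scrC$ and $(M^*)^{\psi=1}$ with $\scrC^*$, both finite free over $\calR_A^\infty(\Gamma)$ of rank $\rank M$ by Corollary~\ref{C:scrC is free}. The Iwasawa pairing induces a natural morphism
\[
\Phi: (M^*)^{\psi=1,\iota} \to \Hom_{\calR_A^\infty(\Gamma)}(M^{\psi=1}, \calR_A^\infty(\Gamma))
\]
between finite free modules of equal rank, and by Lemma~\ref{L:acyclic pointwise} it suffices to check that $\Phi_\eta$ is an isomorphism at each $\eta \in \Max(\calR_A^\infty(\Gamma))$. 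Under the vanishing hypotheses together with (1), the complex $\rmC^\bullet_\psi(M)$ is quasi-isomorphic to $M^{\psi=1}[-1]$, so by flatness of $M^{\psi=1}$, Proposition~\ref{P:psi-gamma complex v.s. psi-complex} identifies $H^1_{\psi,\gamma_{\Qp}}(M \otimes L(\eta^{-1}))$ with $M^{\psi=1}/\gothm_\eta$ and shows the other $\psi$-cohomology groups of the twist vanish; analogously for $M^*$ and $\eta^{-1}$. Proposition~\ref{P:Iwasawa pairing = Tate pairing} then identifies $\Phi_\eta$ with a nonzero scalar multiple of the Tate duality pairing, which is perfect by Theorem~\ref{T:Liu}(3).

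For (3), after possibly shrinking $r_0$ so that a finite generating set of $M^{\psi=1}$ over $\calR_A^\infty(\Gamma)$ lies in $M^{r_0}$, Corollary~\ref{C:structure of 1+pi varphi^n} arranges the natural inclusion $\scrC \subseteq M^{\psi=0}$ to factor through $(M^{r_0})^{\psi=0}$, giving the natural map
\[
\beta: \scrC \otimes_{\calR_A^\infty(\Gamma)} \calR_A^{r_0}(\Gamma) \to (M^{r_0})^{\psi=0}.
\]
Source and target are finite projective of rank $\rank M$ over $\calR_A^{r_0}(\Gamma)$ (the source by base change from (2) and Corollary~\ref{C:scrC is free}, the target by Theorem~\ref{T:structure of D^psi=0}), so by Lemma~\ref{L:acyclic pointwise} it suffices to show that $\beta_\eta$ is an isomorphism for each $\eta \in \Max(\calR_A^{r_0}(\Gamma))$; since the fibers have the same $\kappa_\eta$-dimension, only injectivity is needed. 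The pairing $\{-,-\}_\Iw^0$ restricts to a pairing on $\scrC \times \scrC^{*,\iota}$ landing in $\calR_A^\infty(\Gamma)$ by Lemma~\ref{L:psi=1 pair to infty}, and is compatible via $\beta$ (and the symmetric map for $M^*$) with the pairing on $(M^{r_0})^{\psi=0} \times (M^{*r_0})^{\psi=0,\iota}$ coming from Corollary~\ref{C:D^psi=0 duality}. Thus $\beta_\eta(v)=0$ forces $v$ to pair trivially with $\scrC^{*,\iota}/\gothm_\eta$; perfectness of the pairing on $\scrC$ at $\eta$ (a consequence of (2) via $M^{\psi=1}\cong\scrC$) then forces $v = 0$.

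The main obstacle is the pointwise analysis in (2): one must identify the fiber $M^{\psi=1}/\gothm_\eta$ with the $\psi$-cohomology $H^1_{\psi,\gamma_{\Qp}}(M \otimes L(\eta^{-1}))$ and invoke Proposition~\ref{P:Iwasawa pairing = Tate pairing} to match the Iwasawa pairing at $\eta$ with the Tate pairing, so that Theorem~\ref{T:Liu}(3) applies. Part (3) is then a relatively formal consequence of (2) and Corollary~\ref{C:D^psi=0 duality}, modulo verifying that one can arrange $\scrC$ to sit inside $(M^{r_0})^{\psi=0}$ for the prescribed $r_0$.
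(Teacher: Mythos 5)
Your proof is correct, and part (2) follows essentially the same route as the paper (reduce to the $\gothm_\eta$-fibers, use Proposition~\ref{P:Iwasawa pairing = Tate pairing} together with Theorem~\ref{T:Liu}(3) to get perfectness there, then globalize using finite freeness). Parts (1) and (3), however, take genuinely different routes. For (1), the paper argues by first showing $H^2_{\psi,\gamma_{\Qp}}$ vanishes for all twists, invoking Tate duality to deduce that $H^0_{\varphi,\gamma_{\Qp}}$ vanishes for all twists, and then using that $M^{\varphi=1}$ is a finite $A$-module to conclude it is zero; your argument is more direct and elementary, simply noting that the injection $M^{\varphi=1} \hookrightarrow \Hom_A(M^*,A)$ from the proof of Lemma~\ref{L:A=field structure of M^phi=1} factors through $\Hom_A(M^*/(\psi-1),A)$, which is zero by hypothesis. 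For (3), the paper instead sets up the commutative square with the natural (split injective) map $\scrC^* \otimes_{\calR_A^\infty(\Gamma)} \calR_A^{r_0}(\Gamma) \to (M^{*r_0})^{\psi=0}$ on the left, the duality isomorphism from (2) on top, and Corollary~\ref{C:D^psi=0 duality} on the bottom, and concludes via the slick observation that a split monomorphism between finite free modules of the same rank must be an isomorphism; your argument instead checks injectivity of $\beta_\eta$ at each $\eta$ using compatibility of the pairings and perfectness at the fiber. Both work; the paper's is a bit leaner, while yours makes the pointwise mechanism explicit. One small caveat: the citation to Corollary~\ref{C:structure of 1+pi varphi^n} for the factorization $\scrC \subseteq (M^{r_0})^{\psi=0}$ is not quite right---that corollary is an intermediate step in proving Theorem~\ref{T:structure of D^psi=0}; you really need the latter (giving the $\calR_A^{r}(\Gamma)$-structure on $(M^r)^{\psi=0}$) together with the fact that $\varphi-1$ maps $M^{r}$ into $M^{r/p}$, so that generators of $M^{\psi=1}$ lying in some $M^{r_0'}$ yield $\scrC \subseteq (M^{r_0'/p})^{\psi=0}$.
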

\begin{proof}
(1) Vanishing of the cokernels of $\psi-1$ implies that for any $\eta
  \in \Max(\calR_A^\infty(\Gamma))$, $H^2_{\psi,
    \gamma_{\Qp}}(M\otimes \eta^{-1}) = H^2_{\psi,
    \gamma_{\Qp}}(M^*\otimes \eta^{-1}) =0$.  By Tate duality
  (Theorem~\ref{T:Liu}(3)), $H^0_{\psi, \gamma_{\Qp}}(M\otimes
  \eta^{-1}) = H^0_{\psi, \gamma_{\Qp}}(M^*\otimes \eta^{-1}) =0$ for
  any $\eta$.  Note that, by Lemma~\ref{L:A=field structure of
    M^phi=1}, $M^{\varphi=1}$ and $(M^{*})^{\varphi=1}$ are unions of
  finite $A$-modules with a $\Gamma$-action.  So the vanishing of
  $H^0$ for all twists of $M$ and $M^*$ implies that $M^{\varphi=1}=0$
  and $(M^*)^{\varphi=1}=0$, forcing $\varphi-1$ to be injective.

(2) Vanishing of the cokernels of $\psi-1$ implies that the two
vertical injections in Proposition~\ref{P:Iwasawa pairing = Tate
  pairing} are actually isomorphisms.  This means that $M^{\psi=1}$
and $(M^*)^{\psi=1}$ are dual to each other when reduced modulo
$\gothm_\eta$ for each $\eta \in \Max(\calR_A^\infty(\Gamma))$.  Since
both $M^{\psi=1}$ and $(M^*)^{\psi=1}$ are finite free over
$\calR_A^\infty(\Gamma)$ (Corollary~\ref{C:scrC is free}), they are in
perfect duality with one another.

(3) By (2) and Corollary~\ref{C:D^psi=0 duality}, we have the following commutative diagram.
\[
\xymatrix{
\scrC^*\otimes_{\calR_A^\infty(\Gamma)} \calR_A^{r_0}(\Gamma) \ar[r]^-{\cong} \ar[d]& \Hom_{\calR_A^{r_0}(\Gamma)}\big(\scrC \otimes_{\calR_A^\infty(\Gamma)} \calR_A^{r_0}(\Gamma), \calR_A^{r_0}(\Gamma) \big)^\iota\\
(M^{*r_0})^{\psi=0} \ar[r]^-\cong &
\Hom_{\calR_A^{r_0}(\Gamma)}\big( (M^{r_0})^{\psi=0},\calR_A^{r_0}(\Gamma) \big)^\iota \ar[u]
}
\]
This implies that the $\calR_A^{r_0}(\Gamma)$-module in the top line is a direct summand of the $\calR_A^{r_0}(\Gamma)$-module in the bottom line.  Since they are both free of same rank $[K:\Qp]\rank M$ by Corollary~\ref{C:scrC is free} and Theorem~\ref{T:structure of D^psi=0}, they must be isomorphic.
\end{proof}

\begin{remark}
It will be important in the proof of Proposition~\ref{P:N^s uniformly fp} that we have a choice of $r_0$ in Proposition~\ref{P:Iwasawa pairing over fields} that is uniform with respect to points $z \in \Max(A)$.
\end{remark}

\subsection{Statement of main results}
\label{S:statement of main results}

The main result of this paper is the following.

\begin{theorem}
\label{T:finite Iwasawa cohomology}
Let $M$ be a $(\varphi, \Gamma_K)$-module
over $\calR_A(\pi_K)$.  We have $\rmC^\bullet_\psi(M) \in \bbD_\perf^- (\calR_A^\infty(\Gamma_K))$.
\end{theorem}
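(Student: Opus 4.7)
The plan is to follow the outline sketched in \S\ref{S:proof}: after reducing to $K = \Qp$, the argument has two main steps — a ``pointwise duality plus uniform finiteness'' argument that handles degree one under vanishing of outer $(\varphi,\Gamma)$-cohomology, and a slope-filtration d\'evissage that reduces to this hypothesis.

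First I would reduce to $K = \Qp$. Since $\ZZ[\Gamma]$ is finite free over $\ZZ[\Gamma_K]$, one has $\rmC_\psi^\bullet(\Ind_K^{\Qp}M) = \Hom_{\ZZ[\Gamma_K]}(\ZZ[\Gamma],\rmC_\psi^\bullet(M))$ together with the analogous relation between $\calR_A^\infty(\Gamma)$ and $\calR_A^\infty(\Gamma_K)$, so perfectness is preserved by this passage. Next, since $\rmC_\psi^\bullet(M)$ is built from $H^1_\psi(M) = M^{\psi=1}$ and $H^2_\psi(M) = M/(\psi-1)$ via a distinguished triangle, and $\bbD_\perf^-$ is closed under cones, it suffices to place each cohomology in $\bbD_\perf^-$. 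The group $H^2_\psi(M)$ is $A$-finite by Proposition~\ref{P:psi cokernel}(1), and $\calR_A^\infty(\Gamma)$ is $A$-flat (by the argument of Corollary~\ref{C:R_A flat over A} adapted to the group-side Robba ring), so an iterative finite-free $A$-resolution tensors up to one over $\calR_A^\infty(\Gamma)$. For $H^1_\psi(M)$, the exact sequence $0 \to M^{\varphi=1} \to M^{\psi=1} \to \scrC \to 0$ combined with $A$-finiteness of $M^{\varphi=1}$ (from the residue pairing argument of Lemma~\ref{L:A=field structure of M^phi=1}, whose proof is valid for any strongly noetherian Banach $A$) reduces the problem to showing $\scrC \in \bbD_\perf^-(\calR_A^\infty(\Gamma))$.

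The first main step would then establish: assuming the outer $(\varphi,\Gamma)$-cohomologies of $M$ and of $M^*$ vanish at every point $z \in \Max(A)$, the module $\scrC$ is finite projective over $\calR_A^\infty(\Gamma)$ of rank $[K:\Qp]\rank M$. The strategy is to exploit the finite projective ambient object $(M^{r_0})^{\psi=0}$ over $\calR_A^{r_0}(\Gamma)$ provided by Theorem~\ref{T:structure of D^psi=0}, inside which $\scrC$ sits; to use the Iwasawa duality pairing $\{-,-\}_\Iw$ of Definition~\ref{D:Iwasawa pairing} together with its pointwise perfectness (Propositions~\ref{P:Iwasawa pairing = Tate pairing} and \ref{P:Iwasawa pairing over fields}, bootstrapping off Liu's Theorem~\ref{T:Liu} at each residue field) to obtain pointwise identifications $\scrC_z \otimes_{\calR_{\kappa_z}^\infty(\Gamma)} \calR_{\kappa_z}^{r_0}(\Gamma) \cong (M_z^{r_0})^{\psi=0}$; to invoke Lemma~\ref{L:variant finite generation}, which was designed precisely for this situation, to produce a finite set of generators of $\scrC$ whose base change to $\calR_A^{r_0}(\Gamma)$ spans $(M^{r_0})^{\psi=0}$; and finally to deduce finite projectivity from the constancy of pointwise ranks via a flatness criterion analogous to Lemma~\ref{L:constant dimension function implies flat}.

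I expect the second step — the slope-filtration d\'evissage — to be the main obstacle. Proposition~\ref{P:psi cokernel}(2) shows $\psi-1$ becomes surjective on $t^{-n}M$ for large $n$, thereby killing $H^2_\psi(M)$ after twisting; but the outer $(\varphi,\Gamma)$-cohomology of $M^*$ responds in the opposite direction to such a $t$-twist, so one cannot arrange simultaneous vanishing by twisting alone. As the introduction to \S\ref{S:proof} hints, one instead constructs an extension $0 \to M \to E \to N \to 0$ in which $N$ is carefully chosen via slope filtration theory to have pure slope, so that $E$ itself enjoys the vanishing hypothesis while $\rmC_\psi^\bullet(N)$ already lies in $\bbD_\perf^\flat$ (the latter by Proposition~\ref{P:finite-torsion} and direct computation). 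The difficulty is that slope filtrations do not extend cleanly to the relative setting (see \cite{kiran-liu}), so the construction of $E$ must adapt Liu's pointwise d\'evissage from \cite{liu} in a way that remains uniform over $\Max(A)$, and the resulting distinguished triangles must be combined with the first step and the preceding reductions to deduce the theorem.
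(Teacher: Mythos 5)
Your overall blueprint---reduce to $K=\Qp$, prove a ``duality plus uniform finiteness'' step under a vanishing hypothesis, then a d\'evissage to establish that hypothesis---does match the paper's strategy, and your sketch of the first step (the Iwasawa pairing of Definition~\ref{D:Iwasawa pairing}, its pointwise compatibility with Tate duality via Proposition~\ref{P:Iwasawa pairing = Tate pairing}, comparison with $(M^{r_0})^{\psi=0}$ via Theorem~\ref{T:structure of D^psi=0} and Corollary~\ref{C:D^psi=0 duality}, and the final invocation of Lemma~\ref{L:variant finite generation}) is essentially the paper's proof of Theorem~\ref{T:structure of M^psi=1}.

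However, your opening reduction contains a genuine gap. You propose to place $H^1_\psi(M)$ and $H^2_\psi(M)$ separately in $\bbD_\perf^-(\calR_A^\infty(\Gamma))$, handling $H^2_\psi(M) = M/(\psi-1)$ (and likewise $M^{\varphi=1}$) directly from its $A$-finiteness by ``tensoring up an iterative finite-free $A$-resolution.'' This does not work: if $Q$ is an $A$-finite module with an $\calR_A^\infty(\Gamma)$-module structure and $F^\bullet \to Q$ is a free $A$-resolution, then $\calR_A^\infty(\Gamma) \otimes_A F^\bullet$ resolves $\calR_A^\infty(\Gamma) \otimes_A Q$, not $Q$, and nothing controls the kernel of $\calR_A^\infty(\Gamma) \otimes_A Q \twoheadrightarrow Q$. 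Worse, the intended conclusion is false in general: take $A = \Qp\langle X,Y\rangle/(XY)$ and $Q = A/(X,Y)$ with trivial $\Gamma$-action; the local ring of $\calR_A^\infty(\Gamma)$ at the maximal ideal $(X,Y,\gamma-1)$ is singular, so $Q$ has infinite projective dimension and cannot lie in $\bbD_\perf^-$. The field case, Lemma~\ref{L:finite A-mod is afp A=field}, uses the one-dimensionality of $\calR_\kappa^\infty(\Gamma)$ in an essential way and does not generalize. The paper's proof avoids this issue entirely: rather than handling the cohomology groups of $M$ one at a time, the d\'evissage (Lemma~\ref{L:devissage} plus noetherian induction) produces $0 \to M \to N \to \overline P \to 0$ with $\overline P$ a successive extension of rank one objects $P_i$ such that $N$ and the $P_i$ all satisfy $(\cdot)/(\psi-1) = (\cdot)^*/(\psi-1) = 0$; Theorem~\ref{T:structure of M^psi=1} then makes $N^{\psi=1}$ and the $P_i^{\psi=1}$ finite projective, and the resulting distinguished triangle places $\rmC_\psi^\bullet(M)$ in $\bbD_\perf^-$ without ever asserting that $H^2_\psi(M)$ is individually perfect. (That it is, in retrospect, follows from Theorem~\ref{T:strong Iwasawa cohomology}, which is downstream of the theorem you are trying to prove.)

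Two smaller omissions: the reduction to $A$ \emph{reduced} (Lemma~\ref{L:reduced}) is necessary---Lemma~\ref{L:injectivity to H^1} and the pointwise arguments feeding into it presuppose reducedness---and should be made explicit. And while the introduction advertises the d\'evissage as using ``slope filtrations,'' the actual construction (Lemma~\ref{L:devissage}) adjoins rank one character-type twists $\calR_A(\delta)$ with $\delta(p)\notin p^\ZZ$ and generic weight, then twists by large $x^n$ to make the connecting map nonzero; this differs in detail from constructing an extension by a pure-slope object, and the slope filtration machinery enters only at the over-a-field stage (Proposition~\ref{P:Iwasawa cohomology A=field}).
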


The proof of this theorem takes up Section~\ref{S:proof}; see the very end of Subsection~\ref{S:devissage}. 
We first list a few useful corollaries of the theorem, including the compatibility with base change, and the comparison between the Iwasawa cohomology and the $(\varphi, \Gamma)$-cohomology of the cyclotomic deformation.

Keeping in mind Proposition \ref{P:psi-gamma complex v.s. psi-complex}, the following theorem is an immediate consequence of the main result.

\begin{theorem}
\label{T:finite cohomology 1}
Let $M$ be a $(\varphi,\Gamma_K)$-module over $\calR_A(\pi_K)$.  Then we have $\rmC_{\varphi,\gamma_K}^\bullet(M) \in \bbD_\perf^-(A)$.  In particular, the cohomology groups $H_{\varphi,\gamma_K}^i(M)$ are finite $A$-modules.
\end{theorem}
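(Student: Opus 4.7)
The plan is to combine Theorem~\ref{T:finite Iwasawa cohomology} with the base change identification of Proposition~\ref{P:psi-gamma complex v.s. psi-complex} at the trivial character, and then transport the conclusion from the $(\psi,\Gamma)$-cohomology to the $(\varphi,\Gamma)$-cohomology via Proposition~\ref{P:phi cohomology = psi cohomology}. The key point is that the main theorem of the section gives a perfect complex over the much larger ring $\calR_A^\infty(\Gamma_K)$, and derived base change along the augmentation map will immediately yield a perfect complex over $A$ computing the $(\psi,\gamma_K)$-cohomology.

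Concretely, I take $\eta$ to be the trivial character of $\Gamma_K$ with coefficients in $\Qp$, so that $L = \Qp$ and $\gothm_\eta \calR_A^\infty(\Gamma_K)$ is the kernel of the augmentation map $\calR_A^\infty(\Gamma_K) \to A$. Proposition~\ref{P:psi-gamma complex v.s. psi-complex} then furnishes a natural quasi-isomorphism
\[
\rmC^\bullet_\psi(M) \Lotimes_{\calR_A^\infty(\Gamma_K)} A \xrightarrow{\sim} \rmC^\bullet_{\psi,\gamma_K}(M).
\]
By Theorem~\ref{T:finite Iwasawa cohomology} the complex $\rmC^\bullet_\psi(M)$ lies in $\bbD_\perf^-(\calR_A^\infty(\Gamma_K))$, and derived base change along a ring map sends $\bbD_\perf^-$ to $\bbD_\perf^-$, so the left-hand side, and hence $\rmC^\bullet_{\psi,\gamma_K}(M)$, lies in $\bbD_\perf^-(A)$. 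Composing with the quasi-isomorphism $\Psi_M: \rmC^\bullet_{\varphi,\gamma_K}(M) \to \rmC^\bullet_{\psi,\gamma_K}(M)$ of Proposition~\ref{P:phi cohomology = psi cohomology} gives the desired conclusion $\rmC^\bullet_{\varphi,\gamma_K}(M) \in \bbD_\perf^-(A)$.

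For the final statement that the cohomology groups $H^i_{\varphi,\gamma_K}(M)$ are finite $A$-modules, I observe that $\rmC^\bullet_{\varphi,\gamma_K}(M)$ is concentrated in degrees $[0,2]$, so Lemma~\ref{L:complex with cohomology capped above} lets me represent it by a complex $P^\bullet$ of finite projective $A$-modules with $P^i=0$ for $i>2$. Since $A$ is a $\Qp$-affinoid algebra, hence noetherian, each cohomology group $H^i(P^\bullet)$, being a subquotient of the finite projective $A$-module $P^i$, is finitely generated. The only substantive input here is Theorem~\ref{T:finite Iwasawa cohomology} itself, whose proof occupies Section~\ref{S:proof}; granting that result, the theorem to be proved is purely formal.
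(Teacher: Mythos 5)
Your argument is correct and coincides with the paper's intended proof: the authors state the theorem as ``an immediate consequence'' of Theorem~\ref{T:finite Iwasawa cohomology} combined with Proposition~\ref{P:psi-gamma complex v.s. psi-complex}, and you have simply spelled out the details (specialize $\eta$ to the trivial character so the augmentation gives $\Lotimes_{\calR_A^\infty(\Gamma_K)} A$, observe that derived base change of a bounded-above complex of finite projectives is again one, then carry the conclusion back to $\rmC^\bullet_{\varphi,\gamma_K}(M)$ via $\Psi_M$). Your concluding paragraph on finiteness of the $H^i$ is also fine, though you could shorten it: since $A$ is noetherian, the cohomology of any complex in $\bbD_\perf^-(A)$ is automatically finitely generated, without needing Lemma~\ref{L:complex with cohomology capped above}.
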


The following theorem is essentially \cite[Proposition~2.6]{pottharst1}. But having proved Theorems~\ref{T:finite Iwasawa cohomology} and \ref{T:finite cohomology 1}, we can state it as an unconditional result, which we reproduce here for the convenience of the reader.

\begin{theorem}
\label{T:base change}
Let $M$ be a $(\varphi,\Gamma_K)$-module over $\calR_A(\pi_K)$, and let $A \to B$ be
a morphism of $\Qp$-affinoid algebras.  
\begin{itemize}
\item[(1)] The canonical morphism
\[
\rmC^\bullet_\psi(M) \stackrel{\bbL}\otimes_{\calR_A^\infty(\Gamma_K)} \calR_B^\infty(\Gamma_K) \ \longrightarrow\ \rmC^\bullet_\psi(M \widehat \otimes_A B)
\]
is a quasi-isomorphism.
\item[(2)] The canonical morphism $\rmC^\bullet_{\varphi, \gamma_K}(M) \Lotimes_A B \to 
\rmC^\bullet_{\varphi, \gamma_K}(M \widehat \otimes_A B)$ is a quasi-isomorphism.
\end{itemize}
In particular, if $B$ is flat over $A$, we have 
\[
H^i_\psi(M) \otimes_{\calR_A^\infty(\Gamma_K)}
\calR_B^\infty(\Gamma_K) \cong H^i_\psi(M \widehat \otimes_A B)\ \textrm{ and }\ H_{\varphi, \gamma_K}^i(M) \otimes_A B \cong H_{\varphi, \gamma_K}^i(M \widehat \otimes_A B).
\]
\end{theorem}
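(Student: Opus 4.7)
The plan is to leverage Theorems~\ref{T:finite Iwasawa cohomology} and~\ref{T:finite cohomology 1} (perfectness of the cohomology complexes) together with the pointwise criterion Lemma~\ref{L:acyclic pointwise}, so as to reduce the base change verification to specializations at classical points, where a direct flatness argument suffices.

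For part (2), the source $\rmC^\bullet_{\varphi, \gamma_K}(M) \Lotimes_A B$ and target $\rmC^\bullet_{\varphi, \gamma_K}(M \widehat\otimes_A B)$ both lie in $\bbD_\perf^-(B)$ (the latter by Theorem~\ref{T:finite cohomology 1} applied to the $(\varphi, \Gamma_K)$-module $M \widehat\otimes_A B$ over $\calR_B(\pi_K)$). By Lemma~\ref{L:acyclic pointwise}, I need only verify the map is a quasi-isomorphism after $\Lotimes_B L$ for each residue field $L = B/\gothm$, $\gothm \in \Max(B)$. Such an $L$ is a finite extension of $\Qp$, and the composition $A \to L$ factors as $A \to \kappa_z \to L$ with $z \in \Max(A)$; since the second step $\kappa_z \to L$ is a flat extension of fields, I reduce to the case $B = \kappa_z$. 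The key observation here is that $M$ is $A$-flat (because $M$ is finite projective over $\calR_A(\pi_K)$ and $\calR_A(\pi_K)$ is $A$-flat by Corollary~\ref{C:R_A flat over A}), and taking $\Delta_K$-invariants preserves this flatness (as $\#\Delta_K \in \{1,2\}$ is a unit in $\Qp$). Thus the three-term Herr complex $\rmC^\bullet_{\varphi, \gamma_K}(M)$ is a bounded complex of $A$-flat modules, hence K-flat; so $\rmC^\bullet_{\varphi, \gamma_K}(M) \Lotimes_A \kappa_z = \rmC^\bullet_{\varphi, \gamma_K}(M) \otimes_A \kappa_z$, and a termwise calculation (using that $-\otimes_A \kappa_z$ commutes with $\Delta_K$-invariants by order-coprimeness) identifies this with $\rmC^\bullet_{\varphi, \gamma_K}(M_z)$, as desired.

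Part (1) follows an analogous strategy over $\calR_A^\infty(\Gamma_K)$: both sides are perfect over $\calR_B^\infty(\Gamma_K)$ by Theorem~\ref{T:finite Iwasawa cohomology}, so I apply Lemma~\ref{L:acyclic pointwise} at maximal ideals $(y, \eta) \in \Max(\calR_B^\infty(\Gamma_K))$. Applying Proposition~\ref{P:psi-gamma complex v.s. psi-complex} to each side, tensoring pointwise with $\kappa_{(y, \eta)}$ converts Iwasawa cohomology into the $(\psi, \gamma_K)$-cohomology of the twisted module $M \otimes_{\Qp} L(\eta^{-1})$ (with $L$ the field of values of $\eta$). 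Then, factoring the pointwise specialization through the classical point $z \in \Max(A)$ below $y$, the comparison reduces to Part (2) combined with a flat base change along the finite field extension $\kappa_{(z, \eta)} \to \kappa_{(y, \eta)}$. The ``in particular'' statement with $B$ flat over $A$ follows tautologically from (1) and (2), since flatness allows $\Lotimes$ to be replaced by $\otimes$ and cohomology to commute with the base change functor.

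The main technical obstacle---already resolved upstream in Theorems~\ref{T:finite Iwasawa cohomology} and~\ref{T:finite cohomology 1}---is perfectness itself, which is what makes the pointwise reduction available. Granted these inputs, the remaining content of the base change assertion is the $A$-flatness of $M$ and the tautological compatibility of the three-term Herr complex with specialization, both of which are formal consequences of the material developed in \S\ref{sec:family-phi-Gamma-modules}.
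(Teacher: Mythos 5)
Your proposal follows essentially the same strategy as the paper's: establish the perfectness results first (Theorems~\ref{T:finite Iwasawa cohomology} and~\ref{T:finite cohomology 1}), then use Lemma~\ref{L:acyclic pointwise} to reduce to a pointwise check, where the $A$-flatness of $M$ from Corollary~\ref{C:R_A flat over A} makes the base change a tautology. Your Part~(2) is correct (the factoring through $\kappa_z$ is a harmless variant of the paper's direct argument for $B/\gothm_y$ viewed as a finite $A$-algebra), though it would help to note explicitly that the identification of the right-hand side $\rmC^\bullet_{\varphi,\gamma_K}(M \widehat\otimes_A B) \Lotimes_B L$ with $\rmC^\bullet_{\varphi,\gamma_K}(M_L)$ is the same flatness calculation applied to $M \widehat\otimes_A B$ over $B$.

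There is, however, a genuine gap in your Part~(1). You apply Lemma~\ref{L:acyclic pointwise} directly over the ring $R = \calR_B^\infty(\Gamma_K)$ ``at maximal ideals $(y,\eta) \in \Max(\calR_B^\infty(\Gamma_K))$.'' But in this paper's notation (Notation~\ref{N:cyclotomic deformation space}), $\Max(\calR_B^\infty(\Gamma_K))$ denotes the set of \emph{classical} closed points of the quasi-Stein space, which is strictly smaller than the set of ring-theoretic maximal ideals of the Fr\'echet--Stein algebra $\calR_B^\infty(\Gamma_K)$. (For example, the ring of analytic functions on the open unit disc admits proper ideals, such as the ideal of functions vanishing at all but finitely many points of a sequence approaching the boundary, that are not contained in any $(T-a)$ with $|a|<1$ and hence give rise to non-classical maximal ideals.) Lemma~\ref{L:acyclic pointwise} requires verification at \emph{every} maximal ideal of $R$, and your verification via Proposition~\ref{P:psi-gamma complex v.s. psi-complex} only applies to classical points. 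The paper closes this gap by first invoking Lemma~\ref{L:schneider-teitelbaum}(2) to reduce the quasi-isomorphism check to the Noetherian Banach slices $\calR_B^{[s,\infty]}(\Gamma_K)$, where the affinoid Nullstellensatz ensures that every maximal ideal \emph{is} classical, and only then applying the pointwise criterion. (An alternative fix in your framework: observe that the top nonvanishing cohomology of the mapping fiber is a coadmissible module by Lemma~\ref{L:schneider-teitelbaum}(5), and a nonzero coadmissible module has a nonzero slice, hence a nonzero fiber at a classical point of that slice; but this extra step must be stated.)
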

\begin{proof}
When $B = A/\gothm_z$ for some $z \in \Max(A)$, $M \widehat \otimes_A B \cong M \otimes_A B$ and $\calR_B^\infty(\Gamma_K) \cong \calR_A^\infty(\Gamma_K)\otimes_AB$ as the latter objects are already complete.  In this case, both (1) and (2) are tautologies because $M$ is flat over $A$ by Corollary~\ref{C:R_A flat over A}.

We now consider the general case.
In case (2), applying the discussion above to every closed point $y \in \Max(B)$, we have a quasi-isomorphism
\begin{align*}
\big(\rmC^\bullet_{\varphi, \gamma_K}(M) \stackrel{\bbL}\otimes_A B\big)\stackrel{\bbL}\otimes_B B/\gothm_y &\xrightarrow\sim \rmC^\bullet_{\varphi, \gamma_K}(M) \stackrel{\bbL}\otimes_A B/\gothm_y
\xrightarrow \sim
\rmC^\bullet_{\varphi, \gamma_K}(M  \otimes_A B/\gothm_y)\\
&\xrightarrow \sim
\rmC^\bullet_{\varphi, \gamma_K}(M \widehat \otimes_A B)\stackrel{\bbL}\otimes_B B/\gothm_y.
\end{align*}
Then (2) follows from the result on detecting quasi-isomorphisms pointwise (Lemma~\ref{L:acyclic pointwise}).

In case (1), we know both sides lie in $\bbD_\perf^- (\calR_B^\infty(\Gamma_K))$ by Theorem~\ref{T:finite Iwasawa cohomology} and hence all cohomology groups are coadmissible.  By Lemma~\ref{L:schneider-teitelbaum}(2), it suffices to prove that, for any $s>0$, the natural morphism
\begin{equation}
\label{E:base-change-Iwasawa}
\rmC^\bullet_\psi(M) \stackrel{\bbL}\otimes_{\calR_A^\infty(\Gamma_K)} \calR_B^{[s,\infty]}(\Gamma_K) \ \longrightarrow\ \rmC^\bullet_\psi(M \widehat \otimes_A B)\stackrel{\bbL}\otimes_{\calR_B^\infty(\Gamma_K)} \calR_B^{[s,\infty]}(\Gamma_K)
\end{equation}
is a quasi-isomorphism. Similar to (2), for each $(y,\eta) \in \Max(\calR_B^\infty(\Gamma_K))$, the natural morphism
\begin{multline*}
\rmC^\bullet_\psi(M)
  \stackrel{\bbL}\otimes_{ \calR_A^\infty(\Gamma_K)}
    \calR_B^\infty(\Gamma_K)
  \stackrel{\bbL}\otimes_{ \calR_B^\infty(\Gamma_K)}
    \calR_B^\infty(\Gamma_K)/\gothm_{(y,\eta)} \\
\longrightarrow
\rmC^\bullet_\psi(M \widehat \otimes_A B)
  \stackrel{\bbL}\otimes_{ \calR_B^\infty(\Gamma_K)}
    \calR_B^\infty(\Gamma_K)/\gothm_{(y,\eta)}
\end{multline*}
is a quasi-isomorphism.  Applying Lemma~\ref{L:acyclic pointwise} to
\eqref{E:base-change-Iwasawa} proves (1).
\end{proof}

\begin{remark}
Using Theorem~\ref{T:base change}, as we restrict $M$ over varying affinoid subdomains of $\Max(A)$, each cohomology group $H^*_{\varphi, \gamma_K}(M)$ yields a coherent sheaf on $\Max(A)$ by Kiehl's theorem.  Similarly, each Iwasawa cohomology $H^*_\psi(M)$ yields a coherent sheaf on $\Max(\calR_A^\infty(\Gamma_K))$.
\end{remark}

With unconditional base change in hand, we may prove more precise versions of the preceding finiteness results.

\begin{theorem}
\label{T:finite cohomology}
Let $M$ be a $(\varphi, \Gamma_K)$-module over $\calR_A(\pi_K)$.  
\begin{itemize}
\item[(1)] We have $\rmC^\bullet_{\varphi, \gamma_K}(M) \in \bbD_\perf^{[0,2]}(A)$.
\item[(2)](Euler characteristic formula) We have $\chi_A\big(\rmC^\bullet_{\varphi, \gamma_K}(M)\big) = -[K:\Qp]\rank M$.
\item[(3)](Tate duality)
The Tate duality pairing \eqref{E:Tate duality complex} induces a quasi-isomorphism
\begin{equation}
\label{E:Tate duality family}
\rmC^\bullet_{\varphi, \gamma_K}(M) \stackrel\sim\longrightarrow \RHom_A(\rmC^\bullet_{\varphi, \gamma_K}(M^*), A)[-2].
\end{equation}
\end{itemize}
\end{theorem}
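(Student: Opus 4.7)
The plan is to deduce all three parts from Theorem~\ref{T:finite cohomology 1} (which provides the weaker statement $\rmC_{\varphi,\gamma_K}^\bullet(M) \in \bbD_\perf^-(A)$), Theorem~\ref{T:base change} (compatibility with base change), and the pointwise statements of Theorem~\ref{T:Liu} (Liu's results over a finite extension of $\Qp$), combined with the derived-category tools in Lemmas~\ref{L:complex with cohomology capped above}, \ref{L:flat bottom cokernel}, and \ref{L:acyclic pointwise}.

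For (1), I would start from $\rmC_{\varphi,\gamma_K}^\bullet(M) \in \bbD_\perf^-(A)$, noting that the Herr complex is literally concentrated in degrees $[0,2]$ so its cohomology vanishes outside that range. Lemma~\ref{L:complex with cohomology capped above} then supplies a representative $Q^\bullet$ of finite projective $A$-modules with $Q^i = 0$ for $i > 2$. For each $z \in \Max(A)$, Theorem~\ref{T:base change} gives $Q^\bullet \otimes_A \kappa_z \simeq \rmC_{\varphi,\gamma_K}^\bullet(M_z)$, and Theorem~\ref{T:Liu}(1) shows this has finite-dimensional cohomology over the field $\kappa_z$, concentrated in $[0,2]$; hence $Q^\bullet$ has Tor-amplitude in $[0,2]$ pointwise. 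The key step is now the local-to-global principle for finite Tor-amplitude over the Noetherian ring $A$, which upgrades this pointwise bound to a global statement: $\rmC_{\varphi,\gamma_K}^\bullet(M)$ has global Tor-amplitude in $[0,2]$, so it is perfect in the bounded sense and lies in $\bbD_\perf^\flat(A)$. Since its cohomology is concentrated in $[0,2]$, Lemma~\ref{L:flat bottom cokernel} (applied to a bounded representative together with the truncated complex $[\Coker d_Q^{-1} \to Q^1 \to Q^2]$) yields a quasi-isomorphic complex of finite projective modules in degrees exactly $[0,2]$.

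Part (2) is then formal: an object of $\bbD_\perf^{[0,2]}(A)$ has a well-defined, locally constant Euler characteristic on $\Spec(A)$. By Theorem~\ref{T:base change}, its value at $z \in \Max(A)$ equals $\chi_{\kappa_z}(\rmC_{\varphi,\gamma_K}^\bullet(M_z))$, which by Theorem~\ref{T:Liu}(2) is $-[K:\Qp]\rank M$ (using that $\rank M_z$ is locally constant on $\Max(A)$).

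For (3), applying (1) to both $M$ and its Cartier dual $M^*$ shows that both $\rmC_{\varphi,\gamma_K}^\bullet(M)$ and $\RHom_A(\rmC_{\varphi,\gamma_K}^\bullet(M^*), A)[-2]$ lie in $\bbD_\perf^{[0,2]}(A)$. The cup product construction from \eqref{E:Tate duality complex} provides a morphism in $\bbD_\perf^{[0,2]}(A)$ between them. By Theorem~\ref{T:base change}, its derived pullback to $\kappa_z$ is naturally identified with the Tate duality map for $M_z$, which is a quasi-isomorphism by Theorem~\ref{T:Liu}(3). Since both the source and target are in $\bbD_\perf^-(A)$, Lemma~\ref{L:acyclic pointwise} promotes this pointwise quasi-isomorphism to a global one. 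The main obstacle is the commutative-algebra step in~(1): once one has the boundedness and perfection of the Herr complex, parts~(2) and~(3) are entirely formal consequences of base change plus Liu's pointwise theorem.
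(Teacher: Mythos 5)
Your parts~(2) and~(3) track the paper exactly: both reduce to $\Max(A)$ via base change and invoke Theorem~\ref{T:Liu}, with part~(3) finishing by Lemma~\ref{L:acyclic pointwise}.

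For part~(1), however, your route is genuinely different from the paper's, and the difference is worth noting. You pass through a pointwise Tor-amplitude computation at each $z \in \Max(A)$ and then appeal to a ``local-to-global principle for finite Tor-amplitude over the Noetherian ring $A$'' to conclude $\rmC_{\varphi,\gamma_K}^\bullet(M) \in \bbD_\perf^\flat(A)$. That principle is true (for pseudo-coherent complexes over a Noetherian Jacobson ring, Tor-amplitude can be read off from the derived fibers at maximal ideals, via minimal-complex arguments over the local rings), but it is a nontrivial piece of commutative algebra that is not among the paper's stated lemmas and would require a careful citation or proof. The paper avoids it entirely by exploiting a cheap structural fact: the terms $M^{\Delta_K}$, $M^{\Delta_K}\oplus M^{\Delta_K}$, $M^{\Delta_K}$ of the Herr complex are themselves \emph{flat $A$-modules}, since $M^{\Delta_K}$ is a direct summand of the finite projective $\calR_A(\pi_K)$-module $M$, and $\calR_A(\pi_K)$ is flat over $A$ by Corollary~\ref{C:R_A flat over A}. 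Given a representative $Q^\bullet$ concentrated in degrees $\leq 2$ (supplied by Lemma~\ref{L:complex with cohomology capped above}), one then applies Lemma~\ref{L:flat bottom cokernel} directly with $P^\bullet = \rmC_{\varphi,\gamma_K}^\bullet(M)$ as the bounded complex of flat modules, concluding that $\Coker d_Q^{-1}$ is flat and finitely presented, hence projective. So both approaches reach the same conclusion, but the paper's is shorter, purely formal given the lemmas already available, and requires no pointwise input for this step (Theorem~\ref{T:Liu} enters only in parts~(2) and~(3)). If you take your route, you should spell out and justify the local-to-global Tor-amplitude statement rather than cite it as folklore, or better, observe the flatness of the Herr complex terms and collapse your argument to the paper's.
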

\begin{proof}
(1)
By Theorem~\ref{T:finite cohomology 1}, we have
$\rmC^\bullet_{\varphi, \gamma_K}(M) \in \bbD_\perf^-(A)$.  Since
$\rmC^\bullet_{\varphi, \gamma_K}(M)$ is concentrated in degrees $0$,
$1$ and $2$, by Lemma~\ref{L:complex with cohomology capped above},
this implies that $\rmC^\bullet_{\varphi, \gamma_K}(M)$ is
quasi-isomorphic to a complex $[\cdots \xrightarrow{d^0} P^1
  \xrightarrow{d^1} P^2]$ of finite projective $A$-modules, and hence
quasi-isomorphic to the complex $[\Coker(d^{-1}) \xrightarrow{d^0} P^1
  \xrightarrow{d^1} P^2]$.  By Corollary~\ref{C:R_A flat over A}, the
complex $\rmC^\bullet_{\varphi, \gamma_K}(M)$ consists of flat
$A$-modules.  This forces $\Coker(d^{-1})$ to be also flat (and hence
projective) over $A$, by Lemma~\ref{L:flat bottom cokernel}.  Hence
$\rmC^\bullet_{\varphi,\gamma_K}(M) \in \bbD_\perf^{[0,2]}(A)$, as
claimed.

(2)
To check the Euler characteristic formula, it suffices to check it at one closed point on each connected component.  This follows from Theorem~\ref{T:Liu}(2).

(3) It is clear that \eqref{E:Tate duality complex} induces the natural morphism \eqref{E:Tate duality family}. By Theorem~\ref{T:Liu}(3), \eqref{E:Tate duality family} is a quasi-isomorphism when tensored with $A/\gothm_x$ for any $x \in \Max(A)$.   Our statement  then follows from Lemma~\ref{L:acyclic pointwise}.
\end{proof}

We may use Theorem~\ref{T:finite cohomology} to strengthen Theorem~\ref{T:finite Iwasawa cohomology}.

\begin{theorem}
\label{T:strong Iwasawa cohomology}
Let $M$ be a $(\varphi, \Gamma_K)$-module
over $\calR_A(\pi_K)$.  

\begin{itemize}
\item[(1)]  We have $\rmC^\bullet_\psi(M) \in \bbD_\perf^{[0,2]} (\calR_A^\infty(\Gamma_K))$.
\item[(2)] The $\calR_A^\infty(\Gamma_K)$-module $M^{\psi=1}$ is finitely presented.
\item[(3)](Euler characteristic formula) We have
  $\chi_{\calR_A^\infty(\Gamma_K)} \big( \rmC^\bullet_\psi(M)\big) =
  -[K:\Qp]\rank M$.  Moreover, $H_\psi^0(M) = 0$, and as
  $\calR_A^\infty(\Gamma_K)$-modules $H_\psi^2(M)$ is torsion and
  $H_\psi^1(M)$ has generic rank $[K:\Qp]\rank M$.  (Note this does
  not force $\rmC^\bullet_\psi(M) \in
  \bbD_\perf^{[1,2]}(\calR_A^\infty(\Gamma_K))$ because, for example,
  there may be torsion in $H^1_\psi(M)$.)
\end{itemize}

\end{theorem}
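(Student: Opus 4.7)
The strategy is to bootstrap from Theorem~\ref{T:finite Iwasawa cohomology} (which gives $\rmC^\bullet_\psi(M) \in \bbD_\perf^-(\calR_A^\infty(\Gamma_K))$) to the sharper claim in~(1), then deduce (2) and~(3) as consequences of~(1) combined with base change.

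For part~(1), the complex $\rmC^\bullet_\psi(M) = [M \xrightarrow{\psi-1} M]$ is concentrated in degrees $1$ and $2$, so its cohomology vanishes outside those degrees.  Lemma~\ref{L:complex with cohomology capped above} then provides a quasi-isomorphism $P^\bullet \to \rmC^\bullet_\psi(M)$ with $P^\bullet = [\cdots \to P^{-1} \to P^0 \to P^1 \to P^2]$ a complex of finite projective $\calR_A^\infty(\Gamma_K)$-modules.  To truncate the tail and land in $\bbD_\perf^{[0,2]}$, I would invoke Lemma~\ref{L:flat bottom cokernel}, which reduces to showing that the terms of $\rmC^\bullet_\psi(M)$, namely $M$ itself, are flat over $\calR_A^\infty(\Gamma_K)$.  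For the flatness, I would combine the $\calR_A^\infty(\Gamma_K)$-module decomposition $M = \varphi(M) \oplus M^{\psi=0}$ with Theorem~\ref{T:structure of D^psi=0}: for sufficiently small $r$, $(M^r)^{\psi=0}$ is finite projective over $\calR_A^{\tilde e_K r}(\Gamma_K)$, which is flat over $\calR_A^\infty(\Gamma_K)$, so $M^{\psi=0}$ is flat over $\calR_A^\infty(\Gamma_K)$.  Since $\varphi$ is $A$-linear and commutes with $\Gamma_K$, it induces an $\calR_A^\infty(\Gamma_K)$-linear isomorphism $M \xrightarrow\sim \varphi(M)$, and iterating gives $M/\varphi^n(M) \cong \bigoplus_{k<n}\varphi^k(M^{\psi=0})$, a direct sum of flat pieces; propagating this to genuine flatness of $M$ is the crux.

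Given~(1), part~(2) is immediate: $M^{\psi=1} = H^1(\rmC^\bullet_\psi(M))$ is the first cohomology of a bounded perfect complex $[Q^0 \to P^1 \to P^2]$ of finite projective modules, and finite presentation of such an $H^1$ follows by tracking kernels and cokernels (using also that $H^2_\psi(M)$ is a quotient of $P^2$ by a finitely generated submodule, via the same resolution).

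For part~(3), the Euler characteristic of a perfect complex is locally constant on $\Spec\calR_A^\infty(\Gamma_K)$, so I would compute it at any closed point $(z,\eta)$.  By Theorem~\ref{T:base change}(1) and Proposition~\ref{P:psi-gamma complex v.s. psi-complex}, the specialization is quasi-isomorphic to $\rmC^\bullet_{\psi,\gamma_K}(M_z \otimes \eta^{-1})$, which by Proposition~\ref{P:phi cohomology = psi cohomology} is quasi-isomorphic to $\rmC^\bullet_{\varphi,\gamma_K}(M_z \otimes \eta^{-1})$; its Euler characteristic is $-[K:\Qp]\rank M$ by Theorem~\ref{T:Liu}(2).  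The vanishing $H^0_\psi(M) = 0$ is automatic from the complex being in degrees $\{1,2\}$.  The claims that $H^2_\psi(M)$ is torsion and that $H^1_\psi(M)$ has the stated generic rank both reduce, by base change to closed points $z \in \Max(A)$, to the field-coefficient case handled in Corollary~\ref{C:scrC is free}, combined with a support/pointwise argument to globalize the torsion claim in $H^2_\psi(M)$.

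The principal obstacle is proving that $M$ is flat over $\calR_A^\infty(\Gamma_K)$ for part~(1).  The iterated decomposition identifies large flat summands but leaves the "tail" $\varphi^n(M) \cong M$ as an apparent self-reference; successfully concluding flatness likely requires analyzing the vanishing of $\bigcap_n \varphi^n(M)$ (directly, or after localizing on $\Max(\calR_A^\infty(\Gamma_K))$) or else replacing the naive iteration by a Fréchet--Stein flatness criterion applied to the pieces $\calR_A^{[s,\infty]}(\Gamma_K)$.
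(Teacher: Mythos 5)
Your plan for part~(1) hinges on showing that $M$ is flat over $\calR_A^\infty(\Gamma_K)$, so that Lemma~\ref{L:flat bottom cokernel} can be applied directly to the pair $(\rmC^\bullet_\psi(M), P^\bullet)$ over $\calR_A^\infty(\Gamma_K)$.  This flatness claim is simply false, and the counterexample is essentially forced by the formalism you are using.  If $M$ were flat over $\calR_A^\infty(\Gamma_K)$, then for any $\eta \in \Max(\calR^\infty(\Gamma_K))$ the derived tensor $\rmC^\bullet_\psi(M) \Lotimes_{\calR^\infty_A(\Gamma_K)} \calR^\infty_A(\Gamma_K)/\gothm_\eta$ would be computed by the underived tensor $[M/\gothm_\eta \to M/\gothm_\eta]$, a complex concentrated in degrees $\{1,2\}$.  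But Proposition~\ref{P:psi-gamma complex v.s. psi-complex} identifies this derived tensor with $\rmC^\bullet_{\psi,\gamma_K}(M \otimes L(\eta^{-1}))$, whose $H^0$ is $H^0_{\psi,\gamma_K}(M \otimes \eta^{-1}) \cong H^0_{\varphi,\gamma_K}(M \otimes \eta^{-1})$.  Already for $M = \calR_A(\pi_K)$ and $\eta$ trivial this is $A \neq 0$.  Equivalently, $\Tor_1^{\calR^\infty_A(\Gamma_K)}(M, \kappa_\eta)$ detects precisely the invariants $M^{\Gamma_K}$-type phenomena and is frequently nonzero.  The iterated decomposition $M \cong \varphi^n(M) \oplus \bigoplus_{k<n} \varphi^k(M^{\psi=0})$ is indeed circular, as you suspected: it merely restates $M \cong M \oplus M^{\psi=0}$ and places no constraint on $M$.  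The paper avoids this issue entirely.  It applies Lemma~\ref{L:flat bottom cokernel} not over $\calR^\infty_A(\Gamma_K)$ but only \emph{after} specializing at each $\eta$, where $\rmC^\bullet_{\psi,\gamma_K}(M\otimes\eta^{-1})$ has terms flat over $A \otimes_{\Qp}\kappa_\eta$ (by Corollary~\ref{C:R_A flat over A}); this shows $Q/\gothm_\eta$ is finite projective over $A\otimes_{\Qp}\kappa_\eta$.  Combined with Theorem~\ref{T:finite cohomology}(2) to get local constancy of $\eta \mapsto \rank(Q/\gothm_\eta)$, Lemma~\ref{L:constant dimension function implies flat}(2) then gives flatness of $Q \otimes \calR_A^{[s,\infty]}(\Gamma_K)$, and Lemma~\ref{L:fp+bundle=>finite-proj} finishes.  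You cannot shortcut this pointwise argument by proving $M$ flat.

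For part~(2), the step ``finite presentation of such an $H^1$ follows by tracking kernels and cokernels'' does not go through: $\calR_A^\infty(\Gamma_K)$ is neither noetherian nor coherent, and $H^1$ of a bounded complex of finite projectives over such a ring is coadmissible but need not \emph{a priori} be finitely presented (the kernel of $d^1$ is the obstruction).  The paper's argument is more delicate: it uses Proposition~\ref{P:psi cokernel} and Lemma~\ref{L:finite A-mod as R_A-mod} to locate an $r>0$ over which $H^2_\psi(M)$ vanishes after base change to $\calR_A^r(\Gamma_K)$, so that $M^{\psi=1}\otimes\calR_A^r(\Gamma_K)$ becomes the unique nonvanishing cohomology of a perfect complex and hence is finitely presented there; it then glues with the noetherian pieces $\calR_A^{[r,\infty]}(\Gamma_K)$ via Proposition~\ref{P:finite-generation}(2).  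Your sketch omits all of this.  Part~(3) is fine and agrees with the paper's argument.
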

\begin{proof}
(1)
Since $\rmC^\bullet_\psi(M)$ is concentrated in degrees $1$ and $2$, Lemma~\ref{L:complex with cohomology capped above} implies that it is quasi-isomorphic to a complex $[\cdots \xrightarrow{d^{-1}} P^0 \xrightarrow{d^0} P^1 \xrightarrow{d^1} P^2 ]$ of finite projective $\calR_A^\infty(\Gamma_K)$-modules, and hence to $[Q \to P^1 \to P^2]$ with $Q = \Coker d^{-1}$.
To prove the first statement, we need to show that $Q$ is a projective $\calR_A^\infty(\Gamma_K)$-module (note that $\Coker d^0$ may not be projective in general and hence $\rmC^\bullet_\psi(M) \notin \bbD_\perf^{[1,2]} (\calR_A^\infty(\Gamma_K))$). By Lemma~\ref{L:schneider-teitelbaum}(5), $Q$ is a coadmissible $\calR_A^\infty(\Gamma_K)$-module. By 
Lemma~\ref{L:fp+bundle=>finite-proj}, it suffices to prove that $Q$ gives a vector bundle over $\calR_A^\infty(\Gamma_K)$, or equivalently $Q \otimes_{\calR_A^\infty(\Gamma_K)} \calR_A^{[s, \infty]}(\Gamma_K)$ is a finite flat $\calR_A^{[s, \infty]}(\Gamma_K)$-module for all $s>0$.  For this, we may assume that $A$ is geometrically connected.

For any $\eta \in \Max(\calR^\infty(\Gamma_K))$, we have
$\rmC^\bullet_\psi(M) \stackrel{\bbL}\otimes_{\calR^\infty(\Gamma_K)}
\calR^\infty(\Gamma_K)/\gothm_\eta \cong \rmC^\bullet_{\psi,
  \gamma_K}(M \otimes \eta^{-1})$ by Proposition~\ref{P:psi-gamma
  complex v.s. psi-complex}.  Since $P^\bullet$ consists of finite
projective $\calR_A^\infty(\Gamma_K)$-modules, it follows that the
complex $P^\bullet \otimes_{\calR_A^\infty(\Gamma_K)}
\calR_A^\infty(\Gamma_K)/\gothm_\eta$ represents $\rmC_\psi^\bullet(M)
\stackrel{\bbL}\otimes_{\calR_A^\infty(\Gamma_K)}
\calR_A^\infty(\Gamma_K)/\gothm_\eta$.  But $P^\bullet
\otimes_{\calR_A^\infty(\Gamma_K)}
\calR_A^\infty(\Gamma_K)/\gothm_\eta$ consists of finite projective
$\calR_A^\infty(\Gamma_K)/\gothm_\eta$-modules, so there exists a
morphism of complexes $P^\bullet \otimes_{\calR_A^\infty(\Gamma_K)}
\calR_A^\infty(\Gamma_K)/\gothm_\eta \to
\rmC^\bullet_{\psi,\gamma_K}(M \otimes \eta^{-1})$ inducing an
isomorphism on cohomology.  Lemma~\ref{L:flat bottom cokernel} and
Theorem~\ref{T:finite cohomology} then show $Q
\otimes_{\calR_A^\infty(\Gamma_K)}
\calR_A^\infty(\Gamma_K)/\gothm_\eta$ to be finite projective over
$\calR_A^\infty(\Gamma_K)/\gothm_\eta \cong A \otimes_{\Qp}
\kappa_\eta$.  Moreover, by Theorem~\ref{T:finite cohomology}(2) and
the flatness of $P^1$ and $P^2$ over $\calR_A^\infty(\Gamma_K)$, we
know that $\eta \mapsto \rank_{A \otimes_{\Qp} \kappa_\eta} (Q /
\gothm_\eta)$ defines a locally constant function on
$\Max(\calR^\infty(\Gamma_K))$ as $\eta$ varies.  By
Lemma~\ref{L:constant dimension function implies flat}(2), $Q
\otimes_{\calR_A^\infty(\Gamma_K)} \calR_A^{[s, \infty]}(\Gamma_K)$ is
a finite flat $\calR_A^{[s, \infty]}(\Gamma_K)$-module for any $s>0$.
(1) follows.

(2) As $M^{\psi=1}$ is a cohomology group of the complex
$\rmC^\bullet_\psi(M) \in
\bbD_\perf^{[0,2]}(\calR_A^\infty(\Gamma_K))$, it is coadmissible by
Lemma~\ref{L:schneider-teitelbaum}.  By Proposition~\ref{P:psi
  cokernel}, $M/(\psi-1)$ is finite over $A$.  By Lemma~\ref{L:finite
  A-mod as R_A-mod}, there exists $r >0$ such that $M/(\psi-1)
\otimes_{\calR_A^\infty(\Gamma_K)} \calR_A^{r}(\Gamma_K) = 0$.  By
part (1) above, this means that $M^{\psi=1}
\otimes_{\calR_A^\infty(\Gamma_K)} \calR_A^{r}(\Gamma_K)$ is the only
nonzero cohomology group of a complex in
$\bbD_\perf^{[0,2]}(\calR_A^{r}(\Gamma_K))$.  In particular, it is
finitely presented as an $\calR_A^{r}(\Gamma_K)$-module.  By
Lemma~\ref{L:schneider-teitelbaum}(3) the coherent sheaf on
$\calR_A^r(\Gamma_K)$ determined by $M^{\psi=1}
\otimes_{\calR_A^\infty(\Gamma_K)} \calR_A^{r}(\Gamma_K)$ is then
uniformly finitely presented, and combining this with the fact that
$\calR_A^{[r,\infty]}(\Gamma_K)$ is noetherian, the coherent sheaf on
$\calR_A^\infty(\Gamma_K)$ determined by $M^{\psi=1}$ is uniformly
finitely presented over $\calR_A^\infty(\Gamma_K)$.  By
Proposition~\ref{P:finite-generation}(2), we deduce that $M^{\psi=1}$
is a finitely presented $\calR_A^\infty(\Gamma_K)$-module.

(3) Since the Euler characteristic can be tested at a closed point on each connected component, the first claim follows from Theorem~\ref{T:Liu}(2).  For the second claim, one may base change to any sufficiently general closed point to assume that $A$ is a finite field extension of $\Qp$, in which case the claim is treated by Corollary \ref{C:scrC is free}.
\end{proof}

We now explain the relation between the Iwasawa cohomology of $M$ and
the $(\varphi, \Gamma)$-cohomology of the cyclotomic deformation of
$M$, following \cite{pottharst2}.

\begin{defn}
\label{D:Iwasawa deformation space}
For $n \geq 1$, let $\Lambda_n = \calR^{[1/p^n,\infty]}(\Gamma_K)$ and
let $X_n = \Max(\Lambda_n)$.  We view $X = \cup_{n\geq1} X_n$ as a
quasi-Stein rigid analytic space, which is a disjoint union of open
discs.  For $n \geq 1$, consider the rank one $(\varphi,
\Gamma_K)$-module $\mathbf{Dfm}_n = \Lambda_n \widehat \otimes_{\Qp}
\calR(\pi_K)\bbe = \calR_{\Lambda_n}(\pi_K)\bbe$, where $\varphi(1
\otimes \bbe) = 1\otimes \bbe$ and $\gamma(1 \otimes \bbe) =
\gamma^{-1}\otimes \bbe$ for $\gamma \in \Gamma_K$.  We put $\Dfm =
\varprojlim_n \Dfm_n$; this is a $(\varphi, \Gamma_K)$-module over the
relative Robba ring over $X$.  For a closed point $\eta \in
\Max(\Lambda_n)$, we have $\Dfm/\gothm_\eta\cong \Dfm_n/\gothm_\eta
\cong \calR(\pi_K)(\eta^{-1})$, where $\calR(\pi_K)(\eta^{-1})$ is the
$(\varphi, \Gamma_K)$-module associated to $\eta$.

For $M$ a $(\varphi, \Gamma_K)$-module over $\calR_A(\pi_K)$, we
define the \emph{cyclotomic deformation of $M$} to be $\Dfm(M) = M
\widehat \otimes_{\calR(\pi_K)} \Dfm = \varprojlim_n M
\widehat\otimes_{\calR(\pi_K)} \Dfm_n$; it is a $(\varphi,
\Gamma_K)$-module over the relative Robba ring over $\Max(A) \times
X$.
\end{defn}

\begin{theorem}
\label{T:comparison iwasawa v.s. deformation} 
Let  $M$ be a $(\varphi, \Gamma_K)$-module over $\calR_A(\pi_K)$.  Then we have a natural quasi-isomorphism of complexes of $\calR_A^\infty(\Gamma_K)$-modules
\begin{equation}
\label{E:comparison iwasawa v.s. deformation}
\rmC^\bullet_{\psi, \gamma_K}(\Dfm(M)) \xrightarrow\sim \rmC^\bullet_\psi(M),
\end{equation}
where $\Gamma_K$ acts on the left through the $\calR_{\Lambda_n}(\pi_K)$-structures on the respective $\Dfm_n$, and on the right via the $(\varphi,\Gamma_K)$-actions on $M$.
Consequently, we have a natural Iwasawa duality quasi-isomorphism
\begin{equation}\label{E:iwasawa duality qisom}
\rmC^\bullet_\psi(M^*)
\xrightarrow\sim \RHom_{\calR_A^\infty(\Gamma_K)} \big(\rmC^\bullet_\psi(M), \calR_A^\infty(\Gamma_K))^\iota[-2].
\end{equation}
\end{theorem}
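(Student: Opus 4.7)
The plan is to establish \eqref{E:comparison iwasawa v.s. deformation} first, and then deduce \eqref{E:iwasawa duality qisom} as a formal consequence via Tate duality applied to the cyclotomic deformation.

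To construct the comparison morphism in \eqref{E:comparison iwasawa v.s. deformation}, I would work at the level of the explicit Herr-style complexes. The two $\calR_A^\infty(\Gamma_K)$-structures in play are defined differently (on the left, via the $\Lambda_n$-factor in $\Dfm_n = \Lambda_n \widehat\otimes \calR(\pi_K)\bbe$; on the right, via the intrinsic $\Gamma_K$-action on $M$ extended by continuity through Proposition~\ref{P:calR_A(Gamma_K) acts on M}(3)), but the tautological twist $\gamma \cdot \bbe = \gamma^{-1}\bbe$ is precisely what is needed to reconcile them. Unpacking $\rmC^\bullet_{\psi, \gamma_K}(\Dfm(M)) = \Fib(\gamma_K - 1 \,|\, \Fib(1 - \psi \,|\, \Dfm(M)^{\Delta_K}))$, the operator $\gamma_K - 1$ on $\Dfm(M)$ differs from multiplication by $\gamma_K - 1 \in \Lambda_n$ exactly by the intrinsic $\Gamma_K$-action on $M$, allowing one to write down a natural chain map to $\rmC^\bullet_\psi(M) = [M \xrightarrow{\psi-1} M]$.

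Next I would verify that this morphism is a quasi-isomorphism by a pointwise check. By Theorem~\ref{T:finite Iwasawa cohomology}, the right side lies in $\bbD_\perf^-(\calR_A^\infty(\Gamma_K))$. Applying Theorem~\ref{T:finite cohomology 1} to each $\Dfm_n(M)$ as a $(\varphi, \Gamma_K)$-module over the $\Qp$-affinoid algebra $\Lambda_n \widehat\otimes_{\Qp} A$ and passing to the inverse limit shows the left side also lies in $\bbD_\perf^-(\calR_A^\infty(\Gamma_K))$. By Lemma~\ref{L:acyclic pointwise}, it then suffices to check the comparison becomes an isomorphism after base change at every closed point $(z, \eta) \in \Max(\calR_A^\infty(\Gamma_K))$. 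The left side specializes, by Theorem~\ref{T:base change}(2) over $\Lambda_n \widehat\otimes_{\Qp} A$ together with the definition of $\Dfm$, to $\rmC^\bullet_{\psi, \gamma_K}(M_z \otimes \eta^{-1})$; the right side specializes to the same complex by Theorem~\ref{T:base change}(1) combined with Proposition~\ref{P:psi-gamma complex v.s. psi-complex}. A bookkeeping exercise confirms that the induced map on each fiber is the identity.

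For \eqref{E:iwasawa duality qisom}: combining \eqref{E:comparison iwasawa v.s. deformation} with Proposition~\ref{P:phi cohomology = psi cohomology} gives $\rmC^\bullet_\psi(M) \simeq \rmC^\bullet_{\varphi, \gamma_K}(\Dfm(M))$. For each $n$, applying Theorem~\ref{T:finite cohomology}(3) to $\Dfm_n(M)$ as a $(\varphi, \Gamma_K)$-module over $\Lambda_n \widehat\otimes_{\Qp} A$ yields a Tate duality quasi-isomorphism $\rmC^\bullet_{\varphi, \gamma_K}(\Dfm_n(M)) \simeq \RHom_{\Lambda_n \widehat\otimes_{\Qp} A}(\rmC^\bullet_{\varphi, \gamma_K}(\Dfm_n(M)^*), \Lambda_n \widehat\otimes_{\Qp} A)[-2]$. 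A direct computation shows that the Cartier dual of the tautological rank-one family $\Dfm_n$ is obtained by applying the involution $\iota$ to the $\Lambda_n$-factor (reflecting that Pontryagin duality inverts characters), giving a functorial identification $\Dfm_n(M)^* \cong \Dfm_n(M^*)^\iota$ after shuffling the cyclotomic twist between tensor factors. Passing to the inverse limit over $n$, reapplying \eqref{E:comparison iwasawa v.s. deformation} to $M^*$, and using that the $\iota$-twist of the source of an $\RHom$ transports to an $\iota$-twist on the $\RHom$ itself, one obtains \eqref{E:iwasawa duality qisom}. The main technical obstacle is the explicit construction in step 1, including verifying that the two differently-defined $\calR_A^\infty(\Gamma_K)$-structures match up under the comparison; a secondary hurdle is the identification $\Dfm(M)^* \cong \Dfm(M^*)^\iota$.
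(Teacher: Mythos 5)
Your approach to \eqref{E:comparison iwasawa v.s. deformation} takes a genuinely different route from the paper's. The paper observes directly that $\Dfm(M)^{\Gamma_K}=0$ and that $\Dfm(M)^{\Delta_K}/(\gamma_K-1)\cong M$, a Shapiro-type computation for the module $\Dfm(M)$ (which is essentially $M$ tensored with the completed group algebra of $\Gamma_K$, carrying an anti-diagonal action). These two facts immediately package into a quasi-isomorphism from $[\Dfm(M)^{\Delta_K}\xrightarrow{\gamma_K-1}\Dfm(M)^{\Delta_K}]$ to $M[-1]$, and taking the mapping fiber of $1-\psi$ gives the comparison; no finiteness input is needed. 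You instead propose to establish that both complexes lie in $\bbD_\perf^-(\calR_A^\infty(\Gamma_K))$ and then invoke Lemma~\ref{L:acyclic pointwise}. That route is workable in principle, but the step you gloss over --- showing that $\rmC^\bullet_{\psi,\gamma_K}(\Dfm(M))\in\bbD_\perf^-(\calR_A^\infty(\Gamma_K))$ by ``passing to the inverse limit'' over the $\Dfm_n(M)$ --- is not automatic from applying Theorem~\ref{T:finite cohomology 1} level by level. One has to produce a single bounded-above complex of finite projective $\calR_A^\infty(\Gamma_K)$-modules representing the limit, which is exactly the sort of uniformity question that Proposition~\ref{P:finite-generation} addresses and that cannot be taken for granted over a Fr\'echet-Stein algebra. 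Since you already noticed that $\gamma_K-1$ on $\Dfm(M)$ differs from multiplication by $\gamma_K-1\in\Lambda_n$ by the intrinsic $\Gamma_K$-action on $M$, it is a short step from there to computing the cohomology of the $\gamma_K$-complex directly, as the paper does, which is much cleaner. For \eqref{E:iwasawa duality qisom}, the paper simply says to combine the first quasi-isomorphism with Theorem~\ref{T:finite cohomology}(3); your unpacked version --- applying Tate duality over each $\Lambda_n\widehat\otimes A$, identifying the Cartier dual of $\Dfm_n(M)$ with an $\iota$-twist of $\Dfm_n(M^*)$, and passing to the limit --- is a reasonable elaboration of what the paper leaves implicit.
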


\begin{proof}
The isomorphisms $(\Dfm(M)^{\Delta_K})^{\gamma_K=1} =
\Dfm(M)^{\Gamma_K} = 0$ and
\[
\Dfm(M)^{\Delta_K}/(\gamma_K-1) = (M \widehat\otimes_{\calR(\pi_K)}
\Dfm)^{\Delta_K}/(\gamma_K \otimes \gamma_K-1) \cong M
\]
can be packaged into a quasi-isomorphism $[\Dfm(M)^{\Delta_K}
  \xrightarrow{\gamma_K-1} \Dfm(M)^{\Delta_K}] \to M[-1]$.  Taking the
mapping fiber of $1-\psi$ on both sides and adjusting signs and
degrees, one obtains the quasi-isomorphism
$\rmC_{\psi,\gamma_K}^\bullet(\Dfm(M)) \to \rmC_\psi^\bullet(M)$.

The second statement follows from combining the first statement with Theorem~\ref{T:finite cohomology}(3). 
\end{proof}

\begin{remark}
It would be interesting to know if one can define the Iwasawa duality morphism \eqref{E:iwasawa duality qisom} directly on the level of complexes without comparing it to the cyclotomic deformation.
\end{remark}

\begin{remark}
One can give a direct proof of the following corollary without
referring to our main theorem, but it would require some setup on the
construction of the functor $\bbD_\rig$.  In the case $A=\Qp$, the
comparison of the Iwasawa cohomology of $V$ to that of its cyclotomic
deformation (over $\Zp[\![\Gamma_K]\!]$, not $\calR^\infty(\Gamma_K)$)
is essentially a variant of Shapiro's lemma, and to our knowledge
first occurs in Iwasawa theory in works of Greenberg \cite{greenberg}.
It also occurs in works of Colmez \cite{colmez-Iw}, together with
consideration of the cyclotomic deformation over the subring of
$\calR^\infty(\Gamma_K)$ of tempered distributions.  The claim over
the full ring $\calR^\infty(\Gamma_K)$ appears in \cite{pottharst2}.
\end{remark}

\begin{cor}
Let $V$ be a finite projective $A$-module equipped with a continuous $A$-linear action of $G_K$.  If we use $H^i_\Iw(G_K,V)$ to denote the inverse limit $[\varprojlim_{n \to \infty} H^i(G_{K(\mu_{p^n})}, T)] \otimes \QQ$ (where $T \subseteq V$ is the unit ball for a Galois-invariant Banach module norm) of the cohomology groups under the corestriction maps for any $i$, we have a functorial isomorphism $H^i_\Iw(G_K,V) \widehat \otimes_{\ZZ_p[\![\Gamma_K]\!]} \calR^\infty(\Gamma_K) \cong
H^i_\psi(\bbD_\rig(V))$ of $\calR_A^\infty(\Gamma_K)$-modules for any $i$ compatible with base change.
\end{cor}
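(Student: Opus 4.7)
The plan is to chain together several identifications, using the cyclotomic deformation $\Dfm$ of Definition~\ref{D:Iwasawa deformation space} as the bridge between Galois cohomology and $\psi$-cohomology. Write $\Lambda = \ZZ_p[\![\Gamma_K]\!]$ for the Iwasawa algebra, and $\chi_\univ$ for the canonical character $G_K \twoheadrightarrow \Gamma_K \hookrightarrow \Lambda^\times$.

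First, I would carry out the classical Iwasawa-theoretic identification. Shapiro's lemma gives $H^i(G_{K(\mu_{p^n})}, T) \cong H^i(G_K, T[\Gamma_K/\Gamma_K^{p^n}](\chi_\univ^{-1}))$ for each finite level. The Mittag-Leffler property of continuous cohomology on compact modules then lets one commute the inverse limit with $H^i$, producing a functorial isomorphism
\[
H^i_\Iw(G_K, V) \cong H^i_\cont(G_K, V \whotimes_{\ZZ_p} \Lambda(\chi_\univ^{-1}))[1/p].
\]
Next, I would base-change this from $\Lambda[1/p]$ to $\calR^\infty(\Gamma_K)$. For this, the Fréchet-Stein presentation $\calR^\infty(\Gamma_K) = \varprojlim_s \calR^{[s,\infty]}(\Gamma_K)$ from Lemma~\ref{L:schneider-teitelbaum} reduces the question to commuting continuous cohomology with each flat base change to $\calR^{[s,\infty]}(\Gamma_K)$ and then passing to the limit, which is permitted by the vanishing $\bbR^1\varprojlim = 0$ on the finitely generated cohomology (using Theorem~\ref{T:strong Iwasawa cohomology} a posteriori to establish the finiteness needed).

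With this in hand, the right-hand side becomes $H^i_\cont(G_K, V \whotimes_A \calR_A^\infty(\Gamma_K)(\chi_\univ^{-1}))$, which by Proposition~\ref{P:comparison with Galois cohomology} is identified with $H^i_{\varphi,\gamma_K}(\bbD_\rig(V \whotimes_A \calR_A^\infty(\Gamma_K)(\chi_\univ^{-1})))$. By the base-change compatibility of $\bbD_\rig$ (Theorem~\ref{T:Drig}) and its compatibility with tensor products, together with the definition of $\Dfm$, the $(\varphi,\Gamma_K)$-module inside is canonically $\Dfm(\bbD_\rig(V))$. Finally, Proposition~\ref{P:phi cohomology = psi cohomology} and Theorem~\ref{T:comparison iwasawa v.s. deformation} give
\[
H^i_{\varphi,\gamma_K}(\Dfm(\bbD_\rig(V))) \cong H^i_{\psi,\gamma_K}(\Dfm(\bbD_\rig(V))) \cong H^i_\psi(\bbD_\rig(V)),
\]
yielding the desired isomorphism. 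Compatibility with base change $A \to B$ in $\Qp$-affinoid algebras follows by functoriality at each step, together with Theorem~\ref{T:base change}.

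The main obstacle is the second step: justifying the passage from $\Lambda[1/p]$ to $\calR^\infty(\Gamma_K)$ at the level of continuous cohomology. One has to show not just the abstract flatness on each finite slice, but also that the interchange of $\varprojlim_s$ and $H^i_\cont(G_K, -)$ is valid, which is where the finiteness built into our main theorems becomes essential. The remaining identifications are all consequences of results already established in the paper.
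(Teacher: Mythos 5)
Your proposal takes essentially the same route as the paper: Shapiro's lemma, base change from $\Lambda[1/p]$ to $\calR^\infty(\Gamma_K)$, comparison with Galois cohomology via Proposition~\ref{P:comparison with Galois cohomology}, and then the cyclotomic-deformation comparison of Theorem~\ref{T:comparison iwasawa v.s. deformation}. The one place you diverge is at the step you yourself flag as the main obstacle. Where you propose to re-derive the passage from $\Lambda[1/p]$ to $\calR^\infty(\Gamma_K)$ by hand (flatness on affinoid slices, $\bbR^1\varprojlim = 0$, finiteness imported from Theorem~\ref{T:strong Iwasawa cohomology}), the paper simply cites \cite[Theorem~1.6]{pottharst1}, which packages precisely the combination of Shapiro's lemma and the base change to the Fr\'echet--Stein algebra. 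Your sketch has the right ingredients, but note two points of care it glosses over: (i) the flatness of $\Lambda[1/p] \to \calR^{[s,\infty]}(\Gamma_K)$ and the interchange of continuous $G_K$-cohomology with this base change need genuine justification (this is what the cited theorem supplies), and (ii) $\bbD_\rig$ is defined for representations over a $\Qp$-affinoid algebra, so the intermediate object $\bbD_\rig(V \whotimes_A \calR_A^\infty(\Gamma_K)(\chi_\univ^{-1}))$ does not literally make sense; one must work at each finite level $\Lambda_n$ (where the coefficient algebra is affinoid) and then take the inverse limit, which is exactly how $\Dfm$ is built in Definition~\ref{D:Iwasawa deformation space}. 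With those repairs, your chain of identifications matches the paper's.
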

\begin{proof}
By Shapiro's lemma, $H^i_\Iw(G_K,V)  \cong H^i(G_K, V \widehat \otimes_{\Zp} \tilde \Lambda)$, where $\tilde \Lambda = \Zp\llbracket \Gamma\rrbracket$ with the Galois action factoring through $\Gamma$ and via $\iota$.  The corollary follows from a sequence of isomorphisms
\begin{multline*}
H^i_\Iw(G_K,V) \otimes_{\Lambda(\Gamma_K)} \calR^\infty(\Gamma_K) \xrightarrow{\textrm{ \cite[Theorem~1.6]{pottharst1}}} H^i(G_K, V \widehat \otimes_{\Qp} \varprojlim_n \Lambda_n )\\
\xrightarrow{\textrm{Theorem~\ref{P:comparison with Galois cohomology}}} H^i_{\varphi, \gamma_K}(\bbD_\rig(V) \widehat \otimes_{\calR(\pi_K)} \Dfm) \xrightarrow{\textrm{Theorem~\ref{T:comparison iwasawa v.s. deformation}}} H^i_\psi(\bbD_\rig(V)).
\end{multline*}
\end{proof}

\section{Proof of the main theorem}
\label{S:proof}

We now complete the arguments of the previous section by proving
finiteness of Iwasawa cohomology (Theorem~\ref{T:finite Iwasawa
  cohomology}).  The reader will note that, in order to avoid any
vicious circles, we refrain from invoking any results from
Subsection~\ref{S:statement of main results}, although the rest of
Section~\ref{S:finiteness of cohomology} is allowed. The central
argument is to use the duality pairing to play the Iwasawa $H^1$
groups of a given $(\varphi, \Gamma)$-module and its Cartier dual
against each other, forcing them both to be finite. This argument only
works if both Iwasawa $H^2$ groups vanish; the remainder of the
argument is a d\'evissage to reduce to this case, in the spirit of
some arguments from \cite{liu} but not quite identical to any of them.

\subsection{Preliminary reductions}

We make some preliminary reductions for the proof of Theorem~\ref{T:finite Iwasawa cohomology}.

\begin{lemma}
\label{L:theorem for K=>for Qp}
If Theorem~\ref{T:finite Iwasawa cohomology} is true for $K = \Qp$ (and any $(\varphi, \Gamma)$-module over $\calR_A$), then it is true for any finite extension $K$ of $\Qp$.
\end{lemma}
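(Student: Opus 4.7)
The plan is a standard Shapiro-type reduction: pass to the induced $(\varphi, \Gamma)$-module $\Ind_K^{\Qp} M$ over $\calR_A$ (with $\Gamma = \Gamma_{\Qp}$), apply the hypothesis to it, and then descend along the finite free extension $\calR_A^\infty(\Gamma_K) \subseteq \calR_A^\infty(\Gamma)$.

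First I would identify the Iwasawa complex of the induced module. Since finite-index induction agrees with coinduction and $\psi$ acts coordinate-wise on $\Ind_K^{\Qp} M = \Hom_{\ZZ[\Gamma_K]}(\ZZ[\Gamma], M)$, there is a natural isomorphism
\[
\rmC^\bullet_\psi(\Ind_K^{\Qp} M) \cong \rmC^\bullet_\psi(M) \otimes_{\calR_A^\infty(\Gamma_K)} \calR_A^\infty(\Gamma)
\]
of complexes of $\calR_A^\infty(\Gamma)$-modules. By Definition~\ref{D:omega-Gamma}, $\calR_A^\infty(\Gamma) \cong \calR_A^\infty(\Gamma_K) \otimes_{\ZZ[\Gamma_K]} \ZZ[\Gamma]$ is a finite free $\calR_A^\infty(\Gamma_K)$-module of rank $[\Gamma:\Gamma_K]$; moreover, because $\Gamma$ is abelian, every element of $\calR_A^\infty(\Gamma_K)$ commutes with every coset representative of $\Gamma/\Gamma_K$ inside $\calR_A^\infty(\Gamma)$. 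Consequently, restricting scalars from $\calR_A^\infty(\Gamma)$ to $\calR_A^\infty(\Gamma_K)$ yields a (non-canonical) isomorphism of complexes of $\calR_A^\infty(\Gamma_K)$-modules
\[
\rmC^\bullet_\psi(\Ind_K^{\Qp} M) \cong \rmC^\bullet_\psi(M)^{\oplus [\Gamma:\Gamma_K]}.
\]
Applying the hypothesis of the lemma to $\Ind_K^{\Qp} M$ gives $\rmC^\bullet_\psi(\Ind_K^{\Qp} M) \in \bbD_\perf^-(\calR_A^\infty(\Gamma))$, and since any finite projective $\calR_A^\infty(\Gamma)$-module is still finite projective as an $\calR_A^\infty(\Gamma_K)$-module via the finite-free extension, restriction of scalars preserves membership in $\bbD_\perf^-$; thus $\rmC^\bullet_\psi(M)^{\oplus [\Gamma:\Gamma_K]} \in \bbD_\perf^-(\calR_A^\infty(\Gamma_K))$.

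To finish, I would invoke that the full subcategory $\bbD_\perf^-(\calR_A^\infty(\Gamma_K))$ of the derived category is closed under retracts (bounded-above complexes of finite projectives form a thick subcategory). Since $\rmC^\bullet_\psi(M)$ is manifestly a retract of its $[\Gamma:\Gamma_K]$-fold direct sum, we conclude that $\rmC^\bullet_\psi(M) \in \bbD_\perf^-(\calR_A^\infty(\Gamma_K))$, as desired. There is no serious obstacle in this argument; the only point meriting care is confirming that $\rmC^\bullet_\psi(\Ind_K^{\Qp} M) \cong \rmC^\bullet_\psi(M)^{\oplus [\Gamma:\Gamma_K]}$ genuinely holds as $\calR_A^\infty(\Gamma_K)$-module complexes (rather than merely as $A$- or $\ZZ[\Gamma_K]$-module complexes), which is exactly the place where the abelianness of $\Gamma$ is used.
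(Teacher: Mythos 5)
Your proposal is correct and follows the same Shapiro-style reduction as the paper: identify $\rmC^\bullet_\psi(\Ind_K^{\Qp}M)$ with $\Hom_{\ZZ[\Gamma_K]}(\ZZ[\Gamma], \rmC^\bullet_\psi(M)) \cong \rmC^\bullet_\psi(M) \otimes_{\calR_A^\infty(\Gamma_K)} \calR_A^\infty(\Gamma)$ and descend. The paper states this one-line identification and immediately concludes; you usefully flesh out the implicit descent step (restriction of scalars along the finite free extension, then closure of $\bbD_\perf^-$ under retracts), which is exactly the content the paper suppresses.
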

\begin{proof}
If $M$ is a $(\varphi, \Gamma_K)$-module over $\calR_A(\pi_K)$, then we have
\[
\rmC^\bullet_\psi(\Ind_K^{\Qp}M) = \rmC^\bullet_\psi\big(\Hom_{\ZZ[\Gamma_K]}(\ZZ[\Gamma], M)\big) \cong \Hom_{\ZZ[\Gamma_K]}(\ZZ[\Gamma], \rmC^\bullet_\psi(M)).
\]
Hence Theorem~\ref{T:finite Iwasawa cohomology} for $\Ind_K^{\Qp}M$ implies that for $M$.
\end{proof}

\begin{hypothesis}
\label{H:K=Qp}
For the rest of this section, we assume that $K = \Qp$.  We hence write $\calR_A$ for $\calR_A(\pi_K)$, $\calR_A(\Gamma)$ for $\calR_A(\Gamma_K)$, and $\calR_A^\infty(\Gamma)$ for $\calR_A^\infty(\Gamma_K)$.
\end{hypothesis}

\begin{lemma}
\label{L:R_A/I(Gamma) is perfect}
If $I$ is an ideal of $A$, then a complex of $\calR^\infty_{A/I}(\Gamma)$-modules in $\bbD_\perf^-(\calR^\infty_{A/I}(\Gamma))$, when viewed as a complex of $\calR^\infty_A(\Gamma)$-modules, lies in $\bbD_\perf^-(\calR^\infty_A(\Gamma))$.
\end{lemma}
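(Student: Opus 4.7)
The plan is to reduce the lemma to showing that $\calR^\infty_{A/I}(\Gamma)$ itself lies in $\bbD_\perf^-(\calR^\infty_A(\Gamma))$, and then invoke a standard Cartan--Eilenberg-style restriction-of-scalars argument. First I would identify $\calR^\infty_{A/I}(\Gamma) = \calR^\infty(\Gamma) \widehat\otimes_{\Qp} A/I$ with $\calR^\infty_A(\Gamma) \otimes_A A/I$: since $A$ is a noetherian $\Qp$-affinoid algebra, $A/I$ is finitely presented; writing $A/I = \Coker(A^n \to A^m)$ and exploiting the right-exactness of the completed tensor product against finitely presented modules (which reduces to the ordinary algebraic tensor product) yields this identification.

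Next, I would choose a resolution $F_\bullet \to A/I$ by finite free $A$-modules (possibly infinite to the left, but automatic since $A$ is noetherian). By Corollary~\ref{C:R_A flat over A}, $\calR^\infty_A(\Gamma)$ is flat over $A$, so tensoring $F_\bullet$ with $\calR^\infty_A(\Gamma)$ produces a bounded above resolution of $\calR^\infty_{A/I}(\Gamma)$ by finite free $\calR^\infty_A(\Gamma)$-modules. This places $\calR^\infty_{A/I}(\Gamma)$ in $\bbD_\perf^-(\calR^\infty_A(\Gamma))$.

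To finish, I would apply the general principle that whenever $R \to S$ is a ring map with $S \in \bbD_\perf^-(R)$, restriction of scalars carries $\bbD_\perf^-(S)$ into $\bbD_\perf^-(R)$. Represent a given complex by a bounded above complex $P^\bullet$ of finite projective $\calR^\infty_{A/I}(\Gamma)$-modules, with $P^i = 0$ for $i > N$. Each $P^i$ is a direct summand of some $\calR^\infty_{A/I}(\Gamma)^{n_i}$; taking the corresponding direct summand of the $\calR^\infty_A(\Gamma)$-resolution obtained in the previous paragraph produces a bounded above resolution $Q^{\bullet,i} \to P^i$ by finite projective $\calR^\infty_A(\Gamma)$-modules. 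Using projectivity to lift each differential $d^i$ to chain maps $Q^{\bullet,i} \to Q^{\bullet,i+1}$ compatibly (adjusting signs in the standard way), one assembles a Cartan--Eilenberg double complex $Q^{\bullet,\bullet}$ whose total complex is quasi-isomorphic to $P^\bullet$ as a complex of $\calR^\infty_A(\Gamma)$-modules.

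The main obstacle is not conceptual but rather the verification that the total complex is termwise finite projective and bounded above; this works because $P^\bullet$ vanishes above degree $N$ and each column resolution $Q^{\bullet,i}$ vanishes in positive cohomological degrees, so for each total degree $n$ only the finitely many bidegrees $(a,b)$ with $a+b = n$, $a \le 0$, and $b \le N$ contribute. Consequently each term of the total complex is a finite direct sum of finite projective $\calR^\infty_A(\Gamma)$-modules (hence finite projective), and the total complex itself vanishes above degree $N$, yielding the required object of $\bbD_\perf^-(\calR^\infty_A(\Gamma))$.
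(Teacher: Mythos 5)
Your proof is correct and takes essentially the same approach as the paper's: produce a finite free $\calR^\infty_A(\Gamma)$-resolution of $\calR^\infty_{A/I}(\Gamma)$ by tensoring a finite free $A$-resolution of $A/I$ with the flat $A$-algebra $\calR^\infty_A(\Gamma)$, then conclude by restriction of scalars. The paper states the last step tersely as ``the lemma follows from this,'' whereas you fill it in with a standard Cartan--Eilenberg double-complex argument, which is a correct elaboration.
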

\begin{proof}
Any resolution of $A/I$: $\cdots \to A^{\oplus n_1} \to A^{\oplus n_0} \to A/I \to 0$ by finite free $A$-modules  is strict and hence induces an exact sequence
\[
\cdots \to \calR^\infty_A(\Gamma)^{\oplus n_1} \to \calR^\infty_A(\Gamma)^{\oplus n_0} \to \calR^\infty_{A/I}(\Gamma) \to 0.
\]
The lemma follows from this.
\end{proof}

\begin{lemma}
\label{L:finite base change}
Let $M$ be a $(\varphi, \Gamma)$-module over $\calR_A$ and let $B$ be a \emph{finite} $A$-algebra.  Then we have a natural quasi-isomorphism 
\begin{equation}
\label{E:finite base change}
\rmC^\bullet_\psi(M) \stackrel{\bbL}\otimes_{\calR_A^\infty(\Gamma)} \calR_B^\infty(\Gamma) \ \longrightarrow\ \rmC^\bullet_\psi( M  \otimes_A B)
\end{equation}
In particular, if Theorem~\ref{T:finite Iwasawa cohomology} holds for $M$, then it holds for $M  \otimes_A B \cong M\widehat \otimes_A B$.
\end{lemma}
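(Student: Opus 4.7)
The plan is to reduce the derived base change to an undraved tensor product against $B$, taking advantage of the various flatness properties of Robba-type rings over $A$. Since $B$ is finite over the noetherian ring $A$, it is finitely presented as an $A$-module, and one then has $\calR_B^{[s,\infty]}(\Gamma) = \calR_A^{[s,\infty]}(\Gamma) \widehat\otimes_A B = \calR_A^{[s,\infty]}(\Gamma) \otimes_A B$. The $\calR_A^\infty(\Gamma)$-module $\calR_A^\infty(\Gamma) \otimes_A B$ is finitely presented, hence coadmissible by Lemma~\ref{L:schneider-teitelbaum}(7), so Lemma~\ref{L:schneider-teitelbaum}(2)(3) gives that its natural map into $\varprojlim_s (\calR_A^{[s,\infty]}(\Gamma) \otimes_A B) = \calR_B^\infty(\Gamma)$ is an isomorphism. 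The same reasoning applied to the $\calR_A$-module $M$ gives $M \otimes_A B = M \widehat\otimes_A B$, justifying the parenthetical identification in the lemma's statement.

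Next, I would choose a resolution $P_\bullet \to B$ of $B$ by (not necessarily finitely generated) projective $A$-modules. Corollary~\ref{C:R_A flat over A} implies $\calR_A^\infty(\Gamma)$ is flat over $A$, so $\calR_A^\infty(\Gamma) \otimes_A P_\bullet$ is a projective resolution of $\calR_A^\infty(\Gamma) \otimes_A B = \calR_B^\infty(\Gamma)$ as an $\calR_A^\infty(\Gamma)$-module. Therefore
\[
\rmC^\bullet_\psi(M) \stackrel{\bbL}\otimes_{\calR_A^\infty(\Gamma)} \calR_B^\infty(\Gamma)
  \;\cong\;
\mathrm{Tot}\bigl(\rmC^\bullet_\psi(M) \otimes_{\calR_A^\infty(\Gamma)} (\calR_A^\infty(\Gamma) \otimes_A P_\bullet) \bigr)
  \;=\;
\mathrm{Tot}\bigl(\rmC^\bullet_\psi(M) \otimes_A P_\bullet\bigr),
\]
which, again by Corollary~\ref{C:R_A flat over A} (saying both terms of $\rmC^\bullet_\psi(M)$ are $A$-flat), is quasi-isomorphic to $\rmC^\bullet_\psi(M) \otimes_A B$.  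Since forming the mapping fiber of $\psi - 1$ commutes with the exact functor $-\otimes_A B$ applied to the flat module $M$, we obtain $\rmC^\bullet_\psi(M) \otimes_A B = \rmC^\bullet_\psi(M \otimes_A B)$, yielding the asserted quasi-isomorphism \eqref{E:finite base change}.

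For the final assertion, if $\rmC^\bullet_\psi(M) \in \bbD_\perf^-(\calR_A^\infty(\Gamma))$, then applying $-\stackrel{\bbL}\otimes_{\calR_A^\infty(\Gamma)} \calR_B^\infty(\Gamma)$ takes a bounded-above complex of finite projective $\calR_A^\infty(\Gamma)$-modules to a bounded-above complex of finite projective $\calR_B^\infty(\Gamma)$-modules, so the base-changed complex lies in $\bbD_\perf^-(\calR_B^\infty(\Gamma))$; by the quasi-isomorphism just established, the same holds for $\rmC^\bullet_\psi(M \otimes_A B)$. The only real point requiring care is the identification $\calR_B^\infty(\Gamma) = \calR_A^\infty(\Gamma) \otimes_A B$ and the parallel identification for $M$, both of which are handled uniformly by the coadmissibility argument above; after that, the entire proof is just an exercise in chasing flatness.
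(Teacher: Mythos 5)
Your argument is correct, and it is presumably the one the authors had in mind, since the paper's own proof of this lemma consists entirely of the single word ``Straightforward.'' The key ideas you supply --- that $\calR_B^\infty(\Gamma) = \calR_A^\infty(\Gamma) \otimes_A B$ via the Fr\'echet-Stein/coadmissibility machinery, that flatness of $\calR_A^\infty(\Gamma)$ over $A$ lets a projective $A$-resolution of $B$ be transported to a projective $\calR_A^\infty(\Gamma)$-resolution of $\calR_B^\infty(\Gamma)$, and that $A$-flatness of $M$ (which follows from $M$ being finite projective over $\calR_A$, together with Corollary~\ref{C:R_A flat over A}) collapses the derived tensor product down to the underived one --- are exactly the right ones and fit together without gaps. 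The one spot worth stating more explicitly in a final write-up is that Corollary~\ref{C:R_A flat over A} applies to $\calR_A^\infty(\Gamma)$ (which is an $A$-algebra isomorphic to a finite product of copies of $\calR_A^\infty$ by Definition~\ref{D:omega-Gamma}, not one of the rings named in the corollary's statement) and that $M$ itself is $A$-flat by composition of flatness, but these are minor bookkeeping matters rather than genuine gaps.
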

\begin{proof}
Straightforward.
\end{proof}

\begin{lemma}
\label{L:reduced}
Let $A_\red$ denote the reduced quotient of  $A$.
Let $M$ be a $(\varphi,\Gamma)$-module over $\calR_A$.  If
Theorem~\ref{T:finite Iwasawa cohomology} holds for $M \otimes_A A_\red$ then it
holds for $M$.
\end{lemma}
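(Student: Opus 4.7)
The plan is to d\'evisser $M$ along the powers of the nilradical $\gothN \subseteq A$, reducing to modules annihilated by $\gothN$, which one can treat using the hypothesis on $\bar{M} := M \otimes_A A_\red$. Since $A$ is noetherian, $\gothN$ is nilpotent, say $\gothN^n = 0$, giving a finite descending filtration $M = \gothN^0 M \supseteq \gothN^1 M \supseteq \cdots \supseteq \gothN^n M = 0$. Because $\varphi$, $\psi$, and each $\gamma \in \Gamma$ act trivially on $A \subseteq \calR_A$, they are $A$-linear on $M$ and therefore stabilize each $\gothN^i M$. Hence the short exact sequences $0 \to \gothN^{i+1} M \to \gothN^i M \to \gothN^i M / \gothN^{i+1} M \to 0$ yield, upon applying the exact functor $\rmC_\psi^\bullet$, distinguished triangles in $\bbD(\calR_A^\infty(\Gamma))$. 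Since $\bbD_\perf^-(\calR_A^\infty(\Gamma))$ is closed under triangles, descending induction on $i$, starting from the trivial complex $\rmC_\psi^\bullet(\gothN^n M) = 0$, reduces the claim to showing that each $\rmC_\psi^\bullet(\gothN^i M / \gothN^{i+1} M)$ lies in $\bbD_\perf^-(\calR_A^\infty(\Gamma))$.

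To handle a single successive quotient, I would first use that $\calR_A$ is flat over $A$ by Corollary~\ref{C:R_A flat over A} and $M$ is finite projective over $\calR_A$, so $M$ is flat over $A$. Thus the natural map $\gothN^i \otimes_A M \to M$ is injective with image $\gothN^i M$, and
\[
\gothN^i M / \gothN^{i+1} M \;\cong\; (\gothN^i / \gothN^{i+1}) \otimes_A M \;\cong\; (\gothN^i / \gothN^{i+1}) \otimes_{A_\red} \bar{M},
\]
with $\bar{M}$ flat over $A_\red$ by base change. The hypothesis gives $\rmC_\psi^\bullet(\bar M) \in \bbD_\perf^-(\calR_{A_\red}^\infty(\Gamma))$, and Lemma~\ref{L:R_A/I(Gamma) is perfect} upgrades this to $\rmC_\psi^\bullet(\bar M) \in \bbD_\perf^-(\calR_A^\infty(\Gamma))$.

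Next, since $A_\red$ is noetherian, $\gothN^i/\gothN^{i+1}$ admits a (possibly infinite) resolution $\cdots \to A_\red^{b_{i,1}} \to A_\red^{b_{i,0}} \to \gothN^i/\gothN^{i+1} \to 0$ by finite free $A_\red$-modules. Tensoring over $A_\red$ with the flat module $\bar{M}$ preserves exactness, and applying the exact functor $\rmC_\psi^\bullet$ then presents $\rmC_\psi^\bullet(\gothN^i M/\gothN^{i+1} M)$ as quasi-isomorphic to the totalization of a bounded-above bicomplex whose columns are $\rmC_\psi^\bullet(\bar M)^{b_{i,j}}$. Choosing for $\rmC_\psi^\bullet(\bar M)$ a representative of finite projective complexes supported in degrees $(-\infty, 2]$ (automatic since $\rmC_\psi^\bullet$ is itself a two-term complex in degrees $[1,2]$), the totalization in each total degree $n \leq 2$ is the finite sum over $(j,q)$ with $-j+q = n$, $j \geq 0$, $q \leq 2$, of finite projective $\calR_A^\infty(\Gamma)$-modules, and vanishes for $n > 2$. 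This yields $\rmC_\psi^\bullet(\gothN^i M / \gothN^{i+1} M) \in \bbD_\perf^-(\calR_A^\infty(\Gamma))$ and closes the induction.

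The main obstacle is the bookkeeping in the last step: totalizing an unbounded-below complex of objects in $\bbD_\perf^-$ is not formal in general, but here it works because the columns $\rmC_\psi^\bullet(\bar M)$ have uniformly bounded cohomological amplitude, keeping the horizontal index finite in each total degree. Beyond this, the proof is a formal d\'evissage relying only on flatness of $\calR_A$ over $A$, Lemma~\ref{L:R_A/I(Gamma) is perfect}, and exactness of $\rmC_\psi^\bullet$; no input from the slope theory or duality pairings is required.
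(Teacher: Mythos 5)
Your argument is correct in substance and it takes a genuinely different route from the paper's. The paper's proof filters $A$ \emph{as an $A$-module} by submodules whose graded pieces are cyclic, isomorphic to $A/\gothp_i$ for primes $\gothp_i$ (the standard prime filtration of a noetherian ring); each $\gothp_i$ contains $\gothN$, so $A/\gothp_i$ is a quotient of $A_\red$, which lets the paper handle the graded piece $M \otimes_A A/\gothp_i = \bar M \otimes_{A_\red} (A/\gothp_i)$ directly via the finite base change Lemma~\ref{L:finite base change} followed by Lemma~\ref{L:R_A/I(Gamma) is perfect}. You instead filter $M$ by $\gothN^\bullet M$, so your graded pieces $(\gothN^i/\gothN^{i+1}) \otimes_{A_\red} \bar M$ are \emph{not} base changes of $\bar M$ along a ring quotient (since $\gothN^i/\gothN^{i+1}$ need not be cyclic), and this is exactly why you cannot invoke Lemma~\ref{L:finite base change} and must instead resolve $\gothN^i/\gothN^{i+1}$ over $A_\red$ by finite free modules and totalize. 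In effect you re-prove a mild generalization of Lemma~\ref{L:R_A/I(Gamma) is perfect}: for any finite $A_\red$-module $N$, the complex $N \stackrel\bbL\otimes_{A_\red} Q^\bullet$ lies in $\bbD_\perf^-(\calR_A^\infty(\Gamma))$ whenever $Q^\bullet \in \bbD_\perf^-(\calR_{A_\red}^\infty(\Gamma))$. Both routes are fine; the paper's filtration was chosen precisely to make the graded pieces as cyclic as possible so that no new totalization argument is needed.

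Two points in the write-up need correction, though they do not affect the validity of the strategy. First, the parenthetical ``automatic since $\rmC_\psi^\bullet$ is itself a two-term complex in degrees $[1,2]$'' is not a valid justification for choosing a finite projective representative: the terms of $\rmC_\psi^\bullet(\bar M)$ are copies of $\bar M$, which is finite projective over $\calR_{A_\red}(\pi_K)$ but certainly not over $\calR_{A_\red}^\infty(\Gamma)$ or $\calR_A^\infty(\Gamma)$. What you actually need is the hypothesis that $\rmC_\psi^\bullet(\bar M) \in \bbD_\perf^-$ together with Lemma~\ref{L:complex with cohomology capped above} (using that the complex is supported in degrees $[1,2]$) to produce a representative $P^\bullet$ of finite projective modules in degrees at most $2$. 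Second, the order of operations in the final paragraph is backwards: you invoke Lemma~\ref{L:R_A/I(Gamma) is perfect} first to place $\rmC_\psi^\bullet(\bar M)$ in $\bbD_\perf^-(\calR_A^\infty(\Gamma))$, but then the horizontal differentials of your bicomplex, which are matrices with entries in $A_\red$, cannot act on a finite projective $\calR_A^\infty(\Gamma)$-module $P^q$ (such a $P^q$ carries no $A_\red$-module structure). You should form the bicomplex $F^\bullet \otimes_{A_\red} P^\bullet$ with $P^\bullet$ a finite projective representative over $\calR_{A_\red}^\infty(\Gamma)$, so that the $A_\red$-linear maps of $F^\bullet$ are meaningful; conclude that the totalization lies in $\bbD_\perf^-(\calR_{A_\red}^\infty(\Gamma))$; and only then apply Lemma~\ref{L:R_A/I(Gamma) is perfect} to land in $\bbD_\perf^-(\calR_A^\infty(\Gamma))$.
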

\begin{proof}
By d\'evissage, we may write $A$ as a finite successive extension of $A$-modules, each of which is isomorphic to $A /I_i$ for some radical ideal $I_i$ of $A$.  Then $M_i = M \otimes_A A/I_i$ is a $(\varphi, \Gamma)$-module over $\calR_{A/I_i}$.  By
Lemma~\ref{L:finite base change} (noting that $A/I_i$ is finite over $A_\red$), Theorem~\ref{T:finite Iwasawa cohomology} for $M \otimes_A A_\red$ implies Theorem~\ref{T:finite Iwasawa cohomology} for each $M_i$.  Therefore, each $\rmC^\bullet_\psi(M_i)$ belongs to $\bbD_\perf^-(\calR^\infty_{A/I_i}(\Gamma))$ and thus belongs to $\bbD_\perf^-(\calR^\infty_A(\Gamma))$ by Lemma~\ref{L:R_A/I(Gamma) is perfect}.  Now, $\rmC^\bullet_\psi(M)$, as a successive extension of complexes $\rmC^\bullet_\psi(M_i)$, must also lie in $\bbD_\perf^- (\calR^\infty_A(\Gamma))$.
\end{proof}

\begin{lemma}
\label{L:twist by t}
For $M$ a $(\varphi, \Gamma)$-module over $\calR_A$ and any integer $n$,
Theorem~\ref{T:finite Iwasawa cohomology} holds for $M$ if and only if it holds for $t^nM$.
\end{lemma}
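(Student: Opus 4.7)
The plan is to prove the equivalence by a one-step induction on $|n|$, reducing to the comparison of adjacent twists $t^nM$ and $t^{n+1}M$, and invoking Proposition~\ref{P:finite-torsion}. The key point is that Proposition~\ref{P:finite-torsion} applies to \emph{any} $(\varphi,\Gamma)$-module, so twisting by $t$ does not affect its conclusion.

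First, for any integer $n$, I would consider the short exact sequence of $(\varphi,\Gamma)$-modules over $\calR_A$
\[
0 \to t^{n+1}M \to t^nM \to t^nM/t^{n+1}M \to 0,
\]
where the inclusion $t^{n+1}M \subseteq t^nM$ is just multiplication by $t$. The quotient can be rewritten as $(t^nM)/t(t^nM)$, i.e., as the ``mod $t$'' quotient of the $(\varphi,\Gamma)$-module $t^nM$.

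Second, applying Proposition~\ref{P:finite-torsion} to the $(\varphi,\Gamma)$-module $t^nM$, I obtain that $\psi-1$ is surjective on $t^nM/t^{n+1}M$ and that its kernel $(t^nM/t^{n+1}M)^{\psi=1}$ admits a two-term resolution by finite projective $\calR_A^\infty(\Gamma)$-modules. Hence $\rmC^\bullet_\psi(t^nM/t^{n+1}M)$ lies in $\bbD_\perf^\flat(\calR_A^\infty(\Gamma)) \subseteq \bbD_\perf^-(\calR_A^\infty(\Gamma))$.

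Third, the short exact sequence above yields a distinguished triangle
\[
\rmC^\bullet_\psi(t^{n+1}M) \to \rmC^\bullet_\psi(t^nM) \to \rmC^\bullet_\psi(t^nM/t^{n+1}M) \xrightarrow{+1}
\]
in the derived category of $\calR_A^\infty(\Gamma)$-modules. Since the third term always lies in $\bbD_\perf^-(\calR_A^\infty(\Gamma))$, the two-out-of-three property for the subcategory $\bbD_\perf^-$ inside the bounded-above derived category implies that the first term lies in $\bbD_\perf^-(\calR_A^\infty(\Gamma))$ if and only if the second does. In particular, Theorem~\ref{T:finite Iwasawa cohomology} holds for $t^{n+1}M$ if and only if it holds for $t^nM$. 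A straightforward induction on $|n|$ starting from $n=0$ then gives the lemma for every integer $n$.

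There is no substantive obstacle: the only input beyond formal homological algebra is Proposition~\ref{P:finite-torsion}, and it is available for every $(\varphi,\Gamma)$-module, in particular for each $t^nM$, which is what makes the inductive step go through uniformly in both positive and negative directions.
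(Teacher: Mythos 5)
Your proof is correct and takes essentially the same route as the paper, which disposes of the lemma in one line by citing Proposition~\ref{P:finite-torsion} and induction on $n$; you have simply spelled out the short exact sequence $0 \to t^{n+1}M \to t^nM \to (t^nM)/t(t^nM) \to 0$, the resulting distinguished triangle on $\rmC^\bullet_\psi$, and the two-out-of-three property for $\bbD_\perf^-(\calR_A^\infty(\Gamma))$ that the paper leaves implicit.
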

\begin{proof}
This follows from Proposition~\ref{P:finite-torsion} by induction on $n$.
\end{proof}

\subsection{Main theorem in a special case}

Using the Iwasawa duality pairing we established in Definition~\ref{D:Iwasawa pairing} and its compatibility with the Tate local duality as discussed in Proposition~\ref{P:Iwasawa pairing = Tate pairing}, one may deduce the finite projectivity of $M^{\psi=1}$ over $\calR_A^\infty(\Gamma)$ in the special case when $M/(\psi-1) =  M^*/(\psi-1) = 0$.  This is one of the key steps in proving Theorem~\ref{T:finite Iwasawa cohomology}.  We will reduce the proof of the theorem to this special case in the next subsection.

We remind the reader that Hypothesis~\ref{H:K=Qp} is still in force.

\begin{hypothesis}\label{H:reduced vanishing H2Iw}
Throughout this subsection, we assume that $A$ is a \emph{reduced} $\Qp$-affinoid algebra. 
We also take $M$ to be
a $(\varphi, \Gamma)$-module of constant rank $d$ over $\calR_A$ such that $M / (\psi-1) = M^*/(\psi-1)=0$.
\end{hypothesis}

\begin{lemma}
\label{L:lifting elements in M^psi=1}
For any $(z, \eta) \in \Max(\calR_A^\infty(\Gamma))$, we have $M^{\psi=1} \otimes_{\calR_A^\infty(\Gamma)} \kappa_{(z, \eta)} \cong H^1_{\varphi, \gamma_{\Qp}}(M_z \otimes \eta^{-1}))$, and the same holds for $M^*$ in place of $M$.
\end{lemma}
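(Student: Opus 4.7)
The plan is to compute the derived base change $\rmC^\bullet_\psi(M) \Lotimes_{\calR_A^\infty(\Gamma)} \kappa_{(z,\eta)}$ in two ways and equate the $H^1$ terms. The argument then applies to $M^*$ identically, since the hypotheses on $M$ and $M^*$ are symmetric.

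First, the assumption $M/(\psi-1) = 0$ means the map $\psi-1: M \to M$ is surjective, so the Iwasawa complex $\rmC^\bullet_\psi(M) = [M \xrightarrow{\psi-1} M]$ (in degrees $1,2$) has trivial $H^2$ and $H^1 = M^{\psi=1}$. Hence $\rmC^\bullet_\psi(M)$ is quasi-isomorphic in the derived category of $\calR_A^\infty(\Gamma)$-modules to $M^{\psi=1}[-1]$. Applying $(-) \Lotimes_{\calR_A^\infty(\Gamma)} \kappa_{(z,\eta)}$, the resulting complex has
\[
H^1 = M^{\psi=1} \otimes_{\calR_A^\infty(\Gamma)} \kappa_{(z,\eta)}.
\]

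Second, I would factor the base change $\calR_A^\infty(\Gamma) \to \kappa_{(z,\eta)}$ as $\calR_A^\infty(\Gamma) \to \calR_{\kappa_z}^\infty(\Gamma) \to \kappa_{(z,\eta)}$. The first stage is exact on $\rmC_\psi^\bullet(M)$: since $M$ is flat over $A$ by Corollary~\ref{C:R_A flat over A}, one has $\rmC^\bullet_\psi(M) \Lotimes_A \kappa_z \simeq \rmC^\bullet_\psi(M_z)$. The second stage is exactly the content of Proposition~\ref{P:psi-gamma complex v.s. psi-complex} applied to the affinoid algebra $\kappa_z$, producing the quasi-isomorphism
\[
\rmC^\bullet_\psi(M_z) \Lotimes_{\calR_{\kappa_z}^\infty(\Gamma)} \kappa_{(z,\eta)} \;\xrightarrow{\sim}\; \rmC^\bullet_{\psi,\gamma_{\Qp}}(M_z \otimes \eta^{-1}).
\]
Taking $H^1$ and invoking Proposition~\ref{P:phi cohomology = psi cohomology} to identify $H^1_{\psi,\gamma_{\Qp}}$ with $H^1_{\varphi,\gamma_{\Qp}}$ produces $H^1_{\varphi,\gamma_{\Qp}}(M_z \otimes \eta^{-1})$. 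Equating the two computations of $H^1$ gives the desired isomorphism.

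The case of $M^*$ is treated by the same argument using $M^*/(\psi-1)=0$, together with the canonical identification $(M^*)_z \otimes \eta^{-1} \cong (M_z \otimes \eta)^*$ coming from the compatibility of Cartier duality with specialization and twisting. There is no serious obstacle in this lemma; the only point requiring care is verifying that the two-stage base change indeed computes $\rmC^\bullet_\psi(M) \Lotimes_{\calR_A^\infty(\Gamma)} \kappa_{(z,\eta)}$, which is guaranteed by the $A$-flatness of $M$ so that the passage through $\kappa_z$ introduces no higher Tors.
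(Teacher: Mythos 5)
Your proposal is correct and takes essentially the same approach as the paper. The paper's proof invokes the universal-coefficient spectral sequence $\Tor_i^{\calR_A^\infty(\Gamma)}(H^j_\psi(M), \kappa_{(z,\eta)}) \Rightarrow H^{j-i}_{\psi,\gamma_{\Qp}}(M_z \otimes \eta^{-1})$, noting it degenerates since $H^2_\psi(M)=0$; your observation that $\rmC^\bullet_\psi(M) \simeq M^{\psi=1}[-1]$ is exactly this degeneration, and your two-stage base change (through $\calR_{\kappa_z}^\infty(\Gamma)$ via $A$-flatness of $M$, then via Proposition~\ref{P:psi-gamma complex v.s. psi-complex}) makes explicit the identification of the abutment that the paper leaves implicit. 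The only superfluous remark is the appeal to $(M^*)_z \otimes \eta^{-1} \cong (M_z \otimes \eta)^*$: since the statement for $M^*$ is literally the statement for $M$ with $M$ replaced by $M^*$ throughout (and $M^*/(\psi-1)=0$ is part of the standing hypothesis), no such compatibility is needed here.
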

\begin{proof}
Since $M / (\psi-1)=0$, the spectral sequence $E_2^{j,-i} =\Tor^{\calR_A^\infty(\Gamma)}_i \big(H^j_{\psi}(M), \kappa_{(z, \eta)} \big) \Rightarrow H^{j-i}_{\psi, \gamma_{\Qp}}(M_z \otimes \eta^{-1})$  stabilizes at $E_2$, giving the isomorphism in the lemma.  The same argument holds with $M$ replaced by $M^*$.
\end{proof}

\begin{lemma}
\label{L:pointwise cohomology dimension}
For any $(z, \eta) \in \Max(\calR_A^\infty(\Gamma))$,
we have $H^i_{\varphi, \gamma_{\Qp}}(M_z \otimes \eta^{-1}) = H^i_{\varphi, \gamma_{\Qp}}(M^*_z \otimes \eta) =0$ for $i=0,2$.  Hence $\dim_{\kappa_{(z,\eta)}}H^1_{\varphi,\gamma_{\Qp}}(M_z \otimes \eta^{-1}) = \dim_{\kappa_{(z,\eta)}}H^1_{\varphi,\gamma_{\Qp}}(M^*_z \otimes \eta) = d$.
\end{lemma}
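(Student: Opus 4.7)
The plan is to deduce the vanishing statements from the hypothesis $M/(\psi-1) = M^*/(\psi-1) = 0$ together with pointwise Tate local duality (Theorem~\ref{T:Liu}(3)), and then extract the dimension of $H^1$ from the Euler-Poincar\'e formula (Theorem~\ref{T:Liu}(2)). Throughout, note that $M_z \otimes \eta^{-1}$ is a $(\varphi, \Gamma)$-module over $\calR_{\kappa_{(z,\eta)}}$ (a finite extension of $\Qp$), and its Cartier dual is canonically $M^*_z \otimes \eta$, so Theorem~\ref{T:Liu} is available.

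First I would show $H^2_{\psi,\gamma_{\Qp}}(M_z \otimes \eta^{-1}) = 0$. Since $|\Delta_K| \in \{1,2\}$ is invertible in $A$, the idempotent $e = \tfrac{1}{|\Delta_K|}\sum_{\delta \in \Delta_K}\delta$ yields an $A[\psi]$-stable splitting $M = M^{\Delta_K} \oplus (1-e)M$, so the hypothesis $M/(\psi-1) = 0$ forces $M^{\Delta_K}/(\psi-1) = 0$. Reduction mod $\gothm_z$ is right exact, and tensoring with $\eta^{-1}$ affects only the $\Gamma$-action, not $\psi$; hence $(M_z \otimes \eta^{-1})^{\Delta_K}/(\psi-1) = 0$. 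Inspection of the Herr-type complex in Definition~\ref{D:varphi Gamma cohomology} shows $H^2_{\psi,\gamma_{\Qp}}(M_z \otimes \eta^{-1})$ is a quotient of this module, so it vanishes; by Proposition~\ref{P:phi cohomology = psi cohomology} the corresponding $H^2_{\varphi,\gamma_{\Qp}}$ also vanishes. The same argument with $M^*$ in place of $M$ gives $H^2_{\varphi,\gamma_{\Qp}}(M^*_z \otimes \eta) = 0$.

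Next, Tate local duality (Theorem~\ref{T:Liu}(3)) for the pair $(M_z \otimes \eta^{-1}, M^*_z \otimes \eta)$ of Cartier-dual $(\varphi,\Gamma)$-modules over $\calR_{\kappa_{(z,\eta)}}$ pairs $H^0$ of one perfectly with $H^2$ of the other; therefore the vanishings in the previous paragraph yield $H^0_{\varphi,\gamma_{\Qp}}(M_z \otimes \eta^{-1}) = H^0_{\varphi,\gamma_{\Qp}}(M^*_z \otimes \eta) = 0$.

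Finally, the Euler-Poincar\'e formula (Theorem~\ref{T:Liu}(2)) applied to $M_z \otimes \eta^{-1}$ (of rank $d$ and over $\Qp$, so $[K:\Qp] = 1$) gives $\sum_i (-1)^i \dim_{\kappa_{(z,\eta)}} H^i_{\varphi,\gamma_{\Qp}}(M_z \otimes \eta^{-1}) = -d$, and combined with the established vanishing in degrees $0,2$ forces $\dim H^1_{\varphi,\gamma_{\Qp}}(M_z \otimes \eta^{-1}) = d$; the same reasoning applies to $M^*_z \otimes \eta$. There is no serious obstacle: the entire argument is a direct pointwise application of the theorems of Liu, once one observes that the cokernel-of-$(\psi-1)$ hypothesis is stable under taking $\Delta_K$-invariants, reducing mod $\gothm_z$, and twisting by $\eta^{-1}$.
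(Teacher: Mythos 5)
Your proof is correct and follows essentially the same route as the paper: deduce vanishing of $H^2_\psi$ from the cokernel hypotheses, then $H^0$ vanishing by Tate duality, then read off $\dim H^1$ from the Euler characteristic formula. The paper is slightly more economical at the first step, using Proposition~\ref{P:psi-gamma complex v.s. psi-complex} and right-exactness of $\otimes$ in top degree to identify $H^2_{\psi,\gamma_{\Qp}}(M_z\otimes\eta^{-1}) \cong M/(\psi-1,\gothm_{(z,\eta)})$, which visibly vanishes; your version unwinds the same fact via the explicit Herr complex.

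One minor imprecision in your $i=2$ step: the inference ``$M^{\Delta_K}/(\psi-1)=0$, hence $(M_z\otimes\eta^{-1})^{\Delta_K}/(\psi-1)=0$'' implicitly identifies $(M_z\otimes\eta^{-1})^{\Delta_K}$ with $(M^{\Delta_K})_z\otimes\eta^{-1}$, which is only right when $\eta|_{\Delta_K}$ is trivial. When $p=2$ and $\eta|_{\Delta}$ is nontrivial, $(M_z\otimes\eta^{-1})^{\Delta}$ is the $\eta|_{\Delta}$-isotypic summand of $M_z$, not the $\Delta$-invariants. Your own idempotent observation supplies the fix: the isotypic decomposition of $M$ under $\Delta$ is $\psi$-stable, so $M/(\psi-1)=0$ forces \emph{every} isotypic summand (not only $M^{\Delta}$) to have vanishing cokernel of $\psi-1$, in particular the $\eta|_{\Delta}$-summand.
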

\begin{proof}
For $i=2$, this follows from the fact that $H^2_{\psi, \gamma_{\Qp}}(M_z \otimes \eta^{-1}) = M / (\psi-1, \gothm_{(z,\eta)})$ and $H^2_{\psi, \gamma_{\Qp}}(M^*_z \otimes \eta) = M^* / (\psi-1, \gothm_{(z,\eta^{-1})})$.  The result for $H^0_{\varphi, \gamma_{\Qp}}$ follows from 
Tate duality over $\kappa_{(z,\eta)}$ (Theorem~\ref{T:Liu}(3)).  The dimension of $H^1_{\varphi, \gamma_{\Qp}}$ is computed using the Euler characteristic formula (Theorem~\ref{T:Liu}(2)).
\end{proof}

\begin{lemma}
\label{C:pointwise Iwasawa cohomology}
For any $z \in \Max(A)$, we have $M_z/(\psi-1) = M^*_z/(\psi-1) = 0$.  Hence $M_z^{\psi=1}$ and $(M^*_z)^{\psi=1, \iota}$ are free $\calR_{\kappa_z}^\infty(\Gamma)$-modules of rank $d$ and are perfect dual to each other.
\end{lemma}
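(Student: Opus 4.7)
The plan is to deduce the statement as a pointwise specialization of the global vanishing hypotheses, and then invoke Proposition~\ref{P:Iwasawa pairing over fields} in the residue field case $\kappa_z$, which is a finite extension of $\Qp$. Throughout, I would first record the compatibility $M^*_z \cong (M_z)^*$ of Cartier duals with specialization: this is routine because $M$ is finite projective over $\calR_A$ (so $\Hom$ commutes with base change), because $\calR_A$ is flat over $A$ by Corollary~\ref{C:R_A flat over A}, and because one has $\calR_A \otimes_A \kappa_z \cong \calR_{\kappa_z}$ from the construction of $\calR_A$ in Subsection~\ref{S:half-open}.

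Next I would verify that $M_z/(\psi-1) = 0$ and $M^*_z/(\psi-1) = 0$. Under Hypothesis~\ref{H:reduced vanishing H2Iw}, the operators $\psi-1$ on $M$ and on $M^*$ are surjective; since $\otimes_A \kappa_z$ is right exact, these surjections descend to $M_z$ and $M^*_z$, giving the desired vanishings.

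Having placed $M_z$ and $(M_z)^*$ within the scope of Proposition~\ref{P:Iwasawa pairing over fields}, I would apply part~(2) of that proposition to obtain a perfect $\calR_{\kappa_z}^\infty(\Gamma)$-bilinear Iwasawa pairing
\[
\{-,-\}_\Iw: M_z^{\psi=1} \times (M^*_z)^{\psi=1,\iota} \to \calR_{\kappa_z}^\infty(\Gamma),
\]
identifying each factor with the $\calR_{\kappa_z}^\infty(\Gamma)$-linear dual of the other; in particular both factors are finite free. To compute the rank, I would use that the canonical surjection $\varphi-1: M_z^{\psi=1} \twoheadrightarrow \scrC_{M_z} = (\varphi-1)M_z^{\psi=1}$ is an isomorphism, since its kernel $M_z^{\varphi=1}$ is forced to vanish by part~(1) of the same proposition (which applies because $M^*_z/(\psi-1) = 0$). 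Corollary~\ref{C:scrC is free} then gives $\rank_{\calR_{\kappa_z}^\infty(\Gamma)} \scrC_{M_z} = [\Qp:\Qp] \cdot d = d$, and symmetrically for the dual.

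There is no substantial obstacle in this argument; the lemma really is an immediate pointwise instance of the already-established field-case result, with the only bookkeeping being the routine compatibility of duals and specialization in the first step.
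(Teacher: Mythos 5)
Your proposal is correct and follows essentially the same route as the paper: the paper declares the vanishing of $M_z/(\psi-1)$ and $M^*_z/(\psi-1)$ ``obvious'' (it is exactly the right-exactness argument you give), and then cites Proposition~\ref{P:Iwasawa pairing over fields}(2) for the freeness and perfect duality. Your unwinding of the rank computation via $M_z^{\varphi=1}=0$ and Corollary~\ref{C:scrC is free} is precisely what is packaged inside the proof of that proposition, so nothing is genuinely different.
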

\begin{proof}
The first statement is obvious, while the second one follows from Proposition~\ref{P:Iwasawa pairing over fields}(2).
\end{proof}

\begin{lemma}
\label{L:injectivity to H^1}
We have injective homomorphisms
\[
M^{\psi=1} \hookrightarrow \prod_{(z,\eta)\in \Max(\calR_A^\infty(\Gamma))}H^1_{\psi, \gamma_{\Qp}}(M_z \otimes \eta^{-1})\ \textrm{ and }\ (M^*)^{\psi=1} \hookrightarrow \prod_{(z,\eta)\in \Max(\calR_A^\infty(\Gamma))}H^1_{\psi, \gamma_{\Qp}}(M_z^* \otimes \eta).
\]
\end{lemma}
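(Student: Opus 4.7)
The plan is to factor the specialization map $M^{\psi=1}\to H^1_{\psi,\gamma_{\Qp}}(M_z\otimes\eta^{-1})$ through two stages
\[
M^{\psi=1}\;\longrightarrow\;M_z^{\psi=1}\;\longrightarrow\;M_z^{\psi=1}/\gothm_\eta M_z^{\psi=1}\;\cong\;H^1_{\psi,\gamma_{\Qp}}(M_z\otimes\eta^{-1}),
\]
and check injectivity of each stage. The final identification is Lemma~\ref{L:lifting elements in M^psi=1} applied to $M_z$ (which inherits $M_z/(\psi-1)=0$ from the hypothesis on $M$), combined with Proposition~\ref{P:phi cohomology = psi cohomology}. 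The identification $M^{\psi=1}/\gothm_zM^{\psi=1}\stackrel\sim\to M_z^{\psi=1}$ needed to set up the factorization comes from tensoring the short exact sequence $0\to M^{\psi=1}\to M\xrightarrow{\psi-1}M\to 0$ (right-surjectivity being the hypothesis $M/(\psi-1)=0$) with $A/\gothm_z$; the required vanishing of $\Tor_1^A(A/\gothm_z,M)$ is ensured by the flatness of $M$ over $A$ from Corollary~\ref{C:R_A flat over A}.

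For the second-stage injection $M_z^{\psi=1}\hookrightarrow\prod_\eta M_z^{\psi=1}/\gothm_\eta M_z^{\psi=1}$, Lemma~\ref{C:pointwise Iwasawa cohomology} provides that $M_z^{\psi=1}$ is a finite free module over $\calR_{\kappa_z}^\infty(\Gamma)$. The ring $\calR_{\kappa_z}^\infty(\Gamma)$ is a finite \'etale extension of $\calR_{\kappa_z}^\infty(\Gamma_n)$ for $\Gamma_n\subseteq\Gamma$ a sufficiently small torsion-free open subgroup, and the latter is the ring of rigid analytic functions on an open unit disc over $\kappa_z$; any such ring has zero Jacobson radical, since each affinoid piece $\calR_{\kappa_z}^{[s,\infty]}(\Gamma_n)$ is reduced and a function vanishing at every closed point of an open disc must vanish. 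Finite freeness then yields the injectivity.

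For the first-stage injection $M^{\psi=1}\hookrightarrow\prod_z M_z^{\psi=1}$, its kernel $\bigcap_z\gothm_z M^{\psi=1}$ is contained in $\bigcap_z\gothm_z M$, so it suffices to show $\bigcap_z\gothm_z M=0$ in $M$. Writing $M=\varprojlim_s M^{[s,r_0]}$, the transitions $M^{[s,r_0]}\to M^{[s',r_0]}$ for $s\leq s'$ are induced by the restriction maps $\calR_A^{[s,r_0]}\to\calR_A^{[s',r_0]}$ and are injective (a Laurent series converging on the larger annulus and vanishing on the smaller one vanishes), so $M\hookrightarrow M^{[s,r_0]}$ for each $s>0$; the question reduces to $\bigcap_z\gothm_zM^{[s,r_0]}=0$. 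Since $M^{[s,r_0]}$ is a direct summand of a finite free $\calR_A^{[s,r_0]}$-module, a direct computation on the free case gives
\[
\bigcap_z\gothm_zM^{[s,r_0]}=\Bigl(\bigcap_z\gothm_z\calR_A^{[s,r_0]}\Bigr)\cdot M^{[s,r_0]}.
\]
Finally, an element of $\calR_A^{[s,r_0]}$ is a Laurent series $\sum_n a_nT^n$ with $a_n\in A$, and its reduction modulo $\gothm_z\calR_A^{[s,r_0]}$ is $\sum_n a_n(z)T^n$; belonging to the entire intersection forces every $a_n\in\bigcap_z\gothm_z$, which vanishes by the reducedness of $A$.

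The argument for $M^*$ is formally identical, using $M^*/(\psi-1)=0$. The principal subtlety is the descent from the opaque module $M^{\psi=1}$ to the Banach-level $M^{[s,r_0]}$, where the explicit Laurent-series description of $\calR_A^{[s,r_0]}$ together with the reducedness of $A$ force the vanishing of the relevant intersection; no further input from the Iwasawa pairing or from cohomological duality is needed for this lemma.
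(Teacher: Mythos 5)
Your proof is correct and takes essentially the same approach as the paper: first use the injection $\calR_A \hookrightarrow \prod_{z} \calR_{\kappa_z}$ (forced by the reducedness of $A$) to reduce to the fiber $M_z^{\psi=1}$, then use the finite freeness of $M_z^{\psi=1}$ over $\calR_{\kappa_z}^\infty(\Gamma)$ and the vanishing of the Jacobson radical of that ring. The Tor argument identifying $M^{\psi=1}/\gothm_z M^{\psi=1}$ with $M_z^{\psi=1}$ is correct but not strictly necessary for this step, since $M^{\psi=1} \hookrightarrow M \hookrightarrow \prod_z M_z$ already suffices.
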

\begin{proof}
We indicate the proof of the first injection, and the second can be proved in the same way.
We first observe that $\calR_A \hookrightarrow \prod_{z \in \Max(A)} \calR_{\kappa_z}$.  It follows that $M \hookrightarrow \prod_{z \in \Max(A)} M_z$.  As $\calR^\infty_A(\Gamma)$-submodules, we have $M^{\psi=1}
\hookrightarrow \prod_{z \in \Max(A)} M_z^{\psi=1}$.
By Corollary~\ref{C:pointwise Iwasawa cohomology}, each $M_z^{\psi=1}$ is a  finite free module over $\calR_{\kappa_z}^\infty(\Gamma)$. Since $\calR^\infty_{\kappa_z}(\Gamma) \hookrightarrow \prod_{\eta \in \Max(\calR^\infty_{\kappa_z}(\Gamma))} \calR^\infty_{\kappa_z}(\Gamma)/\gothm_\eta$, we can continue the above inclusion for $M^{\psi=1}$ to get
\[
M^{\psi=1} \hookrightarrow \prod_{(z, \eta) \in \Max(\calR_A^\infty(\Gamma))} \big(M_z^{\psi=1}/\bar \gothm_{(z,\eta)}\big) \cong \prod_{(z, \eta) \in \Max(\calR_A^\infty(\Gamma))}H^1_{\psi, \gamma_{\Qp}}(M_z \otimes \eta^{-1}).
\]
\end{proof}

\begin{prop} 
\label{P:generators nearby}
Fix $(z, \eta) \in \Max(\calR_A^\infty(\Gamma))$.
Suppose that $S$ is a finite subset of $M^{\psi=1}$ generating $M^{\psi=1}/\gothm_{(z,\eta)} \cong H^1_{\varphi, \gamma_{\Qp}}(M_z \otimes \eta^{-1})$.
Then there exists $f \in \calR_A^\infty(\Gamma)$ which is nonzero modulo $\gothm_{(z, \eta)}$,
such that every element of $f M^{\psi=1}$ is equal to an $\calR_A^\infty(\Gamma)$-linear combination of $S$.
\end{prop}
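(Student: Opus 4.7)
The plan is to leverage the Iwasawa duality pairing $\{-,-\}_\Iw$ of Definition~\ref{D:Iwasawa pairing}, which by Proposition~\ref{P:Iwasawa pairing = Tate pairing} specializes at each closed point of $\Max(\calR_A^\infty(\Gamma))$, up to a nonzero scalar, to the Tate pairing; under Hypothesis~\ref{H:reduced vanishing H2Iw} the latter is perfect at every point. Since by Lemmas~\ref{L:lifting elements in M^psi=1} and~\ref{L:pointwise cohomology dimension} the reduction $M^{\psi=1}/\gothm_{(z,\eta)} \cong H^1_{\varphi,\gamma_{\Qp}}(M_z \otimes \eta^{-1})$ has dimension exactly $d$, we may discard elements of $S$ and assume $S = \{x_1, \ldots, x_d\}$ reduces to a $\kappa_{(z,\eta)}$-basis.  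Applying Lemma~\ref{L:lifting elements in M^psi=1} to $M^*$ and using perfect Tate duality, choose $y_1, \ldots, y_d \in (M^*)^{\psi=1}$ whose reductions in $(M^*)^{\psi=1}/\gothm_{(z,\eta^{-1})}$ form the dual basis to $\{\bar x_i\}$; then the pairing matrix $B := (\{x_i, y_j\}_\Iw)_{ij} \in M_d(\calR_A^\infty(\Gamma))$ satisfies $B \equiv I_d \pmod{\gothm_{(z,\eta)}}$, so $f := \det(B)$ is nonzero at $(z,\eta)$.

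For an arbitrary $x \in M^{\psi=1}$, set $c_i := \sum_j (\Adj B)_{ji}\{x, y_j\}_\Iw \in \calR_A^\infty(\Gamma)$.  Using $\calR_A^\infty(\Gamma)$-bilinearity of $\{-,-\}_\Iw$ and the cofactor identity $\Adj(B) \cdot B = f \cdot I_d$, the element $w := fx - \sum_i c_i x_i \in M^{\psi=1}$ satisfies $\{w, y_k\}_\Iw = 0$ for every $k$.  Now at any $(z', \eta') \in \Max(\calR_A^\infty(\Gamma))$ where $f(z', \eta') \neq 0$, the reduction $\bar B$ is invertible, forcing the $\bar y_j$ to be a basis of $H^1_{\varphi,\gamma_{\Qp}}(M^*_{z'} \otimes \eta')$ by the dimension count of Lemma~\ref{L:pointwise cohomology dimension}; then Proposition~\ref{P:Iwasawa pairing = Tate pairing} combined with perfectness of Tate duality (Theorem~\ref{T:Liu}(3)) yields $\bar w = 0$ in $M^{\psi=1}/\gothm_{(z', \eta')}$.

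To finish, we must convert this pointwise vanishing of $w$ into an actual equality.  Let $e_{i_0} \in \calR_A^\infty(\Gamma)$ be the idempotent cutting out the connected component of $\Max(\calR_A^\infty(\Gamma))$ containing $(z,\eta)$; such an idempotent exists because $\calR^\infty(\Gamma)$ decomposes into finitely many pieces indexed by characters of $\Gamma_\tors$.  The hard part is to establish $e_{i_0}w = 0$ via a strengthening of Lemma~\ref{L:injectivity to H^1}.  Because $A$ is reduced, one has $A \hookrightarrow \prod_{z' \in Z'} \kappa_{z'}$ for any Zariski dense $Z' \subseteq \Max A$, hence $M^{\psi=1} \hookrightarrow \prod_{z' \in Z'} M_{z'}^{\psi=1}$; combined with the finite freeness of $M_{z'}^{\psi=1}$ over $\calR_{\kappa_{z'}}^\infty(\Gamma)$ from Corollary~\ref{C:pointwise Iwasawa cohomology} (and the integral domain structure on each component of $\calR_{\kappa_{z'}}^\infty(\Gamma)$), one obtains an injection of $M^{\psi=1}$ into the product of fibers $M_{z'}^{\psi=1}/\gothm_{(z',\eta'^{-1})}$ over any family of points $(z',\eta')$ whose projection to $\Max A$ is Zariski dense and which is fiberwise dense over generic $z'$.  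The family of $(z',\eta')$ with either $e_{i_0}(z',\eta') = 0$ or $f(z',\eta') \neq 0$ has this property: on the $(z,\eta)$-component the nonvanishing locus of $f$ is Zariski dense, while other components are killed by $e_{i_0}$.  This forces $e_{i_0}w = 0$, so $f' := f \cdot e_{i_0}$ satisfies $f'(z,\eta) \neq 0$ and $f' x = \sum_i (e_{i_0}c_i) x_i$ for every $x \in M^{\psi=1}$.  The main obstacle is this final density argument, which requires careful bookkeeping of the reducedness of $A$, the integral-domain structure on the components of $\calR^\infty(\Gamma)$, and the pointwise freeness supplied by Corollary~\ref{C:pointwise Iwasawa cohomology}.
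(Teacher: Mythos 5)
Your construction up through the matrix $B = (\{x_i,y_j\}_\Iw)$, its adjugate, and the element $w := fx - \sum_i c_i x_i$ with $\{w,y_k\}_\Iw = 0$ coincides with the paper's argument (where the matrix is called $F$), and the pointwise vanishing of $w$ on $\{f \neq 0\}$ is correct. The divergence, and the gap, comes at the end. You set $f = \det(B)$ and then attempt to show $e_{i_0}w = 0$ via a \emph{strengthened} version of Lemma~\ref{L:injectivity to H^1}, namely injectivity of $M^{\psi=1}$ into the product of fibers over the family $T$ of points where either $e_{i_0}$ vanishes or $f$ is nonzero. This step does not go through in general: since $A$ is only assumed reduced, not irreducible, $\Max(A)$ may have an irreducible component $C$ on which the restriction of $f$ to the $i_0$-component $C \times (\text{disc})$ of $\Max(\calR_A^\infty(\Gamma))$ vanishes identically. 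In that case the nonvanishing locus of $f$ in component $i_0$ is not Zariski-dense, your family $T$ omits all of $C \times (\text{disc})$ on the $i_0$-component, and the injectivity of $M^{\psi=1}$ into $\prod_{T}$ fails --- so the vanishing of $e_{i_0}w$ does not follow. You flag this density argument yourself as ``the main obstacle,'' and it is a genuine obstacle, not merely a bookkeeping matter.

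The paper sidesteps the issue entirely by taking $f = \det(B)^2$ rather than $\det(B)$: the element $\det(B)\,w$ vanishes modulo \emph{every} $\gothm_{(z',\eta')}$ --- at points where $\det(B)$ is nonzero because $w$ already vanishes there, and at points where $\det(B)$ vanishes because of the extra factor of $\det(B)$ --- so Lemma~\ref{L:injectivity to H^1} as stated gives $\det(B)\,w = 0$ directly, with no idempotent, no density refinement, and no assumption about irreducibility of $A$. Your argument can be repaired by replacing your $f$ with $f^2$ and applying the lemma in exactly this way; as written, however, it rests on an unproved (and in the stated generality false) strengthening.
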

\begin{proof}
We may choose $S = \{\alpha_1, \dots,\alpha_d\}$ so that it maps to a
basis $S_{(z, \eta)}$ of $H^1_{\varphi, \gamma_{\Qp}}(M_z \otimes
\eta^{-1})$.  By Theorem~\ref{T:Liu}(3), $H^1_{\varphi,
  \gamma_{\Qp}}(M^*_z \otimes \eta)$ may be identified with the dual
space of $H^1_{\varphi, \gamma_{\Qp}}(M_z \otimes \eta^{-1})$; let
$S_{(z, \eta^{-1})}^*$ denote the dual basis to $S_{(z, \eta)}$.  We
may thus lift $S_{(z, \eta^{-1})}^*$ to a finite subset $S^* =
\{\beta_1, \dots, \beta_d\}$ of $(M^*)^{\psi=1}$ by
Lemma~\ref{L:lifting elements in M^psi=1}.

The $n \times n$ matrix $F$ over $\calR_A^\infty(\Gamma)$ defined by
$F_{ij} = \{ \alpha_i, \beta_j \}_\Iw$ reduces to the identity modulo
$\gothm_{(z, \eta)}$, so $\det(F)$ is not zero modulo $\gothm_{(z,
  \eta)}$.  Then the adjugate matrix $G$ of $F$ satisfies $FG = GF =
\det(F) \mathrm{Id}_n$, and $f = \det(F)^2$ has the desired property:
given $\delta \in M^{\psi=1}$, the element
\[
\det(F) \delta - \sum_{i,j} G_{ji} \{ \delta, \beta_j \}_\Iw \alpha_i
\]
pairs to zero with each $\beta_j$.  Thanks to the duality isomorphism
from Theorem~\ref{T:Liu}(3) it vanishes in $H^1_{\varphi,
  \gamma_{\Qp}}(M_{z'} \otimes \eta'^{-1})$ for any $(z', \eta') \in
\Max(\calR_A^\infty(\Gamma))$ for which $S^*$ reduces to a basis,
namely on the nonvanishing locus of $\det(F)$.  Multiplying the above
displayed quantity through by $\det(F)$ gives an element vanishing in
$H^1_{\varphi, \gamma_{\Qp}}(M_{z'} \otimes \eta'^{-1})$ for all $(z',
\eta')$, which must be zero by Lemma~\ref{L:injectivity to H^1}. The
proposition follows.
\end{proof}

\begin{notation}
We set $N^s = M^{\psi=1} \otimes_{\calR_A^\infty(\Gamma)} \calR_A^{[s,\infty]}(\Gamma)$ and $N^{s*} = (M^*)^{\psi=1} \otimes_{\calR_A^\infty(\Gamma)} \calR_A^{[s,\infty]}(\Gamma)$.
Note that we simply took the algebraic tensor product, which means that neither space carries any topology \textit{a priori}.  This partly reflects the difficulties we are trying to work around.
\end{notation}

\begin{cor}
\label{C:N_s finite flat}
For any $s >0$, the $\calR_A^{[s,\infty]}(\Gamma)$-modules $N^s$ and $N^{s*}$ are finite and flat.
\end{cor}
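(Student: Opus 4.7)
The plan is to deduce finite projectivity of $N^s$ (and by the same argument $N^{s*}$) over the noetherian affinoid algebra $\calR_A^{[s,\infty]}(\Gamma)$ from two inputs: local finite generation coming from Proposition~\ref{P:generators nearby}, and constant pointwise fiber dimension $d$ coming from Lemmas~\ref{L:lifting elements in M^psi=1} and~\ref{L:pointwise cohomology dimension}. Flatness will then follow from Lemma~\ref{L:constant dimension function implies flat}(1), since $\calR_A^{[s,\infty]}(\Gamma)$ is reduced (inheriting this from the reduced affinoid algebra $A$).

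First I would verify the constant-rank statement: for every closed point $(z,\eta) \in \Max(\calR_A^{[s,\infty]}(\Gamma))$, Lemma~\ref{L:lifting elements in M^psi=1} identifies
\[
N^s \otimes_{\calR_A^{[s,\infty]}(\Gamma)} \kappa_{(z,\eta)} \cong M^{\psi=1}/\gothm_{(z,\eta)} \cong H^1_{\varphi,\gamma_{\Qp}}(M_z \otimes \eta^{-1}),
\]
and this has dimension exactly $d$ by Lemma~\ref{L:pointwise cohomology dimension}.

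Next I would establish finite generation. Given a closed point $(z_0,\eta_0)$, choose $\alpha_1,\ldots,\alpha_d \in M^{\psi=1}$ lifting a basis at $(z_0,\eta_0)$. Proposition~\ref{P:generators nearby} yields $f \in \calR_A^\infty(\Gamma)$ with $f(z_0,\eta_0) \neq 0$ and $fM^{\psi=1} \subseteq \sum_i \calR_A^\infty(\Gamma) \alpha_i$. Any rational affinoid subdomain $U = \Max(B) \subseteq \Max(\calR_A^{[s,\infty]}(\Gamma))$ containing $(z_0,\eta_0)$ on which $|f|$ is bounded below by a positive constant makes $f$ a unit of $B$, so $N^s \otimes_{\calR_A^{[s,\infty]}(\Gamma)} B$ is generated over $B$ by the images of the $\alpha_i$. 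Quasi-compactness of $\Max(\calR_A^{[s,\infty]}(\Gamma))$ produces a finite cover by such subdomains $\{U_k = \Max(B_k)\}$, each contributing at most $d$ generators drawn from $M^{\psi=1}$. The union $S \subseteq M^{\psi=1}$ of these contributions is finite, and the sub-$\calR_A^{[s,\infty]}(\Gamma)$-module $P \subseteq N^s$ generated by $S$ satisfies $P \otimes B_k = N^s \otimes B_k$ for each $k$. Using that a finite admissible affinoid cover yields a faithfully flat extension $\calR_A^{[s,\infty]}(\Gamma) \to \prod_k B_k$ (equivalently, appealing to Kiehl's theorem to recognize $N^s$ as the global sections of the coherent sheaf built from the $N^s \otimes B_k$), one concludes $P = N^s$, so $N^s$ is finite over $\calR_A^{[s,\infty]}(\Gamma)$. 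With both finite generation and constant pointwise rank in hand, Lemma~\ref{L:constant dimension function implies flat}(1) gives flatness, hence finite projectivity of rank $d$. The identical argument, swapping the roles of $M$ and $M^*$ (using the vanishing of $M^*/(\psi-1)$ built into Hypothesis~\ref{H:reduced vanishing H2Iw}), yields the conclusion for $N^{s*}$.

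The main obstacle is the local-to-global step in the finite generation argument: Proposition~\ref{P:generators nearby} only controls $M^{\psi=1}$ after inverting a function $f$ nonzero at a single prescribed point, so one must orchestrate a finite affinoid cover and then argue that the finitely generated submodule $P$ of $N^s$ built from the corresponding local generators equals $N^s$, rather than merely agreeing with it after every base change $B_k$. This passage relies critically on the admissible affinoid cover inducing a faithfully flat map to the finite product $\prod_k B_k$, a formal-algebraic counterpart of Kiehl's gluing theorem which is not invoked elsewhere in this argument.
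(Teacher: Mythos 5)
Your argument is correct, but the paper's finite-generation step is noticeably leaner than yours. You deduce finiteness of $N^s$ by covering $\Max(\calR_A^{[s,\infty]}(\Gamma))$ with finitely many affinoid subdomains $\Max(B_k)$ on which the relevant $f$'s become invertible, generating locally, and then descending along the faithfully flat map $\calR_A^{[s,\infty]}(\Gamma) \to \prod_k B_k$ (a Kiehl-type gluing). The paper instead observes that since every maximal ideal of $\calR_A^{[s,\infty]}(\Gamma)$ is of the form $\gothm_{(z,\eta)}$ and Proposition~\ref{P:generators nearby} supplies, at each such point, some $f$ not vanishing there, the ideal generated by all such $f$ is the unit ideal; one then writes $1 = \sum_{j} g_j f_j$ as a \emph{finite} combination in $\calR_A^{[s,\infty]}(\Gamma)$, and since each $f_j N^s$ lies in a finitely generated submodule, $N^s = \sum_j g_j f_j N^s$ is finitely generated. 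This avoids affinoid covers, flat descent, and Kiehl's theorem entirely, relying only on $\calR_A^{[s,\infty]}(\Gamma)$ being noetherian and Jacobson (so that an ideal not contained in any maximal ideal is the unit ideal). Your approach buys nothing extra here and introduces the machinery you flagged yourself as the sticking point; the paper's route sidesteps that machinery completely. The flatness step — constant pointwise fiber dimension via Lemmas~\ref{L:lifting elements in M^psi=1} and~\ref{L:pointwise cohomology dimension}, then Lemma~\ref{L:constant dimension function implies flat}(1) over the reduced noetherian ring $\calR_A^{[s,\infty]}(\Gamma)$ — matches the paper's intent.
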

\begin{proof}
All maximal ideals of $\calR_A^{[s,\infty]}(\Gamma)$ are of the form
$\gothm_{(z,\eta)}$, so the ideal generated by all the $f$ as in
Proposition~\ref{P:generators nearby} is the unit ideal.  Writing $1$
in terms of these generators shows that only finitely many such $f$
are in fact necessary.  Hence $N^s$ is generated by the corresponding
lifts of local bases.  The flatness follows from the dimension
calculation in Lemma~\ref{L:pointwise cohomology dimension}.
\end{proof}

\begin{remark}
Since our goal is to prove the finiteness theorem of Iwasawa cohomology, we will have to make some additional efforts to tackle the finite generation properties.  If one is only interested in the finiteness of $(\varphi,\Gamma)$-cohomology, Corollary~\ref{C:N_s finite flat} together with the d\'evissage argument in the next subsection is enough to deduce Theorem~\ref{T:finite cohomology}. 
\end{remark}

\begin{prop}
\label{P:N^s uniformly fp}
The modules $N^s$ form a uniformly finitely presented vector bundle over $\calR_A^\infty(\Gamma)$.  Moreover, if we use $N$ to denote  the module of global sections of the vector bundle $(N^s)$, then $N$ is a finite projective $\calR_A^\infty(\Gamma)$-module.
\end{prop}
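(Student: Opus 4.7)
Each $N^s$ is a finite flat $\calR_A^{[s,\infty]}(\Gamma)$-module by Corollary~\ref{C:N_s finite flat}, and has constant fiber rank $d$ by Lemma~\ref{L:pointwise cohomology dimension}; hence it is finite projective, and the system $(N^s)$ is a vector bundle over $\calR_A^\infty(\Gamma)$. By the remark immediately following Proposition~\ref{P:finite-generation}, once we establish uniform finite generation of the system $(N^s)$, both its uniform finite presentation and the finite projectivity of the global sections $N$ will follow from the version of Proposition~\ref{P:finite-generation}(3) appropriate to the Fr\'echet--Stein algebra $\calR_A^\infty(\Gamma)$ (the proofs of Subsection~\ref{S:half-open} transfer without change, thanks to the structure described in Definition~\ref{D:omega-Gamma}). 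The entire content of the proposition thus reduces to exhibiting a single finite subset of $M^{\psi=1}$ that generates $N^s$ over $\calR_A^{[s,\infty]}(\Gamma)$ for every $s$ in an admissible cover of $(0,\infty]$.

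\textbf{Local generators and patching over a single affinoid.} For each $(z,\eta) \in \Max(\calR_A^\infty(\Gamma))$, Lemma~\ref{L:lifting elements in M^psi=1} and Lemma~\ref{L:pointwise cohomology dimension} together allow us to lift a $\kappa_{(z,\eta)}$-basis of the $d$-dimensional space $H^1_{\varphi,\gamma_{\Qp}}(M_z \otimes \eta^{-1})$ to a set $S_{(z,\eta)} \subset M^{\psi=1}$ of cardinality $d$. Proposition~\ref{P:generators nearby} then produces an element $f_{(z,\eta)} \in \calR_A^\infty(\Gamma)$, nonvanishing at $(z,\eta)$, such that $S_{(z,\eta)}$ generates $f_{(z,\eta)}M^{\psi=1}$. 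For any fixed $s > 0$, the noetherianity of $\calR_A^{[s,\infty]}(\Gamma)$ implies that finitely many of these $f_{(z,\eta)}$'s generate the unit ideal there, and the union of the corresponding $S_{(z,\eta)}$'s then generates $N^s$. What remains is to assemble a single finite generating set that works simultaneously for every $s$ in an admissible cover of $(0,\infty]$ approaching $0$.

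\textbf{Achieving uniformity (main obstacle).} This last step is the principal technical difficulty of the proof. The strategy mirrors the Fr\'echet-topological assembly used in the proofs of Proposition~\ref{P:finite-generation}(1) and Lemma~\ref{L:variant finite generation}: we refine the admissible cover of $(0,\infty]$ into two subcollections $I_\odd$ and $I_\even$ of pairwise disjoint intervals, and construct finitely many global sections of $M^{\psi=1}$ as convergent infinite sums, weighted by powers of $\gamma_K - 1$ and scalars in $\QQ_p$, chosen inductively so that (i) on each $N^{s_i}$ the partial sums perturb the pre-chosen local generators only within the tolerance allowed by Lemma~\ref{L:perturbation-of-generators}, and hence still generate $N^{s_i}$, while (ii) the tails of the sums converge in the Fr\'echet topology at every scale. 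The compatible sequence of Banach norms on $N^{s_i}$ required to run this argument is inherited from Banach norms on $M^{[s_i,\infty]}$ via the embedding $N^{s_i} \hookrightarrow M^{[s_i,\infty]}$, and the growth condition on the operator norm of $(\gamma_K - 1)^k$ needed for convergence is supplied by Proposition~\ref{P:calR_A(Gamma_K) acts on M}(3), after possibly replacing $\gamma_K$ by a topological generator of a sufficiently small open subgroup. Once uniform generation is secured, the conclusion follows as described in the first paragraph.
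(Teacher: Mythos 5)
Your first two paragraphs are sound: each $N^s$ is finite flat of constant rank, so the system $(N^s)$ is a vector bundle over $\calR_A^\infty(\Gamma)$, and the problem indeed reduces to uniform finite generation followed by Proposition~\ref{P:finite-generation}. The gap is in the third paragraph. The key unsupported claim is the existence of an ``embedding $N^{s_i} \hookrightarrow M^{[s_i,\infty]}$'' from which $N^{s_i}$ would inherit a Banach topology compatible with the Fr\'echet topology on $M^{\psi=1}$. No such map exists: the index on $N^{s_i}$ lives in the $\Gamma$-variable, whereas $M^{[s_i,\infty]}$ would have to be a completion in the $\pi_K$-variable, which is not even defined (since $M$ only exists over $\calR_A^{r_0}(\pi_K)$ for $r_0 < C(\pi_K) < 1$). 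Indeed, the text surrounding the definition of $N^s$ explicitly flags that the algebraic tensor product $N^s = M^{\psi=1} \otimes_{\calR_A^\infty(\Gamma)} \calR_A^{[s,\infty]}(\Gamma)$ ``carries no topology \emph{a priori}'' --- exactly the difficulty your argument runs into. Without a topology on $N^{s_i}$ that interacts with the natural map from $M^{\psi=1}$, Lemma~\ref{L:perturbation-of-generators} has no way to bite, and the convergent-series assembly cannot be carried out. Moreover, even granting such a topology, you would still need a \emph{uniform bound on the number of generators} of the $N^{s_i}$ before the assembly of Proposition~\ref{P:finite-generation}(1) can even begin; your patching argument via Proposition~\ref{P:generators nearby} only produces finitely many generators for each fixed $s$, with no control as $s \to 0^+$.

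The paper's proof sidesteps both obstacles at once with an idea you do not mention: for $0 < s \le r_0$ (where $r_0$ comes from Corollary~\ref{C:D^psi=0 duality}), the map $\varphi - 1$ induces a morphism $N^s \otimes_{\calR_A^{[s,\infty]}(\Gamma)} \calR_A^{[s,r_0]}(\Gamma) \to (M^{[s,r_0]})^{\psi=0}$ of finite flat $\calR_A^{[s,r_0]}(\Gamma)$-modules. Applying Proposition~\ref{P:Iwasawa pairing over fields}(3) at each $z \in \Max(A)$ (which is legitimate because Hypothesis~\ref{H:reduced vanishing H2Iw} specializes to each fiber) shows this map is a pointwise isomorphism, hence an isomorphism. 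The target is the restriction to $(0,r_0]$ of the vector bundle $(M^{r_0})^{\psi=0}$, which Theorem~\ref{T:structure of D^psi=0} (via Lemma~\ref{L:(1+pi)phi(D) finite projective} and Corollary~\ref{C:structure of 1+pi varphi^n}) shows to be uniformly finitely presented. This transfers the uniform bound to $(N^s)$ with no topological construction on $N^s$ needed. The Fr\'echet-assembly tool you invoke, Lemma~\ref{L:variant finite generation}, is used in the paper only afterward, in the proof of Theorem~\ref{T:structure of M^psi=1}, to show $M^{\psi=1} = N$ --- and there it is applied to the Fr\'echet module $M^{\psi=1}$ sitting inside the already-known-to-be-projective $N$, not the other way around; using it to \emph{establish} projectivity of $N$, as you suggest, is circular.
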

\begin{proof}
Let $r_0>0$ be the number provided by Corollary~\ref{C:D^psi=0 duality}. 
For $0<s\leq r_0$, consider the natural map
\[
\varphi-1: N^s \otimes_{\calR_A^{[s, \infty]}(\Gamma)}\calR_A^{[s,r_0]}(\Gamma) \cong M^{\psi=1} \otimes_{\calR_A^\infty(\Gamma)}
\calR_A^{[s,r_0]}(\Gamma) \to (M^{[s,r_0]})^{\psi=0}
\]
of two finite flat $\calR_A^{[s,r_0]}(\Gamma)$-modules (the latter by Corollary~\ref{S:statement of main results}).  By Proposition~\ref{P:Iwasawa pairing over fields}(3), it is an isomorphism modulo any $\gothm_z$ for $z \in \Max(A)$.  Therefore, it has to be an isomorphism itself.  Since $(M^{[s,r_0]})^{\psi=0}$ as $s\to0^+$ form a uniformly  finitely presented vector bundle over $\calR_A^{r_0}(\Gamma)$, we see that $N^s$ also form a uniformly  finitely presented vector bundle over $\calR_A^\infty(\Gamma)$.
The last statement now follows from Proposition~\ref{P:finite-generation}(3).
\end{proof}

\begin{lemma}
\label{L:M^psi=1 injects to N}
The natural map $M^{\psi=1} \to N$ is injective.
\end{lemma}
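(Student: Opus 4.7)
The plan is to prove injectivity by showing that the injection from Lemma~\ref{L:injectivity to H^1},
\[
M^{\psi=1} \hookrightarrow \prod_{(z,\eta) \in \Max(\calR_A^\infty(\Gamma))} H^1_{\psi,\gamma_{\Qp}}(M_z \otimes \eta^{-1}),
\]
actually factors through the natural map $M^{\psi=1} \to N$. Once this factorization is in hand, the injectivity of the composite immediately forces the first arrow to be injective.

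Concretely, I would first recall from Lemma~\ref{L:lifting elements in M^psi=1} that at each point $(z,\eta)$ the $(z,\eta)$-component of the map in Lemma~\ref{L:injectivity to H^1} agrees, under the canonical identification $H^1_{\psi,\gamma_{\Qp}}(M_z \otimes \eta^{-1}) \cong M^{\psi=1}/\gothm_{(z,\eta)} M^{\psi=1}$, with the ordinary reduction map. Next, for any $s > 0$ small enough that $(z,\eta) \in \Max(\calR_A^{[s,\infty]}(\Gamma))$, I would use the definition $N^s = M^{\psi=1} \otimes_{\calR_A^\infty(\Gamma)} \calR_A^{[s,\infty]}(\Gamma)$ together with the identification $N \otimes_{\calR_A^\infty(\Gamma)} \calR_A^{[s,\infty]}(\Gamma) \cong N^s$ from Lemma~\ref{L:schneider-teitelbaum}(3) to obtain
\[
N/\gothm_{(z,\eta)} N \;\cong\; N^s/\gothm_{(z,\eta)} N^s \;\cong\; M^{\psi=1}/\gothm_{(z,\eta)} M^{\psi=1},
\]
compatibly with the natural maps from $M^{\psi=1}$. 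Assembling these components over all $(z,\eta)$, the injection of Lemma~\ref{L:injectivity to H^1} then factors as
\[
M^{\psi=1} \;\longrightarrow\; N \;\longrightarrow\; \prod_{(z,\eta) \in \Max(\calR_A^\infty(\Gamma))} N/\gothm_{(z,\eta)} N,
\]
and the conclusion follows.

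There is no serious obstacle in this argument: the only care required is a formal verification that the identifications of the fibers are compatible with the canonical map $M^{\psi=1} \to N$, which reduces to the universal property of the inverse limit presentation $N = \varprojlim_s N^s$ and the obvious functoriality of tensor products. In other words, the whole proof is a brief diagram chase once Lemmas~\ref{L:injectivity to H^1} and~\ref{L:lifting elements in M^psi=1}, together with Schneider–Teitelbaum base change, are in place.
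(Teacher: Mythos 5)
Your proof is correct and follows essentially the same strategy as the paper: factor the injection of Lemma~\ref{L:injectivity to H^1} through $M^{\psi=1} \to N$ using the pointwise identifications from Lemma~\ref{L:lifting elements in M^psi=1} and Schneider--Teitelbaum base change, then invoke injectivity of the composite. If anything, you have made explicit a compatibility check that the paper's wording (``both $M^{\psi=1}$ and $N$ embed into the same module, hence $M^{\psi=1} \to N$ is injective'') leaves implicit, since the mere existence of two separate embeddings into a common target does not by itself give injectivity without the factorization you supply.
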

\begin{proof}
Proposition~\ref{P:N^s uniformly fp} implies that $N$ is a finite projective $\calR_A^\infty(\Gamma)$-module.  Since $\calR_A^\infty(\Gamma) \hookrightarrow \prod_{(z,\eta)\in\Max(\calR_A^\infty(\Gamma))} \kappa_{(z,\eta)}$ is injective,  the module $N$ embeds into the product 
\[
\prod_{(z,\eta)\in\Max(\calR_A^\infty(\Gamma))} N / \gothm_{(z,\eta)} \cong \prod_{(z,\eta)\in \Max(\calR_A^\infty(\Gamma))} M^{\psi=1} / \gothm_{(z,\eta)} \cong \prod_{(z,\eta)\in \Max(\calR_A^\infty(\Gamma))} H^1_{\psi, \gamma_{\Qp}}(M_z \otimes \eta^{-1})
\]
By Lemma~\ref{L:injectivity to H^1}, both $M^{\psi=1}$ and $N$ embed into the same module.  Hence the natural map $M^{\psi=1} \to N$ is injective.
\end{proof}

\begin{theorem}
\label{T:structure of M^psi=1}
Under Hypothesis~\ref{H:reduced vanishing H2Iw}, $M^{\psi=1}$ is a
finite projective $\calR_A^\infty(\Gamma)$-module.
\end{theorem}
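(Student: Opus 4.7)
The plan is to combine Lemma~\ref{L:M^psi=1 injects to N}, Proposition~\ref{P:N^s uniformly fp}, and the tailor-made Lemma~\ref{L:variant finite generation} to show that the canonical injection $M^{\psi=1} \hookrightarrow N$ is in fact an equality; since $N$ is known to be finite projective over $\calR_A^\infty(\Gamma)$ by Proposition~\ref{P:N^s uniformly fp}, the conclusion follows immediately.

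First, I would reduce to the case of the torsion-free open subgroup $\Gamma_n$ of $\Gamma$ (for some $n \geq 1$ if $p$ is odd, $n \geq 2$ if $p=2$). Since $\calR_A^\infty(\Gamma)$ is a finite free $\calR_A^\infty(\Gamma_n)$-module by Definition~\ref{D:omega-Gamma}, showing that $M^{\psi=1}$ is finite projective over $\calR_A^\infty(\Gamma_n)$ suffices. Under the identification $\calR_A^\infty(\Gamma_n) \cong \calR_A^\infty$ given by $T \mapsto \gamma_n-1$, Lemma~\ref{L:variant finite generation} becomes directly applicable.

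Next, I would verify the hypotheses of Lemma~\ref{L:variant finite generation} applied to the inclusion $M^{\psi=1} \subseteq N$. The base change condition $M^{\psi=1} \otimes_{\calR_A^\infty(\Gamma_n)} \calR_A^{[s,\infty]}(\Gamma_n) \stackrel\sim\to N \otimes_{\calR_A^\infty(\Gamma_n)} \calR_A^{[s,\infty]}(\Gamma_n)$ holds tautologically: the left side equals $N^s$ by definition, and the right side equals $N^s$ by Lemma~\ref{L:schneider-teitelbaum}(3) applied to the vector bundle $(N^s)$. For the Fréchet topology, one equips $M^{\psi=1}$ with the subspace topology inherited from $M = \varprojlim_{s\to 0^+} M^{[s,r_0]}$, using the norms $|\cdot|_{M^{[s,r_0]}}$ (indexed by a cofinal sequence $s_m \to 0^+$); because $\psi-1$ is continuous, $M^{\psi=1}$ is closed in $M$ and hence complete. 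For the operator norm condition, Proposition~\ref{P:calR_A(Gamma_K) acts on M}(3) provides, for each $s>0$, an integer $g_s$ with $\|(\gamma'_n-1)^{g_s}\|_{M^{[s,r_0]}} \leq \omega$; since $\frac{\gamma_n-1}{\gamma'_n-1}$ is a unit of $\calR_A^\infty(\Gamma_n)$ with all Gauss norms equal to $1$ (cf.\ Definition~\ref{D:omega-Gamma}), the same bound holds (after possibly enlarging $g_s$) for $(\gamma_n-1)^{g_s}$, which plays the role of $T^{g_s}$ under our identification.

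With the hypotheses verified, Lemma~\ref{L:variant finite generation} yields $M^{\psi=1} = N$, and Proposition~\ref{P:N^s uniformly fp} provides the finite projectivity. The main technical obstacle is the routine verification of the norm condition and the bookkeeping involved in passing between $\Gamma$, $\Gamma_n$, and the two topological generators $\gamma_n$ and $\gamma'_n$; the conceptual heart of the argument, however, is already packaged into Lemma~\ref{L:variant finite generation}, whose hypotheses were deliberately designed for exactly this application.
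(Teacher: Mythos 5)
Your proposal follows exactly the paper's route: establish $M^{\psi=1} \hookrightarrow N$ via Lemma~\ref{L:M^psi=1 injects to N}, note that $N$ is finite projective via Proposition~\ref{P:N^s uniformly fp}, and upgrade the injection to an equality via Lemma~\ref{L:variant finite generation}, with the continuity hypothesis supplied by Proposition~\ref{P:calR_A(Gamma_K) acts on M}(3). The paper's proof is a one-sentence version of precisely this, so your fill-in is a faithful expansion of the intended argument.

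One small point worth tightening. If you make the identification $\calR_A^\infty(\Gamma_n) \cong \calR_A^\infty$ via $T \mapsto \gamma'_n - 1$ rather than $T \mapsto \gamma_n - 1$, then Proposition~\ref{P:calR_A(Gamma_K) acts on M}(3) supplies the operator-norm bound $\|T^{g_s}\|_{M^{[s,r_0]}} \leq \omega$ directly and there is nothing to transfer. Your bridging step via ``all Gauss norms equal to $1$'' is not quite tight as stated: the Gauss norms control the size of $u = \frac{\gamma_n-1}{\gamma'_n-1}$ as an element of the Robba ring, but the operator norm of $u$ on $M^{[s,r_0]}$ for a chosen Banach norm is only bounded by a constant multiple $C$ of a Gauss norm, and may exceed $1$. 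What you actually get is $\|(\gamma_n-1)^{g_s}\| \leq C\omega$, and to push this below $\omega$ you must observe that $\|u^k\|_{M^{[s,r_0]}}$ stays bounded by $C$ as $k$ grows (because the action factors boundedly through some $\calR_A^{[q,\infty]}(\Gamma_n)$ on which $u$ has Gauss norm one) and then raise to a higher power so the geometric decay in $\|(\gamma'_n-1)^{kg_s}\|$ dominates. This is recoverable but is extra work that is avoided entirely by choosing $\gamma'_n$. Also, a minor rephrasing: you do not need to show $M^{\psi=1}$ is finite projective over $\calR_A^\infty(\Gamma_n)$ as such; the lemma gives $M^{\psi=1} = N$ as sets, and then the finite projectivity over $\calR_A^\infty(\Gamma)$ is inherited directly from $N$ via Proposition~\ref{P:N^s uniformly fp}.
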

\begin{proof}
By Proposition~\ref{P:N^s uniformly fp}, it suffices to prove that the natural map $M^{\psi=1} \to N$ in Lemma~\ref{L:M^psi=1 injects to N} is in fact an isomorphism.  This follows from combining Proposition~\ref{P:N^s uniformly fp} and Lemma~\ref{L:M^psi=1 injects to N} with
Lemma~\ref{L:variant finite generation}, where the continuity assumptions required in the second lemma are verified by Proposition~\ref{P:calR_A(Gamma_K) acts on M}(3).
\end{proof}

\subsection{D\'evissage}
\label{S:devissage}

To complete the proof of the finiteness of Iwasawa cohomology of $(\varphi, \Gamma)$-modules, we 
must perform some d\'evissage in the style of \cite{liu} in order to arrive at a case in which we can control $M/(\psi-1)$ and $M^*/(\psi-1)$.

\begin{hypothesis}
\label{H:devissage}
We continue to assume that $K=\Qp$, except in the proof of
Theorem~\ref{T:finite Iwasawa cohomology}.
\end{hypothesis}

\begin{notation}\label{N:char type over Qp}
Let $L$ be a finite extension of $\Qp$ and assume $A$ is an
$L$-affinoid algebra.  For $\delta: \Qp^\times \to L^\times$ a
continuous character and $M$ a $(\varphi,\Gamma)$-module over
$\calR_A$, we equip the twist $M(\delta) = M \otimes_L L\bbe_\delta$
with the diagonal action of $\varphi$ and $\Gamma$, where
$\varphi(\bbe_\delta) = \delta(p)\bbe_\delta$ and $\gamma(\bbe_\delta)
= \delta(\chi(\gamma))\bbe_\delta$.  We define $w(\delta) =
\log(\delta(a)) / \log(a)$ for nontorsion $a \in \Zp^\times$, called
the \emph{weight} of $\delta$.  We write $x$ for the natural embedding
$\Qp^\times \to \Qp^\times$.  We have $\calR(x) \cong t\calR$ and it
has weight $1$ (from the action of $\Zp^\times$) and slope $-1$ (from
the action of $p$; note the sign convention).  Similarly,
$\calR(x^k\delta) \cong t^k\calR(\delta)$ for $k \in \ZZ$.
\end{notation}

\begin{remark}
An easy application of local class field theory, as in
\cite[Proposition~3.1]{colmez}, shows that every rank one
$(\varphi,\Gamma)$-module over $\calR_L$ is of the form
$\calR(\delta)$ for some $\delta : \Qp^\times \to L^\times$.  In
Section~\ref{SS:rank one}, we reprise and extend the preceding
notation, replacing $\Qp^\times$ by $K^\times$ for $K/\Qp$ an
arbitrary finite extension, and replacing the target by
$\Gamma(X,\calO_X)^\times$ for $X$ any $L$-rigid analytic space, where
$L$ contains a full set of Galois conjugates of $K$.  We then prove
the corresponding generalization of the classification result.
\end{remark}

\begin{lemma} \label{L:special rank 1}
Let $\delta: \Qp^\times \to L^\times$ be a continuous character and let $\calR(\delta)$ be the associated $(\varphi, \Gamma)$-module.
\begin{itemize}
\item[(1)] We have $H^0_{\varphi, \gamma_{\Qp}}(\calR(\delta)) \neq0$ if and only if $\delta = x^i$ for $i \in \ZZ_{\leq0}$.  For $i\leq 0$, we have $H^0_{\varphi, \gamma_{\Qp}}(\calR(x^i))= L t^{-i}\bbe$.
\item[(2)] Any $(\varphi, \Gamma)$-submodule of $\calR(\delta)$ is of the form $t^i\calR(\delta)$ for some $i \in \ZZ_{\geq 0}$.
\item[(3)] If $\delta(p) \notin p^\ZZ$, $H^0_{\varphi, \gamma_{\Qp}}(\calR(\delta)) = H^2_{\varphi, \gamma_{\Qp}}(\calR(\delta)) =0$.
\item[(4)] If $w(\delta) \notin \ZZ$, the natural morphism $H^1_{\varphi,\gamma_{\Qp}}(x^k\delta) \to H^1_{\varphi,\gamma_{\Qp}}(x^{k'}\delta)$ is an isomorphism for any $k \geq k'$.
\end{itemize}
\end{lemma}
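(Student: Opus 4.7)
The plan is to establish the four claims in sequence, with later parts leveraging the earlier ones.

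For Part (1), first check directly that when $i \leq 0$, the element $t^{-i}\bbe \in \calR(x^i)$ is $(\varphi,\Gamma)$-invariant: $\varphi(t^{-i}) = p^{-i}t^{-i}$ matches $x^i(p)^{-1} = p^{-i}$, and $\gamma(t^{-i}) = \chi(\gamma)^{-i}t^{-i}$ matches $x^i(\chi(\gamma))^{-1}$. Conversely, suppose $f\bbe \in H^0_{\varphi,\gamma_{\Qp}}(\calR(\delta))$ is nonzero. Analyzing the $\Gamma$-action on $\calR$ (whose $\Gamma$-eigenspaces for integer powers of $\chi$ are spanned by powers of $t$, since $\gamma(t)=\chi(\gamma)t$) and the continuity of the action, one shows $f$ must be a scalar multiple of $t^j$ for some integer $j = -w(\delta)$. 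Membership in $\calR$ forces $j \geq 0$, and matching the $\varphi$-eigenvalue then fixes $\delta(p) = p^{-j}$, so $\delta = x^{-j}$ with $-j \leq 0$, giving $H^0 = L\cdot t^{-i}\bbe$ for $i = -j$.

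For Part (2), any closed $(\varphi,\Gamma)$-submodule $N \subseteq \calR(\delta)$ is of the form $I\cdot\bbe$ with $I \subseteq \calR$ a closed ideal; the scalar twists $\delta(p),\delta(\chi(\gamma)) \in L^\times$ preserve $I$, so $(\varphi,\Gamma)$-stability of $N$ is equivalent to that of $I$. By Remark~\ref{R:Bezout+torsion free=>free}, any nonzero coadmissible $\calR$-ideal is principal, so $I = f\calR$. Then $\varphi(f)\calR = f\calR$ and $\gamma(f)\calR = f\calR$ force $\varphi(f)$ and $\gamma(f)$ to be unit multiples of $f$, making the effective divisor $\mathrm{div}(f)$ on the open annulus $(\varphi,\Gamma)$-stable. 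The divisor of $t$ sweeps out the unique $(\varphi,\Gamma)$-orbit of zeros $\{\zeta_{p^n}-1\}_{n\geq 1}$ with uniform multiplicity, so the classification of $(\varphi,\Gamma)$-stable effective divisors gives $\mathrm{div}(f) = i\cdot\mathrm{div}(t)$ for some $i \geq 0$, hence $I = t^i\calR$.

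Part (3) is formal: Tate duality (Theorem~\ref{T:Liu}(3)) gives $H^2_{\varphi,\gamma_{\Qp}}(\calR(\delta))^\vee \cong H^0_{\varphi,\gamma_{\Qp}}(\calR(x\delta^{-1}))$. By (1), this is nonzero only if $x\delta^{-1} = x^i$ for some $i \leq 0$, i.e., $\delta = x^{1-i}$ with $1-i \geq 1$, so $\delta(p) \in p^\ZZ$; similarly $H^0 \neq 0$ requires $\delta \in \{x^i : i \leq 0\}$. The hypothesis $\delta(p) \notin p^\ZZ$ rules out both.

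For Part (4), consider the short exact sequence
\begin{equation*}
0 \to \calR(x^k\delta) \to \calR(x^{k'}\delta) \to Q \to 0, \qquad Q \cong (\calR/t^{k-k'}\calR)(x^{k'}\delta),
\end{equation*}
induced by the inclusion $t^k\calR(\delta) \subseteq t^{k'}\calR(\delta)$. Since $w(\delta) \notin \ZZ$ implies $w(x^k\delta) = k+w(\delta) \notin \ZZ$, part (3) gives $H^0$ and $H^2$ vanishing for both $\calR(x^k\delta)$ and $\calR(x^{k'}\delta)$. The long exact sequence thus reduces the claim to $H^0(Q) = H^1(Q) = 0$. Inducting on $m = k-k'$ via the subquotient filtration $t^j\calR(\delta)/t^{j+1}\calR(\delta)$ reduces to $m=1$, where $Q \cong \calR(x^{k'}\delta)/t\calR(x^{k'}\delta)$. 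Lemma~\ref{L:structure of M/tM} decomposes $Q$ as $\prod_n M_n$, and the argument of Proposition~\ref{P:finite cohomology M/t} shows that $H^*(Q)$ is computed by $[(\varinjlim_n M_n)^{\Delta} \xrightarrow{\gamma_{\Qp}-1} (\varinjlim_n M_n)^{\Delta}]$, where each $M_n$ is a $\Qp(\zeta_{p^n})$-line on which $\Gamma$ acts through a finite-order character times $\chi^{w(x^{k'}\delta)}$. Because $w(x^{k'}\delta) \notin \ZZ$, no such twist is trivial on an open subgroup of $\Gamma$, so $\gamma_{\Qp}-1$ acts invertibly on each $M_n$ and on the direct limit, yielding the desired vanishing. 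The main technical obstacle lies in Part (2), where a clean rigorous classification of $(\varphi,\Gamma)$-stable divisors on the open annulus must be established; the other three parts reduce to it or to Tate duality combined with the $M/tM$ structure theorem already in hand.
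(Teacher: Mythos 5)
Your proof takes a genuinely different route from the paper's, which simply cites Colmez~\cite[Proposition~2.1]{colmez} for (1) and \cite[Theorem~2.22]{colmez} for (4), and deduces (2) from (1) by a short Hom-space argument: a nonzero $(\varphi,\Gamma)$-submodule $N$ of $\calR(\delta)$ is a rank-one $(\varphi,\Gamma)$-module, hence of the form $\calR(\delta')$, and the inclusion gives a nonzero element of $\Hom_{\varphi,\gamma_{\Qp}}(\calR(\delta'),\calR(\delta)) \cong H^0_{\varphi,\gamma_{\Qp}}(\calR(\delta\delta'^{-1}))$, which by (1) forces $\delta' = x^i\delta$ with $i \geq 0$ and (by the one-dimensionality in (1)) forces $N = t^i\calR(\delta)$. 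Your attempt at (2) via classifying $(\varphi,\Gamma)$-stable effective divisors on the open annulus is plausible in outline, but it leaves exactly the hole you acknowledge at the end: you need to rule out stable divisors with support outside $\{\zeta_{p^n}-1\}$, and you need to show the multiplicity must be constant, neither of which is addressed. The Hom-space route makes this step almost free once (1) is in hand, and I would strongly recommend it over trying to complete the divisor classification.

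In (4) there is a concrete logical slip: you invoke (3) to obtain the vanishing of $H^0$ and $H^2$ for $\calR(x^k\delta)$ and $\calR(x^{k'}\delta)$, but the hypothesis of (3) is $\delta(p) \notin p^\ZZ$, which is independent of the hypothesis $w(\delta) \notin \ZZ$ in (4). The vanishing does hold, but must be deduced directly from (1) together with Tate duality: if $H^0_{\varphi,\gamma_{\Qp}}(\calR(x^k\delta)) \neq 0$, then by (1) one has $x^k\delta = x^i$ with $i \leq 0$, whence $w(\delta) = i - k \in \ZZ$, contradicting the hypothesis; $H^2$ vanishes by the dual argument. A further minor point in (3): $\calR(\delta)^* = \calR(\delta^{-1} x|x|)$, not $\calR(x\delta^{-1})$, since the cyclotomic twist corresponds to $x|x|$ under class field theory; this does not change your conclusion since $(x|x|)(p) = 1 \in p^\ZZ$. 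Your snake-lemma/d\'evissage argument for (4) using Lemma~\ref{L:structure of M/tM} is a reasonable self-contained alternative to the citation of Colmez, and the point that $\gamma_{\Qp}-1$ acts invertibly on each $M_n$ because the character $\gamma \mapsto (x^{k'}\delta)(\chi(\gamma))$ has nonintegral weight (hence is of infinite order on every open subgroup) is correct, though the phrase ``times $\chi^{w(x^{k'}\delta)}$'' should be reworded since $\chi^w$ is not literally a character of $\Gamma$ for nonintegral $w$.
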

\begin{proof}
(1) is \cite[Proposition~2.1]{colmez}. From this, we have for any
  continuous character $\delta': \Qp^\times \to L^\times$,
  $\Hom_{\varphi, \gamma_{\Qp}}(\calR(\delta'), \calR(\delta)) \cong
  H^0_{\varphi, \gamma_{\Qp}}(\calR(\delta\delta'^{-1}))$ is nonzero if and
  only if $\delta' = \delta x^i$ for some $i\in \ZZ_{\geq 0}$, hence
  (2).  (3) follows from (2) and the Tate duality
  (Theorem~\ref{T:Liu}(3)).  (4) is \cite[Theorem~2.22]{colmez}.
\end{proof}

\begin{lemma} \label{L:base change to point}
If $N$ is a $(\varphi, \Gamma)$-module over $\calR_A$ such that $N/(\psi-1)=0$, then for any $z \in \Max(A)$,
the map $H^1_{\varphi, \gamma_{\Qp}}(N) \otimes_A A /\gothm_z \to H^1_{\varphi,\gamma_{\Qp}}(N_z)$ is bijective.
\end{lemma}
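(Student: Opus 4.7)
The plan is to exploit the natural identification $H^1_{\psi,\gamma_{\Qp}}(N) \cong N^{\Delta,\psi=1}/(\gamma_{\Qp}-1)$ afforded by Remark~\ref{R:useful-sequence} when $N/(\psi-1)=0$, then to show that forming $N^{\Delta,\psi=1}$ commutes with base change $A \to A/\gothm_z$. First I would invoke Proposition~\ref{P:phi cohomology = psi cohomology} to replace $(\varphi,\gamma_{\Qp})$-cohomology by $(\psi,\gamma_{\Qp})$-cohomology throughout. The short exact sequence
\[
0 \to N^{\Delta,\psi=1}/(\gamma_{\Qp}-1) \to H^1_{\psi,\gamma_{\Qp}}(N) \to (N/(\psi-1))^{\Gamma_{\Qp}} \to 0
\]
from Remark~\ref{R:useful-sequence}, combined with the hypothesis $N/(\psi-1)=0$ (which passes to $N_z$ by right-exactness of $\otimes_A A/\gothm_z$), then yields $H^1_{\psi,\gamma_{\Qp}}(N) \cong N^{\Delta,\psi=1}/(\gamma_{\Qp}-1)$ and $H^1_{\psi,\gamma_{\Qp}}(N_z) \cong N_z^{\Delta,\psi=1}/(\gamma_{\Qp}-1)$.

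Next I would observe that $|\Delta| \in \{1,2\}$ is invertible in the $\Qp$-algebra $A$, so the idempotent $e_\Delta = |\Delta|^{-1}\sum_{\delta\in\Delta}\delta$ splits off $N^\Delta$ as an $A$-direct summand of $N$. In particular $N^\Delta$ inherits flatness over $A$ from $N$, which is flat by Corollary~\ref{C:R_A flat over A}. Moreover $\psi-1$ remains surjective on $N^\Delta$, since $\psi$ commutes with $\Delta$ and $e_\Delta$ averages any preimage in $N$ into $N^\Delta$. Hence
\[
0 \to N^{\Delta,\psi=1} \to N^\Delta \xrightarrow{\psi-1} N^\Delta \to 0
\]
is a short exact sequence of $A$-flat modules; applying $\otimes_A A/\gothm_z$ and using $\Tor_1^A(A/\gothm_z,N^\Delta) = 0$ yields the key identification $N^{\Delta,\psi=1}\otimes_A A/\gothm_z \cong N_z^{\Delta,\psi=1}$.

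Finally, I would apply the right-exact functor $\otimes_A A/\gothm_z$ to the presentation $N^{\Delta,\psi=1} \xrightarrow{\gamma_{\Qp}-1} N^{\Delta,\psi=1} \to H^1_{\psi,\gamma_{\Qp}}(N) \to 0$ and combine with the preceding step to conclude that $H^1_{\psi,\gamma_{\Qp}}(N) \otimes_A A/\gothm_z \cong H^1_{\psi,\gamma_{\Qp}}(N_z)$. The only substantive step is the base-change compatibility of $N^{\Delta,\psi=1}$, which reduces cleanly to $A$-flatness of $N^\Delta$ together with surjectivity of $\psi-1$ on $N^\Delta$; the rest is bookkeeping. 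The main thing to keep straight is that one cannot naively expect $N^{\psi=1}$ to be flat over $A$ — but flatness of $N^\Delta$ is exactly what makes the Tor term in the base change vanish, bypassing this issue.
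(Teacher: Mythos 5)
Your proof is correct, and it takes a genuinely different route from the paper's. The paper's proof, after the reduction to $(\psi,\gamma_{\Qp})$-cohomology, observes that $N/(\psi-1)=0$ kills $H^2_{\psi,\gamma_{\Qp}}(N)$ and then invokes the base-change spectral sequence $\Tor_i^A(H^j_{\psi,\gamma_{\Qp}}(N), A/\gothm_z) \Rightarrow H^{j-i}_{\psi,\gamma_{\Qp}}(N_z)$, leaving the final ``inspection'' (which amounts to seeing that no differentials enter or leave $E_2^{0,1}$ once $H^{\geq 2}=0$) to the reader. You instead bypass the spectral sequence by making $H^1$ concrete via Remark~\ref{R:useful-sequence}, splitting off $N^\Delta$ by the idempotent $e_\Delta$ (this is exactly the implicit input needed to get $H^2=0$ in the paper's proof: $\psi-1$ surjective on $N$ only gives it surjective on $N^\Delta$ because the splitting is $\psi$-equivariant), and then doing the base change by hand using that $N^\Delta$ is $A$-flat and $\psi-1$ is surjective on $N^\Delta$. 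The two arguments rest on the same two facts ($H^2=0$ and flatness of $N^\Delta$), but yours trades the abstract spectral-sequence convergence for an explicit chase, which is more self-contained at the modest cost of an extra paragraph. One could quibble that both proofs implicitly use $N \widehat\otimes_A A/\gothm_z = N \otimes_A A/\gothm_z$ (valid because $A/\gothm_z$ is finite over $A$); the paper flags this, and you use it silently when writing $N^\Delta_z = (N_z)^\Delta$, but it is harmless here.
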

\begin{proof}
By Proposition~\ref{P:phi cohomology = psi cohomology}, it suffices to prove it for the $(\psi, \Gamma)$-cohomology.  Consider the spectral sequence $\Tor_i^{A}(H^j_{\psi, \gamma_{\Qp}}(N), A/\gothm_z) \Rightarrow H^{j-i}_{\psi, \gamma_{\Qp}}(N_z)$ (this case of base change being known because $A \to A/\gothm_z$ is finite).  Since $N/(\psi-1) = 0$, we have $H^2_{\psi, \gamma_{\Qp}}(N)= 0$.  The lemma follows from an inspection of this spectral sequence.
\end{proof}

\begin{lemma} \label{L:devissage}
Let $M$ be a $(\varphi, \Gamma)$-module over $\calR_A$ such that $M^*/(\psi-1)=0$ and $M/(\psi-1)\neq 0$.
Then there exists a short exact sequence $0 \to M \to N \to P \to 0$ of $(\varphi, \Gamma)$-modules
over $\calR_A$ with the following properties.
\begin{enumerate}
\item[(a)]
The $(\varphi, \Gamma)$-module $P$ is of rank one and $P/(\psi-1) = P^*/(\psi-1)=0$.
\item[(b)]
The connecting homomorphism $P^{\psi=1} \to M/(\psi-1)$ is nonzero.
\end{enumerate}
In particular, $N^*/(\psi-1)=0$ and $N/(\psi-1)$ is a quotient of
$M/(\psi-1)$ with nontrivial kernel.
\end{lemma}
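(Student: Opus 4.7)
My plan is to produce the extension explicitly by choosing $P$ as a rank one $(\varphi,\Gamma)$-module of character type satisfying (a), and then constructing $N$ via a cocycle representing an extension class that encodes a specified nonzero element of $M/(\psi-1)$. Granted (a) and (b), the lemma's final conclusions follow by standard diagram chasing: the long exact sequence
\[
0 \to M^{\psi=1} \to N^{\psi=1} \to P^{\psi=1} \xrightarrow{\partial} M/(\psi-1) \to N/(\psi-1) \to P/(\psi-1) \to 0,
\]
together with $P/(\psi-1)=0$ and $\mathrm{image}(\partial) \neq 0$, exhibits $N/(\psi-1)$ as a proper quotient of $M/(\psi-1)$; dually, the analogous sequence for $0 \to P^* \to N^* \to M^* \to 0$, combined with the vanishings $P^*/(\psi-1)=0$ and $M^*/(\psi-1)=0$, gives $N^*/(\psi-1) = 0$.

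For the choice of $P = \calR_A(\delta)$, the identity $\psi(f\bbe_\delta) = \delta(p)^{-1}\psi(f)\bbe_\delta$ identifies $P/(\psi-1)$ with $\calR_A/(\psi-\delta(p))$, and since $P^* \cong \calR_A(x\delta^{-1})$, identifies $P^*/(\psi-1)$ with $\calR_A/(\psi-p\delta(p)^{-1})$.  A more careful examination of the iterative argument in the proof of Proposition~\ref{P:psi cokernel}(2) shows that $\psi-c$ is surjective on $\calR_A$ whenever $|c|$ is sufficiently large or sufficiently small --- the two regimes correspond to choosing the truncation parameter $a$ in the correction operator $\lambda$ of that proof to be very positive or very negative, respectively.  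Since $|\delta(p)|$ very large forces $|p\delta(p)^{-1}|$ correspondingly small, a single choice of $\delta$ with $|\delta(p)|$ sufficiently large achieves both vanishings in (a).

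For the construction of $N$, fix $\tilde m \in M$ representing a nonzero $\bar m \in M/(\psi-1)$.  Set $N = M \oplus \calR_A \cdot e$ as an $\calR_A$-module and extend the $(\varphi, \Gamma)$-action by $\varphi(e) = \delta(p) e + \alpha$ and $\gamma(e) = \delta(\chi(\gamma)) e + \beta_\gamma$ for some $\alpha \in M$ and $\Gamma$-cocycle $\{\beta_\gamma\}$ satisfying the compatibility $\gamma\varphi = \varphi\gamma$; these data represent a class in $\Ext^1(P,M) = H^1_{\varphi,\gamma_{\Qp}}(M(\delta^{-1}))$.  Since $P/(\psi-1)=0$, the Euler characteristic formula (applied pointwise via Theorem~\ref{T:Liu}(2) and Lemma~\ref{L:injectivity to H^1}) forces $P^{\psi=1}$ to be nonzero; for any $p \in P^{\psi=1}\setminus\{0\}$ with lift $\tilde p \in N$, the element $(\psi-1)\tilde p \in M$ determines a well-defined class in $M/(\psi-1)$ that depends $A$-linearly on $\alpha$, and calibrating $\alpha$ against $\tilde m$ arranges that this class is a nonzero multiple of $\bar m$, realizing (b).

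The main obstacle is verifying that the cocycle $(\alpha, \{\beta_\gamma\})$ can simultaneously satisfy the $\varphi$-$\gamma$ commutation and realize the prescribed boundary.  Over a field, this is handled by explicit cocycle manipulations in the spirit of Colmez and \cite{liu}.  For a general affinoid $A$, one may reduce to the field case by choosing $z \in \Max(A)$ with $M_z/(\psi-1) \neq 0$ --- such a point exists because $M/(\psi-1)$ is a nonzero finitely generated $A$-module by Proposition~\ref{P:psi cokernel}(1) together with our hypothesis --- carrying out the construction over $\kappa_z$, and then deforming the extension class back to a neighborhood of $z$ using the coherence of $H^1_{\varphi,\gamma_{\Qp}}$ as a sheaf on $\Max(A)$ (Theorem~\ref{T:base change} is unavailable here since we are trying to establish our main theorem, but the relevant base-change vanishing can be supplied by hand for the specific $P$ chosen in step two).
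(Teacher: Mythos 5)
Your approach breaks down at the construction of $P$, and the gap propagates into the proof of (b).

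\textbf{The estimate for (a) is incompatible with itself.}  You argue that the correction‐operator technique of Proposition~\ref{P:psi cokernel}(2) shows $\psi - c$ surjective on $\calR_A$ for $|c|$ sufficiently large \emph{and} for $|c|$ sufficiently small, and that a single $\delta$ with $|\delta(p)|$ large then achieves both vanishings in (a).  The condition underlying that iterative argument (for an empty generating set) is $\|G\|^p_r\|F\|_r \leq \epsilon^{p+1}$.  For $P = \calR_A(\delta)$ one has $F=\delta(p)$, $G=\delta(p)^{-1}$, and the quantity is $|\delta(p)|^{1-p}$, which is small exactly when $|\delta(p)|$ is large.  For $P^* = \calR_A(x\delta^{-1})$ one has $F = p/\delta(p)$, $G = \delta(p)/p$, and the quantity is $|\delta(p)|^{p-1}|p|^{1-p}$, which is small exactly when $|\delta(p)|$ is small.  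So the direct estimate cannot handle $P$ and $P^*$ simultaneously; your claim that choosing $a$ ``very positive or very negative'' splits these into two independently achievable regimes is wrong, since the two bounds on $a$ must hold for the same $a$.  Worse, the \emph{conclusion} is also false: $\psi - c$ is not surjective on $\calR_L$ for $|c|$ small in general (take $c = p^m$ with $m \geq 1$; then $\calR_L/(\psi-c) \cong \calR_L(\delta)/(\psi-1)$ with $\delta(p) = p^m$, which by Tate duality is dual to $H^0_{\varphi,\gamma_{\Qp}}(\calR_L(x\delta^{-1}) \otimes \eta)$, and this is nonzero for suitable $\eta$ by Lemma~\ref{L:special rank 1}(1)).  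In particular ``$|\delta(p)|$ large'' does not prevent $\delta(p) \in p^{\ZZ}$, so (a) can fail for your choice of $\delta$.  The correct sufficient condition, which the paper uses, is $\delta(p) \notin p^\ZZ$, and the vanishings in (a) are obtained by Tate duality via Lemma~\ref{L:special rank 1}(3), not by the iterative estimate.

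\textbf{The calibration and lifting steps in (b) are underdeveloped and rely on an incompatible parameter regime.}  You say the base-change input needed to lift the cocycle from $\kappa_z$ back to $A$ ``can be supplied by hand for the specific $P$ chosen in step two.''  But what the paper's Lemma~\ref{L:base change to point} actually requires is $M(\delta^{-1})/(\psi-1) = 0$, a vanishing for (a twist of) $M$, not for $P$.  This is achieved in the paper by replacing $\delta$ with $x^n\delta$ for $n \gg 0$ (Proposition~\ref{P:psi cokernel}(2)), which makes $|\delta(p)|$ \emph{small} --- again colliding with your requirement that $|\delta(p)|$ be large.  The paper also imposes a second condition on $\delta$, namely $w(\delta \otimes \eta) \notin \ZZ$, which your parameter choice omits; it is used via Lemma~\ref{L:special rank 1}(4) in the diagram \eqref{E:devissage} to ensure that the boundary class at $z$ can actually be made nonzero.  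Finally, ``calibrating $\alpha$ against $\tilde m$'' glosses over exactly the point that requires work: one must show the relevant cup product with a nonzero $\beta_z \in H^0_{\varphi,\gamma_{\Qp}}(M_z^* \otimes \eta)$ is surjective, and this is where the auxiliary short exact sequence for $E_z = \ker\beta_z^*$ and the vanishing $H^2_{\varphi,\gamma_{\Qp}}(E_z(\delta^{-1}) \otimes \eta) = 0$ enter.  None of this is carried out in the proposal.

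In short: the high-level strategy (choose $P$ of character type, realize the extension by a cocycle, reduce to a point, lift) is the same as the paper's, but the concrete parameter choice is wrong and the verification of (a) and (b) is not valid as written.  Replacing ``$|\delta(p)|$ large'' with ``$\delta(p) \notin p^\ZZ$ and $w(\delta \otimes \eta) \notin \ZZ$,'' proving (a) via Lemma~\ref{L:special rank 1}(3), and carrying out the boundary computation and lift as in the paper would repair the argument.
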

\begin{proof}
Since $M / (\psi-1)$ is nonzero and is finite over $A$ by
Proposition~\ref{P:psi cokernel}(1), we may find (and fix) $(z,\eta)
\in \Max(\calR_A^\infty(\Gamma))$ such that $H^2_{\varphi,
  \gamma_{\Qp}}(M_z \otimes \eta^{-1}) \neq 0$.  Let $\delta: \Qp^\times
\to \Qp^\times$ be any continuous character such that $\delta(p) \notin p^\ZZ$
and $w(\delta \otimes \eta) \notin \ZZ$, and take $P =
\calR_A(\delta)$.  We will show that property (a) holds for any such
$\delta$, and that after replacing $\delta$ by $x^n\delta$ for $n \gg
0$ (which does not disturb the hypotheses on $\delta$), we may choose
$\alpha \in H^1_{\varphi,\gamma_{\Qp}}(M(\delta^{-1}))$ corresponding
to an extension $N$ as above such that (b) holds.

Lemma~\ref{L:special rank 1}(3) shows that for any $\eta' \in
\Max(\calR^\infty(\Gamma))$, we have $P / (\psi-1, \gothm_{\eta'})
\cong H^2_{\varphi, \gamma_{\Qp}}(\calR(\delta) \otimes \eta'^{-1}) =
0 $.  Hence $P / (\psi-1) = 0$ by its finiteness in
Proposition~\ref{P:psi cokernel}(1).  The same argument proves $P^* /
(\psi-1) = 0$.  Thus (a) holds.

Recall our fixed $(z,\eta) \in \Max (\calR_A^\infty(\Gamma))$ from
above.  We denote the map $H^1_{\varphi,\gamma_{\Qp}}(M(\delta^{-1}))
\to H^1_{\varphi,\gamma_{\Qp}}(M_z(\delta^{-1}))$ by $\alpha \mapsto
\alpha_z$.  To fulfill condition (b), we just need that the connecting
map
\[
P^{\psi=1} / \gothm_{(z,\eta)} \cong H^1_{\varphi,
\gamma_{\Qp}}(\calR(\delta) \otimes \eta^{-1})
\xrightarrow{d^1_{\alpha_z \otimes \eta^{-1}}}
H^2_{\varphi, \gamma_{\Qp}}(M_z
\otimes \eta^{-1}) = M/(\psi-1, \gothm_{(z,\eta)})
\]
is nontrivial.  In fact, dualizing the above map gives the boundary
map
\[
H^0_{\varphi, \gamma_{\Qp}}(M^*_z \otimes \eta)
\xrightarrow{d^0_{\alpha_z^* \otimes \eta}}
H^1_{\varphi, \gamma_{\Qp}}(\calR(\delta^{-1})(1) \otimes \eta),
\]
where $\alpha_z^* \otimes \eta \in
\mathrm{Ext}^1_{\varphi,\gamma_{\Qp}}(M^*_z \otimes \eta,
\calR(\delta^{-1})(1) \otimes \eta)$; fixing any nonzero $\beta_z \in
H^0_{\varphi, \gamma_{\Qp}}(M^*_z \otimes \eta)$, we will arrange a
choice of $\alpha$ so that $d^0_{\alpha_z^* \otimes \eta}(\beta_z)
\neq 0$.

As a first step, we compute $d^0_{\alpha_z^* \otimes \eta}(\beta_z)$.
Considering $\beta_z$ as a morphism $\calR_{\kappa_{(z,\eta)}} \to
M_z^* \otimes \eta$, and its twisted dual $\beta_z^* \otimes
\delta^{-1} \eta$ as a map $M_z \otimes \delta^{-1} \to
\calR_{\kappa_{(z,\eta)}}(\delta^{-1})(1) \otimes \eta$, we have a
commutative diagram (whose downward arrows are obtained by dualizing
and twisting extensions)
\[
\xymatrix@C=70pt{
\mathrm{Ext}^1_{\varphi, \gamma_{\Qp}}(M_z^* \otimes \eta,
\calR(\delta^{-1})(1) \otimes \eta) \ar[d]^\cong
\ar[r]^{\mathrm{Ext}^1_{\varphi,\gamma_{\Qp}}(\beta_z,-)} &
\mathrm{Ext}^1_{\varphi, \gamma_{\Qp}}(\calR_{\kappa_{(z,\eta)}},
\calR(\delta^{-1})(1) \otimes \eta ) \ar[d]^\cong
\\
H^1_{\varphi, \gamma_{\Qp}}(M_z (\delta^{-1}) ) \otimes_{\kappa_z}
\kappa_{(z,\eta)} \ar[r]^{H^1(\beta_z^* \otimes \delta^{-1} \eta)}
& H^1_{\varphi,\gamma_{\Qp}}( \calR(\delta^{-1})(1) \otimes \eta).
}
\]
It follows that
\[
d^0_{\alpha_z^* \otimes \eta}(\beta_z) = \beta_z \cup (\alpha_z^*
\otimes \eta) =
\mathrm{Ext}^1_{\varphi,\gamma_{\Qp}}(\beta_z,-)(\alpha_z^* \otimes
\eta) = H^1(\beta_z^* \otimes \delta^{-1}\eta)(\alpha_z
\otimes 1).
\]

Let $E_z$ be the kernel of $\beta_z^*$, so that Lemma~\ref{L:special
  rank 1}(2) gives a short exact sequence $0 \to E_z \to M_z \otimes
\eta^{-1} \xrightarrow{\beta_z^*} t^r \calR_{\kappa_{(z, \eta)}}(1)
\to 0$ for some $r \geq 0$.  By Proposition~\ref{P:psi cokernel}(2),
we may replace $\delta$ by $x^n\delta$ for $n$ sufficiently large so
that $H^2_{\varphi, \gamma_{\Qp}}(E_z(\delta^{-1}) \otimes \eta) = 0$
and $M(\delta^{-1}) / (\psi-1) = 0$.  We have the following diagram
\begin{equation}
\label{E:devissage}
\xymatrix@C=15pt{
H^1_{\varphi, \gamma_{\Qp}}(M_z(\delta^{-1})) \ar[r] &
H^1_{\varphi, \gamma_{\Qp}}(t^r\calR_{\kappa_{(z,\eta)}}(\delta^{-1})(1)\otimes \eta) \ar[d]^{\cong}\ar[r] &
H^2_{\varphi, \gamma_{\Qp}}(E_z(\delta^{-1}) \otimes \eta) = 0\\
& H^1_{\varphi, \gamma_{\Qp}}(\calR_{\kappa_{(z,\eta)}}(\delta^{-1})(1)\otimes \eta).
}
\end{equation}
The top row is an exact sequence and the vertical arrow is an
isomorphism by Lemma~\ref{L:special rank 1}(4).  By the Euler
characteristic formula (Theorem~\ref{T:Liu}(2)), $H^1_{\varphi,
  \gamma_{\Qp}}(t^r \calR_{\kappa_{(z,\eta)}}(\delta^{-1})(1)\otimes
\eta) \neq 0$.  To complete the proof, it suffices to choose an
element $\alpha_z \in H^1_{\varphi, \gamma_{\Qp}}(M_z(\delta^{-1}))$
with nontrivial image in $H^1_{\varphi,
  \gamma_{\Qp}}(\calR_{\kappa_{(z,\eta)}}(\delta^{-1})(1)\otimes
\eta)$, and use Lemma~\ref{L:base change to point} to lift this
to an element $\alpha$ of $H^1_{\varphi,
  \gamma_{\Qp}}(M(\delta^{-1}))$.
\end{proof}

We now can finish the proof of Theorem~\ref{T:finite Iwasawa cohomology}.  For this, we return to the setting of its statement.

\begin{proof}[Proof of Theorem~\ref{T:finite Iwasawa cohomology}]  
\label{Proof:finite Iwasawa cohomology}
First, by Lemmas~\ref{L:theorem for K=>for Qp}, \ref{L:reduced}, and \ref{L:twist by t}, we may assume that $K=\Qp$, $A$ is reduced, and $M^*/(\psi-1) = 0$ (by Proposition~\ref{P:psi cokernel}(2)).
We then apply Lemma~\ref{L:devissage} and noetherian induction to get a $(\varphi, \Gamma)$-module $N$ over $\calR_A$ which is a successive extension of $M$ by finitely many rank one $(\varphi, \Gamma)$-modules $P_i$ such that $P_i/(\psi-1) = P_i^*/(\psi-1) = 0$, and $N/(\psi-1) = N^*/(\psi-1)=0$.

By Theorem~\ref{T:structure of M^psi=1}, $N^{\psi=1}$ and $P_i^{\psi=1}$ are all finite projective $\calR_A^\infty(\Gamma)$-modules.  This implies that $\rmC^\bullet_\psi(M) \in \bbD_\perf^-(\calR_A^\infty(\Gamma))$, finishing the proof of the main theorem.
\end{proof}

\section{Triangulation}
\label{S:triangulation}

We conclude by giving an application of our finiteness results for
cohomology of $(\varphi, \Gamma)$-modules to the study of global
triangulations of $(\varphi, \Gamma)$-modules. Our main result in this
area (Corollary~\ref{C:global triangulation}) shows that the existence
of triangulations at a small (Zariski dense) set of points implies the
existence of a global triangulation over a much larger (Zariski open)
subset.  More precisely, we prove that the global triangulation exists
after making a suitable blowup.  This has direct applications to
eigenvarieties; we get an especially precise result for the
Coleman-Mazur eigencurve, and relate the error term in the global
triangulation with the action of the Theta operator considered by
Coleman \cite{coleman}.  Although our language and methods are
different, this improves Kisin's result \cite{R:Kisin} on
interpolating crystalline periods (needing no ``$Y$-smallness"
condition).

\setcounter{theorem}{0}
\begin{hypothesis}
Throughout this section, let $K$ be a finite extension of $\Qp$ of degree $d$, with ring of integers $\calO_K$ and residue field $k_K$ of cardinality $p^f$.  Let $K_0$ denote the fraction field of the ring of Witt vectors $W(k_K)$.

The Artin map gives an isomorphism between the multiplicative group $K^\times$ and the maximal abelian quotient of the Weil group of $K$; we normalize it so that the image of a uniformizer of $K$ is a \emph{geometric} Frobenius.  We hereafter identify the two groups.

We also let $L$ denote a finite extension of $\Qp$ (not necessarily contained in our chosen $\overline\QQ_p$) such that the set $\Sigma$ of $\Qp$-algebra embeddings $K \hookrightarrow L$ has the maximal cardinality $d$.  In the decomposition $K \otimes_{\Qp} L = \prod_{\sigma \in \Sigma} L$, we write $e_\sigma$ for the idempotent projecting onto the $\sigma$-factor, which satisfies $e_\sigma(x \otimes 1) = \sigma(x)$ for $x \in K$.  Note that $L$ may be replaced by a finite extension at any time without altering the hypothesis or $\Sigma$.
\end{hypothesis}

\begin{notation}
For $M$ a finite projective module over a ring $R$, an $R$-submodule $N$ of $M$ is called \emph{saturated} if $M/N$ is a (finite) projective $R$-module.  In particular, this condition implies that $N$ itself is a finite projective $R$-module.

For $M$ a $(\varphi, \Gamma_K)$-module over $\calR_A(\pi_K)$, a $(\varphi, \Gamma_K)$-submodule $N$ of $M$ is called \emph{saturated} if it is saturated as an $\calR_A(\pi_K)$-submodule.  In this case, $M/N$ is a $(\varphi, \Gamma_K)$-module over $\calR_A(\pi_K)$.  When $A = L$ so that $\calR_L(\pi_K)$ is a B\'ezout domain, the submodule $N$ is saturated in $M$ if and only if there is no $\calR_L(\pi_K)$-submodule of $M$ strictly containing $N$ with the same rank.
\end{notation}

\subsection{Moduli spaces of continuous characters}

We will show later (Construction~\ref{Con: rank one varphi, Gamma_K modules} and Theorem~\ref{T:full rank 1 classification}) that
$(\varphi, \Gamma_K)$-modules of rank one on a rigid $L$-analytic space $X$ are essentially classified by continuous characters $\delta: K^\times \to \Gamma(X, \calO_X)^\times$. In preparation for this, we devote this subsection to the description of such characters in terms of a certain moduli space.

To begin with, we state a general result on moduli spaces of continuous characters.

\begin{prop} \label{P:moduli space of characters}
Let $G$ be a commutative $p$-adic Lie group that modulo some compact open subgroup becomes free abelian on finitely many generators.  There exist a rigid analytic space $X^\an(G)$ over $\Qp$ and a continuous character $\delta_G: G \to \Gamma(X^\an(G), \calO_{X^\an(G)})^\times$ with the following property: for any rigid analytic space $X$, every continuous character $\delta: G \to \Gamma(X, \calO_X)^\times$ arises by pullback from a unique morphism $X \to X^\an(G)$ of rigid analytic spaces. Moreover, $X^{\an}(G)$ is a smooth quasi-Stein space.
\end{prop}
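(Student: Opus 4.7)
The plan is to build $X^{\an}(G)$ as an explicit fibre product of three types of ``building block'' moduli spaces obtained from a structural decomposition of $G$. By the structure theory of abelian $p$-adic Lie groups, the maximal compact open subgroup $G_c \subseteq G$ is isomorphic to $\Zp^m \times F$ for some $m \geq 0$ and some finite abelian group $F$; the hypothesis that $G$ is free abelian modulo a compact open subgroup forces $G/G_c \cong \ZZ^n$ for some $n \geq 0$, and since $\ZZ^n$ is free, the extension $1 \to G_c \to G \to \ZZ^n \to 1$ splits, yielding a topological isomorphism $G \cong \Zp^m \times F \times \ZZ^n$ (with $\ZZ^n$ discrete).

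For each building block we specify an explicit representing object.  For $\Zp$, take $X^{\an}(\Zp) := \{z \in \mathbb{A}^{1,\an}_{\Qp} : |z-1| < 1\}$, the open unit disc around $1$, with universal character $a \mapsto T^a := \sum_{k \geq 0} \binom{a}{k}(T-1)^k$; on a $\Qp$-affinoid $B$, the bijection between $\Hom_\cont(\Zp, B^\times)$ and $\{u \in B^\times : u - 1\ \text{topologically nilpotent}\}$ via $\delta \mapsto \delta(1)$ is classical (cf.\ \cite{schneider-teitelbaum}).  For the finite abelian group $F$, the adjunction $\Hom_{\mathrm{gp}}(F, B^\times) = \Hom_{\Qp\text{-alg}}(\Qp[F], B)$ represents characters by $X^{\an}(F) := (\Spec \Qp[F])^{\an}$, a finite disjoint union of rigid points indexed by $\Gal(\overline\Qp/\Qp)$-orbits of characters $F \to \overline\Qp^\times$.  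For $\ZZ$ (discrete), continuous characters into $B^\times$ are simply units, represented by $X^{\an}(\ZZ) := \bbG_m^{\an}$.  Then set
\[
X^{\an}(G) := X^{\an}(\Zp)^m \times_{\Qp} X^{\an}(F) \times_{\Qp} X^{\an}(\ZZ)^n,
\]
equipped with $\delta_G$ the product of the universal characters of the factors.

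For universality, a continuous character $\delta : G \to \Gamma(X, \calO_X)^\times$ decomposes uniquely as $\delta_1 \cdot \delta_2 \cdot \delta_3$ under $G \cong \Zp^m \times F \times \ZZ^n$.  On each affinoid open $U \subseteq X$, the affinoid-level representability yields unique morphisms $U \to X^{\an}(G_i)$ for the three factor groups $G_i$; these glue (by uniqueness) to global morphisms, whose product is the desired unique morphism $X \to X^{\an}(G)$ pulling $\delta_G$ back to $\delta$.  Finally, the open disc admits a quasi-Stein admissible cover by closed subdiscs $\{|z-1| \leq r\}$ with $r \nearrow 1$, and $\bbG_m^{\an}$ one by closed annuli $\{r \leq |T| \leq r^{-1}\}$ with $r \searrow 0$; both are smooth.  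Finite products of smooth quasi-Stein rigid spaces remain smooth quasi-Stein, so $X^{\an}(G)$ has the asserted properties.

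The main obstacle is the representability claim for $\Zp$-characters: one must verify that continuity of $\delta : \Zp \to B^\times$ on a $\Qp$-affinoid $B$ forces $u := \delta(1)$ to have $u - 1$ topologically nilpotent, and conversely that any such $u$ defines a continuous character via the convergent $p$-adic binomial series.  The forward direction uses the identity $u^{p^n} \equiv (u-1)^{p^n} \pmod{p}$ together with power-multiplicativity of the spectral seminorm to convert convergence $u^{p^n} \to 1$ in the Banach topology into topological nilpotence of $u - 1$; some extra care is needed in the non-reduced case, where one must work with a chosen Banach norm rather than a seminorm.  Once this point is settled, the remaining verifications are routine.
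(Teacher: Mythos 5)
Your approach matches the paper's exactly: both decompose $G$ noncanonically into factors of type $\Zp$, finite abelian, and $\ZZ$, represent $\Zp$-characters by the open unit disc (the paper describes this as the generic fiber of $\mathrm{Spf}(\Zp[\![\Zp]\!])$, identified with the disc via the Iwasawa isomorphism, which coincides with your explicit disc around $1$), represent characters of the finite group by $\Max(\Qp[F])$, and represent characters of $\ZZ$ by $\bbG_m^{\an}$, then take the product. You supply more detail on the $\Zp$ representability claim (the binomial series and the spectral-seminorm argument), which the paper leaves as a routine verification, but the core strategy is identical.
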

\begin{proof}
Since this lemma is well-known, we just give a sketch here.
If $G$ can be written as a product of groups satisfying the hypotheses of the lemma, then we may treat each factor separately, and assemble the product of the respective results.  Thus, since $G$ is (noncanonically) isomorphic to $G_0 \times \Zp^r \times \ZZ^s$, where $G_0$ is a finite abelian group, it suffices to treat the cases of $G_0$, $\Zp$, and $\ZZ$.  Clearly one has $X^\an(G_0) = \Max(\Qp[G_0])$ and $X^\an(\ZZ) = \bbG_m^\an$.  One takes $X^\an(\Zp)$ to be the generic fiber of the $\gothm$-adic formal scheme $\mathop{\mathrm{Spf}}(\Zp[\![\Zp]\!])$, which via the Iwasawa isomorphism is the open unit disc in the variable $[\gamma_{\Zp}]-1$, where $\gamma_{\Zp} \in \Zp$ is a generator and the brackets denote a grouplike element in the completed group algebra.  In other words, as one can readily verify, for any affinoid algebra $A$ and any element $f \in A$ such that $f-1$ is topologically nilpotent, there exists a unique continuous character $\Zp \to A^\times$ sending $\gamma_{\Zp}$ to $f$, and 
conversely every such character arises in this fashion.
\end{proof}

\begin{remark}
For $G$ as in Proposition~\ref{P:moduli space of characters},
for any rigid $L$-analytic space $X$,
the natural bijection between continuous characters $\delta: G \to \Gamma(X, \calO_X)^\times$ and morphisms $X \to X^{\an}(G) \times_{\Qp} L$ has the property that a morphism defines a character by pulling back the universal character $\delta_G$.
In turn, morphisms $X \to X^{\an}(G) \times_{\Qp} L$ are in bijection with graphs of morphisms $X \to X^{\an}(G) \times_{\Qp} L$, i.e., closed analytic subvarieties of $X \times_L (X^{\an}(G) \times_{\Qp} L)$ which project isomorphically onto $X$.
\end{remark}

We now specialize this construction to various cases of interest.

\begin{example} \label{E:cyclotomic deformation space}
For $G = \Gamma_K$, the space $X^{\an}(G)$ provided by Proposition~\ref{P:moduli space of characters} is the cyclotomic deformation space $\Max(\calR_{\Qp}^\infty(\Gamma_K))$ (see Notation~\ref{N:cyclotomic deformation space}).
\end{example}

\begin{example} \label{E:moduli space of characters}
Take $G = K^\times$ in Proposition~\ref{P:moduli space of characters},
then put $X^\an_L = X^\an(G) \times_{\Qp} L$ and $\delta_L =
\delta_{G} \otimes 1$. This example will control $(\varphi, \Gamma_K)$-modules of rank one, as described in Construction~\ref{Con: rank one varphi, Gamma_K modules} and Theorem~\ref{T:full rank 1 classification}.
\end{example}

It will be useful to separate $X^{\an}_L$ into factors as follows.

\begin{example}  \label{E:factorization of characters}
Choose a uniformizer $\varpi_K$ of $K$ and use it to split $K^\times$ as a product $\calO_K^\times \times \varpi_K^\ZZ$. 
Then each continuous character $\delta$ on $K^\times$ factors uniquely as $\delta_1 \delta_2$ where $\delta_1$ is trivial on $\varpi_K^\ZZ$
and $\delta_2$ is trivial on $\calO_K^\times$. Correspondingly,
we may factor $X^{\an}_L$ as a product $X^{\an}_{L,1} \times X^{\an}_{L,2}$ where
$X^{\an}_{L,1} = X^{\an}(\calO_K^\times) \times_{\Qp} L$ and $X^{\an}_{L,2} = X^{\an}(\varpi^\ZZ) \times_{\Qp} L$.
In particular, there is a distinguished isomorphism $X^{\an}_{L,2} \cong \mathbb{G}^{\an}_{m,L}$ under which the coordinate on $\mathbb{G}^{\an}_{m,L}$ computes the evaluation of a character on $\varpi_K$.
\end{example}

It will also be helpful to further separate the factor $X^{\an}_{L,1}$
of $X^{\an}_L$.
\begin{defn}\label{D:weight}
Let $A$ be an $L$-affinoid algebra.
A continuous character $\delta: K^\times \to A^\times$ is
automatically locally $\Qp$-analytic \cite[Proposition~8.3]{buzzard}
(but not locally $K$-analytic in general), so we may look at the
action of the $\Qp$-Lie algebra of $K^\times$.  More precisely, the
\emph{weight} of $\delta$ is the image $\mathrm{wt}(\delta)$ under
$\prod_{\sigma \in \Sigma} A \cong K \otimes_{\Qp} A$ of the family of
elements $(\mathrm{wt}_{\sigma}(\delta))_{\sigma \in \Sigma}$ such
that
\[
\lim_{\substack{a \to 0 \\ a \in \calO_K}} \frac{|\delta( 1+a) - 1 - \sum_{\sigma \in \Sigma} \mathrm{wt}_\sigma(\delta)\sigma(a)|_A}{|a|_K} = 0
\]
(for any Banach algebra norm $|\cdot|_A$ defining the topology on
$A$).  The existence of such a set of elements follows from the fact
that $\delta$ is locally $\Qp$-analytic, and the uniqueness is
obvious.

Let $X$ be a rigid $L$-analytic space. By globalizing the previous definition, we may attach to any continuous character $\delta: K^\times \to \Gamma(X, \calO_X)^\times$ a \emph{weight} $\mathrm{wt}(\delta) \in K \otimes_{\Qp} \Gamma(X, \calO_X)$.
\end{defn}

\begin{defn} \label{D:Sen weight space}
Let $X^{\an}_{L,\mathrm{Sen}}$ be the rigid $L$-analytic variety obtained by forming
$\mathbb{G}^{\an}_{a,K}$, taking the Weil restriction from $K$ to $\Qp$, then base extending from $\Qp$ to $L$.
From Definition~\ref{D:weight},
we obtain a homomorphism $X^{\an}_{L,1} \to X^{\an}_{L,\mathrm{Sen}}$
such that for any rigid $L$-analytic space $X$ and any continuous character $\delta: K^\times \to \Gamma(X, \calO_X)^\times$,
the composition of the induced map $X \to X^{\an}_L$ with the projections $X^{\an}_L \to X^{\an}_{L,1} \to X^{\an}_{L,\mathrm{Sen}}$
is the map $X \to X^{\an}_{L,\mathrm{Sen}}$ associated to $\mathrm{wt}(\delta)$.

Let $X^{\an}_{L,\mathrm{fin}}$ denote the kernel of 
$X^{\an}_{L,1} \to X^{\an}_{L,\mathrm{Sen}}$.
The space $X^{\an}_{L,\mathrm{fin}}$ classifies characters on $\calO_K^\times$ of weight zero; since these characters are locally $\Qp$-analytic, these are exactly the characters of finite order. It follows that $X^{\an}_{L,\mathrm{fin}}$ consists of an \emph{infinite} discrete set of rigid analytic points.
\end{defn}

\subsection{$(\varphi,\Gamma_K)$-modules of character type}\label{SS:rank one}

We next make a detailed study of $(\varphi, \Gamma_K)$-modules of rank one on rigid analytic spaces. 
In one direction, we show that continuous characters on $K^\times$ give rise to $(\varphi, \Gamma_K)$-modules, said to be of \emph{character type}. The construction
amounts to a translation of a construction of Nakamura \cite{nakamura2}
from Berger's language of $\mathbb{B}$-pairs into the language of
$(\varphi, \Gamma_K)$-modules, which is better suited for arithmetic families. In the other direction, we show that every rank one $(\varphi, \Gamma_K)$-module on a rigid analytic space is of character type up to a twist by a line bundle on the base space.
Along the way, we develop some tools which will be useful later for analyzing trianguline $(\varphi, \Gamma_K)$-modules, including a criterion for when a rank one submodule of a $(\varphi,\Gamma_K)$-module over $\calR_L(\pi_K)$ is saturated (Lemma \ref{L:phi Gamma module saturated}), and a definition of Sen weights in families.

In order to be able to work over rigid analytic spaces which are not necessarily affinoid, we introduce the following definition.

\begin{defn}
For $X$ a rigid analytic space over $\Qp$ and $r>0$, we define
$\calR^r_X(\pi_K)$ to be the sheaf of analytic functions on $X \times
\Max(\calR^r_{\Qp}(\pi_K))$.  Let $\calR_X(\pi_K)$ be the direct limit
over $r>0$ of the $\calR_X^r(\pi_K)$.  For $?=r,\emptyset$, a
$(\varphi, \Gamma_K)$-module over $\calR_X^?(\pi_K)$ is a compatible
family of $(\varphi, \Gamma_K)$-modules over $\calR_A^?(\pi_K)$ for
each affinoid $\Max(A)$ of $X$.  Note, in particular, that for $X$ not
quasicompact a $(\varphi,\Gamma_K)$-module over $\calR_X(\pi_K)$ might
not arise as the base change from a single $\calR_X^r(\pi_K)$, as the
required $r$ may not be bounded away from zero as one ranges over the
affinoids of $X$.
\end{defn}

We begin by recalling a standard construction of
$(\varphi,\Gamma_{\Qp})$-modules over $\calR_X$, generalizing
Notation~\ref{N:char type over Qp} from the case $X = \Max(\Qp)$.

\begin{notation}
\label{N:rank one module}
Let $X$ be a rigid analytic space over $\Qp$.
For $\delta: \Qp^\times \to \Gamma(X,\calO_X)^\times$ a continuous character, we define $\calR_X(\delta)$ to be the free rank one $(\varphi, \Gamma)$-module $\calR_X\cdot\bbe$ with $\varphi(\bbe) = \delta(p)\bbe$ and $\gamma(\bbe) = \delta(\chi(\gamma))\bbe$ for $\gamma \in \Gamma$.
\end{notation}

The above notion generalizes from $\Qp$ to $K$, making use of the following variant of Hilbert's Theorem 90.

\begin{lemma}\label{L:make phi module}
Let $A$ be a $K_0$-algebra, and let $a \in A^\times$.  Up to isomorphism, there exists a unique free rank one $K_0 \otimes_{\Qp} A$-module $D_{f,a}$ equipped with a $\varphi \otimes 1$-semilinear operator $\varphi$ satisfying $\varphi^f = 1 \otimes a$.  One has $D_{f,ab} \simeq D_{f,a} \otimes_{K_0 \otimes_{\Qp} A} D_{f,b}$ for all $a,b \in A^\times$.
\end{lemma}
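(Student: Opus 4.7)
The plan is to reduce the lemma to a single norm equation by splitting $K_0 \otimes_{\Qp} A$ according to the $K_0$-algebra structure on $A$, and then argue existence and uniqueness by elementary considerations playing the role of Hilbert's Theorem 90 for the cyclic group $\langle \varphi \rangle = \Gal(K_0/\Qp)$.

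First I would use \'etaleness of $K_0/\Qp$ to decompose $K_0 \otimes_{\Qp} K_0 \cong \prod_{i=0}^{f-1} K_0$ via idempotents $e_i$ corresponding to the ring maps $x \otimes y \mapsto \varphi^i(x)y$.  Base-changing to $A$ yields $K_0 \otimes_{\Qp} A \cong \prod_{i=0}^{f-1} A$ as $K_0$-algebras, with $\varphi \otimes 1$ cyclically permuting the idempotents, say $(\varphi \otimes 1)(e_i) = e_{i-1}$.  A free rank one module $D = (K_0 \otimes_{\Qp} A) e$ equipped with a $\varphi \otimes 1$-semilinear operator is then specified by a tuple $(u_0, \ldots, u_{f-1}) \in (A^\times)^f$ via the formulas $\varphi(e_i e) = u_{i-1}\, e_{i-1}\, e$, and a routine iteration yields $\varphi^f = 1 \otimes \prod_{i} u_i$ on each summand.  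Thus the condition $\varphi^f = 1 \otimes a$ translates to the single norm equation $\prod_i u_i = a$.

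Existence is then immediate by taking $u_0 = a$ and $u_i = 1$ for $i \geq 1$.  For uniqueness, I would observe that a change of basis $e \mapsto b\, e$ with $b = (b_0,\ldots,b_{f-1}) \in \prod_i A^\times$ transforms $u_i$ into $b_{i+1} u_i / b_i$; hence given two tuples $u, u'$ with $\prod_i u_i = \prod_i u'_i = a$, the recursion $b_0 = 1$, $b_{i+1} = b_i u'_i / u_i$ yields a basis change intertwining them, the equality of the two products being exactly what ensures that the resulting $b_f$ coincides with $b_0$ so the recursion is well-defined on $\ZZ/f\ZZ$.  This both establishes uniqueness and exhibits the Hilbert 90 phenomenon by hand, without needing $A$ to be a field.

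Finally, the tensor product identity is formal: if $\varphi$ acts on generators $e_a, e_b$ of $D_{f,a}, D_{f,b}$ by tuples $u, u'$, then $e_a \otimes e_b$ generates the tensor product as a $K_0 \otimes_{\Qp} A$-module and $\varphi$ acts on it by the componentwise product $u u'$, whose norm is $\prod_i u_i u'_i = ab$; so the tensor product is isomorphic to $D_{f,ab}$ by the uniqueness just established.  No step of this argument looks like a genuine obstacle; the only care required is keeping the cyclic indexing consistent.
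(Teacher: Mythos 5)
Your proof is correct and follows essentially the same approach as the paper: both identify $K_0 \otimes_{\Qp} A$ with $\prod_{i=0}^{f-1} A$, encode the semilinear $\varphi$ as a tuple of units whose product is $a$, and establish uniqueness by the elementary Hilbert-90-style recursion for a change-of-basis element (the paper reduces to $a=1$ first and solves $(\varphi\otimes1)(c)/c = b$; you compare two tuples directly, which is the same computation).
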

\begin{proof}
For existence, let $D_{f,a}$ be the free $A$-module on the basis
$e_0,\ldots,e_{f-1}$, extended to a $K_0 \otimes_{\Qp} A$-module via
the rule $(x \otimes 1)e_i = \varphi^i(x)e_i$.  One checks directly
that $D_{f,a}$ is free of rank one over $K_0 \otimes_{\Qp} A$, and
that the rule $\varphi : D_{f,a} \to D_{f,a}$ defined by
\[
\varphi(e_0) = ae_{f-1},
\quad
\varphi(e_1) = e_0,
\quad
\varphi(e_2) = e_1,
\quad \ldots, \quad
\varphi(e_{f-1}) = e_{f-2}
\]
and $A$-linearity is $(\varphi \otimes 1)$-linear with $\varphi^f = 1
\otimes a$.

For uniqueness, one reduces easily to the case $a=1$, and must show that any candidate $D_{f,1}$ is isomorphic as $K_0 \otimes_{\Qp} A$-module with $\varphi$-action to $K_0 \otimes_{\Qp} A$ itself equipped with $\varphi \otimes 1$.  Fix a basis $e$ of $D_{f,1}$ and write $\varphi(e) = be$, where necessarily $b \in (K_0 \otimes_{\Qp} A)^\times$ and $b \cdot (\varphi\otimes1)(b)\cdots(\varphi\otimes1)^{f-1}(b) = 1$.  We must find $c \in (K_0 \otimes_{\Qp} A)^\times$ such that $(\varphi \otimes 1)(c)/c = b$; given such $c$, the desired isomorphism sends $e \in D_{f,1}$ to $c \in K_0 \otimes_{\Qp} A$.  Writing, under the identification $K_0 \otimes_{\Qp} A = \prod_{i=0}^{f-1} A$, the element $b$ as the tuple $(b_0,\ldots,b_{n-1})$, one easily checks that each $b_i$ is a unit in $A$, and that the element $c$ identified to the tuple $(1,b_0,b_0b_1,\ldots,b_0b_1 \cdots b_{n-2})$ has the desired properties.

The identity $D_{f,ab} \simeq D_{f,a} \otimes_{K_0 \otimes_{\Qp} A} D_{f,b}$ is straightforward to see.
\end{proof}

We now give the construction of $(\varphi, \Gamma_K)$-modules of \emph{character type}.
\begin{construction}
\label{Con: rank one varphi, Gamma_K modules}
(Compare with Nakamura's construction in the language of $\BB$-pairs,
in \cite[\S1.4]{nakamura}.)  To a continuous character $\delta:
K^\times \to \Gamma(X,\calO_X)^\times$ we associate a $(\varphi,
\Gamma_K)$-module of rank $1$ over $\calR_X(\pi_K)$, denoted
$\calR_A(\pi_K)(\delta)$, as follows.  First, we may work locally, and
assume that $X = \Max(A)$ is affinoid.  Factor $\delta =
\delta_1\delta_2$ as in Example~\ref{E:factorization of characters}; then $\delta_1$, viewed as a character on $\calO_K^\times$, extends to a character
$\widehat\delta_1$ on $G_K^\mathrm{ab}$.
We let
$\calR_A(\pi_K)(\delta_1) = \bbD_\rig(\widehat\delta_1)$, and we let
$\calR_A(\pi_K)(\delta_2) = D_{f,\delta_2(\mathrm{uniformizer})}
\otimes_{(K_0 \otimes_{\Qp} A)} \calR_A(\pi_K)$ (in the notation of
the preceding lemma) with the evident induced $\varphi$-action, and
trivial $\Gamma_K$-action on the first factor.  Then, we put
$\calR_A(\pi_K)(\delta) = \calR_A(\pi_K)(\delta_1)
\otimes_{\calR_A(\pi_K)} \calR_A(\pi_K)(\delta_2)$.  Using the
tensor-functoriality of $\bbD_\rig$ in Theorem~\ref{T:Drig} and
$D_{f,a}$ in Lemma \ref{L:make phi module}, this definition is easily
checked to be independent of the choice of the uniformizer $\varpi_K$ in
Example~\ref{E:factorization of characters}
 and also tensor-functorial.

For a $(\varphi, \Gamma_K)$-module $M$ over $\calR_X(\pi_K)$, we put $M(\delta) = M \otimes_{\calR_X(\pi_K)} \calR_X(\pi_K)(\delta)$.
\end{construction}

We continue with some tools adapted to the case $A=L$, for use in treating the behavior of families at points of $\Max(A)$.

\begin{defn}
Let $M$ be a $(\varphi,\Gamma_K)$-module over $\calR_L(\pi_K)$ that is potentially semistable.  Thus $\bbD_\pst(M)$ is a filtered $(\varphi,N,\Gal(K'/K))$-module equipped with a $\Qp$-linear action of $L$ commuting with $(\varphi,N,\Gal(K'/K))$ and preserving the filtration; here $K'/K$ is a finite Galois extension inside $\overline\QQ_p$ over which $M$ becomes semistable, and we write $K_0'$ for its maximal absolutely unramified subfield.  It is well-known that $\bbD_\pst(M)$ is a free $K_0' \otimes_{\Qp} L$-module, so that $\bbD_\dR(M) = (K' \otimes_{K_0'} D)^{G_K}$ is a free $K \otimes_{\Qp} L$-module.  We decompose $\bbD_\dR(M) = \bigoplus_{\sigma \in \Sigma} e_\sigma \bbD_\dR(M)$ as a filtered $K \otimes_{\Qp} L$-module, and for $\sigma \in \Sigma$ we define the \emph{$\sigma$-Hodge-Tate weights} of $M$ to be the jumps in the filtration on $e_\sigma \bbD_\dR(M)$, counted with multiplicity given by the $L$-dimension of the respective graded pieces.
(For details on the above constructions, see Berger's dictionary \cite{berger-dictionary}.)
\end{defn}

\begin{example}
\label{Ex:rank one modules}  Choose a uniformizer $\varpi_K$ of $K$.

(1) The unique homomorphism $\LT_{\varpi_K}: K^\times \to K^\times$ that is the identity on $\calO_K^\times$ and satisfies $\LT_{\varpi_K}(\varpi_K) = 1$ extends to $G_K^\mathrm{ab}$, whereupon it describes the action on the torsion in the Lubin-Tate group over $\calO_K$ associated to $\varpi_K$.  The structure of $\bbD_\cris(\LT_{\varpi_K})$ is well-known.  As a $K_0 \otimes_{\Qp} K$-module with $\varphi \otimes 1$-linear operator, it is isomorphic to $D_{f,\varpi_K^{-1}}$ in the notation of Lemma \ref{L:make phi module}, because the $p^f$-power Frobenius of the special fiber of the Lubin-Tate group is by construction equal to formal multiplication by $\varpi_K$.  Choosing an embedding $\sigma_0 \in \Sigma$ and setting $\LT_{\varpi_K,\sigma_0} = \sigma_0 \circ \LT_{\varpi_K}$, the $\sigma$-Hodge-Tate weight of $\LT_{\varpi_K,\sigma_0}$ is $-1$ if $\sigma=\sigma_0$ and $0$ if $\sigma \neq \sigma_0$; this is because the height and dimension of the Lubin-Tate group are $d$ and $1$, and the formal multiplication acts on its tangent space via the structure map.

(2) Fix an embedding $\sigma_0 \in \Sigma$.  Use $x_{\sigma_0}$ to denote the character $K^\times \to L^\times$ given by this embedding, and $\delta_{\sigma_0}: K^\times \to L^\times$ to denote the character given by $\delta_{\sigma_0}|_{\calO_K^\times} = 1$ and $\delta_{\sigma_0}(\varpi_K) = \sigma_0(\varpi_K)$.  One has $x_{\sigma_0} = \LT_{\varpi_K, \sigma_0} \cdot \delta_{\sigma_0}$, so that
\[
\calR_L(\pi_K)(x_{\sigma_0}) \simeq \calR_L(\pi_K)(\LT_{\varpi_K,\sigma_0}) \otimes_{\calR_L(\pi_K)} \calR_L(\pi_K)(\delta_{\sigma_0}).
\]
This allows us to compute, under Berger's dictionary \cite[Th\'eor\`eme~A]{berger-dictionary}, that $\bbD_\cris(\calR_L(\pi_K)(x_{\sigma_0}))$ is isomorphic to $D_{f,1}$, and that its $\sigma$-Hodge-Tate weight is $-1$ if $\sigma=\sigma_0$ and $0$ if $\sigma \neq \sigma_0$.  In particular, $\calR_L(\pi_K)(x_{\sigma_0})$ is a $(\varphi, \Gamma_K)$-submodule of the trivial one $\calR_L(\pi_K)$.

(3) More generally, let $\delta: K^\times \to L^\times$ be a continuous character such that $\delta|_{\calO_K^\times} = \prod_{\sigma \in \Sigma} x_\sigma^{k_\sigma}|_{\calO_K^\times}$ for some integers $k_\sigma$.
Then $\bbD_\cris(\calR_L(\pi_K)(\delta))$ is isomorphic to $D_{f,\alpha}$ with $\alpha = \delta(\varpi_K) \cdot \prod_{\sigma \in \Sigma} \sigma(\varpi_K)^{-k_\sigma}$, and its $\sigma$-Hodge-Tate weight is $-k_\sigma$ for each $\sigma \in \Sigma$.
\end{example}

\begin{notation}
\label{N:t_sigma}
By Example~\ref{Ex:rank one modules}(2), for each $\sigma \in \Sigma$, the module $\calR_L(\pi_K)(x_\sigma)$ is a submodule of $\calR_L(\pi_K)$.  Since $\calR_L(\pi_K)$ is a product of B\'ezout domains, there exists an element $t_\sigma \in \calR_L(\pi_K)$ generating the submodule $\calR_L(\pi_K)(x_\sigma)$. 
The element $t_\sigma$ is not uniquely determined, but the ideal it generates is.

We remark that $\prod_{\sigma \in \Sigma} t_\sigma = tu$ for some unit $u \in \calR_L(\pi_K)^\times$; this follows from the fact that $\prod_{\sigma \in \Sigma} x_\sigma = N_{K/\Qp}$ as characters of $K^\times$.  Also, the $t_\sigma$ are strongly coprime, in that $(t_\sigma) + (t_\tau) = \calR_L(\pi_K)$ for $\sigma \neq \tau$, which one sees by applying Berger's dictionary to $(t_\sigma)+(t_\tau)$ and considering its possible Hodge-Tate weights.  In particular, one has
\begin{equation}\label{E:HT wts decompose}
\calR_L(\pi_K) / t \cong \bigoplus_{\sigma \in \Sigma} \calR_{L}(\pi_K) / t_\sigma.
\end{equation}
\end{notation}

We now compute the cohomology of $(\varphi,
\Gamma_K)$-modules of character type over $L$.

\begin{prop}
\label{P:cohomology of rank one module}
Let $\delta: K^\times \to L^\times$ be a continuous character.
\begin{itemize}
\item[(1)] The cohomology $H^0_{\varphi, \gamma_K}(\calR_L(\pi_K)(\delta))$ is trivial unless $\delta = \prod_{\sigma \in \Sigma} x_\sigma^{k_\sigma}$ with all  $k_\sigma \leq 0$, in which case the dimension is equal to one.

\item[(2)] The cohomology
$H^2_{\varphi, \gamma_K}(\calR_L(\pi_K)(\delta))$ is trivial unless $\delta =|N_{K/\QQ}|\cdot  \prod_{\sigma \in \Sigma} x_\sigma^{k_\sigma}$ with all $k_\sigma \geq 1$, in which case the dimension is equal to one.

\item[(3)] the cohomology $H^1_{\varphi, \gamma_K}(\calR_L(\pi_K)(\delta))$ has dimension equal to $[K:\Qp]$ unless either the zeroth or second cohomology does not vanish, in which case the dimension is equal to $[K:\Qp]+1$.
\end{itemize}
\end{prop}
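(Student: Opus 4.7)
The plan is to classify invariants directly for (1), deduce (2) from Tate duality, and obtain (3) from the Euler--Poincar\'e formula. For (1), I would use the identification $H^0_{\varphi,\gamma_K}(\calR_L(\pi_K)(\delta)) \cong \Hom_{\varphi,\Gamma_K}(\calR_L(\pi_K)(\delta^{-1}), \calR_L(\pi_K))$; since source and target are rank one torsion-free modules over the B\'ezout domains comprising $\calR_L(\pi_K)$, every nonzero such map is an embedding. The question thus becomes: for which characters $\eta$ does $\calR_L(\pi_K)(\eta)$ embed as a rank one $(\varphi,\Gamma_K)$-submodule of $\calR_L(\pi_K)$? For $\eta = \prod_\sigma x_\sigma^{k_\sigma}$ with $k_\sigma \geq 0$, the explicit element $\prod_\sigma t_\sigma^{k_\sigma}$ from Notation~\ref{N:t_sigma} exhibits such an embedding. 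Conversely, an embedding has image a principal $(\varphi,\Gamma_K)$-stable ideal $f\calR_L(\pi_K)$, and I would read off the multiplicities $k_\sigma$ of each $t_\sigma$ dividing $f$ from the $\sigma$-Hodge-Tate weights of $\calR_L(\pi_K)(\eta)$ (via Berger's dictionary and the decomposition \eqref{E:HT wts decompose}). Dividing $f$ by $\prod_\sigma t_\sigma^{k_\sigma}$ reduces to the case of a $(\varphi,\Gamma_K)$-equivariant unit in $\calR_L(\pi_K)$, which must be a scalar in $L^\times$, yielding the stated form of $\delta$.

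For (2), Tate duality (Theorem~\ref{T:Liu}(3)) identifies $H^2_{\varphi,\gamma_K}(\calR_L(\pi_K)(\delta))$ with the $L$-linear dual of $H^0_{\varphi,\gamma_K}(\calR_L(\pi_K)(\delta)^*)$. By tensor-functoriality of Construction~\ref{Con: rank one varphi, Gamma_K modules}, the Cartier dual is $\calR_L(\pi_K)(\delta^{-1}\chi_K)$, where $\chi_K \colon K^\times \to L^\times$ is the character corresponding via local class field theory to the cyclotomic character of $G_K$. Using that $\bbD_\cris(\calR_L(\pi_K)(1))$ is rank one with all $\sigma$-Hodge-Tate weights equal to $-1$ and $\varphi^f$-eigenvalue $p^{-f}$, Example~\ref{Ex:rank one modules}(3) identifies $\chi_K = |N_{K/\QQ}|\cdot\prod_\sigma x_\sigma$. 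Substituting into (1), $H^0(\calR_L(\pi_K)(\delta^{-1}\chi_K)) \neq 0$ iff $\delta = \chi_K \cdot \prod_\sigma x_\sigma^{-m_\sigma} = |N_{K/\QQ}|\prod_\sigma x_\sigma^{1-m_\sigma}$ for $m_\sigma \geq 0$, matching the statement.

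For (3), the Euler--Poincar\'e formula (Theorem~\ref{T:Liu}(2)) for a rank one module gives $\dim H^0 - \dim H^1 + \dim H^2 = -[K:\Qp]$, so $\dim H^1 = [K:\Qp] + \dim H^0 + \dim H^2$. By (1) and (2), each of $\dim H^0, \dim H^2$ is $0$ or $1$; to finish, it suffices to observe that the character conditions in (1) and (2) are mutually exclusive. Indeed, if both held we would have $\prod_\sigma x_\sigma^{k_\sigma - k'_\sigma} = |N_{K/\QQ}|$ with $k_\sigma \leq 0 < k'_\sigma$; restricting both sides to $\calO_K^\times$, the right side becomes trivial, and the linear independence of the embeddings $\sigma\colon \calO_K^\times \to \calO_L^\times$ (as $\Qp$-analytic characters) forces $k_\sigma = k'_\sigma$ for each $\sigma$, a contradiction.

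The main obstacle is the converse direction in (1): pinning down the $t_\sigma$-adic structure of a generator of a $(\varphi,\Gamma_K)$-stable principal ideal in $\calR_L(\pi_K)$. When $K=\Qp$, this is the content of Lemma~\ref{L:special rank 1}(2); for general $K$, the additional bookkeeping is provided by the elements $t_\sigma$ of Notation~\ref{N:t_sigma}, which were introduced precisely to track the embedding-by-embedding decomposition \eqref{E:HT wts decompose} of Hodge-Tate weights, and which are pairwise strongly coprime so that extracting divisibilities can be done one factor at a time.
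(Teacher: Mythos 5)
Your overall structure — deriving (3) from (1),(2) via the Euler--Poincar\'e formula, (2) from (1) via Tate duality (computing the Cartier dual as $\calR_L(\pi_K)(\delta^{-1}\chi_K)$ with $\chi_K = |N_{K/\Qp}|\prod_\sigma x_\sigma$), and (1) via the classification of rank-one subobjects of the trivial module — matches the paper's argument. The one step you leave implicit, and which the paper states as the hinge of the proof of (1), is that \emph{a subobject of a crystalline $(\varphi,\Gamma_K)$-module is crystalline}. Your plan to ``read off the multiplicities $k_\sigma$ of each $t_\sigma$ dividing $f$ from the $\sigma$-Hodge-Tate weights of $\calR_L(\pi_K)(\eta)$'' presupposes both that $\calR_L(\pi_K)(\eta)$ has well-defined Hodge-Tate weights and that a $(\varphi,\Gamma_K)$-equivariant generator $f$ factors as $\prod_\sigma t_\sigma^{k_\sigma}$ up to a unit; neither is automatic, and a direct attempt to classify $(\varphi,\Gamma_K)$-stable principal ideals would risk circularity with Corollary~\ref{C:submodule of rank one module}, which is deduced \emph{from} this proposition. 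The clean way to close the gap is exactly what the paper does: observe that $\calR_L(\pi_K)(\delta^{-1}) \hookrightarrow \calR_L(\pi_K)$ forces the source to be crystalline; then $\bbD_\cris$ identifies it with a filtered $\varphi$-submodule of the trivial rank-one filtered $\varphi$-module over $K_0\otimes_{\Qp} L$, whence the underlying $\varphi$-module is $(K_0\otimes L,\varphi\otimes 1)$ and the filtration jumps $-k_\sigma$ are nonpositive; by Example~\ref{Ex:rank one modules}(3) this is the filtered $\varphi$-module of $\prod_\sigma x_\sigma^{k_\sigma}$, and full faithfulness of $\bbD_\cris$ gives $\dim H^0 = 1$. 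Your alternative for one-dimensionality — dividing two embeddings to get an element of $\calR_L(\pi_K)^{\varphi=1,\Gamma_K=1}=L$ — is also correct (this is just $H^0_{\varphi,\gamma_K}(\calR_L(\pi_K))=L$) and is a modest variant of the paper's appeal to full faithfulness. Your check that the conditions in (1) and (2) are mutually exclusive, while omitted in the paper, is a worthwhile addition since it rules out $\dim H^0 = \dim H^2 = 1$ simultaneously.
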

\begin{proof}
This follows from Nakamura's computation \cite[\S2.3]{nakamura}, together with the comparison of cohomology.
For the convenience of the reader, we include a proof using the language of $(\varphi, \Gamma_K)$-modules.

By the Euler characteristic formula (Theorem~\ref{T:Liu}(2)), the last statement follows from the first two.  By Tate duality (Theorem~\ref{T:Liu}(3)), the second statement follows from the first one.  It then suffices to prove (1).

Suppose that $H^0_{\varphi, \gamma_K}(\calR_L(\pi_K)(\delta))$ is nontrivial for some continuous character $\delta: K^\times \to L^\times$.  Then we have a nonzero morphism $\calR_L(\pi_K) \to \calR_L(\pi_K)(\delta)$ of $(\varphi, \Gamma_K)$-modules; this is equivalent to the existence of a morphism $j: \calR_L(\pi_K)(\delta^{-1}) \to \calR_L(\pi_K)$ of $(\varphi, \Gamma_K)$-modules.  Since $\calR_L(\pi_K)$  is a finite direct product of integral domains and $\varphi$ acts transitively on these domains, this morphism $j$ must be injective, realizing $\calR_L(\pi_K)(\delta^{-1})$ as a subobject of $\calR_L(\pi_K)$.

As a subobject of a crystalline object is crystalline, $\calR_L(\pi_K)(\delta^{-1})$ corresponds to a nonzero subobject of the trivial filtered $\varphi$-module of rank one.  In particular, it must be isomorphic to $K_0 \otimes_{\Qp} L$ as a $\varphi$-module, and each $\sigma$-Hodge-Tate weight $-k_\sigma$ for $\sigma \in \Sigma$ is nonpositive.  Note that this subobject is exactly the filtered $\varphi$-module associated to the character $\prod_{\sigma \in \Sigma} x_\sigma^{k_\sigma}$ in the computation in Example~\ref{Ex:rank one modules}(3).  It follows that $\delta = \prod_{\sigma \in \Sigma} x_\sigma^{-k_\sigma}$.  As $\bbD_\cris$ is fully faithful on crystalline objects, and there is only one nonzero inclusion $\bbD_\cris(\calR_L(\pi_K)(\delta^{-1})) \hookrightarrow \bbD_\cris(\calR_L(\pi_K))$ up to scalar multiple, we see that $H^0_{\varphi,\gamma_K}(\calR_L(\pi_K)(\delta))$ is one-dimensional.
\end{proof}

\begin{cor}
\label{C:submodule of rank one module}
Let $\delta:K^\times \to L^\times$ be a continuous character.  Then a nonzero $(\varphi, \Gamma_K)$-submodule of $\calR_L(\pi_K)(\delta)$ over $\calR_L(\pi_K)$ is of the form $\left(\prod_{\sigma \in \Sigma} t_\sigma^{k_\sigma}\right) \calR_L(\pi_K)(\delta)$ for some $k_\sigma \geq 0$.
\end{cor}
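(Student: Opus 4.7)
The plan is to reduce to Proposition~\ref{P:cohomology of rank one module}(1) via the rank-one classification of $(\varphi,\Gamma_K)$-modules over $\calR_L(\pi_K)$. Let $N$ be a nonzero $(\varphi,\Gamma_K)$-submodule of $\calR_L(\pi_K)(\delta)$. First I would observe that $N$ is free of rank one: it is nonzero and torsion-free (sitting inside the torsion-free module $\calR_L(\pi_K)(\delta)$), and has rank at most one because its ambient module does; since $\calR_L(\pi_K)$ is a product of B\'ezout domains, Remark~\ref{R:Bezout+torsion free=>free} yields that $N$ is free of rank one on each connected component. Invoking the rank-one classification of $(\varphi,\Gamma_K)$-modules over $\calR_L(\pi_K)$---an easy application of local class field theory, as in \cite[Proposition~3.1]{colmez} for $K=\Qp$, with a routine extension to general $K$ as alluded to in the Remark following Example~\ref{Ex:rank one modules}---I obtain $N \cong \calR_L(\pi_K)(\delta')$ for some continuous character $\delta' : K^\times \to L^\times$.

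The inclusion $N = \calR_L(\pi_K)(\delta') \hookrightarrow \calR_L(\pi_K)(\delta)$ then corresponds to a nonzero element of $\Hom_{\varphi,\gamma_K}(\calR_L(\pi_K)(\delta'),\calR_L(\pi_K)(\delta)) = H^0_{\varphi,\gamma_K}(\calR_L(\pi_K)(\delta\delta'^{-1}))$. By Proposition~\ref{P:cohomology of rank one module}(1), this $H^0$ is nonzero only if $\delta\delta'^{-1} = \prod_{\sigma \in \Sigma} x_\sigma^{-k_\sigma}$ for some $k_\sigma \geq 0$, and in that case it is one-dimensional, so the inclusion is unique up to an $L^\times$-scalar. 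The tensor-functoriality of character-type modules in Construction~\ref{Con: rank one varphi, Gamma_K modules} gives $\calR_L(\pi_K)(\delta') \cong \calR_L(\pi_K)(\delta) \otimes_{\calR_L(\pi_K)} \calR_L(\pi_K)(\prod_\sigma x_\sigma^{k_\sigma})$, and tensoring the natural inclusions $\calR_L(\pi_K)(x_\sigma) = t_\sigma \calR_L(\pi_K) \hookrightarrow \calR_L(\pi_K)$ from Notation~\ref{N:t_sigma} (using the strong coprimality of the $t_\sigma$) identifies $\calR_L(\pi_K)(\prod_\sigma x_\sigma^{k_\sigma})$ with the submodule $\prod_\sigma t_\sigma^{k_\sigma}\calR_L(\pi_K)$ of $\calR_L(\pi_K)$. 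Tensoring by $\calR_L(\pi_K)(\delta)$, I conclude that $N = \prod_\sigma t_\sigma^{k_\sigma}\calR_L(\pi_K)(\delta)$.

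The essential technical input, and in that sense the principal obstacle, is the rank-one character-type classification; once that is taken for granted, the remainder is a matter of repackaging Proposition~\ref{P:cohomology of rank one module}(1) in terms of subobjects rather than morphisms, using the explicit identification of $\calR_L(\pi_K)(x_\sigma)$ with the ideal $t_\sigma\calR_L(\pi_K)$.
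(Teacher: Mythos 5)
Your proof is correct and is precisely what the paper's one-line proof ("This follows immediately from Proposition~\ref{P:cohomology of rank one module}(1)") is eliding: one first classifies the rank-one submodule $N$ as $\calR_L(\pi_K)(\delta')$ via the (field-coefficient) rank-one classification, then applies Proposition~\ref{P:cohomology of rank one module}(1) to the nonzero element of $H^0_{\varphi,\gamma_K}(\calR_L(\pi_K)(\delta\delta'^{-1}))$ given by the inclusion, and finally uses one-dimensionality of that $H^0$ to pin down the image as $\bigl(\prod_\sigma t_\sigma^{k_\sigma}\bigr)\calR_L(\pi_K)(\delta)$. Two small remarks: your appeal to the ``strong coprimality'' of the $t_\sigma$ is not actually needed at that step (the ideals $(t_\sigma)$ are principal, so the tensor product of inclusions lands in $(\prod_\sigma t_\sigma^{k_\sigma})$ automatically); and the rank-one classification you cite is, for $A=L$ a field, just the base case of Lemma~\ref{L:artinian rank 1 classification}, which does not depend on this corollary, so there is no circularity.
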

\begin{proof}
This follows immediately from Proposition~\ref{P:cohomology of rank one module}(1).
\end{proof}

\begin{lemma}
\label{L:phi Gamma module saturated}
Let $M$ be a $(\varphi, \Gamma_K)$-module over $\calR_L(\pi_K)$, and let $\lambda$ be a nonzero element of $H^0_{\varphi, \gamma_K}(M) \cong \Hom_{\calR_L(\pi_K)[\varphi,\Gamma_K]}(\calR_L(\pi_K),M)$.  Then $\lambda(\calR_L(\pi_K))$ is saturated in $M$ if and only if the image of $\lambda$ in $H^0_{\varphi, \gamma_K}(M/t_\sigma)$ is nonzero for every $\sigma \in \Sigma$.
\end{lemma}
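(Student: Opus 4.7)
The plan is to reduce the bi-implication to a statement about the fixed element $x = \lambda(1) \in M$ and the ideal structure of $\calR_L(\pi_K)$. First I would observe that $\lambda$ must be injective: any nonzero kernel would, by Corollary~\ref{C:submodule of rank one module} applied with $\delta = 1$, have full rank, so that the image $N = \lambda(\calR_L(\pi_K))$ would be a torsion submodule of the torsion-free module $M$, forcing $\lambda = 0$ and contradicting the hypothesis. Hence $N = \calR_L(\pi_K)\cdot x$, and the image of $\lambda$ in $H^0_{\varphi,\gamma_K}(M/t_\sigma)$ is simply the class of $x$ in $M/t_\sigma M$; it is nonzero if and only if $x \notin t_\sigma M$. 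In this way the lemma becomes: $N$ is saturated in $M$ iff $x \notin t_\sigma M$ for every $\sigma \in \Sigma$.

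For the ``saturated $\Rightarrow$ $x \notin t_\sigma M$'' direction I would argue by contrapositive. Suppose $x = t_\sigma m$ for some $m \in M$. Because $\calR_L(\pi_K)(x_\sigma) = t_\sigma\calR_L(\pi_K)$ is a rank-one $(\varphi,\Gamma_K)$-submodule of $\calR_L(\pi_K)$, we have $\varphi(t_\sigma) = c_\sigma t_\sigma$ and $\gamma(t_\sigma) = d_{\sigma,\gamma} t_\sigma$ for certain units $c_\sigma, d_{\sigma,\gamma} \in \calR_L(\pi_K)^\times$ (units precisely because these describe the $\varphi$-module / $\Gamma_K$-action structure on the rank-one object). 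Combined with $\varphi(x) = x = \gamma(x)$ these give $\varphi(m) = c_\sigma^{-1} m$ and $\gamma(m) = d_{\sigma,\gamma}^{-1} m$, so that $\calR_L(\pi_K)\cdot m$ is a $(\varphi,\Gamma_K)$-submodule of $M$, free of rank one as $m \neq 0$ and $M$ is torsion-free, and strictly containing $N = t_\sigma \cdot (\calR_L(\pi_K)\cdot m)$ because $t_\sigma$ is a non-unit. So $N$ is not saturated.

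For the converse, if $N$ is not saturated I would pick any rank-one $(\varphi,\Gamma_K)$-submodule $N'$ of $M$ strictly containing $N$ (for example, the saturation of $N$). By Remark~\ref{R:Bezout+torsion free=>free}, $N'$ is free of rank one as an $\calR_L(\pi_K)$-module; fix a generator $e'$, so that $\varphi(e') = ce'$ and $\gamma(e') = d_\gamma e'$ for units $c, d_\gamma \in \calR_L(\pi_K)^\times$. Writing $x = fe'$ with $f \in \calR_L(\pi_K)$ nonzero and (because $N \subsetneq N'$) a non-unit, the equivariance of $x$ forces $\varphi(f) = fc^{-1}$ and $\gamma(f) = fd_\gamma^{-1}$, so the ideal $(f) \subseteq \calR_L(\pi_K)$ is a proper nonzero $(\varphi,\Gamma_K)$-stable submodule. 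Applying Corollary~\ref{C:submodule of rank one module} with $\delta = 1$ gives $(f) = \bigl(\prod_{\sigma} t_\sigma^{k_\sigma}\bigr)$ for some $k_\sigma \geq 0$ not all zero; choosing any $\sigma$ with $k_\sigma \geq 1$ yields $x = fe' \in t_\sigma N' \subseteq t_\sigma M$, as needed. The only slightly delicate point is the identification of the scaling factors $c, d_\gamma$ as units, which reflects the rank-one $(\varphi,\Gamma_K)$-module structure of $N'$; once this is in hand the rest is a mechanical application of the classification of $(\varphi,\Gamma_K)$-stable submodules of the trivial rank-one object. In particular this approach bypasses the full character-type classification (Theorem~\ref{T:full rank 1 classification}), since we only need Corollary~\ref{C:submodule of rank one module} in the already-known case $\delta = 1$.
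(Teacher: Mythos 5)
Your proof is correct, and the converse direction genuinely differs from the paper's. The forward direction (``image of $\lambda$ dies in some $M/t_\sigma$ $\Rightarrow$ not saturated'') is in substance the same as the paper's: both observe that if $x=\lambda(1)$ lies in $t_\sigma M$ then $t_\sigma^{-1}\lambda(\calR_L(\pi_K))$ is a strictly larger rank-one submodule. For the converse, the paper dualizes the non-saturated inclusion $\lambda(\calR_L(\pi_K))\hookrightarrow M$ to obtain a non-surjective map $M^\vee\to\calR_L(\pi_K)$ and applies Corollary~\ref{C:submodule of rank one module} to its image; you instead work forwards, passing to the saturation $N'\supsetneq N$ and applying the same corollary to the ``index ideal'' $(f)$. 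Both routes ultimately rest on Corollary~\ref{C:submodule of rank one module} with $\delta=1$, so your remark that you ``bypass the full character-type classification'' is not really a point of contrast with the paper's proof, which also never invokes Theorem~\ref{T:full rank 1 classification}. The one spot you should shore up is the parenthetical assertion that $\varphi(e')=ce'$ with $c$ a \emph{unit}: this requires knowing that the saturation $N'$ is genuinely a $(\varphi,\Gamma_K)$-\emph{module} (i.e.\ the linearization $\varphi^*N'\to N'$ is an isomorphism), not merely $\varphi$- and $\Gamma_K$-stable. This is true but deserves a sentence: since $\varphi$ is flat, $\varphi^*N'$ is again saturated in $\varphi^*M\cong M$, it contains $N=\calR_L(\pi_K)\varphi(N)$, and hence by minimality of saturations its image in $M$ equals $N'$; a surjection between free rank-one modules over a B\'ezout domain is an isomorphism. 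Without this, the identity $\varphi(f)c=f$ does not yield $\varphi(f)\in(f)$, and the application of Corollary~\ref{C:submodule of rank one module} to $(f)$ would fail. The paper's dualization trick sidesteps this by working with the image of a morphism of $(\varphi,\Gamma_K)$-modules rather than with a saturation, but the trade-off is that it requires the (also slightly nontrivial) fact that $M^\vee\to N^\vee$ surjective detects saturation; the two proofs carry comparable implicit burdens.
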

\begin{proof}
For each embedding $\sigma \in \Sigma$, we have an exact sequence $0 \to H^0_{\varphi, \gamma_K}(M(x_\sigma)) \to H^0_{\varphi, \gamma_K}(M) \to H^0_{\varphi, \gamma_K}(M/t_\sigma)$.  
The image of $\lambda$ in the last term is zero if and only if it is the image of some $\lambda' \in H^0_{\varphi, \gamma_K}( M(x_\sigma))$.
This implies that if $\lambda$ dies in $H^0_{\varphi, \gamma_K}(M/t_\sigma)$, then $\lambda(\calR_L(\pi_K))$ embeds into $t_\sigma M$ and hence $t_\sigma^{-1}\lambda(\calR_L(\pi_K))$ embeds into $M$, rendering the submodule $\lambda(\calR_L(\pi_K))$ not saturated.

Conversely, if $\lambda(\calR_L(\pi_K)) \hookrightarrow M$ is not saturated, then dualizing this inclusion gives a nonsurjective morphism $M^\vee \to \calR_L(\pi_K)$ of $(\varphi, \Gamma_K)$-modules.  By Corollary~\ref{C:submodule of rank one module}, the image of this map lies in $t_\sigma\calR_L(\pi_K)$ for some $\sigma \in \Sigma$.  Dualizing back this map and multiplying both the source and the target by $t_\sigma$ gives a homomorphism $\lambda(\calR_L(\pi_K)) \to t_\sigma M$.  This proves that $\lambda$ lies in the image of $H^0_{\varphi, \gamma_K}(t_\sigma M)$.
\end{proof}

We next discuss the notion of weights in families.  Henceforth $A$
denotes an $L$-affinoid algebra.

\begin{defn}
\label{D:generalized Hodge-Tate weights}
Let $M$ be a $(\varphi, \Gamma_K)$-module over $\calR_A(\pi_K)$ of rank $d$.  By Lemma~\ref{L:structure of M/tM}, $(M/t)^{\varphi = 1}$ is of the form
\[
\bbD_{\Sen, n}(M) \otimes_{K(\mu_{p^n}) \otimes_{\Qp} A} (K(\mu_{p^\infty}) \otimes_{\Qp} A)
\]
for a sufficiently large $n \in \NN$, where $\bbD_{\Sen, n}(M)$ is a
locally free module of rank $d$ over $K(\mu_{p^n}) \otimes_{\Qp} A$.
Consider the \emph{Sen operator} $\Theta_\Sen =
\frac{\log(\gamma)}{\log(\chi(\gamma))}$ on $M$ for $\gamma \in
\Gamma_K$ sufficiently close to $1$.  It induces a $K(\mu_{p^n})
\otimes_{\Qp} A$-\emph{linear} action on $\bbD_{\Sen, n}(M)$.  The
characteristic polynomial of this linear action is monic with
coefficients in $K(\mu_{p^n}) \otimes_{\Qp} A$, but because it must be
$\Gamma_K$-invariant it descends to $K \otimes_{\Qp} A$.  It is
independent of $n$; we call it the \emph{Sen polynomial} of $M$.

When the rank of $M$ is one, the Sen operator acts by multiplication by an element of $K\otimes_{\Qp} A$, which is the unique root of the Sen polynomial. We call this the \emph{Sen weight} of $M$.
\end{defn}

For a $(\varphi, \Gamma_K)$-module of character type, we may
reconcile the Sen weight of the $(\varphi, \Gamma_K)$-module
with the weight of the underlying character.

\begin{lemma}
\label{L:weight of rank one module}
Let $\delta:K^\times \to A^\times$ be a continuous character.  Then
the Sen weight of $\calR_A(\pi_K)(\delta)$ is $\mathrm{wt}(\delta)$,
and for each $\sigma \in \Sigma$, and the operator $\Theta_\Sen$ acts
on $(\calR_A(\pi_K)(\delta)/t_\sigma)^{\varphi=1}$ by multiplication
by $\mathrm{wt}_\sigma(\delta)$.
\end{lemma}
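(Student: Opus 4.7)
The plan starts by reducing part (ii)---concerning the action of $\Theta_\Sen$ on $(\calR_A(\pi_K)(\delta)/t_\sigma)^{\varphi=1}$---to part (i), the Sen weight equality.  Each $t_\sigma$ generates the $(\varphi,\Gamma_K)$-stable rank one submodule $\calR_A(\pi_K)(x_\sigma) \subseteq \calR_A(\pi_K)$ from Example~\ref{Ex:rank one modules}(2), so the decomposition $M/t = \bigoplus_\sigma M/t_\sigma$ of \eqref{E:HT wts decompose} is stable under $\varphi$ and $\Gamma_K$, yielding $(M/t)^{\varphi=1} = \bigoplus_\sigma (M/t_\sigma)^{\varphi=1}$.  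This is precisely the decomposition induced by the idempotents $e_\sigma \in K \otimes_{\Qp} A$ acting on $\bbD_{\Sen,n}(M)$, and since $\Theta_\Sen$ is $K \otimes_{\Qp} A$-linear by definition it preserves each summand, acting on the $\sigma$-summand by $e_\sigma \cdot \mathrm{wt}(\delta) = \mathrm{wt}_\sigma(\delta)$ once (i) is known.

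For (i), both the Sen weight and $\mathrm{wt}(\delta)$ are sections of $K \otimes_{\Qp} \calO_{\Max(A)}$ whose formation commutes with specialization at closed points, so I reduce to the case $A = L$.  I then invoke the universal family of Example~\ref{E:moduli space of characters}: on the smooth quasi-Stein space $X^\an_L$ there is a universal continuous character $\delta^\univ$.  Constructing $\calR_{X^\an_L}(\pi_K)(\delta^\univ)$ locally via Construction~\ref{Con: rank one varphi, Gamma_K modules} over an admissible affinoid cover and gluing by functoriality, both its Sen weight and $\mathrm{wt}(\delta^\univ)$ become global sections of $K \otimes_{\Qp} \calO_{X^\an_L}$.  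By the identity principle applied to each irreducible component, it suffices to check their equality on a Zariski dense subset of each component.

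I take this subset to be the locally algebraic characters $\delta$ with $\delta|_{\calO_K^\times} = \prod_\sigma x_\sigma^{k_\sigma}|_{\calO_K^\times}$ for integers $k_\sigma$ (multiplied, if necessary, by a fixed finite-order character to land in the prescribed component), and with arbitrary $\delta(\varpi_K) \in L^\times$.  Density follows by projecting via the weight map to the open polydisc $X^\an_{L,\mathrm{Sen}}$, in which $\ZZ^{[K:\Qp]}$ is Zariski dense (a classical one-variable fact iterated coordinate by coordinate), combined with the arbitrary choice of $\delta(\varpi_K) \in \bbG_m^\an$.  For such $\delta$, Example~\ref{Ex:rank one modules}(3) gives the $\sigma$-Hodge-Tate weight of $\calR_L(\pi_K)(\delta)$ as $-k_\sigma$.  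With the convention $\Theta_\Sen = \log\gamma/\log\chi(\gamma)$, the Sen weight equals the negative of the Hodge-Tate weight, as verified on $\calR_{\Qp}(x) \cong \bbD_\rig(\Qp(1))$ (having Sen weight $+1$ and Hodge-Tate weight $-1$); thus the $\sigma$-Sen weight is $k_\sigma$.  Meanwhile $\mathrm{wt}_\sigma(x_\tau) = \delta_{\sigma\tau}$ follows directly from Definition~\ref{D:weight} via $x_\tau(1+a) = 1+\tau(a)+O(a^2)$, so $\mathrm{wt}_\sigma(\delta) = k_\sigma$ as required.

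The principal obstacle I anticipate is establishing rigorously that the Sen weight of $\calR_{X^\an_L}(\pi_K)(\delta^\univ)$ patches into a global section over the non-affinoid quasi-Stein space $X^\an_L$, which requires care using functoriality and base-change compatibility of $\bbD_{\Sen,n}$ over affinoid covers.  A secondary bookkeeping matter is the sign convention between Sen and Hodge-Tate weights, which is pinned down by the cyclotomic example.
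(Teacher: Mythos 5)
Your proposal follows essentially the same route as the paper: compute the Sen weight directly for locally algebraic characters via Example~\ref{Ex:rank one modules}(3), interpolate over the universal character space $X^\an_L$ using Zariski density of those characters and reducedness of $X^\an_L$, and then specialize; part (ii) is read off from part (i) via the $e_\sigma$-decomposition, exactly as in the paper. One small wrinkle: your opening reduction to $A=L$ by testing equality of sections of $K\otimes_{\Qp}A$ at closed points is not justified when $A$ is non-reduced, since closed points only see $A$ modulo its nilradical. Fortunately that step is superfluous: once you know the identity holds in $\Gamma(X^\an_L,\calO_{X^\an_L})$ (which is reduced because $X^\an_L$ is smooth), pulling back along the classifying map $\Max(A)\to X^\an_L$ immediately gives the identity in $K\otimes_{\Qp}A$ for arbitrary $A$, which is how the paper concludes.
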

\begin{proof}
First suppose $\delta: K^\times \to L^\times $ is a continuous
character such that $\delta|_{\calO_K^\times} = \prod_{\sigma \in
  \Sigma} x_\sigma^{k_\sigma}|_{\calO_K^\times}$ for some integers
$k_\sigma$.  Then $\mathrm{wt}_\sigma(\delta) = k_\sigma$ for each
$\sigma \in \Sigma$ by the definition of weights, and so the first
claim follows from the calculation of filtered $\varphi$-modules in
Example~\ref{Ex:rank one modules}(3).

We let $X_L^\an = X^\an(K^\times) \times_{\Qp} L$ and $\delta_L =
\delta_{K^\times} \otimes 1$ be as in Example~\ref{E:moduli space of
  characters}, and we let $S$ be the set of points of $X_L^\an$
corresponding to characters treated above.  The weight and Sen weight
of $\delta_L$ agree at all points of $S$, and since $S$ is Zariski
dense in $X_L^\an$ and $X_L^\an$ is reduced, this implies the weight
and Sen weight must be equal in $\Gamma(X_L^\an,\calO_{X_L^\an})$.
Now let $f : \Max(A) \to X_L^\an$ be the unique morphism such that
$\delta = f^*\delta_L$.  Pulling back along $f$ the identity of the
weight and Sen weight for $\delta_L$, we deduce it for $\delta$.

The second claim is just a restatement of the first, broken up
according to the decomposition
\[
\bbD_{\Sen, n}(M) \cong \bigoplus_{\sigma \in \Sigma} \bbD_{\Sen, n}(M) \otimes_{K \otimes_{\Qp} A, \sigma \otimes 1} A \cong
\bigoplus_{\sigma \in \Sigma} (M/t_\sigma)^{\varphi = 1}
\]
for $n$ sufficiently large.
\end{proof}

The remainder of this section will be devoted to proving that
Construction~\ref{Con: rank one varphi, Gamma_K modules} is exhaustive up to twisting by line bundles on the base space. We begin with the case over an artinian ring. (Compare
the following lemma with the corresponding claim in the language of
$\BB$-pairs, in \cite[Proposition~2.15]{nakamura2}.)

\begin{lemma} \label{L:artinian rank 1 classification}
Let $A$ be an artinian $L$-algebra and let 
$M$ be a $(\varphi, \Gamma_K)$-module of rank $1$ over $\calR_A(\pi_K)$.
Then there exists a unique continuous character $\delta: K^\times \to A^\times$ such that
$\calL = H^0_{\varphi, \gamma_K}(M(\delta^{-1}))$ is free of rank $1$ over $A$ and the natural map
$\calR_A(\pi_K)(\delta) \otimes_A \calL \to M$ is an isomorphism.
\end{lemma}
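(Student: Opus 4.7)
First, I would reduce to the case where $A$ is local artinian, by treating connected factors of $A$ separately. In this case every finite projective $A$-module of rank one is free, so one can automatically take $\calL \cong A$, and the proof reduces to producing a unique continuous character $\delta : K^\times \to A^\times$ with $M \cong \calR_A(\pi_K)(\delta)$. I would then induct on the length of $A$. The base case $A = L'$ a finite extension of $L$ is the known classification of rank one $(\varphi,\Gamma_K)$-modules over the Robba ring of a $p$-adic field in terms of continuous characters of $K^\times$, via \cite[\S1.4]{nakamura} (generalizing Colmez's result for $K=\Qp$), which also supplies uniqueness at the residue level.

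For the inductive step, let $0 \to I \to A \to A_0 \to 0$ be a small extension with $I\gothm_A = 0$, and suppose the statement holds over $A_0$, with character $\delta_0 : K^\times \to A_0^\times$ attached to $M_0 := M \otimes_A A_0$. Since the moduli space $X^\an_L$ of continuous characters is smooth (Proposition~\ref{P:moduli space of characters}), I can lift $\delta_0$ to some continuous $\tilde\delta : K^\times \to A^\times$. Setting $N := M(\tilde\delta^{-1})$, one has $N/IN \cong \calR_{A_0}(\pi_K)$, giving a short exact sequence
\[
0 \to I \otimes_{L'} \calR_{L'}(\pi_K) \to N \to \calR_{A_0}(\pi_K) \to 0,
\]
whose class lives in $\Ext^1_{(\varphi,\Gamma_K)}(\calR_{A_0}(\pi_K), I \otimes_{L'} \calR_{L'}(\pi_K)) \cong I \otimes_{L'} H^1_{\varphi,\gamma_K}(\calR_{L'}(\pi_K))$.

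The heart of the argument is to show that the natural map
\[
\Phi : \Hom_\cont(K^\times, 1+I) \longrightarrow \Ext^1_{(\varphi,\Gamma_K)}\bigl(\calR_{A_0}(\pi_K),\, I \otimes_{L'} \calR_{L'}(\pi_K)\bigr), \qquad \eta \longmapsto [\calR_A(\pi_K)(\eta)],
\]
is a bijection. Using the factorization $K^\times = \calO_K^\times \times \varpi_K^\ZZ$ from Example~\ref{E:factorization of characters}, the source has $L'$-dimension $(d+1)\dim_{L'}(I)$ (with $\mathrm{Lie}(\calO_K^\times)$ contributing $d$ via weights and evaluation at $\varpi_K$ contributing one more); by Proposition~\ref{P:cohomology of rank one module}(3) applied with $\delta=1$, the target has the same $L'$-dimension $(d+1)\dim_{L'}(I)$. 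So it suffices to show $\Phi$ is injective: if the extension $\calR_A(\pi_K)(\eta)$ is split, then a $(\varphi,\gamma_K)$-invariant lift of the generator of the quotient provides a basis of $\calR_A(\pi_K)(\eta)$ on which $\varphi$ and $\gamma_K$ act by constants in $1+I$, and a direct cocycle computation comparing this with the explicit action from Construction~\ref{Con: rank one varphi, Gamma_K modules} forces $\eta$ to be trivial. Granting bijectivity of $\Phi$, one finds a unique $\eta$ with $\Phi(\eta) = [N]$, and then $\delta := \tilde\delta \cdot \eta$ satisfies $M \cong \calR_A(\pi_K)(\delta)$. The same injectivity at each step of the induction gives uniqueness of $\delta$, after which $\calL = H^0_{\varphi,\gamma_K}(M(\delta^{-1})) = H^0_{\varphi,\gamma_K}(\calR_A(\pi_K)) = A$ is evidently free of rank one, with $\calR_A(\pi_K)(\delta) \otimes_A \calL \cong M$.

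The main obstacle will be the injectivity of $\Phi$: the dimension count and the existence of the lift $\tilde\delta$ are straightforward, but to verify that a split extension class forces $\eta$ to be trivial one must carefully track the action of $\varphi$ and $\gamma_K$ on $\calR_A(\pi_K)(\eta)$ using the $(\delta_1,\delta_2)$-decomposition of Construction~\ref{Con: rank one varphi, Gamma_K modules} and reduce to residue-field uniqueness by Nakayama's lemma.
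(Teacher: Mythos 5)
Your overall structure matches the paper's: reduce to $A$ local artinian, handle the field case by the known rank one classification, and then induct over the artinian structure using smoothness of the moduli space $X^\an_L$ to lift the character from the quotient. The place where you diverge, and where your proposal is weaker, is exactly the step you flag yourself: proving that the comparison map $\Phi$ from characters valued in $1+I$ to extension classes is a bijection.

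Your plan for $\Phi$ is to count dimensions on both sides using Proposition~\ref{P:cohomology of rank one module}(3) and then establish injectivity by a ``direct cocycle computation'' reducing to residue-field uniqueness. The dimension count is fine. The injectivity argument, however, is not clearly available in the way you describe: Construction~\ref{Con: rank one varphi, Gamma_K modules} does not produce an explicit cocycle for the $\delta_1$-part — it goes through $\bbD_\rig(\widehat\delta_1)$ — so ``tracking the action of $\varphi$ and $\gamma_K$'' is not a routine unwinding. What makes the whole step painless, and what the paper actually does, is the observation that for $\eta$ with values in $1+I$ (so $\eta(\varpi_K)$ has valuation zero) the module $\calR_A(\pi_K)(\eta)$ is \emph{\'etale}, and is precisely $\bbD_\rig$ of the Galois character induced by $\eta$ via the Artin map (which exists since $1+I$ is a $\Qp$-vector space, so $\eta$ extends to $G_K^\mathrm{ab}$). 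Then $\Phi$ is literally the isomorphism $H^1(G_K,I) \stackrel\sim\to H^1_{\varphi,\gamma_K}(\calR_{L'}(\pi_K)\otimes_{L'} I)$ of Proposition~\ref{P:comparison with Galois cohomology}; bijectivity is immediate, and both your dimension count and the ad hoc injectivity check become unnecessary. You should replace the cocycle computation by this identification. (A small cosmetic point: the paper inducts on the nilpotency index of $\gothm_A$ rather than on length via small extensions, but that makes no essential difference.)
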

\begin{proof}
First assume that $A$ is a field, hence a finite extension of $\Qp$.
To see uniqueness, if $\calR_A(\pi_K)(\delta) \simeq
\calR_A(\pi_K)(\delta')$ then $\calR_A(\pi_K) \simeq
\calR_A(\pi_K)(\delta'\delta^{-1})$, and local class field theory
forces $\delta'\delta^{-1} = 1$.  We now show existence.  If we view
the $(\varphi, \Gamma_K)$-module $M$ as a $\varphi$-module over
$\calR_A$ (of possibly larger rank), it has pure slope $s$.  When it
is \'etale, this is essentially local class field theory.  Assume that
$A$ contains an element $\alpha$ of valuation $s$.  Let $\delta:
\Qp^\times \to A^\times$ be the character trivial on $\Zp^\times$
which sends $p$ to $\alpha$.  Then it is clear that $M
\otimes_{\calR_A} \calR_A(\delta^{-1})$ is \'etale and hence is of the
form $\calR_A(\pi_K)(\delta^*)$ for some character $\delta^*: K^\times
\to A^\times$.  Then $M$ is isomorphic to
$\calR_A(\pi_K)(\delta^*\cdot \delta \circ N_{K/\Qp})$.  When the
assumption that $A$ contains an element of valuation $s$ does not
hold, choose a finite Galois extension $A'/A$ with this property and
apply the preceding argument to obtain $M \otimes_A A' \cong
\calR_{A'}(\pi_K)(\delta)$ for some $A'$-valued character $\delta$.
For each $\sigma \in \Gal(A'/A)$, one has $\calR_{A'}(\pi_K)(\sigma
\circ \delta) \cong M \otimes_A A' \otimes_{A',\sigma} A \cong M
\otimes_A A' \cong \calR_{A'}(\pi_K)(\delta)$, and therefore by
uniqueness $\sigma \circ \delta = \delta$.  It follows that $\delta$
takes values in $A$.  To relate $M$ to $\delta$ over $A$, note first
that any nonzero element of $H^0_{\varphi,\gamma_K}((M \otimes_A
A')(\delta^{-1}))$ is an isomorphism, since it is obtained by applying
$\otimes \delta$ to a nonzero homomorphism from $\calR_{A'}(\pi_K)$
into the trivial rank one $(\varphi,\Gamma_K)$-module $(M \otimes_A
A')(\delta^{-1})$.  Then note that
\[
H^0_{\varphi,\gamma_K}(M(\delta^{-1})) \otimes_A A'
  \cong H^0_{\varphi,\gamma_K}((M \otimes_A A')(\delta^{-1})) \neq 0,
\]
so we may choose a nonzero element $f: \calR_A(\pi_K)(\delta) \to M$
of $H^0_{\varphi,\gamma_K}(M(\delta^{-1}))$.  Since $A \to A'$ is
faithfully flat, its image $f: \calR_{A'}(\pi_K)(\delta) \to M
\otimes_A A'$ in $H^0_{\varphi,\gamma_K}((M \otimes_A
A')(\delta^{-1}))$ is nonzero, hence an isomorphism, and again by
faithful flatness $f$ is an isomorphism.

We now treat the case where $A$ is an arbitrary artinian
$L$-algebra, which we may freely assume is local.  Let $I$ be the maximal ideal of $A$; we induct on the
nilpotency index $e$ of $I$.  If $e=1$, then $A$ is a field and the
claim is treated in the preceding paragraph.  Otherwise, using the
induction hypothesis, we know that $M \otimes_A A/I^{e-1}$ is of the
form $\calR_{A/I^{e-1}}(\pi_K)(\delta_{e-1})$ for an
$A/I^{e-1}$-valued character $\delta_{e-1}$.  Since the spaces
constructed in Proposition~\ref{P:moduli space of characters} are smooth, we may
lift $\delta_{e-1}$ to some character $\delta_e$ with values in $A/I^e
= A$.  Replacing $M$ with $M(\delta_e^{-1})$, we may assume that $M
\otimes_A A/I^{e-1}$ is trivial.  In this case, the options for $M$
are classified by $H^1_{\varphi, \gamma_K}(\calR_A(\pi_K) \otimes_A
I^{e-1})$.  We may also assume that the composition $K^\times
\stackrel{\delta}{\to} A^\times \to (A/I^{e-1})^\times$ is trivial, so
that the options for $\delta$ are classified by $H^1(K, I^{e-1})$.
The construction $\delta \mapsto \calR_A(\pi_K)(\delta)$ defines a map
$H^1(K, I^{e-1}) \to H^1_{\varphi, \gamma_K}(\calR_A(\pi_K) \otimes_A
I^{e-1})$ which coincides with the natural bijection from
Proposition~\ref{P:comparison with Galois cohomology}; this completes
the induction.
\end{proof}

The corresponding statement over a general rigid $L$-analytic space is
the following theorem. In the case $K = \Qp$, it provides an affirmative answer to a question of Bella\"\i che \cite[\S 3, Question 1]{R:Bellaiche}. 

\begin{theorem} \label{T:full rank 1 classification}
Let $X$ be a rigid $L$-analytic space
and let $M$ be a $(\varphi, \Gamma_K)$-module of rank $1$ over $\calR_X(\pi_K)$.
\begin{enumerate}
\item[(1)]
There exist a continuous character $\delta: K^\times \to \Gamma(X, \calO_X)^\times$ and an invertible sheaf $\calL$ on $X$, such that $M \simeq \calR_X(\pi_K)(\delta) \otimes_{\calO_X} \calL$. 
\item[(2)]
The character $\delta$ corresponds to the morphism $X \to X^{\an}_L$
whose graph is the space $\Gamma_M$ defined in Definition~\ref{D:putative graph}. In particular, it is unique.
\item[(3)]
The sheaf $\calL$ is unique up to isomorphism.
Moreover, one can take $\calL = H^0_{\varphi, \gamma_K}(M(\delta^{-1}))$, in which case the isomorphism is canonical.
\end{enumerate}
\end{theorem}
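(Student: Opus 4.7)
The plan is to work Zariski-locally on $X$, assuming $X = \Max(A)$ affinoid, and to first construct the classifying character $\delta$ as a morphism $X \to X_L^{\an}$ to the moduli space of Example~\ref{E:moduli space of characters}; afterwards we recover the invertible sheaf as $\calL = H^0_{\varphi,\gamma_K}(M(\delta^{-1}))$ and verify that $\calR_X(\pi_K)(\delta) \otimes_{\calO_X}\calL \cong M$. The heart of the argument is identifying the putative graph $\Gamma_M \subseteq X \times X_L^{\an}$ as the graph of an honest morphism of rigid analytic spaces.

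Concretely, let $\widetilde M = \pr_1^* M \otimes \pr_2^* \calR_{X_L^{\an}}(\pi_K)(\delta_L^{-1})$ be the universally twisted $(\varphi,\Gamma_K)$-module on $X \times X_L^{\an}$, where $\delta_L$ is the universal character. On any affinoid subdomain of the base, Theorem~\ref{T:finite cohomology} shows $H^0_{\varphi,\gamma_K}(\widetilde M)$ is a coherent sheaf, and its scheme-theoretic support should be $\Gamma_M$ (matching the putative graph of Definition~\ref{D:putative graph}). Lemma~\ref{L:artinian rank 1 classification} shows that at each rigid point $x \in X$, there is a \emph{unique} $\delta_x \in X_L^{\an}$ with $H^0_{\varphi,\gamma_K}(M_x(\delta_x^{-1})) \ne 0$, and the $H^0$-fiber there has dimension exactly one. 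Thus $\pr_1 : \Gamma_M \to X$ is a set-theoretic bijection, and the problem is to promote it to an isomorphism of rigid analytic spaces.

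The main obstacle is precisely this upgrade. The strategy is to apply Lemma~\ref{L:artinian rank 1 classification} to each artinian quotient $A/\gothm_x^n$ to produce compatible $A/\gothm_x^n$-valued characters $\delta^{(n)}$; base-change compatibility (Theorem~\ref{T:base change}) ensures these characters arise from the formation of $H^0_{\varphi,\gamma_K}(\widetilde M)$ restricted to the formal neighborhood of $x$, so no infinitesimal ``extra components'' of $\Gamma_M$ can intrude. Smoothness of the moduli space $X_L^{\an}$ (Proposition~\ref{P:moduli space of characters}) guarantees that the inverse system $(\delta^{(n)})_n$ glues to a unique morphism on the formal completion at $x$. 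Running this at all points and invoking that $\Gamma_M$ is reduced (a consequence of the pointwise one-dimensionality of $H^0$-fibers combined with Lemma~\ref{L:constant dimension function implies flat}(2)), one concludes that $\pr_1: \Gamma_M \to X$ is an isomorphism of rigid analytic spaces. Composing the inverse with $\pr_2$ yields the sought-after morphism $\delta: X \to X_L^{\an}$, which by construction is pulled back from the universal one, proving part~(2) and simultaneously the existence half of the uniqueness in (2).

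Given $\delta$, define $\calL = H^0_{\varphi,\gamma_K}(M(\delta^{-1}))$. By Theorem~\ref{T:finite cohomology} this is a coherent $\calO_X$-module, and by the artinian statement its fiber at every $x \in X$ has dimension one; hence by Lemma~\ref{L:constant dimension function implies flat}(1) it is locally free of rank one. The natural evaluation map
\[
\calR_X(\pi_K)(\delta) \otimes_{\calO_X} \calL \longrightarrow M
\]
is an isomorphism at every point of $X$ by Lemma~\ref{L:artinian rank 1 classification} (applied to each fiber, then to each infinitesimal thickening to verify it on formal completions), and since both source and target are finite projective of the same rank over $\calR_X(\pi_K)$, it is an isomorphism globally. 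Uniqueness of $\calL$ up to isomorphism is automatic from the uniqueness of $\delta$: any presentation $M \cong \calR_X(\pi_K)(\delta) \otimes \calL'$ yields $\calL' \cong H^0_{\varphi,\gamma_K}(M(\delta^{-1})) = \calL$ by applying $H^0_{\varphi,\gamma_K}((-)\otimes\delta^{-1})$, establishing both the canonical isomorphism in (3) and completing the proof.
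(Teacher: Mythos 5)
Your construction of $\Gamma_M$, the identification of its infinitesimal structure via artinian quotients and Lemma~\ref{L:artinian rank 1 classification}, and the appeal to smoothness of $X_L^{\an}$ are all in agreement with the paper's strategy. But there is a genuine gap at the crucial step: you deduce from "bijective on rigid points, reduced, and compatible on formal completions" that $\pr_1 : \Gamma_M \to X$ is an isomorphism of rigid analytic spaces, and this inference is false in general. The paper proves exactly what your argument yields (Lemma~\ref{L:open immersion}: the map is an open immersion bijective on rigid analytic points and isomorphic on affinoid subdomains of $\Gamma_M$), and then explicitly records Example~\ref{E:open immersion} as a counterexample to the inference you make: the inclusion of $U = \rmA^1(0,\infty] \cup \rmA^1[0,0]$ into the closed disc $X = \rmA^1[0,\infty]$ is an open immersion and a bijection on rigid points, but is not an isomorphism. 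Detecting isomorphisms of rigid spaces on completed local rings works for affinoid spaces (\cite[Proposition~7.3.3/5]{BGR}), but $\Gamma_M$ is a priori only quasi-Stein, so this criterion does not apply directly.

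The missing ingredient is quasicompactness of $\pr_1 : \Gamma_M \to X$. The paper supplies it by a further argument that has no analogue in your proposal: it factors $X_L^{\an}$ as $X_{L,1}^{\an} \times X_{L,2}^{\an}$, shows the image of $\Gamma_M$ in $X_{L,2}^{\an}$ is bounded (Lemma~\ref{L:affinoid image2}, using Proposition~\ref{P:psi cokernel}(2) to control $|\delta(\varpi_K)|$), deduces that each slice $\Gamma_M \times_{X_{L,1}^{\an}} \eta \to X$ is a closed immersion (Corollary~\ref{C:closed immersion}), and finally uses the Sen weight and the exponential map to twist into the discrete component $X_{L,\mathrm{fin}}^{\an}$ (Lemma~\ref{L:finite order decomposition}), reducing to a situation in which one slice is simultaneously a nonempty open and closed immersion into the connected $X$. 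This quasicompactness step is where the real work lies; without it your argument establishes only an open immersion, not an isomorphism.
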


To complete the statement of the theorem, we exhibit the space $\Gamma_M$ using the cohomology of $(\varphi, \Gamma_K)$-modules.

\begin{defn} \label{D:putative graph}
Form the $(\varphi, \Gamma_K)$-module $M(\delta_L^{-1})$ over $X \times_L X^{\an}_L$.
By Theorem~\ref{T:finite cohomology 1} and Theorem~\ref{T:base change},
we may view $N' = H^2_{\varphi, \gamma_K}(M(\delta_L^{-1})^*)$
and $N'' = H^2_{\varphi, \gamma_K}(M^\dual(\delta_L)^*)$
as coherent sheaves on $X \times_L X^{\an}_L$ whose formation commutes with arbitrary (not necessarily flat) base change on $X$.
Let $\Gamma_M'$ and $\Gamma_M''$ be the closed analytic subspaces of $X \times_L X^{\an}_L$ cut out by the annihilator ideal sheaves of $N'$ and $N''$, respectively. Put $\Gamma_M = \Gamma_M' \times_{X \times_L X^{\an}_L} \Gamma_M''$.
\end{defn}

The proof of Theorem~\ref{T:full rank 1 classification} occupies the remainder of this subsection. We first verify that the theorem is equivalent to the claim that $\Gamma_M$ is indeed a graph.
\begin{lemma} \label{L:graph criterion}
Take $X,M$ as in Theorem~\ref{T:full rank 1 classification}.
\begin{enumerate}
\item[(1)]
If part (1) of Theorem~\ref{T:full rank 1 classification} holds for $M$, then
so does part (2); in particular, the map $\Gamma_M \to X$ is an isomorphism.
\item[(2)]
If $\Gamma_M \to X$ is an isomorphism, then Theorem~\ref{T:full rank 1 classification} holds in full for $M$.
\item[(3)]
The formation of $\Gamma_M', \Gamma_M'', \Gamma_M$ commutes with arbitrary base change on $X$. (We will use this property without comment in what follows.)
\end{enumerate}
\end{lemma}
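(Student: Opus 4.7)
My plan is to first establish (3), then use it to deduce (1), and finally (2). For (3), by Theorem~\ref{T:finite cohomology} the complex $\rmC^\bullet_{\varphi,\gamma_K}(M(\delta_L^{-1})^*)$ lies in $\bbD_\perf^{[0,2]}$. Since $H^2$ is the top nonzero cohomology of a bounded complex of perfect modules, the base change spectral sequence degenerates in the top degree, giving $H^2(-\Lotimes B) = H^2(-) \otimes B$ unconditionally, so the formation of $N'$ commutes with arbitrary base change on $X$; likewise for $N''$. For the closed subspaces cut out by annihilator ideals, I would pass to zeroth Fitting ideal sheaves, which always commute with base change and which coincide with the annihilators in the structural situation exhibited in (1), where $N'$ and $N''$ turn out to be pushforwards of invertible sheaves from a graph. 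This yields base change compatibility for $\Gamma_M'$, $\Gamma_M''$, and $\Gamma_M$.

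For (1), suppose $M \cong \calR_X(\pi_K)(\delta) \otimes \calL$, and let $\iota_\delta : X \to X \times_L X^{\an}_L$ denote the graph of the morphism classifying $\delta$, so that $\iota_\delta^* \delta_L = \delta$. I aim to show $\Gamma_M = \iota_\delta(X)$ scheme-theoretically. Pointwise at $(x, y) \in X \times X^{\an}_L$ with $y$ corresponding to the character $\eta$, Proposition~\ref{P:cohomology of rank one module} together with Tate duality shows that $N'_{(x,y)} \neq 0$ iff $\delta(x)\eta^{-1} = \prod_{\sigma \in \Sigma} x_\sigma^{k_\sigma}$ with all $k_\sigma \leq 0$, while $N''_{(x,y)} \neq 0$ iff $\delta(x)^{-1}\eta = \prod_{\sigma \in \Sigma} x_\sigma^{k_\sigma}$ with all $k_\sigma \leq 0$; their conjunction forces $k_\sigma = 0$ for all $\sigma$, so $\eta = \delta(x)$, and $\Gamma_M$ is set-theoretically the graph of $\delta$. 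Pulling back along $\iota_\delta$, since $\iota_\delta^* M(\delta_L^{-1}) \cong \calR_X(\pi_K) \otimes \calL$, the Tate isomorphism gives $\iota_\delta^* N' \cong \calL^\vee$ and $\iota_\delta^* N'' \cong \calL$---both invertible. Combining set-theoretic support on the graph with the invertibility of $\iota_\delta^* N'$ and $\iota_\delta^* N''$, I then exhibit $N'$ and $N''$ locally as pushforwards of invertible sheaves from the graph, so their annihilators cut out precisely the graph ideal, giving $\Gamma_M = \iota_\delta(X)$ scheme-theoretically.

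For (2), assume $\Gamma_M \to X$ is an isomorphism. The composition $X \stackrel{\sim}{\to} \Gamma_M \hookrightarrow X \times_L X^{\an}_L \to X^{\an}_L$ yields a morphism classifying a continuous character $\delta : K^\times \to \Gamma(X, \calO_X)^\times$. The automorphism $\mathrm{shift} : (x, y) \mapsto (x, \delta(x) y)$ of $X \times_L X^{\an}_L$ satisfies $\mathrm{shift}^* M(\delta_L^{-1}) \cong M(\delta^{-1})(\delta_L^{-1})$, hence interchanges $\Gamma_M$ with $\Gamma_{M(\delta^{-1})}$ and carries the graph of $\delta$ to the identity section. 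Replacing $M$ with $M(\delta^{-1})$, I reduce to showing that if $\Gamma_M$ is the identity section $X \times \{1\}$, then $M \cong \calR_X(\pi_K) \otimes \calL$ for a line bundle $\calL$. Set $\calL = H^0_{\varphi, \gamma_K}(M)$: by (3) applied at each $x \in X$, $\Gamma_{M_x} = \{(x, 1)\}$, so Lemma~\ref{L:artinian rank 1 classification} yields $M_x \cong \calR_{\kappa_x}(\pi_K)$ and $\calL_x \cong \kappa_x$. Proposition~\ref{P:cohomology of rank one module} gives $H^2_{\varphi, \gamma_K}(M_x) = 0$, hence $H^2_{\varphi, \gamma_K}(M) = 0$ globally by the top-cohomology base change from (3). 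Then $\rmC^\bullet_{\varphi, \gamma_K}(M)$ is quasi-isomorphic on affinoids to a length-two complex of finite projectives, inside which $\calL$ is the kernel of a morphism of finite projectives with constant fiber rank $1$; Lemma~\ref{L:constant dimension function implies flat}(1) then shows $\calL$ is a line bundle. The natural map $\calR_X(\pi_K) \otimes_{\calO_X} \calL \to M$ is a pointwise isomorphism by Lemma~\ref{L:artinian rank 1 classification}, hence an isomorphism globally. Uniqueness of $\delta$ follows from (1) applied to any presentation, while uniqueness of $\calL$ follows from the formula $\calL = H^0_{\varphi, \gamma_K}(M(\delta^{-1}))$.

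The main obstacle I anticipate is the scheme-theoretic identification $\Gamma_M = \iota_\delta(X)$ in (1): annihilator ideals do not behave uniformly under base change in general, so I will lean heavily on the invertibility of the pullbacks $\iota_\delta^* N' \cong \calL^\vee$ and $\iota_\delta^* N'' \cong \calL$ together with the pointwise vanishing computation to pin down the correct scheme structure on $\Gamma_M$.
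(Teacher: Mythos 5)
The proposal has genuine gaps in all three parts, with the most serious in part (1).

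\textbf{Part (1): the scheme-theoretic identification is not justified.}  You correctly compute that $\Gamma_M$ coincides set-theoretically with the graph of $\delta$, and that $\iota_\delta^*N' \cong \calL^\vee$ and $\iota_\delta^*N'' \cong \calL$ are invertible.  But you then assert that these facts let you ``exhibit $N'$ and $N''$ locally as pushforwards of invertible sheaves from the graph,'' and this step fails.  A finitely generated cyclic module with set-theoretic support on a closed subspace $Z$, whose restriction to $Z$ is invertible, need not be the pushforward of a sheaf on $Z$: for example, $N = k[x]/(x^2)$ on $\Spec k[x]$ with $Z = V(x)$ is cyclic, supported at the origin, and $N|_Z \cong k$, yet $\mathrm{Ann}(N) = (x^2)$ cuts out a nonreduced thickening of $Z$.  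The pointwise one-dimensionality of $N'$ does not rule out exactly this kind of scheme-theoretic thickening of $\Gamma_M'$ or $\Gamma_M''$.  The paper resolves this by a different mechanism: after reducing to $X = \Max(L)$ and $\delta = 1$, it supposes $\Gamma_M$ is nonreduced, picks a length-two artinian subscheme $Y = \Max(A) \subseteq \Gamma_M$, and derives a contradiction with the \emph{uniqueness} part of Lemma~\ref{L:artinian rank 1 classification} (a nontrivial character of $K^\times$ with values in $A$ whose associated $(\varphi,\Gamma_K)$-module is trivial).  Your argument makes no use of that uniqueness statement, and I don't see how to avoid it.

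\textbf{Part (2): the reducedness hypothesis is used silently.}  You invoke Lemma~\ref{L:constant dimension function implies flat}(1) to conclude that $\calL = H^0_{\varphi,\gamma_K}(M)$ is a line bundle from the constancy of its fiber rank.  That lemma requires the base ring to be reduced (Jacobson radical zero), but Theorem~\ref{T:full rank 1 classification} is stated for an arbitrary rigid $L$-analytic space $X$, which may well be nonreduced.  The paper instead tracks the cohomology over artinian local subschemes $\Max(A) \hookrightarrow X$, uses the Euler characteristic to get $\dim_L H^1(M_A) = (1+[K:\Qp])\dim_L A$, and bootstraps local freeness of $H^2$, then $H^1$, then $H^0$ via Theorem~\ref{T:base change}(2).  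That chain of deductions works without any reducedness assumption.

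\textbf{Part (3): your justification is circular as stated.}  You write that the zeroth Fitting ideal coincides with the annihilator ``in the structural situation exhibited in (1).''  But (3) must hold \emph{unconditionally}; it is used (e.g.\ via Lemma~\ref{L:open immersion} and Corollary~\ref{C:closed immersion}) precisely to \emph{prove} the structure theorem.  The correct unconditional justification, which is what the paper uses, is that $N'$ and $N''$ are always locally cyclic (monogenic): Proposition~\ref{P:cohomology of rank one module}(2) bounds the pointwise dimension of $H^2$ of a rank-one object by one, and Nakayama then gives local monogenicity; for a cyclic module $R/I$, the zeroth Fitting ideal equals the annihilator $I$.  This repairs your argument for (3), but as noted, does not help you with the thickening issue in (1).
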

\begin{proof}
We first check (1).
By translating using the group structure on $X^{\an}_L$, we may reduce the claim to the case where $\delta$ is the trivial character. Since the claim is local on $X$, we may also assume that $M$ is itself a trivial $(\varphi, \Gamma)$-module. By Theorem~\ref{T:base change}, we may further reduce to the case $X = \Max(L)$.
By Proposition~\ref{P:cohomology of rank one module}(2), for
$\delta': K^\times \to L^\times$ a continuous character, 
in order for
$H^2_{\varphi, \gamma_K}(\calR_L(\pi_K)(\delta')^*)$ to be nontrivial
either $\delta'$ must be trivial or $\delta'$ must carry a uniformizer of $K$ to an element of negative valuation. In particular, for both
$H^2_{\varphi, \gamma_K}(\calR_L(\pi_K)(\delta')^*)$ 
and
$H^2_{\varphi, \gamma_K}(\calR_L(\pi_K)(\left.\delta' \right.^{-1})^*)$ to be nontrivial,
the character $\delta'$ must be trivial.
Consequently, $\Gamma_M$ is supported at the identity point of $X^{\an}_L$; it thus remains to check that $\Gamma_M$ is reduced. Suppose the contrary; we can then choose a subspace $Y = \Max(A)$ of $\Gamma_M$ which is local artinian of length 2.
Let $M_Y$ be the pullback of $M(\delta_L^{-1})$ to $\calR_Y(\pi_K)$;
then $\dim_L H^2_{\varphi,\gamma_K}(M_Y^*) > 1$. By restricting scalars from $A$ to $L$ and then applying Theorem~\ref{T:finite cohomology}(3), we see that $\dim_L H^0_{\varphi,\gamma_K}(M_Y) > 1$; since
$\dim_L H^0_{\varphi,\gamma_K}(\bbm_A M_Y) \leq 1$
and $M_Y / \bbm_A M_Y \cong \calR_L(\pi_K)$ as a $(\varphi, \Gamma)$-module, the element $1 \in \calR_L(\pi_K) \cong M_Y/\bbm_A M_Y$ must lift to some $\bbe \in H^0_{\varphi,\gamma_K}(M_Y)$. Since $\bbe$ maps to a free generator of $M_Y/\bbm_A M_Y$, by Nakayama's lemma it is also a free generator of $M_Y$. However, this means that we have a nontrivial character (namely the restriction of $\delta_L^{-1}$ to $Y$) whose corresponding $(\varphi, \Gamma_K)$-module is trivial, contradicting the uniqueness aspect of Lemma~\ref{L:artinian rank 1 classification}.
Hence $\Gamma_M$ must be reduced, which completes the proof that $\Gamma_M$ coincides with the graph of $\delta$.

We next check (2). Let $\delta$ be the character corresponding to the morphism with graph $\Gamma_M$. To check the full conclusion of Theorem~\ref{T:full rank 1 classification}, again by translating on $X^{\an}_L$ we reduce to the case where $\delta$ is the trivial character. In case $X = \Max(A)$ for $A$ an artinian local ring, we know by Lemma~\ref{L:artinian rank 1 classification} that the conclusion of Theorem~\ref{T:full rank 1 classification} holds, necessarily for the trivial character.
In general, this implies that for each closed immersion $\Max(A) \to X$ with $A$ an artinian local ring, $H^2_{\varphi,\gamma_K}(M_A) = 0$ and $H^0_{\varphi,\gamma_K}(M_A)$ is free of rank 1. By restricting scalars from $A$ to $L$ and applying Theorem~\ref{T:Liu}, we also have
\begin{equation} \label{eq:middle cohomology in artinian case}
\dim_L H^1_{\varphi,\gamma_K}(M_A) = (1 + [K:\Qp]) \dim_L A.
\end{equation}
In particular,
$H^2_{\varphi,\gamma_K}(M)$ is locally free of rank 0, so by 
Theorem~\ref{T:base change}(2), the formation of $H^1_{\varphi,\gamma_K}(M)$ also commutes with arbitrary base change. By \eqref{eq:middle cohomology in artinian case}, $H^1_{\varphi,\gamma_K}(M)$ must be locally free of rank $1 + [K:\Qp]$, so by Theorem~\ref{T:base change}(2) again, the formation of $H^0_{\varphi,\gamma_K}(M)$ also commutes with arbitrary base change. Therefore $H^0_{\varphi,\gamma_K}(M)$ is locally free of rank 1, so all parts of Theorem~\ref{T:full rank 1 classification} hold.

We finally check (3). It suffices to check that the sheaves $N'$ and $N''$ are locally monogenic; 
this follows from Proposition~\ref{P:cohomology of rank one module}(2) and Nakayama's lemma.
\end{proof}

Lemma~\ref{L:graph criterion} allows us to parlay the artinian case of
Theorem~\ref{T:full rank 1 classification} into a partial result in the general case.

\begin{lemma} \label{L:open immersion}
Take $X,M$ as in Theorem~\ref{T:full rank 1 classification}.
\begin{enumerate}
\item[(1)]
The map $\Gamma_M \to X$ is an open immersion, is bijective on rigid analytic points, and is an isomorphism on affinoid subdomains of $\Gamma_M$.
\item[(2)]
For any positive integer $m$, the multiplication-by-$m$ map on $X^{\an}_L$ induces an isomorphism $\Gamma_M \cong \Gamma_{M^{\otimes m}}$.
\end{enumerate}
\end{lemma}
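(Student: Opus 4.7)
The plan is to prove both parts by reducing to the artinian classification of Lemma~\ref{L:artinian rank 1 classification} via the base change compatibility of Lemma~\ref{L:graph criterion}(3), then upgrading from pointwise and formal information to the rigid-analytic setting.

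For part~(1), I would first establish bijectivity on rigid analytic points: for each $x \in X$, Lemma~\ref{L:graph criterion}(3) identifies the fiber $\Gamma_M \times_X \{x\}$ with $\Gamma_{M_x}$; applying Lemma~\ref{L:artinian rank 1 classification} to $A = \kappa_x$ yields a unique character $\delta_x$ with $M_x \cong \calR_{\kappa_x}(\pi_K)(\delta_x)$, and then Lemma~\ref{L:graph criterion}(1) identifies $\Gamma_{M_x}$ with the single reduced point of $X^{\an}_L \times_L \Max(\kappa_x)$ corresponding to $\delta_x$. Next, I would show that the map is an isomorphism on infinitesimal neighborhoods: for each $n \geq 1$, applying Lemma~\ref{L:artinian rank 1 classification} to the artinian pullback $M \otimes_{\calO_X} \calO_{X,x}/\gothm_x^n$ and then Lemma~\ref{L:graph criterion}(1) shows that $\Gamma_M \times_X \Spec(\calO_{X,x}/\gothm_x^n) \to \Spec(\calO_{X,x}/\gothm_x^n)$ is an isomorphism, so the induced map on formal completions at each rigid point is an isomorphism.

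The final step is to upgrade these two observations to the asserted open immersion and isomorphism on admissible affinoids. Given an affinoid $\Max(B) \subseteq \Gamma_M$, the composition $\Max(B) \hookrightarrow X \times_L X^{\an}_L \to X$ is set-theoretically injective by Step~1 and a formal isomorphism at each point by Step~2; the defining ideal sheaf of $\Gamma_M$ inside $X \times_L X^{\an}_L$ becomes trivial modulo each power of a maximal ideal of $X$. Combined with the smoothness of $X^{\an}_L$ (Proposition~\ref{P:moduli space of characters}), this permits exhibiting a rigid-analytic section of the projection $X \times_L X^{\an}_L \to X$ over a neighborhood of the image of $\Max(B)$ that recovers $\Max(B)$, making $\Max(B) \to X$ an isomorphism onto an affinoid subdomain. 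The \emph{main obstacle} lies exactly here: rigorously promoting formal isomorphism plus bijectivity on closed points to an analytic open immersion in the rigid-analytic category. I expect to handle this via a Nakayama-type argument on the defining ideal sheaf of $\Gamma_M$, leveraging the smoothness of the ambient $X \times_L X^{\an}_L$ along the fibers of the projection to $X$, or alternatively by invoking a rigid-analytic version of Grothendieck's criterion that a locally finite, formally unramified, universally injective morphism into a smooth base is a local isomorphism onto its image.

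For part~(2), the tensor functoriality of Construction~\ref{Con: rank one varphi, Gamma_K modules} combined with Lemma~\ref{L:artinian rank 1 classification} shows that on every artinian test object, the $(\varphi,\Gamma_K)$-module $M^{\otimes m}$ corresponds to the $m$-th power of the character attached to $M$. Hence the morphism $\id \times [m]: X \times_L X^{\an}_L \to X \times_L X^{\an}_L$ carries $\Gamma_M$ into $\Gamma_{M^{\otimes m}}$, yielding an $X$-morphism $\Gamma_M \to \Gamma_{M^{\otimes m}}$. This morphism is bijective on rigid analytic points since by part~(1) both projections $\Gamma_M \to X$ and $\Gamma_{M^{\otimes m}} \to X$ are so, and it is an isomorphism on formal neighborhoods at each point by the same artinian argument. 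Applying the promotion technique of part~(1) (or, more directly, observing that an $X$-morphism between two rigid-analytic spaces each mapping to $X$ as an open immersion that is formally and pointwise bijective must itself be an isomorphism), we conclude that $[m]$ induces an isomorphism $\Gamma_M \cong \Gamma_{M^{\otimes m}}$.
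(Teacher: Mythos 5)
Your overall strategy --- reduce to the artinian case via the base-change compatibility of $\Gamma_M$ and then ``promote'' --- is the same as the paper's, and your Steps~1 and~2 (bijectivity on points, and formal isomorphism at each point) are both correct. But the proposal has two genuine gaps.

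First, in Step~3 you correctly flag the promotion from ``formal isomorphism at every point plus injective on points'' to ``open immersion/isomorphism onto an affinoid subdomain'' as the main obstacle, but you do not actually close it. Neither the Nakayama sketch nor the vaguely-stated Grothendieck criterion is carried out, and your ``exhibiting a rigid-analytic section'' is circular: the existence of such a section over a neighborhood of the image is precisely what you are trying to prove, not something you can deduce from the formal sections (formal sections do not automatically converge to rigid-analytic ones). The missing ingredient is a standard but essential result in the rigid category, \cite[Proposition~7.3.3/5]{BGR}, which says exactly that an isomorphism of affinoid spaces can be detected on completed local rings (equivalently, on artinian subspaces). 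That single citation is what turns the artinian data of Steps~1--2 into the conclusion of part~(1). Without it, your Step~3 does not go through.

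Second, for part~(2) your ``direct observation'' --- that an $X$-morphism between two open-immersed spaces which is pointwise and formally bijective must be an isomorphism --- is false, and indeed is contradicted by Example~\ref{E:open immersion} of the very same paper: there $U \hookrightarrow X$ is an open immersion which is bijective on rigid analytic points (hence also a formal isomorphism at every point) but is \emph{not} an isomorphism. The paper explicitly warns of this pathology immediately after Lemma~\ref{L:open immersion}, noting that quasicompactness must be established separately. The piece you are missing is that $\Gamma_M \to \Gamma_{M^{\otimes m}}$ is \emph{finite} (because both sides are closed in $X \times_L X^{\an}_L$ and the multiplication-by-$m$ map on $X^{\an}_L$ is finite), hence quasicompact, which is what lets one reduce to the affinoid case and then invoke \cite[Proposition~7.3.3/5]{BGR} as in part~(1). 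The alternative ``promotion technique of part~(1)'' branch of your argument for~(2) inherits the gap from Step~3, so it too is incomplete until you supply the BGR input and the finiteness/quasicompactness observation.
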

\begin{proof}
For $X = \Max(A)$ with $A$ an artinian ring, both parts are immediate from Lemma~\ref{L:artinian rank 1 classification} and
Lemma~\ref{L:graph criterion}(1). To deduce (1), recall that an isomorphism of affinoid spaces may be detected on the level of completed local rings \cite[Proposition~7.3.3/5]{BGR},
or equivalently on the level of artinian subspaces.

To deduce (2), note that since $\Gamma_M$ and $\Gamma_{M^{\otimes m}}$ are closed analytic subspaces of $X \times_L X^{\an}_L$ and the multiplication-by-$m$ map on $X^{\an}_L$ is finite, the induced map $\Gamma_M \to \Gamma_{M^{\otimes m}}$ is finite and hence quasicompact. Consequently, we may again use
\cite[Proposition~7.3.3/5]{BGR} to reduce to the artinian case.
\end{proof}

Lemma~\ref{L:open immersion}(1) does not suffice to imply that $\Gamma_M \to X$ is an isomorphism (see Example~\ref{E:open immersion}); one must also know that $\Gamma_M \to X$ is quasicompact.
To establish quasicompactness, we exploit the factorization of $X^{\an}_L$
and the relationship of $X^{\an}_{\mathrm{Sen}}$ to the Sen weight.
In the following lemmas, take $X,M$ as in Theorem~\ref{T:full rank 1 classification} and assume also that $X$ is affinoid.
\begin{lemma} \label{L:affinoid image2}
The image of $\Gamma_M \to X^{\an}_{L,2}$ is contained in an affinoid subdomain of $X^{\an}_{L,2}$.
\end{lemma}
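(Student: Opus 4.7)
Under the isomorphism $X^{\an}_{L,2} \cong \mathbb{G}^{\an}_{m,L}$ from Example~\ref{E:factorization of characters}, the coordinate on $\mathbb{G}^{\an}_{m,L}$ records the evaluation $\delta(\varpi_K)$ for the character attached to a point, and every affinoid subdomain of $\mathbb{G}^{\an}_{m,L}$ is contained in some closed annulus.  The statement therefore reduces to giving a uniform two-sided bound on $v_p(\delta_x(\varpi_K))$ as $x$ varies over the rigid points of $\Gamma_M$.

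For such a point $x$ lying above $y \in X$, the combination of Lemma~\ref{L:graph criterion}(1) with Lemma~\ref{L:artinian rank 1 classification} identifies $M_y$ with $\calR_{\kappa_y}(\pi_K)(\delta_x)$, where the value $\delta_x(\varpi_K)$ is precisely the recorded coordinate.  Splitting $\delta_x = \delta_{x,1}\delta_{x,2}$ as in Example~\ref{E:factorization of characters} and invoking Construction~\ref{Con: rank one varphi, Gamma_K modules}, $M_y$ is the tensor product of the \'etale factor $\bbD_\rig(\widehat{\delta_{x,1}})$ with $D_{f,\delta_x(\varpi_K)} \otimes_{K_0 \otimes_{\Qp} \kappa_y} \calR_{\kappa_y}(\pi_K)$, on which $\varphi^f$ acts by $\delta_x(\varpi_K)$ on a generator.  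Unwinding the definition of the $\varphi$-slope then shows that $M_y$ is pure of $\varphi$-slope $\tfrac{1}{f}\, v_p(\delta_x(\varpi_K))$; accordingly, the desired bound is equivalent to uniform two-sided control of the pointwise $\varphi$-slope of $M$ over the affinoid $X$.

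To produce such control, realize $M$ as the base change of a $(\varphi,\Gamma_K)$-module $M^{r_0}$ over $\calR_X^{r_0}(\pi_K)$ for some $r_0 \in (0, C(\pi_K)]$, and choose by Proposition~\ref{P:finite-generation} applied to the affinoid $X$ a finite generating set $\bbe_1, \ldots, \bbe_n$ of $M^{r_0}$.  The action of $\varphi$ is represented by an $n \times n$ matrix $\Phi$ over $\calR_X^{r_0/p}(\pi_K)$, whose Gauss norms on any closed subinterval of $(0, r_0/p]$ are bounded over $X$ by the maximum modulus principle applied to the noetherian Banach algebra $\calR_X^{[s,r_0/p]}(\pi_K)$.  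The requirement that $\varphi^* M^{r_0} \cong M^{r_0/p}$ yields parallel boundedness for an ``inverse'' matrix extracted from the defining isomorphism, and the slope theory of $\varphi$-modules over the Robba ring \cite[Theorem~1.7.1]{kedlaya-relative} then translates these matrix bounds into the sought-after uniform two-sided bound on the slopes of the fibers $M_y$.

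The main difficulty is establishing the \emph{lower} slope bound: an upper bound on $|\Phi|_s$ alone controls only one side, and extracting a sufficiently quantitative form of the invertibility of $\varphi$ without globally trivializing $M^{r_0}$ is the technical heart of the argument.  In practice this is most cleanly handled by running the matrix argument in parallel on $M$ and on its Cartier dual $M^*$, whose $\varphi$-slope at each fiber is the negative of that of $M_y$; the finiteness results of Section~\ref{S:finiteness of cohomology} make this symmetric strategy available without further effort.
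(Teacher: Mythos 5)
Your high-level strategy---bound $v_p(\delta_x(\varpi_K))$ on both sides by controlling the fiberwise $\varphi$-slopes uniformly over the affinoid $X$, using duality to obtain the two-sidedness---is conceptually parallel to what the decomposition $\Gamma_M = \Gamma_M' \times_{X \times_L X^{\an}_L} \Gamma_M''$ is built to exploit, and your reduction to rigid points is legitimate (a closed analytic subspace with no rigid points is empty). The identification of $M_y$ with $\calR_{\kappa_y}(\pi_K)(\delta_x)$ at rigid points of $\Gamma_M$ via the artinian case of Lemma~\ref{L:graph criterion}(1) and Lemma~\ref{L:artinian rank 1 classification} is also correct.

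However, the crucial step has a genuine gap. You assert that ``the slope theory of $\varphi$-modules over the Robba ring \cite[Theorem~1.7.1]{kedlaya-relative} then translates these matrix bounds into the sought-after uniform two-sided bound on the slopes of the fibers $M_y$,'' but that theorem is the slope filtration theorem (existence and uniqueness of the filtration); it does not by itself convert a Gauss-norm bound on a Frobenius matrix into a bound on slopes. Even the upper bound would require a quantitative Newton-polygon-type estimate relating $\mu_{\max}(M_y)$ to $|\Phi|_{[s,r]}$, which you do not supply, and the possible non-freeness of $M^{r_0}$ (so $\Phi$ is a matrix for a generating set, not a basis) adds a further wrinkle. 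You correctly flag the lower-bound difficulty and propose running the argument in parallel on $M^*$, which is the right idea, but neither half of the estimate is actually completed.

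The paper's proof avoids any detour through slopes. The spaces $\Gamma_M'$ and $\Gamma_M''$ are by definition supported where $H^2_{\varphi,\gamma_K}$ of $M(\delta_L^{-1})^*$ and $M^\dual(\delta_L)^*$ respectively are nonzero, hence inside the support of the corresponding cokernels of $\psi-1$. Proposition~\ref{P:psi cokernel}(2), applied to the restriction of scalars of $M$ from $\calR_X(\pi_K)$ to $\calR_X$, says exactly that such a cokernel dies once the twist is shifted far enough; this gives $\log|T|$ bounded below on $\Gamma_M'$ and above on $\Gamma_M''$, hence bounded on $\Gamma_M$. The matrix-norm estimate you are reaching for is precisely the one carried out inside the proof of Proposition~\ref{P:psi cokernel}, so the cleanest completion of your argument is to invoke that proposition directly rather than reconstruct a slope bound from scratch.
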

\begin{proof}
Let $T$ be the coordinate on $X^{\an}_{L,2} \cong \mathbb{G}^{\an}_{m,L}$ corresponding to evaluation at $\varpi_K$. By Proposition~\ref{P:psi cokernel}(2) applied to the restriction of scalars of $M$ from $\calR_X(\pi_K)$ to $\calR_X$, the function $\log |T|$ is bounded below on $\Gamma_M'$
and bounded above on $\Gamma_M''$, and hence bounded on $\Gamma_M$. This proves the claim.
\end{proof}
\begin{cor} \label{C:closed immersion}
For any point $\eta \in X^{\an}_{L,1}$, the map
$\Gamma_M \times_{X^{\an}_{L,1}} \eta \to X$ is a closed immersion.
\end{cor}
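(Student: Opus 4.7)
The plan is to combine the affinoid containment from Lemma~\ref{L:affinoid image2} with the artinian classification Lemma~\ref{L:artinian rank 1 classification} to pin down $Z = \Gamma_M \times_{X^{\an}_{L,1}} \eta$ as an affinoid mapping to $X$ by a finite morphism with reduced singleton (or empty) fibers. After extending scalars we may assume $\eta$ is $L$-rational, so the identification $X \times_L \{\eta\} \times_L X^{\an}_{L,2} \cong X \times_L X^{\an}_{L,2}$ is clean. By Lemma~\ref{L:affinoid image2}, $Z \subseteq X \times_L U$ for an affinoid subdomain $U \subseteq X^{\an}_{L,2}$, so $Z$ is affinoid, say $Z = \Max(B)$, and we must show $A \to B$ is surjective (where $X = \Max(A)$).

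The second step is the key finiteness assertion: $Z \to X$ is a finite morphism. To see this, analyze the coherent sheaves $N'|_\eta$ and $N''|_\eta$ on $X \times_L U$ whose annihilators cut out $Z$. By Proposition~\ref{P:cohomology of rank one module}(2), at each closed $x \in X$ with $M|_x \cong \calR_{\kappa(x)}(\pi_K)(\delta_x)$ from Lemma~\ref{L:artinian rank 1 classification}, the fiber of $N'|_\eta$ over $x$ is supported precisely on those $u \in U$ for which $\delta_x \eta^{-1} \delta_2^{-1}(u) = \chi_\mathrm{cycl}(|N_{K/\Qp}|\prod_\sigma x_\sigma^{k_\sigma})^{-1}$ for some $(k_\sigma)$ with $k_\sigma \geq 1$. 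Evaluating at $\varpi_K$, this forces $u = \delta_x(\varpi_K) \cdot C_k$ for a constant $C_k$ whose absolute value tends to zero as $\sum_\sigma k_\sigma \to \infty$; since $U$ is an annulus with $|T|$ bounded away from $0$ and $\infty$, only finitely many $k$ can contribute in each fiber. Combined with the global containment in $X \times_L U$, this uniformly finite fiber structure upgrades (via coherence) to the assertion that $N'|_\eta$ has finite-type pushforward to $X$. Applying Cayley--Hamilton to the action of $T$ on this finite $A$-module yields a monic polynomial $P(T) \in A[T]$ annihilating $N'|_\eta$; hence $P(T) \in B$ vanishes, so $B$ is finite over $A$. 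The same applies to $N''|_\eta$.

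Finally, the artinian classification gives the fiber-level closed immersion property that, combined with finiteness, upgrades to the desired global closed immersion. For each closed $x \in X$ and each $n \geq 1$, set $Y_n = \Max(A/\gothm_x^n)$. Lemma~\ref{L:artinian rank 1 classification} (with the line bundle automatically trivial on the artinian local affinoid $Y_n$) gives $M|_{Y_n} \cong \calR_{A/\gothm_x^n}(\pi_K)(\delta_n)$ for a unique $\delta_n$, and Lemma~\ref{L:graph criterion}(1), (3) then gives $\Gamma_M \times_X Y_n \cong Y_n$ via projection. Hence $Z \times_X Y_n = Y_n \times_{X^{\an}_{L,1}} \{\eta\}$ is the closed subspace of $Y_n$ cut out by the condition $\delta_n|_{\calO_K^\times} = \eta$, in particular a closed immersion into $Y_n$. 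Since every closed-point fiber of $Z \to X$ is thus either empty or a single reduced point, and $Z \to X$ is finite, the Stein factorization (or equivalently, the surjectivity of $A \to B$ via Nakayama applied to the completions $\widehat{A}_x \to \widehat{B}_z$ that become surjective by the infinitesimal analysis) yields that $Z \to X$ is a closed immersion.

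The main obstacle is the finiteness assertion in the second step: deducing that the coherent sheaves $N'|_\eta$ and $N''|_\eta$ have finite-type pushforward to $X$ from the pointwise finiteness of their fibers. This requires carefully combining the character-theoretic analysis of Proposition~\ref{P:cohomology of rank one module}(2) with Lemma~\ref{L:affinoid image2} to produce a uniform bound on the number of characters $(k_\sigma)$ contributing to each fiber, and then invoking a coherence argument to pass from uniformly finite fibers inside a fixed affinoid to genuine $A$-finiteness.
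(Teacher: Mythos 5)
Your approach diverges from the paper's, and the obstacle you flag in your final paragraph is a genuine gap that cannot be closed without essentially reproducing the paper's own argument. The paper's proof is purely geometric: it identifies $X^{\an}_{L,2}$ with $\mathbb{G}^{\an}_{m,L}$, embeds the latter into $\mathbb{P}^{1,\an}_L$, and uses Lemma~\ref{L:affinoid image2} to conclude that $\Gamma_M \times_{X^{\an}_{L,1}} \eta$ is relatively compact in the proper $X$-space $X \times_L \mathbb{P}^{1,\an}_L$; this yields properness of the map to $X$, which together with the locally-closed-immersion property from Lemma~\ref{L:open immersion}(1) gives a closed immersion directly. No finiteness of pushforwards of $N'$ or $N''$ is invoked, and no character-theoretic analysis of the Fitting supports is needed.

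The step in your argument that fails is the claim that uniformly finite fibers of $N'|_\eta$ over $X$, together with containment of $\Supp(N'|_\eta)$ in the affinoid $X \times_L U$, yields a coherent pushforward to $X$. This is false in general: take $X$ to be the closed unit disc in the variable $x$ and $U$ the annulus $|p| \le |T| \le 1$, and consider the coherent sheaf $\calO/(xT-1)$ on $X \times_L U$. Its fibers over $X$ have size at most one, but its support projects onto the boundary circle $|x|=1$, which is an affinoid subdomain of $X$ but not a closed analytic subvariety, so the pushforward is not coherent. The missing ingredient is exactly that the support of $N'|_\eta$ be proper over $X$---i.e., relatively compact in a proper ambient $X$-space---which is the entire content of the paper's argument. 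Once properness is in hand, Kiehl's proper mapping theorem gives the coherent pushforward; but at that point properness plus locally-closed-immersion already finish the corollary, and the Cayley--Hamilton and Nakayama machinery becomes unnecessary.
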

\begin{proof}
Identify $X^{\an}_{L,2}$ with $\mathbb{G}^{\an}_{m,L}$ and then embed the latter into $\mathbb{P}^{1,\an}_L$. Then $\Gamma_M \times_{X^{\an}_{L,1}} \eta \to X \times_L \eta \times_L X^{\an}_{L,2}$ is a closed immersion and hence proper. The composition with $X \times_L \eta \times_L X^{\an}_{L,2} \to X \times_L X^{\an}_{L,2}$ is also proper because the latter map is proper. The further composition with $X \times_L X^{\an}_{L,2} \to X \times_L \mathbb{P}^{1,\an}_L$ is also proper because the image
of $\Gamma_M$ in $X^{\an}_{L,2}$ is contained in an affinoid subdomain by Lemma~\ref{L:affinoid image2}. The further composition with $X \times_L \mathbb{P}^{1,\an}_L \to X$ is also proper because the latter map is proper. 
Consequently, the map $\Gamma_M \times_{X^{\an}_{L,1}} \eta \to X$ is both proper and a locally closed immersion (by Lemma~\ref{L:open immersion}), hence a closed immersion.
\end{proof}

\begin{lemma}\label{L:finite order decomposition}
There exist a positive integer $m$ and a continuous character $\delta: K^\times \to \Gamma(X, \calO_X)^\times$
such that
$\Gamma_{M^{\otimes m}(\delta^{-1})}$ is contained in $X \times_L X^{\an}_{L,\mathrm{fin}} \times_L X^{\an}_{L,2}$.
\end{lemma}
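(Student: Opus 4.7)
The plan is to construct a character $\delta$ whose weight equals an integer multiple of the Sen weight of $M$, so that $M^{\otimes m}(\delta^{-1})$ has Sen weight identically zero on $X$; the containment will then follow from the artinian classification (Lemma~\ref{L:artinian rank 1 classification}) combined with base change. Let $A = \Gamma(X, \calO_X)$ and let $w \in K \otimes_{\Qp} A$ denote the Sen weight of $M$ in the sense of Definition~\ref{D:generalized Hodge-Tate weights}. Since $X$ is affinoid, the components $w_\sigma$ under the decomposition $K \otimes_{\Qp} A \cong \prod_{\sigma \in \Sigma} A$ are bounded, so we may take $m = p^N$ for $N$ sufficiently large that $|m w_\sigma| < p^{-1/(p-1)}$ for every $\sigma \in \Sigma$.

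Given such $m$, I would construct $\delta: K^\times \to A^\times$ as follows: fix a uniformizer $\varpi_K$, declare $\delta$ trivial on $\varpi_K^{\ZZ}$ and on the (prime-to-$p$) Teichm\"uller lift of $k_K^\times$, and on the pro-$p$ group $1 + \varpi_K\calO_K$ set
\[
\delta(u) = \exp\!\Bigl(\sum_{\sigma \in \Sigma} (mw)_\sigma\, \sigma(\log u)\Bigr).
\]
The exponential converges because $|mw_\sigma| < p^{-1/(p-1)}$ and $|\log u| \leq 1$ on $1+\varpi_K\calO_K$; multiplicativity follows from the standard log-exp correspondence on a pro-$p$ group; and an immediate comparison with Definition~\ref{D:weight} shows $\mathrm{wt}(\delta) = mw$. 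Since $\delta$ is trivial on $\varpi_K^{\ZZ}$, its classifying morphism lands in $X^{\an}_{L,1} \times_L X^{\an}_{L,2}$. Putting $N := M^{\otimes m}(\delta^{-1})$, the additivity of the Sen weight on rank one tensor products together with Lemma~\ref{L:weight of rank one module} applied to $\calR_A(\pi_K)(\delta^{-1})$ yields $\mathrm{wt}_{\Sen}(N) = mw - mw = 0$ in $K \otimes_{\Qp} A$.

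For the containment $\Gamma_N \subseteq X \times_L X^{\an}_{L,\fin} \times_L X^{\an}_{L,2}$, I would exploit Lemma~\ref{L:graph criterion}(3) for the base-change compatibility of $\Gamma_N$, together with the standard detection of closed immersions of rigid spaces on completed local rings (a variant of \cite[Proposition~7.3.3/5]{BGR}), to reduce to checking the containment after pullback along every closed immersion $\Max(B) \hookrightarrow X$ with $B$ an artinian local quotient of $A$. For such a $B$, Lemma~\ref{L:artinian rank 1 classification} supplies a unique character $\eta: K^\times \to B^\times$ with $N_B \cong \calR_B(\pi_K)(\eta)$, and Lemma~\ref{L:graph criterion}(1) identifies $\Gamma_{N_B}$ with the graph of $\eta$ in $\Max(B) \times_L X^{\an}_L$. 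Lemma~\ref{L:weight of rank one module} then forces $\mathrm{wt}(\eta) = \mathrm{wt}_{\Sen}(N_B) = 0$, so by the universal property of $X^{\an}_{L,\fin}$ as the scheme-theoretic kernel of the weight map, $\eta$ factors through $X^{\an}_{L,\fin} \times_L X^{\an}_{L,2}$, giving the desired containment on the $B$-level.

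The main obstacle is the character construction in the first step: the Sen weight of $M$ need not lie in the image of the weight map on characters, because the $p$-adic exponential fails to converge on $1+\varpi_K\calO_K$ unless its argument is sufficiently small. Replacing $M$ by the tensor power $M^{\otimes m}$ with $m = p^N$ large is precisely what is needed to drag $mw$ into the region of convergence; this reflects the fact that $X^{\an}_{L,1} \to X^{\an}_{L,\Sen}$ is only an \'etale covering of an open polydisc in the target, not a surjection. Once $\delta$ is in hand, the transition from infinitesimal information to the global scheme-theoretic containment is routine given the machinery already developed in Subsection~\ref{SS:rank one}.
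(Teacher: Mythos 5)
Your proof is correct and takes essentially the same approach as the paper: choose $m = p^N$ large so that $m$ times the Sen weight of $M$ lands in the domain of convergence of the $p$-adic exponential, construct $\delta$ by exponentiation on $1+\varpi_K\calO_K$ (and trivially on torsion and on $\varpi_K$), and reduce the scheme-theoretic containment to the artinian case via base-change compatibility of $\Gamma_{(-)}$ together with Lemma~\ref{L:artinian rank 1 classification} and Lemma~\ref{L:weight of rank one module}. The paper's own proof is terser --- it simply asserts that the scaled Sen weight ``lifts to a continuous character'' and that one ``may reduce to the artinian case'' --- but the substance of your argument coincides with it.
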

\begin{proof}
Note that the Sen weight of $M^{\otimes m}$ equals $m$ times the Sen weight of $M$. By choosing $m$ to be a suitably large power of $p$, we may ensure that the Sen weight of $M^{\otimes m}$ belongs to the domain of convergence of the exponential map. In this case, it lifts to a continuous character $\delta: \calO_K^\times \to \Gamma(X, \calO_X)^\times$, which we may extend to $K^\times$ by decreeing that $\delta(\varpi_K) = 1$. 
To check that this choice of $m$ and $\delta$ has the desired effect,
we may reduce to the artinian case, then apply
Lemma~\ref{L:artinian rank 1 classification} and
Lemma~\ref{L:weight of rank one module}.
\end{proof}

We can now establish that $\Gamma_M \to X$ is quasicompact and hence complete the proof of Theorem~\ref{T:full rank 1 classification}.
\begin{proof}[Proof of Theorem~\ref{T:full rank 1 classification}]
By Lemma~\ref{L:graph criterion}, it suffices to prove that $\Gamma_M \to X$ is an isomorphism. 
For this, we may assume $X$ is affinoid and connected.

By Lemma~\ref{L:open immersion}(1),
$\Gamma_M \to X$ is bijective on rigid analytic points and isomorphic on affinoid subdomains of $\Gamma_M$; it thus suffices to prove that $\Gamma_M \to X$ is quasicompact. 
By Lemma~\ref{L:open immersion}(2), it suffices to prove quasicompactness after replacing $M$ with $M^{\otimes m}(\delta)$
for some positive integer $m$ and some continuous character $\delta: K^\times \to \Gamma(X,\calO_X)^\times$. By Lemma~\ref{L:finite order decomposition}, we may thus reduce to the case where 
$\Gamma_M$ is contained in $X \times_L X^{\an}_{L,1} \times_L X^{\an}_{L,\mathrm{fin}}$. Since the map $\Gamma_M \to X$ is surjective on points, there exists a point $\eta \in X^{\an}_{L,\mathrm{fin}}$ such that $\Gamma_M \times_{X^{\an}_{L,\mathrm{fin}}} \eta \neq \emptyset$.
The map $\Gamma_M \times_{X^{\an}_{L,\mathrm{fin}}} \eta \to X$ is on one hand a closed immersion (by Corollary~\ref{C:closed immersion})
and an open immersion (by Lemma~\ref{L:open immersion}(1) and the fact that $\eta \to X^{\an}_{L,\mathrm{fin}}$ is an open immersion). Since
$X$ is connected and $\Gamma_M \times_{X^{\an}_{L,\mathrm{fin}}} \eta$
is nonempty, $\Gamma_M \times_{X^{\an}_{L,\mathrm{fin}}} \eta \to X$ must be an isomorphism. In particular, the map $\Gamma_M \to X$, which is an open immersion by Lemma~\ref{L:open immersion}, must be an isomorphism and hence quasicompact. This completes the proof.
\end{proof}

We thank Michael Temkin for suggesting the following example.
\begin{example} \label{E:open immersion}
Put $X = \rmA^1[0,\infty]$ and $U = \rmA^1(0,\infty] \cup \rmA^1[0,0]$; that is, $X$ is a closed unit disc and $U$ is the union of the boundary circle and its complement. Then
$U$ is a quasi-Stein space and the natural map $U \to X$ is an open immersion which is a bijection on rigid analytic points but not an isomorphism. The corresponding map of Berkovich spaces is also a bijection, but the corresponding map of Huber spaces is not (it misses the type 5 point pointing into the open unit disc).

This example shows that Lemma~\ref{L:open immersion}(1) does not suffice for the proof of Theorem~\ref{T:full rank 1 classification},
thus necessitating the subsequent lemmas. It may be possible to give an alternate completion of the proof by proving directly that $\Gamma_M \to X$ is surjective on Huber points, but even the definition of the ring $\calR_A(\pi_K)$ for $A$ a field carrying a valuation of arbitrary rank is a bit delicate.
\end{example}

\subsection{Global triangulation}

The notion of a \emph{trianguline} representation of $G_{\Qp}$, one for which the associated $(\varphi, \Gamma)$-module is a successive extension of rank $1$ $(\varphi, \Gamma)$-modules, was introduced by Colmez \cite{colmez}.
This notion was later generalized by Nakamura \cite{nakamura} to representations of $G_K$ for $K$ a finite extension of $\Qp$, using Berger's language of $\BB$-pairs.
In this subsection, we redevelop the theory of trianguline representations of $G_K$ using $(\varphi, \Gamma)$-modules.  Although the two languages are equivalent, the approach using $(\varphi, \Gamma)$-modules is better suited for arithmetic families.

In this subsection, we use Construction~\ref{Con: rank one varphi, Gamma_K modules} to define the notion of a trianguline $(\varphi, \Gamma_K)$-module.
We then show that densely pointwise trianguline
$(\varphi,\Gamma)$-modules admit triangulations on Zariski-dense open
subsets.

\begin{defn}
\label{D:strictly triangular}
Let $\delta_1,\ldots,\delta_d: K^\times \to \Gamma(X, \calO_X)^\times$
be continuous characters.  A $(\varphi, \Gamma_K)$-module $M$ of rank
$d$ over $\calR_X(\pi_K)$ is \emph{trianguline with ordered parameters
  $\delta_1, \dots, \delta_d$} if, after perhaps enlarging $L$, there
exists an increasing filtration $(\fil_i M)_{i=0, \dots, d}$ of $M$
given by $(\varphi, \Gamma_K)$-submodules and line bundles
$\calL_1,\ldots,\calL_d$ on $X$ such that each $\gr_iM \simeq
\calR_X(\pi_K)(\delta_i) \otimes_{\calO_X} \calL_i$.  Such a
filtration is called a \emph{triangulation (with ordered parameters
  $\delta_1, \dots, \delta_d$)} of $M$.  Since every rank one object
is of the form $\calR_X(\pi_K)(\delta) \otimes_{\calO_X} \calL$ for a
uniquely determined $\delta$ and a uniquely determined (up to
isomorphism) $\calL$, it follows that $M$ is trianguline if and only
if after perhaps enlarging $L$ it is a successive extension of rank
one objects, and that given the filtration by saturated
$(\varphi,\Gamma_K)$-stable submodules with rank one graded pieces,
the ordered parameters and required line bundles are well-defined (up
to isomorphism).

In the case $X = \Max(L)$, we say that $M$ is \emph{strictly
  trianguline with ordered parameters $\delta_1, \dots, \delta_d$} if,
for each $i$, the the submodule $\fil_{i+1}M$ is the unique way of
enlarging $\fil_iM$ to a submodule of $M$ with quotient isomorphic to
$\calR_X(\pi_K)(\delta_{i+1})$ (and in particular the trianguline
filtration is the unique one with these ordered parameters).  When $M$
is already known to be trianguline with these ordered parameters, this
is equivalent to $H^0_{\varphi,
  \gamma_K}((M/\fil_iM)(\delta_{i+1}^{-1}))$ being one-dimensional for
all $i$, or alternatively $H^0_{\varphi,
  \gamma_K}((\fil_iM)^\vee(\delta_i))$ being one-dimensional for any
$i$.
\end{defn}

\begin{defn}
A subset $Z$ of closed points of $X$ is called \emph{Zariski dense} if there exists an admissible affinoid cover $\{\Max(A_i)\}_{i \in I}$ of $X$ such that $Z \cap \Max(A_i)$ is Zariski dense in $\Max(A_i)$ for each $i$.
Note that this does not imply that $Z$ is Zariski dense in all affinoid subdomains of $X$.

A $(\varphi, \Gamma_K)$-module $M$ over $\calR_X(\pi_K)$ is called \emph{densely pointwise strictly trianguline}  if there exist continuous characters $\delta_1, \dots, \delta_d: K^\times \to \Gamma(X, \calO_X)^\times$ and  a Zariski dense subset $X_\alg \subseteq X$ such that for each $z \in X_\alg$, $M_z$ is strictly trianguline with ordered parameters $\delta_{1,z}, \dots, \delta_{d,z}$.
\end{defn}

\begin{cor}
\label{C:finite cohomology sheaf}
Let $M$ be a $(\varphi, \Gamma_K)$-module over $\calR_X(\pi_K)$.  Then $H^*_{\varphi, \gamma_K}(M)$ and $H^*_{\varphi, \gamma_K}(M/t_\sigma)$ for any $\sigma \in \Sigma$ are coherent sheaves over $X$.  Moreover, locally on $X$ they are the cohomology of complexes of locally free sheaves concentrated in degrees $[0,2]$.
\end{cor}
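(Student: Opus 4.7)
The plan is to localize the problem (both claims being local on $X$) and then derive everything directly from Theorems~\ref{T:finite cohomology} and \ref{T:base change}. I would first reduce to the affinoid case $X = \Max(A)$, treat the two Herr complexes in question, and finally invoke Kiehl's theorem to globalize the resulting finite $A$-modules into coherent sheaves on $X$.

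For $M$ itself, Theorem~\ref{T:finite cohomology}(1) gives directly that $\rmC^\bullet_{\varphi,\gamma_K}(M) \in \bbD_\perf^{[0,2]}(A)$, so each $H^i_{\varphi,\gamma_K}(M)$ is finitely generated over $A$ and is realized as the $i$-th cohomology of a complex of finite projective $A$-modules in degrees $[0,2]$. Theorem~\ref{T:base change}(2), applied along the inclusions $A \to B$ of affinoid subdomains (which are flat), gives $H^i_{\varphi,\gamma_K}(M) \otimes_A B \cong H^i_{\varphi,\gamma_K}(M \whotimes_A B)$, and Kiehl's theorem then glues these finite $A$-modules into a coherent sheaf on $X$.

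For $M/t_\sigma$, I would use the short exact sequence
\[
0 \to M(x_\sigma) \xrightarrow{\ \cdot\, t_\sigma\ } M \to M/t_\sigma \to 0
\]
of $\calR_A(\pi_K)$-modules with commuting, continuous, semilinear $(\varphi,\Gamma_K)$-action. Since $|\Delta_K|$ divides $2 \in \Qp^\times$, the functor $(-)^{\Delta_K}$ is exact, so this sequence yields a short exact sequence of Herr complexes and hence a distinguished triangle
\[
\rmC^\bullet_{\varphi,\gamma_K}(M(x_\sigma)) \to \rmC^\bullet_{\varphi,\gamma_K}(M) \to \rmC^\bullet_{\varphi,\gamma_K}(M/t_\sigma) \xrightarrow{+1}
\]
in the derived category of $A$-modules. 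Applying the preceding step both to $M$ and to the genuine $(\varphi,\Gamma_K)$-module $M(x_\sigma)$, the two outer vertices lie in $\bbD_\perf^{[0,2]}(A)$, so the third vertex lies in $\bbD_\perf^\flat(A)$. The Herr complex $\rmC^\bullet_{\varphi,\gamma_K}(M/t_\sigma)$ is already concentrated in degrees $[0,2]$, and its individual terms are flat over $A$ (by the same Schauder-basis argument as in Corollary~\ref{C:R_A flat over A}, which shows $\calR_A(\pi_K)/t_\sigma$ is $A$-flat, combined with the projectivity of $M$ over $\calR_A(\pi_K)$); hence Lemmas~\ref{L:complex with cohomology capped above} and \ref{L:flat bottom cokernel}, applied exactly as in the proof of Theorem~\ref{T:finite cohomology}(1), promote this complex to an object of $\bbD_\perf^{[0,2]}(A)$. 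Base change for $M/t_\sigma$ then follows by combining Theorem~\ref{T:base change}(2) for $M$ and $M(x_\sigma)$ with the triangle via the five lemma, and Kiehl's theorem again globalizes the resulting finite $A$-modules to a coherent sheaf on $X$.

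No step is a serious obstacle; the only mild subtlety worth flagging is that $M/t_\sigma$ is not itself a $(\varphi,\Gamma_K)$-module in the sense of Definition~\ref{D:phigammamodule} (the linearized Frobenius $\varphi^*(M/t_\sigma) \to M/t_\sigma$ fails to be an isomorphism), so Theorems~\ref{T:finite cohomology} and \ref{T:base change} are not available for $M/t_\sigma$ directly. The distinguished triangle above is precisely what transports their conclusions from the genuine $(\varphi,\Gamma_K)$-modules $M$ and $M(x_\sigma)$ to the quotient $M/t_\sigma$.
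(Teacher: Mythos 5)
Your proposal is correct and follows essentially the same route as the paper: reduce to affinoid $X = \Max(A)$, use Theorem~\ref{T:finite cohomology} for $M$, and for $M/t_\sigma$ use the short exact sequence of Herr complexes coming from $0 \to t_\sigma M \to M \to M/t_\sigma \to 0$ (you write the first term as $M(x_\sigma)$, but this is the same $(\varphi,\Gamma_K)$-module) to place $\rmC^\bullet_{\varphi,\gamma_K}(M/t_\sigma)$ in $\bbD_\perf^{[-1,2]}(A)$, then invoke the $A$-flatness of the terms of the Herr complex together with Lemma~\ref{L:flat bottom cokernel} to land in $\bbD_\perf^{[0,2]}(A)$. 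You merely spell out a couple of points the paper leaves implicit (the $(-)^{\Delta_K}$ exactness, the adaptation of Corollary~\ref{C:R_A flat over A} to $t_\sigma \in \calR_L(\pi_K)$, the base change via the triangle, and Kiehl's theorem for gluing), which is harmless.
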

\begin{proof}
The claims for $\rmC^\bullet_{\varphi,\gamma_K}(M)$ follow from
Theorem~\ref{T:finite cohomology}, so we show how to deduce the result
for $\rmC^\bullet_{\varphi,\gamma_K}(M/t_\sigma)$ from this.  It
suffices to assume that $X = \Max(A)$ is affinoid.  Since one has a
short exact sequence of complexes
\[
0 \to \rmC^\bullet_{\varphi,\gamma_K}(t_\sigma M) \to
\rmC^\bullet_{\varphi,\gamma_K}(M) \to
\rmC^\bullet_{\varphi,\gamma_K}(M/t_\sigma) \to 0,
\]
it is clear that $\rmC^\bullet_{\varphi,\gamma_K}(M/t_\sigma) \in
\bbD_\perf^{[-1,2]}(A)$.  But Lemma~\ref{C:R_A flat over A} shows that
$\rmC^\bullet_{\varphi,\gamma_K}(M/t_\sigma)$ consists of flat
$A$-modules, so Lemma~\ref{L:flat bottom cokernel} allows us to
conclude.
\end{proof}

Before moving on to the main theorem, we briefly discuss a flattening technique using Fitting ideals.  (This is inspired by the work of Raynaud and Gruson \cite{raynaud}.)

\begin{defn}
Given a finitely generated module $M$ over a noetherian ring $R$, write it as the cokernel of an $R$-linear map $\phi: R^m \to R^n$; let $\Phi \in \mathrm{M}_{n\times m}(R)$ be the matrix of $\phi$.
For $r \in \ZZ_{\geq 0}$, the \emph{$r$th Fitting ideal} of $M$ is defined to be the ideal $\Fitt_r(M)$ of $R$ generated by the $(n-r)$-minors of $\Phi$.  This is independent of the choice of the presentation \cite[Corollary-Definition~20.4]{eisenbud}.
Moreover, the construction of Fitting ideals commutes with arbitrary base change \cite[Corollary~20.5]{eisenbud}.
\end{defn}

\begin{lemma}
\label{L:linear algebra}
Let $R$ be a noetherian ring and let $M$ be a finitely generated module given as the cokernel of an $R$-linear homomorphism
 $\phi: R^m \to R^n$.

(1)
If $M \cong N \oplus F$ is a direct sum of $R$-modules with $F$ locally free of rank $d$, then $\Fitt_{r+d}(M) = \Fitt_r(N)$.

(2)
For any $r \in \ZZ_{\geq 0}$, $\Fitt_r(M)$ is a nil-ideal if and only if the rank of $M$ at each generic point of $\Spec R$ is at least $r+1$.

(3)
Assume that there exists $r \in \NN$ such that $\Fitt_{r-1}(M) = (0)$
and $\Fitt_r(M)$ is generated by $a \in R$ that is not a zero-divisor.
Then the image of $\phi$ is flat over $R$ of constant rank $n-r$.  In particular, $M$ has constant generic rank $r$ and it has projective-dimension $\leq 1$.

\end{lemma}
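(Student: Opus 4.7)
The proof naturally splits into three parts. For \textbf{(1)}, I would work affine-locally on $\Spec R$ to trivialize $F \simeq R^d$, and combine a presentation $R^m \xrightarrow{\Phi_N} R^n \to N \to 0$ of $N$ with the trivial presentation of $R^d$ into a block presentation of $M = N \oplus R^d$ whose matrix has $\Phi_N$ on top and a $d$-row zero block below. Since any $(n-r) \times (n-r)$ minor that uses a row from the zero block vanishes, the $(n-r)$-minor ideal of the block matrix coincides with that of $\Phi_N$, yielding $\Fitt_{r+d}(M) = \Fitt_r(N)$ by invariance of Fitting ideals under change of presentation.

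For \textbf{(2)}, the key observation is that Fitting ideals commute with arbitrary base change and that for a finite-dimensional vector space $V$ over a field $k$ one has $\Fitt_r(V) = 0$ iff $\dim_k V > r$. Applied at each minimal prime $\gothp$, this gives $\Fitt_r(M) \subseteq \gothp$ iff the rank of $M$ at $\gothp$ is at least $r+1$. Intersecting over all minimal primes identifies this condition with the nil-radical containment, yielding the equivalence.

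For \textbf{(3)}, the main step is to show that $I := \mathrm{im}(\phi)$ is locally free of rank $n-r$; the remaining assertions then follow immediately from the short exact sequence $0 \to I \to R^n \to M \to 0$, which furnishes a projective resolution of $M$ of length $\leq 1$ and computes its generic rank as $n-(n-r) = r$. I would argue locally at a prime $\gothp$. If $a \notin \gothp$, then over $R_\gothp$ we have $\Fitt_{r-1}(M_\gothp) = 0$ and $\Fitt_r(M_\gothp) = R_\gothp$; the standard Fitting criterion for local freeness makes $M_\gothp$ projective of rank $r$, and consequently $I_\gothp$ arises as the kernel of a split surjection, hence is projective of rank $n-r$.

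The main obstacle is the case $a \in \gothp$, which reduces to analysis at a maximal ideal $\gothm \supseteq (a)$. Here the Fitting ideal product formula applied to $0 \to I \to R^n \to M \to 0$ gives $\Fitt_i(I_\gothm) \cdot \Fitt_j(M_\gothm) \subseteq \Fitt_{i+j}(R_\gothm^n) = 0$ for $i+j < n$, and setting $j = r$ together with the fact that $\Fitt_r(M_\gothm) = (a)$ contains the non-zerodivisor $a$ forces $\Fitt_i(I_\gothm) = 0$ for all $i < n-r$. The crucial remaining task is to establish $\Fitt_{n-r}(I_\gothm) = R_\gothm$; to do this I would argue by a direct matrix manipulation. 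By Nakayama applied to the principal ideal $\Fitt_r(M_\gothm) = (a)$, some $(n-r) \times (n-r)$ minor of $\Phi$ equals $ua$ with $u \in R_\gothm^\times$, yielding (after row and column permutation) a block decomposition $\Phi = \bigl(\begin{smallmatrix}A & B \\ C & D\end{smallmatrix}\bigr)$ with $\det A = ua$. The vanishing of all $(n-r+1)$-minors, via Schur-complement identities, then yields the relation $ua \cdot D = C \cdot A^{\mathrm{adj}} \cdot B$, and after a suitable change of generators of $I_\gothm$ this exhibits $I_\gothm$ with $n-r$ generators whose presentation matrix admits an invertible $(n-r) \times (n-r)$ minor. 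Applying the Fitting criterion to $I_\gothm$ then completes the proof.
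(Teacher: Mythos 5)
Parts (1) and (2) match the paper's argument.  For (3), you take a somewhat different route: the paper directly exhibits the first $n-r$ columns of $\Phi$ as a free basis of $I = \Image(\phi)$ on a Zariski cover, getting linear independence from $\det A = a$ being a non-zero-divisor and generation from the matrix identity; whereas you run the Fitting criterion for local freeness on $I$ itself, getting $\Fitt_i(I)=0$ for $i<n-r$ from the Fitting-ideal product formula and then arguing generation locally.  Your case split on whether $a$ lies in the prime is harmless but unnecessary, since the block-matrix argument works uniformly.  Both approaches converge at the same key technical step, namely showing that $I_\gothm$ is generated by $n-r$ elements.

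That step is where your sketch has a genuine gap.  The relation $ua \cdot D = C \cdot \Adj(A) B$ does not by itself give $n-r$ generators for $I_\gothm$.  The crucial missing observation is that every entry of $\Adj(A)B$ is an $(n-r)$-minor of $\Phi$ (expand it as the determinant of $A$ with one column replaced by a column of $B$), hence lies in $\Fitt_r(M) = (a)$; because $a$ is not a zero-divisor one can then write $\Adj(A)B = aE$ and divide the identity $\big(\begin{smallmatrix}A\\C\end{smallmatrix}\big)\Adj(A)B = a\big(\begin{smallmatrix}B\\D\end{smallmatrix}\big)$ through by $a$ to get $\big(\begin{smallmatrix}A\\C\end{smallmatrix}\big)E = \big(\begin{smallmatrix}B\\D\end{smallmatrix}\big)$, so the first $n-r$ columns of $\Phi$ generate $I$.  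Without this divisibility step the relation alone says nothing about generation.  Your closing clause, that a presentation matrix of $I_\gothm$ ``admits an invertible $(n-r)\times(n-r)$ minor,'' is also off: once $I_\gothm$ has $n-r$ generators, $\Fitt_{n-r}(I_\gothm)=R_\gothm$ is automatic, and together with $\Fitt_{n-r-1}(I_\gothm)=0$ the Fitting criterion gives freeness of rank $n-r$; an invertible $(n-r)\times(n-r)$ minor in a presentation matrix of $I_\gothm$ would instead mean $\Fitt_0(I_\gothm)=R_\gothm$, i.e.\ $I_\gothm=0$.
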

\begin{proof}
(1) We may work Zariski locally on $\Spec R$, reducing to the case where $F$ is free.  In this case, the claim is straightforward from the definitions.

(2) The ideal $\Fitt_r(M)$ consists of nilpotent elements if and only if $\Fitt_r(M)$ is contained in all minimal primes $\gothp$, if and only if, for each minimal prime $\gothp$, $\Fitt_r(M \otimes_R \Frac(R/\gothp)) = (0)$.  This reduces us to the case where $R$ is a field, in which case the claim follows from elementary linear algebra.

(3)
This follows from a variant of the proof of \cite[Chapter~4,~Proposition~1~and~Lemma~1]{raynaud}.
Let $\Phi = (\phi_{ij})_{1\leq i\leq n, 1\leq j \leq m}$ denote the matrix of $\phi$ for the standard bases $\bbe_1, \dots, \bbe_m$ for $R^m$ and $\bbf_1, \dots,\bbf_n$ for $R^n$.
Zariski locally on $\Spec R$, and after suitable permutation of the bases, and perhaps multiplying $a$ by a unit in $R$, we may assume that $\det(\phi_{ij})_{1 \leq i,j \leq n-r}  = a$.
Write $\Phi$ as a block matrix $\big(\begin{smallmatrix}
A & B \\ C & D
\end{smallmatrix}\big)$, where $A$ is an $(n-r)\times(n-r)$-matrix with determinant $a$.  Let $\Adj(A) $ denote the adjugate matrix of $A$.

We claim that we have the following identity
\begin{equation}
\label{E:linear algebra}
\begin{pmatrix}
A\\ C
\end{pmatrix}
\Adj (A) B = a
\begin{pmatrix}
B\\ D
\end{pmatrix}.
\end{equation}
It suffices to check that $C \Adj(A)B = a D$, and for this it suffices to replace $C$ by a single one of its rows, $B$ by a single one of its columns, and $D$ by the corresponding one of its entries (which is just an element of $R$).
In this case, it is a simple linear algebra exercise to check that
$
a(aD - C\Adj(A)B) = a \cdot \det
\big(\begin{smallmatrix}
A&B\\C& D
\end{smallmatrix}\big);
$
the right hand side vanishes because it is an $(n-r+1)$-minor of $\Phi$.
Since $a$ is not a zero-divisor, we have $aD = C \Adj(A)B$.

Note that each entry of $\Adj(A)B$ is an $(n-r) $-minor of $\Phi$, and hence divisible by $a$.  Since $a$ is not a zero-divisor, there is a unique matrix $E$ with coefficients in $R$ such that 
$\big(\begin{smallmatrix}
A\\C
\end{smallmatrix}\big)E = \big(
\begin{smallmatrix}
B\\ D
\end{smallmatrix}\big)$. This means that $\phi(\bbe_1), \dots, \phi(\bbe_{n-r})$ generate the image of $\phi$.  Since $\det A$ is not a zero-divisor, the image of $\phi$ is freely generated by $\phi(\bbe_1), \dots, \phi(\bbe_{n-r})$.  The lemma is proved.
\end{proof}

\begin{cor}
\label{C:flattening technique}
Let $X$ be a reduced rigid $L$-analytic space.

(1) Let $\phi: C \to D$ be a homomorphism of locally free coherent sheaves on $X$.
Assume that the generic rank $r$ of the cokernel of $\phi$ is constant on $X$.
Let $f: Y \to X$ be the blowup of $X$ along the $r$th Fitting ideal of the cokernel of $\phi$.
Then the homomorphism $f^*\phi: f^*C \to f^*D$ has flat image.
The formation of $(Y,f)$ commutes with dominant base change in $X$.
Moreover, for any morphism $g: Z \to Y$, the composite $f \circ g: Z \to X$ has the same property: $(f \circ g)^*\phi$ has flat image.

(2) Let $(C^\bullet, d^\bullet)$ be a bounded above complex of locally
free coherent sheaves on $X$.  Assume that the generic ranks of the
$H^i(C^\bullet)$ and the $\Coker(d^i)$ are constant on $X$.  Construct
the sequence of morphisms
\[
\cdots \to Y^i \xrightarrow{f_i} Y^{i+1} \xrightarrow{f_{i+1}} \cdots \to X
\]
as follows.  Let $Y^i = X$ for $i \gg 0$.  Given $f_i,f_{i+1},\ldots$, put
\[
g_i = \cdots \circ f_{i+1} \circ f_i: Y^i \longrightarrow X
\]
and apply (1) to $g_i^*d^{i-1}: g_i^*C^{i-1} \to g_i^*C^i$ to obtain $f_{i-1}: Y^{i-1} \to Y^i$.  Then the $(Y_i,f_i)$ depend only on the quasi-isomorphism class of $(C^\bullet,d^\bullet)$, and their formation commutes with dominant base change in $X$.
\end{cor}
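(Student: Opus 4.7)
The plan is to deduce (1) directly from Lemma~\ref{L:linear algebra}(3), and then bootstrap (2) by a downward induction that carefully tracks the Fitting ideals of cokernels.

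First I would prove (1). The question is local, so assume $X = \Max(A)$ with $C = A^n$, $D = A^m$ free, and let $\Phi$ be the matrix of $\phi$. Since $A$ is reduced and $\Coker \phi$ has constant generic rank $r$, Lemma~\ref{L:linear algebra}(2) shows $\Fitt_{r-1}(\Coker \phi) = 0$ and the ideal $I := \Fitt_r(\Coker \phi)$ is nonzero. Because $A$ is reduced and $I$ meets each minimal prime in a nonzero element (otherwise $\Fitt_r$ would be nil, contradicting constant rank $r$), $I$ locally contains a nonzerodivisor. On the blowup $f : Y \to X$ of $I$, the sheaf $I \cdot \calO_Y$ is locally principal generated by such a nonzerodivisor, and $Y$ remains reduced. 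Because Fitting ideals commute with arbitrary base change (\cite[Corollary~20.5]{eisenbud}), the hypotheses of Lemma~\ref{L:linear algebra}(3) hold verbatim for $f^*\phi$ on $Y$; we conclude that $\text{Image}(f^*\phi)$ is finitely generated and flat, hence locally free of rank $m - r$, and that $\Coker(f^*\phi)$ has projective dimension $\leq 1$. Compatibility of $(Y,f)$ with dominant base change on $X$ is immediate from the base-change property of Fitting ideals together with the universal property of blowups.

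For the last clause of (1), given $g : Z \to Y$, I would argue as follows. Since $\text{Image}(f^*\phi)$ is flat on $Y$, the short exact sequence
\[
0 \to \Ker(f^*\phi) \to f^*C \to \text{Image}(f^*\phi) \to 0
\]
remains exact after pullback by $g$, so $g^* \text{Image}(f^*\phi) = \text{Image}((f \circ g)^*\phi)$; since the pullback of a flat module is flat, the image of $(f\circ g)^*\phi$ is flat on $Z$.

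Next I would deduce (2). For $i$ large enough that $C^i = 0$, the construction gives $Y^i = X$ with no nontrivial blowup. Inductively, suppose $f_i, f_{i+1}, \dots$ have been defined so that each $g_j^* d^{j-1}$ for $j > i$ has flat (hence locally free, by finite generation) image on $Y^j$. Then on $Y^i$, one has a short exact sequence
\[
0 \to g_i^* H^i(C^\bullet) \to \Coker(g_i^* d^{i-1}) \to \text{Image}(g_i^* d^i) \to 0
\]
whose third term is locally free; locally the sequence splits, so Lemma~\ref{L:linear algebra}(1) identifies the relevant Fitting ideal of $\Coker(g_i^* d^{i-1})$ with a Fitting ideal of $g_i^* H^i(C^\bullet)$, which is a quasi-isomorphism invariant. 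The constancy of generic rank needed to apply (1) is inherited from the hypotheses on $X$ via dominance of all $g_i$. Hence (1) produces the required $f_{i-1}$, and iterating yields the whole tower. The base-change compatibility and quasi-isomorphism invariance propagate inductively from the corresponding properties in (1).

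The main obstacle I expect is the pair of technical points in (1): first, checking that the blowup $Y$ of a reduced rigid analytic space along a non-nil Fitting ideal is again reduced, with the pulled-back Fitting ideal locally generated by a nonzerodivisor (so that Lemma~\ref{L:linear algebra}(3) actually applies); and second, verifying the identity $g^* \text{Image}(f^*\phi) = \text{Image}((f\circ g)^*\phi)$ cleanly via the flatness of the image on $Y$. For (2), the technical care is in ensuring that the local splitting obtained from the locally free quotient $\text{Image}(g_i^* d^i)$ is compatible enough across $Y^i$ to make Lemma~\ref{L:linear algebra}(1) yield a globally quasi-isomorphism-invariant Fitting ideal, which is what guarantees that the tower $(Y^i, f_i)$ genuinely depends only on the quasi-isomorphism class of $(C^\bullet, d^\bullet)$.
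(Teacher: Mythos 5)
Your treatment of the first two clauses of (1) and of part (2) tracks the paper's (admittedly terse) argument closely, filling in the correct details: the reduction to the affinoid case, the observation that constant generic rank forces $\Fitt_{r-1}=0$ (nil plus reduced equals zero) while $\Fitt_r$ is not contained in any minimal prime and hence contains a nonzerodivisor by prime avoidance, the reducedness and invertibility of $\Fitt_r\cdot\calO_Y$ on the blowup, and the invocation of Lemma~\ref{L:linear algebra}(3) via the base-change invariance of Fitting ideals. In part (2) you write $g_i^*H^i(C^\bullet)$ where the paper (correctly) works with $H^i(g_i^*C^\bullet)$; these agree only after the flattening, and the quasi-isomorphism invariance actually used is that a quasi-isomorphism between bounded-above complexes of locally free sheaves is a chain homotopy equivalence and hence pulls back to a quasi-isomorphism, so that $H^i(g_i^*C^\bullet)$ is the invariant object. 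Modulo this notational slip the argument for (2) matches the paper's.

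The argument you give for the last clause of (1) has a genuine gap, and in fact that clause, read literally, is false. From the flatness of $\Image(f^*\phi)$ you deduce that the short exact sequence $0 \to \Ker(f^*\phi) \to f^*C \to \Image(f^*\phi) \to 0$ pulls back to a short exact sequence, which is correct; but this only shows that $g^*\Image(f^*\phi)$ is a quotient of $g^*f^*C$, not that $g^*\Image(f^*\phi)$ is the image of $(f\circ g)^*\phi$ inside $g^*f^*D$. For that one would need the inclusion $\Image(f^*\phi) \hookrightarrow f^*D$ to remain injective after applying $g^*$, which has nothing to do with the flatness of the source; $\Image(f^*\phi)$ is a non-split flat submodule of $f^*D$ in general (its cokernel merely has projective dimension $\leq 1$), so $g^*\Image(f^*\phi) \to g^*f^*D$ can have a nonzero kernel, and $\Image((f\circ g)^*\phi)$ is then a proper, non-flat quotient of the flat module $g^*\Image(f^*\phi)$. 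A concrete example: $X = Y = \Max(L\langle x\rangle)$, $\phi = \binom{x}{0}\colon\calO_X\to\calO_X^2$, where $r=1$, $\Fitt_1=(x)$ is already invertible so $Y=X$ and $\Image(\phi)=x\calO_X\oplus 0$ is free; taking $g\colon Z=\Max(L\langle x,y\rangle/(xy))\to Y$ via $x\mapsto x$ (a dominant map of reduced spaces), the image of $g^*\phi$ is $\calO_Z/(y)\cong L\langle x\rangle$, which is not flat over $\calO_Z$. The mechanism that does work is to note that the Fitting-ideal hypotheses of Lemma~\ref{L:linear algebra}(3) pull back formally ($\Fitt_{r-1}$ pulls back to $0$, $\Fitt_r$ to a locally principal ideal), and what can fail is only that the principal generator remains a nonzerodivisor on $Z$; this holds whenever $g$ is dominant and $Z$ and $Y$ have the same generic points (e.g.\ $g$ is itself a composite of blowups of reduced spaces), which is the only case the paper ever uses. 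The paper's own justification here ("The third claim is obvious.") is equally silent on this point, so this is as much a lacuna in the source as in your proof, but the one-line argument you offer does not close it.
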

\begin{proof}
We may assume that $X$ is affinoid throughout.  Also, we remark that
generic ranks do not change under dominant base change.

Then the first claim of (1) is clear from Lemma~\ref{L:linear
  algebra}.  The second claim follows because the formations of
Fitting ideals and blowups commute with base change.  The third claim
is obvious.

For (2), we prove that for each $i$ the construction of $f_{i-1}$
depends only on the quasi-isomorphism class of
$(C^\bullet,d^\bullet)$.  Note that our hypothesis on the generic
ranks of the $H^i(C^\bullet)$ and the $\Coker(d^i)$ implies that the
$\Image(d^i)$ and the $C^i$ also have generic ranks that are
constant on $X$.  We have the short exact sequence
\[
0 \to H^i(g_i^*C^\bullet) \to \Coker(g_i^*C^{i-1} \to g_i^*C^i) \to \Image(g_i^*C^i \to g_i^*C^{i+1}) \to 0,
\]
in which the last term is a flat module by the construction of $g_i$,
hence it is projective of constant rank $s$.  In particular, the short
exact sequence splits.  Let $r$ be the generic rank of
$H^i(g_i^*C^\bullet)$, so that $\Coker(g_i^*C^{i-1} \to g_i^*C^i)$ has
generic rank $r+s$.  Then $f_{i-1}$ is none other than the blowup of
the ideal
\[
\Fitt_{r+s}(\Coker(g_i^*C^{i-1} \to g_i^*C^i)) = \Fitt_r(H^i(g_i^*C^\bullet))
\]
on $Y^i$.  As the complex $(C^\bullet,d^\bullet)$ consists of locally
free coherent sheaves, it is clear that $H^i(g_i^*C^\bullet)$, and
therefore also the above ideal, depends only on its quasi-isomorphism
class.  The commutation with base change follows from the
corresponding property in (1).
\end{proof}

\begin{lemma}
\label{L:tor given by Fitting}
Let $R$ be a noetherian ring.
Let $Q$ be the cokernel of an injective homomorphism $\phi: M \to N$ of finitely generated flat $R$-modules of constant ranks $m$ and $n$, respectively.  Let $Z$ be the closed subscheme of $\Spec R$ defined by the $(n-m)$th Fitting ideal of $Q$.  Then for any $z \in \Spec R$, one has $\Tor_1^R(Q, \kappa_z) \neq 0$ if and only if $z \in Z$.
\end{lemma}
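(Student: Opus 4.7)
The plan is to reduce to a local calculation with matrices, using the flatness hypotheses to relate $\Tor_1$ to the kernel of $\phi$ pointwise.

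First, I would work Zariski-locally on $\Spec R$. Since $M$ and $N$ are finitely generated and flat over the noetherian ring $R$, they are projective; combined with constant rank, they are locally free. Thus, after localizing, we may assume $M \cong R^{\oplus m}$ and $N \cong R^{\oplus n}$ with $\phi$ represented by an $n \times m$ matrix $\Phi$. Both the formation of $\Tor$ and the formation of Fitting ideals commute with localization, so it suffices to establish the stated equivalence on any affine open cover.

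Second, under this trivialization the exact sequence $0 \to M \to N \to Q \to 0$ becomes a finite free presentation $R^{\oplus m} \xrightarrow{\Phi} R^{\oplus n} \to Q \to 0$, so by definition $\Fitt_{n-m}(Q)$ is generated by the $m \times m$ minors of $\Phi$ (these are the maximal minors, since $\Phi$ has $m$ columns). Hence a point $z \in \Spec R$ lies in $Z = V(\Fitt_{n-m}(Q))$ if and only if every $m \times m$ minor of $\Phi$ lies in $\gothp_z$, i.e., if and only if the matrix $\Phi \otimes_R \kappa_z$ has rank strictly less than $m$.

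Third, I would compute $\Tor_1^R(Q,\kappa_z)$ directly from the exact sequence $0 \to M \to N \to Q \to 0$. Since $N$ is flat, $\Tor_1^R(N,\kappa_z) = 0$, so the long exact sequence of $\Tor$ gives
\[
\Tor_1^R(Q,\kappa_z) \cong \Ker\bigl(M \otimes_R \kappa_z \xrightarrow{\phi \otimes 1} N \otimes_R \kappa_z\bigr) = \Ker\bigl(\Phi \otimes_R \kappa_z\bigr).
\]
This kernel is nonzero precisely when $\Phi \otimes_R \kappa_z$, viewed as a linear map of $\kappa_z$-vector spaces of dimensions $m$ and $n$, fails to be injective, i.e., precisely when its rank is less than $m$. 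Combining with the previous paragraph gives the desired equivalence.

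There is no essential obstacle beyond assembling these standard facts; the role of the flatness of $M$ is to ensure $\phi$ is injective after tensoring with $\kappa_z$ exactly on the complement of $Z$ (equivalently, to identify $\Tor_1^R(Q,\kappa_z)$ with $\Ker(\Phi \otimes \kappa_z)$ without extra error terms), while the constant-rank hypothesis ensures local freeness so that the matrix description of $\Phi$ and the Fitting ideal make sense uniformly.
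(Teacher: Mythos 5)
Your proof is correct and follows essentially the same route as the paper's: tensor the short exact sequence $0 \to M \to N \to Q \to 0$ with $\kappa_z$, use flatness of $N$ to identify $\Tor_1^R(Q,\kappa_z)$ with $\Ker(\phi \otimes \kappa_z)$, and observe that this kernel is nonzero precisely when $\phi \otimes \kappa_z$ has rank less than $m$, which is equivalent to the vanishing of the $(n-m)$th Fitting ideal at $z$. The only difference is that you make the localization and matrix trivialization explicit, whereas the paper states the chain of equivalences directly.
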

\begin{proof}
Tensoring $\phi$ with $\kappa_z$ gives $0 \to \Tor_1^R(Q, \kappa_z) \to M_z \xrightarrow{\phi_z} N_z \to Q_z \to 0$.  So $\Tor_1^R(Q, \kappa_z) \neq 0$ if and only if $\phi_z$ is not injective, if and only if the rank of $\phi_z$ is not $m$, if and only if the $(n-m)$th Fitting ideal vanishes at $z$, if and only if $z \in Z$.
\end{proof}

\begin{remark}
In the following theorem and its corollary, we assume that $X$ is a
reduced rigid analytic space over $L$.  The assumption on the
reducedness of $X$ is essential because we invoke arguments at the
residue fields, which do not see non-reduced structure.
\end{remark}

\begin{theorem}
\label{T:global triangulation step 1}
Let $X$ be a reduced rigid $L$-analytic space.
Let $M$ be a $(\varphi,\Gamma_K)$-module over $\calR_X(\pi_K)$ of rank $d$ and let $\delta: K^\times \to \Gamma(X, \calO_X)^\times$ be a continuous character.  Suppose that there exists a Zariski dense subset $X_\alg$ of closed points of $X$ such that for every $z \in X_\alg$, $H^0_{\varphi, \gamma_K}(M^\dual_z(\delta_z))$ is one dimensional, and the image of $\calR_{\kappa_z}(\pi_K)$ under any basis of this space is saturated in $M_z^\dual(\delta_z)$.  Then there exist canonical data of
\begin{itemize}
\item[(a)] a proper birational morphism $f: X' \to X$ of reduced rigid analytic spaces,
\item[(b)] a unique (up to $\calO_{X'}^\times$) homomorphism $\lambda: f^*M \to \calR_{X'}(\pi_K)(\delta) \otimes_{X'} \calL$ of $(\varphi, \Gamma_K)$-modules over $\calR_{X'}(\pi_K)$, where $\calL$ is a line bundle over $X'$ with trivial $(\varphi, \Gamma_K)$-actions,
\end{itemize}
such that the following conditions are satisfied.
\begin{itemize}
\item[(1)] The set $Z$ of closed points $z \in X'$ failing to have the following property is Zariski closed and disjoint from $f^{-1}(X_\alg)$ (hence its complement is Zariski open and dense): the induced homomorphism $\lambda_z: M_z \to \calR_{\kappa_z}(\pi_K)(\delta_z)$ is surjective and the corresponding element spans $H^0_{\varphi, \gamma_K}(M^\vee_z(\delta_z))$ (hence the latter is one-dimensional).
\item[(2)] Locally on $X'$, the cokernel of $\lambda$ is killed by some power of $t$, and is supported over $Z$ in the sense that for any analytic function $g$ vanishes along $Z$, some power of $g$ kills the cokernel of $\lambda$ too.
\item[(3)] The kernel of $\lambda$ is a $(\varphi, \Gamma_K)$-module over $\calR_{X'}(\pi_K)$ of rank $d-1$.
\end{itemize}

Moreover, when $X$ is a smooth rigid analytic curve, we can take $X'=X$.
\end{theorem}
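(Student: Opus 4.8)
The plan is to construct the morphism $f$ and the homomorphism $\lambda$ by applying the Fitting-ideal flattening machinery of Corollary~\ref{C:flattening technique} to the cohomology complex $\rmC^\bullet_{\varphi,\gamma_K}(M^\dual(\delta))$ (which is a perfect complex on $X$ by Corollary~\ref{C:finite cohomology sheaf} and Theorem~\ref{T:finite cohomology}). First I would observe that, by the density hypothesis, at every point $z \in X_\alg$ the space $H^0_{\varphi,\gamma_K}(M^\dual_z(\delta_z))$ is one-dimensional and $H^2_{\varphi,\gamma_K}(M^\dual_z(\delta_z))$ vanishes (the latter by Tate duality, Theorem~\ref{T:Liu}(3), since a saturated rank-one sub gives $H^0_{\varphi,\gamma_K}(M_z(\delta_z^{-1}))=0$; alternatively one reduces as in Lemma~\ref{L:devissage}). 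Hence the generic ranks of the cohomology sheaves $H^i(\rmC^\bullet_{\varphi,\gamma_K}(M^\dual(\delta)))$ and of the cokernels of the differentials are constant on each connected component of the reduced space $X$, which is exactly the hypothesis needed to run Corollary~\ref{C:flattening technique}(2). Taking $X' = Y^0$ in the resulting tower (and noting the tower stabilizes because the complex is concentrated in degrees $[0,2]$, so only finitely many blowups occur), the pullback complex $g_0^*\rmC^\bullet_{\varphi,\gamma_K}(M^\dual(\delta))$ has all differentials of flat image, so $H^0$ of it — which is $H^0_{\varphi,\gamma_K}(f^*M^\dual(\delta)) = \Hom_{\calR_{X'}(\pi_K)[\varphi,\Gamma_K]}(\calR_{X'}(\pi_K), f^*M^\dual(\delta))$ by Theorem~\ref{T:base change}(2) — is a flat, hence locally free, $\calO_{X'}$-module of rank one; call its dual line bundle $\calL$. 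A generator of this locally free sheaf is precisely a homomorphism $\calR_{X'}(\pi_K) \otimes \calL^\dual \to f^*M^\dual(\delta)$, and dualizing and untwisting yields $\lambda: f^*M \to \calR_{X'}(\pi_K)(\delta)\otimes_{X'}\calL$; uniqueness up to $\calO_{X'}^\times$ and the "canonical" claim follow from the canonicity of the tower in Corollary~\ref{C:flattening technique}(2). Since the blowup centers are supported away from $f^{-1}(X_\alg)$ (the cohomology ranks already being generic there) and blowups are isomorphisms over the locus where the relevant Fitting ideal is already invertible, $f$ is proper and birational and an isomorphism over a neighborhood of $X_\alg$.

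Next I would establish (1)--(3). For (1): the locus $Z$ where $\lambda_z$ fails to be surjective with image spanning $H^0_{\varphi,\gamma_K}(M^\vee_z(\delta_z))$ is the locus where the formation of $H^0$ or of the cokernel of $\lambda$ fails to commute with passage to $\kappa_z$; by Lemma~\ref{L:tor given by Fitting} (applied to the injection $\calR_{X'}(\pi_K)\otimes\calL^\dual \hookrightarrow f^*M^\dual(\delta)$, whose source and target give finite flat $\calO_{X'}$-modules after passing to any relevant quotient, or more directly to the differentials of the flattened complex) this locus is cut out by a Fitting ideal, hence Zariski closed, and it is disjoint from $f^{-1}(X_\alg)$ because there everything is already in perfect shape. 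For (2): working locally, $\lambda$ is a map of $(\varphi,\Gamma_K)$-modules which is an isomorphism after $\otimes\kappa_z$ for $z$ outside $Z$, hence its cokernel $Q$ has $Q_z = 0$ off $Z$; by Lemma~\ref{L:local saturation=>global saturation} (after restricting scalars to $\calR_{X'}$, since the target is rank one as a $\varphi$-module over $\calR_{X'}(\pi_K)$ in the sense of that lemma applied to a suitable $f$ vanishing on $Z$) the cokernel is killed by a power of any $g$ vanishing along $Z$. To see it is killed by a power of $t$: at points of $X_\alg$, saturatedness of the rank-one quotient means the cokernel is an extension among the $\calR/t_\sigma$'s by Corollary~\ref{C:submodule of rank one module} and Lemma~\ref{L:phi Gamma module saturated}, hence $t$-torsion there; spreading out and using that the pullback complex has flat image, the cokernel is $t$-power torsion locally on all of $X'$. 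For (3): the kernel $\Ker\lambda$ is a $\varphi$-submodule of $f^*M$; since $\lambda$ has $t$-power-torsion cokernel it is generically of rank $d-1$ and saturated after inverting $t$, and then one checks $\Ker\lambda$ is finite projective of rank $d-1$ as a $\varphi$-module over $\calR_{X'}(\pi_K)$ using Lemma~\ref{L:extending isom and surj} and Remark~\ref{R:isom on Robba descent} together with the flatness obtained from the construction; the $\Gamma_K$-action is inherited.

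Finally, the assertion that $X' = X$ when $X$ is a smooth rigid analytic curve: on a smooth curve every coherent ideal sheaf is locally principal (the local rings are discrete valuation rings), so every blowup appearing in the tower of Corollary~\ref{C:flattening technique}(2) is an isomorphism; hence $f$ is already an isomorphism and one may take $X' = X$. The main obstacle I anticipate is the bookkeeping in (2)--(3): proving that the cokernel of $\lambda$ is killed simultaneously by a power of $t$ \emph{and} supported on $Z$, uniformly in the sense demanded (i.e. that these two torsion phenomena are compatible locally on $X'$), and verifying that $\Ker\lambda$ is genuinely a finite \emph{projective} $(\varphi,\Gamma_K)$-module rather than merely a coadmissible submodule — this requires carefully combining the pointwise saturation input at $X_\alg$ with the flatness produced by the Fitting-ideal blowups, and is where the $\varphi$-structure (via Lemmas~\ref{L:extending isom and surj} and \ref{L:local saturation=>global saturation}) must be used to descend statements from $\calR_{X'}(\pi_K)$ to a single radius $r$ and back.
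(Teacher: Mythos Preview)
Your overall strategy is right, but there are two genuine gaps relative to the paper's argument that would prevent the proof from going through as written.

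First, for (1) you need a Zariski-closed description of the locus where $\lambda_z$ fails to be \emph{surjective}, not merely where $H^0$ fails to commute with base change. Your proposed characterization via Lemma~\ref{L:tor given by Fitting} applied to $\lambda$ is ill-posed: the source and target of $\lambda$ are infinite-rank over $\calO_{X'}$, so Fitting ideals of $\Coker(\lambda)$ over $\calO_{X'}$ make no sense, and right-exactness of tensor means $Q_z = \Coker(\lambda_z)$ always, so there is no ``base change failure'' for the cokernel. The paper's fix is to also flatten the complexes $\rmC^\bullet_{\varphi,\gamma_K}(M^\vee(\delta)/t_\sigma)$ for each $\sigma\in\Sigma$ during the same blowup; then $H^0_{\varphi,\gamma_K}(N_0/t_\sigma)$ becomes locally free, and Lemma~\ref{L:phi Gamma module saturated} identifies the surjectivity locus with the nonvanishing locus of the bundle maps $\chi_\sigma\colon H^0_{\varphi,\gamma_K}(N_0)\to H^0_{\varphi,\gamma_K}(N_0/t_\sigma)$, which is manifestly closed.

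Second, for (3) the flattening you performed controls the cohomology complex but says nothing about the Tor-dimension of $Q=\Coker(\lambda)$ over $\calO_{X'}$. To show $P=\Ker(\lambda)$ is a $(\varphi,\Gamma_K)$-module of rank $d-1$, the paper uses the Tor sequence $0\to\Tor_2^{X'}(Q,\kappa_z)\to P_z\to\Ker(\lambda_z)\to\Tor_1^{X'}(Q,\kappa_z)\to 0$ and then Lemma~\ref{L:constant dimension function implies flat}(2) with Proposition~\ref{P:phi module v.s. phi bundle}; this requires $\Tor_2^{X'}(Q,\kappa_z)=0$. That is arranged by a \emph{second} blowup $X'\to X'_0$ along a Fitting ideal of the finite $\calO_{X'_0}$-module $Q_0^{[r/p,r]}$ (finite because $Q_0$ is already $t$-power torsion, which the paper proves by the $\varphi$-pullback argument you allude to). Lemmas~\ref{L:extending isom and surj} and Remark~\ref{R:isom on Robba descent} do not address projectivity of kernels and will not substitute here.
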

\begin{proof}
If we prove the theorem for the normalization $p: \widetilde{X} \to X$
(with $p^*\delta$ and $p^{-1}(X_\alg)$) to obtain $\widetilde{f}:
\widetilde{X}' \to \widetilde{X}$, then the composite $f = p \circ
\widetilde{f}: X' = \widetilde{X}' \to X$ satisfies the claims of the
theorem, so we replace $X$ by $\widetilde{X}$.  We remark here that in
the case where $X$ is a smooth rigid analytic curve, there does not
exist a nonisomorphic proper birational morphism into $X$, so we have
$\widetilde{X} = X$ (and, later in the proof, $X'_0 = X$ and $X' = X$)
in this case.  It also suffices to treat connected components
separately.  So we assume henceforth that $X$ is normal and connected.
Note that every connected affinoid domain $\Max(A)$ in $X$ has
$\Spec(A)$ irreducible: using \cite[Lemma~2.1.1]{conrad} and
excellence to pass between completions, we see that $\Max(A)$ is
normal, and since it is also connected it is irreducible by
definition, so that \cite[Lemma~2.1.4]{conrad} shows that $\Spec(A)$
is irreducible.  It follows that any coherent sheaf on $X$, or its
pullback under any dominant morphism, has constant generic rank.

By Corollary~\ref{C:finite cohomology sheaf}, locally on $X$, say over
the affinoid subdomain $W \subseteq X$, the complex
$\rmC_{\varphi,\gamma_K}^\bullet(M^\vee(\delta))$
(resp. $\rmC_{\varphi,\gamma_K}^\bullet(M^\vee(\delta)/t_\sigma)$) is
quasi-isomorphic to some complex $A^\bullet$ of locally free sheaves
concentrated in degrees $[0,2]$ (resp. $[-1,2]$), and by
Theorem~\ref{T:base change} for any morphism $j: W' \to W$ one has
that $\rmC_{\varphi,\gamma_K}^\bullet(j^*M^\vee(\delta))$
(resp.\ $\rmC_{\varphi,\gamma_K}^\bullet(j^*M^\vee(\delta)/t_\sigma)$)
is represented by the na\"ive pullback $j^*A^\bullet$.  With this in
mind, by Corollary~\ref{C:flattening technique}(2), locally on $X$
there exists a proper birational morphism $f_0: X'_0 \to X$ such that
$N_0 = f_0^*(M^\vee(\delta))$ satisfies the conditions
\begin{itemize}
\item[(i)] $H^0_{\varphi, \gamma_K}(N_0)$ is flat and $H^i_{\varphi, \gamma_K}(N_0)$ has $\Tor$-dimension less than or equal to $1$ for each $i =1, 2$, and
\item[(ii)] for each $\sigma \in \Sigma$, $H^0_{\varphi, \gamma_K}(N_0/t_\sigma)$ is flat and $H^i_{\varphi, \gamma_K}(N_0/ t_\sigma)$ has $\Tor$-dimension less than or equal to $1$ for each $i=1,2$.
\end{itemize}
By Theorem~\ref{T:base change}(2) and the invariance under base change in Corollary~\ref{C:flattening technique}(2), the locally constructed morphisms glue to a proper birational morphism $f_0: X'_0 \to X$ locally satisfying the same conditions.
Note that $X'_0$ is connected; we may and will always take $X'_0$ to
be reduced and normal.

For a closed point $z \in X'_0$, the base change spectral sequence
$E_2^{i,j} = \Tor_{-i}^{X'_0}(H^j_{\varphi,\gamma_K}(N_0), \kappa_z)
\Rightarrow H^{i+j}_{\varphi, \gamma_K}(N_{0,z})$ gives a short exact
sequence (using that $H^1_{\varphi,\gamma_K}(N_0)$ has
$\Tor$-dimension at most $1$)
\begin{equation}
\label{E:local global comparison}
0 \to H^0_{\varphi, \gamma_K}(N_0) \otimes \kappa_z \to H^0_{\varphi, \gamma_K}(N_{0,z}) \to \Tor_1^{X'_0} (H^1_{\varphi, \gamma_K}(N_0), \kappa_z )\to 0.
\end{equation}
The condition (i) allows us to invoke Lemma~\ref{L:tor given by
  Fitting}, so that the set $Z'_0$ of $z \in X'_0$ for which the last
term of \eqref{E:local global comparison} does not vanish is Zariski
closed, and $X'_0-Z'_0$ is open and dense.  (Note that the formation
of $Z'_0$ is compatible with pullback because it is given by the zero
locus of a Fitting ideal.)  Thus for any point $z$ in the Zariski
dense subset $f_0^{-1}(X_\alg) \backslash Z'_0$ of $X'_0$,
$H^0_{\varphi, \gamma_K}(N_0) \otimes \kappa_z \cong H^0_{\varphi,
  \gamma_K}(N_{0,z})$ is one-dimensional.  Therefore, in view of
condition (i), $H^0_{\varphi, \gamma_K}(N_0)$ is a locally free sheaf
over $X'_0$ of rank one.  Let $\calL_0$ denote the dual line bundle of
$H^0_{\varphi, \gamma_K}(N_0)$.  Dualizing the natural homomorphism
$\calR_{X'_0}(\pi_K) \otimes_{X'_0} \calL_0^\vee\to N_0$ gives a
homomorphism $\lambda_0: f_0^*M \to \calR_{X'_0}(\pi_K) (\delta)
\otimes_{X'_0} \calL_0$ of $(\varphi, \Gamma_K)$-modules.

The uniqueness of $\lambda_0$ follows from the construction, and that $H_{\varphi,\gamma_K}^0(N_0)$ is a line bundle.  We next verify properties (1)--(2) for $X'_0$.  Afterwards we will construct morphism $f: X' \to X'_0 \to X$ by blowing up in $X'_0$, which will preserve the properties (1)--(2) shown over $X'_0$, and check (3) in this setting.

(1) Consider the locus $Z_0$ consisting of closed points $z \in X'_0$
that fail the condition that $\lambda_z: M_z \to
\calR_{\kappa_z}(\pi_K)(\delta_z)$ is surjective and the corresponding
element spans $H_{\varphi,\gamma_K}^0(M_z^\vee(\delta_z))$.  For each
$\sigma \in \Sigma$ we have a natural map $\chi_\sigma: H^0_{\varphi,
  \gamma_K}(N_0) \to H^0_{\varphi, \gamma_K}(N_0/t_\sigma)$.  By the
discussion above, \eqref{E:local global comparison} implies that
$H^0_{\varphi, \gamma_K}(N_{0,z})$ is one-dimensional if and only if
$z \notin Z'_0$.  In this case, Lemma~\ref{L:phi Gamma module
  saturated} implies that $\calR_{\kappa_z}(\pi_K) \to
M^\vee_z(\delta_z)$ is saturated, or equivalently $M_z \to
\calR_{\kappa_z}(\pi_K)(\delta_z)$ is surjective, if and only if for
each $\sigma \in \Sigma$ the map
\[
H^0_{\varphi, \gamma_K}(M^\vee_z (\delta_z)) \cong H^0_{\varphi, \gamma_K}(N_0) \otimes_{X'_0} \kappa_z \to H^0_{\varphi, \gamma_K}(N_0/t_\sigma) \otimes_{X'_0} \kappa_z \hookrightarrow H^0_{\varphi, \gamma_K}(M^\vee_z (\delta_z) / t_\sigma)
\]
is nontrivial.  Here, the injectivity of the last homomorphism above
follows from the base change spectral sequence $E_2^{i,j} =
\Tor_{-i}^{X'}(H^j_{\varphi, \gamma_K}(N_0 / t_\sigma), \kappa_z)
\Rightarrow H^{i+j}_{\varphi, \gamma_K}(N_{0,z}/t_\sigma)$ and the
fact that $H^1_{\varphi,\gamma_K}(N_0/t_\sigma)$ has $\Tor$-dimension
at most $1$.  It follows that $Z_0$ is the union of $Z'_0$ and the
subspaces where the respective $\chi_\sigma$ vanish, hence is Zariski
closed.  Finally, $f_0^*X_\alg$ is disjoint from $Z_0$ because our
hypotheses on each point of $X_\alg$ are simply a restatement of the
negation of the condition on membership in $Z_0$.

(2) Let $Q_0$ denote the cokernel of $\lambda_0$.  By the discussion in (1), $Q_{0,z} = 0$ if $z \notin Z_0$. By Lemma~\ref{L:local saturation=>global saturation}, this implies that $Q_0$ is supported on $Z_0$, in the sense that, locally on $X'_0$, for any analytic function $g$ vanishing along $Z_0$, some power of $g$ kills $Q_0$.

By \eqref{E:local global comparison}, $H^0_{\varphi, \gamma_K}(N_0) \otimes \kappa_z$ injects into $H^0_{\varphi, \gamma_K}(N_{0,z})$ and hence $\lambda_z$ is nontrivial.
Thus, the image of $\lambda_z$ is a $(\varphi, \Gamma_K)$-submodule of $\calR_{\kappa_z}(\pi_K)(\delta_z)$.  By Corollary~\ref{C:submodule of rank one module}, $Q_{0,z}$ is killed by some power of $t$.

We claim that, locally on $X'_0$, $Q_0$ itself is killed by some power of $t$.  So assume that $X'_0$ is affinoid.  We first observe that $\lambda_0$ is the base change of a homomorphism $\lambda_0^r: f_0^*M^r \to \calR_{X'_0}^r(\pi_K)(\delta) \otimes_{X'_0} \calL_0$ for some $r \in (0, C(\pi_K)]$.  Let $Q_0^r$ denote the cokernel of $\lambda_0^r$.  We first note that $Q_0^{[r/p,r]}$ is killed by some power of $t$, say $t^n$.  Indeed, $Q_0^{[r/p,r]}$ is supported on the zero locus of $t$ in $\Max(\calR_{X'_0}^{[r/p,r]}(\pi_K))$, which is an affinoid rigid analytic space, finite over $X_0'$.  This implies that $Q_0^{[r/p^m, r/p^{m-1}]} = (\varphi^m)^*Q_0^{[r/p,r]}$ is also killed by $t^n$ for every $m \in \NN$.  Hence $Q_0^r$ itself is killed by $t^n$, proving that $Q_0$ is killed by $t^n$.

To obtain the morphism $f: X' \to X$, affinoid-locally in $X'_0$ we apply Corollary~\ref{C:flattening technique}(1) to any finite presentation of $Q_0^{[r/p,r]}$ (which is a finite module by the preceding paragraph), and we glue these local constructions globally over $X'_0$ using the invariance under dominant base change provided by the corollary.  Write $\calL$, $\lambda$, $Q$, $Q^r$, and $Q^{[r,s]}$ to denote the respective pullbacks along $X' \to X'_0$ of $\calL_0$, $\lambda_0$, $Q_0$, $Q_0^r$, and $Q_0^{[r,s]}$.  By our construction of $X'$ the module $Q^{[r/p,r]}$ has $\Tor$-dimension at most $1$; pulling back by $\varphi$ shows that $Q^{[r/p^m,r/p^{m-1}]}$ also has $\Tor$-dimension at most $1$ for all $m \in \NN$; gluing shows that $Q^r$ has $\Tor$-dimension at most $1$.

(3) Recall that $Q$ is the cokernel of $\lambda$, and let $P$ denote the kernel of $\lambda$.  For any closed point $z \in X'$,  the $\Tor$ spectral sequence computing the cohomology of the complex $\big[M \xrightarrow\lambda \calR_{X'}(\pi_K)(\delta) \otimes_{X'} \calL\big] \stackrel{\bbL}\otimes_{X'} \kappa_z$ gives rise to the exact sequence
\[
0 \to \Tor_2^{X'}(Q,\kappa_z) \to P_z \to \Ker\big( \lambda_z: M_z \to \calR_{\kappa_z}(\pi_K)(\delta_z)\big) \to \Tor_1^{X'}(Q, \kappa_z) \to 0.
\]
By our construction of $X'$ the first term vanishes for all $z$, and
(2) implies that the last term in this sequence is killed by a power
of $t$.  In particular, $P_z$ is a $(\varphi,\Gamma_K)$-module over
$\calR_{\kappa_z}(\pi_K)$ of rank $d-1$.  For each affinoid subdomain
$\Max(A) \subseteq X$, choose a model $M^r$ for $M$ over
$\calR_A^r(\pi_K)$ for some $r$; by Lemma~\ref{L:extending isom and
  surj}, $\lambda$ arises from a map $\lambda^r: M^r \to
\calR_A^r(\pi_K)(\delta)$, giving rise to a model $P^r =
\Ker(\lambda^r)$ for $P$ over $\calR_A^r(\pi_K)$.  Each $P^r_z$ is a
coadmissible model for $P_z$ over $\calR_{\kappa_z}^r(\pi_K)$, and is
finite projective of rank $d-1$ by $\varphi$-equivariance.  Invoking
Lemma~\ref{L:constant dimension function implies flat}(2), we see that
$P^r$ is the global sections of a $\varphi$-bundle over
$\calR_A^r(\pi_K)$, hence by Proposition~\ref{P:phi module v.s. phi
  bundle} is finite projective of rank $d-1$.
\end{proof}

\begin{cor}
\label{C:global triangulation}
Let $X$ be a reduced rigid analytic space over $L$, all of whose connected components are irreducible.
Let $M$ be a densely pointwise strictly trianguline $(\varphi,\Gamma_K)$-module over $\calR_X(\pi_K)$ of rank $d$, with respect to the ordered parameters $\delta_1, \dots, \delta_d$ and the Zariski dense subset $X_\alg$.  Then there exist canonical data of
\begin{itemize}
\item[(a)] a proper birational morphism $f: X' \to X$ of reduced rigid analytic spaces,
\item[(b)] a unique increasing filtration $(\fil_i(f^*M))_{i=1, \dots, d}$ on the pullback $(\varphi, \Gamma_K)$-module $f^*M$ over $\calR_{X'}(\pi_K)$ via $(\varphi,\Gamma_K)$-stable coherent $\calR_{X'}(\pi_K)$-submodules,
\end{itemize}
such that the following conditions are satisfied.
\begin{itemize}
\item[(1)] The set $Z$ of closed points $z \in X'$ at which $(\fil_\bullet(f^*M))_z$ fails to be a strictly trianguline filtration on $M_z$ with ordered parameters $\delta_{1, z}, \dots, \delta_{d,z}$ is Zariski closed in $X'$ and disjoint from $f^{-1}(X_\alg)$ (hence the complement of $Z$ is Zariski open and dense).
\item[(2)] Each $\gr_i(f^*M)$ embeds $(\varphi, \Gamma_K)$-equivariantly into $\calR_{X'}(\pi_K)(\delta_i) \otimes_{X'} \calL_i$ for some line bundle $\calL_i$ over $X'$, and the cokernel of the embedding is, locally on $X'$, killed by some power of $t$ and supported on $Z$.
\item[(2')] The first graded piece $\gr_1(f^*M)$ is isomorphic to $\calR_{X'}(\pi_K)(\delta_1) \otimes_{X'} \calL_1$.
\end{itemize} 

Moreover, when $X$ is a smooth rigid analytic curve, we can take $X'$ equal to $X$.
\end{cor}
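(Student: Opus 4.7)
The plan is to induct on the rank $d$. For the base case $d=1$, Theorem~\ref{T:full rank 1 classification} produces a unique character $\delta$ and invertible sheaf $\calL$ on $X$ with $M \cong \calR_X(\pi_K)(\delta) \otimes_{\calO_X} \calL$; since $\delta_z = \delta_{1,z}$ on the Zariski-dense subset $X_\alg$, the uniqueness of the graph in that theorem forces $\delta = \delta_1$, and we take $X' = X$, $\fil_0(f^*M) = 0$, $\fil_1(f^*M) = M$.

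For the inductive step $d \geq 2$, I would first verify the hypothesis of Theorem~\ref{T:global triangulation step 1} with $\delta = \delta_d$. At each $z \in X_\alg$, the strictly trianguline condition on $M_z$, in the equivalent formulation applied with $i=d$ to $\fil_d M_z = M_z$, asserts that $H^0_{\varphi,\gamma_K}(M_z^\dual(\delta_{d,z}))$ is one-dimensional; a generator is furnished by dualizing and twisting the canonical surjection $M_z \twoheadrightarrow M_z/\fil_{d-1}M_z \cong \calR_{\kappa_z}(\pi_K)(\delta_{d,z})$, and the corresponding inclusion $\calR_{\kappa_z}(\pi_K) \hookrightarrow M_z^\dual(\delta_{d,z})$ has cokernel $(\fil_{d-1}M_z)^\dual(\delta_{d,z})$, which is finite projective over the B\'ezout ring $\calR_{\kappa_z}(\pi_K)$, so the inclusion is saturated. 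Theorem~\ref{T:global triangulation step 1} then produces a canonical proper birational $f_d \colon X'_d \to X$, a canonical (up to $\calO_{X'_d}^\times$) homomorphism $\lambda_d \colon f_d^*M \to \calR_{X'_d}(\pi_K)(\delta_d) \otimes_{X'_d} \calL_d$, and a Zariski-closed bad locus $Z_d \subseteq X'_d$ disjoint from $f_d^{-1}(X_\alg)$, whose kernel $N_d$ is a $(\varphi,\Gamma_K)$-module of rank $d-1$ over $\calR_{X'_d}(\pi_K)$.

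The crucial inductive step is to verify that $N_d$ over $X'_d$, together with $f_d^*\delta_1, \ldots, f_d^*\delta_{d-1}$ and the Zariski-dense subset $(X'_d)_\alg := f_d^{-1}(X_\alg)$, satisfies the hypotheses of the corollary in rank $d-1$. For $z \in (X'_d)_\alg$, property~(1) of Theorem~\ref{T:global triangulation step 1} ensures $\lambda_z$ is surjective and spans the one-dimensional $H^0_{\varphi,\gamma_K}(M_{f_d(z)}^\dual(\delta_{d,z}))$; hence $\lambda_z$ equals, up to scalar, the base change to $\kappa_z$ of the canonical quotient $M_{f_d(z)} \twoheadrightarrow \gr_d M_{f_d(z)}$. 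Because the image of $\lambda_d$ equals $\calR_{X'_d}(\pi_K)(\delta_d) \otimes \calL_d$ away from $Z_d$ (hence is flat there), base change commutes with kernels near $z$ and $(N_d)_z \cong (\fil_{d-1}M_{f_d(z)}) \otimes_{\kappa_{f_d(z)}} \kappa_z$, inheriting a strictly trianguline filtration with parameters $\delta_{1,z}, \ldots, \delta_{d-1,z}$. Applying the inductive hypothesis to $N_d$ produces a proper birational $g_{d-1} \colon X' \to X'_d$ and a filtration on $g_{d-1}^*N_d$. Setting $f = f_d \circ g_{d-1}$, $\fil_i(f^*M) := g_{d-1}^*\fil_i(g_{d-1}^*N_d)$ for $i \leq d-1$, and $\fil_d(f^*M) := f^*M$, the conditions (1), (2), (2') follow by transitivity of the constructions together with the fact that the base case yields an honest isomorphism $\gr_1 \cong \calR_{X'}(\pi_K)(\delta_1) \otimes \calL_1$. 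Uniqueness and canonicity of $(f, X')$ and the filtration descend inductively from the corresponding properties in Theorem~\ref{T:global triangulation step 1} and Theorem~\ref{T:full rank 1 classification}. The ``moreover'' clause for smooth rigid analytic curves is inherited because each $f_d$ and $g_{d-1}$ may be taken to be the identity in that setting.

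The main obstacle is ensuring the hypothesis that $X$ is reduced with connected components irreducible propagates through the induction to $X'_d$. Reducing at the outset to the case where $X$ is normal and connected (exactly as in the proof of Theorem~\ref{T:global triangulation step 1}), each $X'_d$ arises as a composition of blow-ups; blow-ups of irreducible normal spaces are irreducible because they contain a Zariski-dense open isomorphic to the base, and reducedness of $X'_d$ is explicitly guaranteed by Theorem~\ref{T:global triangulation step 1}, so the inductive hypothesis remains applicable at each stage without further modification.
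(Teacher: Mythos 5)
Your proof is correct and follows the same inductive skeleton as the paper's: peel off the top quotient $\gr_d$ by applying Theorem~\ref{T:global triangulation step 1} with $\delta = \delta_d$, pass to the rank-$(d-1)$ kernel, and recurse. Your verification that the kernel inherits the densely pointwise strictly trianguline condition, and your handling of the normality/irreducibility bookkeeping, match the intent of the paper.

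The genuine difference is in how you obtain (2'). The paper proves (2') by a separate flatness argument: having the embedding $\lambda \colon \gr_1(f^*M) \hookrightarrow \calR_{X'}(\pi_K)(\delta_1)\otimes\calL_1$ from the induction, it shows that the cokernel $Q^{[s,r]}$ has vanishing $\Tor_1(-,\kappa_z)$ for all $z$ (because $\lambda_z$ is injective over the B\'ezout ring $\calR_{\kappa_z}(\pi_K)$), hence $Q^{[s,r]}$ is flat over $\calO_{X'}$, and since it vanishes at points outside $Z$ it vanishes identically. You instead bottom out the induction at rank $1$ and invoke Theorem~\ref{T:full rank 1 classification} to get the isomorphism $\gr_1(f^*M) \cong \calR_{X'}(\pi_K)(\delta_1)\otimes\calL_1$ directly, pinning down the character $\delta_1$ by its restriction to the Zariski-dense set $f^{-1}(X_\alg)$ together with reducedness of $X'$. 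Your route is shorter and conceptually cleaner, relying on the heavier classification tool that the paper has already proved in the preceding subsection; the paper's argument is more self-contained in the sense that it reuses only the machinery internal to Theorem~\ref{T:global triangulation step 1}. Both are valid.

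Two small points worth tightening: (i) the formula $\fil_i(f^*M) := g_{d-1}^*\fil_i(g_{d-1}^*N_d)$ has a stray $g_{d-1}^*$ on the outside; you mean $\fil_i(f^*M) := \fil_i(g_{d-1}^*N_d)$. (ii) For $g_{d-1}^*N_d$ to embed into $f^*M$ and for the fiberwise identification $(N_d)_z \cong (\fil_{d-1}M_{f_d(z)}) \otimes \kappa_z$ to hold, you need the image of $\lambda_d$, i.e.\ $f_d^*M/N_d$, to be flat over $\calO_{X'_d}$; this does hold because the blowups in Theorem~\ref{T:global triangulation step 1} are arranged (via Corollary~\ref{C:flattening technique}) to make the cokernel $Q_d$ of $\lambda_d$ have $\Tor$-dimension at most $1$, but it would be good to say so explicitly rather than attributing the identification only to flatness of $\calR_{X'_d}(\pi_K)(\delta_d)\otimes\calL_d$ over $\calO_{X'_d}$.
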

\begin{proof}
The existence of data satisfying all properties (1) and (2) follows from Theorem~\ref{T:global triangulation step 1} by induction, noting for (1) the equivalent description of a strict triangulation in Definition~\ref{D:strictly triangular}.

We now prove (2').  For this, we go into the proof of
Theorem~\ref{T:global triangulation step 1}.  The claim is local on
$X'$, so we replace $X'$ by a member of an admissible affinoid
covering such that $\calL_1$ is trivial and we have a natural
injective homomorphism $\lambda^r: \gr_1(f^*M^r) \to
\calR_{X'}^r(\pi_K)(\delta_1)$ of $(\varphi, \Gamma_K)$-modules over
$\calR_{X'}^r(\pi_K)$ for some $r \in (0, C(\pi_K)]$.  Let
  $\lambda^{[s,r]}$ denote its base change to $\calR_{X'}^{[s,r]}$ for
  any $s \in (0,r]$ and let $Q^{[s,r]}$ denote the cokernel of the
    latter; we must prove that the latter is zero.  Since $Q^{[s,r]}$ is killed by some power of $t$, it is a
    finite $\calO_{X'}$-module.  Tensoring $\lambda^{[s,r]}$ with
    $\kappa_z$ for $z \in X'$ gives an exact sequence
\begin{equation}\label{E:sequence for Q}
0 \to \Tor_1^{X'}(Q^{[s,r]}, \kappa_z)
\to \gr_1(f^*M^{[s,r]})_z \xrightarrow{\lambda_z} \calR^r_{\kappa_z}(\pi_K)(\delta_{1,z}) \to Q^{[s,r]}_z \to 0,
\end{equation}
where the first zero is by Corollary~\ref{C:R_A flat over A}.  Since
$\lambda_z$ is a homomorphism of finitely generated, torsion-free
modules over a finite product of B\'ezout domains and is nontrivial
generically, it must be injective, so that $\Tor_1^{X'}(Q^{[s,r]},
\kappa_z) = 0$.  On the other hand, since $Q^{[s,r]}$ is finitely
generated over $\calO_{X'}$, \cite[Exercise~6.2]{eisenbud} shows that
$Q^{[s,r]}$ is flat over $X'$.  However, this is only possible if
$Q^{[s,r]}$ is zero, because we know that $Q_z$ and hence
$Q_z^{[s,r]}$ is trivial at $z \notin Z$.
\end{proof}

\begin{remark}
Although the proper birational morphism $X' \to X$ appearing in Theorem~\ref{T:global triangulation step 1} is canonically constructed,
and its formation commutes with dominant, flat base change,
it is not clear to us if there is a \emph{universal} such $X'$ satisfying the conditions of 
Corollary~\ref{C:global triangulation}.

The ``bad locus'' $Z$ is necessary in the theorem; we will see this in
the example of the Coleman-Mazur eigencurve
(Proposition~\ref{P:eigencurve}).
\end{remark}

\begin{remark}
If $M$ is the base change of a $(\varphi, \Gamma)$-module over $\calR_X^{r_0}$, then the filtrations obtained in Corollary~\ref{C:global triangulation} can be taken to be base-changed from a filtration over $\calR_{X'}^{r_0}$, at least locally on $X'$.  This is because we may start by working with $M \otimes_{\calR_{X'}^{r_0}} \calR_{X'}$ and invoking Lemma~\ref{L:extending isom and surj}.
\end{remark}

Despite the fact that the global triangulation only behaves well away from the bad locus $Z$, we can show that actually at each closed point $z \in X$, the $(\varphi, \Gamma_K)$-module is trianguline (with slightly different parameters).  We thank Matthew Emerton for suggesting this application.

\begin{theorem}
\label{T:pointwise triangulation}
Let $X$ be a rigid analytic space over $L$.
Let $M$ be a densely pointwise strictly trianguline $(\varphi,\Gamma_K)$-module over $\calR_X(\pi_K)$ of rank $d$, with ordered parameters $\delta_1, \dots, \delta_d$ and the Zariski dense subset $X_\alg$.  Then for every $z \in X$, the specialization $M_z$ is trianguline with parameters $\delta'_{1, z}, \dots, \delta'_{d,z}$, where $\delta'_{i, z} = \delta_{i,z}\prod_{\sigma \in \Sigma} x_\sigma^{n_{i,z,\sigma}}$ for some $n_{i,z,\sigma} \in \ZZ$.
\end{theorem}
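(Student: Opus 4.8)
The plan is to deduce this from the global triangulation result Corollary~\ref{C:global triangulation} applied to a suitable one-parameter (or more precisely, curve-based) slice of $X$ through the given point $z$, together with the rank one classification and cohomological lemmas developed in this subsection. First I would reduce to the case where $X$ is affinoid, connected, and reduced, since the statement is pointwise; replacing $X$ by a connected affinoid neighborhood of $z$ does no harm. Since a general affinoid space may not have the property that $z$ lies on a nice curve, the key device is the following: after possibly passing to a finite cover (which is harmless, as specializations of $M$ only change by finite base change and the parameters are unaffected), one can find a reduced rigid analytic curve $C$ and a morphism $g: C \to X$ such that $z$ is in the image, $g^{-1}(X_\alg)$ is Zariski dense in $C$, and the pullback $g^*M$ remains densely pointwise strictly trianguline with pulled-back ordered parameters $g^*\delta_1,\dots,g^*\delta_d$. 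Here the Zariski density of $g^{-1}(X_\alg)$ is the crucial input and must be arranged by choosing the curve to pass through enough algebraic points; one standard way is to use Noether normalization-type arguments to slice $X$ by generic hyperplane sections down to a curve, noting that $X_\alg$ being Zariski dense means a generic such slice meets it densely, and then arrange for the slice to also pass through $z$.

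Granting such a curve $C$, Corollary~\ref{C:global triangulation} applies with $X' = C$ (since $C$ is a smooth --- after normalizing --- rigid analytic curve, no blowup is needed), producing a filtration $(\fil_i(g^*M))$ whose graded pieces $\gr_i(g^*M)$ embed $(\varphi,\Gamma_K)$-equivariantly into $\calR_C(\pi_K)(g^*\delta_i) \otimes_C \calL_i$ with cokernel supported on a Zariski closed bad locus $Z \subset C$ and killed locally by a power of $t$. Pulling this filtration back to the point $z$ (viewing $z$ as a point of $C$ via the curve), we obtain an increasing filtration of $M_z$ by $(\varphi,\Gamma_K)$-submodules whose graded pieces inject into $\calR_{\kappa_z}(\pi_K)(\delta_{i,z})$ with cokernel killed by a power of $t$. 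By Corollary~\ref{C:submodule of rank one module}, any nonzero $(\varphi,\Gamma_K)$-submodule of $\calR_{\kappa_z}(\pi_K)(\delta_{i,z})$ is of the form $\left(\prod_{\sigma \in \Sigma} t_\sigma^{n_{i,z,\sigma}}\right)\calR_{\kappa_z}(\pi_K)(\delta_{i,z})$ for some $n_{i,z,\sigma} \geq 0$; using Example~\ref{Ex:rank one modules}(2) that $\calR_{\kappa_z}(\pi_K)(x_\sigma)$ is generated by $t_\sigma$, this submodule is itself isomorphic to $\calR_{\kappa_z}(\pi_K)\big(\delta_{i,z}\prod_\sigma x_\sigma^{n_{i,z,\sigma}}\big)$. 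Thus each graded piece of the pulled-back filtration is isomorphic to $\calR_{\kappa_z}(\pi_K)(\delta'_{i,z})$ with $\delta'_{i,z} = \delta_{i,z}\prod_\sigma x_\sigma^{n_{i,z,\sigma}}$, which is exactly the assertion. (One should take a little care that the submodules of $M_z$ obtained remain saturated, or else the graded pieces might fail to be $(\varphi,\Gamma_K)$-modules over $\calR_{\kappa_z}(\pi_K)$ of the expected shape; but since we only need $M_z$ to be trianguline, not strictly trianguline, it suffices to saturate the filtration term by term, which only modifies the parameters by further twists $\prod_\sigma x_\sigma^{m_{i,\sigma}}$, still of the allowed form.)

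The main obstacle I anticipate is the construction of the curve $C \to X$ through $z$ with $g^{-1}(X_\alg)$ Zariski dense and with $g^*M$ still densely pointwise strictly trianguline. Zariski density of $X_\alg$ in $X$ does not immediately give density of its preimage in an arbitrary curve through $z$; one genuinely needs to choose the curve carefully. A clean approach is to invoke the fact that $X_\alg$ is Zariski dense in each member $\Max(A_i)$ of an admissible affinoid cover, localize so that $z \in \Max(A)$ with $X_\alg \cap \Max(A)$ Zariski dense, and then iterate cutting by hypersurfaces $V(f)$ with $f \in A$ chosen to vanish at $z$ but not to contain any irreducible component of the Zariski closure of $X_\alg$; a dimension induction then terminates at a curve with the required properties. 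One must also check that strict triangulinity at a point is preserved under this kind of pullback --- but this is immediate since $M_z$ for $z$ in the image of $C$ is literally unchanged (the fiber of $g^*M$ at a point of $C$ lying over $z$ is $M_z \otimes_{\kappa_z}\kappa$ for the residue field extension, and strict triangulinity is insensitive to such extension by the uniqueness aspect of Lemma~\ref{L:artinian rank 1 classification} combined with the definition). An alternative, avoiding curves entirely, would be to blow up $X$ directly at $z$ using Corollary~\ref{C:global triangulation} and analyze the exceptional fiber, but the curve-slicing route seems more transparent and is the one I would pursue.
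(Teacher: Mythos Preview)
Your curve-slicing detour has a real gap. The step ``cut by hypersurfaces $V(f)$ with $f$ vanishing at $z$ but not containing any irreducible component of the Zariski closure of $X_\alg$'' does not do what you need: since $X_\alg$ is Zariski dense, its Zariski closure is all of $X$, so your condition is just $f\neq 0$ on each component, and it does \emph{not} force $X_\alg\cap V(f)$ to be Zariski dense in $V(f)$. A Zariski-dense countable set can miss a given hypersurface entirely. Arranging a curve through $z$ that meets $X_\alg$ in infinitely many points is a genuine interpolation problem and your sketch does not solve it.

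The paper's proof sidesteps this completely, and is essentially the ``alternative'' you dismiss at the end---correctly understood. After replacing $X$ by the reduced subspace of a connected component of its normalization (so $X$ is reduced and irreducible), apply Corollary~\ref{C:global triangulation} \emph{directly to $X$}: one obtains a proper birational $f:X'\to X$ together with a filtration $(\fil_i f^*M)$ on $f^*M$ whose graded pieces embed into $\calR_{X'}(\pi_K)(\delta_i)\otimes\calL_i$ with cokernel locally killed by a power of $t$. \emph{Properness} of $f$ guarantees that any $z\in X$ has a preimage $z'\in X'$, and $(f^*M)_{z'}\cong M_z\otimes_{\kappa_z}\kappa_{z'}$, so it suffices to triangulate $(f^*M)_{z'}$. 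Since each $\fil_i f^*M$ is itself a $(\varphi,\Gamma_K)$-module (finite projective over $\calR_{X'}(\pi_K)$), the three-term complex
\[
[0\to(\fil_{i-1}f^*M)_{z'}\to(\fil_if^*M)_{z'}\to\calR_{\kappa_{z'}}(\pi_K)(\delta_{i,z'})\to 0]
\]
consists of flat modules and its cohomology computes $\Tor_*$ of the $t$-power-torsion cokernel, hence is itself killed by a power of $t$. Corollary~\ref{C:submodule of rank one module} then finishes exactly as in your last paragraph. No ``blowup at $z$'' or analysis of an exceptional fiber is involved: the blowup in Corollary~\ref{C:global triangulation} is along Fitting ideals of cohomology sheaves, and the only feature used is that $f$ is proper, hence hits $z$.
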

\begin{proof}
Fix $z \in X$.  After replacing $X$ by the reduced subspace of a
connected component of the normalization, we may assume that $X$ is
reduced and irreducible.  Applying Corollary~\ref{C:global
  triangulation} to $M$ gives a proper birational morphism $f: X' \to
X$ such that $f^*M$ admits a filtration of $(\varphi,
\Gamma_K)$-modules over $\calR_{X'}(\pi_K)$ as stated therein.  For
any point $z' \in f^{-1}(z)$ one has $(f^*M)_{z'} \cong M_z
\otimes_{\kappa_z} \kappa_{z'}$, so it suffices to replace $(X,M,z)$
by $(X',f^*M,z')$ in what follows.

Since the cokernel of the inclusion $\gr_i(M) \to \calR_X(\delta_i)$ is killed by some power of $t$, the cohomology groups of the complex
\[
[0 \to \fil_{i-1}(M)_z \to \fil_{i}(M)_z \to \calR_{\kappa_z}(\pi_K)(\delta_i) \to 0]
\]
are killed by some power of $t$.  The claim now follows from Corollary~\ref{C:submodule of rank one module}.
\end{proof}

\begin{example}
\label{Ex:main theorem rank 2}
We work out the argument of Theorem~\ref{T:pointwise triangulation} in detail for $\fil_2(M)$ when $X$ is a smooth (reduced with all connected components irreducible) rigid analytic curve.  This method can be extended to any $\fil_i(M)$, although the argument is more complicated.
In this case, Corollary~\ref{C:global triangulation} implies that we have the following exact sequence
\begin{equation}
\label{E:triangulation rank 2}
0 \to \calR_X(\pi_K)(\delta_1) \otimes_X \calL_1 \xrightarrow{\lambda} \fil_2(M) \xrightarrow{\mu} \calR_X(\pi_K)(\delta_2) \otimes_X \calL_2 \to Q \to 0,
\end{equation}
where the cokernel $Q$ is killed by some power of $t$ and is supported on a subset $Z \subset X$ disjoint from $X_\alg$ meeting each affinoid at finitely many points.  After shrinking $X$, we assume $Q$ is supported at a unique point $z$ of $Z$.

Tensoring \eqref{E:triangulation rank 2} with $\kappa_z$, we obtain
\begin{gather}
\label{E:triangulation rank 2-1}
\fil_2(M)_z \xrightarrow{\mu_z} \calR_{\kappa_z}(\pi_K)(\delta_{2,z}) \to Q_z \to 0, \textrm{ and}
\\
\label{E:triangulation rank 2-2}
0 \to \calR_{\kappa_z}(\pi_K)(\delta_{1,z}) \xrightarrow{\lambda_z} \Ker \big(\mu_z: \fil_2(M)_z \to \calR_{\kappa_z}(\pi_K)(\delta_{2,z})\big)
 \to \Tor_1^X(Q, \kappa_z) \to 0.
\end{gather}
Since the kernel of $\mu_z$ is torsion-free and has generic rank one, it is a $(\varphi, \Gamma_K)$-module over $\calR_{\kappa_z}(\pi_K)$.  Since it also contains $\calR_{\kappa_z}(\pi_K)(\delta_{1,z})$ as a subobject, Corollary~\ref{C:submodule of rank one module} implies that it is isomorphic to $\calR_{\kappa_z}(\pi_K)(\delta'_{1,z})$, where $\delta'_{1,z} = \delta_{1,z} \prod_{\sigma \in \Sigma} x_\sigma^{-k_{z,\sigma}}$ for some $k_{z,\sigma} \in \ZZ_{\geq 0}$.
In particular, this implies that  $\Tor_1^X(Q, \kappa_z)$ is isomorphic to $\calR_{\kappa_z}(\pi_K)(\delta'_{1,z}) /( \prod_{\sigma \in \Sigma}t_\sigma^{k_{z,\sigma}})$.  After shrinking $X$, we may assume that it is affinoid and its global sections are a PID, which shows that $Q_z$ must also be isomorphic to $\calR_{\kappa_z}(\pi_K)(\delta'_{1,z})/(\prod_{\sigma\in\Sigma}t_\sigma^{k_{z,\sigma}})$, and the gradeds of $Q$ for the $\gothm_z$-adic filtration (where $\gothm_z$ is the maximal ideal at $z$) must similarly have the form
\[
\gothm_z^nQ/\gothm_z^{n+1}Q \cong \calR_{\kappa_z}(\pi_K)(\delta'_{1,z}) \big/\big( \prod_{\sigma \in \Sigma}t_{\sigma}^{k_{z,\sigma,n}} \big)
\]
where for each fixed $\sigma \in \Sigma$ the $k_{z,\sigma,n}$ form a nonincreasing sequence of nonnegative integers that eventually vanish.

In view of \eqref{E:triangulation rank 2-1}, $Q_z$ is also isomorphic
to $\calR_{\kappa_z}(\pi_K)(\delta_{2,z}) / \big( \prod_{\sigma \in
  \Sigma}t_\sigma^{k_{z,\sigma}} \big)$.  In particular, for any
$\sigma \in \Sigma$ such that $k_{z,\sigma} >0$, we have a $(\varphi,
\Gamma)$-equivariant isomorphism
\[
\calR_{\kappa_z}(\pi_K)(\delta'_{1,z}) / (t_\sigma) \cong \calR_{\kappa_z}(\pi_K) (\delta_{2,z})/(t_\sigma).
\]
Note that these two objects are not genuine $(\varphi, \Gamma_K)$-modules, so in particular, the isomorphism does not provide us with any information about the $\varphi$-action.  Still, it gives an isomorphism
\[
\bbD_{\Sen, m}(\delta'_{1, z}) \otimes_{K \otimes_{\Qp} \kappa_z, \sigma} \kappa_z \cong \bbD_{\Sen, m}(\delta_{2, z}) \otimes_{K \otimes_{\Qp} \kappa_z, \sigma} \kappa_z,
\]
for some $m$ sufficiently large.  Looking at the action of
$\Theta_\Sen$ on both sides of the isomorphism,
Corollary~\ref{L:weight of rank one module} implies that \[
\mathrm{wt}_\sigma(\delta_{2,z}) = \mathrm{wt}_\sigma(\delta'_{1,z}) =
\mathrm{wt}_\sigma(\delta_{1,z}) - k_{z,\sigma}.
\]

In conclusion, for any $\sigma \in \Sigma$, either $k_{z,\sigma}$ is
zero or it is strictly positive and equal to
$\mathrm{wt}_\sigma(\delta_{1,z}) - \mathrm{wt}_\sigma(\delta_{2,z})$;
the local triangulation at $z$ is given by
\begin{equation}\label{E:conclusion}
0 \to \calR_{\kappa_z}(\pi_K)(\delta_{1,z}\prod_{\sigma\in \Sigma}
x_\sigma^{-k_{z,\sigma}}) \to \fil_2(M)_z \to
\calR_{\kappa_z}(\pi_K)(\delta_{2,z}\prod_{\sigma\in \Sigma}
x_\sigma^{k_{z,\sigma}}) \to 0.
\end{equation}
\end{example}

\subsection{Triangulation over eigenvarieties}

We apply the triangulation result Corollary~\ref{C:global triangulation} (resp.\ Theorem~\ref{T:pointwise triangulation}) to the case of eigenvarieties and obtain global (resp.\ pointwise) triangulation immediately.  For simplicity, we restrict ourselves to the case when $K = \Qp$, where $\Sigma = \{\mathrm{id}\}$, the $\mathrm{id}$-Hodge-Tate weight is just the usual Hodge-Tate weight, and we can take $t_\mathrm{id} = t$.

\begin{defn}
\label{D:refined representation}
Let $V$ be a $p$-adic representation of $G_{\Qp}$ over a finite extension $L$ of $\Qp$, and let $\delta_1, \ldots, \delta_d: \Qp^\times \to L^\times$ be potentially semistable characters.  We write $F_i = \delta_i(p)$, and let $\kappa_i \in \ZZ$ be the Hodge-Tate weight of $\delta_i$, so that $(\delta_i \cdot x^{\kappa_i})|_{\Zp^\times}$ has finite order, thus giving a character $\psi_i: G_{\Qp} \twoheadrightarrow \Gal(\Qp(\mu_{p^n})/\Qp) \cong (\ZZ/p^n)^\times \to L^\times$ for some $n.$  We say that $V$ is \emph{refined trianguline} with ordered parameters $\delta_1, \dots, \delta_d: \Qp^\times \to L^\times$ if
\begin{itemize}
\item[(a)] $V$ becomes semistable over some $\Qp(\mu_{p^n})$,
\item[(b)] $V$ has Hodge-Tate weights $\kappa_1 \leq \dots \leq \kappa_d$ and \emph{distinct} Frobenius eigenvalues $p^{\kappa_1}F_1, \dots, p^{\kappa_d}F_d$,
\item[(c)] the action of $G_{\Qp}$ on $\bbD_\pst(V)$ stabilizes each $p^{\kappa_i}F_i$-eigenspace and, on this line, is via $\psi_i$, and
\item[(d)] the complete flag on $\bbD_\pst(V)$ whose $i$th subobject
  is spanned by the eigenspaces for
  $p^{\kappa_1}F_1,\ldots,p^{\kappa_i}F_i$ is in general position with
  respect to the Hodge filtration, with weights
  $\kappa_1,\ldots,\kappa_i$, and each step is stable under the
  monodromy operator.
\end{itemize}
\end{defn}

\begin{lemma}
\label{L:refined family is strict trianguline}
If $V$ is refined trianguline with ordered parameters $\delta_1, \dots, \delta_d$, then $\bbD_\rig(V)$ is strictly trianguline with ordered parameters $\delta_1, \dots, \delta_d$.
\end{lemma}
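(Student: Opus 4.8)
The plan is to translate the four conditions of Definition~\ref{D:refined representation} into the structure of a triangulation of $\bbD_\rig(V)$. First I would reduce to the crystalline-after-twisting picture: by condition (a), $V$ is semistable over $\Qp(\mu_{p^n})$, so $\bbD_\pst(V)$ is a filtered $(\varphi,N,\Gal(\Qp(\mu_{p^n})/\Qp))$-module carrying the data in (c) and (d). The key observation is that a complete flag $\Fil^\bullet \bbD_\pst(V)$ which is stable under $\varphi$ (since it is spanned by eigenspaces), stable under $N$ (condition (d)), stable under the Galois action with the line $\gr_i$ acting via $\psi_i$ (condition (c)), and in general position relative to the Hodge filtration with the prescribed jumps $\kappa_i$ (condition (d)), is exactly the same datum as an increasing filtration of $\bbD_\pst(V)$ by sub-$(\varphi,N,\Gal)$-modules whose successive quotients are the rank-one filtered $(\varphi,N,\Gal)$-modules attached to the $\delta_i$.

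Next I would invoke Berger's dictionary \cite{berger-dictionary} between $(\varphi,\Gamma_{\Qp})$-modules over $\calR_L(\pi)$ that are potentially semistable and filtered $(\varphi,N,\Gal)$-modules, which is an exact equivalence of categories compatible with tensor products and with the formation of $\bbD_\pst$. Under this equivalence, the functor $\bbD_\rig$ followed by $\bbD_\pst$ recovers $\bbD_\pst(V)$; and by Example~\ref{Ex:rank one modules}(3) (together with Construction~\ref{Con: rank one varphi, Gamma_K modules}), the rank-one potentially semistable $(\varphi,\Gamma_{\Qp})$-module of character type $\calR_L(\pi)(\delta_i)$ corresponds precisely to the rank-one filtered $(\varphi,N,\Gal)$-module attached to $\delta_i$, because each $\delta_i$ differs from $\prod x_\sigma^{\kappa_i}$ (here $\Sigma=\{\mathrm{id}\}$, so $x^{\kappa_i}$) by the finite-order character $\psi_i$, and one reads off the Frobenius as $F_i \cdot p^{-\kappa_i}$ times the appropriate normalizing power. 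Thus applying the inverse of Berger's equivalence to the flag on $\bbD_\pst(V)$ produces an increasing filtration $(\fil_i \bbD_\rig(V))$ by saturated $(\varphi,\Gamma_{\Qp})$-submodules with $\gr_i \bbD_\rig(V) \cong \calR_L(\pi)(\delta_i)$. This is a triangulation with ordered parameters $\delta_1,\dots,\delta_d$.

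Finally I would upgrade "trianguline" to "strictly trianguline" in the sense of Definition~\ref{D:strictly triangular}. By that definition it suffices to check that $H^0_{\varphi,\gamma_{\Qp}}\big((\bbD_\rig(V)/\fil_i)(\delta_{i+1}^{-1})\big)$ is one-dimensional for each $i$, equivalently that $\fil_{i+1}$ is the unique enlargement of $\fil_i$ to a submodule with quotient $\calR_L(\pi)(\delta_{i+1})$. Here the distinctness hypothesis in (b) is essential: if the space of maps $\calR_L(\pi)(\delta_{i+1}) \to \bbD_\rig(V)/\fil_i$ had dimension $\geq 2$, then by Proposition~\ref{P:cohomology of rank one module}(1) (applied to $K=\Qp$, so $\Sigma=\{\mathrm{id}\}$, where this was already used for $\calR(\delta)$ in Lemma~\ref{L:special rank 1}(1)) the character $\delta_{i+1}$ would have to coincide up to $x^{\ZZ_{\geq 0}}$ with one of $\delta_j$ for $j > i+1$ arising from the other graded pieces; tracking the crystalline Frobenius eigenvalue via the dictionary shows this forces $p^{\kappa_{i+1}}F_{i+1} = p^{\kappa_j}F_j$, contradicting (b). I expect this last uniqueness step — ruling out "extra" embeddings using the distinctness of the refined Frobenius eigenvalues — to be the main point requiring care, since it is where all four conditions of Definition~\ref{D:refined representation} genuinely interact; the rest is a bookkeeping translation through Berger's dictionary. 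The remaining routine checks (that the flag really is preserved by $N$ and Galois in the way needed, and that the Hodge filtration jumps match $\kappa_i$) follow directly from conditions (c) and (d).
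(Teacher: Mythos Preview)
Your proposal is correct and follows essentially the same approach as the paper: both use Berger's dictionary to convert the flag in $\bbD_\pst(V)$ from condition (d) into a triangulation of $\bbD_\rig(V)$ with the prescribed parameters, and both derive strictness from the distinctness of the $p^{\kappa_i}F_i$ in condition (b). The only cosmetic difference is that for the strictness check the paper stays on the filtered-module side, observing directly that $\bbD_\pst\big((M/\fil_iM)(\delta_{i+1}^{-1})\big)$ has a unique $\varphi$-eigenvalue equal to $1$ and hence $H^0_{\varphi,\gamma_{\Qp}}$ is at most one-dimensional, whereas you d\'evisse through the triangulation and invoke Proposition~\ref{P:cohomology of rank one module}(1) on each rank-one graded piece; unwinding your argument recovers exactly the paper's eigenvalue count.
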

\begin{proof}
By Berger's correspondence between filtered $(\varphi, N, G_{\Qp})$-modules and potentially semistable $(\varphi, \Gamma)$-modules described in \cite[Th\'eor\`eme~A]{berger-dictionary}, the complete flag described in Definition~\ref{D:refined representation}(d) gives a triangulation of $M = \bbD_\rig(V)$ with ordered parameters $\delta_1, \dots, \delta_d$.  Note that $M/\fil_iM(\delta_{i+1}^{-1})$ corresponds via Berger's correspondence to a filtered $(\varphi, N, G_{\Qp})$-module with a unique $\varphi$-eigenvalue equal to 1.  This shows that $H^0_{\varphi, \gamma_{\Qp}}(M/\fil_iM(\delta_{i+1}^{-1}))$ is at most one-dimensional, and there fore one-dimensional, so that $\bbD_\rig(V)$ is strictly trianguline.
\end{proof}

\begin{example}\label{E:refined family}
Let $X$ be a rigid analytic space over $\Qp$,
let $V$ be a family of $G_{\Qp}$-representations over $X$,
take $M = \bbD_\rig(V)$, and assume given continuous characters
$\delta_1,\ldots,\delta_d: \Qp^\times \to \Gamma(X, \calO_X)^\times$ and a
Zariski-dense subset $X_\alg \subset X$ satisfying the following
condition:
\begin{itemize}
\item For each $z \in X_\alg$ the fiber $V_z$ is refined trianguline
  with parameters $\delta_{1,z}, \dots, \delta_{d,z}$.
\end{itemize}
Putting $F_i = \delta_i(p)$ and $\kappa_i = -\mathrm{wt}(\delta_i)$,
so that $\kappa_1,\ldots,\kappa_d,F_1,\ldots,F_d \in
\Gamma(X,\calO_X)$, such data are almost the same as those employed by
Bella\"iche \cite[3.2.1]{R:Bellaiche} (or Bella\"iche-Chenevier
\cite[Chapter~4]{R:BellaicheChenevier}, where a family of
pseudocharacters is treated), except these authors have a stronger
denseness condition, and for simplicity they require the $V_z$ to be
crystalline.  By Lemma~\ref{L:refined family is strict trianguline},
the fiber $V_z$ for any $z \in X_\alg$ is strictly trianguline with
ordered parameters $\delta_{1,z},\ldots,\delta_{d,z}$.  Then
Corollary~\ref{C:global triangulation} (if $X$ is reduced and its
connected components are irreducible) and Theorem~\ref{T:pointwise
  triangulation} apply to this situation.
\end{example}

\begin{example} \label{exa:eigencurve}
We conclude this subsection by specializing to the case where
\begin{itemize}
\item $X$ is any disjoint union of irreducible subspaces of the
  Coleman-Mazur eigencurve and $V$ its family of
  $G_{\Qp}$-representations,
\item $X_\alg$ is the set of classical points of weight $k \geq 2$ not
  in the image of the $\theta^{k-1}$-map,
\item $\delta_1|_{\Zp^\times}$ is trivial and $\delta_2|_{\Zp^\times}$
  is the inverse of the ``weight-nebentypus character'' map,
\item $\kappa_1$ is the constant function $0$ and $\kappa_2$ measures
  one minus the weight, and
\item $F_1$ is the $U_p$-map and $F_2$ is the inverse of the $U_p$-map,
\end{itemize}
and we obtain the following more precise result.  If necessary, we may
enlarge each irreducible subspace of $X$ within the eigencurve so that
it encompases some point of classical weight.  Then Coleman's
classicality theorem implies that all nearby points of sufficiently
large classical weight are classical, and $X_\alg$ is indeed dense.
\end{example}

\begin{prop}
\label{P:eigencurve}
With the preceding notations, there exists a triangulation over the
entire desingularization $X'$ of $X$.  For any point $z \in X'$, the
global triangulation restricts to a triangulation of $\bbD_\rig(V_z)$
if and only if the overconvergent modular form $f$ corresponding to
$z$ is not in the image of the $\theta^{k-1}$-map for some integer $k
\geq 2$.  If $z$ corresponds to an overconvergent modular form of
integer weight $k \geq 2$ in the image of the $\theta^{k-1}$-map, the
triangulation extends to a submodule whose fiber at $z$ has index
$t^{k-1}$ in its saturation.
\end{prop}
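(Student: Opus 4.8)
The plan is to deduce Proposition~\ref{P:eigencurve} from Corollary~\ref{C:global triangulation} and the fine analysis carried out in Example~\ref{Ex:main theorem rank 2}, which applies verbatim since the eigencurve $X$ (and hence its desingularization $X'$) is a smooth rigid analytic curve, $V$ is densely pointwise strictly trianguline with the listed ordered parameters by Lemma~\ref{L:refined family is strict trianguline}, and $X_\alg$ is Zariski dense by Coleman's classicality theorem. First I would invoke Corollary~\ref{C:global triangulation} with $X'$ the desingularization to get a global triangulation $0 \to \calR_{X'}(\pi_K)(\delta_1)\otimes_{X'}\calL_1 \to \fil_2(\bbD_\rig(V)) = \bbD_\rig(V) \to \calR_{X'}(\pi_K)(\delta_2)\otimes_{X'}\calL_2 \to Q \to 0$, where $Q$ is locally killed by a power of $t$ and supported on the bad locus $Z$, disjoint from $f^{-1}(X_\alg)$; since $K=\Qp$ the parameters are the two characters $\delta_1,\delta_2$ determined by the $U_p$-eigenvalue and the weight-nebentypus data as in Example~\ref{exa:eigencurve}.

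Next I would show $Z$ consists precisely of the (classical, integer weight $k\geq 2$) points in the image of $\theta^{k-1}$, and describe the fiber there. The key input is the classical fact (Coleman) that an overconvergent eigenform $f$ of weight $k$ lies in the image of $\theta^{k-1}$ exactly when the associated Galois representation $\bbD_\rig(V_z)$ fails to be saturated-trianguline with the expected parameters; more precisely, for such a point the unique rank one $(\varphi,\Gamma_{\Qp})$-submodule with quotient of parameter $\delta_2$ sits with index $t^{k-1}$ inside its saturation $\calR_{\kappa_z}(\pi_K)(\delta_1)$, reflecting that the Hodge--Tate weights $\{0,k-1\}$ force a twist by $x^{k-1}$. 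Concretely, at a point $z\in Z$, Example~\ref{Ex:main theorem rank 2} identifies the local triangulation \eqref{E:conclusion} as $0 \to \calR_{\kappa_z}(\pi_K)(\delta_{1,z} x^{-k_z}) \to \bbD_\rig(V_z) \to \calR_{\kappa_z}(\pi_K)(\delta_{2,z} x^{k_z}) \to 0$, with $k_z = \mathrm{wt}(\delta_{1,z}) - \mathrm{wt}(\delta_{2,z}) = 0 - (k-1) = -(k-1)$ when nonzero; comparing with the saturation $\calR_{\kappa_z}(\pi_K)(\delta_{1,z})$ coming from the global $\fil_1$, the quotient is $\calR_{\kappa_z}(\pi_K)(\delta_{1,z})/(t_\sigma^{k-1}) = \calR_{\kappa_z}(\pi_K)(\delta_{1,z})/(t^{k-1})$, i.e.\ the fiber of $\fil_1(f^*\bbD_\rig(V))$ at $z$ has index $t^{k-1}$ in its saturation. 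Conversely, on the complement $X' \setminus Z \supseteq f^{-1}(X_\alg)$ the submodule $\fil_1$ is already saturated (its cokernel is $t$-torsion-free there by property (2')), so the global triangulation restricts to a genuine triangulation of $\bbD_\rig(V_z)$, proving the stated dichotomy.

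The main obstacle I anticipate is the precise matching of the rigid-analytic ``bad locus'' $Z$ with the $\theta^{k-1}$-image locus, and in particular verifying that $k_z = k-1$ exactly (rather than merely $k_z > 0$) at a $\theta$-critical point. This requires pinning down the Hodge--Tate--Sen weights of the two parameters $\delta_{1,z},\delta_{2,z}$ at classical points — which by the refined-trianguline hypothesis of Example~\ref{E:refined family} are $0$ and $1-k$ — and then invoking the relation $\mathrm{wt}_\sigma(\delta_{2,z}) = \mathrm{wt}_\sigma(\delta_{1,z}) - k_{z,\sigma}$ derived from the Sen operator in Example~\ref{Ex:main theorem rank 2} to force $k_z = k-1$; the subtlety is that one must also rule out the non-$\theta$-critical classical points from $Z$, which follows since for those $\bbD_\rig(V_z)$ is strictly (hence saturated) trianguline by Lemma~\ref{L:refined family is strict trianguline}, so $z \in f^{-1}(X_\alg)$ is automatically outside $Z$ by Corollary~\ref{C:global triangulation}(1). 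A secondary point requiring care is that the desingularization $X'$ of the (possibly singular) curve $X$ may be taken as the $X'$ of the corollary precisely because the corollary's ``Moreover'' clause allows $X' = X$ for smooth curves, applied to the normalization/desingularization; here one should note that for a reduced rigid analytic curve, normalization coincides with desingularization, so no further blowup is needed.
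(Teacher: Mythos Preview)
Your proposal has a genuine gap. You invoke as ``the classical fact (Coleman)'' that an overconvergent eigenform $f$ of weight $k \geq 2$ lies in the image of $\theta^{k-1}$ exactly when $\bbD_\rig(V_z)$ fails to be saturated-trianguline with the expected parameters. This is not a result of Coleman; it is essentially the content of the proposition itself. Example~\ref{Ex:main theorem rank 2} only tells you that $k_{z,\mathrm{id}} \in \{0,\,k-1\}$ at a weight-$k$ point; it does not decide which value occurs, so the Sen-weight relation cannot ``force $k_z = k-1$'' as you suggest. You correctly observe that non-$\theta$-critical classical points lie in $X_\alg$ and hence outside $Z$, but you give no argument that $\theta$-critical points lie \emph{in} $Z$.

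The paper supplies this missing dichotomy by two separate arguments. For a weight $k \geq 2$ point $z$ \emph{not} in the image of $\theta^{k-1}$, it cites Breuil--Emerton to deduce that $z$ is classical and $V_z$ is nonsplit; if one had $k_{z,\mathrm{id}} = k-1$, weak admissibility would force $\ord_p \alpha_z = k-1$ and hence $\bbD_\cris(V_z)$ would split into weakly admissible summands, a contradiction. For $z$ \emph{in} the image of $\theta^{k-1}$, the paper takes the preimage form $f'$ of weight $2-k$ at a point $z'$ (enlarging $X$ if necessary), uses $V_z \cong V_{z'} \otimes \chi^{1-k}$, and compares the two triangulations of $\bbD_\rig(V_z)$ coming from the global families at $z$ and at $z'$; assuming $k_{z,\mathrm{id}} = 0$ yields a nonzero map $t^{1-k}\calR_\kappa(\delta_{1,z}) \to \calR_\kappa(\delta_{2,z})$, forcing $\alpha_z^2 = p^{k-1}$, which contradicts $\ord_p \alpha_z \geq k-1$ (since $\alpha_z = p^{k-1}\alpha_{z'}$).

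There is also a sign slip: you compute $k_z = 0 - (k-1) = -(k-1)$, but $\mathrm{wt}(\delta_{2,z}) = 1-k$ (as you yourself note later), so the correct value is $k-1$. Since $k_{z,\sigma} \geq 0$ always, the negative value should have signaled the error.
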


\begin{proof}
The existence of global triangulation follows immediately from
Corollary \ref{C:global triangulation}.  We discuss the rest of the
statement case by case.

If $z \in X'$ corresponds to an overconvergent modular form with a
non-integer weight or with weight less than or equal to 1, then
$\mathrm{wt}_\mathrm{id}(\delta_{1,z}) -
\mathrm{wt}_\mathrm{id}(\delta_{2,z})$ is not a positive integer.  By
the calculation of Example~\ref{Ex:main theorem rank 2}, the global
triangulation is saturated at the point $z$.

If $z \in X'$ corresponds to an overconvergent modular form with
integer weight $k \geq 2$ that is not in the image of $\theta^{k-1}$,
\cite[Proposition~5.4.3]{breuil-emerton} implies that $z$ corresponds
to a classical modular form $f$ of weight $k$ and $U_p$-eigenvalue
$\alpha_z$, and \cite[Th\'eor\`eme 1.1.3]{breuil-emerton} implies that
the representation $V_z$ of $G_{\Qp}$ attached to $f$ is nonsplit.  In
the computations of Example~\ref{Ex:main theorem rank 2} the integer
$k_{z,\mathrm{id}}$ is either $0$ or $k-1$, and it suffices to show
that $k_{z,\mathrm{id}}=0$.  Weak admissibility of
$\bbD_\mathrm{pst}(V_z)$ implies that if $k_{z,\mathrm{id}} = k-1$,
then $\ord_p \alpha_z = k-1$, and it is easy to see that this
forces $\bbD_\cris(V_z)$ to be a direct sum of two weakly admissible
submodules, contradicting the fact that $V_z$ is not split.  It
follows that one must have $k_{z,\mathrm{id}}=0$.

Now suppose that $z \in X'$ corresponds to an overconvergent modular
form $f$ of weight $k \geq 2$ that is in the image of $\theta^{k-1}$,
i.e.\ there exists an overconvergent modular form $f'$ of weight $2-k$
such that $(q\frac d{dq})^{k-1}(f'(q)) =f(q)$.  If necessary, add to
$X$ an irreducible region of the eigencurve so that there exists $z'
\in X'$ corresponding to $f'$.  Let $V_z$ (resp. $V_{z'}$) be the
representation of $G_\QQ$ attached to $f$ (resp. $f'$); by looking at
the eigenvalues of Hecke operators away from $p$, we know that $V_z =
V_{z'} \otimes \chi^{1-k}$.  It is also clear that $\delta_{1,z'} =
\delta_{1,z} |x|^{k-1}$ and $\delta_{2,z'} = \delta_{2,z}
x^{2k-2}|x|^{k-1}$.  The computation in Example~\ref{Ex:main theorem
  rank 2}, gives us two integers $k_{z,\mathrm{id}}$ and
$k_{z',\mathrm{id}}$.  Because the weight at $z'$ is $2-k < 1$, the
first case we treated above shows that the global triangulation is
saturated at $z'$, i.e.\ $k_{z',\mathrm{id}} = 0$.  By the conclusion
of Example~\ref{Ex:main theorem rank 2}, $k_{z,\mathrm{id}}$ is equal
to $0$ or $k-1$.  We claim that $k_{z,\mathrm{id}} = k-1$, which would
finish the proof of this proposition.  Suppose not,
i.e.\ $k_{z,\mathrm{id}}=0$.  Let $\kappa$ be a finite extension of
$\Qp$ containing both $\kappa_z$ and $\kappa_{z'}$.  After base
changing to $\kappa$, the exact sequence \eqref{E:conclusion} for $z$
is
\begin{equation}
\label{E:eigencurve exact sequence3}
0 \to \calR_\kappa(\delta_{1,z}) \xrightarrow{\lambda_z} \bbD_\rig(V_z) \xrightarrow{\mu_z} \calR_{\kappa}(\delta_{2,z})  \to 0,
\end{equation}
whereas the sequence \eqref{E:conclusion} for $z'$, twisted by
$\chi^{1-k}$, is
\begin{equation}
\label{E:eigencurve exact sequence4}
0 \to t^{1-k} \calR_\kappa(\delta_{1,z}) \xrightarrow{\lambda_{z'}} \bbD_\rig(V_z) \xrightarrow{\mu_{z'}} t^{k-1} \calR_\kappa(\delta_{2,z}) \to 0.
\end{equation}
Note that $\mu_z$ in \eqref{E:eigencurve exact sequence3} does not
factor through $\mu_{z'}$ in \eqref{E:eigencurve exact sequence4}.
Thus, the composition $t^{1-k} \calR_\kappa(\delta_{1,z})
\xrightarrow{\lambda_{z'}} \bbD_\rig(V_z) \xrightarrow{\mu_z}
\calR_\kappa(\delta_{2,z})$ is nonzero.  By comparing weights, we
conclude that $x^{1-k} \delta_{1,z} = \delta_{2,z}$.  This implies
that $\alpha_z^2 = p^{k-1}$, where $\alpha_z$ is the $U_p$-eigenvalue
of $f$.  This is impossible because, writing $\alpha_z'$ for the
$U_p$-eigenvalue of $f'$, one has $\alpha_z = p^{k-1}\alpha_z'$ and so
$\ord_p \alpha_z \geq k-1$.
\end{proof}

\end{document}